\pdfoutput=1
\documentclass[a4paper,reqno]{amsart}
\usepackage[margin=3cm]{geometry}
\usepackage[T3,T1]{fontenc}

\newcommand{\ifarticle}[2]{
    \csname@ifclassloaded\endcsname{beamer}{#2}{#1}
}

\newcommand{\ifbook}[2]{
    \csname@ifclassloaded\endcsname{amsbook}{#1}{#2}
}

    \usepackage[T1]{fontenc} 
    \usepackage{xparse} 
    \usepackage{xspace} 
    \usepackage{etoolbox} 
    \usepackage{xpatch} 
    \usepackage{pgffor} 

    \usepackage{amsmath,amssymb} 
    \usepackage{keytheorems} 
    \allowdisplaybreaks[1]

    \usepackage{mathtools} 
    \usepackage{mathrsfs} 
    \usepackage{stmaryrd} 
    \usepackage{bbm} 
    \usepackage{quiver} 
    \usepackage{xcolor} 
    \usepackage{scalerel} 
    \usepackage{booktabs} 
    \usepackage{nameref} 
    \usepackage[british]{babel} 
    \usepackage{csquotes} 
    \usepackage[UKenglish]{isodate} 
    \usepackage{microtype} 
    \usepackage[splitrule]{footmisc} 

    \ifarticle{
        \PassOptionsToPackage{hyphens}{url}
        \usepackage[pdfusetitle,linktoc=all,colorlinks,citecolor=cyan,linkcolor=purple,urlcolor=blue]{hyperref} 
        \urlstyle{rm}

        \usepackage{zref-clever} 
        \zcsetup{nameinlink,noabbrev,cap=true}
        \newcommand{\cref}{\zcref}
        \newcommand{\Cref}{\zcref[cap=true]}
        \usepackage{footnotebackref} 

        \usepackage[shortlabels]{enumitem} 
        \setlist{topsep=2pt,itemsep=2pt,partopsep=2pt,parsep=2pt} 

        \setcounter{tocdepth}{1}
    }{}
    \usepackage[backend=biber,style=alphabetic,sorting=nyt,abbreviate=false,backref=true,backrefstyle=three,maxnames=9,minalphanames=3,maxalphanames=4]{biblatex} 
    \DeclareUnicodeCharacter{0301}{\TODO[Invalid symbol.]} 

    \DeclareFieldFormat*{citetitle}{\emph{#1}} 
    \DeclareCiteCommand{\citetitle}
        {\boolfalse{citetracker}%
        \boolfalse{pagetracker}%
        \usebibmacro{prenote}}
        {\ifciteindex
            {\indexfield{indextitle}}
            {}%
        \printtext[bibhyperref]{\printfield[citetitle]{labeltitle}}}
        {\multicitedelim}
        {\usebibmacro{postnote}}
    \DeclareCiteCommand*{\citeauthor}
        {\defcounter{maxnames}{99}%
        \defcounter{minnames}{99}%
        \defcounter{uniquename}{2}%
        \boolfalse{citetracker}%
        \boolfalse{pagetracker}%
        \usebibmacro{prenote}}
        {\ifciteindex{\indexnames{labelname}}{}%
        \printnames{labelname}}
        {\multicitedelim}
        {\usebibmacro{postnote}}

    \usepackage{calc} 
    \usepackage{tabularray} 
    \usepackage{ebproof} 
    \usepackage{forest} 
    \usepackage{adjustbox} 
    \usepackage{float} 
    \usepackage{stfloats}

    \makeatletter
    \ifarticle{
        \@ifclassloaded{amsart}{
        \makeatletter
        \def\subsection{\@startsection{subsection}{2}%
          \z@{.5\linespacing\@plus.7\linespacing}{.5\baselineskip}%
          {\normalfont\centering\bfseries}}
        \makeatother
        }{}

        \makeatletter\def\@makefntext{\indent\@makefnmark}\makeatletter

        \xpretocmd{\@adminfootnotes}{\let\@makefntext\BHFN@OldMakefntext}{}{}
        \xpatchcmd{\@maketitle}{\let\@makefnmark\relax}{\let\@makefnmark\no@makefnmark}{}{}
        \def\no@makefnmark{}
        \renewcommand\@makefntext[1]{%
          \ifx\@makefnmark\no@makefnmark
            \BHFN@OldMakefntext{#1}%
          \else
            \renewcommand\@makefnmark{%
            \mbox{%
                \textsuperscript{%
                \normalfont
                \hyperref[\BackrefFootnoteTag]{\@thefnmark}%
                }%
            }\,%
            }%
            \BHFN@OldMakefntext{#1}%
          \fi
        }

        \LetLtxMacro{\BHFN@Old@footnotemark}{\@footnotemark}
        \renewcommand*{\@footnotemark}{%
            \refstepcounter{BackrefHyperFootnoteCounter}%
            \xdef\BackrefFootnoteTag{bhfn:\theBackrefHyperFootnoteCounter}%
            \label{\BackrefFootnoteTag}%
            \BHFN@Old@footnotemark
        }

        \makeatletter
        \def\paragraph{\@startsection{paragraph}{4}%
          \z@\z@{-\fontdimen2\font}%
          {\normalfont\bfseries}}
        \makeatother

        \makeatletter
        \def\subsection{\@startsection{subsection}{2}%
          \z@{.5\linespacing\@plus.7\linespacing}{.5\baselineskip}%
          {\normalfont\centering\bfseries}}
        \makeatother
    }{}
    \makeatother

    \ifarticle{
        \theoremstyle{plain}
        \ifbook{
            \newkeytheorem{theorem}[name=Theorem, parent=chapter, refname={theorem, theorems}, Refname={Theorem, Theorems}]
        }{
            \newkeytheorem{theorem}[name=Theorem, parent=section, refname={theorem, theorems}, Refname={Theorem, Theorems}]
        }

        \newkeytheorem{theorem*}[name=Theorem, numbered=false]

        \newkeytheorem{corollary}[name=Corollary, sibling=theorem, refname={corollary, corollaries}, Refname={Corollary, Corollaries}]
        \newkeytheorem{lemma}[name=Lemma, sibling=theorem, refname={lemma, lemmas}, Refname={Lemma, Lemmas}]
        \newkeytheorem{proposition}[name=Proposition, sibling=theorem, refname={proposition, propositions}, Refname={Proposition, Propositions}]

        \theoremstyle{definition}

        \newkeytheorem{axiom}[name=Axiom, sibling=theorem, qed=$\lrcorner$, refname={axiom, axioms}, Refname={Axiom, Axioms}]
        \newkeytheorem{conjecture}[name=Conjecture, sibling=theorem, qed=$\lrcorner$, refname={conjecture, conjectures}, Refname={Conjecture, Conjectures}]
        \newkeytheorem{construction}[name=Construction, sibling=theorem, qed=$\lrcorner$, refname={construction, constructions}, Refname={Construction, Constructions}]
        \newkeytheorem{definition}[name=Definition, sibling=theorem, qed=$\lrcorner$, refname={definition, definitions}, Refname={Definition, Definitions}]
        \newkeytheorem{example}[name=Example, sibling=theorem, qed=$\lrcorner$, refname={example, examples}, Refname={Example, Examples}]
        \newkeytheorem{notation}[name=Notation, sibling=theorem, qed=$\lrcorner$]
        \newkeytheorem{note}[name=Note, sibling=theorem, qed=$\lrcorner$, refname={note, notes}, Refname={Note, Notes}]
        \newkeytheorem{remark}[name=Remark, sibling=theorem, qed=$\lrcorner$, refname={remark, remarks}, Refname={Remark, Remarks}]

        \newenvironment{sketch}{\proof}{\endproof}

        \AtBeginEnvironment{axiom}{%
            \zcsetup{countertype={enumi=axiom}}
            \setlist[enumerate,1]{ref={\csname theaxiom\endcsname.(\arabic*)}}
            \setlist[enumerate,2]{ref={\theaxiom.(\arabic*).(\alph*)}}
        }

        \AtBeginEnvironment{conjecture}{%
            \zcsetup{countertype={enumi=conjecture}}
            \setlist[enumerate,1]{ref={\csname theconjecture\endcsname.(\arabic*)}}
            \setlist[enumerate,2]{ref={\theconjecture.(\arabic*).(\alph*)}}
        }

        \AtBeginEnvironment{construction}{%
            \zcsetup{countertype={enumi=construction}}
            \setlist[enumerate,1]{ref={\csname theconstruction\endcsname.(\arabic*)}}
            \setlist[enumerate,2]{ref={\theconstruction.(\arabic*).(\alph*)}}
        }

        \AtBeginEnvironment{corollary}{%
            \zcsetup{countertype={enumi=corollary}}
            \setlist[enumerate,1]{ref={\csname thecorollary\endcsname.(\arabic*)}}
            \setlist[enumerate,2]{ref={\thecorollary.(\arabic*).(\alph*)}}
        }

        \AtBeginEnvironment{definition}{%
            \zcsetup{countertype={enumi=definition}}
            \setlist[enumerate,1]{ref={\csname thedefinition\endcsname.(\arabic*)}}
            \setlist[enumerate,2]{ref={\thedefinition.(\arabic*).(\alph*)}}
        }

        \AtBeginEnvironment{example}{%
            \zcsetup{countertype={enumi=example}}
            \setlist[enumerate,1]{ref={\csname theexample\endcsname.(\arabic*)}}
            \setlist[enumerate,2]{ref={\theexample.(\arabic*).(\alph*)}}
        }

        \AtBeginEnvironment{lemma}{%
            \zcsetup{countertype={enumi=lemma}}
            \setlist[enumerate,1]{ref={\csname thelemma\endcsname.(\arabic*)}}
            \setlist[enumerate,2]{ref={\thelemma.(\arabic*).(\alph*)}}
        }

        \AtBeginEnvironment{notation}{%
            \zcsetup{countertype={enumi=notation}}
            \setlist[enumerate,1]{ref={\csname thenotation\endcsname.(\arabic*)}}
            \setlist[enumerate,2]{ref={\thenotation.(\arabic*).(\alph*)}}
        }

        \AtBeginEnvironment{note}{%
            \zcsetup{countertype={enumi=note}}
            \setlist[enumerate,1]{ref={\csname thenote\endcsname.(\arabic*)}}
            \setlist[enumerate,2]{ref={\thenote.(\arabic*).(\alph*)}}
        }

        \AtBeginEnvironment{proposition}{%
            \zcsetup{countertype={enumi=proposition}}
            \setlist[enumerate,1]{ref={\csname theproposition\endcsname.(\arabic*)}}
            \setlist[enumerate,2]{ref={\theproposition.(\arabic*).(\alph*)}}
        }

        \AtBeginEnvironment{remark}{%
            \zcsetup{countertype={enumi=remark}}
            \setlist[enumerate,1]{ref={\csname theremark\endcsname.(\arabic*)}}
            \setlist[enumerate,2]{ref={\theremark.(\arabic*).(\alph*)}}
        }

        \AtBeginEnvironment{theorem}{%
            \zcsetup{countertype={enumi=theorem}}
            \setlist[enumerate,1]{ref={\csname thetheorem\endcsname.(\arabic*)}}
            \setlist[enumerate,2]{ref={\thetheorem.(\arabic*).(\alph*)}}
        }

        \newcommand{\qedshift}{\vspace*{-\baselineskip}}
    }{}

    \ExplSyntaxOn
    \NewDocumentCommand{\mathcommand}{mO{0}m}
     {
      \exp_args:Nc \NewCommandCopy {khue_kept_\cs_to_str:N #1} { #1 }
      \exp_args:Nc \newcommand {khue_new_\cs_to_str:N #1}[#2]{#3}
      \DeclareDocumentCommand {#1} {}
       {
        \mode_if_math:TF
         {
          \use:c {khue_new_\cs_to_str:N #1}
         }
         {
          \use:c {khue_kept_\cs_to_str:N #1}
         }
       }
     }
    \ExplSyntaxOff

    \newenvironment{iffseq}{%
        \global\let\externaldblbackslash\\
        \[\begin{array}{cl}
        \ifundef{\internaldblbackslash}{%
            \global\let\internaldblbackslash\\%
            \gdef\\{\internaldblbackslash\cmidrule{1-1}\morecmidrules\cmidrule{1-1}}%
        }{}
    }{%
        \end{array}\]
        \global\undef\internaldblbackslash
        \global\let\\\externaldblbackslash
    }

    \newsavebox\tikzcdbox

    \mathcommand{\h}{\textup{-}}

    \newcommand{\tx}{\mathrm}
    \mathcommand{\b}{\mathbf}
    \newcommand{\s}{\mathsf}
    \newcommand{\cl}{\mathcal}
    \mathcommand{\bb}{\mathbb}
    \DeclareMathAlphabet{\bbn}{U}{bbold}{m}{n}
    \newcommand{\dc}[1]{\TextOrMath{double category\xspace#1}{\b{\bb#1}}}
    
    \mathcommand{\sf}{\mathsf}
    \mathcommand{\u}{\underline}
    \mathcommand{\o}{\overline}

    \newcommand{\TODO}[1][TODO]{\textcolor{orange}{\textup{#1}}\xspace}

    \newcommand{\flip}[1]{\text{\rotatebox[origin=c]{-180}{$#1$}}}
    
    \newcommand{\datetoday}{\date{\cleanlookdateon\today}}

    \newcommand{\defeq}{\mathrel{:=}}
    \mathcommand{\d}{\mathbin{;}}
    \mathcommand{\c}{\circ}
    \newcommand{\ph}[1][]{{({-}_{#1})}}
    
    \newcommand{\iso}{\cong}
    \renewcommand{\equiv}{\simeq}
    \newcommand{\biequiv}{\sim}

    \newcommand{\from}{\leftarrow}
    \newcommand{\xto}{\xrightarrow}
    
    \newcommand{\tto}{\Rightarrow}

    \newcommand{\ffto}{\hookrightarrow}
    
    \newcommand{\epito}{\twoheadrightarrow}
    
    \newcommand*\cocolon{%
            \nobreak
            \mskip6mu plus1mu
            \mathpunct{}%
            \nonscript
            \mkern-\thinmuskip
            {:}%
            \mskip2mu
            \relax
    }

    \makeatletter
    \def\slashedarrowfill@#1#2#3#4#5{%
    $\m@th\thickmuskip0mu\medmuskip\thickmuskip\thinmuskip\thickmuskip
    \relax#5#1\mkern-7mu%
    \cleaders\hbox{$#5\mkern-2mu#2\mkern-2mu$}\hfill
    \mathclap{#3}\mathclap{#2}%
    \cleaders\hbox{$#5\mkern-2mu#2\mkern-2mu$}\hfill
    \mkern-7mu#4$%
    }
    \def\rightslashedarrowfill@{%
    \slashedarrowfill@\relbar\relbar\mapstochar\rightarrow}
    \newcommand\xslashedrightarrow[2][]{%
    \ext@arrow 0055{\rightslashedarrowfill@}{#1}{#2}}
    \def\leftslashedarrowfill@{%
    \slashedarrowfill@\leftarrow\relbar\mapsfromchar\relbar}
    \newcommand\xslashedleftarrow[2][]{%
    \ext@arrow 0055{\leftslashedarrowfill@}{#1}{#2}}
    \def\rightdoubleslashedarrowfill@{%
    \slashedarrowfill@\relbar\relbar{\mapstochar\mkern-2mu\mapstochar}\rightarrow}
    \newcommand\xdoubleslashedrightarrow[2][]{%
    \ext@arrow 0055{\rightdoubleslashedarrowfill@}{#1}{#2}}
    \makeatother

    \newcommand{\xlto}{\xslashedrightarrow}
    \newcommand{\lto}{\xlto{}}
    \newcommand{\xlfrom}{\xslashedleftarrow}

    \newcommand{\op}{{}^\tx{op}}
    \newcommand{\co}{{}^\tx{co}}

    \newcommand{\tp}[1]{\langle#1\rangle}
    
    \DeclareMathOperator*{\colim}{colim}

    \newcommand{\rf}{\mathbin{\blacktriangleleft}}
    
    \newcommand{\rx}{\mathbin{\blacktriangleright}}
    \newcommand{\plx}{\mathbin{{\rhd}\mathclap{\mspace{-17.5mu}\cdot}}}
    \newcommand{\prx}{\mathbin{{\blacktriangleright}\mathclap{\mspace{-17.5mu}\textcolor{white}{\cdot}}}}

    \newcommand{\adj}{\dashv}
    
    \newcommand{\radj}[1]{\mathrel{\adj_{#1}}}

    \newcommand{\ob}[1]{|#1|}

    \DeclareFontFamily{U}{min}{}
    \DeclareFontShape{U}{min}{m}{n}{<-> udmj30}{}
    \newcommand{\yo}{\!\text{\usefont{U}{min}{m}{n}\symbol{'210}}\!}

    \mathcommand{\comma}{\downarrow}

    \newcommand{\copi}{\flip\pi}
    
    \newsavebox{\whitecircstar}\sbox{\whitecircstar}{\kern.075em\tikz{\node[draw, circle,line width=.36pt, inner sep=0]{$*$};}\kern.075em}
    \newcommand{\ostar}{\mathbin{\scalerel*{\usebox{\whitecircstar}}{\odot}}}
    \newsavebox{\blackcircstar}\sbox{\blackcircstar}{\kern.075em\tikz{\node[fill, circle, line width=.36pt, inner sep=0, text=white]{$*$};}\kern.075em}
    \newcommand{\bulletstar}{\mathbin{\scalerel*{\usebox{\blackcircstar}}{\odot}}}

    \newcommand{\copow}{\cdot}

    \makeatletter
    \def\widebreve{\mathpalette\wide@breve}
    \def\wide@breve#1#2{\sbox\z@{$#1#2$}%
         \mathop{\vbox{\m@th\ialign{##\crcr
    \kern0.08em\brevefill#1{0.8\wd\z@}\crcr\noalign{\nointerlineskip}%
                        $\hss#1#2\hss$\crcr}}}\limits}
    \def\brevefill#1#2{$\m@th\sbox\tw@{$#1($}%
      \hss\resizebox{#2}{\wd\tw@}{\rotatebox[origin=c]{90}{\upshape(}}\hss$}
    \makeatother

    \makeatletter
    \def\widearch{\mathpalette\wide@arch}
    \def\wide@arch#1#2{\sbox\z@{$#1#2$}%
         \mathop{\vbox{\m@th\ialign{##\crcr
    \kern0.08em\archfill#1{0.8\wd\z@}\crcr\noalign{\nointerlineskip}%
                        $\hss#1#2\hss$\crcr}}}\limits}
    \def\archfill#1#2{$\m@th\sbox\tw@{$#1($}%
      \hss\resizebox{#2}{\wd\tw@}{\rotatebox[origin=c]{-90}{\upshape(}}\hss$}
    \makeatother

    \NewDocumentCommand{\jrule}{om}{%
        \IfNoValueTF{#1}
            {\textsc{#2}}
            {$#1$-\textsc{#2}}%
    }

    \newcommand{\Set}{{\b{Set}}}
    
    \newcommand{\V}{{\bb V}} 
    \newcommand{\Cat}{\b{Cat}}
    \newcommand{\CAT}{\b{CAT}}
    \newcommand{\VCat}{{\V\h\Cat}}
    
    \newcommand{\Mnd}{\b{Mnd}}
    \newcommand{\RMnd}{\b{RMnd}}

    \newcommand{\ff}{fully faithful}
    \newcommand{\Ff}{Fully faithful}
    \newcommand{\ffness}{full faithfulness}

    \newcommand{\ioo}{identity-on-objects}

    \newcommand{\EM}{Eilenberg--Moore}

    \newcommand{\eg}{e.g.\@\xspace}
    \newcommand{\ie}{i.e.\@\xspace}
    \newcommand{\viz}{viz.\@\xspace}
    \newcommand{\cf}{cf.\@\xspace}
    
    \newcommand{\aka}{a.k.a.\@\xspace}
    \NewDocumentCommand{\etc}{t.}{etc.\@\xspace}
    \NewDocumentCommand{\ibid}{t.}{ibid.\@\xspace}
    \NewDocumentCommand{\loccit}{t.}{loc.\ cit.\@\xspace}

    \newcommand{\dcs}{double categories\xspace}
    \newcommand{\vd}{virtual double}
    \newcommand{\vdc}{\vd{} category}
    \newcommand{\vdcs}{\vd{} categories}
    
    \newcommand{\ve}{virtual equipment}
    \newcommand{\vb}{virtual bicategory}
    \newcommand{\vbs}{virtual bicategories}
    \newcommand{\fct}{formal category theory}

    \newcommand{\lfp}{locally finitely presentable}

\makeatletter

\define@key{beamerframe}{c}[true]{
    \beamer@frametopskip=0pt plus 1fill\relax%
    \beamer@framebottomskip=0pt plus 1fill\relax%
}

\patchcmd{\beamer@sectionintoc}{\vfill}{\vskip\itemsep}{}{}

\makeatother

\ifarticle{}{

  \colorlet{colour-bg}{black!85} 
  \definecolor{colour-primary}{HTML}{cc80ff} 
  \colorlet{colour-text}{black!10} 
  \colorlet{colour-subtle}{black!40} 
  \colorlet{colour-block-bg}{black!80} 
  \definecolor{colour-warning-bg}{HTML}{ffea80} 
  \definecolor{colour-warning-primary}{HTML}{e08152} 

  \hypersetup{linktoc=all,colorlinks, citecolor={colour-primary}, linkcolor={colour-primary}, urlcolor={colour-primary}} 
  \urlstyle{sf}

  \usepackage{changepage} 

  \usepackage{ragged2e} 

  \setbeamertemplate{section in toc}[sections numbered]

  \apptocmd{\frame}{}{\justifying}{}

  \usefonttheme[onlymath]{serif}

  \beamertemplatenavigationsymbolsempty
  \setbeamertemplate{footline}{
    \hfill%
    \usebeamercolor[fg]{page number in head/foot}%
    \usebeamerfont{page number in head/foot}%
    \setbeamertemplate{page number in head/foot}[pagenumber]%
    \usebeamertemplate*{page number in head/foot}\kern.2cm\vskip.2cm%
  }

  \setbeamertemplate{frametitle}[default][center]
  \addtobeamertemplate{frametitle}{\vspace*{1ex}}{\vspace*{1ex}}

  \setbeamertemplate{itemize item}{$\bullet$}

  \setbeamertemplate{theorems}[numbered]
  \newtheorem{proposition}[theorem]{\translate{Proposition}}

  \addtobeamertemplate{block begin}
  {}
  {\vspace{0ex} 
  \begin{adjustwidth}{1em}{1em} 
  \tikzcdset{background color=colour-block-bg}
  }
  \addtobeamertemplate{block end}{\end{adjustwidth}%
  \vspace{1ex}} 
  {}
  \renewenvironment<>{block}[1]{%
      \begin{actionenv}#2%
        \def\insertblocktitle{\begin{adjustwidth}{.8em}{.8em}#1\end{adjustwidth}}%
        \par%
        \usebeamertemplate{block begin}}
      {\par%
        \usebeamertemplate{block end}%
      \end{actionenv}}

  \addtobeamertemplate{block example begin}
  {}
  {\vspace{0ex} 
  \begin{adjustwidth}{1em}{1em} 
  \tikzcdset{background color=colour-block-bg}
  }
  \addtobeamertemplate{block example end}{\end{adjustwidth}%
  \vspace{1ex}} 
  {}
  \renewenvironment<>{exampleblock}[1]{%
      \begin{actionenv}#2%
          \def\insertblocktitle{\begin{adjustwidth}{.8em}{.8em}#1\end{adjustwidth}}%
          \par%
          \only<presentation>{
            \setbeamercolor{local structure}{parent=example text}}%
          \usebeamertemplate{block example begin}}
        {\par%
          \usebeamertemplate{block example end}%
        \end{actionenv}}

    \setbeamercolor{background canvas}{bg=colour-bg}
    \tikzcdset{background color=colour-bg}

    \setbeamercolor{block title}{fg=colour-primary,bg=colour-block-bg}
    \setbeamercolor{block title example}{fg=colour-primary,bg=colour-block-bg}
    \setbeamercolor{block body}{bg=colour-block-bg}
    \setbeamercolor{block body example}{bg=colour-block-bg}

    \setbeamercolor{title}{fg=colour-primary}
    \setbeamercolor{frametitle}{fg=colour-primary}
    \setbeamercolor{alerted text}{fg=colour-primary}

    \setbeamercolor{normal text}{fg=colour-text}
    \setbeamercolor{item}{fg=colour-text}
    \setbeamercolor{section in toc}{fg=colour-text}

    \setbeamercolor{page number in head/foot}{fg=colour-subtle}

    \setbeamercolor*{bibliography entry author}{fg=colour-text}
    \setbeamercolor*{bibliography entry note}{fg=colour-text}

}

\usetikzlibrary{nfold}
\IfFileExists{tangle.sty}{\usepackage{tangle}}{}
\usepackage{bbm}
\usepackage{wasysym}
\usepackage{suffix}

\newcommand{\X}{\bb X}
\newcommand{\tX}{\u\X}
\renewcommand{\Cat}{\dc{Cat}}
\newcommand{\D}{\bb D}

\newcommand{\Y}{\bb Y}

\newcommand{\E}{\cl E}
\newcommand{\K}{\cl K}
\newcommand{\I}{\s I}

\newcommand{\lh}[1]{\llbracket #1 \rrbracket}
\mathcommand{\L}{\textup{\sagittarius}}

\newcommand{\jAE}{j \colon A \to E}

\newcommand{\jadj}{\radj j}
\newcommand{\ljr}{\ell \jadj r}

\newcommand{\cp}[1]{\mathbin{\smallsmile_{#1}}}
\newcommand{\pc}[1]{\mathbin{\smallfrown_{#1}}}

\newcommand{\VcoCat}{\V\co\h\Cat}

\newcommand{\Kl}{\b{Kl}}

\newcommand{\wc}{\ostar}
\newcommand{\wl}{\bulletstar}

\renewcommand{\defeq}{\mathrel{:=}}
\newcommand{\opcart}{\tx{opcart}}
\newcommand{\odotl}{\mathbin{{\odot}_L}}
\newcommand{\odotr}{\mathbin{{\odot}_R}}
\newcommand{\cart}{\tx{cart}}

\renewcommand{\EM}{\relax\ifmmode\b{EM}\else{}Eilenberg\nobreakdash--Moore\fi}

\newcommand{\ex}{\underline}

\newcommand{\rfP}[3]{\mathcal{P}{#1}(#2, #3)}

\mathcommand{\P}{\cl P}

\makeatletter
\@ifpackageloaded{tangle}{
\tgColours{F5A3A3,F5CCA3,F5F5A3,CCF5A3,A3F5A3,A3F5CC,A3F5F5,A3CCF5,A3A3F5,CCA3F5,F5A3F5,F5A3CC}

}{}
\makeatother

\newcounter{eqstep}
\newcounter{eqsubstep}[eqstep]
\newcommand{\nexteqstep}{\stepcounter{eqstep}}

\makeatletter
\newcommand{\raisedtarget}[1]{\Hy@raisedlink{\hypertarget{#1}{}}}
\makeatother

\newcommand{\tangleeq}{\refstepcounter{eqsubstep}\raisedtarget{eqstep:\theeqstep.\theeqsubstep}{\mathclap{\overset{(\theeqstep.\theeqsubstep)}{=}}}}

\WithSuffix\newcommand\tangleeq*{\mathclap{=}}

\newcommand{\tangleeql}{\refstepcounter{eqsubstep}\raisedtarget{eqstep:\theeqstep.\theeqsubstep}{\mathllap{\overset{(\theeqstep.\theeqsubstep)}{=}}}}

\WithSuffix\newcommand\tangleeql*{\mathllap{=}}

\newenvironment{tangleeqs*}{%
    \mathcommand{\=}{\tangleeq*}
    \global\let\externaldblbackslash\\
    \csname gather*\endcsname
    \ifundef{\internaldblbackslash}{%
        \global\let\internaldblbackslash\\%
        \gdef\\{\internaldblbackslash\tangleeql*}%
    }{}
}{%
    \csname endgather*\endcsname
    \global\undef\internaldblbackslash
    \global\let\\\externaldblbackslash
}

\bibliography{references}

\DeclareSymbolFont{tipa}{T3}{cmr}{m}{n}
\DeclareMathAccent{\arch}{\mathalpha}{tipa}{16}
\DeclareFontFamily{U}{mathx}{}
\DeclareFontShape{U}{mathx}{m}{n}{<-> mathx10}{}
\DeclareSymbolFont{mathx}{U}{mathx}{m}{n}
\DeclareMathAccent{\widehat}{0}{mathx}{"70}
\DeclareMathAccent{\widecheck}{0}{mathx}{"71}

\newcommand{\opcartl}{\opcart_l}
\newcommand{\lcto}{\lto \cdots \lto}

\newcommand{\A}{{\b A}}
\newcommand{\B}{{\b B}}
\newcommand{\C}{{\b C}}
\renewcommand{\D}{{\b D}}
\renewcommand{\E}{{\b E}}
\newcommand{\M}{{\b M}}
\newcommand{\Q}{{\b Q}}
\newcommand{\W}{{\b W}}
\newcommand{\Sind}{\b{Sind}}
\newcommand{\Ind}{\b{Ind}}
\newcommand{\Coc}{\h\b{Coc}}
\newcommand{\Span}{{\b{Span}}}

\newcommand{\PP}{\dc P}
\newcommand{\QQ}{\dc Q}
\newcommand{\J}{{\cl J}}

\newcommand{\colimequiv}{\mathrel{\hat\approx}}
\newcommand{\colimleq}{\mathrel{\hat\lesssim}}
\newcommand{\limequiv}{\mathrel{\check\approx}}
\newcommand{\limleq}{\mathrel{\check\lesssim}}
\newcommand{\repl}{^{{\iso}}}
\newcommand{\satw}{^{{\ostar}}}
\newcommand{\limclosure}[2]{#1^{{\wl}\h\tx{cl}}_{#2}}
\newcommand{\colimclosure}[2]{#1^{{\wc}\h\tx{cl}}_{#2}}

\newcommand{\psh}[2]{{\widehat{#1}}[{#2}]}
\newcommand{\pshe}[2]{{\hat{#1}}_{#2}}

\newcommand{\coc}[2]{{\widehat{#1}}(#2)}
\newcommand{\coce}[2]{{\hat{#1}}_{#2}}
\newcommand{\pshm}{\arch}
\newcommand{\widepshm}{\widearch}

\newcommand{\cpsh}[2]{{\widecheck{#1}}[{#2}]}
\newcommand{\cpshe}[2]{{\check{#1}}_{#2}}

\newcommand{\com}[2]{{\check{#1}}(#2)}
\newcommand{\come}[2]{{\check{#1}}_{#2}}
\newcommand{\cpshm}{\breve}
\newcommand{\widecpshm}{\widebreve}

\theoremstyle{plain}

\newtheorem{manualtheoreminner}{Theorem}
\newenvironment{manualtheorem}[1]{%
  \renewcommand\themanualtheoreminner{#1}%
  \manualtheoreminner
}{\endmanualtheoreminner}

\newtheorem{manualpropositioninner}{Proposition}
\newenvironment{manualproposition}[1]{%
  \renewcommand\themanualpropositioninner{#1}%
  \manualpropositioninner
}{\endmanualpropositioninner}

\theoremstyle{definition}

\newtheorem{manualdefinitioninner}{Definition}
\newenvironment{manualdefinition}[1]{%
  \renewcommand\themanualdefinitioninner{#1}%
  \manualdefinitioninner
}{\endmanualdefinitioninner}

\foreach \env in {manualdefinition}{
\AtBeginEnvironment{\env}{%
    \pushQED{\qed}%
}
\AtEndEnvironment{\env}{\popQED\endexample}
}

\title{Presheaves and cocompletions in formal category theory}
\author{Nathanael Arkor}
\address{Department of Software Science, Tallinn University of Technology, Estonia}
\author{Dylan McDermott}
\address{Department of Computer Science, University of Oxford, United Kingdom}
\datetoday

\subjclass[2020]{18D70,18D65,18D60,18A30,18A35,18F20,18D20,18N10}

\hypersetup{pdfauthor={Nathanael Arkor, Dylan McDermott}}

\begin{document}

\begin{abstract}
  We study the relationship between presheaf constructions and free cocompletions in the context of formal category theory, elucidating the coincidence between the two concepts in familiar settings. We show that, in a \ve{} satisfying mild assumptions, free cocompletions under classes of weights are exhibited by presheaf constructions. We furthermore extend the theory of weighted colimits from enriched category theory to this setting, developing the concepts of atomicity and rank, and providing recognition theorems for presheaf objects, free cocompletions, and cocomplete objects. As an application of our methods, we construct free cocompletions, under arbitrary classes of colimit-small weights, of (possibly large) categories enriched in (not necessarily symmetric) monoidal categories and bicategories; this resolves a longstanding omission in the literature on enriched category theory.
\end{abstract}

\maketitle

\tableofcontents

\section{Introduction}
\label{introduction}

The construction of the category of presheaves $\widehat{\A} \defeq \Set^{\A\op}$ on a locally small category $\A$ is perhaps the most fundamental construction in category theory. When $\A$ is small, the presheaf construction is characterised by two important universal properties.
\begin{enumerate}
  \item It classifies distributors (\aka{} profunctors or bimodules), in the sense that functors $\A \to \Set^{\B\op}$ are in natural bijection with distributors $\A \lto \B$.
  \item It is the free cocompletion of $\A$, in that (a) $\widehat{\A}$ admits all small colimits, and (b) functors $\A \to \b X$, for $\b X$ admitting small colimits, are in pseudo natural equivalence with functors $\widehat{\A} \to \b X$ that preserve small colimits.
\end{enumerate}

When $\A$ is large, the situation is more subtle. While $\widehat{\A}$ still classifies distributors, it no longer exhibits the free cocompletion of $\A$. However, we may obtain the free cocompletion by restricting $\widehat{\A}$ to the full subcategory spanned by the \emph{small presheaves}, \ie{} the small colimits of representable presheaves. More generally, for any class $\Phi$ of \emph{colimit-small weights} (here meaning a class of distributors satisfying an appropriate size condition), corresponding to a class of colimit shapes, the free cocompletion of $\A$ under $\Phi$-weighted colimits may be obtained as a full subcategory of $\widehat{\A}$ by restricting to the $\Phi$-weighted colimits of representable presheaves.

For enriched categories, there is a similar story, though additional care is required. For one, given an arbitrary monoidal category $\b V$ and a $\b V$-enriched category $\A$, it may not be possible to form the $\b V$-category of presheaves $\widehat{\A}$, even assuming $\A$ is small. For another, even supposing we may form $\widehat{\A}$, it is not necessarily constructed as a $\b V$-category of contravariant $\b V$-functors from $\A$ to $\b V$ (indeed, for $\b V$ to itself form a $\b V$-category requires $\b V$ to be closed, and to construct opposites of $\b V$-categories requires some form of symmetry on $\b V$). However, as long as $\b V$ is sufficiently nice\footnotemark{}, it is nevertheless again true that we may form free cocompletions (and, more generally, $\Phi$-cocompletions) of $\b V$-categories by restricting to subcategories of $\widehat{\A}$~\cite{kelly1982basic}.
\footnotetext{For instance, \textcite{kelly1982basic} assumes that $\b V$ is a complete and cocomplete symmetric closed monoidal category, and we will show in \cref{presheaves-and-cocompletions-in-V-Cat} that these assumptions can be substantially weakened.}%

This intimate relationship between presheaf constructions and free cocompletions is highly useful in practice, as it gives a simple concrete construction of an a priori entirely abstract object. Furthermore, the knowledge that free cocompletions are exhibited by categories of presheaves allows one to deduce important properties of free cocompletions that are not evidently derivable merely from their universal property. It is therefore desirable to understand how presheaf constructions relate to free cocompletions in general, permitting these techniques to be extended to settings beyond that of categories enriched in a nice monoidal category.

To explain what we mean by \emph{presheaf construction} and \emph{free cocompletion} in general, we recall the philosophy of \emph{\fct}~\cite{gray1974formal,street1978yoneda,wood1982abstract}. Category theory provides an expressive language with which to describe totalities of structures and their relationships. Formal category theory arises when we apply the principles of category theory to category theory itself, studying the totalities of categories -- as well as variants such as enriched categories, internal categories, and fibred categories -- using (two-dimensional) category theoretic techniques. While several frameworks for \fct{} have been proposed, in recent years the formalism of \emph{\ve s} has emerged as a particularly apposite setting in which to carry out \fct~\cite{cruttwell2010unified,arkor2024formal,arkor2024relative,arkor2025nerve,koudenburg2024formal}.

A \ve{} is a two-dimensional structure that axiomatises the totality of (large, enriched) categories, functors, distributors, and natural transformations. A \ve{} has two kinds of morphism: \emph{tight-cells} (denoted $\to$), which axiomatise functors; and \emph{loose-cells} (denoted $\lto$), which axiomatise distributors. They thus extend the concept of a 2-category by regarding distributors also as a primitive notion. It is possible to define notions of \emph{presheaf object} and \emph{weighted colimit} in a \ve{}~\cite{arkor2025nerve,arkor2024formal,koudenburg2024formal}, which recover the usual notions of enriched category theory when instantiated in appropriate \ve s of enriched categories. We may therefore ask to what extent the relationship between presheaf constructions and free cocompletions, known classically to hold for categories enriched in nice monoidal categories, holds purely formally.

In this paper, we introduce the notions of \emph{$\PP$-presheaf object} and \emph{$\Phi$-cocompletion} in a \ve{}, for $\PP$ a class of loose-cells and $\Phi$ a class of weights, and study their relationship. In doing so, we extend much of the classical theory of presheaves and cocompletions -- such as atomicity, rank, and recognition theorems -- from enriched categories to formal category theory. Our central result, \cref{presheaf-object-is-cocompletion}, gives a construction of $\Phi$-cocompletions via presheaf objects, establishing that the classical relationship between presheaves and cocompletions is not specific to enriched categories, but holds for entirely formal reasons.

To demonstrate the utility of our development, we apply our results in the setting of categories enriched in a \emph{normal \vb} (a structure simultaneously generalising a unital multicategory and a bicategory). While the theory of categories enriched in a monoidal category is well developed~\cite{kelly1982basic}, the theory of categories enriched in multicategories~\cite{lambek1969deductive} or in bicategories~\cite{walters1981sheaves,street1983enriched} is much less mature, despite the abundance of motivating examples~\cite{walters1982sheaves,betti1983variation,garner2014topological,garner2018enriched}. For a normal \vb{} $\V$ satisfying some mild assumptions, we show that it is possible to construct $\Phi$-cocompletions of $\V$-enriched categories via enriched presheaf categories. In particular, when $\V$ is a bicategory, we have the following.

\begin{theorem*}[{\ref{cocompletion-in-nice-situation}}]
  Let $\V$ be a locally complete and locally cocomplete bicategory with right lifts and let $\Phi$ be a class of colimit-small weights. For every $\V$-category $\A$ (not assumed to be small), the free cocompletion of $\A$ under $\Phi$-weighted colimits exists, and is given by the largest sub-$\V$-category of the $\V$-category of small presheaves on $\A$ that is closed under $\Phi$-weighted colimits and representable presheaves.
\end{theorem*}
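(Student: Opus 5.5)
The plan is to deduce this from the formal machinery developed in the body of the paper. The key tool is \cref{presheaf-object-is-cocompletion}: in a \ve{} satisfying mild assumptions, free $\Phi$-cocompletions are exhibited by presheaf objects. We apply it to the \ve{} $\V\h\Cat$ of (large) $\V$-categories, $\V$-functors, $\V$-distributors and $\V$-natural transformations.

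The first step is to verify that the hypotheses on $\V$ make $\V\h\Cat$ a \ve{} with the required structure. Loose composites of $\V$-distributors $\A \lto \B \lto \C$ are computed as coends $\int^{b} Q(b,-) \circ P(-,b)$. These are formed in the local hom-categories of $\V$, and exist whenever the coend is small, which is where local cocompleteness enters. Restrictions along $\V$-functors are given by precomposition. Right lifts in $\V$ provide the right extensions and lifts of distributors, computed as ends using local completeness. These are exactly what is needed to define the $\V$-category of presheaves: its hom from $p$ to $q$ is the end $\int_a q(a) \rf p(a)$.

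The second step is to take $\PP$ to be the class of small distributors, namely those expressible as small colimits of representables (equivalently, those with small support in the appropriate sense). We then show that the $\V$-category of small presheaves on $\A$ is a $\PP$-presheaf object for $\A$. The homs exist because smallness of $p$ lets the end defining $\hom(p,q)$ be reduced to a small limit. Colimit-smallness of each weight in $\Phi$ then guarantees that $\Phi \subseteq \PP$, and that $\PP$ is closed under composition with $\Phi$-weights. This is the closure condition demanded by \cref{presheaf-object-is-cocompletion}.

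The third step applies \cref{presheaf-object-is-cocompletion}. It says that the $\Phi$-cocompletion is the closure of the representables inside the $\PP$-presheaf object under $\Phi$-weighted colimits. In other words, it is the smallest full sub-$\V$-category containing the representables and closed under $\Phi$-colimits. The statement calls this the largest such subcategory, presumably with respect to the paper's ordering on sub-objects, or with "largest" referring to the saturation. I would reconcile the two descriptions using the paper's definition of the closure $\colimclosure{\cdot}{\Phi}$.

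The main obstacle is size. $\A$ is large, so the full presheaf $\V$-category need not exist, and we must check carefully that composites and ends involving small presheaves and colimit-small weights exist in a $\V$ that is only locally (small) complete and cocomplete. My first attempt would be to prove a density lemma: every small presheaf is a small colimit of representables. By the Yoneda lemma, ends against such presheaves then reduce to small limits, and composites with colimit-small weights reduce to small colimits.
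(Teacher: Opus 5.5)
The gap is in your third step, which misreads \cref{presheaf-object-is-cocompletion}. That theorem does not say that the $\Phi$-cocompletion is the closure of the representables inside $\psh\PP\A$. Its conclusion is that the \emph{whole} presheaf object, via $\pshe\PP\A \colon \A \to \psh\PP\A$, is the $\Phi$-cocompletion. This rests on its second hypothesis, that $\pi_\A$ lies in the colimit saturation $\Phi\satw$, which you never check. For your choice of $\PP$ (all small presheaves) it is false in general. Otherwise the theorem would make the $\V$-category of small presheaves the $\Phi$-cocompletion for every $\Phi$, whereas for $\Phi = \varnothing$ (or $\Phi$ consisting only of representables) the $\Phi$-cocompletion of $\A$ is $\A$ itself, via $1_\A$. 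The other two hypotheses do hold for small presheaves: closure under left-composites with colimit-small weights, and lifting (\cref{small-presheaf-closure,small-right-lifts-exist}). Both are proved exactly via your density lemma $p \iso \A(1, i) \odotl p(i)$ with $i$ small. But they do not single out the right object; the saturation hypothesis is the only place where $\Phi$ constrains which presheaf object you obtain.

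The missing idea is to apply the theorem to a smaller, intrinsically defined class: the class $\PP$ of $\V$-distributors induced by the colimit closure $\colimclosure{\Phi}{}$ (\cref{closure}), generated from the representables by left-composites with $\Phi$-weights. The key lemma is $\colimclosure{\Phi}{} \colimequiv \Phi$ (\cref{closure-is-colimit-equivalent}). It is proved by induction, using that $\Phi\satw$ is replete, contains the representables, and is closed under concatenation of chains and under left-composites (\cref{saturation-properties}). One then uses \cref{colimit-weighted-by-presheaves} to pass from the presheaves $\pi_\A(1, p)$ to the distributor $\pi_\A$. This supplies the saturation hypothesis; closure under left-composites holds by construction, and lifting holds because each member of $\colimclosure{\Phi}{}$ is lifting. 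That is \cref{closure-is-cocompletion}. The paper applies it after replacing $\Phi$ by a colimit-equivalent class of weights between small $\V$-categories (\cref{colimit-equivalent-small}), so that local (co)completeness and right lifts in $\V$ supply the required left-composites and right lifts (\cref{enriched-cocompletion}). Small presheaves enter only at the end. They form a replete class closed under left-composites with $\Phi$-weights, so \cref{colimclosure-as-full-subcategory} identifies $\psh\PP\A$ with the \emph{smallest} full sub-$\V$-category of small presheaves containing the representables and closed under $\Phi$-colimits. You could instead start from that smallest subcategory and view it as a presheaf object via \cref{sub-V-presheaf-category}. You would still owe the proof that its projection lies in $\Phi\satw$, for instance by showing that the class of its objects lying in $\Phi\satw$ contains the representables and is closed under $\Phi$-colimits, hence exhausts it by minimality. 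Finally, ``largest'' in the statement is simply a slip for ``smallest''. The whole $\V$-category of small presheaves is itself closed under $\Phi$-colimits and contains the representables, so no reinterpretation via orderings or saturation is needed.
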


Despite the historical maturity of bicategory-enrichment, which dates back to the 1980s (originating in \citeauthor{walters1981sheaves}' observation that sheaves may be seen as categories enriched in a bicategory~\cite{walters1981sheaves,walters1982sheaves}), the construction of arbitrary $\Phi$-cocompletions appears to be entirely new; we give a survey of free cocompletions of enriched categories in \cref{historical-context}. In fact, many of the results throughout this paper, which extend the classical theory of colimits and cocompletions for categories enriched in symmetric monoidal categories~\cite{kelly1982basic} to the context of \fct, appear to be new even when instantiated in the setting of enrichment in a non-symmetric monoidal category. Our paper may be thus be seen as a contribution to the theory of non-symmetric monoidal and bicategorical enrichment, of independent interest to our formal development.

In the future, we intend to apply this framework to situations beyond enriched categories. There are numerous examples in the literature of category-like structures that admit notions of presheaf construction and free cocompletion, which satisfy the same kinds of theorems as in enriched category theory, and yet have heretofore been studied on a case-by-case basis (necessarily, as no general theory existed). For instance, in each of the following examples, a notion of cocompletion is constructed for a category-like structure by considering an appropriate notion of presheaf construction.
\begin{itemize}
  \item For internal categories, there are at least two notions of presheaf construction, depending on one's attitude to size. For instance, \textcite{street1980cosmoi,weber2007yoneda} consider exponentials of internal full subcategories (playing the role of an internal `category of sets'), whereas \textcite[\S B2.5]{johnstone2002sketches} considers a presheaf \emph{indexed category} on an internal category.
  \item In \cite{street1981conspectus}, \citeauthor{street1981conspectus} studies the theory of \emph{variable categories}, \viz{} pseudo functors $\K\op \to \b{Cat}$ from a bicategory $\K$. In \S7 -- \S9 \ibid, \citeauthor{street1981conspectus} examines what it means for a variable category to be cocomplete, and constructs the free cocompletion of a variable category under a class of colimits via a presheaf construction~\cite[Theorem~9.3]{street1981conspectus}.
  \item \textcite{im1986universal} show that, for a small monoidal category $\b A$, the convolution monoidal structure on the category of presheaves $\Set^{\A\op}$ exhibits it as the free monoidal cocompletion of $\b A$; this is generalised to pseudo algebras for certain pseudomonads by \textcite{walker2019distributive} (\cf~\cite[\S8]{koudenburg2015double}, in which \citeauthor{koudenburg2015double} develops a formal category theoretic approach to convolution algebraic structures).
  \item In \cite{garner2012lex}, \citeauthor{garner2012lex} develop the theory of \emph{lex colimits}, which is the theory of cocompletions for finitely complete enriched categories, and show that many notions of exactness in category theory are thereby captured.
  \item \textcite{bunge2013tightly,shulman2013enriched} independently introduced the notion of \emph{indexed enriched} (\aka{} \emph{enriched indexed}) categories, providing a simultaneous generalisation of indexed categories and enriched categories. In particular, \citeauthor{shulman2013enriched} exhibits free cocompletions of indexed enriched categories via a presheaf construction.
  \item In \cite{garner2020cocompletion}, \citeauthor{garner2020cocompletion} develop the theory of presheaves and cocompletions for restriction categories, which are axiomatisations of categories of partial functions~\cite{cockett2002restriction}.
\end{itemize}
Our expectation is that these arise, for appropriate choices of \ve, as instances of the formal theory of presheaves and cocompletions we develop herein, permitting the general theory to be applied immediately in these settings and facilitating comparison between them.

\subsection{Outline of the paper}

In \cref{fct}, we recall the definition of \ve{} and develop some of the basic \fct{} that will be used throughout the paper, focusing on the calculus of right lifts. This may be viewed as an extension of the material in \cite[\S3]{arkor2024formal} (though familiarity with that work is not assumed here). In \cref{presheaves-in-a-virtual-equipment}, we introduce the notion of \emph{$\PP$-presheaf object}, for a class $\PP$ of loose-cells in a \ve{}, and study its properties, showing it to satisfy the usual properties satisfied by full subcategories of enriched presheaf categories. In \cref{atomicity}, we introduce the notion of \emph{atomicity}, which generalises the concept of a finitely presentable object in a category, and show how it can be used to characterise presheaf objects. In \cref{rank-etc}, we introduce the notion of \emph{rank}, which generalises the concept of a finitary functor. Rank is fundamental to \cref{cocompleteness-in-a-virtual-equipment}, in which we introduce the concept of a \emph{$\Phi$-cocompletion} in a \ve{} and study its properties, showing it to satisfy the usual properties satisfied by enriched cocompletions. Our formal development culminates in \cref{presheaves-and-cocompletions}, in which we establish sufficient conditions under which presheaf objects in \ve s exhibit free cocompletions. (Naturally, while we have spoken so far only about cocompletions under classes of colimits, the theory of completions under classes of limits is entirely dual: for completeness, we discuss this in \cref{copresheaf-objects-and-completions}.)

To demonstrate the utility of our framework, we use our central result to construct in \cref{presheaves-and-cocompletions-in-V-Cat} free (co)completions, under arbitrary classes of (co)limit-small weights, of categories enriched in normal virtual bicategories. The general theory of sections \ref{fct} -- \ref{copresheaf-objects-and-completions} is then immediately applicable. We conclude in \cref{examples} by giving various examples of cocompletions of categories enriched in bicategories. \Cref{local-(co)completeness} provides some basic material on local completeness and local cocompleteness for \vdcs, which is used in \cref{presheaves-and-cocompletions-in-V-Cat}.

\subsection{Referencing convention}

Throughout, we frequently annotate our results with references to comparable results in standard accounts (\eg{} in \textcite{kelly1982basic}). `Comparable' here is intended loosely: it is typically the case that, even when specialised to the setting of enriched categories, our results are more general than those we reference, and the proofs we give simpler by virtue of the formal setting in which we work. The references are intended rather to make it convenient for readers of those texts to find a formal analogue (or generalisation) of a given result in the setting of ordinary or enriched categories.

\subsection{Related work}

The relationship between distributors, presheaf constructions, and free cocompletions is of historical significance, as it provides the motivation for the three primary approaches to \fct{} in the literature. \citeauthor{street1978yoneda}' formalism of \emph{Yoneda structures}~\cite{street1978yoneda} is an axiomatisation of the presheaf construction. \citeauthor{weber2007yoneda} showed that, under strong assumptions (holding, for instance, for internal categories, but not for enriched categories), if the presheaf objects on small objects in a Yoneda structure are cocomplete, then they form free cocompletions~\cites[Theorem~3.20]{weber2007yoneda}[Remark~7.7]{koudenburg2024formal}. Crucially, cocompleteness is assumed rather than derived (though specific examples are given where this assumption does hold~\cite[\S7]{weber2007yoneda}).

\citeauthor{wood1982abstract}'s formalism of \emph{proarrow equipments}~\cite{wood1982abstract}, as well as the later \emph{\ve s} of \textcite{cruttwell2010unified} and the \emph{augmented \ve s} of \textcite{koudenburg2024formal}, are axiomatisations of the structure of distributors. \textcite{koudenburg2024formal} introduced presheaf constructions in the setting of augmented \ve s, established that this setting generalises that of Yoneda structures~\cite[\S4.34]{koudenburg2024formal}, and proved that, under mild assumptions, every presheaf object exhibits a free cocompletion with respect to a class of weighted diagrams. This result is closely related to our central result. However, the universal property that we consider -- which is double categorical in nature, rather than merely 2-categorical -- is significantly stronger than that of \cite{koudenburg2024formal}, and is necessary to generalise important classical theorems about enriched cocompletions to the formal setting. We give a more detailed comparison to \citeauthor{koudenburg2024formal}'s work in \cref{relationship-to-koudenburg}. \citeauthor{koudenburg2024formal} furthermore studied conditions under which the the presheaf construction is exhibited as an internal hom, as in the classical setting~\cite[Theorem~8.21]{koudenburg2024formal}.

\citeauthor{bunge1999bicomma} demonstrated that \emph{lax-idempotent pseudomonads} (\aka{} \emph{KZ-doctrines}), conceived by \textcite{kock1967limit,zoberlein1976doctrines} to axiomatise free cocompletions, also provide a setting for \fct. In particular, they give conditions under which the existence of a pseudo algebra structure for a lax-idempotent pseudomonad corresponds to the existence of certain pointwise left extensions~\cite[Theorem~1.13]{bunge1999bicomma}; here, \emph{pointwiseness} is defined in terms of bicomma objects, which is suitable for internal categories but not for enriched categories. It was shown by \textcite{walker2018yoneda} that every lax-idempotent pseudomonad induces a Yoneda structure; in particular, the formal category theory of lax-idempotent pseudomonads is subsumed by that of \ve s. Conversely, we show in \cref{induced-relative-pseudomonad} that every class of weights in a \ve{} induces a lax-idempotent \emph{relative} pseudomonad~\cite{fiore2018relative}.

In her PhD thesis, \textcite{despalungue2023operads} introduced a formalism for formal category theory based on an internalisation of the notion of enrichment. Let $\K$ be a cartesian closed bicategory $\K$ with a duality involution ${\ph^\circ \colon \K\co \xto\iso \K}$ (therein called an \emph{oppositisation}). An \emph{entangled enrichment} on $\K$ is a coherent, functorial assignment equipping each object $A$ with a hom-1-cell $A^\circ \times A \to S$ into a distinguished object $S$ (playing the role of the category of sets)~\cite[\S I.4]{despalungue2023operads}. This suffices to capture categories enriched in a symmetric closed monoidal category~\cite[\S I.4.6]{despalungue2023operads}, although not enrichment in non-braided categories or bicategories. In the context of an entangled enrichment, the canonical 1-cell $A \to S^{A^\circ}$, obtained by transposing the hom-1-cell, is shown to possess several of the expected properties of a presheaf embedding~\cite[\S I.4.5]{despalungue2023operads}, and exhibits the free cocompletion of $A$ in a suitable sense~\cite[Proposition~I.4.5.8]{despalungue2023operads}. In contrast to previous approaches, $A$ is not required for this result to be small with respect to $S$ in any sense.\footnote{Note that \citeauthor{despalungue2023operads} uses the term \emph{small category} differently from the convention in set-theoretic foundations~\cite[\S A.7]{despalungue2023operads}.} Consequently, \citeauthor{despalungue2023operads}'s statement does not directly recover the classical universal property of the presheaf construction on a small category; however, the statement and proof may be adapted with minimal changes to recover the classical statement. Free cocompletions of large categories under small colimits are not considered therein.

\subsection{Size conventions}

We shall have need throughout the paper to speak of \emph{small} and \emph{large} categories, as well as to assemble them into \emph{huge} categories. Formally, this is achieved by positing the existence of a Grothendieck universe $\mathfrak U$: throughout the paper, \emph{small}, \emph{large}, and \emph{huge} will refer to what typically would be called \emph{$\mathfrak U$-small}, \emph{small} (\viz{} set-sized), and \emph{large} (\viz{} class-sized).

\subsection{Acknowledgements}

We thank Roald Koudenburg for explaining the relationship to his work, as outlined in \cref{relationship-to-koudenburg}, thank Giacomo Tendas for a helpful conversation about petty presheaves, and thank Omer Cantor, Bryce Clarke, James Deikun, Sophie d'Espalungue, Roald Koudenburg, Luc Saccoccio-{}-Le Guennec, and Ross Street for providing helpful comments on the paper. During the writing of this paper, the first-named author was supported by a departmental postdoctoral grant from the Department of Software Science at Tallinn University of Technology, and by the Estonian Research Council grant PSG1242; the second-named author was supported by Icelandic Research Fund grant \textnumero~228684-053.

\section{Formal category theory in a \ve}
\label{fct}

We begin by recalling the basic concepts of formal category theory in a \ve{}, including weighted colimits, (pointwise) extensions, and relative adjunctions. This section is based on \cite[\S3]{arkor2024formal}, though we generalise several of the definitions and results therein -- for instance, by considering colimits weighted by chains of loose-cells, rather than individual loose-cells. Familiarity with \cite{arkor2024formal} is not assumed.

\subsection{Virtual \dcs{} and \ve s}
\label{vdcs-and-ves}

A \emph{\ve} is a two-dimensional structure that axiomatises the totality of enriched categories, functors, distributors, and natural transformations.

\begin{definition}[{\cite[61]{burroni1971tcategories}}]
    A \emph{\vdc{}} comprises the following data.
    \begin{enumerate}
        \item A (huge) category of \emph{objects} and \emph{tight-cells}.
        We denote a tight-cell $f$ from an object $A$ to an object $B$ by an arrow $f \colon A \to B$; denote the composition of tight-cells $f \colon A \to B$ and $g \colon B \to C$ both by $(f \d g) \colon A \to C$ and by $g f \colon A \to C$; and denote the identity of an object $A$ by $1_A \colon A \to A$, or simply by $=$ in pasting diagrams.
        \item For each pair of objects $A$ and $B$, a class of \emph{loose-cells} from $A$ to $B$. We denote a loose-cell $p$ from $A$ to $B$ by an arrow with a vertical stroke $p \colon A \lto B$.
        \item For each chain of loose-cells $p_1, \ldots, p_n$ ($n \geq 0$) and compatible tight-cells $f, f'$ and loose-cell $q$ (together forming a \emph{frame}), a class of \emph{2-cells} with the given frame.
        \[\begin{tikzcd}
            {A_0} & {A_1} & \cdots & {A_{n - 1}} & {A_n} \\
            B &&&& {B'}
            \arrow[""{name=0, anchor=center, inner sep=0}, "f"', from=1-1, to=2-1]
            \arrow["{p_1}"'{inner sep=.8ex}, "\shortmid"{marking}, from=1-2, to=1-1]
            \arrow["{p_2}"'{inner sep=.8ex}, "\shortmid"{marking}, from=1-3, to=1-2]
            \arrow["{p_{n - 1}}"'{inner sep=.8ex}, "\shortmid"{marking}, from=1-4, to=1-3]
            \arrow["{p_n}"'{inner sep=.8ex}, "\shortmid"{marking}, from=1-5, to=1-4]
            \arrow[""{name=1, anchor=center, inner sep=0}, "{f'}", from=1-5, to=2-5]
            \arrow["q"{inner sep=.8ex}, "\shortmid"{marking}, from=2-5, to=2-1]
            \arrow["\phi"{description}, draw=none, from=1, to=0]
        \end{tikzcd}\]
        Observe that our pasting diagrams are written from right-to-left (matching nondiagrammatic composition order of loose-cells). Such a 2-cell is \emph{globular} if $f$ and $f'$ are identities.
        We denote a globular 2-cell by $\phi \colon p_1, \dots, p_n \tto q$.

        In pasting diagrammatic notation, we denote a nullary 2-cell by a square of the following form\footnote{Some authors, \eg{} \cite{cruttwell2010unified}, prefer to render nullary 2-cells as triangles. However, we find this to be both less uniform (for instance, such authors do not render binary 2-cells as trapeziums) and less visually pleasing than rendering all 2-cells as rectangles.}.
        \[\begin{tikzcd}
          A & A \\
          B & {B'}
          \arrow[""{name=0, anchor=center, inner sep=0}, "f"', from=1-1, to=2-1]
          \arrow[equals, nfold, from=1-2, to=1-1]
          \arrow[""{name=1, anchor=center, inner sep=0}, "{f'}", from=1-2, to=2-2]
          \arrow["p"{inner sep=.8ex}, "\shortmid"{marking}, from=2-2, to=2-1]
          \arrow["\phi"{description}, draw=none, from=1, to=0]
        \end{tikzcd}\]
        \item For every configuration of 2-cells of the following shape,
        \[\begin{tikzcd}
            \cdot & \cdots & \cdot & \cdots & \cdot & \cdots & \cdot \\
            \cdot && \cdot & \cdots & \cdot && \cdot \\
            \cdot &&&&&& \cdot
            \arrow["\shortmid"{marking}, from=2-7, to=2-5]
            \arrow[""{name=0, anchor=center, inner sep=0}, "{f'}", from=1-7, to=2-7]
            \arrow[""{name=1, anchor=center, inner sep=0}, from=1-5, to=2-5]
            \arrow["{p_{m_n}^n}"', "\shortmid"{marking}, from=1-7, to=1-6]
            \arrow["{p_{1}^n}"', "\shortmid"{marking}, from=1-6, to=1-5]
            \arrow["{p_{m_1}^1}"', "\shortmid"{marking}, from=1-3, to=1-2]
            \arrow["{p_1^1}"', "\shortmid"{marking}, from=1-2, to=1-1]
            \arrow["\shortmid"{marking}, from=2-3, to=2-1]
            \arrow[""{name=2, anchor=center, inner sep=0}, from=1-3, to=2-3]
            \arrow[""{name=3, anchor=center, inner sep=0}, "f"', from=1-1, to=2-1]
            \arrow[""{name=4, anchor=center, inner sep=0}, "{g'}", from=2-7, to=3-7]
            \arrow["q", "\shortmid"{marking}, from=3-7, to=3-1]
            \arrow[""{name=5, anchor=center, inner sep=0}, "g"', from=2-1, to=3-1]
            \arrow["{\phi_n}"{description}, draw=none, from=0, to=1]
            \arrow["{\phi_1}"{description}, draw=none, from=2, to=3]
            \arrow["\psi"{description}, draw=none, from=4, to=5]
        \end{tikzcd}\]
        a 2-cell,
        \[\begin{tikzcd}
            \cdot & \cdots & \cdot \\
            \cdot && \cdot
            \arrow["{p_{m_n}^n}"', "\shortmid"{marking}, from=1-3, to=1-2]
            \arrow["q", "\shortmid"{marking}, from=2-3, to=2-1]
            \arrow[""{name=0, anchor=center, inner sep=0}, "{f' \d g'}", from=1-3, to=2-3]
            \arrow[""{name=1, anchor=center, inner sep=0}, "{f \d g}"', from=1-1, to=2-1]
            \arrow["{p_1^1}"', "\shortmid"{marking}, from=1-2, to=1-1]
            \arrow["{(\phi_1, \ldots, \phi_n) \d \psi}"{description}, draw=none, from=0, to=1]
        \end{tikzcd}\]
        the \emph{composite}.
        \item For each loose-cell $p \colon A' \lto A$, a 2-cell $1_p \colon p \tto p$, the \emph{identity} of $p$, denoted simply as $=$ in pasting diagrams.
        \[\begin{tikzcd}
            A & {A'} \\
            A & {A'}
            \arrow[""{name=0, anchor=center, inner sep=0}, Rightarrow, no head, nfold, from=1-1, to=2-1]
            \arrow["p"', "\shortmid"{marking}, from=1-2, to=1-1]
            \arrow[""{name=1, anchor=center, inner sep=0}, Rightarrow, no head, nfold, from=1-2, to=2-2]
            \arrow["p", "\shortmid"{marking}, from=2-2, to=2-1]
            \arrow["{=}"{description}, draw=none, from=1, to=0]
        \end{tikzcd}\]
    \end{enumerate}
    We shall sometimes denote by
    \[\begin{tikzcd}
        \cdot & \cdots & \cdot \\
        \cdot & \cdots & \cdot
        \arrow["\shortmid"{marking}, from=1-3, to=1-2]
        \arrow["\shortmid"{marking}, from=1-2, to=1-1]
        \arrow["\shortmid"{marking}, from=2-3, to=2-2]
        \arrow["\shortmid"{marking}, from=2-2, to=2-1]
        \arrow[""{name=0, anchor=center, inner sep=0}, Rightarrow, no head, nfold, from=1-3, to=2-3]
        \arrow[""{name=1, anchor=center, inner sep=0}, Rightarrow, no head, nfold, from=1-1, to=2-1]
        \arrow["{=}"{description}, draw=none, from=0, to=1]
    \end{tikzcd}\]
    the juxtaposition of identity 2-cells:
    \[\begin{tikzcd}
        \cdot & \cdot & \cdot & \cdot \\
        \cdot & \cdot & \cdot & \cdot
        \arrow["\shortmid"{marking}, from=1-4, to=1-3]
        \arrow["\shortmid"{marking}, from=2-2, to=2-1]
        \arrow[""{name=0, anchor=center, inner sep=0}, Rightarrow, no head, nfold, from=1-4, to=2-4]
        \arrow[""{name=1, anchor=center, inner sep=0}, Rightarrow, no head, nfold, from=1-1, to=2-1]
        \arrow["\shortmid"{marking}, from=1-2, to=1-1]
        \arrow["\shortmid"{marking}, from=2-4, to=2-3]
        \arrow["\cdots"{description}, draw=none, from=1-3, to=1-2]
        \arrow["\cdots"{description}, draw=none, from=2-3, to=2-2]
        \arrow[""{name=2, anchor=center, inner sep=0}, Rightarrow, no head, nfold, from=1-3, to=2-3]
        \arrow[""{name=3, anchor=center, inner sep=0}, Rightarrow, no head, nfold, from=1-2, to=2-2]
        \arrow["{=}"{description}, draw=none, from=0, to=2]
        \arrow["{=}"{description}, draw=none, from=3, to=1]
    \end{tikzcd}\]
    Composition of 2-cells is required to be associative and unital in the evident manner~\cite[Definition~2.1]{cruttwell2010unified}: associativity is implicit in pasting diagrams.
\end{definition}

\begin{example}[label=Cat]
  The \vdc{} $\Cat$ is defined as follows.
  \begin{enumerate}
    \item The underlying category is the category of locally small categories and functors.
    \item A loose-cell from $A$ to $B$ is a distributor from $\A$ to $\B$, \ie{} a functor $\B\op \times \A \to \Set$.
    \item A 2-cell with the following frame
      \[\begin{tikzcd}
        {\b A_0} & \cdots & {\b A_n} \\
        {\B} && {\B'}
        \arrow[""{name=0, anchor=center, inner sep=0}, "f"', from=1-1, to=2-1]
        \arrow["{p_1}"'{inner sep=.8ex}, "\shortmid"{marking}, from=1-2, to=1-1]
        \arrow["{p_n}"'{inner sep=.8ex}, "\shortmid"{marking}, from=1-3, to=1-2]
        \arrow[""{name=1, anchor=center, inner sep=0}, "{f'}", from=1-3, to=2-3]
        \arrow["q"{inner sep=.8ex}, "\shortmid"{marking}, from=2-3, to=2-1]
        \arrow["\phi"{description}, draw=none, from=1, to=0]
      \end{tikzcd}\]
      comprises a family of functions
      \[\phi_{a_0, \ldots, a_n} \colon p_1(a_0, a_1) \times \cdots \times p_n(a_{n - 1}, a_n) \to q(f (a_0), f' (a_n))\]
      for each $a_0 \in \ob{\b A_0}, \dots, a_n \in \ob{\b A_n}$, satisfying naturality conditions~\cite[Definition~8.1]{arkor2024formal}.
  \end{enumerate}
  The identity 2-cell on a distributor is given by the family of identity functions, and the composition of 2-cells is induced by functoriality of the cartesian product of sets.
\end{example}

In contrast to 2-categories, which admits three notions of dual, there is a single notion of dual for \vdcs{}, which for the \vdc{} $\Cat$ incorporates both the 2-category $\b{Cat}\co$ and the bicategory $\b{Dist}\op$.

\begin{example}[{\cite[Definition~2.12]{arkor2024formal}}]
  \label{dual}
  The \emph{dual} $\X\co$ of a \vdc{} $\X$ is the \vdc{} with the same objects and tight-cells as $\X$, for which a loose-cell $A \lto B$ in $\X\co$ is a loose-cell $B \lto A$ in $\X$, and for which a 2-cell with the frame on the left below in $\X\co$ is a 2-cell with the frame on the right below in $\X$.
  \[
  \begin{tikzcd}
    {A_0} & \cdots & {A_n} \\
    {B_0} && {B_n}
    \arrow["{f_0}"', from=1-1, to=2-1]
    \arrow["{p_1}"'{inner sep=.8ex}, "\shortmid"{marking}, from=1-2, to=1-1]
    \arrow["{p_n}"'{inner sep=.8ex}, "\shortmid"{marking}, from=1-3, to=1-2]
    \arrow["{f_n}", from=1-3, to=2-3]
    \arrow["q"{inner sep=.8ex}, "\shortmid"{marking}, from=2-3, to=2-1]
  \end{tikzcd}
  \hspace{6em}
  \begin{tikzcd}
    {A_n} & \cdots & {A_0} \\
    {B_n} && {B_0}
    \arrow["{f_n}"', from=1-1, to=2-1]
    \arrow["{p_n}"'{inner sep=.8ex}, "\shortmid"{marking}, from=1-2, to=1-1]
    \arrow["{p_1}"'{inner sep=.8ex}, "\shortmid"{marking}, from=1-3, to=1-2]
    \arrow["{f_0}", from=1-3, to=2-3]
    \arrow["q"{inner sep=.8ex}, "\shortmid"{marking}, from=2-3, to=2-1]
  \end{tikzcd}
  \qedshift
  \]
\end{example}

The \vdcs{} in which we shall be interested satisfy several important properties. The first is the ability to restrict loose-cells along tight-cells: this axiomatises the restriction of distributors along functors.

\begin{definition}[{\cite[Definition~7.1]{cruttwell2010unified}}]
	\label{cartesian-cell}
  A 2-cell
	\[\begin{tikzcd}
		\cdot & \cdot \\
		\cdot & \cdot
		\arrow["p"', "\shortmid"{marking}, from=1-2, to=1-1]
		\arrow["q", "\shortmid"{marking}, from=2-2, to=2-1]
		\arrow[""{name=0, anchor=center, inner sep=0}, "g", from=1-2, to=2-2]
		\arrow[""{name=1, anchor=center, inner sep=0}, "f"', from=1-1, to=2-1]
		\arrow["\cart"{description}, draw=none, from=0, to=1]
	\end{tikzcd}\]
  in a \vdc{} is \emph{cartesian} if any 2-cell, as on the left below, factors uniquely therethrough:
	\[\begin{tikzcd}
		\cdot & \cdots & \cdot \\
		\cdot && \cdot
		\arrow["q", "\shortmid"{marking}, from=2-3, to=2-1]
		\arrow[""{name=0, anchor=center, inner sep=0}, "{g' \d g}", from=1-3, to=2-3]
		\arrow[""{name=1, anchor=center, inner sep=0}, "{f' \d f}"', from=1-1, to=2-1]
		\arrow["{r_n}"', "\shortmid"{marking}, from=1-3, to=1-2]
		\arrow["{r_1}"', "\shortmid"{marking}, from=1-2, to=1-1]
		\arrow["\phi"{description}, draw=none, from=0, to=1]
	\end{tikzcd}
  \quad = \quad
	\begin{tikzcd}
		\cdot & \cdots & \cdot \\
		\cdot && \cdot \\
		\cdot && \cdot
		\arrow["q", "\shortmid"{marking}, from=3-3, to=3-1]
		\arrow["{r_n}"', "\shortmid"{marking}, from=1-3, to=1-2]
		\arrow[""{name=0, anchor=center, inner sep=0}, "g", from=2-3, to=3-3]
		\arrow[""{name=1, anchor=center, inner sep=0}, "f"', from=2-1, to=3-1]
		\arrow["p"{description}, from=2-3, to=2-1]
		\arrow[""{name=2, anchor=center, inner sep=0}, "{g'}", from=1-3, to=2-3]
		\arrow[""{name=3, anchor=center, inner sep=0}, "{f'}"', from=1-1, to=2-1]
		\arrow["{r_1}"', "\shortmid"{marking}, from=1-2, to=1-1]
		\arrow["\cart"{description}, draw=none, from=0, to=1]
		\arrow["\hat\phi"{description}, draw=none, from=2, to=3]
	\end{tikzcd}\]
  In this case, we call $p$ the \emph{restriction} $q(f, g)$.
  If $q$ is a loose-identity $A(1, 1)$ (see \cref{opcartesian}), we denote $p = A(1, 1)(f, g)$ simply by $A(f, g)$. For a tight-cell $f \colon A \to B$, the restriction ${B(1, f) \colon A \lto B}$, when it exists, is the \emph{companion} of $f$; and the restriction ${B(f, 1) \colon B \lto A}$, when it exists, is the \emph{conjoint} of $f$. A loose-cell is \emph{representable} if it is the companion of some tight-cell; and \emph{corepresentable} if it is the conjoint of some tight-cell.
\end{definition}

\begin{example}[label=restriction-in-Cat]
  The restriction of a distributor $q \colon \B' \lto \A'$ along functors $f \colon \A \to \A'$ and $g \colon \B \to \B'$ in $\Cat$ is the distributor $q(f, g) \defeq (a, b) \mapsto q(f(a), g(b))$.
\end{example}

Note that a \vdc{} $\X$ admits restrictions if and only if $\X\co$ does~\cite[Lemma~2.14]{arkor2024formal}.

The second property we shall be interested in is the existence of identity loose-cells, corresponding to the hom-set distributors $\A({-}, {-}) \colon \A\op \times \A \to \Set$. While \vdcs{} do not in general admit composites of loose-cells, we may identify the existence of composites via a universal property.

\begin{definition}[{\cites[Definition~2.7]{dawson2006paths}[Definition~5.1]{cruttwell2010unified}}]
	\label{opcartesian}
  A 2-cell
	\[\begin{tikzcd}
		\cdot & \cdots & \cdot \\
		\cdot && \cdot
		\arrow["{q_m}"', "\shortmid"{marking}, from=1-3, to=1-2]
		\arrow["{q_1}"', "\shortmid"{marking}, from=1-2, to=1-1]
		\arrow["q", "\shortmid"{marking}, from=2-3, to=2-1]
		\arrow[""{name=0, anchor=center, inner sep=0}, Rightarrow, no head, nfold, from=1-3, to=2-3]
		\arrow[""{name=1, anchor=center, inner sep=0}, Rightarrow, no head, nfold, from=1-1, to=2-1]
		\arrow["\opcart"{description}, draw=none, from=0, to=1]
	\end{tikzcd}\]
  in a \vdc{} is \emph{opcartesian} if any 2-cell
	\[\begin{tikzcd}
		\cdot & \cdots & \cdot & \cdots & \cdot & \cdots & \cdot \\
		\cdot &&&&&& \cdot
		\arrow["{r_n}"', "\shortmid"{marking}, from=1-7, to=1-6]
		\arrow["{r_1}"', "\shortmid"{marking}, from=1-6, to=1-5]
		\arrow["{q_m}"', "\shortmid"{marking}, from=1-5, to=1-4]
		\arrow["{q_1}"', "\shortmid"{marking}, from=1-4, to=1-3]
		\arrow["{p_l}"', "\shortmid"{marking}, from=1-3, to=1-2]
		\arrow["{p_1}"', "\shortmid"{marking}, from=1-2, to=1-1]
		\arrow[""{name=0, anchor=center, inner sep=0}, "g", from=1-7, to=2-7]
		\arrow[""{name=1, anchor=center, inner sep=0}, "f"', from=1-1, to=2-1]
		\arrow["s", "\shortmid"{marking}, from=2-7, to=2-1]
		\arrow["\phi"{description}, draw=none, from=0, to=1]
	\end{tikzcd}\]
  factors uniquely therethrough:
	\[\begin{tikzcd}
		\cdot & \cdots & \cdot & \cdots & \cdot & \cdots & \cdot \\
		\cdot & \cdots & \cdot && \cdot & \cdots & \cdot \\
		\cdot &&&&&& \cdot
		\arrow["{r_n}"', "\shortmid"{marking}, from=1-7, to=1-6]
		\arrow["{r_1}"', "\shortmid"{marking}, from=1-6, to=1-5]
		\arrow["{q_m}"', "\shortmid"{marking}, from=1-5, to=1-4]
		\arrow["{q_1}"', "\shortmid"{marking}, from=1-4, to=1-3]
		\arrow["{p_l}"', "\shortmid"{marking}, from=1-3, to=1-2]
		\arrow["{p_1}"', "\shortmid"{marking}, from=1-2, to=1-1]
		\arrow["s", "\shortmid"{marking}, from=3-7, to=3-1]
		\arrow["q"{description}, from=2-5, to=2-3]
		\arrow[""{name=0, anchor=center, inner sep=0}, Rightarrow, no head, nfold, from=1-5, to=2-5]
		\arrow[""{name=1, anchor=center, inner sep=0}, "g", from=2-7, to=3-7]
		\arrow[""{name=2, anchor=center, inner sep=0}, "f"', from=2-1, to=3-1]
		\arrow[""{name=3, anchor=center, inner sep=0}, Rightarrow, no head, nfold, from=1-7, to=2-7]
		\arrow[""{name=4, anchor=center, inner sep=0}, Rightarrow, no head, nfold, from=1-1, to=2-1]
		\arrow["{r_n}"{description}, from=2-7, to=2-6]
		\arrow["{r_1}"{description}, from=2-6, to=2-5]
		\arrow["{p_l}"{description}, from=2-3, to=2-2]
		\arrow["{p_1}"{description}, from=2-2, to=2-1]
		\arrow[""{name=5, anchor=center, inner sep=0}, Rightarrow, no head, nfold, from=1-3, to=2-3]
		\arrow["{=}"{description}, draw=none, from=3, to=0]
		\arrow["{=}"{description}, draw=none, from=5, to=4]
		\arrow["\opcart"{description}, draw=none, from=0, to=5]
		\arrow["\check\phi"{description}, draw=none, from=1, to=2]
	\end{tikzcd}\]
  In this case, we call $q$ the \emph{(loose) composite} $q_1 \odot \cdots \odot q_m$ of $q_1, \ldots, q_m$.
	When $m = 0$, we call $q \colon A \lto A$ the \emph{loose-identity} and denote it by $A(1, 1)$, or simply by $=\!\!\!|\!\!\!=$ in pasting diagrams. A \vdc{} is \emph{normal} if every object admits a loose-identity.

	We denote a nullary loose-cell with loose-identity codomain by $\phi \colon f \tto g$.
	\[\begin{tikzcd}
		\cdot & \cdot \\
		\cdot & \cdot
		\arrow["\shortmid"{marking}, Rightarrow, no head, nfold, from=2-2, to=2-1]
		\arrow[""{name=0, anchor=center, inner sep=0}, "g", from=1-2, to=2-2]
		\arrow[""{name=1, anchor=center, inner sep=0}, "f"', from=1-1, to=2-1]
		\arrow[Rightarrow, no head, nfold, from=1-2, to=1-1]
		\arrow["\phi"{description}, draw=none, from=0, to=1]
	\end{tikzcd}\qedshift\]
\end{definition}

\begin{example}
  As indicated above, every object $\A$ in $\Cat$ admits a loose-identity, given by the hom-set distributor $(a, a') \mapsto \A(a, a')$. Furthermore, given distributors $p \colon \B \lto \A$ and $q \colon \C \lto \B$, if the following coend exists (for instance, if $\B$ is small), then it exhibits the composite $p \odot q \colon \C \lto \A$.
  \[(a, c) \mapsto \int^{b \in \B} p(a, b) \times q(b, c) \qedhere\]
\end{example}

In fact, for the results with which we shall be concerned, it will typically suffice to consider a weaker universal property than that of an opcartesian 2-cell.

\begin{definition}[{\cite[Definition~2.5]{arkor2024formal}}]
  \label{left-opcartesian}
	A 2-cell in a \vdc{} is \emph{left-opcartesian} if it satisfies the universal property of an opcartesian 2-cell (\cref{opcartesian}) in the special case where $l = 0$ and $f$ is the identity. In this case, we call $q$ the \emph{left-composite} $q_1 \odotl \cdots \odotl q_m$ of $q_1, \ldots, q_m$.
  Dually, a 2-cell is \emph{right-opcartesian} if it satisfies the universal property in the special case where $n = 0$ and $g$ is the identity. In this case, we call $q$ the \emph{right-composite} $q_1 \odotr \cdots \odotr q_m$.
\end{definition}

Left-composites are unique up to isomorphism and are left-associative in the sense that there is a canonical isomorphism,
\begin{equation}
  \label{left-associativity-of-left-composites}
  q_1 \odotl q_2 \odotl \cdots \odotl q_m \iso (\cdots (q_1 \odotl q_2) \odotl \cdots) \odotl q_m
\end{equation}
together with a canonical reassociating 2-cell (which is invertible if the left-composites are actual composites).
\begin{equation}
  q_1 \odotl q_2 \odotl \cdots \odotl q_m \tto q_1 \odotl (\cdots \odotl (q_{m - 1} \odotl q_m) \cdots)
\end{equation}
Dual statements hold for right-composites, a left-composite in $\X\co$ being precisely a right-composite in $\X$. We reserve no notation for nullary left- or right-composites, since we shall not make use of them here (all loose-identities we consider have the full universal property of \cref{opcartesian}). Observe that loose-composites are, in particular, left- and right-composites (but the converse is not true in general). We record the following useful interaction between left-composites and loose-composites, permitting reassociation.

\begin{lemma}
  \label{left-composite-reassociation}
  Suppose that the left-composite of a chain $q_1, \ldots, q_m$ exists and that the composite $q_i \odot \cdots \odot q_j$ exists ($1 \leq i \leq j \leq m$). Then the left-composite on the left below exhibits the left-composite on the right below.
  \[q_1 \odotl \cdots \odotl q_i \odotl \cdots \odotl q_j \odotl \cdots \odotl q_m \iso q_1 \odotl \cdots \odotl (q_i \odot \cdots \odot q_j) \odotl \cdots \odotl q_m\]
\end{lemma}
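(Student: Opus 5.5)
We construct the comparison 2-cell and verify its universal property by reducing it to the universal properties we already have. Write $\lambda \colon q_1, \ldots, q_m \tto q_1 \odotl \cdots \odotl q_m$ for the given left-opcartesian 2-cell and $\mu \colon q_i, \ldots, q_j \tto q_i \odot \cdots \odot q_j$ for the opcartesian 2-cell exhibiting the composite. Set $\bar q \defeq q_1, \ldots, q_{i-1}, (q_i \odot \cdots \odot q_j), q_{j+1}, \ldots, q_m$. The claim is that there is a unique 2-cell $\lambda' \colon \bar q \tto q_1 \odotl \cdots \odotl q_m$ whose precomposite with $(1, \ldots, 1, \mu, 1, \ldots, 1)$ is $\lambda$, and that $\lambda'$ is left-opcartesian.

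To obtain $\lambda'$, we apply the opcartesian property of $\mu$ to $\lambda$, regarding $q_1, \ldots, q_{i-1}$ as the chain on the left (the $p$'s) and $q_{j+1}, \ldots, q_m$ as the chain on the right (the $r$'s), with identity tight-cells on both sides. This is exactly why we need $\mu$ to be fully opcartesian and not merely left-opcartesian: the chain $q_i, \ldots, q_j$ sits in the middle.

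Next we verify that $\lambda'$ is left-opcartesian. Take any 2-cell $\phi \colon \bar q, r_1, \ldots, r_n \tto s$ whose left tight-cell is the identity and whose right tight-cell is some $g$.
\begin{enumerate}
  \item Precompose $\phi$ with $(1, \ldots, \mu, \ldots, 1, 1_{r_1}, \ldots, 1_{r_n})$ to obtain a 2-cell $q_1, \ldots, q_m, r_1, \ldots, r_n \tto s$.
  \item Factor this 2-cell through $\lambda$ using its left-opcartesian property, obtaining $\check\phi$.
  \item Show that $(\lambda', 1, \ldots, 1) \d \check\phi = \phi$. Both sides become equal after precomposition with $\mu$, since $\lambda' \circ \mu = \lambda$. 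By the uniqueness clause of the opcartesian property of $\mu$, with the $r$'s appended to the right-hand chain, they are therefore equal.
  \item For uniqueness, suppose $\psi$ also satisfies $(\lambda', 1, \ldots, 1) \d \psi = \phi$. Precomposing with $\mu$ shows that $\psi$ factors $\phi \circ \mu$ through $\lambda$, so $\psi = \check\phi$.
\end{enumerate}

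Finally, the displayed isomorphism follows because left-composites are unique up to isomorphism. Both sides are left-composites of $\bar q$: the left-hand side via $\lambda'$, and the right-hand side by definition.

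The main obstacle is bookkeeping rather than mathematics: we must check that the frames match when invoking the opcartesian property of $\mu$ with extra loose-cells $r_1, \ldots, r_n$ on the right and a nonidentity right tight-cell $g$. Definition~\ref{opcartesian} permits exactly this, as it allows arbitrary $f$, $g$ and side chains. We also need the associativity of composition in the \vdc{}, so that $(\ldots, \mu, \ldots) \d (\lambda', 1, \ldots, 1) = (\lambda, 1, \ldots, 1)$ as composites.
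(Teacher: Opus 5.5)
Your proof is correct, but it runs in the opposite direction to the paper's.

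\textbf{The paper's route.} The paper starts from the left-composite of the reassociated chain $\bar q$. It shows that pasting the identity-whiskered opcartesian cell $\mu$ on top of the left-opcartesian cell for $\bar q$ gives a left-opcartesian cell out of $q_1, \ldots, q_m$. Given $\phi \colon q_1, \ldots, q_m, r_1, \ldots, r_n \tto s$, it factors $\phi$ first through $\mu$ and then through the left-opcartesian cell for $\bar q$. Uniqueness is the conjunction of the two uniqueness clauses.

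\textbf{Comparison.} The paper's argument is the familiar ``composite of universal cells is universal'' pattern, so it is slightly more direct. However, it tacitly presupposes that the right-hand left-composite exists, whereas the lemma as stated only assumes the left-hand one.

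Your argument starts from exactly the stated hypothesis. You induce $\lambda'$ from $\lambda$ via opcartesianness of $\mu$ and then prove a cancellation property. The price is your step~(3): there, existence of the factorisation already relies on the uniqueness clause of $\mu$'s universal property, with the $r_k$ and $g$ appended. In the paper's direction, existence follows simply by applying the two existence clauses in turn.

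The two arguments are complementary. Together they show that either left-composite exists if and only if the other does, in which case they coincide.
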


\begin{proof}
  We will show that the following 2-cell is left-opcartesian.
  \[\begin{tikzcd}[column sep=huge]
    \cdot & \cdots & \cdot & \cdots & \cdot & \cdots & \cdot \\
    \cdot & \cdots & \cdot && \cdot & \cdots & \cdot \\
    \cdot &&&&&& \cdot
    \arrow[""{name=0, anchor=center, inner sep=0}, equals, nfold, from=1-1, to=2-1]
    \arrow["{q_1}"'{inner sep=.8ex}, "\shortmid"{marking}, from=1-2, to=1-1]
    \arrow["{q_{i - 1}}"'{inner sep=.8ex}, "\shortmid"{marking}, from=1-3, to=1-2]
    \arrow[""{name=1, anchor=center, inner sep=0}, equals, nfold, from=1-3, to=2-3]
    \arrow["{q_i}"'{inner sep=.8ex}, "\shortmid"{marking}, from=1-4, to=1-3]
    \arrow["{q_j}"'{inner sep=.8ex}, "\shortmid"{marking}, from=1-5, to=1-4]
    \arrow[""{name=2, anchor=center, inner sep=0}, equals, nfold, from=1-5, to=2-5]
    \arrow["{q_{j + 1}}"'{inner sep=.8ex}, "\shortmid"{marking}, from=1-6, to=1-5]
    \arrow["{q_m}"'{inner sep=.8ex}, "\shortmid"{marking}, from=1-7, to=1-6]
    \arrow[""{name=3, anchor=center, inner sep=0}, equals, nfold, from=1-7, to=2-7]
    \arrow[""{name=4, anchor=center, inner sep=0}, equals, nfold, from=2-1, to=3-1]
    \arrow["{q_1}"{description}, from=2-2, to=2-1]
    \arrow["{q_{i - 1}}"{description}, from=2-3, to=2-2]
    \arrow["{q_i \odot \cdots \odot q_j}"{description}, from=2-5, to=2-3]
    \arrow["{q_{j + 1}}"{description}, from=2-6, to=2-5]
    \arrow["{q_m}"{description}, from=2-7, to=2-6]
    \arrow[""{name=5, anchor=center, inner sep=0}, equals, nfold, from=2-7, to=3-7]
    \arrow["{q_1 \odotl \cdots \odotl (q_i \odot \cdots \odot q_j) \odotl \cdots \odotl q_m}"{inner sep=.8ex}, "\shortmid"{marking}, from=3-7, to=3-1]
    \arrow["{=}"{description}, draw=none, from=0, to=1]
    \arrow["\opcart"{description}, draw=none, from=1, to=2]
    \arrow["{=}"{description}, draw=none, from=2, to=3]
    \arrow["\opcartl"{description}, draw=none, from=4, to=5]
  \end{tikzcd}\]
  For every 2-cell as follows,
  \[\begin{tikzcd}
    \cdot & \cdots & \cdot & \cdots & \cdot \\
    \cdot &&&& \cdot
    \arrow[""{name=0, anchor=center, inner sep=0}, "f"', from=1-1, to=2-1]
    \arrow["{q_1}"'{inner sep=.8ex}, "\shortmid"{marking}, from=1-2, to=1-1]
    \arrow["{q_m}"'{inner sep=.8ex}, "\shortmid"{marking}, from=1-3, to=1-2]
    \arrow["{r_1}"'{inner sep=.8ex}, "\shortmid"{marking}, from=1-4, to=1-3]
    \arrow["{r_n}"'{inner sep=.8ex}, "\shortmid"{marking}, from=1-5, to=1-4]
    \arrow[""{name=1, anchor=center, inner sep=0}, "g", from=1-5, to=2-5]
    \arrow["s"{inner sep=.8ex}, "\shortmid"{marking}, from=2-5, to=2-1]
    \arrow["\phi"{description}, draw=none, from=1, to=0]
  \end{tikzcd}\]
  we have a factorisation, where $\check{\check\phi}$ arises by first applying the universal property of the loose-composite, and then the universal property of the left-composite.
  \[\begin{tikzcd}[column sep=large]
    \cdot & \cdots & \cdot & \cdots & \cdot & \cdots & \cdot & \cdots & \cdot \\
    \cdot & \cdots & \cdot && \cdot & \cdots & \cdot & \cdots & \cdot \\
    \cdot &&&&&& \cdot & \cdots & \cdot \\
    \cdot &&&&&&&& \cdot
    \arrow[""{name=0, anchor=center, inner sep=0}, equals, nfold, from=1-1, to=2-1]
    \arrow["{q_1}"'{inner sep=.8ex}, "\shortmid"{marking}, from=1-2, to=1-1]
    \arrow["{q_{i - 1}}"'{inner sep=.8ex}, "\shortmid"{marking}, from=1-3, to=1-2]
    \arrow[""{name=1, anchor=center, inner sep=0}, equals, nfold, from=1-3, to=2-3]
    \arrow["{q_i}"'{inner sep=.8ex}, "\shortmid"{marking}, from=1-4, to=1-3]
    \arrow["{q_j}"'{inner sep=.8ex}, "\shortmid"{marking}, from=1-5, to=1-4]
    \arrow[""{name=2, anchor=center, inner sep=0}, equals, nfold, from=1-5, to=2-5]
    \arrow["{q_{j + 1}}"'{inner sep=.8ex}, "\shortmid"{marking}, from=1-6, to=1-5]
    \arrow["{q_m}"'{inner sep=.8ex}, "\shortmid"{marking}, from=1-7, to=1-6]
    \arrow["{r_1}"'{inner sep=.8ex}, "\shortmid"{marking}, from=1-8, to=1-7]
    \arrow["{r_n}"'{inner sep=.8ex}, "\shortmid"{marking}, from=1-9, to=1-8]
    \arrow[""{name=3, anchor=center, inner sep=0}, equals, nfold, from=1-9, to=2-9]
    \arrow[""{name=4, anchor=center, inner sep=0}, equals, nfold, from=2-1, to=3-1]
    \arrow["{q_1}"{description}, from=2-2, to=2-1]
    \arrow["{q_{i - 1}}"{description}, from=2-3, to=2-2]
    \arrow["{q_i \odot \cdots \odot q_j}"{description}, from=2-5, to=2-3]
    \arrow["{q_{j + 1}}"{description}, from=2-6, to=2-5]
    \arrow["{q_m}"{description}, from=2-7, to=2-6]
    \arrow[""{name=5, anchor=center, inner sep=0}, equals, nfold, from=2-7, to=3-7]
    \arrow["{r_1}"{description}, from=2-8, to=2-7]
    \arrow["{r_n}"{description}, from=2-9, to=2-8]
    \arrow[""{name=6, anchor=center, inner sep=0}, equals, nfold, from=2-9, to=3-9]
    \arrow[""{name=7, anchor=center, inner sep=0}, equals, nfold, from=3-1, to=4-1]
    \arrow["{q_1 \odotl \cdots \odotl (q_i \odot \cdots \odot q_j) \odotl \cdots \odotl q_m}"{description}, from=3-7, to=3-1]
    \arrow[from=3-8, to=3-7]
    \arrow[from=3-9, to=3-8]
    \arrow[""{name=8, anchor=center, inner sep=0}, equals, nfold, from=3-9, to=4-9]
    \arrow["s"{inner sep=.8ex}, "\shortmid"{marking}, from=4-9, to=4-1]
    \arrow["{=}"{description}, draw=none, from=0, to=1]
    \arrow["\opcart"{description}, draw=none, from=1, to=2]
    \arrow["{=}"{description}, draw=none, from=2, to=3]
    \arrow["\opcartl"{description}, draw=none, from=4, to=5]
    \arrow["{=}"{description}, draw=none, from=5, to=6]
    \arrow["{\check{\check\phi}}"{description}, draw=none, from=7, to=8]
  \end{tikzcd}\]
  Uniqueness of this factorisation follows from the conjunction of the uniqueness of the factorisations arising first from the opcartesian 2-cell, and then the left-opcartesian 2-cell.
\end{proof}

The ambient setting for formal category theory within which we shall work in this paper is that of a \emph{strict \ve}.

\begin{definition}[{\cite[Definition~7.6]{cruttwell2010unified}}]
  A \emph{\ve} is a \vdc{} that admits all loose-identities and restrictions.
\end{definition}

\begin{definition}[{\cite[Definition~5.1]{arkor2025nerve}}]
    \label{strict-ve}
    A \ve{} is \emph{strict} if it is equipped with a strictly functorial choice of restrictions, in the sense that the following 2-cells exhibit the chosen cartesian 2-cells (so that $p(1, 1) = p$ and $p(f, g)(f', g') = p(ff', gg')$), for all objects $A, A', B, B''$, tight-cells $f, f', g, g'$, and loose-cell $p$.
    \[
    \begin{tikzcd}[column sep=large]
      A & B \\
      A & B
      \arrow[""{name=0, anchor=center, inner sep=0}, equals, nfold, from=1-1, to=2-1]
      \arrow["{p(1, 1)}"'{inner sep=.8ex}, "\shortmid"{marking}, from=1-2, to=1-1]
      \arrow[""{name=1, anchor=center, inner sep=0}, equals, nfold, from=1-2, to=2-2]
      \arrow["p"{inner sep=.8ex}, "\shortmid"{marking}, from=2-2, to=2-1]
      \arrow["{=}"{description}, draw=none, from=0, to=1]
    \end{tikzcd}
    \hspace{4em}
    \begin{tikzcd}
      {A''} && {B''} \\
      {A'} && {B'} \\
      A && B
      \arrow[""{name=0, anchor=center, inner sep=0}, "{f'}"', from=1-1, to=2-1]
      \arrow["{p(ff', gg')}"'{inner sep=.8ex}, "\shortmid"{marking}, from=1-3, to=1-1]
      \arrow[""{name=1, anchor=center, inner sep=0}, "{g'}", from=1-3, to=2-3]
      \arrow[""{name=2, anchor=center, inner sep=0}, "f"', from=2-1, to=3-1]
      \arrow["{p(f, g)}"{description}, from=2-3, to=2-1]
      \arrow[""{name=3, anchor=center, inner sep=0}, "g", from=2-3, to=3-3]
      \arrow["p"{inner sep=.8ex}, "\shortmid"{marking}, from=3-3, to=3-1]
      \arrow["\cart"{description}, draw=none, from=1, to=0]
      \arrow["\cart"{description}, draw=none, from=2, to=3]
    \end{tikzcd}
    \qedshift
    \]
\end{definition}

Note that strictness is a mild assumption: every \ve{} is equivalent to a strict \ve{}~\cite[Theorem~A.1]{arkor2025nerve}, and examples of \ve s that arise in practice (for instance, $\Cat$) tend to be strict. Furthermore, strictness is a useful assumption, as it generally allows us to reason about universal properties involving restrictions up to isomorphism rather than up to equivalence (this permits our definition of presheaf object in \cref{Phi-presheaf-object} to have a strict universal property, for instance, which simplifies calculations considerably).

\begin{notation}[label=cp-notation]
  Let $f \colon A \to B$ be a tight-cell in a \ve. Denote by $\cp f \colon B(1, f), B(f, 1) \tto A(1, 1)$ the following 2-cell.
  \[
  \cp f \defeq
  \begin{tikzcd}
    B & A & B \\
    B & B & B \\
    B && B
    \arrow[""{name=0, anchor=center, inner sep=0}, equals, from=1-1, to=2-1]
    \arrow["{B(1, f)}"'{inner sep=.8ex}, "\shortmid"{marking}, from=1-2, to=1-1]
    \arrow[""{name=1, anchor=center, inner sep=0}, "f"{description}, from=1-2, to=2-2]
    \arrow["{B(f, 1)}"'{inner sep=.8ex}, "\shortmid"{marking}, from=1-3, to=1-2]
    \arrow[""{name=2, anchor=center, inner sep=0}, equals, from=1-3, to=2-3]
    \arrow[""{name=3, anchor=center, inner sep=0}, equals, from=2-1, to=3-1]
    \arrow["\shortmid"{marking}, equals, from=2-2, to=2-1]
    \arrow["\shortmid"{marking}, equals, from=2-3, to=2-2]
    \arrow[""{name=4, anchor=center, inner sep=0}, equals, from=2-3, to=3-3]
    \arrow["\shortmid"{marking}, equals, from=3-3, to=3-1]
    \arrow["\cart"{description}, draw=none, from=0, to=1]
    \arrow["\cart"{description}, draw=none, from=1, to=2]
    \arrow["\opcart"{description}, draw=none, from=3, to=4]
  \end{tikzcd}\qedshift
  \]
\end{notation}

\subsection{Lifts and extensions}
\label{weights}

Next, we shall explain how limits and colimits in categories are expressed in a \ve. We shall be concerned in this paper with free cocompletions under various colimit shapes. In \fct, as in enriched category theory, the appropriate data by which to parameterise a colimit shape is a \emph{weight}. In enriched category theory, it is customary to define a weight for a colimit simply to be a presheaf~\cite{kelly1982basic} (\cf~\cref{presheaves-and-cocompletions-in-V-Cat}). However, it has long been observed that it is more flexible to define a weight to be an arbitrary distributor~\cite{street1978yoneda}; in particular, this permits a single notion of weight for both limits and colimits. In maximum generality, for settings in which composites of distributors may not exist (for instance, if we are concerned with categories that are not necessarily small), it is most appropriate to consider \emph{chains} of distributors~\cite[\S1.11]{koudenburg2024formal}.

\begin{definition}
    \label{weight}
    A \emph{weight} in a \vdc{} is a chain of loose-cells.
    \[\underbrace{X_0 \xlfrom{p_1} X_1 \xlfrom{p_2} \cdots \xlfrom{p_m} X_m}_{\vec p} \tag{$m \ge 0$}\]
    We write $(p_1, \ldots, p_m) \colon X_m \lcto X_0$ or simply $\vec p \colon X_m \lcto X_0$ for this situation.
    A weight is \emph{unary} if $m = 1$, \ie{} it comprises a single loose-cell.
\end{definition}

Note that, in contrast to classical definitions of weight in enriched category theory~\cite{kelly1982basic,albert1988closure,kelly2005notes}, we do not impose any smallness assumptions in the definition of a weight. In practice, it is often useful to talk about certain large colimits (\eg~adjoints, or large cointersections of epimorphisms~\cite{kelly1981large}), and most of the properties of colimits we will consider are independent of size concerns.

The universal properties of weighted limits and weighted colimits in a \ve{} are expressed in terms of right \emph{extensions} and \emph{lifts} respectively~\cite{wood1982abstract}. In the following, we generalise the notions of \emph{right lift} and \emph{right extension} from \cites[Definitions~3.1 \& 3.2]{arkor2024formal}[Definition~9.1.2]{riehl2022elements} to permit arbitrary (rather than unary) weights (\cf~\cite[\S1]{koudenburg2024formal}). We note that, for lifts and extensions in an arbitrary \vdc{}, a stronger universal property involving tight-cells would be necessary, but in a \ve{} this simpler definition suffices (\cf~\cite[\S7]{pare2024retro}).

\begin{definition}
	\label{right-lift}
  Let $\vec p = (p_1, \ldots, p_m) \colon Y \lcto Z$ be a weight and let $q \colon X \lto Z$ be a loose-cell.
  A loose-cell $q \rf \vec p \colon X \lto Y$, equipped with a 2-cell $\varpi \colon p_1, \ldots, p_m, q \rf \vec p \tto q$, is the \emph{right lift} of $q$ through $p_1, \ldots, p_m$ when every 2-cell as on the left below factors uniquely as a diagram of the form on the right below (where $n \geq 0$). We call $\varpi$ the \emph{counit} of the right lift.
	\[
  \begin{tikzcd}[row sep=5em]
    Z & \cdots & Y & \cdots & X \\
    Z &&&& X
    \arrow[""{name=0, anchor=center, inner sep=0}, equals, nfold, from=1-1, to=2-1]
    \arrow["{p_1}"', "\shortmid"{marking}, from=1-2, to=1-1]
    \arrow["{p_m}"', "\shortmid"{marking}, from=1-3, to=1-2]
    \arrow["{r_1}"', "\shortmid"{marking}, from=1-4, to=1-3]
    \arrow["{r_n}"', "\shortmid"{marking}, from=1-5, to=1-4]
    \arrow[""{name=1, anchor=center, inner sep=0}, equals, nfold, from=1-5, to=2-5]
    \arrow["q", "\shortmid"{marking}, from=2-5, to=2-1]
    \arrow["\phi"{description}, draw=none, from=1, to=0]
  \end{tikzcd}
	\hspace{1em} = \hspace{1em}
  \begin{tikzcd}
    Z & \cdots & Y & \cdots & X \\
    Z & \cdots & Y && X \\
    Z &&&& X
    \arrow[""{name=0, anchor=center, inner sep=0}, equals, nfold, from=1-1, to=2-1]
    \arrow["{p_1}"'{inner sep=.8ex}, "\shortmid"{marking}, from=1-2, to=1-1]
    \arrow["{p_m}"'{inner sep=.8ex}, "\shortmid"{marking}, from=1-3, to=1-2]
    \arrow[""{name=1, anchor=center, inner sep=0}, equals, nfold, from=1-3, to=2-3]
    \arrow["{r_1}"'{inner sep=.8ex}, "\shortmid"{marking}, from=1-4, to=1-3]
    \arrow["{r_n}"'{inner sep=.8ex}, "\shortmid"{marking}, from=1-5, to=1-4]
    \arrow[""{name=2, anchor=center, inner sep=0}, equals, nfold, from=1-5, to=2-5]
    \arrow[""{name=3, anchor=center, inner sep=0}, equals, nfold, from=2-1, to=3-1]
    \arrow["{p_1}"{description}, from=2-2, to=2-1]
    \arrow["{p_m}"{description}, from=2-3, to=2-2]
    \arrow["{q \rf (p_1, \ldots, p_m)}"{description}, from=2-5, to=2-3]
    \arrow[""{name=4, anchor=center, inner sep=0}, equals, nfold, from=2-5, to=3-5]
    \arrow["q"{inner sep=.8ex}, "\shortmid"{marking}, from=3-5, to=3-1]
    \arrow["{=}"{description}, draw=none, from=1, to=0]
    \arrow["{\lambda\phi}"{description}, draw=none, from=2, to=1]
    \arrow["\varpi"{description}, draw=none, from=3, to=4]
  \end{tikzcd}
  \qedshift
	\]
\end{definition}

\begin{definition}[label=lifting]
  A weight $\vec p \colon Y \lcto Z$ is \emph{lifting} if all right lifts through $\vec p$ exist, \ie{} if, for every loose-cell $q \colon X \lto Z$, a right lift $q \rf \vec p \colon X \lto Y$ exists.
\end{definition}

A right extension in a \ve{} $\X$ is, by definition, a right lift in $\X\co$. We spell out the definition for convenience.

\begin{definition}
  \label{right-extension}
  Let $\vec p = (p_1, \ldots, p_n) \colon X \lcto Y$ be a weight and let $q \colon X \lto Z$ be a loose-cell. A loose-cell $\vec p \rx q \colon Y \lto Z$ equipped with a 2-cell $\varpi \colon \vec p \rx q, p_1, \ldots, p_m \tto q$ is the \emph{right extension} of $q$ along $p_1, \ldots, p_m$ when every 2-cell as on the left below factors uniquely as a diagram of the form on the right below (where $n \geq 0$). We call $\varpi$ the \emph{counit} of the right extension.
	\[
  \begin{tikzcd}[row sep=5em]
    Z & \cdots & Y & \cdots & X \\
    Z &&&& X
    \arrow[""{name=0, anchor=center, inner sep=0}, equals, nfold, from=1-1, to=2-1]
    \arrow["{r_1}"', "\shortmid"{marking}, from=1-2, to=1-1]
    \arrow["{r_n}"', "\shortmid"{marking}, from=1-3, to=1-2]
    \arrow["{p_1}"', "\shortmid"{marking}, from=1-4, to=1-3]
    \arrow["{p_m}"', "\shortmid"{marking}, from=1-5, to=1-4]
    \arrow[""{name=1, anchor=center, inner sep=0}, equals, nfold, from=1-5, to=2-5]
    \arrow["q", "\shortmid"{marking}, from=2-5, to=2-1]
    \arrow["\phi"{description}, draw=none, from=1, to=0]
  \end{tikzcd}
	\hspace{1em} = \hspace{1em}
  \begin{tikzcd}
    Z & \cdots & Y & \cdots & X \\
    Z && Y & \cdots & X \\
    Z &&&& X
    \arrow[""{name=0, anchor=center, inner sep=0}, equals, nfold, from=1-1, to=2-1]
    \arrow["{r_1}"'{inner sep=.8ex}, "\shortmid"{marking}, from=1-2, to=1-1]
    \arrow["{r_n}"'{inner sep=.8ex}, "\shortmid"{marking}, from=1-3, to=1-2]
    \arrow[""{name=1, anchor=center, inner sep=0}, equals, nfold, from=1-3, to=2-3]
    \arrow["{p_1}"'{inner sep=.8ex}, "\shortmid"{marking}, from=1-4, to=1-3]
    \arrow["{p_m}"'{inner sep=.8ex}, "\shortmid"{marking}, from=1-5, to=1-4]
    \arrow[""{name=2, anchor=center, inner sep=0}, equals, nfold, from=1-5, to=2-5]
    \arrow[""{name=3, anchor=center, inner sep=0}, equals, nfold, from=2-1, to=3-1]
    \arrow["{(p_1, \ldots, p_m) \rx q}"{description}, from=2-3, to=2-1]
    \arrow["{p_1}"{description}, from=2-4, to=2-3]
    \arrow["{p_m}"{description}, from=2-5, to=2-4]
    \arrow[""{name=4, anchor=center, inner sep=0}, equals, nfold, from=2-5, to=3-5]
    \arrow["q"{inner sep=.8ex}, "\shortmid"{marking}, from=3-5, to=3-1]
    \arrow["{\lambda\phi}"{description}, draw=none, from=1, to=0]
    \arrow["{=}"{description}, draw=none, from=2, to=1]
    \arrow["\varpi"{description}, draw=none, from=3, to=4]
  \end{tikzcd}
  \qedshift
  \]
\end{definition}

\begin{definition}[label=extending]
  A weight $\vec p \colon X \lcto Y$ is \emph{extending} if all right extensions along $\vec p$ exist, \ie{} if, for every loose-cell $q \colon X \lto Z$, a right extension $\vec p \rx q \colon Y \lto Z$ exists.
\end{definition}

In the presence of left- and right-composites, lifts through, and extensions along, arbitrary weights are no more general than lifts through, and extensions along, unary weights.

\begin{lemma}
  \label{right-lift-through-left-composite}
  Let $q \colon X \lto Z$ be a loose-cell.
  \begin{enumerate}
    \item Let $(p_1, \ldots, p_m) \colon Y \lcto Z$ be a weight. If the left-composite $p_1 \odotl \cdots \odotl p_i$ exists ($1 \leq i \leq m$), then, if either side of the following exists, so does the other, in which case they are isomorphic.
    \[q \rf (p_1, \ldots, p_m) \iso q \rf (p_1 \odotl \cdots \odotl p_i, p_{i + 1}, \ldots, p_m)\]
    \item Let $(p_1, \ldots, p_m) \colon X \lcto Y$ be a weight. If the right-composite $p_i \odotr \cdots \odotr p_m$ exists ($1 \leq i \leq m$), then, if either side of the following exists, so does the other, in which case they are isomorphic.
    \[(p_1, \ldots, p_m) \rx q \iso (p_1, \ldots, p_{i - 1}, p_i \odotr \cdots \odotr p_m) \rx q\]
  \end{enumerate}
\end{lemma}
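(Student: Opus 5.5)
We prove (1) directly; (2) then follows by duality, since a right extension in $\X$ is a right lift in $\X\co$ and a right-composite in $\X$ is a left-composite in $\X\co$. The plan is to show that the two universal properties pick out the same data. Write $p' \defeq p_1 \odotl \cdots \odotl p_i$, with left-opcartesian 2-cell $\kappa \colon p_1, \ldots, p_i \tto p'$. For a fixed chain $r_1, \ldots, r_n \colon X \lcto Y$, precomposition with $\kappa$ (juxtaposed with identities on $p_{i+1}, \ldots, p_m, r_1, \ldots, r_n$) defines a function
\[\kappa^* \colon \{\text{2-cells } p', p_{i+1}, \ldots, p_m, r_1, \ldots, r_n \tto q\} \to \{\text{2-cells } p_1, \ldots, p_m, r_1, \ldots, r_n \tto q\}.\]
The key observation is that $\kappa^*$ is a bijection. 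The 2-cells in question are globular, with identity on the left boundary, and $p_1, \ldots, p_i$ is the leftmost segment of the source chain. This is exactly the case $l = 0$, $f = 1$ covered by \cref{left-opcartesian}, so any 2-cell with source $p_1, \ldots, p_m, r_1, \ldots, r_n$ factors uniquely through $\kappa$, where the trailing chain in \cref{left-opcartesian} is taken to be $p_{i+1}, \ldots, p_m, r_1, \ldots, r_n$.

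Suppose first that $q \rf (p', p_{i+1}, \ldots, p_m)$ exists with counit $\varpi'$. We equip the same loose-cell with the counit $\varpi \defeq \kappa^*(\varpi')$. To verify the universal property of \cref{right-lift}, take any $\phi \colon p_1, \ldots, p_m, \vec r \tto q$. Write $\phi = \kappa^*(\check\phi)$, and let $\lambda\check\phi \colon \vec r \tto q \rf (p', \ldots)$ be the factorisation of $\check\phi$ through $\varpi'$. Then the pasting of $\lambda\check\phi$ with $\varpi$ equals $\kappa^*$ applied to the pasting with $\varpi'$, which is $\kappa^*(\check\phi) = \phi$. This step uses associativity of composition: the factors $\kappa$ and $\lambda\check\phi$ act on disjoint parts of the chain, so they commute. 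For uniqueness, suppose $\lambda$ is another 2-cell with $\varpi \circ (1, \lambda) = \phi$. Then $\kappa^*(\varpi' \circ (1, \lambda)) = \kappa^*(\check\phi)$. Injectivity of $\kappa^*$ gives $\varpi' \circ (1, \lambda) = \check\phi$, and uniqueness for $\varpi'$ gives $\lambda = \lambda\check\phi$.

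Conversely, suppose $q \rf (p_1, \ldots, p_m)$ exists with counit $\varpi$. We take $\varpi' \defeq (\kappa^*)^{-1}(\varpi)$, and the same argument run in reverse, using bijectivity of $\kappa^*$ at each step, shows that $\varpi'$ exhibits the right lift through $(p', p_{i+1}, \ldots, p_m)$. In either direction the two right lifts are carried by the same loose-cell, and since right lifts are unique up to isomorphism, any other choices are isomorphic.

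The main point requiring care is checking that the left-opcartesian property, as opposed to full opcartesianness, really suffices. This holds because the composite block sits at the far left of every frame we consider, with identity tight-cells on the sides, so only the restricted form of \cref{left-opcartesian} is ever invoked. The remaining verifications are routine manipulations of pasting diagrams.
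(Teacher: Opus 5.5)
Your proposal is correct and follows essentially the same route as the paper: the same loose-cell carries both right lifts, the counits correspond under factorisation through the left-opcartesian 2-cell (only its restricted $l = 0$, $f = 1$ universal property is needed), and (2) follows by duality. Packaging this as the bijection $\kappa^*$ treats both directions uniformly, whereas the paper spells out only the direction starting from $q \rf (p_1, \ldots, p_m)$ and calls the converse analogous.
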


\begin{proof}
  We focus on (1), since (2) follows by duality. Suppose that $q \rf (p_1, \ldots, p_m)$ exists. By the universal property of $p_1 \odotl \cdots \odotl p_i$, the counit $\varpi$ factors uniquely through a 2-cell $\check\varpi$.
  \[\begin{tikzcd}[column sep=large]
    Z & \cdots & \cdot & \cdots & Y && X \\
    Z && \cdot & \cdots & Y && X \\
    Z &&&&&& X
    \arrow[""{name=0, anchor=center, inner sep=0}, equals, nfold, from=1-1, to=2-1]
    \arrow["{p_1}"'{inner sep=.8ex}, "\shortmid"{marking}, from=1-2, to=1-1]
    \arrow["{p_i}"'{inner sep=.8ex}, "\shortmid"{marking}, from=1-3, to=1-2]
    \arrow[""{name=1, anchor=center, inner sep=0}, equals, nfold, from=1-3, to=2-3]
    \arrow["{p_{i + 1}}"'{inner sep=.8ex}, "\shortmid"{marking}, from=1-4, to=1-3]
    \arrow["{p_m}"'{inner sep=.8ex}, "\shortmid"{marking}, from=1-5, to=1-4]
    \arrow[""{name=2, anchor=center, inner sep=0}, equals, nfold, from=1-5, to=2-5]
    \arrow["{q \rf (p_1, \ldots, p_m)}"'{inner sep=.8ex}, "\shortmid"{marking}, from=1-7, to=1-5]
    \arrow[""{name=3, anchor=center, inner sep=0}, equals, nfold, from=1-7, to=2-7]
    \arrow[""{name=4, anchor=center, inner sep=0}, equals, nfold, from=2-1, to=3-1]
    \arrow["{p_1 \odotl \cdots \odotl p_i}"{description}, from=2-3, to=2-1]
    \arrow["{p_{i + 1}}"{description}, from=2-4, to=2-3]
    \arrow["{p_m}"{description}, from=2-5, to=2-4]
    \arrow["{q \rf (p_1, \ldots, p_m)}"{description}, from=2-7, to=2-5]
    \arrow[""{name=5, anchor=center, inner sep=0}, equals, nfold, from=2-7, to=3-7]
    \arrow["q"{inner sep=.8ex}, "\shortmid"{marking}, from=3-7, to=3-1]
    \arrow["\opcartl"{description}, draw=none, from=0, to=1]
    \arrow["{=}"{description}, draw=none, from=1, to=2]
    \arrow["{=}"{description}, draw=none, from=2, to=3]
    \arrow["{\check\varpi}"{description}, draw=none, from=4, to=5]
  \end{tikzcd}\]
  Now, suppose we are given a 2-cell $\phi$ as follows, which we may precompose by the left-opcartesian 2-cell defining $p_1 \odotl \cdots \odotl p_i$.
  \[\begin{tikzcd}[column sep=large]
    Z & \cdots & \cdot & \cdots & Y & \cdots & X \\
    Z && \cdot & \cdots & Y & \cdots & X \\
    Z &&&&&& X
    \arrow[""{name=0, anchor=center, inner sep=0}, equals, nfold, from=1-1, to=2-1]
    \arrow["{p_1}"'{inner sep=.8ex}, "\shortmid"{marking}, from=1-2, to=1-1]
    \arrow["{p_i}"'{inner sep=.8ex}, "\shortmid"{marking}, from=1-3, to=1-2]
    \arrow[""{name=1, anchor=center, inner sep=0}, equals, nfold, from=1-3, to=2-3]
    \arrow["{p_{i + 1}}"'{inner sep=.8ex}, "\shortmid"{marking}, from=1-4, to=1-3]
    \arrow["{p_m}"'{inner sep=.8ex}, "\shortmid"{marking}, from=1-5, to=1-4]
    \arrow[""{name=2, anchor=center, inner sep=0}, equals, nfold, from=1-5, to=2-5]
    \arrow["{r_1}"'{inner sep=.8ex}, "\shortmid"{marking}, from=1-6, to=1-5]
    \arrow["{r_n}"'{inner sep=.8ex}, "\shortmid"{marking}, from=1-7, to=1-6]
    \arrow[""{name=3, anchor=center, inner sep=0}, equals, nfold, from=1-7, to=2-7]
    \arrow[""{name=4, anchor=center, inner sep=0}, equals, nfold, from=2-1, to=3-1]
    \arrow["{p_1 \odotl \cdots \odotl p_i}"{description}, from=2-3, to=2-1]
    \arrow["{p_{i + 1}}"{description}, from=2-4, to=2-3]
    \arrow["{p_m}"{description}, from=2-5, to=2-4]
    \arrow["{r_1}"{description}, from=2-6, to=2-5]
    \arrow["{r_n}"{description}, from=2-7, to=2-6]
    \arrow[""{name=5, anchor=center, inner sep=0}, equals, nfold, from=2-7, to=3-7]
    \arrow["q"{inner sep=.8ex}, "\shortmid"{marking}, from=3-7, to=3-1]
    \arrow["\opcartl"{description}, draw=none, from=0, to=1]
    \arrow["{=}"{description}, draw=none, from=1, to=2]
    \arrow["{=}"{description}, draw=none, from=2, to=3]
    \arrow["\phi"{description}, draw=none, from=5, to=4]
  \end{tikzcd}\]
  By the universal property of the right lift $q \rf (p_1, \ldots, p_m)$, this factors uniquely through $\varpi$ as $\lambda\phi$. Finally, pasting the left-opcartesian 2-cell on top of the following 2-cell exhibits the composite as equal to the 2-cell above, so that the following 2-cell is equal to $\phi$ by the universal property of the left-opcartesian 2-cell.
  \[\begin{tikzcd}[column sep=6em]
    Z & \cdot & \cdots & Y & \cdots & X \\
    Z & \cdot & \cdots & Y && X \\
    Z &&&&& X
    \arrow[""{name=0, anchor=center, inner sep=0}, equals, nfold, from=1-1, to=2-1]
    \arrow["{p_1 \odotl \cdots \odotl p_i}"'{inner sep=.8ex}, "\shortmid"{marking}, from=1-2, to=1-1]
    \arrow["{p_{i + 1}}"'{inner sep=.8ex}, "\shortmid"{marking}, from=1-3, to=1-2]
    \arrow["{p_m}"'{inner sep=.8ex}, "\shortmid"{marking}, from=1-4, to=1-3]
    \arrow[""{name=1, anchor=center, inner sep=0}, equals, nfold, from=1-4, to=2-4]
    \arrow["{r_1}"'{inner sep=.8ex}, "\shortmid"{marking}, from=1-5, to=1-4]
    \arrow["{r_n}"'{inner sep=.8ex}, "\shortmid"{marking}, from=1-6, to=1-5]
    \arrow[""{name=2, anchor=center, inner sep=0}, equals, nfold, from=1-6, to=2-6]
    \arrow[""{name=3, anchor=center, inner sep=0}, equals, nfold, from=2-1, to=3-1]
    \arrow["{p_1 \odotl \cdots \odotl p_i}"{description}, from=2-2, to=2-1]
    \arrow["{p_{i + 1}}"{description}, from=2-3, to=2-2]
    \arrow["{p_m}"{description}, from=2-4, to=2-3]
    \arrow["{q \rf (p_1, \ldots, p_m)}"{description}, from=2-6, to=2-4]
    \arrow[""{name=4, anchor=center, inner sep=0}, equals, nfold, from=2-6, to=3-6]
    \arrow["q"{inner sep=.8ex}, "\shortmid"{marking}, from=3-6, to=3-1]
    \arrow["{=}"{description}, draw=none, from=1, to=0]
    \arrow["{\lambda\phi}"{description}, draw=none, from=2, to=1]
    \arrow["{\check\varpi}"{description}, draw=none, from=3, to=4]
  \end{tikzcd}\]
  Thus $\lambda\phi$ exhibits $q \rf (p_1, \ldots, p_m)$ as having the universal property of the right lift $q \rf (p_1 \odotl \cdots \odotl p_i, p_{i + 1}, \ldots, p_m)$. The converse follows by entirely analogous reasoning.
\end{proof}

In the presence of actual loose-composites, we have the following stronger statement.

\begin{lemma}
  \label{right-lift-through-composite}
  \begin{enumerate}
    \item Let $(p_1, \ldots, p_m) \colon Y \lcto Z$ be a weight. If the loose-composite ${p_i \odot \cdots \odot p_j}$ exists ($0 \leq i \leq j \leq m$), then, if either side of the following exists, so does the other, in which case they are isomorphic.
    \[q \rf (p_1, \ldots, p_m) \iso q \rf (p_1, \ldots, p_i \odot \cdots \odot p_j, \ldots, p_m)\]
    \item Let $(p_1, \ldots, p_m) \colon X \lcto Y$ be a weight. If the loose-composite ${p_i \odot \cdots \odot p_j}$ exists ($0 \leq i \leq j \leq m$), then, if either side of the following exists, so does the other, in which case they are isomorphic.
    \[(p_1, \ldots, p_m) \rx q \iso (p_1, \ldots, p_i \odot \cdots \odot p_j, \ldots, p_m) \rx q\]
  \end{enumerate}
\end{lemma}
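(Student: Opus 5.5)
The plan is to adapt the proof of \cref{right-lift-through-left-composite}, replacing the left-opcartesian 2-cell by the opcartesian 2-cell exhibiting $p_i \odot \cdots \odot p_j$. The point is that an opcartesian 2-cell has the factorisation property with arbitrary loose-cells on \emph{both} sides of the composed segment. This is exactly what lets us collapse a segment in the middle of the weight, where a mere left-composite only permits collapsing an initial segment. As before, we treat (1) and obtain (2) by duality, since right lifts in $\X\co$ are right extensions in $\X$ and loose-composites in $\X\co$ are loose-composites in $\X$ (with the chain reversed).

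For (1), suppose first that $q \rf (p_1, \ldots, p_m)$ exists, with counit $\varpi$. Write $\vec p' \defeq (p_1, \ldots, p_{i-1}, p_i \odot \cdots \odot p_j, p_{j+1}, \ldots, p_m)$. The counit $\varpi$ has domain $p_1, \ldots, p_m, q \rf \vec p$. By the universal property of the opcartesian 2-cell, with $p_1, \ldots, p_{i-1}$ on the left and $p_{j+1}, \ldots, p_m, q \rf \vec p$ on the right, $\varpi$ factors uniquely through a 2-cell $\check\varpi \colon \vec p', q \rf \vec p \tto q$.

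Now take any 2-cell $\phi \colon \vec p', r_1, \ldots, r_n \tto q$. Precompose it with the opcartesian 2-cell, placed beside identities, to obtain a 2-cell $p_1, \ldots, p_m, r_1, \ldots, r_n \tto q$. The universal property of $q \rf \vec p$ factors this uniquely through $\varpi$ via some $\lambda\phi \colon r_1, \ldots, r_n \tto q \rf \vec p$. Pasting $\lambda\phi$ under $\check\varpi$ gives a 2-cell with frame $\vec p', \vec r \tto q$. Its precomposite with the opcartesian cell equals $\varpi$ composed with $\lambda\phi$, which by construction equals $\phi$ precomposed with the opcartesian cell. Uniqueness of opcartesian factorisations then shows that the pasting equals $\phi$.

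For uniqueness of $\lambda\phi$, suppose $\mu$ also satisfies $\check\varpi \circ (1, \mu) = \phi$. Precomposing with the opcartesian cell gives $\varpi \circ (1, \mu) = \phi \circ \opcart$, so $\mu = \lambda\phi$ by the universal property of $\varpi$. Hence $(q \rf \vec p, \check\varpi)$ is a right lift of $q$ through $\vec p'$, and uniqueness of right lifts up to isomorphism gives the stated isomorphism.

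For the converse, suppose $q \rf \vec p'$ exists with counit $\varpi'$. Set the candidate counit to be $\varpi'$ precomposed with the opcartesian 2-cell. Given $\phi \colon p_1, \ldots, p_m, \vec r \tto q$, first factor $\phi$ through the opcartesian cell to obtain $\check\phi$, then factor $\check\phi$ through $\varpi'$. Uniqueness follows by composing the two uniqueness clauses in the same way.

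The main point requiring care is the edge case $i = j$ and, more subtly, the reading of $0 \le i$. If $i = j$, the composite of a single loose-cell is $p_i$ itself up to isomorphism, so there is nothing to prove. If the indexing is meant to allow an empty segment (a nullary composite, i.e.\ a loose-identity inserted into the weight), then the same argument applies verbatim with the nullary opcartesian 2-cell, because \cref{opcartesian} allows $m = 0$. I would also check that the composite of the two factorisations really lands in the right frame when $j = m$, in which case the segment is adjacent to $q \rf \vec p$. This is covered because the opcartesian property allows $r$-cells on the right, whereas the left-opcartesian property would not.
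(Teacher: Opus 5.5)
Your proof is correct and follows the paper's approach: the paper's proof simply says to repeat the argument of \cref{right-lift-through-left-composite} using the full universal property of the opcartesian 2-cell, and you carry that out in detail. One small correction to your last remark: a left-opcartesian cell already permits loose-cells on the right, since \cref{left-opcartesian} only imposes $l = 0$, so the case $j = m$ is not special; what actually requires the full opcartesian property is the chain $p_1, \ldots, p_{i-1}$ sitting on the left when $i > 1$.
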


\begin{proof}
  The reasoning is analogous to that of \cref{right-lift-through-left-composite}, making use of the stronger universal property of a loose-composite.
\end{proof}

Iterated lifts may be reduced to individual lifts, and similarly for extensions.

\begin{lemma}
  \label{iterated-lift}
  Let $q \colon X \lto Z$ be a loose-cell.
  \begin{enumerate}
    \item Let $(p_1, \ldots, p_m) \colon Y \lcto Z$ be a weight. If the right lift $q \rf (p_1, \ldots, p_i)$ exists ($0 \leq i \leq m$), then, if either side of the following exists, so does the other, in which case they are isomorphic.
    \[q \rf (p_1, \ldots, p_i) \rf (p_{i + 1}, \ldots, p_m) \iso q \rf (p_1, \ldots, p_m)\]
    \item Let $(p_1, \ldots, p_m) \colon X \lcto Y$ be a weight. If the right extension $(p_1, \ldots, p_i)$ exists ($0 \leq i \leq m$), then, if either side of the following exists, so does the other, in which case they are isomorphic.
    \[(p_1, \ldots, p_i) \rx (p_{i + 1}, \ldots, p_m) \rx q \iso (p_1, \ldots, p_m) \rx q\]
  \end{enumerate}
\end{lemma}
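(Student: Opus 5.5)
We prove (1) directly from the universal property of right lifts (\cref{right-lift}); (2) then follows by duality, since a right extension in $\X$ is precisely a right lift in $\X\co$. Write $q' \defeq q \rf (p_1, \ldots, p_i)$ with counit $\varpi' \colon p_1, \ldots, p_i, q' \tto q$. The strategy is to exhibit a bijection, natural in the chain $r_1, \ldots, r_n$, between the 2-cells whose factorisations define the two sides:
\[\{\phi \colon p_1, \ldots, p_m, r_1, \ldots, r_n \tto q\} \;\cong\; \{\psi \colon p_{i+1}, \ldots, p_m, r_1, \ldots, r_n \tto q'\}.\]
The bijection is given by the universal property of $q'$, taking the chain following $p_i$ to be $p_{i+1}, \ldots, p_m, r_1, \ldots, r_n$. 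Explicitly, $\psi \mapsto (1_{p_1}, \ldots, 1_{p_i}, \psi) \d \varpi'$, and its inverse is $\phi \mapsto \lambda\phi$.

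For the first direction, suppose $q' \rf (p_{i+1}, \ldots, p_m)$ exists with counit $\varpi''$. We will show that it is a right lift of $q$ through $(p_1, \ldots, p_m)$, with counit the composite $(1_{p_1}, \ldots, 1_{p_i}, \varpi'') \d \varpi'$.
\begin{itemize}
  \item \emph{Existence.} Given $\phi$ as above, first factor $\phi$ through $\varpi'$ to obtain $\lambda\phi$. Then factor $\lambda\phi$ through $\varpi''$ to obtain $\lambda''(\lambda\phi)$. By associativity of composition in the \vdc{}, pasting $\lambda''(\lambda\phi)$ with the proposed counit recovers $\phi$.
  \item \emph{Uniqueness.} Suppose $\chi$ also factors $\phi$ through the proposed counit. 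Then $(1, \ldots, 1, \chi) \d \varpi''$ is a factorisation of $\phi$ through $\varpi'$, so by uniqueness for $q'$ it equals $\lambda\phi$. Uniqueness for $\varpi''$ then forces $\chi = \lambda''(\lambda\phi)$.
\end{itemize}

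For the converse, suppose $q \rf (p_1, \ldots, p_m)$ exists with counit $\varpi$. Factor $\varpi$ through $\varpi'$ to obtain a 2-cell $\lambda\varpi \colon p_{i+1}, \ldots, p_m, q \rf (p_1, \ldots, p_m) \tto q'$. We claim that $\lambda\varpi$ exhibits $q \rf (p_1, \ldots, p_m)$ as the right lift of $q'$ through $(p_{i+1}, \ldots, p_m)$.
\begin{itemize}
  \item \emph{Existence.} Given $\psi \colon p_{i+1}, \ldots, p_m, r_1, \ldots, r_n \tto q'$, form $\phi \defeq (1, \ldots, 1, \psi) \d \varpi'$. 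Factor $\phi$ through $\varpi$ to obtain $\lambda\phi$. Then $(1, \ldots, 1, \lambda\phi) \d \lambda\varpi$ and $\psi$ have the same composite with $\varpi'$, namely $\phi$. Uniqueness of factorisation through $\varpi'$ therefore gives $(1, \ldots, 1, \lambda\phi) \d \lambda\varpi = \psi$.
  \item \emph{Uniqueness.} Any factorisation $\chi$ of $\psi$ through $\lambda\varpi$, once pasted with $\varpi'$, is a factorisation of $\phi$ through $\varpi$. Hence $\chi = \lambda\phi$.
\end{itemize}
In either direction, the resulting loose-cells are isomorphic, since right lifts are unique up to isomorphism.

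No step is deep. The only care needed is in the bookkeeping of pasting. When we whisker a 2-cell by the identities $1_{p_1}, \ldots, 1_{p_i}$, we must check that this genuinely matches the frame required by the universal property of $q'$: the chain after $p_i$ is $p_{i+1}, \ldots, p_m, r_1, \ldots, r_n$, and its length may be zero. Associativity of multi-composition in the \vdc{} then guarantees that the relevant composites coincide. The degenerate cases $i = 0$ and $i = m$ need a separate remark. For $i = 0$ we have $q' = q$ (the empty lift, with counit the identity). For $i = m$, the outer lift is through the empty chain and is therefore $q'$ itself. In both cases the statement is immediate.
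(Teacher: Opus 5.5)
Your proof is correct and takes essentially the paper's route. The paper simply says that the two universal properties are evidently equivalent, as in \cite[Lemma~3.5]{arkor2024formal}, with (2) following by duality. You instead spell out that equivalence: the bijection $\phi \leftrightarrow \lambda\phi$ through the counit of $q \rf (p_1, \ldots, p_i)$, with existence and uniqueness checked in both directions. Your separate treatment of $i = 0$ and $i = m$ is harmless but unnecessary, since your general argument already allows chains of length zero.
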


\begin{proof}
  The reasoning in (1) follows as in \cite[Lemma~3.5]{arkor2024formal}, the universal properties of the loose-cells in question evidently being equivalent. (2) is dual.
\end{proof}

If all right lifts exist, then every left-composite is furthermore a composite.

\begin{lemma}[label=right-lifts-imply-left-composites-are-composites]
  Suppose that all restrictions exist and that every loose-cell is lifting. Then every left-opcartesian 2-cell is moreover opcartesian.
\end{lemma}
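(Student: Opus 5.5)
We must show that a left-opcartesian 2-cell $\psi \colon q_1, \ldots, q_m \tto q$ satisfies the full opcartesian universal property. The right-hand context $r_1, \ldots, r_n$ with a trivial right boundary is already handled by the left-opcartesian property. So the work is to absorb, in turn, a left context $p_1, \ldots, p_l$, a nontrivial left tight-cell $f$, and a nontrivial right tight-cell $g$. We strip these away using restrictions (to trivialise the tight boundaries) and right lifts (to move the left context into the codomain). After that the left-opcartesian property applies directly.

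First we trivialise the tight boundary. Given $\phi \colon p_1, \ldots, p_l, q_1, \ldots, q_m, r_1, \ldots, r_n \tto s$ with boundary tight-cells $f$ and $g$, it factors uniquely through the cartesian 2-cell $s(f, g) \tto s$. This gives a globular 2-cell into $s(f, g)$, and factorisations of $\phi$ through $\psi$ correspond bijectively to factorisations of this globular cell. So we may assume $f$ and $g$ are identities.

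Next we remove the left context. Since every loose-cell is lifting, any weight $(p_1, \ldots, p_l)$ is lifting: by \cref{iterated-lift} the right lift $s \rf (p_1, \ldots, p_l)$ exists as an iterated unary lift. By the universal property of this right lift, globular 2-cells
\[ p_1, \ldots, p_l, q_1, \ldots, q_m, r_1, \ldots, r_n \tto s \]
correspond bijectively, by composition with the counit $\varpi$, to globular 2-cells
\[ q_1, \ldots, q_m, r_1, \ldots, r_n \tto s \rf (p_1, \ldots, p_l). \]
The left-opcartesian property of $\psi$ then gives a unique factorisation of the latter through $\psi$, as a 2-cell $q, r_1, \ldots, r_n \tto s \rf \vec p$. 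Applying the lift bijection again, this time with the chain $q, r_1, \ldots, r_n$, yields a 2-cell $\check\phi \colon p_1, \ldots, p_l, q, r_1, \ldots, r_n \tto s$. We then check that $\check\phi$ composed with $\psi$ (padded with identities) equals $\phi$. This follows from the naturality of the lift bijection: $\lambda$ commutes with precomposition by 2-cells acting on the non-$\vec p$ part of the domain. That naturality holds by uniqueness in \cref{right-lift}.

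Uniqueness runs the same chain of bijections backwards. Two candidate factorisations $\check\phi, \check\phi'$ have transposes under $\lambda$ that both factor $\lambda\phi$ through $\psi$. These transposes therefore agree by left-opcartesianness, and applying injectivity of $\lambda$ gives $\check\phi = \check\phi'$.

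The main obstacle is bookkeeping: making sure the transposition $\lambda$ is compatible with pasting $\psi$ underneath the $q_i$. Concretely, we need
\[ \lambda\big((1_{\vec p}, \psi, 1_{\vec r}) \d \chi\big) = (\psi, 1_{\vec r}) \d \lambda\chi. \]
I would verify this by composing both sides with $\varpi$ and using associativity of pasting. The nullary case $m = 0$ (loose-identities) needs no separate treatment, since the argument is uniform in $m$.
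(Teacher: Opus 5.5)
Your proposal is correct and follows the paper's proof: reduce to globular 2-cells using restrictions, transpose across the right lift $s \rf (p_1, \ldots, p_l)$ (which exists by \cref{iterated-lift}), factor through the left-opcartesian 2-cell, and recompose with the counit $\varpi$. Uniqueness then follows from the uniqueness in both universal properties. Your explicit check that $\lambda$ is compatible with pasting $\psi$ on the non-$\vec p$ part fills in the step the paper leaves implicit.
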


\begin{proof}
  Suppose we are given a left-opcartesian 2-cell as follows.
	\[\begin{tikzcd}
		\cdot & \cdots & \cdot \\
		\cdot && \cdot
		\arrow["{q_m}"', "\shortmid"{marking}, from=1-3, to=1-2]
		\arrow["{q_1}"', "\shortmid"{marking}, from=1-2, to=1-1]
		\arrow["q", "\shortmid"{marking}, from=2-3, to=2-1]
		\arrow[""{name=0, anchor=center, inner sep=0}, Rightarrow, no head, nfold, from=1-3, to=2-3]
		\arrow[""{name=1, anchor=center, inner sep=0}, Rightarrow, no head, nfold, from=1-1, to=2-1]
		\arrow["\opcart"{description}, draw=none, from=0, to=1]
	\end{tikzcd}\]
  To verify that this 2-cell is opcartesian, suppose we are given a 2-cell as follows; since restrictions exists, it suffices to consider a globular 2-cell.
  \[\begin{tikzcd}
    \cdot & \cdots & \cdot & \cdots & \cdot & \cdots & \cdot \\
    \cdot &&&&&& \cdot
    \arrow[""{name=0, anchor=center, inner sep=0}, equals, from=1-1, to=2-1]
    \arrow["{p_1}"'{inner sep=.8ex}, "\shortmid"{marking}, from=1-2, to=1-1]
    \arrow["{p_l}"'{inner sep=.8ex}, "\shortmid"{marking}, from=1-3, to=1-2]
    \arrow["{q_1}"'{inner sep=.8ex}, "\shortmid"{marking}, from=1-4, to=1-3]
    \arrow["{q_m}"'{inner sep=.8ex}, "\shortmid"{marking}, from=1-5, to=1-4]
    \arrow["{r_1}"'{inner sep=.8ex}, "\shortmid"{marking}, from=1-6, to=1-5]
    \arrow["{r_n}"'{inner sep=.8ex}, "\shortmid"{marking}, from=1-7, to=1-6]
    \arrow[""{name=1, anchor=center, inner sep=0}, equals, from=1-7, to=2-7]
    \arrow["s"{inner sep=.8ex}, "\shortmid"{marking}, from=2-7, to=2-1]
    \arrow["\phi"{description}, draw=none, from=1, to=0]
  \end{tikzcd}\]
  Since every loose-cell is lifting, every weight is lifting by \cref{iterated-lift}. Thus, using the universal property of the right lift, followed by the left-opcartesian 2-cell, the 2-cell above induces a 2-cell $\widecheck{\lambda\phi}$ such that the following equation holds.
  \[
  \phi \quad = \quad
  \begin{tikzcd}
    \cdot & \cdots & \cdot & \cdots & \cdot & \cdots & \cdot \\
    \cdot & \cdots & \cdot && \cdot & \cdots & \cdot \\
    \cdot & \cdots & \cdot &&&& \cdot \\
    \cdot &&&&&& \cdot
    \arrow[""{name=0, anchor=center, inner sep=0}, equals, from=1-1, to=2-1]
    \arrow["{p_1}"'{inner sep=.8ex}, "\shortmid"{marking}, from=1-2, to=1-1]
    \arrow["{p_l}"'{inner sep=.8ex}, "\shortmid"{marking}, from=1-3, to=1-2]
    \arrow[""{name=1, anchor=center, inner sep=0}, equals, from=1-3, to=2-3]
    \arrow["{q_1}"'{inner sep=.8ex}, "\shortmid"{marking}, from=1-4, to=1-3]
    \arrow["{q_m}"'{inner sep=.8ex}, "\shortmid"{marking}, from=1-5, to=1-4]
    \arrow[""{name=2, anchor=center, inner sep=0}, equals, from=1-5, to=2-5]
    \arrow["{r_1}"'{inner sep=.8ex}, "\shortmid"{marking}, from=1-6, to=1-5]
    \arrow["{r_n}"'{inner sep=.8ex}, "\shortmid"{marking}, from=1-7, to=1-6]
    \arrow[""{name=3, anchor=center, inner sep=0}, equals, from=1-7, to=2-7]
    \arrow[""{name=4, anchor=center, inner sep=0}, equals, from=2-1, to=3-1]
    \arrow["{p_1}"{description}, from=2-2, to=2-1]
    \arrow["{p_l}"{description}, from=2-3, to=2-2]
    \arrow[""{name=5, anchor=center, inner sep=0}, equals, from=2-3, to=3-3]
    \arrow["{q_1 \odotl \cdots \odotl q_m}"{description}, from=2-5, to=2-3]
    \arrow["{r_1}"{description}, from=2-6, to=2-5]
    \arrow["{r_n}"{description}, from=2-7, to=2-6]
    \arrow[""{name=6, anchor=center, inner sep=0}, equals, from=2-7, to=3-7]
    \arrow[""{name=7, anchor=center, inner sep=0}, equals, from=3-1, to=4-1]
    \arrow["{p_1}"{description}, from=3-2, to=3-1]
    \arrow["{p_l}"{description}, from=3-3, to=3-2]
    \arrow["{s \rf (p_1, \ldots, p_l)}"{description}, from=3-7, to=3-3]
    \arrow[""{name=8, anchor=center, inner sep=0}, equals, from=3-7, to=4-7]
    \arrow["s"{inner sep=.8ex}, "\shortmid"{marking}, from=4-7, to=4-1]
    \arrow["{=}"{description}, draw=none, from=0, to=1]
    \arrow["\opcartl"{description}, draw=none, from=1, to=2]
    \arrow["{=}"{description}, draw=none, from=2, to=3]
    \arrow["{=}"{description}, draw=none, from=4, to=5]
    \arrow["{\widecheck{\lambda\phi}}"{description}, draw=none, from=6, to=5]
    \arrow["\varpi"{description}, draw=none, from=7, to=8]
  \end{tikzcd}
  \]
  That this provides a unique factorisation of $\phi$ follows from the universal properties of the left-composite and right lift.
\end{proof}

While, in practice, the existence of \emph{all} right lifts or extensions tends to be too strong an assumption, the following lemma provides a useful special case of this fact.

\begin{lemma}
  Suppose we are given a chain of loose-morphisms, for which the left-composites $q_1 \odotl \cdots \odotl q_m$ and $p_1 \odotl \cdots \odotl p_l \odotl (q_1 \odotl \cdots \odotl q_m)$ exist.
  \[\begin{tikzcd}
    \cdot & \cdots & \cdot & \cdot & \cdot
    \arrow["{p_1}"'{inner sep=.8ex}, "\shortmid"{marking}, from=1-2, to=1-1]
    \arrow["{p_l}"'{inner sep=.8ex}, "\shortmid"{marking}, from=1-3, to=1-2]
    \arrow["{q_1}"'{inner sep=.8ex}, "\shortmid"{marking}, from=1-4, to=1-3]
    \arrow["{q_m}"'{inner sep=.8ex}, "\shortmid"{marking}, from=1-5, to=1-4]
  \end{tikzcd}\]
  If the weight $(p_1, \ldots, p_l)$ is lifting, then the following 2-cell is left-opcartesian.
  \[\begin{tikzcd}[column sep=huge]
    \cdot & \cdots & \cdot & \cdots & \cdot \\
    \cdot & \cdots & \cdot && \cdot \\
    \cdot &&&& \cdot
    \arrow[""{name=0, anchor=center, inner sep=0}, equals, nfold, from=1-1, to=2-1]
    \arrow["{p_1}"'{inner sep=.8ex}, "\shortmid"{marking}, from=1-2, to=1-1]
    \arrow["{p_l}"'{inner sep=.8ex}, "\shortmid"{marking}, from=1-3, to=1-2]
    \arrow[""{name=1, anchor=center, inner sep=0}, equals, nfold, from=1-3, to=2-3]
    \arrow["{q_1}"'{inner sep=.8ex}, "\shortmid"{marking}, from=1-4, to=1-3]
    \arrow["{q_m}"'{inner sep=.8ex}, "\shortmid"{marking}, from=1-5, to=1-4]
    \arrow[""{name=2, anchor=center, inner sep=0}, equals, nfold, from=1-5, to=2-5]
    \arrow[""{name=3, anchor=center, inner sep=0}, equals, nfold, from=2-1, to=3-1]
    \arrow["{p_1}"{description}, from=2-2, to=2-1]
    \arrow["{p_l}"{description}, from=2-3, to=2-2]
    \arrow["{q_1 \odotl \cdots \odotl q_m}"{description}, from=2-5, to=2-3]
    \arrow[""{name=4, anchor=center, inner sep=0}, equals, nfold, from=2-5, to=3-5]
    \arrow["{p_1 \odotl \cdots \odotl p_l \odotl (q_1 \odotl \cdots \odotl q_m)}"{inner sep=.8ex}, "\shortmid"{marking}, from=3-5, to=3-1]
    \arrow["{=}"{description}, draw=none, from=0, to=1]
    \arrow["\opcartl"{description}, draw=none, from=1, to=2]
    \arrow["\opcartl"{description}, draw=none, from=3, to=4]
  \end{tikzcd}\]
\end{lemma}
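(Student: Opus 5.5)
Write $\vec p = (p_1, \ldots, p_l)$, $\hat q \defeq q_1 \odotl \cdots \odotl q_m$, and $c \defeq p_1 \odotl \cdots \odotl p_l \odotl \hat q$. Call $\Theta$ the displayed composite: the left-opcartesian cell for $\hat q$, with identities on $p_1, \ldots, p_l$, followed by the left-opcartesian cell for $c$. I will check the left-opcartesian universal property directly, following the pattern of \cref{right-lifts-imply-left-composites-are-composites}: use the right lift through $\vec p$ to move $\vec p$ out of the way, so that only the left-opcartesian property of $\hat q$ is needed on what remains.

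Let $\phi \colon p_1, \ldots, p_l, q_1, \ldots, q_m, r_1, \ldots, r_n \tto s$ be a 2-cell with identity on the left and some tight-cell $g$ on the right. Since $\X$ is a \ve, precompose $\phi$ with the cartesian cell $s(1,g) \tto s$ to reduce to the globular case $\phi' \colon \ldots \tto s(1, g)$. Since $\vec p$ is lifting, the right lift $s(1,g) \rf \vec p$ exists. Its universal property (\cref{right-lift}) factors $\phi'$ uniquely as $\varpi \circ (1_{\vec p}, \lambda\phi')$, where $\lambda\phi' \colon q_1, \ldots, q_m, r_1, \ldots, r_n \tto s(1,g) \rf \vec p$. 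The cell $\lambda\phi'$ has identity on its left boundary, so the left-opcartesian property of $\hat q$ factors it uniquely through $\hat q, r_1, \ldots, r_n$ as $\widecheck{\lambda\phi'}$. Pasting, $\phi'$ is recovered from the 2-cell $\varpi \circ (1_{\vec p}, \widecheck{\lambda\phi'}) \colon p_1, \ldots, p_l, \hat q, r_1, \ldots, r_n \tto s(1,g)$, precomposed with $1_{\vec p}$ and the opcartesian cell of $\hat q$. Now apply the left-opcartesian property of $c$, viewed as the left-composite of $p_1, \ldots, p_l, \hat q$, to this cell. It factors through $c, r_1, \ldots, r_n$, which yields the required factorisation of $\phi'$ through $\Theta$. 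Composing with the cartesian cell gives the factorisation of $\phi$.

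Uniqueness is the main obstacle. Suppose $\chi, \chi' \colon c, r_1, \ldots, r_n \tto s$ satisfy $\Theta \d \chi = \Theta \d \chi'$. I need to peel off the opcartesian cell of $\hat q$, which sits strictly inside the chain behind $\vec p$; left-opcartesianness alone does not allow this, and this is exactly where lifting is used. First reduce to the globular case via the cartesian cell. Precompose $\chi$ with the opcartesian cell of $c$ to get $\psi \colon p_1, \ldots, p_l, \hat q, \vec r \tto s(1,g)$, and form $\psi'$ from $\chi'$ in the same way. By the right lift, $\psi = \varpi \circ (1, \lambda\psi)$ and $\psi' = \varpi \circ (1, \lambda\psi')$. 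The hypothesis says that $\lambda\psi$ and $\lambda\psi'$ become equal after precomposition with the opcartesian cell of $\hat q$, by uniqueness in the right lift. Since $\lambda\psi$ has identity left boundary, the left-opcartesian property of $\hat q$ gives $\lambda\psi = \lambda\psi'$. Hence $\psi = \psi'$, and the left-opcartesian property of $c$ gives $\chi = \chi'$.
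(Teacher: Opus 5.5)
Your proof is correct and is essentially the paper's argument. For existence, you factor in turn through the right lift $s \rf (p_1, \ldots, p_l)$, then the left-composite $q_1 \odotl \cdots \odotl q_m$, then the outer left-composite; uniqueness comes from combining the three uniqueness properties. Your explicit reduction to the globular case via the restriction $s(1, g)$, and your step-by-step uniqueness argument, spell out what the paper leaves implicit.
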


\begin{proof}
  Suppose we are given a 2-cell as follows.
  \[\begin{tikzcd}
    \cdot & \cdots & \cdot & \cdots & \cdot & \cdots & \cdot \\
    \cdot &&&&&& \cdot
    \arrow[""{name=0, anchor=center, inner sep=0}, equals, nfold, from=1-1, to=2-1]
    \arrow["{p_1}"'{inner sep=.8ex}, "\shortmid"{marking}, from=1-2, to=1-1]
    \arrow["{p_l}"'{inner sep=.8ex}, "\shortmid"{marking}, from=1-3, to=1-2]
    \arrow["{q_1}"'{inner sep=.8ex}, "\shortmid"{marking}, from=1-4, to=1-3]
    \arrow["{q_m}"'{inner sep=.8ex}, "\shortmid"{marking}, from=1-5, to=1-4]
    \arrow["{r_1}"'{inner sep=.8ex}, "\shortmid"{marking}, from=1-6, to=1-5]
    \arrow["{r_n}"'{inner sep=.8ex}, "\shortmid"{marking}, from=1-7, to=1-6]
    \arrow[""{name=1, anchor=center, inner sep=0}, "g", from=1-7, to=2-7]
    \arrow["s"{inner sep=.8ex}, "\shortmid"{marking}, from=2-7, to=2-1]
    \arrow["\phi"{description}, draw=none, from=0, to=1]
  \end{tikzcd}\]
  Invoking in turn the universal properties of the right lift $s \rf (p_1, \ldots, p_l)$, the left-composite $q_1 \odotl \cdots \odotl q_m$, and the left-composite $p_1 \odotl \cdots \odotl p_l \odotl (q_1 \odotl \cdots \odotl q_m)$, the 2-cell $\phi$ is equal to the following composite, in which $\psi$ is the factorisation of $((1_{p_1}, \ldots, 1_{p_l}, \widecheck{\lambda \phi}) \d \varpi)$ through the latter left-opcartesian 2-cell, establishing the existence aspect of the required universal property.
  \[\begin{tikzcd}[column sep=huge]
    \cdot & \cdots & \cdot & \cdots & \cdot & \cdots & \cdot \\
    \cdot & \cdots & \cdot && \cdot & \cdots & \cdot \\
    \cdot &&&& \cdot & \cdots & \cdot \\
    \cdot &&&&&& \cdot
    \arrow[""{name=0, anchor=center, inner sep=0}, equals, nfold, from=1-1, to=2-1]
    \arrow["{p_1}"'{inner sep=.8ex}, "\shortmid"{marking}, from=1-2, to=1-1]
    \arrow["{p_l}"'{inner sep=.8ex}, "\shortmid"{marking}, from=1-3, to=1-2]
    \arrow[""{name=1, anchor=center, inner sep=0}, equals, nfold, from=1-3, to=2-3]
    \arrow["{q_1}"'{inner sep=.8ex}, "\shortmid"{marking}, from=1-4, to=1-3]
    \arrow["{q_m}"'{inner sep=.8ex}, "\shortmid"{marking}, from=1-5, to=1-4]
    \arrow[""{name=2, anchor=center, inner sep=0}, equals, nfold, from=1-5, to=2-5]
    \arrow["{r_1}"'{inner sep=.8ex}, "\shortmid"{marking}, from=1-6, to=1-5]
    \arrow["{r_n}"'{inner sep=.8ex}, "\shortmid"{marking}, from=1-7, to=1-6]
    \arrow[""{name=3, anchor=center, inner sep=0}, equals, nfold, from=1-7, to=2-7]
    \arrow[""{name=4, anchor=center, inner sep=0}, equals, nfold, from=2-1, to=3-1]
    \arrow["{p_1}"{description}, from=2-2, to=2-1]
    \arrow["{p_l}"{description}, from=2-3, to=2-2]
    \arrow["{q_1 \odotl \cdots \odotl q_m}"{description}, from=2-5, to=2-3]
    \arrow[""{name=5, anchor=center, inner sep=0}, equals, nfold, from=2-5, to=3-5]
    \arrow["{r_1}"{description}, from=2-6, to=2-5]
    \arrow["{r_n}"{description}, from=2-7, to=2-6]
    \arrow[""{name=6, anchor=center, inner sep=0}, equals, nfold, from=2-7, to=3-7]
    \arrow[""{name=7, anchor=center, inner sep=0}, equals, nfold, from=3-1, to=4-1]
    \arrow["{p_1 \odotl \cdots \odotl p_l \odotl (q_1 \odotl \cdots \odotl q_m)}"{description}, from=3-5, to=3-1]
    \arrow["{r_1}"{description}, from=3-6, to=3-5]
    \arrow["{r_n}"{description}, from=3-7, to=3-6]
    \arrow[""{name=8, anchor=center, inner sep=0}, "g", from=3-7, to=4-7]
    \arrow["s"{inner sep=.8ex}, "\shortmid"{marking}, from=4-7, to=4-1]
    \arrow["{=}"{description}, draw=none, from=0, to=1]
    \arrow["\opcartl"{description}, draw=none, from=1, to=2]
    \arrow["{=}"{description}, draw=none, from=2, to=3]
    \arrow["\opcartl"{description}, draw=none, from=4, to=5]
    \arrow["{=}"{description}, draw=none, from=5, to=6]
    \arrow["\psi"{description}, draw=none, from=7, to=8]
  \end{tikzcd}\]
  Uniqueness follows from the conjunction of the uniqueness aspects of the universal properties of the respective left-composite, left-composite, and right lift.
\end{proof}

The proofs of the following lemmas, which establish the interaction between right lifts, right extensions, and restrictions, are exactly as their correspondents in \cite[\S3]{arkor2024formal} and are omitted.

\begin{lemma}[{\cf~\cite[Lemma~3.4]{arkor2024formal}}]
  Let $\vec p = p_1, \ldots, p_m \colon X \lto \cdots \lto Y$ and $\vec r = (r_1, \ldots, r_n) \colon Y \lto \cdots \lto Z$ be weights and $q \colon X \lto Z$ be a loose-cell.
  Suppose that the right extension $\vec p \rx q$ and right lift $q \rf \vec r$ exist. Then, if either side of the following exists, so does the other, in which case they are isomorphic.
  \[(\vec p \rx q) \rf \vec r \iso \vec p \rx (q \rf \vec r) \tag*{\qed}\]
\end{lemma}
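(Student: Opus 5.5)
The plan is to show that both loose-cells $Y \lto X$... wait, let me first fix the shapes. We have $\vec p \colon X \lcto Y$, $q \colon X \lto Z$ and $\vec r \colon Y \lcto Z$. Then $\vec p \rx q \colon Y \lto Z$ and $(\vec p \rx q) \rf \vec r \colon Y \lto Y$. Likewise $q \rf \vec r \colon X \lto Y$ and $\vec p \rx (q \rf \vec r) \colon Y \lto Y$. So both sides are loose-cells of the same type.

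The approach is to show that both sides represent the same functor on chains of loose-cells. Concretely, for every chain $\vec s = (s_1, \ldots, s_k) \colon Y \lcto Y$, I will exhibit a bijection, natural in precomposition by 2-cells into $\vec s$, between the following:
\begin{enumerate}
  \item globular 2-cells $\vec r, \vec s \tto \vec p \rx q$;
  \item globular 2-cells $\vec r, \vec s, \vec p \tto q$;
  \item globular 2-cells $\vec s, \vec p \tto q \rf \vec r$.
\end{enumerate}
Here (1) corresponds to (2) by the universal property of the right extension $\vec p \rx q$ (\cref{right-extension}), applied with the chain $\vec r, \vec s$ in the role of the arbitrary prefix. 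Similarly, (2) corresponds to (3) by the universal property of the right lift $q \rf \vec r$ (\cref{right-lift}), applied with the chain $\vec s, \vec p$ in the role of the arbitrary suffix.

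By definition, a right lift $(\vec p \rx q) \rf \vec r$ is a loose-cell $\ell$ together with a natural bijection between 2-cells $\vec s \tto \ell$ and 2-cells $\vec r, \vec s \tto \vec p \rx q$. A right extension $\vec p \rx (q \rf \vec r)$ is a loose-cell $e$ with a natural bijection between 2-cells $\vec s \tto e$ and 2-cells $\vec s, \vec p \tto q \rf \vec r$.

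Suppose the left side $\ell$ exists. Its counit is $\varpi_\ell \colon \vec r, \ell \tto \vec p \rx q$. Transport it along the bijections above, taking $\vec s = (\ell)$, to obtain a 2-cell $\ell, \vec p \tto q \rf \vec r$. Concretely, this is the factorisation through the counit of $q \rf \vec r$ of the composite $(\varpi_\ell, 1_{\vec p}) \d \varpi_{\vec p \rx q}$.

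I then check that this 2-cell has the universal property of the right extension. Given any $\phi \colon \vec s, \vec p \tto q \rf \vec r$, pass it through the bijections to get a 2-cell $\vec r, \vec s \tto \vec p \rx q$, and factor that uniquely through $\varpi_\ell$. Uniqueness of the resulting factorisation follows from the uniqueness clauses of the three universal properties. The converse direction is symmetric, and the isomorphism follows because both loose-cells satisfy the same universal property.

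The main obstacle is verifying that the bijections are compatible with pasting. That is, I must check that transporting a factorisation $\vec s \tto \ell$ composed with the transported counit recovers the transported $\phi$. This amounts to an associativity or interchange argument for pasting in the \vdc. The key fact is that the 2-cell $\vec r, \vec s, \vec p \tto q$ obtained in (2) is the same whichever universal property is used to decompose it: it equals $(\ldots, \varpi_{q \rf \vec r})$ pasted below and $(\ldots, \varpi_{\vec p \rx q})$ pasted below. I would write this out once as a pasting equation and appeal to associativity of composition, following the pattern of \cite[Lemma~3.4]{arkor2024formal}.
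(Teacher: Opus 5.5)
Your proposal is correct, and it is essentially the argument the paper omits by deferring to \cite[Lemma~3.4]{arkor2024formal}: both sides are characterised by the property that 2-cells $\vec s \tto {-}$ into them correspond bijectively to 2-cells $\vec r, \vec s, \vec p \tto q$, via the universal properties of $\vec p \rx q$ and $q \rf \vec r$. The compatibility you flag as the main obstacle is indeed just associativity of pasting: writing $\varpi' \colon \ell, \vec p \tto q \rf \vec r$ for your transported counit, both $((1_{\vec r}, \beta) \d \varpi_\ell, 1_{\vec p}) \d \varpi_{\vec p \rx q}$ and $(1_{\vec r}, (\beta, 1_{\vec p}) \d \varpi') \d \varpi_{q \rf \vec r}$ equal $(1_{\vec r}, \beta, 1_{\vec p})$ pasted onto $(\varpi_\ell, 1_{\vec p}) \d \varpi_{\vec p \rx q}$.
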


\begin{lemma}[{\cf~\cite[Lemma~3.6]{arkor2024formal}}]
  \label{restriction-of-lift}
  Let $q \colon X \lto Z$ be a loose-cell and $y \colon Y' \to Y$ be a tight-cell.
  \begin{enumerate}
    \item Let $\vec p = (p_1, \ldots, p_m) \colon Y \lto \cdots \lto Z$ be a weight and $x \colon X' \to X$ be a tight-cell. If the right lift $q \rf \vec p$ exists, then $(q \rf \vec p)(y, x) \iso q(1, x) \rf (p_1, \ldots, p_m(1, y))$.
    \item Let $\vec p = (p_1, \ldots, p_m) \colon X \lto \cdots \lto Y$ be a weight and $z \colon Z' \to Z$ be a tight-cell. If the right extension $\vec p \rx q$ exists, then $(\vec p \rx q)(z, y) \iso (p_1(y, 1), \ldots, p_m) \rx q(z, 1)$.
    \qed
  \end{enumerate}
\end{lemma}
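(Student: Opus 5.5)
We prove (1) by exhibiting a 2-cell with the required universal property; (2) then follows by applying (1) in the dual $\X\co$, since right extensions in $\X$ are right lifts in $\X\co$, and restrictions exist in $\X$ if and only if they exist in $\X\co$.

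\textbf{Candidate counit.} Write $\ell \defeq (q \rf \vec p)(y, x) \colon X' \lto Y'$, a restriction of $q \rf \vec p$. We build a 2-cell
\[p_1, \ldots, p_{m-1}, p_m(1, y), \ell \tto q(1, x)\]
as follows. Paste the cartesian 2-cells defining $p_m(1, y)$ and $\ell$ side by side (they share the tight boundary $y$), with identities on $p_1, \ldots, p_{m-1}$. Then follow this with the counit $\varpi$. The resulting 2-cell has left tight boundary $1_Z$ and right tight boundary $x$. By the universal property of the cartesian 2-cell defining $q(1, x)$, it factors uniquely as a globular 2-cell $\varpi'$ into $q(1, x)$.

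\textbf{Universal property.} Let $\phi \colon p_1, \ldots, p_m(1, y), r_1, \ldots, r_n \tto q(1, x)$ be given.
\begin{enumerate}
  \item Post-compose $\phi$ with the cartesian 2-cell of $q(1, x)$ to obtain a 2-cell into $q$ with right boundary $x$.
  \item Use the cartesian cell of $p_m(1, y)$ to view the whole as a 2-cell with top chain $p_1, \ldots, p_m, r_1, \ldots, r_n$. Its tight boundary at $Y$ is $y$, and it is not globular.
\end{enumerate}

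This step is the main obstacle. The lift's universal property in \cref{right-lift} only accepts globular 2-cells, whereas here the boundary is nonglobular at $Y$ (via $y$) and at $X$ (via $x$). To handle it, I will absorb these boundaries into restrictions. The chain $r_1, \ldots, r_n$ runs from $X'$ to $Y'$. A 2-cell
\[p_1, \ldots, p_m(1, y), r_1, \ldots, r_n \tto q(1, x)\]
corresponds to a 2-cell
\[p_1, \ldots, p_m, Y(y, 1), r_1, \ldots, r_n, X(1, x) \tto q,\]
using the companion/conjoint correspondences. For $n \geq 1$, these are obtained by pasting with the cartesian cells defining $Y(y, 1)$ and $X(1, x)$ and the $\cp{}$-type cells of \cref{cp-notation}; the bijection is witnessed by the usual zig-zag identities for companions and conjoints. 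The case $n = 0$ needs a separate treatment of the nullary cells but is similar.

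\textbf{Factorisation and uniqueness.} Applying the lift's universal property to the transformed 2-cell gives a globular
\[\lambda \colon Y(y, 1), r_1, \ldots, r_n, X(1, x) \tto q \rf \vec p.\]
Transposing $\lambda$ back yields a 2-cell
\[r_1, \ldots, r_n \tto (q \rf \vec p)(y, x) = \ell,\]
using the cartesian property of $\ell$. One then checks that composing this with $\varpi'$ recovers $\phi$; this is a pasting computation using the cartesian factorisations and the zig-zag identities. Uniqueness follows because each step (the cartesian factorisations, the companion/conjoint transposition, and the lift factorisation) is a bijection.

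The bookkeeping for $n = 0$ and the zig-zag identities is routine; the substantive point is the transposition that converts the nonglobular factorisation problem into a globular one.
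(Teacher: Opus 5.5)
Your strategy is sound, and your construction of $\varpi'$ is correct. The central transposition step, however, is wrong as written, because you have swapped companions and conjoints.

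Since $p_m \colon Y \lto X_{m-1}$, the loose-cell following $p_m$ in a chain must have codomain $Y$, and the one following $r_n$ must have codomain $X'$. But $Y(y, 1) \colon Y \lto Y'$ and $X(1, x) \colon X' \lto X$, so neither the chain $p_1, \ldots, p_m, Y(y, 1), r_1, \ldots, r_n, X(1, x)$ nor the domain of your $\lambda$ is composable. The correct correspondence is between globular 2-cells
\[p_1, \ldots, p_m(1, y), r_1, \ldots, r_n \tto q(1, x) \qquad\text{and}\qquad p_1, \ldots, p_m, Y(1, y), r_1, \ldots, r_n, X(x, 1) \tto q.\]
The companion $Y(1, y) \colon Y' \lto Y$ appears because $p_m(1, y) \iso p_m \odot Y(1, y)$. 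The conjoint $X(x, 1) \colon X \lto X'$ absorbs the tight boundary $x$ produced by the cartesian cell of $q(1, x)$. Correspondingly, $\lambda$ should have domain $Y(1, y), r_1, \ldots, r_n, X(x, 1)$. Transposing along the same companion and conjoint turns it into a 2-cell $r_1, \ldots, r_n \tto q \rf \vec p$ with tight boundaries $y$ and $x$, which then factors through the cartesian cell of $\ell$.

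Three further repairs are needed.
\begin{enumerate}
  \item Discard your step 2 under ``Universal property'' rather than patching it. The cartesian cell of $p_m(1, y)$ cannot replace $p_m(1, y)$ by $p_m$ in the domain of a 2-cell; pasting it converts $p_m$ into $p_m(1, y)$, the opposite direction. Also, a domain chain has no tight boundary ``at $Y$''.
  \item What replaces step 2 is the bijection at the $p_m(1, y)$ position. This is precisely the statement that the canonical 2-cell $p_m, Y(1, y) \tto p_m(1, y)$ is opcartesian, i.e.\ \cite[Theorem~7.16]{cruttwell2010unified}, and you should cite it. Concretely, one direction precomposes with the cartesian cell of $p_m(1, y)$ juxtaposed with the nullary binding cell of the companion (empty domain at $Y'$, codomain $Y(1, y)$, tight boundaries $y$ and $1_{Y'}$). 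The other direction precomposes with that canonical 2-cell, which is obtained by cartesianness. The cells $\cp{f}$ of \cref{cp-notation}, which pair a companion with its conjoint, play no role here.
  \item The case $n = 0$ needs no separate treatment, since all these bijections are uniform in $n$.
\end{enumerate}

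With these corrections, the chain of bijections agrees with postcomposition by $\varpi'$, and your argument becomes a complete proof. The paper itself omits the proof, deferring to \cite[Lemma~3.6]{arkor2024formal}.
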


\subsection{Limits and colimits}

We are now ready to introduce weighted limits and weighted colimits in \ve s. The following definitions generalise the notions in \cite[Definitions~3.8 \& 3.9]{arkor2024formal} to permit arbitrary (rather than unary) weights. Consequently, they coincide with the \emph{left Kan extensions} and \emph{right Kan extensions} of \cite[Definition~1.9 \& Remark~1.6]{koudenburg2024formal} (though \citeauthor{koudenburg2024formal} defines these in a \vdc{} more generally, without assuming the existence of companions or conjoints).

\begin{definition}
	\label{weighted-colimit}
  Let $\vec p = (p_1, \ldots, p_m) \colon Y \lcto Z$ be a weight and $f \colon Z \to X$ be a tight-cell. A \emph{$\vec p$-weighted cocone} (or simply \emph{$\vec p$-cocone}) for $f$ is a pair $(c, \gamma)$ of a tight-cell $c \colon Y \to X$ and a 2-cell $\gamma \colon p_1, \ldots, p_m \tto X(f, c)$.
  A cocone $(\vec p \wc f, \lambda)$ is \emph{colimiting} (alternatively is the \emph{$\vec p$-weighted colimit}, or simply \emph{$\vec p$-colimit}) of $f$ when the 2-cell
  \[\begin{tikzcd}[column sep=8em]
    Z & \cdots & Y & X \\
    Z && Y & X \\
    Z &&& X
    \arrow[""{name=0, anchor=center, inner sep=0}, equals, nfold, from=1-1, to=2-1]
    \arrow["{p_1}"'{inner sep=.8ex}, "\shortmid"{marking}, from=1-2, to=1-1]
    \arrow["{p_m}"'{inner sep=.8ex}, "\shortmid"{marking}, from=1-3, to=1-2]
    \arrow[""{name=1, anchor=center, inner sep=0}, equals, nfold, from=1-3, to=2-3]
    \arrow["{X(\vec p \wc f, 1)}"'{inner sep=.8ex}, "\shortmid"{marking}, from=1-4, to=1-3]
    \arrow[""{name=2, anchor=center, inner sep=0}, equals, nfold, from=1-4, to=2-4]
    \arrow[""{name=3, anchor=center, inner sep=0}, equals, nfold, from=2-1, to=3-1]
    \arrow["{X(f, \vec p \wc f)}"{description}, from=2-3, to=2-1]
    \arrow["{X(\vec p \wc f, 1)}"{description}, from=2-4, to=2-3]
    \arrow[""{name=4, anchor=center, inner sep=0}, equals, nfold, from=2-4, to=3-4]
    \arrow["{X(f, 1)}"{inner sep=.8ex}, "\shortmid"{marking}, from=3-4, to=3-1]
    \arrow["\lambda"{description}, draw=none, from=1, to=0]
    \arrow["{=}"{description}, draw=none, from=2, to=1]
    \arrow["{\cp{\vec p \wc f}(f, 1)}"{description}, draw=none, from=4, to=3]
  \end{tikzcd}\]
	exhibits $X(\vec p \wc f, 1)$ as the right lift $X(f, 1) \rf \vec p$.
	A tight-cell $g \colon X \to X'$ \emph{preserves} the colimit $\vec p \wc f$ when the cocone $(((\vec p \wc f) \d g), (\lambda \d g))$ is the $\vec p$-colimit of $(f \d g) \colon Z \to X'$.
\end{definition}

A weighted limit in $\X$ is a weighted colimit in $\X\co$. We spell out the definition for convenience.

\begin{definition}
	\label{weighted-limit}
	Let $\vec p = (p_1, \ldots, p_m) \colon Z \lcto Y$ be a weight and $f \colon Z \to X$ be a tight-cell. A \emph{$\vec p$-weighted cone} (or simply \emph{$\vec p$-cone}) for $f$ is a pair $(c, \gamma)$ of a tight-cell $c \colon Y \to X$ and a 2-cell $\gamma \colon \vec p \tto X(c, f)$. A cone $(\vec p \wl f, \mu)$ is \emph{limiting} (alternatively is the \emph{$p$-weighted limit}, or simply \emph{$\vec p$-limit}) of $f$ when the 2-cell
  \[\begin{tikzcd}[column sep=8em]
    X & Y & \cdots & Z \\
    X & Y && Z \\
    X &&& Z
    \arrow[""{name=0, anchor=center, inner sep=0}, equals, nfold, from=1-1, to=2-1]
    \arrow["{X(1, \vec p \wl f)}"'{inner sep=.8ex}, "\shortmid"{marking}, from=1-2, to=1-1]
    \arrow[""{name=1, anchor=center, inner sep=0}, equals, nfold, from=1-2, to=2-2]
    \arrow["{p_1}"'{inner sep=.8ex}, "\shortmid"{marking}, from=1-3, to=1-2]
    \arrow["{p_m}"'{inner sep=.8ex}, "\shortmid"{marking}, from=1-4, to=1-3]
    \arrow[""{name=2, anchor=center, inner sep=0}, equals, nfold, from=1-4, to=2-4]
    \arrow[""{name=3, anchor=center, inner sep=0}, equals, nfold, from=2-1, to=3-1]
    \arrow["{X(1, \vec p \wl f)}"{description}, from=2-2, to=2-1]
    \arrow["{X(\vec p \wl f, f)}"{description}, from=2-4, to=2-2]
    \arrow[""{name=4, anchor=center, inner sep=0}, equals, nfold, from=2-4, to=3-4]
    \arrow["{X(1, f)}"{inner sep=.8ex}, "\shortmid"{marking}, from=3-4, to=3-1]
    \arrow["{=}"{description}, draw=none, from=1, to=0]
    \arrow["\mu"{description}, draw=none, from=2, to=1]
    \arrow["{\cp{\vec p \wl f}(1, f)}"{description}, draw=none, from=4, to=3]
  \end{tikzcd}\]
	exhibits $X(1, \vec p \wl f)$ as the right extension $\vec p \rx X(1, f)$.
  A tight-cell $g \colon X \to X'$ \emph{preserves} the limit $\vec p \wl f$ when the cone $((\vec p \wl f \d g), (\mu \d g))$ is the $\vec p$-limit of $(f \d g) \colon Z \to X'$.
\end{definition}

Just as we can speak of \emph{tight}-cells that preserve weighted colimits, there is an analogous notion of compatibility between \emph{loose}-cells and weighted colimits.

\begin{definition}
  \label{respects-colimit}
  A loose-cell $p \colon W \lto X$ \emph{respects} a colimit $\vec q \wc g$ in $X$ when the canonical 2-cell $\vec q, p(\vec q \wc g, 1) \tto p(g, 1)$ exhibits $p(\vec q \wc g, 1)$ as the right lift of $p(g, 1)$ through $\vec q$.
  \[
    p(\vec q \wc g, 1) \iso p(g, 1) \rf \vec q \qedhere
  \]
\end{definition}

In particular, a tight-cell $f \colon X \to W$ preserves a colimit if and only if its conjoint $W(f, 1) \colon W \lto X$ respects the colimit.

\begin{example}
  \label{distributor-respects-colimit}
  In $\Cat$, a distributor $p \colon \b X \lto \b Y$ respects the $(q \colon \A \lto \B)$-weighted colimit of a functor $g \colon \B \to \b Y$ when there is a canonical isomorphism as follows, natural in $a \in \A$ and $x \in \b X$.
  \[p((q \wc g)(a), x) \iso \widehat{\B}(q({-}, a), p(g{-}, x)) \qedhere\]
\end{example}

We obtain the dual notion for limits by working in $\X\co$. Once again, we spell out the definition in this case, but henceforth shall generally leave routine applications of duality to the reader.

\begin{definition}
  \label{respects-limit}
  A loose-cell $p \colon X \lto W$ \emph{respects} a limit $\vec q \wl g$ in $X$ when the canonical 2-cell $p(1, \vec q \wl g), \vec q \tto p(1, g)$ exhibits $p(1, \vec q \wl g)$ as the right extension of $p(1, g)$ along $\vec q$.
  \[
    p(1, \vec q \wl g) \iso \vec q \rx p(1, g) \qedhere
  \]
\end{definition}

An \emph{absolute colimit} is a colimit that is preserved by any functor or, equivalently, a colimit that is preserved by the presheaf embedding~\cite{pare1969absolute}. We may generalise the latter description by parameterising it with respect to a distributor.

\begin{definition}[label=j-absolute]
  Let $\vec p = (p_1, \ldots, p_m) \colon Y \lcto Z$ be a weight, $j \colon E \lto A$ be a loose-cell, and $f \colon Z \to E$ be a tight-cell.
  A colimit $(\vec p \wc f, \lambda)$ is \emph{$j$-absolute} if
  the following 2-cell is left-opcartesian (\cref{left-opcartesian}).
  \[\begin{tikzcd}[column sep=huge]
    A & Z & \cdots & Y \\
    A & Z && Y \\
    A &&& Y
    \arrow["{j(1, f)}"'{inner sep=.8ex}, "\shortmid"{marking}, from=1-2, to=1-1]
    \arrow["{p_1}"'{inner sep=.8ex}, "\shortmid"{marking}, from=1-3, to=1-2]
    \arrow["{p_m}"', from=1-4, to=1-3]
    \arrow[""{name=0, anchor=center, inner sep=0}, equals, nfold, from=2-1, to=1-1]
    \arrow[""{name=1, anchor=center, inner sep=0}, equals, nfold, from=2-2, to=1-2]
    \arrow["{j(1, f)}"{description}, from=2-2, to=2-1]
    \arrow[""{name=2, anchor=center, inner sep=0}, equals, nfold, from=2-4, to=1-4]
    \arrow["{E(f, \vec p \wc f)}"{description}, from=2-4, to=2-2]
    \arrow[""{name=3, anchor=center, inner sep=0}, equals, nfold, from=3-1, to=2-1]
    \arrow[""{name=4, anchor=center, inner sep=0}, equals, nfold, from=3-4, to=2-4]
    \arrow["{j(1, \vec p \wc f)}"{inner sep=.8ex}, "\shortmid"{marking}, from=3-4, to=3-1]
    \arrow["{=}"{description}, draw=none, from=1, to=0]
    \arrow["\lambda"{description}, draw=none, from=2, to=1]
    \arrow["{j \odot \cp f(1, \vec p \wc f)}"{description}, draw=none, from=4, to=3]
  \end{tikzcd}\]
  For a tight-cell $j \colon A \to E$, a colimit in $E$ is \emph{$j$-absolute} if it is \emph{$E(j, 1)$}-absolute~\cite[Definition~3.21]{arkor2024formal}. A colimit in $E$ is \emph{absolute} if it is $1_E$-absolute.
\end{definition}

When $j = 1$, \cref{j-absolute} is closely related to the \emph{left Beck--Chevalley condition} of \cite[Definition~5.10]{koudenburg2024formal}, in which context \cite[Theorem~5.16]{koudenburg2024formal} corresponds to our \cref{absoluteness-and-exactness}.

Note that, when we talk of \emph{preservation of absolute colimits}, we simply mean that absolute colimits are sent to (not necessarily absolute) colimits. The following lemma provides a useful method to recognise absolute colimits, and generalises \cite[Lemma~3.23]{arkor2024formal} from tight-cells to loose-cells (upon taking $j$ to be corepresentable), for which we require the notion of \emph{density} for a loose-cell.

\begin{definition}[{\cite[Definition~6.8]{arkor2025nerve}}]
  \label{dense-loose-cell}
  A loose-cell $p \colon B \lto A$ is \emph{dense} when the identity 2-cell $1_p \colon p \tto p$ exhibits $B(1, 1) \colon B \lto B$ as the right lift $p \rf p$. A tight-cell is \emph{dense} if its conjoint is.
\end{definition}

\begin{lemma}
  \label{absolute-implies-colimit}
  Let $\vec p \colon Y \lcto Z$ be a weight, let $j \colon E \lto A$ be a dense loose-cell, and let $f \colon Z \to E$ be a tight-cell.
  A tight-cell $c \colon Y \to E$ forms the $j$-absolute $\vec p$-weighted colimit of $f$ if and only if there is an isomorphism:
  \[
     j(1, f) \odotl \vec p \iso j(1, c)
  \]
\end{lemma}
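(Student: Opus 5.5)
The forward direction is immediate from \cref{j-absolute}. If $(c, \lambda)$ is the $j$-absolute $\vec p$-colimit of $f$, the displayed 2-cell $j(1, f), p_1, \ldots, p_m \tto j(1, c)$ is left-opcartesian, so $j(1, c)$ is the left-composite $j(1, f) \odotl \vec p$. For the converse, I read the isomorphism $j(1, f) \odotl \vec p \iso j(1, c)$ as saying that some left-opcartesian 2-cell $\phi \colon j(1, f), p_1, \ldots, p_m \tto j(1, c)$ exists. I then construct $\lambda$ from $\phi$ and verify both the colimit property and absoluteness.

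\emph{Constructing $\lambda$.} Since $j$ is dense, $E(1, 1) \iso j \rf j$, with counit the identity $1_j$. Applying \cref{restriction-of-lift}(1) with $q = p = j$ gives
\[E(f, c) = E(1,1)(f, c) \iso j(1, c) \rf j(1, f),\]
and the counit of this lift is, up to the evident identification, the 2-cell $j \odot \cp f(1, c)$-type composite appearing in \cref{j-absolute}. I define $\lambda \colon \vec p \tto E(f, c)$ as the transpose $\lambda\phi$ of $\phi$ under this right lift. By construction, $\lambda$ pasted with that counit equals $\phi$. This settles absoluteness, provided the cocone turns out to be colimiting: the 2-cell in \cref{j-absolute} is then literally $\phi$, which is left-opcartesian.

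\emph{Proving $(c, \lambda)$ is colimiting.} I need the canonical 2-cell $\vec p, E(c, 1) \tto E(f, 1)$ to exhibit $E(c, 1)$ as $E(f, 1) \rf \vec p$. I compute the right-hand side through a chain of isomorphisms:
\[E(f,1) \rf \vec p \iso (j \rf j(1, f)) \rf \vec p \iso j \rf (j(1, f), \vec p) \iso j \rf (j(1, f) \odotl \vec p) \iso j \rf j(1, c) \iso E(c, 1).\]
Each step has a justification from earlier results.
\begin{enumerate}
  \item The first and last isomorphisms come from \cref{restriction-of-lift} together with density of $j$.
  \item The second comes from \cref{iterated-lift}.
  \item The third comes from \cref{right-lift-through-left-composite}(1).
  \item The fourth is induced by $\phi$.
\end{enumerate}
Together these show that $E(c, 1)$ has the universal property of $E(f, 1) \rf \vec p$, and in particular that the relevant right lift exists.

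\emph{Main obstacle.} The hard part is the bookkeeping: I must check that the counit produced by this chain coincides with the specific 2-cell $(\lambda, 1) \d \cp c(f, 1)$ required by \cref{weighted-colimit}, and not merely that some isomorphism exists. My approach is to transpose everything back through the lift $j \rf -$, so that both 2-cells become 2-cells $j(1, f), \vec p, E(c, 1) \tto j$. Both should then reduce to $\phi$ followed by the counit $j(1, c), E(c, 1) \tto j$, by the defining equation of $\lambda$ and the interchange of cartesian cells with the companion/conjoint cell $\cp c$. Uniqueness of transposes then identifies them.
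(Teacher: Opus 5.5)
Your proposal is correct and follows the paper's own proof. The paper uses the same chain of isomorphisms $E(c,1) \iso j \rf j(1,c) \iso j \rf (j(1,f) \odotl \vec p) \iso (j \rf j(1,f)) \rf \vec p \iso E(f,1) \rf \vec p$, and defines $\lambda$ the same way, as the unique factorisation of the given left-opcartesian cell through $E(f,c) \iso j(1,c) \rf j(1,f)$.

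The paper leaves implicit the check that the resulting counit agrees with $(\lambda, 1) \d \cp c(f,1)$. Your plan for it works: after transposing through $j \rf {-}$, it reduces to associativity of the action of $E(1,1)$ on $j$, together with the defining equation of $\lambda$.
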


\begin{proof}
  The only if direction is trivial.
  For the other direction, assume there is such an isomorphism.
  Then $c$ forms the colimit $\vec p \wc f$ by the following chain of isomorphisms.
  \begin{align*}
    E(c, 1)
    &\iso j \rf j(1, c)
    \\&\iso j \rf (j(1, f) \odotl \vec p)
    \\&\iso (j \rf j(1, f)) \rf \vec p
    \\&\iso (j \rf j)(f, 1) \rf \vec p
    \\&\iso E(f, 1) \rf \vec p
  \end{align*}
  The universal 2-cell $\lambda \colon p \tto E(f, c)$ is the unique 2-cell such that the left-opcartesian 2-cell ${j(1, f), p \tto j(1, c)}$ witnessing the isomorphism above is equal to that in the definition of $j$-absoluteness (\cref{j-absolute}).
  Hence this colimit is $j$-absolute.
\end{proof}

The following lemma provides a common source of loose-cells that respect $j$-absolute colimits.

\begin{lemma}[label=right-lifts-respect-absolute-colimits]
  Every right lift through a loose-cell $j \colon E \lto A$ respects $j$-absolute colimits.
\end{lemma}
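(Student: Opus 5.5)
Fix a loose-cell $q \colon X \lto A$ for which the right lift $q \rf j \colon X \lto E$ exists. Let $(\vec p \wc f, \lambda)$ be a $j$-absolute colimit, with $\vec p = (p_1, \ldots, p_m) \colon Y \lcto Z$ and $f \colon Z \to E$. By \cref{respects-colimit}, we must show that the canonical 2-cell
\[\vec p, (q \rf j)(\vec p \wc f, 1) \tto (q \rf j)(f, 1)\]
exhibits its target-side loose-cell as the right lift $(q \rf j)(f, 1) \rf \vec p$. The plan is a chain of isomorphisms, in the style of the proof of \cref{absolute-implies-colimit}, after which we check that the composite isomorphism is induced by the canonical 2-cell.

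First, by \cref{restriction-of-lift}(1), restricting a right lift along tight-cells on the domain side may be absorbed into the weight:
\[(q \rf j)(\vec p \wc f, 1) \iso q \rf j(1, \vec p \wc f), \qquad (q \rf j)(f, 1) \iso q \rf j(1, f).\]
Second, $j$-absoluteness (\cref{j-absolute}) gives a left-opcartesian 2-cell $j(1, f), \vec p \tto j(1, \vec p \wc f)$, so $j(1, \vec p \wc f) \iso j(1, f) \odotl p_1 \odotl \cdots \odotl p_m$. By \cref{right-lift-through-left-composite}(1), taking the left-composite of the whole chain, we obtain
\[q \rf j(1, \vec p \wc f) \iso q \rf (j(1, f), p_1, \ldots, p_m).\]
Third, \cref{iterated-lift}(1) lets us split the chain, since $q \rf j(1, f)$ exists by the first step:
\[q \rf (j(1, f), p_1, \ldots, p_m) \iso (q \rf j(1, f)) \rf \vec p \iso (q \rf j)(f, 1) \rf \vec p.\]
Composing these isomorphisms shows that $(q \rf j)(\vec p \wc f, 1)$ is a right lift of $(q \rf j)(f, 1)$ through $\vec p$.

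The main obstacle is the last step. We need the counit transported along these isomorphisms to coincide with the canonical 2-cell of \cref{respects-colimit}, namely
\[\vec p, (q \rf j)(\vec p \wc f, 1) \xRightarrow{\;\lambda,\, 1\;} E(f, \vec p \wc f), (q \rf j)(\vec p \wc f, 1) \Rightarrow (q \rf j)(f, 1),\]
where the second map comes from composing with the cartesian cells and using the action of $E(1,1)$ on the lift. I would verify this by precomposing both 2-cells with $j(1, f)$ and the counit $\varpi$ of $q \rf j$. By the uniqueness part of the lifting universal property, it suffices to compare the resulting 2-cells
\[j(1, f), \vec p, (q \rf j)(\vec p \wc f, 1) \tto q.\]
Both reduce to the cell obtained from the left-opcartesian 2-cell of \cref{j-absolute} followed by $\varpi(\vec p \wc f, 1)$. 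This uses the definition of that 2-cell as $\lambda$ followed by $j \odot \cp f$, together with the fact that $\varpi$ is compatible with the $E(1,1)$-action. The check is diagrammatic but routine once the cells are written out.
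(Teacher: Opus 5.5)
Your proposal is correct and follows the paper's proof. The paper uses the same chain $(q \rf j)(\vec p \wc f, 1) \iso q \rf j(1, \vec p \wc f) \iso q \rf (j(1, f) \odotl \vec p) \iso (q \rf j(1, f)) \rf \vec p \iso (q \rf j)(f, 1) \rf \vec p$, via \cref{restriction-of-lift}, $j$-absoluteness, and \cref{right-lift-through-left-composite,iterated-lift}; the paper leaves implicit the step you add, namely checking (by uniqueness against the restricted counit $\varpi(f,1)$) that this composite isomorphism is induced by the canonical 2-cell of \cref{respects-colimit}.
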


\begin{proof}
  We have the following for each $j$-absolute colimit $q \wc g$.
  \begin{align*}
    (p \rf j)(q \wc g, 1)
    ~\iso~&
    p \rf j(1, q \wc g)
    \\~\iso~&
    p \rf (j(1, g) \odotl q)
    \\~\iso~&
    (p \rf j(1, g)) \rf q
    \\~\iso~&
    (p \rf j)(g, 1) \rf q
    \qedhere
  \end{align*}
\end{proof}

Moreover, in the presence of sufficient right lifts, this property characterises $j$-absoluteness.

\begin{lemma}[label=absoluteness-and-exactness]
  Let $j \colon E \lto A$ be a lifting loose-cell (\cref{lifting}). A colimit in $E$ is $j$-absolute if and only if every right lift through $j$ respects the colimit.
\end{lemma}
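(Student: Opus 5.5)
The forward direction is \cref{right-lifts-respect-absolute-colimits}: if $\vec q \wc g$ is $j$-absolute, then every right lift $p \rf j$ respects it. The work lies in the converse. Suppose every right lift through $j$ respects the colimit $(\vec q \wc g, \lambda)$, where $g \colon Z \to E$ and $\vec q \colon Y \lcto Z$. We must show that the 2-cell $\kappa \colon j(1, g), \vec q \tto j(1, \vec q \wc g)$ of \cref{j-absolute} is left-opcartesian.

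First reduce to globular test 2-cells. A test 2-cell for left-opcartesianness has the form $\phi \colon j(1, g), \vec q, r_1, \ldots, r_n \tto s$, with the identity on the left boundary and an arbitrary tight-cell $h$ on the right. Using the restriction $s(1, h)$, such 2-cells correspond bijectively to globular ones into $s(1, h)$. So it suffices to treat globular $\phi$ with $s \colon W \lto A$, where $W$ is the source of $r_n$.

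Next, set up a chain of natural bijections, using that $j$ is lifting so that $s \rf j$ exists.
\begin{enumerate}
\item By \cref{restriction-of-lift}, $(s \rf j)(g, 1) \iso s \rf j(1, g)$. Hence globular 2-cells $j(1, g), \vec q, \vec r \tto s$ correspond to 2-cells $\vec q, \vec r \tto (s \rf j)(g, 1)$.
\item By the universal property of right lifts (equivalently \cref{iterated-lift}), these correspond to 2-cells $\vec r \tto (s \rf j)(g, 1) \rf \vec q$.
\item By the hypothesis that $s \rf j$ respects the colimit, $(s \rf j)(g, 1) \rf \vec q \iso (s \rf j)(\vec q \wc g, 1)$.
\item Again by \cref{restriction-of-lift}, $(s \rf j)(\vec q \wc g, 1) \iso s \rf j(1, \vec q \wc g)$. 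By the universal property of this lift, the 2-cells in the previous step therefore correspond to 2-cells $j(1, \vec q \wc g), \vec r \tto s$.
\end{enumerate}
Composing these gives a bijection between 2-cells $j(1, g), \vec q, \vec r \tto s$ and 2-cells $j(1, \vec q \wc g), \vec r \tto s$.

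The main obstacle is to verify that this composite bijection is exactly precomposition with $\kappa$, namely $\psi \mapsto (\kappa, 1_{\vec r}) \d \psi$. Only then does the bijection exhibit the required unique factorisation. To check this, trace the identity: take $\vec r$ to be the single loose-cell $j(1, \vec q \wc g)$ and $\psi$ the identity, and compute its image under the inverse bijection. Each isomorphism above is the canonical one, built from:
\begin{itemize}
\item the counit $\varpi$ of $s \rf j$;
\item the cartesian 2-cells defining the restrictions;
\item the canonical 2-cell of \cref{respects-colimit}, which is built from $\lambda$ and $\cp{\vec q \wc g}$.
\end{itemize}
Unwinding these, the identity is sent to the pasting of $\lambda$ with $j \odot \cp g(1, \vec q \wc g)$, which is $\kappa$ by definition. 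Naturality of every step in $\vec r$, since each bijection is given by pasting, then extends the identification from this universal case to all $\psi$. It follows that $\kappa$ is left-opcartesian, so the colimit is $j$-absolute.
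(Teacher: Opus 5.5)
Your proof is correct and follows essentially the same route as the paper: the same chain of bijections through the right lifts $s \rf j(1, \vec q \wc g)$ and $(s \rf j)(g, 1) \rf \vec q$, glued together by the respect isomorphism for $s \rf j$. The globular reduction and the check that the composite bijection is precomposition with $\kappa$ are details the paper leaves implicit. One small slip in that check: $j(1, \vec q \wc g)$ is ill-typed as $\vec r$, and the universal instance that makes your naturality-in-$\vec r$ argument work is $\vec r = (s \rf j(1, \vec q \wc g))$ with $\psi$ the counit, i.e.\ the identity at the middle stage of the chain.
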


\begin{proof}
  That right lifts through $j$ respect $j$-absolute colimits is \cref{right-lifts-respect-absolute-colimits}.
  For the converse, let $q \wc g$ be a colimit in $E$ and assume that it is respected by every right lift through $j$, so that we have the following isomorphism.
  \[
    p \rf j(1, q \wc g) \iso (p \rf j(1, g)) \rf q
  \]
  We have to show that the canonical 2-cell $j(1, g), q \tto j(1, q \wc g)$ is left-opcartesian, meaning that composition with this 2-cell exhibits a bijection between 2-cells
  $
    j(1, q \wc g), r_1, \dots, r_n \tto p
  $
  and 2-cells
  $
    j(1, g), q, r_1, \dots, r_n \tto p
  $.
  This is the case because composition with the canonical 2-cell is the composition of the following bijections, where the second bijection is the fact that $q \wc g$ is respected, and the others are the universal property of the right lifts.
  \begin{iffseq}
    j(1, q \wc g), r_1, \dots, r_n \tto p
    \\
    r_1, \dots, r_n \tto p \rf j(1, q \wc g)
    \\
    r_1, \dots, r_n \tto (p \rf j(1, g)) \rf q
    \\
    p, r_1, \dots, r_n \tto q \rf j(1, g)
    \\
    j(1, g), q, r_1, \dots, r_n \tto p
    \qedshift
  \end{iffseq}
\end{proof}

In addition to considering the preservation of colimits, we may consider their reflection.

\begin{definition}
  Let $\vec p \colon Y \lcto Z$ be a weight, and let $f \colon Z \to W$ and $g \colon W \to X$ be tight-cells. We say that $g$ \emph{reflects $\vec p$-colimits of $f$} if a $\vec p$-cocone $(c, \gamma)$ for $f$ is colimiting only if the $\vec p$-cocone $((c \d g), (\gamma \d g))$ for $(f \d g)$ is colimiting.
\end{definition}

\begin{example}
  A tight-cell \emph{(non-strictly) creates} weighted colimits, in the sense of \cite[Definition~3.2$'$]{arkor2024relative}, if and only if it preserves and reflects weighted colimits.
\end{example}

In particular, all limits and colimits are reflected by \ff{} tight-cells.

\begin{definition}[{\cite[Corollary~3.28]{arkor2024formal}}]
  \label{ff}
  A tight-cell $f \colon A \to B$ is \emph{\ff} if $A(1, 1) \iso B(f, f)$.
\end{definition}

\begin{lemma}
  \label{ff-reflects-limits}
  \Ff{} tight-cells reflect weighted limits and colimits.
\end{lemma}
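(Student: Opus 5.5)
We treat colimits; the statement for limits then follows by applying the colimit case in $\X\co$, where full faithfulness is self-dual because $B(f, f)$ is unchanged.

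Let $f \colon A \to B$ be \ff{}, $\vec p \colon Y \lcto Z$ a weight, and $h \colon Z \to A$ a tight-cell. Let $(c, \gamma)$ be a $\vec p$-cocone for $h$ such that $(c \d f, \gamma \d f)$ is the $\vec p$-colimit of $h \d f$. We must show that $(c, \gamma)$ is colimiting. By \cref{weighted-colimit}, this means the canonical 2-cell
\[
  \vec p, A(c, 1) \tto A(h, 1)
\]
must exhibit $A(c, 1)$ as the right lift $A(h, 1) \rf \vec p$.

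First, rewrite every loose-cell involved using full faithfulness and strict functoriality of restrictions:
\[
  A(c, 1) = A(1, 1)(c, 1) \iso B(f, f)(c, 1) = B(fc, f),
  \qquad
  A(h, 1) \iso B(fh, f).
\]
Next, apply the hypothesis that $fc$ is the colimit, which gives $B(fc, 1) \iso B(fh, 1) \rf \vec p$. Restricting both sides along $f$ in the second variable and using \cref{restriction-of-lift}(1) (with $y = 1$ and $x = f$) yields
\[
  B(fc, f) \iso B(fh, 1)(1, f) \rf \vec p = B(fh, f) \rf \vec p.
\]
Combining this with the rewrites above gives $A(c, 1) \iso A(h, 1) \rf \vec p$.

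The main obstacle is checking that this isomorphism is the one induced by the canonical counit built from $\gamma$ and $\cp{c}$. A bare isomorphism of loose-cells is not enough; the right-lift universal property must be exhibited by that particular 2-cell. Two points need verifying.
\begin{itemize}
  \item The counit supplied by \cref{restriction-of-lift} is the restriction, via the cartesian 2-cells, of the counit for $fc$.
  \item That restricted counit corresponds, under the isomorphism $A(1, 1) \iso B(f, f)$, to $(\gamma, \cp c)$ followed by $\cp{c}(h, 1)$.
\end{itemize}
I expect both to follow by pasting the cartesian 2-cell defining $B(f, f)$ and then comparing with the original 2-cell via uniqueness of cartesian factorisations. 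For this comparison, $\cp{fc}(fh, f)$ should decompose as $f$ applied to $\cp c$, transported along the isomorphism $A(1, 1) \iso B(f, f)$.

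Finally, a right lift is determined up to isomorphism together with its counit. Since the canonical 2-cell for $(c, \gamma)$ agrees with the transported counit, it is itself a right-lift counit. Hence $(c, \gamma)$ is colimiting.
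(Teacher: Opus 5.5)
Your proposal is correct and is essentially the paper's proof. In its own notation the paper gives exactly the chain $A(c, 1) \iso B(fc, f) \iso B(fh, f) \rf \vec p \iso A(h, 1) \rf \vec p$, using full faithfulness and restriction of the right lift, and obtains the limit case by duality; the coherence check you flag (that the canonical counit, not just some isomorphism, exhibits the right lift) is left implicit there.
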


\begin{proof}
  Let $\vec p \colon Y \lcto Z$ be a weight, let $f \colon Z \to W$ be a tight-cell, and let $g \colon W \ffto X$ be a \ff{} tight-cell. Suppose that there exists a cocone $(c \colon Y \to X, \gamma)$ such that $((c \d g), (\gamma \d g))$ exhibits the colimit $\vec p \wc (f \d g)$. Then $W(c, 1) \iso X(gc, g) \iso X(gf, g) \rf \vec p \iso X(f, 1) \rf \vec p$ using \ffness{} of $g$. The statement for limits follows by duality.
\end{proof}

Absoluteness allows us to characterise when weighted colimits are \ff{} (we shall not use this result here, but record it as a useful lemma).

\begin{lemma}[label=ff-weighted-colimit]
  Let $p \colon Y \lto Z$ be a loose-cell and $f \colon Z \to X$ be a tight-cell such that the colimit $p \wc f$ exists. Any three of the following conditions implies the fourth.
  \begin{enumerate}
    \item $p$ is dense.
    \item $f$ is \ff.
    \item $p \wc f$ is $f$-absolute.
    \item $p \wc f$ is \ff.
  \end{enumerate}
\end{lemma}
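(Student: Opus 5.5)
The plan is to reduce everything to a small set of canonical isomorphisms of loose-cells and then read off each implication. Write $c \defeq p \wc f$, with colimiting 2-cell $\lambda \colon p \tto X(f, c)$.

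The first isomorphism comes from the colimit itself. By definition $X(c, 1) \iso X(f, 1) \rf p$. Restricting along $c$ in the remaining variable and using \cref{restriction-of-lift} gives
\[
  X(c, c) \iso X(f, c) \rf p, \tag{$\ast$}
\]
with counit the composite of $\lambda$ with the canonical 2-cell $\cp c$.

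Next I translate the four conditions into statements about loose-cells.
\begin{itemize}
  \item Condition (1) says that $Y(1, 1) \iso p \rf p$ via the identity 2-cell.
  \item Condition (2) says that $X(f, f) \iso Z(1, 1)$.
  \item Condition (4) says that $Y(1, 1) \iso X(c, c)$.
  \item Condition (3), instantiating \cref{j-absolute} with $j = X(f, 1)$ (so that $j(1, f) = X(f, f)$), says that the 2-cell $X(f, f), p \tto X(f, c)$ built from $\lambda$ is left-opcartesian, that is, $X(f, f) \odotl p \iso X(f, c)$.
\end{itemize}
Under (2), the 2-cell in (3) becomes the composite $Z(1, 1), p \tto p \xRightarrow{\lambda} X(f, c)$. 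Since the nullary opcartesian 2-cell for $Z(1, 1)$ makes $Z(1, 1), p \tto p$ left-opcartesian, condition (3) is then equivalent to $\lambda$ being invertible.

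Now the easy implications.
\begin{itemize}
  \item (1), (2), (3) imply (4). By the above, $\lambda \colon p \iso X(f, c)$. Substituting into $(\ast)$ gives $X(c, c) \iso p \rf p \iso Y(1, 1)$. I would then check that this composite isomorphism is the canonical comparison $Y(1, 1) \tto X(c, c)$, by tracking the counits: both correspond under the right lift to $\lambda$.
  \item (2), (3), (4) imply (1). Read the same chain backwards, $Y(1, 1) \iso X(c, c) \iso X(f, c) \rf p \iso p \rf p$, and verify that the resulting counit is $1_p$.
\end{itemize}

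For (1), (2), (4) implying (3), we must show $\lambda$ is invertible. Combining (1), (4) and $(\ast)$ gives that $\lambda \rf p \colon p \rf p \to X(f, c) \rf p$ is invertible. The route I would try first is to construct an inverse $X(f, c) \tto p$ directly. Full faithfulness of $f$ lets me rewrite $X(f, c)$ as a right lift: restricting $X(c, 1) \iso X(f, 1) \rf p$ and using $X(f, f) \iso Z(1, 1)$ should identify $X(f, c)$ with a lift through $p$ of a loose-identity. The counit of $p \rf p$, transported along $(4)$, should then supply the inverse. The two triangle identities would follow from the uniqueness clauses of the right lifts.

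For (1), (3), (4) implying (2), I would use \cref{ff-reflects-limits}-style reasoning. Write $X(f, c) \iso X(f, f) \odotl p$ and apply the left-composite/lift adjunction from \cref{right-lift-through-left-composite}. This shows that $2$-cells out of $X(f, f), p$ correspond to $2$-cells out of $X(f, c)$. Density (1) together with (4) should then force the canonical $Z(1, 1) \tto X(f, f)$ to have an inverse, obtained by factoring through the left-opcartesian 2-cell.

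I expect these last two implications to be the main obstacle. In each, one has to recover an isomorphism of loose-cells from an isomorphism of their right lifts through $p$ (respectively their left-composites with $p$), which is not automatic. Density of $p$ is exactly what has to be exploited to cancel $p$ there, and the precise way to do so is the step I am least sure of.
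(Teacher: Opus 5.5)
Your arguments for $(1)\wedge(2)\wedge(3)\Rightarrow(4)$ and $(2)\wedge(3)\wedge(4)\Rightarrow(1)$ are correct. They coincide with the paper's proof, which is the chain
\[
  Y(1,1) \iso p \rf p \iso (X(f,f) \odotl p) \rf p \iso X(f, p \wc f) \rf p \iso X(p \wc f, p \wc f)
\]
read in either direction (the paper misprints the first term as $Z(1,1)$). The paper asserts that the other two cases also follow by rearranging this chain. They do not. The two steps you flag as the obstacle cannot be completed, because both implications are false. Rearranging only produces an isomorphism between two right lifts through $p$, and density of $p$ does not let you cancel $p$.

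For $(1)\wedge(2)\wedge(4)\Rightarrow(3)$, work in $\Cat$ with $X = Y = Z = \b 1$, $f = 1_{\b 1}$, and $p$ the empty distributor.
\begin{itemize}
  \item $p \rf p \iso \Set(\emptyset,\emptyset)$ is a singleton, so $p$ is dense.
  \item $c = p \wc f$ is the initial object of $\b 1$, so $c = 1_{\b 1}$ is \ff{}, and $f$ is \ff.
  \item However, $\lambda \colon p \tto X(f,c)$ is the map $\emptyset \to 1$, which is not invertible, so (3) fails.
\end{itemize}
In this example $p \rf p$ and $X(f,c) \rf p$ are both terminal. So the isomorphism you extract from (1), (4) and $(\ast)$ holds even though $\lambda$ is not invertible. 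Your proposed inverse also has the wrong variance. Restricting $X(c,1) \iso X(f,1) \rf p$ along $f$ gives $X(c,f) \iso X(f,f) \rf p \iso Z(1,1) \rf p$. That statement concerns $X(c,f)$, not $X(f,c)$, which is a restriction of the companion of $c$.

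For $(1)\wedge(3)\wedge(4)\Rightarrow(2)$, take $p = Z(1,g)$ to be the companion of a tight-cell $g \colon Y \to Z$.
\begin{itemize}
  \item Then $p \wc f \iso fg$, and this colimit is $f$-absolute since $X(f,f) \odot Z(1,g) \iso X(f,fg)$, so (3) holds automatically.
  \item Condition (1) says that $g$ is \ff{}, because $Z(1,g) \rf Z(1,g) \iso Z(g,g)$.
  \item Condition (4) says that $fg$ is \ff.
\end{itemize}
The implication would therefore say that $f$ is \ff{} whenever $g$ and $fg$ are. This fails for the coproduct injection $g \colon \b 1 \to \b 1 + \b 1$ and the unique $f \colon \b 1 + \b 1 \to \b 1$. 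Taking $Y$ empty is even more degenerate: it makes (1), (3) and (4) hold vacuously for an arbitrary $f$.

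What is actually true is that, given (2) and (3), conditions (1) and (4) are equivalent. That is exactly what your first two arguments prove, and no way of exploiting density will rescue the other two directions. The lemma is not used elsewhere in the paper, so nothing downstream is affected.
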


\begin{proof}
  We prove (1 \& 2 \& 3 $\implies$ 4); the other cases follow by rearranging the following chain of isomorphisms.
  \begin{align*}
    Z(1, 1) & \iso p \rf p \tag{$p$ is dense} \\
      & \iso (X(f, f) \odotl p) \rf p \tag{$f$ is \ff} \\
      & \iso X(f, p \wc f) \rf p \tag{$p \wc f$ is $f$-absolute} \\
      & \iso (X(f, 1) \rf p)(1, p \wc f) \tag{\cref{restriction-of-lift}} \\
      & \iso X(p \wc f, p \wc f)
      \tag*{\qedhere}
  \end{align*}
\end{proof}

\subsection{Adjunctions and reflections}

Some of the most important properties concerning limits and colimits regard their interaction with adjunctions and, more generally, relative adjunctions. Relative adjunctions were studied in detail in \cite[\S5]{arkor2024formal}; we recall some of the basic properties that will be relevant to us here.

\begin{definition}[{\cite[Definition~5.1]{arkor2024formal}}]
    \label{relative-adjunction}
    A \emph{relative adjunction} in a \ve{} comprises the following data.
    \[\begin{tikzcd}
    	& C \\
    	A && E
    	\arrow[""{name=0, anchor=center, inner sep=0}, "\ell"{pos=0.4}, from=2-1, to=1-2]
    	\arrow[""{name=1, anchor=center, inner sep=0}, "r"{pos=0.6}, from=1-2, to=2-3]
    	\arrow["j"', from=2-1, to=2-3]
    	\arrow["\dashv"{anchor=center}, shift right=2, draw=none, from=0, to=1]
    \end{tikzcd}\]
    \begin{enumerate}
        \item A tight-cell $\jAE$, the \emph{root}.
        \item A tight-cell $\ell \colon A \to C$, the \emph{left (relative) adjoint}.
        \item A tight-cell $r \colon C \to E$, the \emph{right (relative) adjoint}.
        \item An isomorphism $C(\ell, 1) \iso E(j, r)$.
    \end{enumerate}
    In this situation, we write $\ljr$. An \emph{adjunction} is a relative adjunction whose root is the identity, in which case we write $\ell \adj r$.
\end{definition}

The proof of the following proposition is exactly as its correspondents in \cite[\S3]{arkor2024formal} (where only unary weights were considered) and is omitted.

\begin{proposition}
	\label{adjoints-preserve-(co)limits}
    Let $\jAE$ be a tight-cell.
    \begin{enumerate}
      \item If $\ell \colon A \to C$ is a left $j$-adjoint, then $\ell$ preserves every colimit that $j$ preserves~\cite[Proposition~5.11]{arkor2024formal}.
      \item If $j$ is dense and $r \colon C \to E$ is a right $j$-adjoint, then $r$ preserves limits~\cite[Proposition~5.12]{arkor2024formal}.
      \qed
    \end{enumerate}
\end{proposition}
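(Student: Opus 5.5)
Both parts reduce to chains of isomorphisms of loose-cells, built from the right-lift calculus of this section; I treat them in turn.

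For (1), let $\ljr$ with $C(\ell,1) \iso E(j,r)$. Let $\vec p \wc f$ be a colimit in $A$ that $j$ preserves, with $\vec p \colon Y \lcto Z$ and $f \colon Z \to A$. We must show that $(\ell(\vec p \wc f), \lambda \d \ell)$ is the $\vec p$-colimit of $\ell f$, \ie{} that the canonical 2-cell exhibits $C(\ell(\vec p \wc f), 1)$ as $C(\ell f, 1) \rf \vec p$. I would compute:
\begin{align*}
  C(\ell(\vec p \wc f), 1)
  &\iso C(\ell, 1)(\vec p \wc f, 1)
  \iso E(j, r)(\vec p \wc f, 1)
  \iso E(j(\vec p \wc f), r) \\
  &\iso E(1, r)\bigl(\ldots\bigr) \text{, more precisely } (E(jf, 1) \rf \vec p)(1, r) \\
  &\iso E(jf, r) \rf \vec p
  \iso C(\ell f, 1) \rf \vec p .
\end{align*}
The justifications are as follows.
\begin{itemize}
  \item The third isomorphism uses strictness of restrictions.
  \item The fourth uses that $j$ preserves the colimit, so $E(j(\vec p \wc f), 1) \iso E(jf, 1) \rf \vec p$; we then restrict along $r$ on the right.
  \item The fifth uses \cref{restriction-of-lift}(1) with $y = 1$ and $x = r$.
  \item The last uses the adjunction isomorphism restricted along $f$.
\end{itemize}

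For (2), assume $j$ is dense, so $E(j, 1)$ is dense, and let $\vec p \wl g$ be a limit in $C$, with $g \colon Z \to C$. We must show that $E(1, r(\vec p \wl g)) \iso \vec p \rx E(1, rg)$. Here I would use density to detect this after restricting along $j$. Since $E(j,1)$ is dense, we have $E(1,1) \iso E(j,1) \rf E(j,1)$. Restricting along a tight-cell $x$ and applying \cref{restriction-of-lift} then gives $E(1, x) \iso E(j,1) \rf E(j, x)$. Then
\begin{align*}
  E(j, r(\vec p \wl g))
  &\iso C(\ell, \vec p \wl g)
  \iso \vec p \rx C(\ell, g) \\
  &\iso \vec p \rx E(j, rg),
\end{align*}
where the middle step restricts the defining isomorphism $C(1, \vec p \wl g) \iso \vec p \rx C(1, g)$ along $\ell$ using \cref{restriction-of-lift}(2). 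Applying $E(j,1) \rf {-}$ and interchanging the lift with the extension (the lemma following \cref{iterated-lift}) gives
\[
  E(1, r(\vec p \wl g)) \iso \vec p \rx \bigl(E(j,1) \rf E(j, rg)\bigr) \iso \vec p \rx E(1, rg).
\]
That interchange lemma requires the relevant lifts and extensions to exist, which they do here by the isomorphisms just established.

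The main obstacle is the same in both parts: checking that each isomorphism is the \emph{canonical} one, \ie{} that the composite isomorphism is induced by the 2-cells $\lambda \d \ell$ and $\mu \d r$ rather than being merely some abstract isomorphism. I would handle this by tracking counits through each lemma. Each step is an instance of a universal property whose counit is obtained by pasting the previous one with a cartesian 2-cell or with the adjunction isomorphism, so the final counit is the required pasting with $\cp{-}$.
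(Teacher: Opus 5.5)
Your proposal is correct and is essentially the argument the paper defers to, namely the cited Propositions~5.11 and~5.12 adapted to chains. Part~(1) restricts the lift $E(jf,1) \rf \vec p$ along $r$ via \cref{restriction-of-lift}, and part~(2) uses density to write $E(1,x) \iso E(j,x) \rf E(j,1)$ and then interchanges the lift through $E(j,1)$ with the extension along $\vec p$. One notational slip: in the paper's convention $q \rf p$ is the lift of $q$ through $p$, so in part~(2) you mean $E(j,x) \rf E(j,1)$ and ${-} \rf E(j,1)$ rather than $E(j,1) \rf E(j,x)$ and $E(j,1) \rf {-}$; the mathematics is unaffected.
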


\begin{corollary}
  \label{dense-and-ff-implies-continuous}
  Every dense and \ff{} tight-cell creates weighted limits.
\end{corollary}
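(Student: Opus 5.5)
Let $j \colon A \to E$ be a dense and \ff{} tight-cell. We must show that $j$ both preserves and reflects weighted limits. Reflection needs no new argument: \cref{ff-reflects-limits} says \ff{} tight-cells reflect weighted limits and colimits. What remains is to show that $j$ preserves every weighted limit that exists in $A$.

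For preservation, the plan is to exhibit $j$ as a right relative adjoint and then apply \cref{adjoints-preserve-(co)limits}(2). Consider the relative adjunction with root $j$, left adjoint $1_A \colon A \to A$ and right adjoint $j \colon A \to E$. Its defining isomorphism $C(\ell, 1) \iso E(j, r)$ specialises here to
\[A(1, 1) \iso E(j, j).\]
This is exactly \ffness{} of $j$ (\cref{ff}), so we have $1_A \radj j j$. Since $j$ is dense by hypothesis, \cref{adjoints-preserve-(co)limits}(2) applies with root $j$ and right adjoint $j$, and gives that $j$ preserves limits.

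Combining preservation with reflection, $j$ preserves and reflects weighted limits. This is what \emph{creates} means in the (non-strict) sense recalled after the definition of reflection.

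The only point needing care is that \cref{adjoints-preserve-(co)limits}(2) is stated for the weights of this paper (chains, not just unary weights) and for all limits in the domain $C = A$. The paper asserts that its proof carries over verbatim from the unary case, so no further work should be required. If one wanted a direct argument instead, it would be to show that $E(1, j)(1, \vec p \wl f)$ is the right extension $\vec p \rx E(1, jf)$. This would come from writing $E(1, j) \iso E(j, 1) \rx A(1, 1)$, a consequence of density and \ffness{}, and then commuting the extensions via \cref{iterated-lift}.
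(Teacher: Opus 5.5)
Your proof is correct and is exactly the paper's argument: reflection comes from \cref{ff-reflects-limits}, and preservation comes from the relative adjunction $1_A \radj j j$ (which is just \ffness{} of $j$) combined with density of $j$ via \cref{adjoints-preserve-(co)limits}. In your optional direct argument, the identity you want is the right lift $E(1, j) \iso A(1, 1) \rf E(j, 1)$ (the expression $E(j, 1) \rx A(1, 1)$ does not type-check), and the commutation step uses the lemma $(\vec p \rx q) \rf \vec r \iso \vec p \rx (q \rf \vec r)$ rather than \cref{iterated-lift}.
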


\begin{proof}
  Let $f \colon A \to B$ be a dense and \ff{} tight-cell. \Ff{} tight-cells reflect weighted limits by \cref{ff-reflects-limits}. Furthermore, $1 \radj f f$ since $f$ is \ff{}. Hence, it preserves weighted limits since it is dense, by \cref{adjoints-preserve-(co)limits}.
\end{proof}

A particular class of adjunctions in which we shall be interested are the \emph{reflections}, which are those with invertible counit; equivalently, they are those adjunctions for which the right adjoint is \ff{} in the sense of \cref{ff}. Our main interest in reflections herein is the following lemma, which demonstrates that reflective subobjects inherit colimits.

\begin{lemma}
  \label{reflected-colimits}
  Let $\ell \adj r$ and suppose that the right adjoint is \ff.
  \[\begin{tikzcd}
    A & B
    \arrow[""{name=0, anchor=center, inner sep=0}, "r"', shift right=2, hook, from=1-1, to=1-2]
    \arrow[""{name=1, anchor=center, inner sep=0}, "\ell"', shift right=2, from=1-2, to=1-1]
    \arrow["\dashv"{anchor=center, rotate=-90}, draw=none, from=1, to=0]
  \end{tikzcd}\]
  Let $\vec p$ be a weight and let $f \colon X \to B$ be a tight-cell. If $B$ admits the $\vec p$-weighted colimit of $(f \d r)$, then $A$ admits the $\vec p$-weighted colimit of $f$.

  Suppose furthermore that $\ell' \adj r'$ and that there is a pseudocommutative square as below left. If $b \colon B \to B'$ preserves the $\vec p$-weighted colimit of $(f \d r)$, then $a \colon A \to A'$ preserves the $\vec p$-weighted colimit of $f$ if and only if the mate (below right) of the invertible 2-cell is itself invertible.
  \[
  \begin{tikzcd}
    B & {B'} \\
    A & {A'}
    \arrow["b", from=1-1, to=1-2]
    \arrow[""{name=0, anchor=center, inner sep=0}, "r", hook, from=2-1, to=1-1]
    \arrow["a"', from=2-1, to=2-2]
    \arrow[""{name=1, anchor=center, inner sep=0}, "{r'}"', hook, from=2-2, to=1-2]
    \arrow["\iso"{description}, draw=none, from=0, to=1]
  \end{tikzcd}
  \hspace{6em}
  \begin{tikzcd}
    B & {B'} \\
    A & A
    \arrow["b", from=1-1, to=1-2]
    \arrow["\ell"', from=1-1, to=2-1]
    \arrow[between={0.3}{0.7}, Rightarrow, from=1-2, to=2-1]
    \arrow["{\ell'}", from=1-2, to=2-2]
    \arrow["a"', from=2-1, to=2-2]
  \end{tikzcd}
  \]
\end{lemma}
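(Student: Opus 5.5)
We will take $c \defeq \ell(\vec p \wc (f \d r))$ as the candidate colimit. Its cocone is the $\vec p$-colimit cocone $\lambda \colon \vec p \tto B(fr, \vec p \wc fr)$ pasted with the unit of $\ell \adj r$ (read through the isomorphism $A(\ell, 1) \iso B(1, r)$), giving a 2-cell $\vec p \tto A(f, c)$ up to the isomorphism $A(1,1) \iso B(r,r)$.

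To see that this cocone is colimiting, we compute the right lift directly. Full faithfulness of $r$ gives $A(f,1) \iso B(rf, r)$. By \cref{restriction-of-lift} this is $B(rf, 1)(1, r)$, and the colimit property of $\vec p \wc fr$ gives
\[
B(rf,1) \rf \vec p \iso B(\vec p \wc fr, 1).
\]
Hence
\[
A(f,1) \rf \vec p \iso (B(rf,1) \rf \vec p)(1, r) \iso B(\vec p \wc fr, r) \iso A(\ell(\vec p \wc fr), 1),
\]
where the first step is \cref{restriction-of-lift} (restricting the lifted loose-cell on its far end commutes with lifting) and the last step is the adjunction isomorphism. It remains to check that this composite isomorphism is the one induced by the cocone constructed above, i.e.\ that the counit is $\cp{c}(f,1)$ precomposed with that cocone. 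This is a diagram chase using the naturality of the isomorphisms in \cref{restriction-of-lift} and the triangle identities.

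For the second part, apply the first part in both $A$ and $A'$. The $\vec p$-colimit of $fa$ exists and is $\ell'(\vec p \wc (f a r'))$. Using $ar' \iso rb$ and the assumption that $b$ preserves $\vec p \wc fr$, we get
\[
\vec p \wc (f a r') \iso \vec p \wc (f r b) \iso (\vec p \wc fr)\, b .
\]
So the colimit of $fa$ is $\ell' b(\vec p \wc fr)$, while $a$ applied to the colimit of $f$ gives $a\ell(\vec p \wc fr)$. The canonical comparison $\ell' b(\vec p \wc fr) \to a\ell(\vec p \wc fr)$ out of the colimit is exactly the mate component at $\vec p \wc fr$. Therefore $a$ preserves the colimit if and only if this comparison is invertible.

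The main obstacle is reconciling the statement's ``mate invertible'' with this computation, which only yields invertibility of the mate at the object $\vec p \wc fr$; in a \ve{} there are no generalised elements to speak of such a ``component''. We will interpret the claim as invertibility of the mate whiskered by $\vec p \wc fr$. If the full mate is intended instead, we will argue the ``if'' direction as above and, for ``only if'', show invertibility after precomposition with $r$; this holds because
\[
\ell' b r \iso \ell' r' a \iso a \iso a \ell r,
\]
using that $r'$ is fully faithful. We must also verify that the two comparisons agree (the mate against the colimit-induced map), which requires care with the strict choices of restrictions.
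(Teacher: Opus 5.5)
Your main line is correct. The reading you adopt for the second statement, invertibility of the mate whiskered by $\vec p \wc (f \d r)$, is exactly what the paper's proof establishes. The one non-invertible step in its chain, $(\vec p \wc (f \d r)) \d b \d \ell' \tto (\vec p \wc (f \d r)) \d \ell \d a$, is that whiskered mate, and your second part is the same chain of isomorphisms.

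Your first part takes a different route.
\begin{itemize}
\item The paper notes that $\ell$ preserves colimits because it is a left adjoint (\cref{adjoints-preserve-(co)limits}), and that the counit $\ell r \iso 1_A$ is invertible because $r$ is fully faithful. Hence $\ell(\vec p \wc (f \d r)) \iso \vec p \wc (f \d r \d \ell) \iso \vec p \wc f$.
\item You compute $A(f,1) \rf \vec p$ directly from $A(1,1) \iso B(r,r)$, \cref{restriction-of-lift} and $A(\ell,1) \iso B(1,r)$. This inlines the relevant special case of the preservation result and uses full faithfulness without passing through the counit. The paper's version is shorter because it outsources that computation.
\end{itemize}
The diagram chase you defer is unnecessary for existence. 2-cells $\vec p, A(c,1) \tto A(f,1)$ correspond bijectively to cocones $\vec p \tto A(f,c)$ via the canonical 2-cell $A(f,c), A(c,1) \tto A(f,1)$. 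So any isomorphism $A(c,1) \iso A(f,1) \rf \vec p$ is induced by a unique colimiting cocone; the paper uses this tacitly, e.g.\ in \cref{ff-reflects-limits}. Pinning down that cocone matters only for identifying the comparison in the second part with the mate, which the paper likewise leaves implicit.

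Your fallback for the literal (unwhiskered) reading does not work, and cannot. The isomorphism $\ell' b r \iso \ell' r' a \iso a \iso a \ell r$ holds for every such square, regardless of any colimit. Invertibility of the mate after whiskering by $r$ does not imply invertibility of the mate itself. Since the counit of $\ell' \adj r'$ is invertible, the mate is invertible exactly when $\ell' b$ inverts the unit of $\ell \adj r$.

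In fact the \emph{only if} direction is false for the unwhiskered mate. Take $A' = A$, $B' = B$, $\ell' = \ell$, $r' = r$, $a = 1_A$ and any $b$ with $b r = r$. Then $a$ preserves every colimit, and the hypothesis on $b$ holds, for instance, for the empty weight. Now work in $\Cat$ with:
\begin{itemize}
\item $B = \{0 < m < 1\}$ and $A = \{0 < 1\}$;
\item $r$ the inclusion, so that $\ell(m) = 1$;
\item $b$ the monotone map fixing $0$ and $1$ with $b(m) = 0$.
\end{itemize}
The component at $m$ of the mate $\ell b \tto \ell$ is $0 \le 1$, which is not invertible. So drop the fallback and state the result with the whiskered mate.
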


\begin{proof}
  For the first statement, since $\ell$ is a left adjoint, it preserves colimits (\cref{adjoints-preserve-(co)limits}), and, since $r$ is \ff{}, the counit of the adjunction is invertible. It follows that $(\vec p \wc (f \d r)) \d \ell \iso \vec p \wc (f \d r \d \ell) \iso \vec p \wc f$.

  For the second statement, we have
  \begin{align*}
    \vec p \wc (f \d a) & \iso (\vec p \wc (f \d a \d r')) \d \ell' \\
      & \iso (\vec p \wc (f \d r \d b)) \d \ell' \\
      & \iso (\vec p \wc (f \d r)) \d b \d \ell' \\
      & \tto (\vec p \wc (f \d r)) \d \ell \d a \\
      & \iso (\vec p \wc f) \d a
  \end{align*}
  where the not-necessarily-invertible 2-cell is the mate in question.
\end{proof}

The following lemma, which establishes the good behaviour of right lifts through \ff{} tight-cells, generalises \cite[Lemma~3.29]{arkor2024formal} from tight-cells to loose-cells (upon taking $p$ to be corepresentable).

\begin{lemma}
  \label{lift-through-ff-tight-cell}
  Let $j \colon A \to E$ be a tight-cell. If $j$ is \ff, then, for every loose-cell $p \colon X \to A$ for which the right lift $(p \rf E(j, 1), \varpi)$ exists, the 2-cell $\varpi \colon (p \rf E(j, 1))(j, 1) \tto p$ is invertible.
\end{lemma}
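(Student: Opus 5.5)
The plan is to compute the restriction of the right lift using \cref{restriction-of-lift} and then simplify using \ffness{} of $j$. Note first that $p$ is a loose-cell $X \lto A$ and $E(j, 1) \colon E \lto A$, so the right lift $p \rf E(j, 1)$ is a loose-cell $X \lto E$. Its restriction $(p \rf E(j, 1))(j, 1) \colon X \lto A$ therefore has the same frame as $p$, and the counit $\varpi \colon E(j,1), p \rf E(j,1) \tto p$ induces the 2-cell in the statement. Concretely, that 2-cell is $\varpi$ precomposed with the cartesian 2-cell for the restriction together with the canonical unit-type 2-cell $A(1,1) \tto E(j,j)$ exhibiting $j$ as a tight-cell. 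Equivalently, it is $\varpi$ restricted along $j$ in the $E$-variable, using $E(j,1)(1,j) = E(j,j) \iso A(1,1)$.

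First, apply \cref{restriction-of-lift}(1) with $y \defeq j$ and $x \defeq 1_X$. This gives
\[
  (p \rf E(j,1))(j, 1) \iso p \rf E(j,1)(1, j) = p \rf E(j,j).
\]
Here I use strictness of the \ve{} to identify $E(j,1)(1,j)$ with $E(j,j)$. Second, since $j$ is \ff{} (\cref{ff}), $E(j,j) \iso A(1,1)$. Third, the right lift of $p$ through a loose-identity $A(1,1)$ is $p$ itself, with the counit the opcartesian nullary 2-cell. This holds because $A(1,1)$ has the universal property of a nullary composite: 2-cells $A(1,1), r_1, \ldots, r_n \tto p$ correspond bijectively to 2-cells $r_1, \ldots, r_n \tto p$. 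Combining the three steps yields $(p \rf E(j,1))(j,1) \iso p$.

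The main obstacle is not the existence of some isomorphism but checking that the composite isomorphism is precisely $\varpi$ as described in the statement. For this I will trace the counits through each step. The counit of $p \rf E(j,j)$ produced in the proof of \cref{restriction-of-lift} is $\varpi$ pasted with the cartesian 2-cells for the restrictions. Transporting it along $E(j,j) \iso A(1,1)$ gives the counit of $p \rf A(1,1)$. Uniqueness of right lifts up to a unique counit-compatible isomorphism then shows that this counit corresponds to the identity on $p$ under the identification $p \rf A(1,1) \iso p$. Hence $\varpi$, viewed as a 2-cell $(p \rf E(j,1))(j,1) \tto p$, equals this comparison isomorphism and is invertible. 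As an alternative route for the identification, I could argue directly: the factorisation of $1_p$, regarded via \ffness{} as a 2-cell $E(j,1), \ldots \tto p$, through $\varpi$ provides a candidate inverse. The two triangle-style equations would then follow from the uniqueness clauses of the right lift and of the cartesian 2-cell.
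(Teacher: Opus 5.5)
Your proposal is correct and matches the paper's argument: the paper also identifies $\varpi$ with the composite of the isomorphisms $p \iso p \rf A(1, 1) \iso p \rf E(j, j) \iso (p \rf E(j, 1))(j, 1)$, which come from \ffness{} of $j$ and \cref{restriction-of-lift}; the paper simply asserts the agreement with $\varpi$ that you propose to check by tracing counits. One harmless imprecision: the counit exhibiting $p \iso p \rf A(1,1)$ is not the nullary opcartesian 2-cell itself but the factorisation of $1_p$ through it.
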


\begin{proof}
  The 2-cell $\varpi$ is equal to the composite of the following isomorphisms,
  \[p \iso p \rf A(1, 1) \iso p \rf E(j, j) \iso (p \rf E(j, 1))(j, 1)\]
  using \ffness{} of $j$ and \cref{restriction-of-lift}.
\end{proof}

\subsection{Pointwise extensions}

A particularly important class of limits and colimits are extensions of one tight-cell along another: these correspond to the \emph{pointwise} extensions in enriched category theory~\cite[\S3.3]{arkor2024formal}.

\begin{definition}
  \label{left-extension}
  Let $j \colon Z \to Y$ and $f \colon Z \to X$ be tight-cells. A \emph{(pointwise) left extension} $j \plx f \colon Y \to X$ of $f$ along $j$ is a $Y(j, 1)$-weighted colimit of $f$.
\end{definition}

A (pointwise) right extension in $\X$ is a left extension in $\X\co$.

\begin{definition}
  \label{pointwise-right-extension}
  Let $j \colon Z \to Y$ and $f \colon Z \to X$ be tight-cells. A \emph{(pointwise) right extension} $j \prx f \colon Y \to X$ of $f$ along $j$ is a $Y(1, j)$-weighted limit of $f$.
\end{definition}

As in \cite[\S3]{arkor2024formal}, we shall speak simply of \emph{left} and \emph{right extensions}, the notion of right extensions of loose-cells (\cref{right-extension}) being distinguished from right extensions of tight-cells (\cref{pointwise-right-extension}) by the context. In any case, the two concepts are closely related: given tight-cells $j \colon Z \to Y$ and $f \colon Z \to X$, the defining property of the right extension of tight-cells is $X(1, j \prx f) \iso Y(1, j) \rx X(1, f)$.

We shall require a few properties of left extensions. The following lemma is handy for composing left extensions.

\begin{lemma}
  \label{composing-left-extensions}
  Let $j$, $f$ and $g$ be tight-cells. Suppose that $j$ is \ff, that the left extensions $f \plx j$ and $j \plx g$ exist, and that the latter preserves the former. Then $(f \plx j) \d (j \plx g) \iso f \plx g$.
\end{lemma}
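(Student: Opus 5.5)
Write $j \colon Z \to X$, $f \colon Z \to Y$ and $g \colon Z \to W$, so that $f \plx j \colon Y \to X$ is a $Y(f, 1)$-weighted colimit of $j$, $j \plx g \colon X \to W$ is an $X(j, 1)$-weighted colimit of $g$, and $f \plx g \colon Y \to W$ is to be the $Y(f, 1)$-weighted colimit of $g$. The plan has three steps: use preservation to identify the composite $(f \plx j) \d (j \plx g)$ as a $Y(f,1)$-colimit of $j \d (j \plx g)$; use full faithfulness of $j$ to show $j \d (j \plx g) \iso g$; and transport the colimit along this isomorphism of diagrams.

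\emph{Step 1 (preservation).} By hypothesis $j \plx g$ preserves the colimit $f \plx j$. By \cref{weighted-colimit}, this means the cocone $\bigl((f \plx j) \d (j \plx g), \lambda \d (j \plx g)\bigr)$ is the $Y(f,1)$-colimit of $j \d (j \plx g)$. Equivalently,
\[W((f \plx j) \d (j \plx g), 1) \iso W(j \d (j \plx g), 1) \rf Y(f, 1).\]

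\emph{Step 2 (the unit is invertible).} The defining property of $j \plx g$ gives $W(j \plx g, 1) \iso W(g, 1) \rf X(j, 1)$, with counit $\varpi$. Restricting along $j$ and applying \cref{lift-through-ff-tight-cell}, which uses that $j$ is \ff, shows that
\[\varpi \colon (W(g,1) \rf X(j,1))(j, 1) \tto W(g,1)\]
is invertible. Hence $W(j \d (j \plx g), 1) \iso W(g, 1)$, compatibly with the universal cocone. A tight-cell is determined up to isomorphism by its conjoint: an invertible 2-cell between conjoints transposes, via the cartesian 2-cells and $\cp{}$, to an invertible 2-cell between the tight-cells. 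So this yields an invertible 2-cell $j \d (j \plx g) \iso g$, namely the unit of the left extension.

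\emph{Step 3 (transport).} Substituting Step 2 into the isomorphism of Step 1 gives
\[W((f \plx j) \d (j \plx g), 1) \iso W(g, 1) \rf Y(f,1).\]
The cocone 2-cell is obtained by pasting $\lambda \d (j \plx g)$ with the invertible unit. This is exactly the universal property of the $Y(f,1)$-weighted colimit $f \plx g$, so $(f \plx j) \d (j \plx g) \iso f \plx g$.

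The main obstacle is the bookkeeping in Step 2. One must check that the isomorphism produced by \cref{lift-through-ff-tight-cell} is compatible with the cocones, so that the transported 2-cell actually exhibits the right lift rather than merely giving an abstract isomorphism of loose-cells. It must also be justified that the notion of colimit is invariant under an invertible 2-cell between diagram tight-cells. I expect both to follow by pasting with cartesian 2-cells, but this is where care is needed.
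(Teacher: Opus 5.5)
Your proof is correct and is the paper's own argument written out in more detail: preservation gives $(f \plx j) \d (j \plx g) \iso f \plx (j \d (j \plx g))$, and full faithfulness of $j$ (via \cref{lift-through-ff-tight-cell}, where the paper cites its unary precursor) gives $j \d (j \plx g) \iso g$, hence $f \plx (j \d (j \plx g)) \iso f \plx g$. The cocone bookkeeping you flag causes no trouble: a weighted colimit is determined, up to isomorphism, by the corepresentability of the relevant right lift, so the abstract isomorphism $W((f \plx j) \d (j \plx g), 1) \iso W(g, 1) \rf Y(f, 1)$ is already enough.
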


\begin{proof}
  Using the preservation assumption, together with \cite[Lemma~3.29]{arkor2024formal}, we have the following.
  \[(f \plx j) \d (j \plx g) \iso f \plx (j \d (j \plx g)) \iso f \plx g \qedhere\]
\end{proof}

The following lemma characterises when left extensions witness equivalences.

\begin{lemma}
  \label{nerve--realisation-equivalence}
  Let $j \colon A \to E$ and $f \colon A \to B$ be tight-cells for which the left extensions $f \plx j \colon B \to E$ and $j \plx f \colon E \to B$ exist and are adjoint: $f \plx j \adj j \plx f$.
  \begin{enumerate}
    \item The adjunction is coreflective if $j$ is \ff{}, $f$ is dense, and $f \plx j$ is $j$-absolute.
    \item The adjunction is reflective if $f$ is \ff{} and $j$ is dense.
  \end{enumerate}
  In particular, the adjunction is an equivalence if both $f$ and $j$ are dense and \ff, and $f \plx j$ is $j$-absolute.
\end{lemma}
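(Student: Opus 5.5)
Throughout, $f \plx j \colon B \to E$ is the $B(f, 1)$-weighted colimit of $j$, so $E(f \plx j, 1) \iso E(j, 1) \rf B(f, 1)$. Likewise $j \plx f \colon E \to B$ is the $E(j, 1)$-weighted colimit of $f$, so $B(j \plx f, 1) \iso B(f, 1) \rf E(j, 1)$. The plan is to compute the two composites $(f \plx j) \d (j \plx f)$ and $(j \plx f) \d (f \plx j)$ using \cref{composing-left-extensions}, and to identify them with identities using density. To conclude, I will use the standard 2-categorical fact that, for an adjunction $\ell \adj r$, if $\ell r \iso 1$ via \emph{some} invertible 2-cell then the counit is invertible (and dually for the unit). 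This avoids checking that the constructed isomorphisms coincide with the unit or counit themselves.

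For (2), assume $f$ is \ff{} and $j$ is dense. Apply \cref{composing-left-extensions} with $f$ in the role of the \ff{} tight-cell, giving $(j \plx f) \d (f \plx j) \iso j \plx j$. The hypothesis to check is that $f \plx j$ preserves the colimit $j \plx f$. This holds because $f \plx j$ is a left adjoint (with root the identity), and the identity preserves every colimit, so \cref{adjoints-preserve-(co)limits}(1) applies. Next, density of $j$ says exactly $E(1, 1) \iso E(j, 1) \rf E(j, 1)$, with the identity 2-cell as counit. This exhibits $1_E$ as the $E(j, 1)$-weighted colimit of $j$, so $j \plx j \iso 1_E$. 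Hence $(j \plx f) \d (f \plx j) \iso 1_E$, where this composite is $\ell r$ in the notation of the fact above. Therefore the counit is invertible and the adjunction is reflective.

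For (1), assume $j$ is \ff{}, $f$ is dense and $f \plx j$ is $j$-absolute. Apply \cref{composing-left-extensions} with $j$ \ff{}, giving $(f \plx j) \d (j \plx f) \iso f \plx f \iso 1_B$; the last step uses density of $f$ exactly as above. The remaining hypothesis, and the main point, is that $j \plx f$ preserves the colimit $f \plx j$, which does not follow from adjointness since $j \plx f$ is a right adjoint. Preservation by $j \plx f$ is equivalent to its conjoint $B(j \plx f, 1)$ respecting the colimit (the remark after \cref{respects-colimit}). That conjoint is the right lift $B(f, 1) \rf E(j, 1)$, and by \cref{right-lifts-respect-absolute-colimits} every right lift through $E(j, 1)$ respects $E(j, 1)$-absolute, i.e.\ $j$-absolute, colimits. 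So $j \plx f$ preserves $f \plx j$. The composite $(f \plx j) \d (j \plx f)$ is $r\ell$, so the unit is invertible and the adjunction is coreflective.

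The final claim follows by combining (1) and (2): under all four hypotheses both unit and counit are invertible, giving an equivalence. The main obstacle I expect is bookkeeping in (1): confirming that ``$j$-absolute'' for the tight-cell $j$ matches the loose-cell notion used in \cref{right-lifts-respect-absolute-colimits}. It does, by definition via $E(j, 1)$. I also need the preservation hypothesis of \cref{composing-left-extensions} in the form stated there.
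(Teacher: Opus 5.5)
Your proposal is correct and follows the paper's proof. Both parts apply \cref{composing-left-extensions}, with the preservation hypothesis supplied by left adjointness in (2) and by $j$-absoluteness in (1), and then use density to identify $f \plx f$ and $j \plx j$ with identities. The differences are only in detail. Where the paper cites an external lemma that left extensions along $j$ preserve $j$-absolute colimits, you derive this from \cref{right-lifts-respect-absolute-colimits} applied to the conjoint $B(j \plx f, 1) \iso B(f, 1) \rf E(j, 1)$. You also justify the passage from an abstract isomorphism $(f \plx j) \d (j \plx f) \iso 1$ (respectively $(j \plx f) \d (f \plx j) \iso 1$) to an invertible unit (respectively counit), which the paper leaves implicit.
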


\begin{proof}
  (1) Since $f \plx j$ is $j$-absolute by assumption, it is preserved by $j \plx f$ by \cite[Lemma~3.22]{arkor2024formal}, hence \cref{composing-left-extensions} implies $(f \plx j) \d (j \plx f) \iso f \plx f \iso 1$, exhibiting coreflectivity. (2) Since $f \plx j$ is a left adjoint, it preserves all left extensions, so \cref{composing-left-extensions} implies $(j \plx f) \d (f \plx j) \iso j \plx j \iso 1$.
\end{proof}

\subsection{Adjoints as absolute lifts}

We now have almost everything we need to begin our study of presheaf constructions and cocompletions. Before doing so, we explain the relation between adjunctions and extensions, which will be useful in studying adjointness properties for cocomplete objects.

\begin{definition}[{\cite[Definition~2.18]{arkor2024formal}}]
	\label{loose-adjunction}
	A \emph{loose adjunction} in a \vdc{} comprises the following data.
	\begin{enumerate}
    \item Loose-cells $\ell \colon A \lto C$ and $r \colon C \lto A$.
    \item A loose-identity for $A$ and a loose-composite $r \odot \ell \colon A \lto A$.
		\item 2-cells $\eta \colon \tto r \odot \ell$ and $\varepsilon \colon \ell, r \tto C(1, 1)$
	\end{enumerate}
	These data must satisfy the evident triangle identities. We denote this situation by $\ell \adj r$.
\end{definition}

\begin{example}[{\cite[Lemma~2.19]{arkor2024formal}}]
  Let $f \colon A \to C$ be a tight-cell in a \ve{}. Then $C(1, f) \adj C(f, 1)$. The unit is denoted $\pc f \colon {} \tto C(f, f)$ and the counit is $\cp f \colon C(1, f), C(f, 1) \tto A(1, 1)$ (\cref{cp-notation}).
\end{example}

The following observation is classical for 2-categories~\cite[Proposition~2]{street1978yoneda}, but requires a little more care in the situation in which composites may not exist.

\begin{proposition}
  \label{left-loose-adjoint-is-absolute-right-lift}
  Let $\ell \colon B \lto A$ and $r \colon A \lto B$ be loose-cells admitting a loose-composite ${r \odot \ell \colon B \lto B}$ and let $\varepsilon \colon \ell, r \tto A(1, 1)$ be a 2-cell. The following are equivalent.
  \begin{enumerate}
    \item The 2-cell $\varepsilon$ exhibits the counit of a loose adjunction $\ell \adj r$.
    \item For any loose-cell $p \colon X \lto A$ for which the loose-composite $r \odot p$ exists, the 2-cell below exhibits the counit of a right lift $r \odot p \iso p \rf \ell$.
    \[\begin{tikzcd}
      A & B & A & X \\
      A && A & X \\
      A &&& X
      \arrow[""{name=0, anchor=center, inner sep=0}, equals, nfold, from=1-1, to=2-1]
      \arrow["\ell"'{inner sep=.8ex}, "\shortmid"{marking}, from=1-2, to=1-1]
      \arrow["r"'{inner sep=.8ex}, "\shortmid"{marking}, from=1-3, to=1-2]
      \arrow[""{name=1, anchor=center, inner sep=0}, equals, nfold, from=1-3, to=2-3]
      \arrow["p"'{inner sep=.8ex}, "\shortmid"{marking}, from=1-4, to=1-3]
      \arrow[""{name=2, anchor=center, inner sep=0}, equals, nfold, from=1-4, to=2-4]
      \arrow[""{name=3, anchor=center, inner sep=0}, equals, nfold, from=2-1, to=3-1]
      \arrow["\shortmid"{marking}, equals, nfold, from=2-3, to=2-1]
      \arrow["p"{description}, from=2-4, to=2-3]
      \arrow[""{name=4, anchor=center, inner sep=0}, equals, nfold, from=2-4, to=3-4]
      \arrow["p"{inner sep=.8ex}, "\shortmid"{marking}, from=3-4, to=3-1]
      \arrow["\varepsilon"{description}, draw=none, from=0, to=1]
      \arrow["{=}"{description}, draw=none, from=1, to=2]
      \arrow["\opcart"{description}, draw=none, from=3, to=4]
    \end{tikzcd}\]
    \item The 2-cell $\varepsilon$ exhibits the counit of a right lift $r \iso A(1, 1) \rf \ell$ and the 2-cell $\varepsilon \odot \ell$ exhibits the counit $\ell, r \odot \ell \tto \ell$ of a right lift $r \odot \ell \iso \ell \rf \ell$.
  \end{enumerate}
\end{proposition}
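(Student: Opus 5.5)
I will prove the cycle (1) $\Rightarrow$ (2) $\Rightarrow$ (3) $\Rightarrow$ (1).

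\emph{(1) $\Rightarrow$ (2).} Let $\eta \colon {} \tto r \odot \ell$ be the unit. Given $p \colon X \lto A$ with $r \odot p$ existing, I construct the inverse of the map $\psi \mapsto (1_\ell, \psi) \d \bar\varepsilon_p$. Here $\bar\varepsilon_p$ denotes the displayed 2-cell $\ell, r \odot p \tto p$, namely the factorisation of $(\varepsilon, 1_p)$ through the opcartesian cell $r, p \tto r \odot p$. Suppose given $\phi \colon \ell, s_1, \ldots, s_n \tto p$. Its transpose is obtained by pasting $\eta$ on the left to produce $r \odot \ell, s_1, \ldots, s_n$. Then, using that $r \odot \ell$ is a loose-composite (its opcartesian property permits additional cells on either side), I form
\[
r, \ell, s_1, \ldots, s_n \xrightarrow{(1_r, \phi)} r, p \xrightarrow{\opcart} r \odot p .
\]
This factors uniquely through $r \odot \ell, s_1, \ldots, s_n$, and precomposing with $\eta$ gives $\lambda\phi \colon s_1, \ldots, s_n \tto r \odot p$. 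That the two assignments are mutually inverse should reduce, after using the universal properties of the opcartesian cells to cancel them, to the two triangle identities. The identity $(\eta \odot \ell) \d (\ell \odot \varepsilon) = 1_\ell$ handles $\lambda\phi \d \bar\varepsilon_p = \phi$. The other triangle identity, together with uniqueness of factorisations through the opcartesian cells for $r \odot \ell$ and $r \odot p$, handles $\lambda((1_\ell, \psi) \d \bar\varepsilon_p) = \psi$. The main obstacle is the bookkeeping here: $r \odot \ell$ and $r \odot p$ are only composites, not part of a bicategory, so every use of ``whiskering'' must be expressed as a factorisation through a specified opcartesian cell, and each equation checked by precomposing with those cells.

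\emph{(2) $\Rightarrow$ (3).} First take $p = A(1,1)$. The composite $r \odot A(1,1)$ exists and is $r$, with the opcartesian cell $r, A(1,1) \tto r$ the canonical one. Under this identification the displayed cell reduces to $\varepsilon$, so $r \iso A(1,1) \rf \ell$. Next take $p = \ell$. The composite $r \odot \ell$ exists by hypothesis, and the displayed cell is precisely $\varepsilon \odot \ell$.

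\emph{(3) $\Rightarrow$ (1).} Define $\eta \colon {} \tto r \odot \ell$ as the factorisation of the identity $1_\ell \colon \ell \tto \ell$ through the right lift $r \odot \ell \iso \ell \rf \ell$. The resulting defining equation $(1_\ell, \eta) \d (\varepsilon \odot \ell) = 1_\ell$ is exactly one triangle identity.

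For the other triangle identity, let $\tau \colon r \tto r$ denote the composite $r \to r \odot \ell, r \to r$, given by $\eta$ followed by the evident cell built from $\varepsilon$ and the opcartesian cell. Since $r \iso A(1,1) \rf \ell$, it suffices to check that $(1_\ell, \tau) \d \varepsilon = \varepsilon$. By the definition of $\varepsilon \odot \ell$, the left-hand side rewrites as $(1_\ell, \eta, 1_r) \d ((\varepsilon \odot \ell), 1_r) \d \varepsilon$. The first triangle identity then collapses this to $\varepsilon$, and uniqueness in the right lift forces $\tau = 1_r$.
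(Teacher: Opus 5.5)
Your proof is correct and follows the paper's argument. In (1)$\Rightarrow$(2) you transpose by pasting with $\eta$ and get uniqueness from the triangle identities, and in (2)$\Rightarrow$(3) you specialise to $p = A(1,1)$ and $p = \ell$. In (3)$\Rightarrow$(1) you define $\eta$ by lifting $1_\ell$ through $\ell \rf \ell$, then verify the second triangle identity by pasting $\ell$ on the left, postcomposing $\varepsilon$, and using the universal property of $A(1,1) \rf \ell$.

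Two cosmetic points. In (1)$\Rightarrow$(2) the triangle identity you use should be written $(1_\ell, \eta) \d (\varepsilon \odot \ell) = 1_\ell$, as you do later; the equation it yields is $(1_\ell, \lambda\phi) \d \bar\varepsilon_p = \phi$. In (3)$\Rightarrow$(1), the rewriting step needs the definitions of both $\varepsilon \odot \ell$ and $r \odot \varepsilon$, plus uniqueness of factorisation through the opcartesian cell of $r \odot \ell$.
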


\begin{proof}
  (1 $\implies$ 2) Given a loose adjunction $\ell \adj r$, the universal property of the right lift $p \rf \ell$ follows by transposition, \ie pasting with the unit $\eta \colon {} \tto r \odot \ell$, uniqueness of the factorisation following from the triangle identities.

  (2 $\implies$ 3) The composite $A(1, 1) \odot p$ exists trivially, while the composite $r \odot \ell$ exists by assumption.

  (3 $\implies$ 1) Given the specified right lifts, we define a unit ${} \tto \ell \rf \ell$ by applying the universal property of $\ell \rf \ell$ to the identity $1_\ell \colon \ell \tto \ell$; one triangle identity then follows immediately from the universal property.
  \[\begin{tikzcd}
    A & B && B \\
    A & B && B \\
    A &&& B
    \arrow[""{name=0, anchor=center, inner sep=0}, Rightarrow, no head, nfold, from=1-1, to=2-1]
    \arrow["\ell"', "\shortmid"{marking}, from=1-2, to=1-1]
    \arrow[""{name=1, anchor=center, inner sep=0}, Rightarrow, no head, nfold, from=1-2, to=2-2]
    \arrow[Rightarrow, no head, nfold, from=1-4, to=1-2]
    \arrow[""{name=2, anchor=center, inner sep=0}, Rightarrow, no head, nfold, from=1-4, to=2-4]
    \arrow[""{name=3, anchor=center, inner sep=0}, Rightarrow, no head, nfold, from=2-1, to=3-1]
    \arrow["\ell"{description}, from=2-2, to=2-1]
    \arrow["{\ell \rf \ell}"{description}, from=2-4, to=2-2]
    \arrow[""{name=4, anchor=center, inner sep=0}, Rightarrow, no head, nfold, from=2-4, to=3-4]
    \arrow["\ell", "\shortmid"{marking}, from=3-4, to=3-1]
    \arrow["{=}"{description}, draw=none, from=1, to=0]
    \arrow["{\lambda(1_\ell)}"{description}, draw=none, from=2, to=1]
    \arrow["{\varepsilon \odot \ell}"{description}, draw=none, from=3, to=4]
  \end{tikzcd}\]
  For the other triangle identity, observe that the universal property of $A(1, 1) \rf \ell$ implies that, to show the following 2-cell is the identity, it suffices to show that pasting $\ell$ on the left and postcomposing $\varepsilon$ produces $\varepsilon$. This follows straightforwardly from the first triangle identity.
  \[\begin{tikzcd}[column sep=huge]
    B & B & A \\
    B & B & A \\
    B && A
    \arrow[""{name=0, anchor=center, inner sep=0}, Rightarrow, no head, nfold, from=1-1, to=2-1]
    \arrow[Rightarrow, no head, nfold, from=1-2, to=1-1]
    \arrow[""{name=1, anchor=center, inner sep=0}, Rightarrow, no head, nfold, from=1-2, to=2-2]
    \arrow["{A(1, 1) \rf \ell}"', "\shortmid"{marking}, from=1-3, to=1-2]
    \arrow[""{name=2, anchor=center, inner sep=0}, Rightarrow, no head, nfold, from=1-3, to=2-3]
    \arrow[""{name=3, anchor=center, inner sep=0}, Rightarrow, no head, nfold, from=2-1, to=3-1]
    \arrow["{\ell \rf \ell}"{description}, from=2-2, to=2-1]
    \arrow["{A(1, 1) \rf \ell}"{description}, from=2-3, to=2-2]
    \arrow[""{name=4, anchor=center, inner sep=0}, Rightarrow, no head, nfold, from=2-3, to=3-3]
    \arrow["{A(1, 1) \rf \ell}", "\shortmid"{marking}, from=3-3, to=3-1]
    \arrow["{\lambda(1_\ell)}"{description}, draw=none, from=0, to=1]
    \arrow["{=}"{description}, draw=none, from=1, to=2]
    \arrow["{(A(1, 1) \rf \ell) \odot \varepsilon}"{description}, draw=none, from=3, to=4]
  \end{tikzcd}\qedshift\]
\end{proof}

As a consequence, we obtain the following `adjoint tight-cell theorem'.

\begin{corollary}[{\cf~\cites[Proposition~21]{street1978yoneda}[Corollary~10]{wood1982abstract}}]
  \label{adjointness-via-absolute-left-extension}
  Let $f \colon A \to B$ be a tight-cell. The following conditions are equivalent.
  \begin{enumerate}
    \item $f$ admits a right adjoint.
    \item The left extension $f \plx 1_A \colon B \to A$ exists and is absolute.
    \item The left extension $f \plx 1_A \colon B \to A$ exists and is preserved by $f$.
  \end{enumerate}
  In this case, the unit of the left extension exhibits the unit of the adjunction $f \adj f \plx 1_A$.
\end{corollary}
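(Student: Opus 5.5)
The strategy is to translate everything into statements about loose-cells, and then apply \cref{left-loose-adjoint-is-absolute-right-lift} to the companion/conjoint pair of $f$. By definition, the left extension $f \plx 1_A$ is the $B(f, 1)$-weighted colimit of $1_A$. Thus a tight-cell $g \colon B \to A$ with a 2-cell $\lambda \colon B(f, 1) \tto A(1, g)$ is this extension exactly when the induced 2-cell exhibits $A(g, 1) \iso A(1, 1) \rf B(f, 1)$. Recall also that $f \adj g$ means $B(f, 1) \iso A(1, g)$ by \cref{relative-adjunction}, taking the root to be the identity.

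For (1 $\implies$ 2), suppose $f \adj g$. Then $A(1, g) \iso B(f, 1)$, and, since companions and conjoints form loose adjunctions, $B(f, 1) \iso A(1, g) \adj A(g, 1)$. The composite $A(g, 1) \odot p$ always exists for representable-type restrictions: it is $p(g, 1)$. Hence \cref{left-loose-adjoint-is-absolute-right-lift}(1 $\implies$ 2) gives $p(g, 1) \iso p \rf B(f, 1)$ for every $p \colon X \lto A$.

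Taking $p = A(1, 1)$ shows that $g$ is the left extension $f \plx 1_A$. Taking $p = A(1, 1)(1, h)$ for the appropriate cells, or working directly with the left-opcartesian condition of \cref{j-absolute} for $j = 1_A$, gives absoluteness. Concretely, absoluteness requires $A(1, 1), B(f, 1) \tto A(1, g)$ to be left-opcartesian. Since $B(f, 1) \iso A(1, g)$ with a loose-identity on the left, this 2-cell is the opcartesian identity-composite, up to isomorphism.

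The implication (2 $\implies$ 3) is immediate, since absolute colimits are preserved by every tight-cell: by \cref{right-lifts-respect-absolute-colimits} every right lift through $A(1,1)$ respects them, and conjoints of tight-cells are such lifts. One should check that preservation by $f$ is the $A(f,1)$-respect condition, using the remark after \cref{respects-colimit}.

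For (3 $\implies$ 1), write $g \defeq f \plx 1_A$ and let $\lambda \colon B(f, 1) \tto A(1, g)$ be the unit. Preservation by $f$ says $B(fg, 1) \iso B(f, 1) \rf B(f, 1)$, via the cocone $\lambda \d f$. We aim to verify condition (3) of \cref{left-loose-adjoint-is-absolute-right-lift} for $\ell = B(f, 1)$ and $r = A(g, 1)$, with $\varepsilon$ built from $\lambda$ and $\cp g$. The colimit property gives $r \iso A(1, 1) \rf \ell$.

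The composite $r \odot \ell$ exists and equals $B(f, 1)(g, 1) = B(fg, 1)$ by strictness of restrictions. The preservation hypothesis then gives $r \odot \ell \iso \ell \rf \ell$. The main obstacle is checking that this isomorphism has counit $\varepsilon \odot \ell$, i.e.\ that the canonical cells match. This is a routine pasting calculation with cartesian cells and $\cp{}$.

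Having obtained a loose adjunction $B(f, 1) \adj A(g, 1)$, uniqueness of loose right adjoints together with $A(1, g) \adj A(g, 1)$ gives $B(f, 1) \iso A(1, g)$, i.e.\ $f \adj g$. Tracing the unit through these identifications shows that the unit of the adjunction corresponds to $\lambda$.
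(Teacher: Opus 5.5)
Your proof is correct and is essentially the paper's. The substantive step (3 $\Rightarrow$ 1) is exactly its argument: apply \cref{left-loose-adjoint-is-absolute-right-lift}(3) to $\ell = B(f, 1)$ and $r = A(g, 1)$, then compare with $A(1, g) \adj A(g, 1)$. The only difference is that the paper cites \cite[Proposition~5.10]{arkor2024formal} for (1 $\iff$ 2), whereas you close the cycle internally using \cref{left-loose-adjoint-is-absolute-right-lift}(1 $\Rightarrow$ 2) and \cref{right-lifts-respect-absolute-colimits}.

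There are two small slips. The final step needs uniqueness of loose \emph{left} adjoints, since both $B(f, 1)$ and $A(1, g)$ are left adjoint to $A(g, 1)$. Also, the conjoint of $f$ is $B(f, 1)$, not $A(f, 1)$.
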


\begin{proof}
  (1 $\iff$ 2) follows from \cite[Proposition~5.10]{arkor2024formal}. For (1 $\iff$ 3), observe that,
  for each tight-cell $u \colon B \to A$, the loose-composite $A(u, 1) \odot B(f, 1) \colon B \lto B$ is exhibited by $B(u f, 1)$. By \cref{left-loose-adjoint-is-absolute-right-lift}, a 2-cell $B(1, f u) \tto B(1, 1)$ therefore exhibits the counit of a loose-adjunction $B(f, 1) \adj A(u, 1)$ if and only if it exhibits the counit of a right lift $A(u, 1) \iso A(1, 1) \rf B(f, 1)$, hence a left extension $u \iso f \plx 1_A$, and this is preserved by $f$. In particular, $B(f, 1) \adj A(u, 1)$ if and only if $f \adj u$, from which the result follows.
\end{proof}

\section{Presheaves in a \ve}
\label{presheaves-in-a-virtual-equipment}

A \emph{presheaf object} in a \ve{} axiomatises the structure of a category of presheaves~\cite[\S7.1]{arkor2025nerve}. Its universal property expresses the bijection discussed in the \cref{introduction}, between loose-cells $A \lto B$ and tight-cells $A \to \cl P B$. In particular, the identity $1_{\cl P A} \colon \cl P A \to \cl P A$ induces a loose-cell $\pi_A \colon \cl P A \lto A$, which we require to be dense in the sense of \cref{dense-loose-cell}. (This distinguishes presheaf objects from \emph{power objects}~\cite[\S11]{lambert2022double}, in which $\pi_A$ is not required to be dense; see \cite[Remark~7.4]{arkor2025nerve} for a discussion of related concepts.) We may refine the notion of presheaf object by parameterising it by a class $\PP$ of loose-cells, requiring that the projection $\pi_A$ witnesses a bijection between loose-cells $A \lto B$ in $\PP$ and tight-cells from $A$ into the $\PP$-presheaf object on $B$.

\begin{remark}[A comment on notation]
  Before we present the definition, it is necessary to clarify a distinction between classical enriched category theory and formal category theory, which was mentioned in passing in \cref{weights}. For the theory of categories enriched in a monoidal category, it is typical to consider two notions of weight, depending on whether one is considering limits or colimits~\cite{kelly1982basic}: colimits are weighted by presheaves (equivalently distributors from the trivial $\b V$-category), while limits are weighted by copresheaves (equivalently distributors to the trivial $\b V$-category). This means that it is often reasonable to use the same notation for both colimit notions and limit notions (\eg{} free completions under a class of weights and free cocompletions under a class of weights), since these notions will be disambiguated by whether one is considering a class of weights for colimits, or a class of weights for limits. (Strictly speaking, when enriching in a braided monoidal category, presheaves on $\A$ are in bijection with copresheaves on $\A\op$, which means that the two notions of weight do coincide. However, the two notions are distinguished notationally in practice via the presence or absence of ${}\op$.) In the setting of formal category theory, the appropriate notion of weight (\cref{weight}) is entirely symmetric, and parameterises both limits and colimits. We must therefore take care in our notation to be clear about whether we are talking about limit notions or colimit notions. We decorate operations with $\hat{\ }$ and $\check{\ }$ to denote colimit notions and limit notions respectively, which aligns with the typical convention in enriched category theory to denote the category of presheaves by $\widehat{\A}$ and the category of copresheaves by~$\widecheck{\A}$.
\end{remark}

Henceforth, we shall work in the setting of an ambient strict \ve{} $\X$ (see \cref{strict-ve} and the following discussion).

\begin{definition}
  \label{Phi-presheaf-object}
  Let $\PP$ be a class of loose-cells in $\X$ and let $A$ be an object. A \emph{$\PP$-presheaf object} for $A$ comprises an object $\psh\PP A$ and a dense loose-cell $\pi^\PP_A \colon \psh\PP A \lto A$, such that
  \begin{enumerate}
      \item \label{Phi-presheaf-object-UP} for every loose-cell $p \colon X \lto A$ in $\PP$, there is a unique tight-cell $\pshm p \colon X \to \psh\PP A$ satisfying $p = \pi^\PP_A(1, \pshm p)$;
      \item \label{Phi-presheaf-object-saturation} for every tight-cell $f \colon X \to \psh\PP A$, the restriction $\pi^\PP_A(1, f) \colon X \lto A$ is in $\PP$.
  \end{enumerate}
  Typically, when there is no risk of confusion, we will denote $\pi^\PP_A$ simply by $\pi_A$.
\end{definition}

Note in particular that:
\begin{itemize}
  \item for each $\PP$-presheaf object, $\pi^\PP_A = \pi^\PP_A(1, 1)$ is in $\PP$, and $\widepshm{\pi^\PP_A} = 1_{\psh\PP A}$;
  \item for each $p \in \PP$ and tight-cell $f \colon X \to \psh\PP A$, we have $\widepshm{p(1, f)} = \pshm p \c f$ (since $p(1, f) = \pi^\PP_A(1, \pshm p)(1, f) = \pi^\PP_A(1, \pshm p f)$).
\end{itemize}

\begin{example}
  \label{presheaf-object}
  Denote by $\P$ the class of all loose-cells in $\X$.\footnote{We could alternatively simply write $\X$ for this class, but it is useful to have notation that is independent of the ambient \ve.} Then an object $A$ admits a $\P$-presheaf object $\psh\P A$ if and only if it admits a \emph{presheaf object} in the sense of \cite[Definition~7.2]{arkor2025nerve}.
\end{example}

\begin{example}
  \label{trivial-presheaf-object}
  Denote by $\P|_{(1, {-})}$ the class of representable loose-cells in $\X$. Then, for each object $A$, the loose-identity $A(1, 1) \colon A \lto A$ exhibits the $\P|_{(1, {-})}$-presheaf object for $A$. This is essentially the minimal example of a presheaf object: for instance, observe that, since classes of loose-cells for presheaf objects must be closed under precomposition by tight-cells into the presheaf objects, $\varnothing$-presheaf objects cannot exist. (We say `essentially' because we could further restrict the class to only those representable loose-cells that have codomain $A$: see \cref{PhiA-presheaf-object}.)
\end{example}

\begin{remark}[label=P-presheaf-object-is-stronger-than-presheaf-object-in-sub-equipment]
  Given a class $\PP$ of loose-cells in the \ve{} $\X$, closed under restrictions and loose-identities, we can consider the full sub-\ve{} spanned by loose-cells in $\PP$. A $\PP$-presheaf object in $\X$ is, in particular, a presheaf object in the sub-\ve{} (in the sense of \cref{presheaf-object}). However the converse is not necessarily true: density of $\pi_A \colon \psh\PP A \lto A$ in $\X$ is typically a stronger condition than density in the sub-\ve, as it refers to arbitrary loose-cells in $\X$. It therefore does not seem sufficient to study $\PP$-presheaf constructions generally merely by considering presheaf constructions in sub-\ve s{} (or sub-augmented \ve s, \cf~\cref{relationship-to-koudenburg}).
\end{remark}

Strictly speaking, the class of loose-cells in \cref{Phi-presheaf-object} is underconstrained, as the universal property of a presheaf object on an object $A$ only involves loose-cells with codomain $A$. Therefore, we may always restrict a class of loose-cells to the loose-cells with a fixed codomain, without changing the notion of presheaf object.

\begin{lemma}
  \label{PhiA-presheaf-object}
  Let $\PP$ be a class of loose-cells and let $A$ be an object. Denote by $\PP|_{{}\lto A}$ the subclass of $\PP$ spanned by loose-cells with codomain $A$. Then $A$ admits a $\PP$-presheaf object if and only if it admits a $\PP|_{{}\lto A}$-presheaf object.
\end{lemma}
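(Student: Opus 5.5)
The plan is to show that the same data $(\psh\PP A, \pi^\PP_A)$ serves as both a $\PP$-presheaf object and a $\PP|_{{}\lto A}$-presheaf object. We check each clause of \cref{Phi-presheaf-object} separately. The guiding observation is that every loose-cell mentioned in the definition has codomain $A$. Indeed, $\pi_A$ itself is a loose-cell $\psh\PP A \lto A$. In clause~\ref{Phi-presheaf-object-UP}, $p \colon X \lto A$ ranges over loose-cells with codomain $A$. In clause~\ref{Phi-presheaf-object-saturation}, the restriction $\pi_A(1, f) \colon X \lto A$ again has codomain $A$. Density of $\pi_A$ (\cref{dense-loose-cell}) is a property of $\pi_A$ in $\X$ and makes no reference to the class at all.

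Suppose first that $(\psh\PP A, \pi_A)$ is a $\PP$-presheaf object. Density holds verbatim. For clause~\ref{Phi-presheaf-object-UP}, any $p \colon X \lto A$ in $\PP|_{{}\lto A}$ lies in $\PP$, so it has a unique $\pshm p$ with $p = \pi_A(1, \pshm p)$. For clause~\ref{Phi-presheaf-object-saturation}, each $\pi_A(1, f)$ lies in $\PP$ and has codomain $A$, hence lies in $\PP|_{{}\lto A}$.

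Conversely, suppose $(\psh{\PP|_{{}\lto A}} A, \pi_A)$ is a $\PP|_{{}\lto A}$-presheaf object. Clause~\ref{Phi-presheaf-object-saturation} for $\PP$ follows because $\PP|_{{}\lto A} \subseteq \PP$. For clause~\ref{Phi-presheaf-object-UP}, take a loose-cell $p \colon X \lto A$ in $\PP$. Since $p$ has codomain $A$, it lies in $\PP|_{{}\lto A}$, so the required unique tight-cell exists. Uniqueness is a statement about tight-cells $g$ with $\pi_A(1, g) = p$, which does not depend on the class.

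I expect no genuine obstacle here; the argument is a direct unwinding of the definition. The only point needing care is noting explicitly that clause~\ref{Phi-presheaf-object-UP} quantifies only over loose-cells \emph{into} $A$, so that restricting the class cannot change it. Strictness of $\X$ makes the equalities $p = \pi_A(1, \pshm p)$ literal, so no coherence issues arise.
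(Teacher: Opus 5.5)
Your argument is correct and is the same as the paper's, whose entire proof is the observation that the definition of a $\PP$-presheaf object only involves loose-cells with codomain $A$; you have simply checked this clause by clause. Your final appeal to strictness is unnecessary, since the identical equation appears in both definitions.
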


\begin{proof}
  The universal property of \cref{Phi-presheaf-object} only involves loose-cells with codomain $A$.
\end{proof}

However, in practice, we typically wish to consider larger classes of weights than the one minimally determined by \cref{PhiA-presheaf-object}, as we are often interested in comparing $\PP$-presheaf objects on different objects.

\begin{remark}
  \label{canonical-class-of-loose-cells}
  Note that, if a $\PP$-presheaf object exists, then necessarily \[\PP|_{{}\lto A} = \{\ \pi^\PP_A(1, f) \mid f \colon \cdot \to \psh\PP A\ \}\] (where $\cdot$ denotes an arbitrary object). However, typically we wish to start with a class of loose-cells, and then form a presheaf object with respect to that class, rather than conversely. We will give conditions in \cref{well-behaved-is-presheaf-object} under which a given loose-cell $\pi \colon P \lto A$ exhibits a presheaf object for the canonically associated class of loose-cells $\{\ \pi(1, f) \mid f \colon \cdot \to E\ \}$.
\end{remark}

It is worth highlighting that $\PP$-presheaf objects are unique up to isomorphism, rather than up to equivalence. While this tends to be convenient in practice, it does mean that the relationship between the class $\PP$ and the notion of $\PP$-presheaf object is rather strict. To illustrate this point, let us consider a natural operation on a class of loose-cells: closing under isomorphism.

\begin{definition}
  The \emph{repletion} $\PP\repl$ of a class of loose-cells $\PP$ in a \vdc{} $\X$ is the closure of $\PP$ under (globularly) isomorphic loose-cells. A class of loose-cells is \emph{replete} if it is equal to its repletion, \ie{} $\PP = \PP\repl$.
\end{definition}

While the classes $\PP$ and $\PP\repl$ may seem `essentially the same', a $\PP$-presheaf object exhibits a $\PP\repl$-presheaf object only if $\PP$ is replete.

\begin{lemma}
  \label{coincident-presheaf-objects}
  Let $\PP$ and $\PP'$ be classes of loose-cells and let $A$ be an object. Suppose that a loose-cell $\pi_A \colon P \lto A$ exhibits both a $\PP$- and a $\PP'$-presheaf object for $A$. Then $\PP|_{{}\lto A} = \PP'|_{{}\lto A}$.
\end{lemma}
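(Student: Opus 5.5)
We prove the two inclusions $\PP|_{{}\lto A} \subseteq \PP'|_{{}\lto A}$ and $\PP'|_{{}\lto A} \subseteq \PP|_{{}\lto A}$; by symmetry of the hypotheses it suffices to establish the first. The key observation is \cref{canonical-class-of-loose-cells}: if a loose-cell $\pi_A \colon P \lto A$ exhibits a $\PP$-presheaf object, then $\PP|_{{}\lto A}$ is determined entirely by $\pi_A$, being exactly the class $\{\ \pi_A(1, f) \mid f \colon \cdot \to P\ \}$. Since the same $\pi_A$ exhibits the $\PP'$-presheaf object, the same description applies to $\PP'|_{{}\lto A}$. Both restricted classes therefore coincide with the class of restrictions of $\pi_A$ along tight-cells into $P$.

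To make this rigorous, I would verify the displayed equality of \cref{canonical-class-of-loose-cells} directly from \cref{Phi-presheaf-object}. For the inclusion $\supseteq$, take any tight-cell $f \colon X \to P$. The saturation condition \eqref{Phi-presheaf-object-saturation} gives $\pi_A(1, f) \in \PP$, and this loose-cell has codomain $A$. For the inclusion $\subseteq$, take $p \colon X \lto A$ in $\PP$. Condition \eqref{Phi-presheaf-object-UP} gives a tight-cell $\pshm p \colon X \to P$ with $p = \pi_A(1, \pshm p)$, so $p$ has the required form. Running the identical argument with $\PP'$ in place of $\PP$ yields the same class, and the result follows.

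There is no genuine obstacle here. The only point requiring care is that the equalities are strict rather than up to isomorphism. This is guaranteed because the universal property in \cref{Phi-presheaf-object} is stated with an equality $p = \pi_A(1, \pshm p)$, which makes sense since $\X$ is a strict \ve{} with chosen restrictions. It is precisely this strictness that makes the lemma hold on the nose, and that explains the preceding remark that a $\PP$-presheaf object is a $\PP\repl$-presheaf object only when $\PP$ is replete.
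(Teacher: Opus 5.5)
Your proof is correct and is essentially the paper's argument. The paper takes $p \in \PP'$ with codomain $A$, writes $p = \pi_A(1, \pshm p)$ using the universal property for $\PP'$, concludes $p \in \PP$ from the saturation condition for $\PP$, and finishes by symmetry; you route the same two conditions through the intermediate class $\{\pi_A(1, f) \mid f \colon \cdot \to P\}$, which gives both inclusions at once.
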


\begin{proof}
  Let $p \colon X \lto A$ be a tight-cell in $\PP'$. Then $p = \pi_A(1, \pshm p)$ by \eqref{Phi-presheaf-object-UP} for a $\PP'$-presheaf object, and hence $p \in \PP$ by \eqref{Phi-presheaf-object-saturation} for a $\PP$-presheaf object. The converse is symmetric.
\end{proof}

We now establish some basic properties of presheaf objects. One of the most important properties is that the homs of presheaf objects exhibit right lifts.

\begin{lemma}[{\cf~\cite[Lemma~7.5]{arkor2025nerve}}]
  \label{Phi-presheaf-hom-is-right-lift}
  Let $\PP$ be a class of loose-cells and let $A$ be an object admitting a $\PP$-presheaf object. For every pair of loose-cells $p \colon X \lto A$ and $q \colon Y \lto A$ in $\PP$, there is a canonical 2-cell with the following frame,
  \[\begin{tikzcd}[column sep=huge, row sep=small]
    A & X & Y \\
    A && Y
    \arrow[equals, nfold, from=1-1, to=2-1]
    \arrow["p"'{inner sep=.8ex}, "\shortmid"{marking}, from=1-2, to=1-1]
    \arrow["{\psh\PP A(\pshm p, \pshm q)}"'{inner sep=.8ex}, "\shortmid"{marking}, from=1-3, to=1-2]
    \arrow[equals, nfold, from=1-3, to=2-3]
    \arrow["q"{inner sep=.8ex}, "\shortmid"{marking}, from=2-3, to=2-1]
  \end{tikzcd}\]
  which exhibits $\psh\PP A(\pshm p, \pshm q)$ as the right lift $q \rf p$ of $q$ through $p$. In particular, $\psh\PP A(1, \pshm p) \iso p \rf \pi_A$ and $\psh\PP A(\pshm p, 1) \iso \pi_A \rf p$.
\end{lemma}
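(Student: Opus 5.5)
The plan is to reduce everything to density of $\pi_A$ together with the interaction between right lifts and restrictions (\cref{restriction-of-lift}). By density (\cref{dense-loose-cell}), the identity 2-cell on $\pi_A$ exhibits $\psh\PP A(1,1)$ as the right lift $\pi_A \rf \pi_A$, with counit $1_{\pi_A}$. Using strictness of $\X$ and \eqref{Phi-presheaf-object-UP}, we have equalities $p = \pi_A(1, \pshm p)$ and $q = \pi_A(1, \pshm q)$. I would therefore restrict this right lift along the tight-cells $\pshm q$ and $\pshm p$.

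First, apply \cref{restriction-of-lift}(1) to the right lift $\pi_A \rf \pi_A$, with the weight $\vec p$ the single loose-cell $\pi_A$, $y = \pshm p$ and $x = \pshm q$. This gives
\[
\psh\PP A(\pshm p, \pshm q) = (\pi_A \rf \pi_A)(\pshm p, \pshm q) \iso \pi_A(1, \pshm q) \rf \pi_A(1, \pshm p) = q \rf p .
\]
Next, I would identify the counit. It should be the composite of the cartesian 2-cells exhibiting $p$ and $\psh\PP A(\pshm p, \pshm q)$ as restrictions, followed by the counit $1_{\pi_A}$. This composite has frame $p, \psh\PP A(\pshm p,\pshm q) \tto \pi_A$ with right leg $\pshm q$. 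It then factors through the cartesian 2-cell for $q = \pi_A(1,\pshm q)$, yielding the canonical globular 2-cell $p, \psh\PP A(\pshm p,\pshm q) \tto q$. That this 2-cell is the counit is what the proof of \cref{restriction-of-lift} produces, citing \cite[Lemma~3.6]{arkor2024formal}.

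For the special cases, take $q = \pi_A$, so that $\pshm q = 1$; this gives $\psh\PP A(\pshm p, 1) \iso \pi_A \rf p$. Take $p = \pi_A$; this gives $\psh\PP A(1, \pshm q) \iso q \rf \pi_A$. Both use that $\pi_A \in \PP$ and $\widepshm{\pi_A} = 1$.

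The main point requiring care is checking that the variance conventions of \cref{restriction-of-lift} line up. In $(q \rf \vec p)(y,x)$ the first argument is restricted along $y$ at the $Y$ end, and this must match $\psh\PP A(\pshm p,\pshm q)$ with $\pshm p$ on the domain-of-$p$ side. This is bookkeeping rather than a conceptual obstacle. If the conventions happen to be misaligned, I would instead verify the universal property directly: given a 2-cell $p, r_1,\dots,r_n \tto q$, compose it with the cartesian cells to get a 2-cell $\pi_A(1,\pshm p), r_1,\dots,r_n \tto \pi_A$. I would then factor it through density to obtain a 2-cell into $\psh\PP A(1,1)$ with legs $\pshm p, \pshm q$, and finally through the cartesian cell of the restriction $\psh\PP A(\pshm p,\pshm q)$. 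Uniqueness follows from uniqueness at each step.
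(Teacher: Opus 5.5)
Your proposal is correct and is essentially the paper's own proof. Density of $\pi_A$ gives $\psh\PP A(1,1) \iso \pi_A \rf \pi_A$, \cref{restriction-of-lift} turns its restriction into $\pi_A(1,\pshm q) \rf \pi_A(1,\pshm p)$, and the presheaf universal property identifies this with $q \rf p$. Your variance bookkeeping is also right ($y = \pshm p$ on the codomain side, $x = \pshm q$ on the domain side), so the fallback direct verification is not needed.
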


\begin{proof}
  We have $\psh\PP A(\pshm p, \pshm q) \iso (\pi_A \rf \pi_A)(\pshm p, \pshm q) \iso \pi_A(1, \pshm q) \rf \pi_A(1, \pshm p) = q \rf p$ using density of $\pi_A$, \cref{restriction-of-lift}, and the universal property of the $\PP$-presheaf object.
\end{proof}

For instance, using this fact, we can generalise the observation that a functor is dense if and only if its nerve is \ff{} (which is recovered by taking $p$ to be corepresentable in the following).

\begin{corollary}
  \label{p-dense-iff-breve-p-ff}
  Let $\PP$ be a class of loose-cells and let $A$ be an object admitting a $\PP$-presheaf object. For each loose-cell $p \colon X \lto A$ in $\PP$, the tight-cell $\pshm p \colon X \to \psh\PP A$ is \ff{} if and only if $p$ is dense.
\end{corollary}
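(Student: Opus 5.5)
The plan is to compute $\psh\PP A(\pshm p, \pshm p)$ directly using \cref{Phi-presheaf-hom-is-right-lift}. Taking $q = p$ there, the canonical 2-cell exhibits $\psh\PP A(\pshm p, \pshm p)$ as the right lift $p \rf p$. By \cref{ff}, $\pshm p$ is \ff{} precisely when $X(1, 1) \iso \psh\PP A(\pshm p, \pshm p)$. By \cref{dense-loose-cell}, $p$ is dense precisely when the identity $1_p$ exhibits $X(1,1)$ as $p \rf p$. So both conditions amount to $X(1,1)$ being the right lift $p \rf p$, and the task is to check that the universal 2-cells match.

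Concretely, I would identify the canonical 2-cell $p, \psh\PP A(\pshm p, \pshm p) \tto p$ from \cref{Phi-presheaf-hom-is-right-lift}. It is built from the density counit of $\pi_A$, namely $\pi_A, \pi_A \rf \pi_A \tto \pi_A$, restricted along $(\pshm p, \pshm p)$ using $p = \pi_A(1, \pshm p)$. The comparison map $X(1,1) \tto \psh\PP A(\pshm p, \pshm p)$ is the canonical 2-cell induced by the tight-cell $\pshm p$ (the unit $\pc{\pshm p}$, viewed through restriction). Precomposing the canonical counit with this comparison should give back the identity $1_p$. I would verify this by using the fact that the nullary 2-cell from $X(1,1)$ is the image of the identity of $\pshm p$, together with the counit of $\pi_A \rf \pi_A$ at the identity, which is $1_{\pi_A}$ by density of $\pi_A$. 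This works because restricting $1_{\pi_A}$ along $\pshm p$ gives $1_p$, by strictness of the chosen restrictions.

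Given that compatibility, both directions follow from uniqueness of right lifts up to unique isomorphism compatible with counits.

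\textbf{If $p$ is dense.} Both $X(1,1)$ (with counit $1_p$) and $\psh\PP A(\pshm p, \pshm p)$ (with the canonical counit) are right lifts $p \rf p$. The induced comparison between them is the canonical map, so that map is invertible and $\pshm p$ is \ff.

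\textbf{If $\pshm p$ is \ff.} The canonical map $X(1,1) \tto \psh\PP A(\pshm p, \pshm p)$ is an isomorphism. Transporting the right-lift structure along it shows that $1_p$ exhibits $X(1,1)$ as $p \rf p$, so $p$ is dense.

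The main obstacle is the bookkeeping step of checking that the canonical counit, whiskered by the unit-induced comparison, equals $1_p$. \Cref{ff} asserts only "an isomorphism", so I would read it as the canonical comparison being invertible, as in the cited source. With that reading, everything else is a formal consequence of \cref{Phi-presheaf-hom-is-right-lift} and \cref{restriction-of-lift}.
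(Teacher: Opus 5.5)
Your proposal is correct and follows the paper's proof. Both arguments use \cref{Phi-presheaf-hom-is-right-lift} to identify $\psh\PP A(\pshm p, \pshm p)$ with $p \rf p$, then note that this identification is compatible with the canonical 2-cells out of $X(1,1)$, so that full faithfulness of $\pshm p$ and density of $p$ become the same condition. Your explicit check, that the canonical counit precomposed with the comparison $X(1,1) \tto \psh\PP A(\pshm p, \pshm p)$ recovers $1_p$ via density of $\pi_A$, is exactly the compatibility the paper states is clear.
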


\begin{proof}
  $\psh\PP A(\pshm p, \pshm p) \iso p \rf p$ by \cref{Phi-presheaf-hom-is-right-lift}, and it is clear this isomorphism commutes with the canonical 2-cells from $A(1, 1)$.
\end{proof}

As a consequence, while we may not generally have $\psh\PP A \iso \psh{\PP\repl} A$, the former is nevertheless contained in the latter.

\begin{corollary}
  \label{subpresheaf-object}
  Let $\PP \subseteq \PP'$ be classes of loose-cells and let $A$ be an object admitting $\PP$- and $\PP'$-presheaf objects. Then there is a \ff{} tight-cell $\psh\PP A \ffto \psh{\PP'} A$.
\end{corollary}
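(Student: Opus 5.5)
The plan is to take the tight-cell to be the one classifying the projection of the smaller presheaf object. Since $\PP \subseteq \PP'$, the loose-cell $\pi^\PP_A \colon \psh\PP A \lto A$ belongs to $\PP$ (as noted after \cref{Phi-presheaf-object}), and hence to $\PP'$. The universal property \eqref{Phi-presheaf-object-UP} of the $\PP'$-presheaf object then provides a unique tight-cell
\[
  \widepshm{\pi^\PP_A} \colon \psh\PP A \to \psh{\PP'} A
  \qquad\text{satisfying}\qquad
  \pi^{\PP'}_A(1, \widepshm{\pi^\PP_A}) = \pi^\PP_A .
\]
Here $\widepshm{\pi^\PP_A}$ denotes the classifying map taken with respect to $\PP'$. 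It is not the identity; that would be the classifying map with respect to $\PP$.

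To show that this tight-cell is \ff{}, I will apply \cref{p-dense-iff-breve-p-ff} to the object $A$, the class $\PP'$, and the loose-cell $p \defeq \pi^\PP_A \in \PP'$. That corollary states that $\pshm p$ is \ff{} exactly when $p$ is dense. Density of $\pi^\PP_A$ is part of the definition of a $\PP$-presheaf object, so the conclusion follows immediately.

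Unwinding the argument, it amounts to the chain of isomorphisms
\[
  \psh{\PP'} A(\pshm p, \pshm p) \iso p \rf p \iso \psh\PP A(1, 1),
\]
obtained from \cref{Phi-presheaf-hom-is-right-lift} and the density of $\pi^\PP_A$. The only point requiring care is that \cref{p-dense-iff-breve-p-ff} is stated for loose-cells lying in the class of the presheaf object. This is why the membership $\pi^\PP_A \in \PP'$, which comes from $\PP \subseteq \PP'$, is the one hypothesis that must be checked, and it is immediate. Beyond that I do not expect any real obstacle.
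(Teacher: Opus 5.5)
Your proposal is correct and follows the paper's proof. The paper likewise notes that $\pi^\PP_A \in \PP \subseteq \PP'$, takes $\widepshm{\pi^\PP_A} \colon \psh\PP A \to \psh{\PP'} A$ from the universal property of $\psh{\PP'} A$, and concludes full faithfulness from \cref{p-dense-iff-breve-p-ff} using density of $\pi^\PP_A$.
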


\begin{proof}
  Observe that since $\pi^\PP_A \colon \psh\PP A \lto A$ is in $\PP$, it is also in $\PP'$, so the universal property of $\psh{\PP'} A$ induces a tight-cell $\widepshm{\pi^\PP_A} \colon \psh\PP A \to \psh{\PP'} A$, which by \cref{p-dense-iff-breve-p-ff} is \ff{} since $\pi^\PP_A$ is dense.
\end{proof}

While the universal property of \cref{Phi-presheaf-object} is ostensibly one-dimensional, the density of the projection $\pi_A \colon \psh\PP A \lto A$ implies that presheaf objects satisfy a two-dimensional universal property.

\begin{proposition}[{\cf~\cite[Proposition~7.6]{arkor2025nerve}}]
    \label{Phi-presheaf-two-dimensional-UP}
    Let $\PP$ be a class of loose-cells and let $A$ be an object admitting a $\PP$-presheaf object. Every 2-cell $\phi$ of the following form, in which $p$ and $q$ are loose-cells in $\PP$,
    \[\begin{tikzcd}
    	A & X & \cdots & {X'} \\
    	A &&& {X'}
    	\arrow["q", "\shortmid"{marking}, from=2-4, to=2-1]
    	\arrow[""{name=0, anchor=center, inner sep=0}, Rightarrow, no head, nfold, from=1-1, to=2-1]
    	\arrow["p"', "\shortmid"{marking}, from=1-2, to=1-1]
    	\arrow["{p_1}"', "\shortmid"{marking}, from=1-3, to=1-2]
    	\arrow["{p_n}"', "\shortmid"{marking}, from=1-4, to=1-3]
    	\arrow[""{name=1, anchor=center, inner sep=0}, Rightarrow, no head, nfold, from=1-4, to=2-4]
    	\arrow["\phi"{description}, draw=none, from=0, to=1]
    \end{tikzcd}\]
    factors uniquely through a 2-cell $\pshm\phi$, as below.
    \[\begin{tikzcd}[column sep=large]
    	A & {\psh\PP A} & X & \cdots & {X'} \\
    	A & {\psh\PP A} &&& {X'}
    	\arrow[""{name=0, anchor=center, inner sep=0}, Rightarrow, no head, nfold, from=1-2, to=2-2]
    	\arrow["{\psh\PP A(1, \pshm p)}"', "\shortmid"{marking}, from=1-3, to=1-2]
    	\arrow["{\psh\PP A(1, \pshm q)}", "\shortmid"{marking}, from=2-5, to=2-2]
    	\arrow["{p_1}"', "\shortmid"{marking}, from=1-4, to=1-3]
    	\arrow["{p_n}"', "\shortmid"{marking}, from=1-5, to=1-4]
    	\arrow[""{name=1, anchor=center, inner sep=0}, Rightarrow, no head, nfold, from=1-5, to=2-5]
    	\arrow["{\pi_A}"', "\shortmid"{marking}, from=1-2, to=1-1]
    	\arrow["{\pi_A}", "\shortmid"{marking}, from=2-2, to=2-1]
    	\arrow[""{name=2, anchor=center, inner sep=0}, Rightarrow, no head, nfold, from=1-1, to=2-1]
    	\arrow["\pshm\phi"{description}, draw=none, from=0, to=1]
    	\arrow["{=}"{description}, draw=none, from=2, to=0]
    \end{tikzcd}\]
\end{proposition}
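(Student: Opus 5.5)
The factorisation is a right lift through the dense loose-cell $\pi_A$, applied after rewriting the frame of $\phi$ using restrictions. By the universal property of the $\PP$-presheaf object, $p = \pi_A(1, \pshm p)$ and $q = \pi_A(1, \pshm q)$. By \cref{Phi-presheaf-hom-is-right-lift}, $\psh\PP A(1, \pshm q) \iso q \rf \pi_A$, with counit the 2-cell $\pi_A, \psh\PP A(1, \pshm q) \tto q$. Concretely, this counit is the cartesian 2-cell for $\pi_A(1, \pshm q)$ composed with $\cp{\pshm q}$. Similarly, the top boundary $\pi_A, \psh\PP A(1, \pshm p)$ admits the canonical 2-cell $\pi_A, \psh\PP A(1, \pshm p) \tto \pi_A(1, \pshm p) = p$. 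I claim this 2-cell is left-opcartesian, so that $p \iso \pi_A \odotl \psh\PP A(1, \pshm p)$. This holds because restriction along $\pshm p$ on the right is computed by composing with a companion: $q(1, f) \iso q \odot B(1, f)$ in any \ve{} (a standard fact, e.g.\ \cite[\S3]{arkor2024formal}), and in fact this is a genuine loose-composite.

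Given these two facts, the argument is a chain of bijections. First, 2-cells $\phi \colon p, p_1, \ldots, p_n \tto q$ correspond bijectively, via precomposition with the opcartesian 2-cell, to 2-cells $\pi_A, \psh\PP A(1, \pshm p), p_1, \ldots, p_n \tto q$. Second, by the universal property of $q \rf \pi_A \iso \psh\PP A(1, \pshm q)$, these correspond to 2-cells $\pshm\phi \colon \psh\PP A(1, \pshm p), p_1, \ldots, p_n \tto \psh\PP A(1, \pshm q)$, via pasting with $1_{\pi_A}$ followed by the counit. Composing the two bijections gives exactly the factorisation displayed in the statement, provided the bottom 2-cell of the pasted composite (counit of the lift) is identified with the canonical 2-cell $\pi_A, \psh\PP A(1,\pshm q) \tto \pi_A(1,\pshm q)$ and the top cell with the opcartesian one. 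Existence and uniqueness of $\pshm\phi$ then follow from the composite being a bijection.

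The only delicate point is the bookkeeping of the two boundary 2-cells. The statement's diagram shows only $\pshm\phi$ pasted beside an identity on $\pi_A$, and so implicitly uses the identifications $p = \pi_A(1,\pshm p) \iso \pi_A \odot \psh\PP A(1, \pshm p)$ and likewise for $q$. I must check that these identifications use the same canonical 2-cell (cartesian followed by $\cp{\pshm q}$) as the counit in \cref{Phi-presheaf-hom-is-right-lift}. Verifying this compatibility via the proof of that lemma (density gives $\psh\PP A \iso \pi_A \rf \pi_A$, then \cref{restriction-of-lift}) is where I expect the main friction. Density of $\pi_A$ is used exactly there: it is what makes the homs of $\psh\PP A$ right lifts.
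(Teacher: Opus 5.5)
Your argument is correct and is essentially the paper's. Both reduce the factorisation to the universal property of a right lift that \cref{Phi-presheaf-hom-is-right-lift} identifies with a hom of $\psh\PP A$: the paper lifts through $p$ via $q \rf p \iso \psh\PP A(\pshm p, \pshm q)$, while you lift through $\pi_A$ via $q \rf \pi_A \iso \psh\PP A(1, \pshm q)$ after precomposing with the opcartesian cell $\pi_A, \psh\PP A(1, \pshm p) \tto p$, which matches the displayed shape of $\pshm\phi$ more literally.

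One harmless slip: the counit $\pi_A, \psh\PP A(1, \pshm q) \tto \pi_A(1, \pshm q)$ does not involve $\cp{\pshm q}$. It is the cartesian cell of the companion $\psh\PP A(1, \pshm q)$ followed by the loose-identity cell $\pi_A, \psh\PP A(1, 1) \tto \pi_A$, factored through the cartesian cell of $\pi_A(1, \pshm q)$. Unwinding density and \cref{restriction-of-lift} shows this canonical cell is exactly the counit, so the compatibility you were worried about does hold.
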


\begin{proof}
    Since $q \rf p \iso \psh\PP A(\pshm p, \pshm q)$ by \cref{Phi-presheaf-hom-is-right-lift}, the factorisation follows directly from the universal property of the right lift.
\end{proof}

It follows that the universal property of a presheaf object may be expressed as a two-dimensional adjointness property.

\begin{notation}
  For a class of loose-cells $\PP \subseteq \X$ in a \vdc{} $\X$, and for each pair of objects $X, A \in \X$, denote by $\PP\lh{X, A}$ the category of loose-cells $X \lto A$ in $\PP$ and the globular 2-cells between them.
\end{notation}

\begin{proposition}[{\cf~\cite[Lemma~7.7]{arkor2025nerve}}]
  \label{Phi-presheaf-via-adjunction}
  Let $\PP$ be a class of loose-cells and let $A$ be an object.
  A dense loose-cell $\pi_A \colon \psh\PP A \lto A$ exhibits a $\PP$-presheaf object for $A$ if and only if, for each object $X \in \X$, the functor
  \[\pi_A(1, {-}) \colon \X[X, \psh\PP A] \to \PP\lh{X, A}\]
  admits an inverse $\widepshm{\ph} \colon \PP\lh{X, A} \rightleftarrows \X[X, \psh\PP A]$, exhibiting an isomorphism of categories.
\end{proposition}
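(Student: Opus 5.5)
The plan is to prove the two directions separately, in both cases working with the functor $\pi_A(1, {-}) \colon \X[X, \psh\PP A] \to \PP\lh{X, A}$. First I check that this functor is well defined only under the saturation condition \eqref{Phi-presheaf-object-saturation}. On objects it sends $f$ to the restriction $\pi_A(1, f)$. On morphisms, a 2-cell $\alpha \colon f \tto g$ (a nullary 2-cell into $\psh\PP A(1,1)$) is sent to the globular 2-cell $\pi_A(1, f) \tto \pi_A(1, g)$. This is obtained by pasting $\alpha$ beneath the cartesian 2-cell for $\pi_A(1, f)$, next to $1_{\pi_A}$, and composing with the canonical 2-cell $\pi_A, \psh\PP A(1,1) \tto \pi_A$. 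The result is then factored through the cartesian 2-cell defining $\pi_A(1, g)$. Functoriality follows from uniqueness of factorisations through cartesian 2-cells and strictness of restrictions.

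For the forward direction, assume $\pi_A$ exhibits a $\PP$-presheaf object. On objects, \eqref{Phi-presheaf-object-saturation} says the functor lands in $\PP\lh{X,A}$, and \eqref{Phi-presheaf-object-UP} says it is bijective on objects, with inverse $p \mapsto \pshm p$. It remains to show it is fully faithful. Take $p, q \in \PP\lh{X, A}$, which we may write as $p = \pi_A(1, \pshm p)$ and $q = \pi_A(1, \pshm q)$. Globular 2-cells $p \tto q$ are, by \cref{Phi-presheaf-two-dimensional-UP} with $n = 0$, in bijection with globular 2-cells $\psh\PP A(1, \pshm p) \tto \psh\PP A(1, \pshm q)$. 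Since companions form a loose adjunction and restriction is strictly functorial, these correspond in turn to nullary 2-cells $\pshm p \tto \pshm q$, i.e.\ to morphisms in $\X[X, \psh\PP A]$. This uses the standard fact that 2-cells between companions correspond to 2-cells between the tight-cells; I would derive it from the cartesian property of the companion 2-cells.

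The step I expect to be the main obstacle is checking that the composite bijection is exactly the action of $\pi_A(1,{-})$ on morphisms, and not merely some bijection. I would handle this by unwinding both constructions to the same pasting of $\alpha$ against the counit $\pi_A, \psh\PP A(1,1) \tto \pi_A$. Here I would use that the factorisation in \cref{Phi-presheaf-two-dimensional-UP} is obtained by pasting with $1_{\pi_A}$, using density $\psh\PP A(1,1) \iso \pi_A \rf \pi_A$. Once this is checked, the functor is bijective on objects and fully faithful, hence an isomorphism of categories with inverse $\widepshm{\ph}$.

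For the converse, assume that for every $X$ the functor $\pi_A(1,{-}) \colon \X[X, \psh\PP A] \to \PP\lh{X,A}$ is defined, which already requires \eqref{Phi-presheaf-object-saturation}, and has an inverse. Bijectivity on objects gives exactly \eqref{Phi-presheaf-object-UP}. Since $\pi_A$ is dense by hypothesis, $\pi_A$ is a $\PP$-presheaf object. I will phrase the statement's hypothesis so that the functor having codomain $\PP\lh{X, A}$ is part of what is assumed; this makes saturation immediate.
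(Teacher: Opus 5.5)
Your proposal is correct and matches the paper's proof. Bijectivity on objects comes from the two clauses of \cref{Phi-presheaf-object} and bijectivity on morphisms from \cref{Phi-presheaf-two-dimensional-UP}, while the converse is read off from bijectivity on objects together with the functor landing in $\PP\lh{X, A}$. The check you flag as the main obstacle, namely that the hom-bijection is the functor's own action, is left implicit in the paper. It is most direct if you skip the detour through companions and use $q \rf p \iso \psh\PP A(\pshm p, \pshm q)$ from \cref{Phi-presheaf-hom-is-right-lift} together with the cartesian property of restriction. Transposing a tight 2-cell across that right lift is then literally your construction: paste it beside the cartesian 2-cell for $p$, follow with the unitor $\pi_A, \psh\PP A(1, 1) \tto \pi_A$, and factor through the cartesian 2-cell for $q$.
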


\begin{proof}
  Suppose that $\pi_A \colon \psh\PP A \lto A$ exhibits a $\PP$-presheaf object. That the assignment forms a bijection on objects is immediate from the universal property; that it forms a bijection on morphisms is immediate from \cref{Phi-presheaf-two-dimensional-UP}.

  Conversely, suppose $\pi_A \colon \psh\PP A \lto A$ exhibits a natural isomorphism of categories. The universal property of \cref{Phi-presheaf-object} follows trivially. Closure under representable loose-cells into $A$ follows by definition of the assignment $\X[X, \psh\PP A] \to \PP\lh{X, A}$.
\end{proof}

The following characterises those colimits that exist in presheaf objects, which will be central to relating presheaf objects to cocompletions in \cref{presheaves-and-cocompletions}.

\begin{proposition}[label=presheaf-colimits-absolute, note={\cf~\cite[Corollary~5.7]{koudenburg2024formal}}]
  Let $\PP$ be a class of loose-cells, let $\vec p \colon Y \lcto X$ be a weight and let $q \colon X \lto A$ be a loose-cell for which $A$ admits a $\PP$-presheaf object and for which $q \in \PP$. Suppose that the left-composite $q \odotl \vec p$ exists. Then $\psh\PP A$ admits the colimit $\vec p \wc \pshm q$ if and only if $(q \odotl \vec p) \in \PP\repl$. In this case, the colimit is $\pi_A$-absolute.
\end{proposition}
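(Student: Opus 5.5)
Both directions run through the characterisation of colimits as right lifts (\cref{weighted-colimit}) combined with the description of homs of $\psh\PP A$ as right lifts (\cref{Phi-presheaf-hom-is-right-lift}) and density of $\pi_A$.

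\emph{Only if.} Suppose the colimit $c \defeq \vec p \wc \pshm q \colon Y \to \psh\PP A$ exists, with $\psh\PP A(c, 1) \iso \psh\PP A(\pshm q, 1) \rf \vec p$. By \eqref{Phi-presheaf-object-saturation}, $r \defeq \pi_A(1, c) \in \PP$ and $c = \pshm r$. We show $r \iso q \odotl \vec p$ and conclude $q \odotl \vec p \in \PP\repl$. For any loose-cell $s \colon W \lto A$ and chain $\vec t$, we chain bijections between 2-cells $r, \vec t \tto s$ and 2-cells $q, \vec p, \vec t \tto s$. First, 2-cells $\pi_A(1, c), \vec t \tto s$ correspond, via density $\psh\PP A(1,1) \iso \pi_A \rf \pi_A$ and the universal property of restriction, to 2-cells $\vec t \tto (s \rf \pi_A)(c, 1)$. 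Next, \cref{restriction-of-lift} gives $(s \rf \pi_A)(c,1) \iso s \rf \pi_A(1,c)$. The key step is then the isomorphism $(s \rf \pi_A)(c, 1) \iso (s \rf \pi_A)(\pshm q, 1) \rf \vec p$, i.e.\ that the loose-cell $s \rf \pi_A$ respects the colimit $c$. This holds by \cref{right-lifts-respect-absolute-colimits} once absoluteness is known, so the argument is circular unless we establish respect directly. I would prove it using the conjoint description $\psh\PP A(c, 1) \iso \psh\PP A(\pshm q,1) \rf \vec p$ together with a factorisation of $s \rf \pi_A$ through $\psh\PP A(1,1)$: every 2-cell $\vec p, \vec t \tto (s\rf\pi_A)(\pshm q,1)$ transposes to one out of $\psh\PP A(\pshm q, 1)$-shaped data. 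This is the main obstacle, and it may be simplest to restrict attention to $s \in \PP$ first (using \cref{Phi-presheaf-two-dimensional-UP}) and then bootstrap. Finally, $(s \rf \pi_A)(\pshm q,1) \iso s \rf q$, and \cref{iterated-lift} gives $(s \rf q)\rf \vec p \iso s \rf (q,\vec p)$, which corresponds to 2-cells $q,\vec p,\vec t \tto s$. Composing these bijections shows that the 2-cell $q, \vec p \tto r$ induced by the cocone $\lambda$ is left-opcartesian.

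\emph{If.} Suppose $q \odotl \vec p \iso r$ with $r \in \PP$ (transport along the isomorphism, using strictness, so that $r = \pi_A(1, \pshm r)$). Set $c \defeq \pshm r$. We then compute
\[\psh\PP A(c,1) \iso \pi_A \rf r \iso \pi_A \rf (q \odotl \vec p) \iso (\pi_A \rf q) \rf \vec p \iso \psh\PP A(\pshm q, 1) \rf \vec p,\]
using \cref{Phi-presheaf-hom-is-right-lift} twice and the universal property of left-composites for the middle step, where right lifts through left-composites reduce to iterated lifts as in \cref{right-lift-through-left-composite,iterated-lift}. The cocone $\lambda$ is obtained by transposing the left-opcartesian 2-cell $q, \vec p \tto r$ via \cref{Phi-presheaf-two-dimensional-UP}, and one checks that it induces the displayed isomorphism. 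Equivalently, we may invoke \cref{absolute-implies-colimit} directly with the dense loose-cell $j = \pi_A$ and $f = \pshm q$, since $\pi_A(1,\pshm q) \odotl \vec p = q \odotl \vec p \iso \pi_A(1, c)$. That lemma yields both the colimit and its $\pi_A$-absoluteness at once.

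For the absoluteness claim in the only-if direction, the bijection chain above already exhibits $q, \vec p \tto \pi_A(1,c)$ as left-opcartesian. Alternatively, once $q \odotl \vec p \in \PP\repl$ is known, the if-direction constructs an absolute colimit, and uniqueness of colimits up to isomorphism transfers absoluteness to the given one.
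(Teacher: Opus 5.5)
Your if-direction is correct and is essentially the paper's argument. The chain $\psh{\PP}{A}(\pshm q, 1) \rf \vec p \iso \pi_A \rf (q \odotl \vec p)$, or more directly \cref{absolute-implies-colimit} with $j = \pi_A$ and $f = \pshm q$, yields the colimit together with its $\pi_A$-absoluteness.

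The only-if direction has a genuine gap, and it is exactly the step you flag as the main obstacle. Your claim that every $s \rf \pi_A$ respects $c$ is, by \cref{right-lifts-respect-absolute-colimits,absoluteness-and-exactness}, merely a restatement of the $\pi_A$-absoluteness of $c$ that you are trying to derive. It also presupposes that the lifts $s \rf \pi_A$ exist, which is not among the hypotheses. Restricting to $s \in \PP$ does not help. There $s \rf \pi_A \iso \psh{\PP}{A}(1, \pshm s)$ does respect $c$, but left-opcartesianness of $q, \vec p \tto \pi_A(1, c)$ has to be tested against arbitrary $s$. No bootstrap can succeed, because the only-if direction is false as stated.

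For a counterexample, work in $\Cat$ with $\PP$ the class of representable loose-cells and $A \defeq \Set$, so that $\pi_A = \Set(1, 1)$ exhibits $\psh{\PP}{A} = \Set$ (\cref{trivial-presheaf-object}). Let $X$ be the discrete category on two objects and let $f \colon X \to \Set$ be constant at a singleton. Let $q \defeq \Set(1, f) \in \PP$, so that $\pshm q = f$, and let $p \colon \b 1 \lto X$ be the terminal presheaf. The colimit $p \wc f$ exists: it is the coproduct $1 + 1$. The left-composite $q \odotl p$ also exists, as the coend $a \mapsto \Set(a, 1) + \Set(a, 1)$, since $X$ is small. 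But this is constant at a two-element set, whereas a representable takes a singleton value at $\varnothing$. Hence $q \odotl p \notin \PP\repl$; this is just the fact that the coproduct $1 + 1$ in $\Set$ is not absolute.

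The paper's one-line proof passes over the same point. Corepresentability gives $\pi_A \rf (q \odotl \vec p) \iso \psh{\PP}{A}(c, 1) \iso \pi_A \rf \pi_A(1, c)$, but $r \mapsto \pi_A \rf r$ need not reflect isomorphisms. In the example, $\Set(1, 1) \rf (q \odotl p)$ is corepresented by $1 + 1$ even though $q \odotl p$ is not representable.

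What does hold, and is all that is used later in the paper, is the corrected statement: $\psh{\PP}{A}$ admits a $\pi_A$-absolute colimit $\vec p \wc \pshm q$ if and only if $q \odotl \vec p \in \PP\repl$. Your if-direction gives one implication. The other is immediate: absoluteness gives $q \odotl \vec p \iso \pi_A(1, c)$, which lies in $\PP$ by condition (2) of \cref{Phi-presheaf-object}.
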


\begin{proof}
  We have
  \[\psh\PP A(\pshm q, 1) \rf \vec p \iso (\pi_A \rf q) \rf \vec p \iso \pi_A \rf (q, \vec p) \iso \pi_A \rf (q \odotl \vec p)\]
  so that, considering corepresentability of both loose-cells, we have that the colimit $\vec p \wc \pshm q$ exists if and only if $(q \odotl \vec p) \in \PP\repl$. In this case, we have $\pi_A(1, \vec p \wc \pshm q) \iso q \odotl \vec p \iso \pi_A(1, \pshm q) \odotl \vec p$, from which it follows by \cref{absolute-implies-colimit} that the colimit is $\pi_A$-absolute.
\end{proof}

Generally speaking, a $\PP$-presheaf object may not be `closed under representables', in that we may not have a tight-cell $A \to \psh\PP A$. However, we shall primarily be interested in situations where we have such an embedding, for which we introduce the following definition.

\begin{definition}
  \label{Phi-embedding}
  Let $\PP$ be a class of loose-cells. An object $A$ is \emph{$\PP$-admissible} if it admits a $\PP$-presheaf object, and $1_A \in \PP\repl$.
  In this case, we denote by
  \[\pshe\PP A \iso \widepshm{A(1, 1)} \colon A \to \psh\PP A\]
  the \emph{$\PP$-presheaf embedding of $A$}, which is the essentially unique tight-cell satisfying $A(1, 1) \iso \pi_A(1, \pshe\PP A)$.
\end{definition}

$\PP$-embeddings are the formal analogues of Yoneda embeddings, and are well behaved in the ways one would expect.

\begin{lemma}
  Let $\PP$ be a class of loose-cells and let $A$ be a $\PP$-admissible object.
  \begin{enumerate}
    \item \label{Yoneda} There is an isomorphism $\psh\PP A(\pshe\PP A, 1) \iso \pi_A$ (\cf~\cite[Lemma~7.9]{arkor2025nerve}).
    \item \label{Phi-embedding-is-dense-and-ff} $\pshe\PP A$ is dense and \ff{}.
    \item $\pshe\PP A$ creates weighted limits (\cf~\cite[Theorem~10.28]{shulman2013enriched}).
    \item \label{tight-cells-into-psh-are-absolute-colimits} For each weight $p \in \PP$, the tight-cell $\pshm p \colon X \to \psh\PP A$ exhibits the $\pshe\PP A$-absolute colimit $p \wc \pshe\PP A$ (\cf~\cite[Proposition~3.15]{kelly2005notes}).
  \end{enumerate}
\end{lemma}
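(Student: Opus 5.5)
We prove the four parts in order, each resting on the previous ones together with \cref{Phi-presheaf-hom-is-right-lift}. Throughout, write $y \defeq \pshe\PP A$. By $\PP$-admissibility we may take $A(1,1) \iso \pi_A(1, y)$; here $A(1,1)$ is in $\PP\repl$, and $y$ is induced by the representative in $\PP$ isomorphic to $A(1,1)$.

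For (1), apply \cref{Phi-presheaf-hom-is-right-lift} with $p$ the loose-cell in $\PP$ isomorphic to $A(1,1)$, so that $\pshm p = y$. This gives $\psh\PP A(y, 1) \iso \pi_A \rf A(1,1)$. The right lift of any loose-cell through a loose-identity is that loose-cell itself, with counit the opcartesian nullary cell, so $\pi_A \rf A(1,1) \iso \pi_A$. Hence $\psh\PP A(y,1) \iso \pi_A$.

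For (2), density of $y$ means density of its conjoint $\psh\PP A(y,1)$. By (1) this conjoint is isomorphic to $\pi_A$, which is dense by definition of a $\PP$-presheaf object. Full faithfulness follows from \cref{p-dense-iff-breve-p-ff} applied to $p \iso A(1,1)$, since the loose-identity is trivially dense: $A(1,1) \rf A(1,1) \iso A(1,1)$. Alternatively, one can compute directly that $\psh\PP A(y,y) \iso A(1,1) \rf A(1,1) \iso A(1,1)$.

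Part (3) is then immediate from (2) and \cref{dense-and-ff-implies-continuous}.

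For (4), let $p \colon X \lto A$ be in $\PP$. We verify the criterion of \cref{absolute-implies-colimit}, taking $j \defeq \psh\PP A(y,1) \iso \pi_A$, which is dense by (2), with weight $p$, $f = y$ and $c = \pshm p$. We need an isomorphism $j(1, y) \odotl p \iso j(1, \pshm p)$. On the left, $j(1,y) \iso \pi_A(1,y) \iso A(1,1)$, and $A(1,1) \odotl p \iso p$ because the loose-identity is a unit for composition. On the right, $j(1, \pshm p) \iso \pi_A(1, \pshm p) = p$. The lemma therefore shows that $\pshm p$ is the $j$-absolute colimit $p \wc y$, which is by definition $y$-absolute. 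Equivalently, this case is \cref{presheaf-colimits-absolute} with $q = A(1,1)$: there $q \odotl p \iso p \in \PP$.

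The main obstacle is bookkeeping rather than mathematics. One must check that the isomorphisms used are compatible with the canonical 2-cells, so that the induced cocone is the expected one. One must also handle the fact that $A(1,1)$ lies only in $\PP\repl$, not necessarily in $\PP$. The latter is resolved by working with a representative in $\PP$ and transporting along globular isomorphisms, which is harmless because right lifts and restrictions are invariant under isomorphism.
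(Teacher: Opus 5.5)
Your proof is correct and matches the paper's argument for (1)--(3) essentially verbatim: (1) uses \cref{Phi-presheaf-hom-is-right-lift} together with $\pi_A \rf A(1,1) \iso \pi_A$, (2) uses (1) and \cref{p-dense-iff-breve-p-ff}, and (3) uses \cref{dense-and-ff-implies-continuous}. For (4) the paper simply cites \cref{presheaf-colimits-absolute}, which is your ``equivalently'' remark with $q = A(1,1)$; your direct appeal to \cref{absolute-implies-colimit} just unfolds that proposition's proof, so the two approaches coincide.
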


\begin{proof}
  \begin{enumerate}
    \item We have $\psh\PP A(\pshe\PP A, 1) = \psh\PP A(\widepshm{A(1, 1)}, 1) \iso \pi_A \rf A(1, 1) \iso \pi_A$ using \cref{Phi-presheaf-hom-is-right-lift}.
    \item Using (1), density follows immediately from density of $\pi_A$, while \ffness{} follows from \cref{p-dense-iff-breve-p-ff}.
    \item Immediate from \cref{dense-and-ff-implies-continuous} using (2).
    \item Immediate from \cref{presheaf-colimits-absolute}.
    \qedhere
  \end{enumerate}
\end{proof}

\begin{corollary}
  Let $\PP \subseteq \PP'$ be classes of loose-cells and let $A$ be a $\PP$-admissible and $\PP'$-admissible object. The induced tight-cell $\psh\PP A \ffto \psh{\PP'} A$ (\cref{subpresheaf-object}) commutes with the presheaf embeddings up to isomorphism.
  \[\begin{tikzcd}
    {\psh\PP A} && {\psh{\PP'}A} \\
    & A
    \arrow["{\widepshm{\pi^\PP_A}}", hook, from=1-1, to=1-3]
    \arrow[""{name=0, anchor=center, inner sep=0}, "{\pshe\PP A}", from=2-2, to=1-1]
    \arrow[""{name=1, anchor=center, inner sep=0}, "{\pshe{\PP'}A}"', from=2-2, to=1-3]
    \arrow["\iso"{description}, shift left=2, draw=none, from=0, to=1]
  \end{tikzcd}\]
\end{corollary}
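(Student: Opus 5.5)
We must show $\widepshm{\pi^\PP_A} \c \pshe\PP A \iso \pshe{\PP'} A$. The plan is to apply the universal property of $\psh{\PP'} A$, using the bullet-point identity $\widepshm{p(1, f)} = \pshm p \c f$ for $p \in \PP'$ and $f$ a tight-cell into $\psh{\PP'} A$'s domain. Concretely, take $p = \pi^\PP_A$, which lies in $\PP \subseteq \PP'$, and $f = \pshe\PP A \colon A \to \psh\PP A$. Here $\widepshm{\ph}$ denotes the operation for $\PP'$, so that
\[
  \widepshm{\pi^\PP_A} \c \pshe\PP A = \widepshm{\pi^\PP_A(1, \pshe\PP A)}.
\]

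The next step is to identify the loose-cell $\pi^\PP_A(1, \pshe\PP A)$. By \cref{Phi-embedding}, it is isomorphic to $A(1, 1)$. Likewise $\pi^{\PP'}_A(1, \pshe{\PP'} A) \iso A(1, 1)$. Thus both tight-cells $\widepshm{\pi^\PP_A} \c \pshe\PP A$ and $\pshe{\PP'} A$ are sent by $\pi^{\PP'}_A(1, {-})$ to loose-cells isomorphic to $A(1, 1)$.

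The remaining step, and the only mild subtlety, is that the universal property in \cref{Phi-presheaf-object} is strict (an equality), whereas we only have isomorphisms of loose-cells, possibly not in $\PP'$ on the nose. To handle this I would invoke \cref{Phi-presheaf-via-adjunction}. The functor $\pi^{\PP'}_A(1, {-}) \colon \X[A, \psh{\PP'} A] \to \PP'\lh{A, A}$ is an isomorphism of categories, hence in particular fully faithful. Both loose-cells $\pi^{\PP'}_A(1, \widepshm{\pi^\PP_A} \c \pshe\PP A)$ and $\pi^{\PP'}_A(1, \pshe{\PP'} A)$ lie in $\PP'$ by saturation, and they are globularly isomorphic via $A(1, 1)$. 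An isomorphism between them in $\PP'\lh{A, A}$ therefore lifts uniquely to an invertible 2-cell $\widepshm{\pi^\PP_A} \c \pshe\PP A \iso \pshe{\PP'} A$ in $\X[A, \psh{\PP'} A]$, which gives the claimed commutativity up to isomorphism.
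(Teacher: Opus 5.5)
Your proof is correct and is essentially the paper's. Both compute $\pi^{\PP'}_A(1, \widepshm{\pi^\PP_A}\,\pshe\PP A) = \pi^\PP_A(1, \pshe\PP A) \iso A(1,1) \iso \pi^{\PP'}_A(1, \pshe{\PP'}A)$, and then lift this isomorphism of loose-cells to an isomorphism of tight-cells. For that last step the paper cites density of $\pi^{\PP'}_A$ directly, whereas you invoke the isomorphism of categories in \cref{Phi-presheaf-via-adjunction}; since that isomorphism is itself derived from the same density, the two arguments coincide.
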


\begin{proof}
  We have $\pi^{\PP'}(1, \widepshm{\pi^\PP_A}) = \pi^\PP_A$ by definition, so $\pi^{\PP'}_A(1, \widepshm{\pi^\PP_A} \pshe\PP A) = \pi^\PP_A(1, \pshe\PP A) \iso A(1, 1) \iso \pi^{\PP'}_A(1, \pshe{\PP'}A)$, from which the statement follows using density of $\pi^{\PP'}_A$.
\end{proof}

In \cref{presheaf-colimits-absolute}, we gave necessary and sufficient conditions for a presheaf object to admit weighted colimits. We may also give necessary and sufficient conditions for a presheaf object to admit weighted limits.

\begin{proposition}[label=limit-in-presheaf-object, note={\cf~\cite[Theorem~10.29]{shulman2013enriched}}]
  Let $\PP$ be a class of loose-cells, let $\vec p \colon Y \lcto X$ be a weight and let $q \colon Y \lto A$ be a loose-cell in $\PP$. Suppose that $A$ admits a $\PP$-presheaf object. If the right extension $\vec p \rx q$ exists and is in $\PP\repl$, then $\widepshm{\vec p \rx q}$ exhibits the limit $\vec p \wl \pshm q \colon X \to \psh\PP A$. The converse holds if $A$ is $\PP$-admissible.
\end{proposition}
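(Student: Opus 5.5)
The plan is to compute both sides of the defining isomorphism of a weighted limit using the right-lift description of the homs of $\psh\PP A$. By \cref{weighted-limit}, a tight-cell $c \colon X \to \psh\PP A$ is the limit $\vec p \wl \pshm q$ exactly when the canonical 2-cell exhibits $\psh\PP A(1, c)$ as the right extension $\vec p \rx \psh\PP A(1, \pshm q)$. So the whole argument reduces to identifying this right extension.

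For the forward direction, suppose $\vec p \rx q$ exists and is isomorphic to some $r \in \PP$. We may replace $\vec p \rx q$ by $r$, so that $\widepshm{\vec p \rx q} \defeq \pshm r$. We then have the following chain of isomorphisms:
\[
  \vec p \rx \psh\PP A(1, \pshm q) \iso \vec p \rx (q \rf \pi_A) \iso (\vec p \rx q) \rf \pi_A \iso r \rf \pi_A \iso \psh\PP A(1, \pshm r).
\]
The first and last steps are \cref{Phi-presheaf-hom-is-right-lift}. The middle step is the lemma interchanging right extensions and right lifts (\cf~\cite[Lemma~3.4]{arkor2024formal}): since $\vec p \rx q$ and $q \rf \pi_A$ both exist, either side of $(\vec p \rx q) \rf \pi_A \iso \vec p \rx (q \rf \pi_A)$ exists if the other does, and the left side exists by \cref{Phi-presheaf-hom-is-right-lift}.

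The main obstacle is that a weighted limit requires a specific cone $\mu \colon \vec p \tto \psh\PP A(\pshm r, \pshm q)$ whose induced 2-cell is the extension counit, not merely an abstract isomorphism. I would define $\mu$ via \cref{Phi-presheaf-two-dimensional-UP}, as the unique 2-cell corresponding to the counit $\varpi \colon r, \vec p \tto q$ of $\vec p \rx q$. I would then check that pasting $\mu$ with $\cp{\pshm r}(1, \pshm q)$ gives, after postcomposing with the counit of $q \rf \pi_A$, the same 2-cell as the chain of isomorphisms above. Uniqueness in the universal property of the right lift $q \rf \pi_A$ reduces this to an equality of 2-cells into $q$, which should hold by unwinding the construction of the interchange isomorphism.

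For the converse, assume $A$ is $\PP$-admissible and that $c = \vec p \wl \pshm q$ exists, and write $y \defeq \pshe\PP A$. By part \eqref{Yoneda} of the lemma on $\PP$-embeddings, $\psh\PP A(y, 1) \iso \pi_A$, so $\psh\PP A(y, \pshm q) \iso \pi_A(1, \pshm q) = q$ and $\psh\PP A(y, c) \iso \pi_A(1, c)$. Restricting the defining isomorphism $\psh\PP A(1, c) \iso \vec p \rx \psh\PP A(1, \pshm q)$ along $y$ and applying \cref{restriction-of-lift}(2) with $z = y$ gives
\[
  \pi_A(1, c) \iso \vec p \rx \psh\PP A(y, \pshm q) \iso \vec p \rx q.
\]
Hence $\vec p \rx q$ exists. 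It is isomorphic to $\pi_A(1, c)$, which lies in $\PP$ by the saturation condition \eqref{Phi-presheaf-object-saturation}, so $\vec p \rx q \in \PP\repl$.
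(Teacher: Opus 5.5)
Your proposal is correct and is essentially the paper's proof. The forward direction is the same chain of isomorphisms, read in reverse, through the interchange $(\vec p \rx q) \rf \pi_A \iso \vec p \rx (q \rf \pi_A)$ and \cref{Phi-presheaf-hom-is-right-lift}; the converse is the same restriction along $\pshe\PP A$, using $\psh\PP A(\pshe\PP A, 1) \iso \pi_A$ and \cref{restriction-of-lift}. Your extra verification of the cone is harmless but unnecessary: any isomorphism $\psh\PP A(1, c) \iso \vec p \rx \psh\PP A(1, \pshm q)$ transposes, via the companion of $c$, to a cone whose canonical 2-cell is exactly the transported counit, which is why the paper is content with the chain of isomorphisms.
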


\begin{proof}
  Suppose that $\vec p \rx q$ exists and is in $\PP\repl$. We have:
  \begin{align*}
    \psh\PP A(1, \widepshm{\vec p \rx q}) & \iso (\vec p \rx q) \rf \pi_A \\
      & \iso \vec p \rx (q \rf \pi_A) \\
      & \iso \vec p \rx (\pi_A(1, \pshm q) \rf \pi_A) \\
      & \iso \vec p \rx \psh\PP A(1, \pshm q) \tag{$\pi_A$ is dense}
  \end{align*}
  Conversely, suppose that $A$ is $\PP$-admissible and that the limit $\vec p \wl \pshm q$ exists. We have:
  \begin{align*}
    \pi_A(1, \vec p \wl \pshm q) & \iso \psh\PP A(\pshe\PP A, \vec p \wl \pshm q) \\
      & \iso \psh\PP A(1, \vec p \wl \pshm q)(\pshe\PP A, 1) \\
      & \iso (\vec p \rx \psh\PP A(1, \pshm q))(\pshe\PP A, 1) \\
      & \iso \vec p \rx \psh\PP A(\pshe\PP A, \pshm q) \\
      & \iso \vec p \rx \pi_A(1, \pshm q) \\
      & \iso \vec p \rx q
      \qedhere
  \end{align*}
\end{proof}

\begin{corollary}
  \label{breve-p-is-continuous}
  Let $\PP$ be a class of loose-cells and let $q \colon Y \lto A$ be a loose-cell in $\PP$. Suppose that $A$ admits a $\PP$-presheaf object. If $q$ respects a limit $\vec p \wl f$, then $\pshm q \colon Y \to \psh\PP A$ preserves it.
\end{corollary}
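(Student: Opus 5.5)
The plan is to read the corollary off from the first half of \cref{limit-in-presheaf-object}, by identifying the limit of $\pshm q$ with the presheaf object's tight-cell classifying the right extension supplied by respect. Let $\vec p \colon Z \lcto Y'$ be the weight and $f \colon Z \to Y$ the tight-cell, with limit $(\vec p \wl f, \mu)$ in $Y$. Saying that $q$ respects this limit means (\cref{respects-limit}) that the canonical 2-cell $q(1, \vec p \wl f), \vec p \tto q(1, f)$ exhibits $q(1, \vec p \wl f)$ as the right extension $\vec p \rx q(1, f)$.

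First, both loose-cells lie in $\PP$. Since $q \in \PP$, we have $q = \pi_A(1, \pshm q)$, so $q(1, f) = \pi_A(1, \pshm q \c f)$ and $q(1, \vec p \wl f) = \pi_A(1, \pshm q \c (\vec p \wl f))$ (strictness). Both therefore lie in $\PP$ by \eqref{Phi-presheaf-object-saturation}, and the notes after \cref{Phi-presheaf-object} give $\widepshm{q(1, f)} = \pshm q \c f$ and $\widepshm{q(1, \vec p \wl f)} = \pshm q \c (\vec p \wl f)$.

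Next, apply the forward direction of \cref{limit-in-presheaf-object} to the loose-cell $q(1, f) \in \PP$ and the weight $\vec p$. The right extension $\vec p \rx q(1, f)$ exists and equals $q(1, \vec p \wl f)$, which is in $\PP \subseteq \PP\repl$. Hence $\widepshm{q(1, \vec p \wl f)} = \pshm q \c (\vec p \wl f)$ exhibits the limit $\vec p \wl (\pshm q \c f)$. Notably, this direction does not need $A$ to be $\PP$-admissible.

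The main obstacle is that preservation requires the specific cone $(\pshm q \c (\vec p \wl f), \mu \d \pshm q)$ to be limiting, not merely that some limit has this apex. To handle this, I would check that the limiting cone produced by \cref{limit-in-presheaf-object} corresponds to $\mu \d \pshm q$. The cone in that proof is induced from the counit of $\vec p \rx q(1, f)$ via the chain of isomorphisms there and the 2-dimensional universal property \cref{Phi-presheaf-two-dimensional-UP}. Here that counit is the canonical 2-cell built from $\mu$ and the restriction of $q$. Under the isomorphism $\psh\PP A(\pshm q \c -, \pshm q \c -) \iso q(1,-) \rf q(1,-)$ of \cref{Phi-presheaf-hom-is-right-lift}, whiskering $\mu$ by $\pshm q$ corresponds to exactly this canonical 2-cell. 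Uniqueness of factorisations through the right lift (density of $\pi_A$) then identifies the two cones, so $\pshm q$ preserves $\vec p \wl f$.
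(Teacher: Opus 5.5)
Your proposal is correct and is essentially the paper's proof: respect identifies $\vec p \rx q(1, f)$ with $q(1, \vec p \wl f) = \pi_A(1, \pshm q \c (\vec p \wl f))$, which lies in $\PP$, and you then apply the forward direction of \cref{limit-in-presheaf-object} to $q(1, f)$, using $\widepshm{q(1, g)} = \pshm q \c g$. Your additional check that the resulting limiting cone is exactly $\mu \d \pshm q$ (via uniqueness in the two-dimensional universal property) is sound, and it makes explicit a point the paper's proof leaves implicit.
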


\begin{proof}
  First, observe that $q(1, \vec p \wl f) \iso \vec p \rx q(1, f)$, so that the latter is in $\PP$ since $q$ is. By \cref{limit-in-presheaf-object}, $\widepshm{q(1, \vec p \wl f)} = \pshm q(\vec p \wl f)$ consequently exhibits the limit $\vec p \wl \widepshm{q(1, f)} = \vec p \wl (\pshm q f)$.
\end{proof}

The properties of presheaf objects established above will be sufficient for our study of free cocompletions, which is our concern here. In a sequel, we will study presheaf objects further in relation to \emph{admissibility} and \emph{realisability}, which is the study of abstract nerve constructions~\cite{street1978yoneda,bunge1999bicomma}.

\section{Atomicity}
\label{atomicity}

In the study of $\Phi$-cocompletions, an important concept is the notion of \emph{$\Phi$-atomicity}, which extends the concept of a $\Phi$-atom in a category $\A$, \viz{} an object $a \in \ob\A$ for which $\A(a, {-}) \colon \A \to \Set$ preserves $\Phi$-colimits~\cite[\S4]{kelly2005notes}. We generalise this notion from objects in a category to loose-cells in a \ve.

\begin{definition}
  \label{atom}
  Let $\vec p \colon Y \lcto Z$ be a weight. A loose-cell $j \colon E \lto A$ is a \emph{$\vec p$-atom} when $E$ admits $\vec p$-weighted colimits and these colimits are $j$-absolute. Given a class of weights $\Phi$, we say that $j$ is a \emph{$\Phi$-atom} if it is a $\vec p$-atom for each weight $\vec p \in \Phi$. A tight-cell is a \emph{$\vec p$-atom}, respectively \emph{$\Phi$-atom}, if its conjoint is.
\end{definition}

\begin{example}
  \label{examples-of-atoms}
  A functor $f \colon \A \to \B$ is a $\Phi$-atom if and only if $\B$ is $\Phi$-cocomplete and the nerve functor $\B(f({-}_2), {-}_1) \colon \B \to \Set^{\A\op}$ is $\Phi$-cocontinuous~\cite[Lemma~8.10]{arkor2024formal}. Since colimits in $\Set^{\A\op}$ are computed pointwise, this equivalently says that, for each $a \in \ob\A$, the functor $\B(f(a), {-}) \colon \B \to \Set$ is $\Phi$-cocontinuous, \ie{} each object $f(a)$ is a $\Phi$-atom in the classical sense. In particular, an object $b \in \ob\B$ is a $\Phi$-atom in the classical sense if and only if the corresponding functor $b \colon \b1 \to \B$ is a $\Phi$-atom in the sense of \cref{atom}.

  Consider $\Delta$ a class of categories, which induces a class $\Phi$ of weights by $\Phi \defeq \{ * \colon \b1 \lto \b J \mid \b J \in \Delta \}$, where $*$ denotes the terminal presheaf.
    \begin{enumerate}
      \item When $\Delta = \{ \text{small discrete categories} \}$, the $\Phi$-atoms are called \emph{abstractly exclusively unary}~\cite{bunge1966categories}, \emph{Z-objects}~\cite{hoffmann1973categorical}, \emph{coprime}~\cite{borger1989multicoreflective}, or \emph{connected}~\cite{ehresmann1978multiple}.
      \item When $\Delta = \{ \text{sifted categories} \}$, the $\Phi$-atoms are called \emph{strongly finitely presentable}~\cite{adamek2001sifted} or \emph{perfectly presentable}~\cite{adamek2010algebraic}.
      \item When $\Delta = \{ \text{filtered categories} \}$, the $\Phi$-atoms are called \emph{finitely presentable}~\cite{gabriel1971lokal} or \emph{compact}~\cite{lurie2003infinity}.
      \item When $\Delta = \{ \text{$\kappa$-filtered categories} \}$, for a regular cardinal $\kappa$, the $\Phi$-atoms are called \emph{$\kappa$-presentable}~\cite{gabriel1971lokal} or \emph{$\kappa$-compact}~\cite{lurie2003infinity}.
      \item When $\Delta = \{ \text{small categories} \}$, the $\Phi$-atoms are called \emph{small projectives}~\cite{kelly1982basic}, \emph{absolutely presentable}~\cite{centazzo2004characterization} or \emph{atoms}~\cite{kelly2005notes}.
      \qedhere
    \end{enumerate}
\end{example}

We shall use atomicity in \cref{well-behaved-is-presheaf-object} to give a characterisation of presheaf objects, and later in \cref{cocompletion-recognition} to give a recognition theorem for free cocompletions. Before doing so, we establish a useful relationship between relative adjointness and atomicity. Recall from \cref{adjoints-preserve-(co)limits} that left relative adjoints preserve those colimits preserved by the root, whilst relative right adjoints preserve limits when the root is dense; the following situation is reminiscent.

\begin{lemma}
  \label{right-adjoints-preserve-absolute-colimits}
  Let $\jAE$ be a dense tight-cell and let $\ljr$. The right adjoint $r$ preserves $\ell$-absolute colimits.
\end{lemma}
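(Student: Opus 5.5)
We must show: given $\ell \colon A \to C$, $r \colon C \to E$ with $C(\ell,1) \iso E(j,r)$, $j$ dense, and an $\ell$-absolute colimit $\vec p \wc f$ in $C$ (with $f \colon Z \to C$), the composite $r(\vec p \wc f)$ is the $\vec p$-colimit of $rf$. The plan is to reduce everything to right lifts through the conjoint $E(j,1)$. Density of $j$ gives $E(1,1) \iso E(j,1) \rf E(j,1)$. Restriction (\cref{restriction-of-lift}) then gives, for any tight-cell $g$ into $E$,
\[E(g,1) \iso E(j,1) \rf E(j,g).\]
We therefore aim to show
\[E(r(\vec p \wc f),1) \iso E(rf,1) \rf \vec p,\]
with the canonical 2-cell built from $\lambda \d r$.

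The chain of isomorphisms is as follows.
\begin{align*}
E(r(\vec p \wc f), 1) &\iso E(j,1) \rf E(j, r(\vec p \wc f)) \\
&\iso E(j,1) \rf C(\ell, \vec p \wc f) \\
&\iso E(j,1) \rf \bigl(C(\ell,f) \odotl \vec p\bigr) \\
&\iso \bigl(E(j,1) \rf C(\ell,f)\bigr) \rf \vec p \\
&\iso \bigl(E(j,1) \rf E(j,rf)\bigr) \rf \vec p \\
&\iso E(rf,1) \rf \vec p.
\end{align*}
The steps are justified in turn:
\begin{itemize}
\item The first and last steps are density of $j$ plus restriction.
\item The second and fifth steps use the relative adjunction isomorphism $C(\ell,1) \iso E(j,r)$, restricted along $\vec p \wc f$ and $f$ respectively.
\item The third step is exactly $\ell$-absoluteness of the colimit: \cref{j-absolute} for the tight-cell $\ell$ means the canonical 2-cell $C(\ell,f), \vec p \tto C(\ell, \vec p \wc f)$ is left-opcartesian.
\item The fourth step is the lift-through-left-composite reasoning: a right lift through a left-composite agrees with the iterated lift. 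This combines \cref{right-lift-through-left-composite} (collapsing $(C(\ell,f),\vec p)$ to its left-composite) and \cref{iterated-lift}, in the same way as the proof of \cref{right-lifts-respect-absolute-colimits}.
\end{itemize}
Indeed, the whole chain from the second line onward is an instance of \cref{right-lifts-respect-absolute-colimits}. There, $E(j,1) \rf E(j,1)$-style lifts through $C(\ell,1) \iso E(j,r)$ are the relevant right lifts. More precisely, we observe that $E(r,1) \iso E(j,1) \rf E(j,r) \iso E(j,1) \rf C(\ell,1)$, so the conjoint of $r$ is a right lift through $C(\ell,1)$. By \cref{right-lifts-respect-absolute-colimits}, it therefore respects $C(\ell,1)$-absolute, i.e.\ $\ell$-absolute, colimits. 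By the remark after \cref{respects-colimit}, a tight-cell preserves a colimit if and only if its conjoint respects it. So $r$ preserves $\ell$-absolute colimits. This is the cleanest route, and the one I will write.

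The main obstacle is checking that the isomorphism obtained is induced by the canonical 2-cell, i.e.\ that the counit exhibiting $E(r,1)$ as $E(j,1) \rf C(\ell,1)$ is compatible with the relative adjunction isomorphism. This is needed so that ``respects'' in the sense of \cref{respects-colimit} (a statement about the canonical 2-cell $\vec p, E(r,1)(\vec p \wc f,1) \tto E(r,1)(f,1)$) applies and yields preservation with cocone $\lambda \d r$. I would handle it by noting that the right lift $E(j,1) \rf E(j,r)$ has counit given by $\cp j$-type composites restricted along $r$. Transporting along $C(\ell,1) \iso E(j,r)$ then gives a right lift through $C(\ell,1)$. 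The canonical 2-cell arising in \cref{right-lifts-respect-absolute-colimits} is then just restriction of this counit, hence agrees with the one built from $\lambda$.
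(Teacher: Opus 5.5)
Your proof is correct, but it takes a different route from the paper's. The paper works with left-composites throughout. It computes $E(j, r(\vec p \wc f)) \iso C(\ell, \vec p \wc f) \iso C(\ell, f) \odotl \vec p \iso E(j, rf) \odotl \vec p$, and then applies \cref{absolute-implies-colimit} to the dense loose-cell $E(j,1)$.

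You instead use density of $j$ together with \cref{restriction-of-lift} to obtain $E(r,1) \iso E(j,1) \rf E(j,r) \iso E(j,1) \rf C(\ell,1)$. This exhibits $r$ as the left extension $\ell \plx j$. You then conclude by applying \cref{right-lifts-respect-absolute-colimits} to the loose-cell $C(\ell,1)$, and using that $r$ preserves a colimit exactly when its conjoint respects it.

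The paper's computation is shorter and gives slightly more: the image colimit $r(\vec p \wc f)$ is moreover $j$-absolute. Your decomposition is more conceptual. It also shows that the left-extension argument mentioned just before the lemma does not need full faithfulness of $j$, since density alone already gives $r \iso \ell \plx j$.

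The compatibility check you single out as the main obstacle is unnecessary. Respecting a colimit is invariant under globular isomorphism of the loose-cell, because the canonical 2-cell of \cref{respects-colimit} is natural in the loose-cell. So any isomorphism $E(r,1) \iso E(j,1) \rf C(\ell,1)$ suffices.
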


We note that, under the additional assumption that $j$ is \ff, this follows from the combination of the facts that (1) under the given assumptions, $r$ is the left extension $\ell \plx j$; and that (2) left extensions along $\ell$ preserve $\ell$-absolute colimits~\cite[Proposition~5.10 \& Lemma~3.22]{arkor2024formal}. However, this property holds even without the assumption that $j$ is \ff.

\begin{proof}
  We have the following chain of isomorphisms.
  \begin{align*}
    E(j, r (\vec p \wc f)) & \iso C(\ell, \vec p \wc f) \tag{$\ljr$} \\
      & \iso C(\ell, f) \odotl \vec p \tag{$\vec p \wc f$ is $\ell$-absolute} \\
      & \iso E(j, r f) \odotl \vec p \tag{$\ljr$}
  \end{align*}
  Hence $r (\vec p \wc f) \iso \vec p \wc (r f)$ by \cref{absolute-implies-colimit}.
\end{proof}

\begin{theorem}[label=atomic-left-adjoint-iff-cocontinuous-right-adjoint]
  Let $\jAE$ be a tight-cell and suppose $\ljr$.
  \begin{enumerate}
    \item If $j$ is a $\Phi$-atom and $r$ is $\Phi$-cocontinuous, then $\ell$ is a $\Phi$-atom.
    \item If $j$ is dense and $\ell$ is a $\Phi$-atom, then $r$ is $\Phi$-cocontinuous.
  \end{enumerate}
\end{theorem}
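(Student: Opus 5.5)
Part (2) follows directly from \cref{right-adjoints-preserve-absolute-colimits}. Part (1) needs a short chain of isomorphisms, and the real work there is checking that this chain is the canonical 2-cell of \cref{j-absolute}. Throughout, I write $C(\ell, 1) \iso E(j, r)$ for the isomorphism exhibiting $\ljr$. I read ``$r$ is $\Phi$-cocontinuous'' as including that $C$ admits $\Phi$-weighted colimits, which $r$ then preserves. If the paper's notion of cocontinuity does not include this, that existence would have to be supplied separately, and I do not see how to derive it from the other hypotheses.

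For (2), assume $j$ is dense and $\ell$ is a $\Phi$-atom. By \cref{atom}, $C$ admits all $\vec p$-weighted colimits for $\vec p \in \Phi$, and each such colimit $\vec p \wc f$ is $C(\ell, 1)$-absolute, i.e.\ $\ell$-absolute. \Cref{right-adjoints-preserve-absolute-colimits} says that, for a dense root, the right relative adjoint preserves $\ell$-absolute colimits. Hence $r$ preserves every $\Phi$-weighted colimit in $C$, which is exactly $\Phi$-cocontinuity. Nothing further is needed.

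For (1), assume $j$ is a $\Phi$-atom and $r$ is $\Phi$-cocontinuous. Existence of $\Phi$-colimits in $C$ holds by the reading above, so it remains to show that each colimit $\vec p \wc f$ in $C$, with $\vec p \in \Phi$, is $\ell$-absolute. I would compute
\begin{align*}
  C(\ell, \vec p \wc f) & \iso E(j, r(\vec p \wc f)) \tag{$\ljr$} \\
    & \iso E(j, \vec p \wc (r f)) \tag{$r$ preserves $\vec p \wc f$} \\
    & \iso E(j, r f) \odotl \vec p \tag{$\vec p \wc (rf)$ is $j$-absolute} \\
    & \iso C(\ell, f) \odotl \vec p \tag{$\ljr$, restricted along $f$}
\end{align*}
The third step uses that $E$ has $\Phi$-colimits which are $j$-absolute, since $j$ is a $\Phi$-atom.

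This chain gives an isomorphism of loose-cells, but $\ell$-absoluteness requires something specific: the 2-cell $C(\ell, f), \vec p \tto C(\ell, \vec p \wc f)$ built from $\lambda$ and $\cp f$ in \cref{j-absolute} must itself be left-opcartesian. I cannot shortcut this with \cref{absolute-implies-colimit}, because $C(\ell,1)$ need not be dense.

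So the main obstacle is a coherence check: the composite isomorphism above must be compatible with the canonical 2-cells. Concretely, I would transport the left-opcartesian 2-cell $E(j, rf), \vec p \tto E(j, \vec p \wc rf)$ along the isomorphisms of the chain. Three ingredients are needed:
\begin{itemize}
  \item naturality of the isomorphism $C(\ell,1) \iso E(j,r)$ with respect to restrictions and 2-cells in $C$, meaning that it intertwines whiskering by a 2-cell of $C$ with whiskering by its image under $r$;
  \item that the preservation isomorphism sends the colimit cocone $\lambda$ to $r\lambda$;
  \item that $\cp{}$ is compatible with $r$.
\end{itemize}
Given these, the transported 2-cell equals the canonical 2-cell built from $\lambda$. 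Since precomposing a left-opcartesian 2-cell with isomorphisms keeps it left-opcartesian, the canonical 2-cell is left-opcartesian, which is $\ell$-absoluteness.
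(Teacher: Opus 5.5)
Your proposal is correct and matches the paper's proof: part (2) is an appeal to \cref{right-adjoints-preserve-absolute-colimits}, and part (1) is the same chain of isomorphisms through $E(j, r(\vec p \wc g))$, $E(j, \vec p \wc (r g))$ and $E(j, r g) \odotl \vec p$. Your reading of $\Phi$-cocontinuity agrees with the paper, since \cref{cocomplete} only defines it for tight-cells with $\Phi$-cocomplete domain; and the coherence check you flag (that the isomorphisms carry the canonical 2-cell of \cref{j-absolute} to the left-opcartesian one for $r\lambda$) is left implicit in the paper but goes through as you describe.
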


\begin{proof}
  Let $\vec p \colon Z \lcto X$ be a weight in $\Phi$ and let $g \colon X \to C$ be a tight-cell.
  \begin{enumerate}
    \item \begin{align*}
      C(\ell, \vec p \wc g) & \iso E(j, r(\vec p \wc g)) \tag{$\ljr$} \\
        & \iso E(j, \vec p \wc (r g)) \tag{$r$ is $\Phi$-cocontinuous} \\
        & \iso E(j, r g) \odotl \vec p \tag{$j$ is a $\Phi$-atom} \\
        & \iso C(\ell, g) \odotl \vec p \tag{$\ljr$}
    \end{align*}
    \item Since $\ell$ is a $\Phi$-atom, $\Phi$-colimits are $\ell$-absolute, from which the result follows by \cref{right-adjoints-preserve-absolute-colimits}.
    \qedhere
  \end{enumerate}
\end{proof}

\begin{example}
  In particular, \cref{atomic-left-adjoint-iff-cocontinuous-right-adjoint} recovers the characterisation of $\Phi$-atomic functors mentioned in \cref{examples-of-atoms}: given a functor $f \colon \A \to \B$, the nerve $\B(f({-}_2), {-}_1) \colon \B \to \Set^{\A\op}$ is right-adjoint to $f$ relative to the presheaf embedding $\yo_\A \colon \A \to \Set^{\A\op}$~\cite[Lemma~7.9]{arkor2025nerve}, which is trivially a dense $\Phi$-atom (since its nerve is the identity on $\Set^{\A\op}$). Thus, \cref{atomic-left-adjoint-iff-cocontinuous-right-adjoint} states that $f$ is a $\Phi$-atom if and only if $\B(f({-}_2), {-}_1) \colon \B \to \Set^{\A\op}$ is $\Phi$-cocontinuous.
\end{example}

\begin{example}
  Let $\ell \colon \Ind(\A) \rightleftarrows \Ind(\B) \cocolon r$ be an adjunction between \lfp{} categories, for small finitely cocomplete categories $\A$ and $\B$. By precomposing the dense inclusion $j \colon \A \to \Ind(\A)$, which is a $\Phi$-atom with respect to $\Phi$ the class of filtered colimits (\cref{examples-of-atoms}), we obtain a relative adjunction~\cite[Proposition~5.29]{arkor2024formal}.
  \[\begin{tikzcd}
    & {\Ind(\B)} \\
    {\A} && {\Ind(\A)}
    \arrow[""{name=0, anchor=center, inner sep=0}, "r", from=1-2, to=2-3]
    \arrow[""{name=1, anchor=center, inner sep=0}, "{j \d \ell}", from=2-1, to=1-2]
    \arrow["j"', hook, from=2-1, to=2-3]
    \arrow["\dashv"{anchor=center, rotate=1}, shift right=2, draw=none, from=1, to=0]
  \end{tikzcd}\]
  Since $\A$ comprises the finitely presentable objects of $\Ind(\A)$, the functor $(j \d \ell)$ is $\Phi$-atomic if and only if $\ell$ preserves finitely presentable objects.
  \Cref{atomic-left-adjoint-iff-cocontinuous-right-adjoint} therefore states that $\ell$ preserves finitely presentable objects if and only if $r$ preserves filtered colimits, which is a key observation in establishing Gabriel--Ulmer duality~\cite{gabriel1971lokal}.\footnote{Building on this observation, it is possible to establish an entirely formal duality theorem. We defer this to the study of admissibility and realisability mentioned at the end of the previous section.}
\end{example}

\subsection{Characterisation of presheaf objects}

As remarked in \cref{canonical-class-of-loose-cells}, given a $\PP$-presheaf object, the class of loose-cells $\PP$ is canonically determined by the loose-cell $\pi_A \colon \psh\PP A \lto A$. It is therefore natural to ask whether it is possible to characterise when an arbitrary loose-cell exhibits a presheaf object for the canonical class of loose-cells induced in this manner. The following gives a sharp characterisation, subject to a mild condition on the class of loose-cells.

\begin{definition}[label=monic]
  A loose-cell $j \colon E \lto A$ is \emph{monic} when, given any pair of tight-cells $f, f' \colon X \to E$, if $j(1, f) = j(1, f')$ then $f = f'$.
\end{definition}

\begin{proposition}
  \label{well-behaved-is-presheaf-object}
  The following properties of a loose-cell $j \colon E \lto A$ are equivalent.
  \begin{enumerate}
    \item There exists a class $\PP$ of loose-cells, for which $(j(1, f) \odotl j) \in \PP\repl$ for each $f \colon A \to E$, and such that $j \colon E \lto A$ exhibits a $\PP$-presheaf object (\cref{Phi-presheaf-object}).
    \item $j$ is a dense and monic $j$-atom (\cref{dense-loose-cell,monic,atom}).
  \end{enumerate}
  In this case, $A$ is $\PP$-admissible (\cref{Phi-embedding}) if and only if $j$ is represented by a \ff{} tight-cell $A \to E$.
\end{proposition}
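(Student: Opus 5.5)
The proof takes the two directions separately, and in each the presheaf-object data is matched against the three properties dense, monic and $j$-atom. The key tool is \cref{presheaf-colimits-absolute}, specialised to the weight $\vec p = (j)$. For a tight-cell $f \colon A \to E$, a $j$-weighted colimit of $f$ is a tight-cell $j \wc f \colon E \to E$. By \cref{atom}, $j$ is a $j$-atom exactly when all such colimits exist and are $j$-absolute.

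For (1 $\Rightarrow$ 2), suppose $j = \pi^\PP_A$ exhibits a $\PP$-presheaf object. Density is part of \cref{Phi-presheaf-object}.

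For monicity, take $f, f' \colon X \to E$ with $j(1, f) = j(1, f')$. By saturation \eqref{Phi-presheaf-object-saturation}, this loose-cell lies in $\PP$. Uniqueness in \eqref{Phi-presheaf-object-UP} then forces $f = \widepshm{j(1, f)} = f'$.

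For atomicity, fix $f \colon A \to E$. Then $q \defeq j(1, f)$ is in $\PP$ and $\pshm q = f$. By hypothesis the left-composite $q \odotl j$ exists and lies in $\PP\repl$. \Cref{presheaf-colimits-absolute} therefore gives that $\psh\PP A = E$ admits the colimit $j \wc f$, and that this colimit is $\pi_A = j$-absolute.

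For (2 $\Rightarrow$ 1), take $\PP \defeq \{\, j(1, f) \mid f \colon X \to E \,\}$, the canonical class of \cref{canonical-class-of-loose-cells}. Density of $j$ is assumed. Saturation holds by construction of $\PP$. For the universal property, each $p \in \PP$ has the form $j(1, f)$, so $f$ provides a tight-cell with $p = j(1, f)$; monicity of $j$ makes it unique. Strictness of $\X$ is what lets us state this as an equality rather than an isomorphism.

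It remains to verify the hypothesis $(j(1, f) \odotl j) \in \PP\repl$ for each $f \colon A \to E$. Since $j$ is a $j$-atom, the colimit $j \wc f$ exists and is $j$-absolute. By \cref{j-absolute}, the canonical 2-cell $j(1, f), j \tto j(1, j \wc f)$ is then left-opcartesian. Hence $j(1, f) \odotl j$ exists and is isomorphic to $j(1, j \wc f)$, which lies in $\PP$.

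For the final clause, recall that $A$ is $\PP$-admissible exactly when $A(1, 1) \in \PP\repl$, i.e.\ when $A(1, 1) \iso j(1, y)$ for some $y \colon A \to E$.
\begin{itemize}
  \item If this holds, then $y \iso \pshe\PP A$. By the Yoneda lemma part \eqref{Yoneda}, $j \iso E(y, 1)$, and by \eqref{Phi-embedding-is-dense-and-ff}, $y$ is \ff.
  \item Conversely, if $j \iso E(y, 1)$ with $y$ \ff, then $j(1, y) \iso E(y, y) \iso A(1, 1)$, so $A(1, 1) \in \PP\repl$.
\end{itemize}

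I expect the main obstacle to be bookkeeping rather than any single hard step. The universal property and monicity are strict equalities, while the left-composite and colimit conditions hold only up to isomorphism, so the argument must pass to $\PP\repl$ at exactly the right places, and nowhere else, since $\PP$ itself need not be replete.
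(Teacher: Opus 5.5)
Your proof is correct and follows the paper's argument essentially step for step. For (1)$\Rightarrow$(2) you obtain density and monicity from the definition of a presheaf object and atomicity from \cref{presheaf-colimits-absolute}; for (2)$\Rightarrow$(1) you take the canonical class $\{ j(1,f) \}$, use monicity for uniqueness, and use $j$-absoluteness for $j(1,f) \odotl j \iso j(1, j \wc f)$. Your treatment of the final clause is slightly more explicit than the paper's, since you invoke the isomorphism $\psh\PP A(\pshe\PP A, 1) \iso \pi_A$ to get $j \iso E(y,1)$. Your remark about strictness is harmless but unnecessary: only the chosen restrictions are used there, not their strict functoriality.
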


\begin{proof}
  (1) $\implies$ (2). By the definition of a presheaf object, $j$ must be dense and monic. That, under the given assumptions, every $j$-weighted colimit exists and is $j$-absolute is immediate from \cref{presheaf-colimits-absolute}.

  (2) $\implies$ (1). Define a class of weights $\PP$ as follows.
  \[\PP \defeq \{ j(1, f) \mid f \colon X \to E \}\]
  By construction, $\PP$ is closed under precomposition by tight-cells into $E$.
  For each $f \colon A \to E$ we have $j(1, j \wc f) \iso j(1, f) \odotl j$ because $j \wc f$ is $j$-absolute, so the repletion of $\PP$ is closed under the necessary left-composites.

  Define $\pi_A \defeq j \colon E \lto A$. This is dense by assumption. Given $j(1, f) \colon X \lto A$, we define $\widepshm{j(1, f)} \defeq f$. We thus have $\pi_A(1, \widepshm{j(1, f)}) = j(1, \widepshm{j(1, f)}) = j(1, f)$. That this choice is unique follows from monicity of $j$.

  Finally, if $A$ is $\PP$-admissible, then there exists a \ff{} tight-cell $A \to E$ by \cref{Phi-embedding-is-dense-and-ff}. Conversely, if $j$ is represented by a \ff{} tight-cell $j'$, we have $A(1, 1) \iso E(j', j') \iso j(1, j') \in \PP\repl$.
\end{proof}

\begin{remark}
  Density of a loose-cell $j \colon E \lto A$ implies that it is `essentially monic', \ie{} given $f, f' \colon X \to E$, if $j(1, f) \iso j(1, f')$ then $f \iso f'$. The monicity condition in \cref{well-behaved-is-presheaf-object} is necessary because our presheaf objects are unique up to isomorphism, rather than equivalence.
\end{remark}

The previous proposition motivates the following definition, which essentially characterises presheaf objects with respect to classes of weights that are particularly well behaved; the terminology follows \cite[\S4]{altenkirch2015monads}.

\begin{definition}
  \label{well-behaved}
  A tight-cell $j \colon A \to E$ is \emph{well-behaved} if the following conditions hold.
  \begin{enumerate}
    \item $j$ is \ff{}.
    \item $j$ is dense.
    \item $j$ is an $E(j, 1)$-atom, \ie left extensions of tight-cells $A \to E$ along $j$ exist and are $j$-absolute.
    \qedhere
  \end{enumerate}
\end{definition}

We shall further study this property in \cref{cocompleteness-in-a-virtual-equipment}.

\section{Rank}
\label{rank-etc}

Along with atomicity, a further important concept in the study of $\Phi$-cocompletions is the notion of \emph{rank} with respect to a tight-cell, which extends the concept of the \emph{accessibility rank} of a functor. Classically, a functor is said to have \emph{rank} $\kappa$, for a regular cardinal $\kappa$, if it preserves $\kappa$-filtered colimits (\ie{} if it is \emph{$\kappa$-accessible}, \aka{} \emph{finitary} when $\kappa = \aleph_0$). The rank of a functor intuitively captures the `size' of a functor: a functor with rank may typically be recovered by specifying a functor out of a smaller subcategory, and left-extending along the subcategory inclusion. The following definitions are intended to capture this intuition.

\begin{definition}
  \label{rank}
  Let $\jAE$ be a tight-cell.
  \begin{enumerate}
    \item A tight-cell $f \colon E \to I$ has \emph{rank} $j$ if the left extension $j \plx (j \d f)$ exists, and the canonical 2-cell $j \plx (j \d f) \tto f$ is invertible.
    \item A loose-cell $p \colon I \lto E$ has \emph{rank} $j$ if the right lift $p(j, 1) \rf E(j, 1)$ exists, and the canonical 2-cell $p \tto p(j, 1) \rf E(j, 1)$ is invertible. \qedhere
  \end{enumerate}
\end{definition}

In particular, a tight-cell $f \colon E \to I$ has rank $j$ precisely when its conjoint $I(f, 1) \colon I \lto E$ does.

\begin{example}
  The most common situation in which to consider rank is when the functor $j \colon \A \to \E$ exhibits the cocompletion of a small category under a class of colimits. In this situation, the property of having rank $j$ admits several equivalent formulations. Since we establish these equivalent characterisations in full generality in \cref{Phi-preservation-is-phi-absolute-preservation}, we shall only give a few illustrative examples here.
  \begin{enumerate}
    \item Let $j \colon \A \to \Ind(\A)$ denote the cocompletion of a small category $\A$ under filtered colimits. A functor from $\Ind(\B)$ is \emph{finitary} precisely when it has rank $j$, which holds if and only if it preserves filtered colimits.
    \item Let $j \colon \A \to \Sind(\A)$ denote the cocompletion of a small category $\A$ under sifted colimits. A functor is \emph{strongly finitary} precisely when it has rank $j$, which holds if and only if it preserves sifted colimits.
    \qedhere
  \end{enumerate}
\end{example}

\begin{example}
  \label{density-via-rank}
  Rank may be viewed as a generalisation of the concept of density: a tight-cell $\jAE$ is dense precisely when the identity $1_E$ on its codomain has rank $j$.
\end{example}

The following proposition establishes a close connection between rank, the property of being a left extension, and preservation of absolute colimits.

\begin{proposition}[{\cf~\cites[Th\'eor\`eme~2.5]{diers1974jadjonction}[Theorem~5.29]{kelly1982basic}[Proposition~7.9]{lucyshyn2016enriched}[Lemma~3.22]{arkor2024formal}}]
  \label{rank-and-cocontinuity}
  Let $\jAE$ be a tight-cell.
  Consider the following conditions on a tight-cell $f \colon E \to I$ (respectively, a loose-cell $p \colon I \lto E$).
  \begin{enumerate}
    \item $f$ has rank $j$ (respectively $p$ has rank $j$).
    \item $f$ is a left extension along $j$ (respectively $p$ is a right lift through $E(j, 1)$).
    \item $f$ preserves $j$-absolute colimits (respectively $p$ respects $j$-absolute colimits).
  \end{enumerate}
  We have (1) $\implies$ (2) $\implies$ (3).
  If $j$ is \ff{}, then (2) $\implies$ (1).
  If $j$ is \ff{} and dense, then all three conditions are equivalent.
\end{proposition}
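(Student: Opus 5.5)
I would reduce everything to the loose-cell formulation. Since a tight-cell $f \colon E \to I$ has rank $j$ precisely when its conjoint $I(f, 1)$ does, $f$ is a left extension along $j$ precisely when $I(f, 1) \iso I(j f, 1) \rf E(j, 1)$, and $f$ preserves a colimit precisely when $I(f, 1)$ respects it, it suffices to prove the statement for a loose-cell $p \colon I \lto E$. The tight-cell statement then follows by taking $p \defeq I(f, 1)$. For the implication (2) $\implies$ (1) in the tight case, the extra step is that the left extension $j \plx (j \d f)$ exists with conjoint $I(jf,1) \rf E(j,1) \iso I(f,1)$. The canonical 2-cell is then invertible by density of $\pi$-style uniqueness, that is, by \ffness{} of conjoints in the equipment up to the chosen comparison.

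(1) $\implies$ (2) is immediate: if the canonical 2-cell $p \tto p(j, 1) \rf E(j, 1)$ is invertible, then $p$ is (isomorphic to) a right lift through $E(j, 1)$.

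(2) $\implies$ (3) is an instance of \cref{right-lifts-respect-absolute-colimits}. A $j$-absolute colimit is by definition $E(j,1)$-absolute, and every right lift through $E(j, 1)$ respects $E(j,1)$-absolute colimits. Since respecting a colimit is invariant under isomorphism of loose-cells, $p$ respects them.

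For (2) $\implies$ (1) when $j$ is \ff, suppose $p \iso q \rf E(j, 1)$ for some $q$. By \cref{lift-through-ff-tight-cell}, the counit $(q \rf E(j,1))(j,1) \tto q$ is invertible, so $p(j, 1) \iso q$. Hence $p(j,1) \rf E(j,1)$ exists and is isomorphic to $p$. The remaining check is that this isomorphism is the canonical 2-cell of \cref{rank}. I would verify this by computing its image under the counit $\varpi$: both sides restrict to the identity-up-to-iso on $p(j,1)$, and uniqueness in the universal property of the right lift gives equality.

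For (3) $\implies$ (2) when $j$ is dense and \ff, the key observation is that $1_E$ is a $j$-absolute colimit. Density says $E(1,1) \iso E(j,1) \rf E(j,1)$, so $1_E \iso E(j,1) \wc j$, that is, $1_E = j \plx j$. By \cref{absolute-implies-colimit}, $j$-absoluteness amounts to $E(j, 1) \iso E(j, j) \odotl E(j, 1)$, which holds because $E(j,j) \iso A(1,1)$ by \ffness{} and the loose-identity is a left unit. Since $p$ respects this colimit, \cref{respects-colimit} gives $p = p(1_E, 1) \iso p(j, 1) \rf E(j, 1)$ via the canonical 2-cell, which is exactly (1) and hence (2).

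The main obstacle is this step: confirming that the identity cocone is $j$-absolute and that the isomorphism produced by respecting it is the canonical comparison. I would handle both by unwinding \cref{j-absolute} with $f = j$ and the unit $\cp j$, checking that the relevant 2-cell is the left-opcartesian nullary unit.
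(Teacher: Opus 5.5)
Your proposal is correct and follows the paper's proof essentially step for step. The paper likewise reduces to the loose-cell case, gets (1)$\implies$(2) trivially, and gets (2)$\implies$(3) from \cref{right-lifts-respect-absolute-colimits}. For (2)$\implies$(1) it uses the chain $p \iso p' \rf E(j,1)$ with $p(j,1) \iso p'$, which is exactly the content of \cref{lift-through-ff-tight-cell}. For (3)$\implies$(1) it observes that $1_E$ is the $j$-absolute left extension $j \plx j$, since $E(j,1) \iso E(j,j) \odotl E(j,1)$.

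Your attention to whether the isomorphisms are the canonical comparison 2-cells goes slightly beyond the paper, which does not check this. Your remark about ``density of $\pi$-style uniqueness'' in the tight case is garbled and unnecessary: the paper simply notes that each condition on $f$ is equivalent to the corresponding condition on $I(f,1)$.
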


\begin{proof}
  Each condition for a tight-cell $f$ is equivalent to that for the corresponding loose-cell $I(f, 1)$, so it is enough to give the proofs only for loose-cells.
  (1) $\implies$ (2). Trivial.
  (2) $\implies$ (3). Follows from \cref{right-lifts-respect-absolute-colimits}.

  (2) $\implies$ (1). Writing $p$ as a right lift $p' \rf E(j, 1)$, we have
  \begin{align*}
    p & \iso p' \rf E(j, 1) \\
    & \iso (p' \rf A(1, 1)) \rf E(j, 1) \\
    & \iso (p' \rf E(j, j)) \rf E(j, 1) \tag{$j$ is \ff{}} \\
    & \iso (p' \rf E(j, 1))(j, 1) \rf E(j, 1) \tag{\cref{restriction-of-lift}} \\
    & \iso p(j, 1) \rf E(j, 1)
  \end{align*}

  Finally, when $j$ is \ff{} and dense, (3) $\implies$ (1) because $1_E$ is the $j$-absolute left extension $j \plx j$:
  \[
    E(j, j \plx j) ~\iso~ E(j, 1) ~\iso~ E(j, j) \odotl E(j, 1)
  \]
  Hence $p$ preserves $j \plx j$, so $p \iso p(j \plx j, 1) \iso p(j, 1) \rf E(j, 1)$ as required.
\end{proof}

\begin{remark}
  From \cref{right-adjoints-preserve-absolute-colimits,rank-and-cocontinuity}, we see that tight-cells $r$ that preserve \mbox{$\ell$-absolute} colimits provide a joint generalisation of right relative adjoints to $\ell$ with dense roots, and tight-cells with rank $\ell$.
\end{remark}

The following provides useful composability and cancellation lemmas for rank.

\begin{lemma}
  Let $\jAE$ and $a \colon A' \to A$ be tight-cells.
  Consider the following conditions for a tight-cell $f \colon E \to I$ (respectively, a loose-cell $p \colon I \lto E$).
  \begin{enumerate}
    \item $(j \d f)$ has rank $a$ (respectively $p(j, 1)$ has rank $a$).
    \item $f$ has rank $j$ (respectively $p$ has rank $j$).
    \item $f$ has rank $(a \d j)$ (respectively $p$ has rank $(a \d j)$).
  \end{enumerate}
  \label{rank-composition} If (1) holds, then (2) $\iff$ (3).
  \label{rank-from-rank} If $j$ is \ff{}, then (3) $\implies$ (1 \& 2).
\end{lemma}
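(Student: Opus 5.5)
As in the proof of \cref{rank-and-cocontinuity}, each condition on a tight-cell $f$ is equivalent to the corresponding condition on its conjoint $I(f, 1)$. Indeed, $I(f,1)(j,1) = I(jf, 1)$, so the rank conditions for $(j \d f)$ and for $I(f,1)(j,1)$ coincide. It therefore suffices to treat a loose-cell $p \colon I \lto E$. The whole argument is a calculation with right lifts, using \cref{restriction-of-lift}, \cref{iterated-lift}, and the fact that a restriction can be absorbed into the weight of a lift.

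The key identity I will establish first is, for any loose-cell $s \colon I \lto A$,
\[s \rf E(j, 1)(a, 1) \iso (s \rf A(a, 1)) \rf E(j, 1), \qquad\text{noting } E(aj, 1) = A(a,1) \odot E(j,1).\]
Here $E(aj,1) = E(j,1)(a,1)$ is the loose-composite of $A(a, 1)$ with $E(j, 1)$: restriction along a tight-cell is composition with its conjoint. So \cref{right-lift-through-composite} together with \cref{iterated-lift} give
\[q \rf E(aj, 1) \iso (q \rf A(a, 1)) \rf E(j, 1),\]
whenever either side exists. I would also record that $q \rf A(a,1) \iso q(a,1)$ is not needed in general. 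What is needed instead is the identity $(q \rf A(a,1))$ evaluated via \cref{restriction-of-lift}: for $q = p(j,1)$ we have $p(aj, 1) \rf A(a, 1)$.

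For (1) $\implies$ ((2) $\iff$ (3)), assume $p(j,1) \iso p(aj, 1) \rf A(a, 1)$, with that right lift existing. Then
\[p(aj,1) \rf E(aj,1) \iso (p(aj,1) \rf A(a,1)) \rf E(j,1) \iso p(j,1) \rf E(j,1).\]
Either side exists iff the other does, and the isomorphism is compatible with the canonical comparison 2-cells out of $p$. Hence $p$ has rank $(a \d j)$ iff it has rank $j$.

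For the second claim, suppose $j$ is fully faithful and $p$ has rank $(a \d j)$. Then $p \iso p(aj, 1) \rf E(aj, 1) \iso (p(aj,1) \rf A(a,1)) \rf E(j,1)$, so $p$ is a right lift through $E(j,1)$. By (2) $\implies$ (1) of \cref{rank-and-cocontinuity} (using full faithfulness of $j$), $p$ has rank $j$, which is (2). For (1), restrict along $j$ using \cref{restriction-of-lift}:
\[p(j, 1) \iso ((p(aj,1) \rf A(a,1)) \rf E(j,1))(j,1) \iso (p(aj,1) \rf A(a,1)) \rf E(j,j).\]
Since $E(j,j) \iso A(1,1)$, this is $p(aj,1) \rf A(a,1)$, exhibiting rank $a$. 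Equivalently, one can apply \cref{lift-through-ff-tight-cell}.

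The main obstacle is bookkeeping rather than ideas. One must check that the isomorphisms obtained are the canonical comparison 2-cells of \cref{rank}, not merely abstract isomorphisms. One must also justify the identification $E(aj,1) \iso A(a,1)\odot E(j,1)$ as a genuine loose-composite, which follows from the standard fact that composing conjoints gives the conjoint of the composite in a virtual equipment. I expect canonicity to follow from the uniqueness clauses of the universal properties, tracking counits at each step.
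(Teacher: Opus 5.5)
Your first claim is handled correctly and exactly as in the paper. Given (1), the lift $p(ja,1) \rf A(a,1)$ exists. So \cref{right-lift-through-composite} and \cref{iterated-lift}, together with $E(ja,1) \iso A(a,1) \odot E(j,1)$, identify $p(ja,1) \rf E(ja,1)$ with $p(j,1) \rf E(j,1)$. (In the paper's convention your $aj$ is $ja$, and $s$ in your key identity should have codomain $A'$.)

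The second claim has a gap. You assert the key identity $q \rf E(ja,1) \iso (q \rf A(a,1)) \rf E(j,1)$ ``whenever either side exists''. But \cref{iterated-lift} only applies once the inner lift $q \rf A(a,1)$ is known to exist. A right lift through a conjoint need not exist even when $q \rf E(ja,1)$ does. For example, in $\Cat$ take:
\begin{itemize}
\item $E = \b 1$;
\item $A$ discrete on $b_0, b_1$;
\item $A'$ discrete on $\{\ast\} \sqcup D$ with $D$ large, and $a$ sending $\ast$ to $b_0$ and $D$ to $b_1$;
\item $q$ the presheaf on $A'$ with $q(\ast) = \varnothing$ and $q(d) = 2$.
\end{itemize}
Then $q \rf E(ja,1) = \varnothing$ exists, but $(q \rf A(a,1))(b_1)$ would have to be the large set $2^{D}$.

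Under hypothesis (3) you are given only $p(ja,1) \rf E(ja,1)$. Yet both your derivation of (2) and your derivation of (1) start from $(p(ja,1) \rf A(a,1)) \rf E(j,1)$, as does your alternative via the ff-lift lemma. So they all presuppose the existence of $p(ja,1) \rf A(a,1)$, which is part of what (1) asserts.

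The repair is to restrict the lift you are actually given, as the paper does. By \cref{restriction-of-lift},
\[p(j,1) \iso (p(ja,1) \rf E(ja,1))(j,1) \iso p(ja,1) \rf E(ja,j) \iso p(ja,1) \rf A(a,1),\]
using $E(ja,j) = E(j,j)(a,1) \iso A(a,1)$ by full faithfulness. This proves the lift exists and establishes (1) in one step. Then (2) follows from the first claim, or, now legitimately, from your appeal to \cref{rank-and-cocontinuity}.

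Separately, your aside about $q \rf A(a,1) \iso q(a,1)$ is confused. That formula holds for the companion, $q \rf A(1,a) \iso q(a,1)$, not for the conjoint. It plays no role in the argument and should be dropped.
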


\begin{proof}
  The results for a tight-cell $f$ follow from those for a loose-cell $p$, so we just give the proofs for the latter case.
  The first claim is immediate from the following isomorphisms, where the first follows from $p(j, 1)$ having rank $a$, and the second follows from \cref{iterated-lift,right-lift-through-composite}.
  \[
    p(j, 1) \rf E(j, 1)
    \iso
    (p(ja, 1) \rf A(a, 1)) \rf E(j, 1)
    \iso
    p(ja, 1) \rf E(ja, 1)
  \]

  Now suppose that $j$ is \ff{} and that (3) holds.
  The following isomorphisms prove that (1) holds, so that (2) also holds.
  \[
    p(j, 1) \iso (p(ja, 1) \rf E(ja, 1))(j, 1) \iso p(ja, 1) \rf E(ja, j) \iso p(ja, 1) \rf E(a, 1) \qedhere
  \]
\end{proof}

The following characterises tight- and loose-cells whose rank is a composite.

\begin{lemma}[label=rank-and-ff-extension]
  Let $\jAE$ be a \ff{} tight-cell.
  For every tight-cell $a \colon A' \to A$, we have the following.
  \begin{enumerate}
    \item A tight-cell has rank $(a \d j)$ if and only if it is a left extension $j \plx g$ for some $g$ of rank $a$.
    \item A loose-cell has rank $(a \d j)$ if and only if it is a right lift $q \rf E(j, 1)$ for some $q$ of rank $a$.
  \end{enumerate}
\end{lemma}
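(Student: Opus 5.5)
As in the preceding results, I will prove the loose-cell statement (2) and deduce (1) from it. A tight-cell $f \colon E \to I$ has rank $(a \d j)$ precisely when its conjoint $I(f, 1)$ does. Moreover, $f$ is a left extension $j \plx g$ precisely when $I(f, 1) \iso I(g, 1) \rf E(j, 1)$. Finally, $g$ has rank $a$ precisely when $I(g, 1)$ does. So (1) is the special case of (2) in which $q$ is corepresentable, and the existence of the colimit is read off from corepresentability of the right lift.

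For the forward direction of (2), suppose $p \colon I \lto E$ has rank $(a \d j)$. Since $j$ is \ff{}, \cref{rank-from-rank} gives both (1) and (2) of that lemma: $p(j, 1)$ has rank $a$, and $p$ has rank $j$. Rank $j$ means exactly $p \iso p(j, 1) \rf E(j, 1)$. Taking $q \defeq p(j, 1)$ therefore exhibits $p$ as a right lift through $E(j, 1)$ of a loose-cell of rank $a$.

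For the converse, suppose $p \iso q \rf E(j, 1)$ with $q$ of rank $a$. By \cref{rank-and-cocontinuity}, the implication (2) $\implies$ (1) for \ff{} $j$ shows that $p$ has rank $j$. Next I compute $p(j, 1)$. By \cref{restriction-of-lift}, $p(j, 1) \iso q \rf E(j, j)$, and by \ffness{} this is $q \rf A(1, 1) \iso q$. (This is essentially \cref{lift-through-ff-tight-cell}.) Hence $p(j, 1) \iso q$ has rank $a$. The composition statement of \cref{rank-composition}, that (1) implies (2) $\iff$ (3), then gives that $p$ has rank $(a \d j)$.

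The only delicate point is that rank is defined via the canonical 2-cell being invertible, not merely via an abstract isomorphism. So I must check that the isomorphisms above are compatible with the canonical 2-cells. In particular, the isomorphism $p(j, 1) \iso q$ has to transport the rank-$a$ structure of $q$ to $p(j, 1)$. I expect this to follow because every isomorphism used arises from universal properties of right lifts and restrictions, and rank is invariant under isomorphism of loose-cells. Concretely, if $p \iso p'$ then the canonical 2-cells $p \tto p(j, 1) \rf E(j, 1)$ and $p' \tto p'(j, 1) \rf E(j, 1)$ correspond under the induced isomorphisms. I will note this invariance explicitly before running the argument.
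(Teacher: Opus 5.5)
Your proof is correct and follows the paper's argument. The forward direction uses the cancellation clause of the rank-composition lemma for \ff{} $j$ to get that $p(j, 1)$ has rank $a$ and $p \iso p(j, 1) \rf E(j, 1)$. The converse combines the step from being a right lift to having rank $j$, the computation $p(j, 1) \iso q \rf E(j, j) \iso q$, and the composition clause, exactly as the paper does. Your conclusion that $p$ (not $q$) has rank $j$ is the correct reading of the paper's text. Reducing (1) to (2) via conjoints is also how the paper treats tight-cells in the neighbouring results, and your remark that rank is invariant under isomorphism of loose-cells makes explicit a step the paper leaves tacit.
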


\begin{proof}
  We give the proof of (2); the proof of (1) is analogous.
  If a loose-cell $p$ has rank $(a \d j)$, then by \cref{rank-from-rank}, $p(j, 1)$ has rank $a$ and $p$ has rank $j$.
  The latter means that $p \iso p(j, 1) \rf E(j, 1)$, exhibiting $p$ as a right lift of a loose-cell through $E(j, 1)$ with rank $a$.
  In the other direction, consider a right lift $p = q \rf E(j, 1)$.
  Then $q$ has rank $j$ by \cref{rank-and-cocontinuity}, so if $p(j, 1) \iso q \rf E(j, j) \iso q$ has rank $a$, then $p$ has rank $(a \d j)$ by \cref{rank-composition}.
\end{proof}

Since, in some cases, having rank $j$ is equivalent to preserving certain colimits by \cref{rank-and-cocontinuity}, one might expect tight-cells having rank $j$ to enjoy similar properties to tight-cells that preserve certain colimits.
The following lemma supports this intuition by providing an analogue to the fact that left $k$-adjoints preserve all colimits that are preserved by $k$ (\cref{adjoints-preserve-(co)limits}).

\begin{lemma}
  Let $\jAE$ be a tight-cell.
  \begin{enumerate}
    \item If $k \colon E \to K$ has rank $j$, then every left $k$-adjoint has rank $j$.
    \item \label{rank-cancellation} In particular, if $i \colon I \to I'$ is \ff{}, and $(f \d i)$ has rank $j$, then $f$ has rank $j$.
  \end{enumerate}
\end{lemma}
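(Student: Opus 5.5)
We argue at the level of loose-cells, using the characterisation of rank via right lifts (\cref{rank}). Suppose $k \colon E \to K$ has rank $j$ and that $\ell \radj k r$ for some $\ell \colon E \to C$ and $r \colon C \to K$, so that $C(\ell, 1) \iso K(k, r)$. Rank $j$ for $\ell$ means that the conjoint $C(\ell, 1) \colon C \lto E$ has rank $j$. Equivalently, the right lift $C(\ell, 1)(j, 1) \rf E(j, 1)$ exists and the canonical comparison from $C(\ell, 1)$ into it is invertible.

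\textbf{Main step.} Since $k$ has rank $j$, its conjoint satisfies $K(k, 1) \iso K(kj, 1) \rf E(j, 1)$ via the canonical 2-cell. We restrict this isomorphism along $r$ in the second variable, using \cref{restriction-of-lift}(1) with $y = 1$ and $x = r$. This gives
\begin{align*}
  C(\ell, 1) & \iso K(k, r) \\
  & \iso (K(kj, 1) \rf E(j, 1))(1, r) \\
  & \iso K(kj, r) \rf E(j, 1) \\
  & \iso C(\ell j, 1) \rf E(j, 1).
\end{align*}
The last step uses the relative adjunction isomorphism restricted along $j$: $K(k, r)(j, 1) = K(kj, r) \iso C(\ell, 1)(j, 1) = C(\ell j, 1)$. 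In particular, the required right lift $C(\ell j, 1) \rf E(j, 1)$ exists.

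\textbf{Main obstacle.} We must check that the composite isomorphism is the \emph{canonical} 2-cell $C(\ell, 1) \tto C(\ell, 1)(j, 1) \rf E(j, 1)$, and not merely some isomorphism. The canonical 2-cell is the factorisation, through the counit, of the cartesian 2-cell $E(j, 1), C(\ell, 1)(j, 1)\ldots$; more precisely, of the 2-cell $E(j,1), C(\ell, 1) \tto C(\ell j, 1)$ obtained from restriction. I expect naturality of the restriction isomorphism in \cref{restriction-of-lift}, and of the adjunction isomorphism under restriction, to transport the canonical 2-cell for $K(k, 1)$ to that for $C(\ell, 1)$. I would verify this by pasting the counits and checking that the two factorisations agree after composing with the counit $\varpi$, which pins them down by uniqueness. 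Alternatively, in a strict \ve{} any isomorphism $p \iso p(j, 1) \rf E(j, 1)$ commuting with counits forces the canonical map to be invertible, so commutation with counits is all that needs checking.

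For (2): if $i \colon I \to I'$ is \ff{}, then $I(1,1) \iso I'(i, i)$, which exhibits $f \radj{(f \d i)} i$, a left $(f \d i)$-adjoint with right adjoint $i$. Applying (1) with $k \defeq (f \d i)$, which has rank $j$ by hypothesis, shows that $f$ has rank $j$.
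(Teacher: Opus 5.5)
Your proof is correct and is essentially the paper's argument. The paper notes that rank $j$ is stable under restriction $p \mapsto p(1, r)$, which is your use of the restriction lemma for right lifts, and then transports rank along $C(\ell, 1) \iso K(k, r)$; it deduces (2) from $I(f, 1) \iso I'(if, i)$ exactly as you do, and leaves implicit the canonicity check you flag, which is a routine uniqueness-of-factorisation argument as you describe.
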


\begin{proof}
  For (1), note that if a loose-cell $p$ has rank $j$, then so does $p(1, r)$ for each tight-cell $r$.
  If $k$ has rank $j$, then $K(k, 1)$ has rank $j$, and hence so does every loose-cell of the form $K(k, r)$.
  In particular, if $\ell$ is the left $k$-adjoint of some $r$, then $C(\ell, 1) \iso K(k, r)$ has rank $j$, so $\ell$ has rank $j$.

  (2) is a special case of (1) because, taking $k \defeq (f \d i)$, the isomorphism $I(f, 1) \iso I'(if, i)$ witnesses $f$ as the left $k$-adjoint of $i$.
\end{proof}

Given that the concept of rank generalises the concept of density (\cref{density-via-rank}), we may specialise the previous results to obtain composition and cancellation results for dense tight-cells.

\begin{corollary}
  Let $\jAE$ and $a \colon A' \to A$ be tight-cells.
  \begin{enumerate}
    \item \label{density-composition} If $j$ has rank $a$, then density of $j$ is equivalent to density of $(a \d j)$ (\cf{}~\cite[Proposition~5.7]{kelly1982basic}).
    \item \label{density-from-rank} If $j$ is \ff{} and has rank $a$, then $a$ is dense.
    \item \label{rank-from-density} If $j$ is \ff{} and $(a \d j)$ is dense, then $j$ has rank $a$, and both $a$ and $j$ are dense (\cf{}~\cites[Lemme~1.0]{diers1976type}[Theorem~5.13]{kelly1982basic}).
  \end{enumerate}
\end{corollary}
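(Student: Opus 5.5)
Each part is a specialisation of the preceding rank lemmas, via \cref{density-via-rank}: a tight-cell $k \colon B \to C$ is dense exactly when $1_C$ has rank $k$. In every case I would take the tight-cell $f$ of those lemmas to be $1_E \colon E \to E$, so that $(j \d f) = j$.

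For \eqref{density-composition}, I apply the composition lemma with $f \defeq 1_E$. Its condition (1) reads ``$j$ has rank $a$'', which is exactly our hypothesis. The lemma then gives that (2) ``$1_E$ has rank $j$'' is equivalent to (3) ``$1_E$ has rank $(a \d j)$''. By \cref{density-via-rank}, these say respectively that $j$ is dense and that $(a \d j)$ is dense.

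For \eqref{rank-from-density}, assume $j$ is \ff{} and $(a \d j)$ is dense, so $1_E$ has rank $(a \d j)$. The \ff{} half of the same lemma, \eqref{rank-from-rank}, gives (3) $\implies$ (1 \& 2). Condition (1) says $j = (j \d 1_E)$ has rank $a$, and condition (2) says $1_E$ has rank $j$, i.e.\ $j$ is dense. It remains to show that $a$ is dense, and this follows from \eqref{density-from-rank}, since we now know $j$ is \ff{} and has rank $a$.

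So the real content is \eqref{density-from-rank}, which I expect to be the main step. Suppose $j$ is \ff{} and has rank $a$. I would take the loose-cell $p \defeq E(j, 1)$, which has rank $a$ because $j$ does. By definition of rank this gives
\[ E(j, 1) \iso E(ja, 1) \rf A(a, 1). \]
Restricting along $j$ and using \cref{restriction-of-lift} (restriction in the first variable of the lift's codomain side), I get
\[ E(j, j) \iso E(ja, j) \rf A(a, 1). \]
By \ffness{} of $j$, the left side is $A(1, 1)$ and $E(ja, j) \iso A(a, 1)$. Hence
\[ A(1, 1) \iso A(a, 1) \rf A(a, 1), \]
which is density of $a$.

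This is precisely the restriction step already used in the proof of \eqref{rank-from-rank}. Equivalently, I could say: apply \eqref{rank-cancellation} with $i \defeq j$ and $f \defeq 1_A$, since $(1_A \d j) = j$ has rank $a$; then $1_A$ has rank $a$, which is density of $a$ by \cref{density-via-rank}. I would use this second formulation, as it is a one-line appeal to an earlier result.

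The only care needed is to check that in each case the canonical comparison 2-cells are the ones required by density. This holds because the isomorphisms are the canonical ones of \cref{rank}.
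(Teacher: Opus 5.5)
Your proposal is correct and matches the paper's proof: (1) is the composition lemma with $f = 1_E$, (2) is the cancellation lemma with $f = 1_A$, $i = j$ and root $a$ (exactly your preferred one-line formulation), and (3) is the \ff{} half of the composition lemma with $f = 1_E$ combined with (2). The only cosmetic difference is that in (3) you read off density of $j$ directly from condition (2) of that lemma, whereas the paper deduces it from part (1) of the corollary; both are immediate.
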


\begin{proof}
  (1) is precisely \cref{rank-composition} with $f = 1_{E'}$, while (2) is \cref{rank-cancellation} with $f = 1_A$ and $i = j = a$.
  For (3), $j$ having rank $a$ is \cref{rank-from-rank} with $f = 1_{E'}$.
  Density of $j$ and $a$ is then immediate from (1) and (2).
\end{proof}

The following consequence complements \cref{ff-weighted-colimit}, which gave conditions for a left extension to be \ff.

\begin{corollary}[{\cf~\cite[Proposition~5.10]{kelly1982basic}}]
  Let $\jAE$ and $f \colon A \to X$ be tight-cells admitting a left extension $j \plx f \colon E \to X$. If $j$ is \ff, then $j \plx f$ is dense if and only if $f$ is dense.
\end{corollary}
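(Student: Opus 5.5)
The plan is to translate density of $j \plx f$ into the rank language of the preceding corollary, and then apply its parts (1) and (2) to the composite $(j \d (j \plx f))$. Write $g \defeq j \plx f \colon E \to X$. Since $j$ is \ff, the unit of the left extension is invertible: $j \d g \iso f$. This is \cite[Lemma~3.29]{arkor2024formal}, as used in \cref{composing-left-extensions}; alternatively, it follows from \cref{lift-through-ff-tight-cell} applied to $X(f, 1) \rf E(j, 1) \iso X(g, 1)$. Moreover, $g$ is a left extension along $j$, so by \cref{rank-and-cocontinuity} ((2) $\implies$ (1), valid because $j$ is \ff) the tight-cell $g$ has rank $j$.

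Now apply the corollary on composition of density, part (1), with the roles renamed. The tight-cell there called $j$ becomes our $g \colon E \to X$, and the one there called $a$ becomes our $j \colon A \to E$. Since $g$ has rank $j$, density of $g$ is equivalent to density of $(j \d g)$. Density is invariant under isomorphism of tight-cells, because the conjoints $X(j \d g, 1)$ and $X(f, 1)$ are then isomorphic and the right lift $p \rf p$ is transported along the isomorphism. Hence density of $(j \d g)$ is equivalent to density of $f$. This gives both directions at once.

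The main point to check is that part (1) of that corollary needs no \ffness{} hypothesis on $g$; it only needs $g$ to have rank $j$. Indeed, it is \cref{rank-composition} with the identity on the codomain, which requires only that the identity restricted along $g$ has rank $j$, \ie{} $X(g, g) \iso X(g, g)(j, 1) \rf E(j, 1)$. That follows from $g$ having rank $j$ by restricting along $g$ on the other side, using \cref{restriction-of-lift}. If this bookkeeping proves awkward, I would instead argue directly. Use $X(g, 1) \iso X(f, 1) \rf E(j, 1)$ together with \cref{iterated-lift} to get
\[ X(f, 1) \rf X(f, 1) \iso (X(f, 1) \rf E(j, 1)) \rf E(j, 1)(1, \ldots) \]
and compare with $X(g, 1) \rf X(g, 1)$ via \ffness{} of $j$.
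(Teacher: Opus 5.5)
Your proposal is correct and is essentially the paper's own proof: since $j$ is fully faithful, $j \plx f$ has rank $j$ by \cref{rank-and-cocontinuity}, so by \cref{density-composition} its density is equivalent to density of $j \d (j \plx f)$, which is isomorphic to $f$ by \cref{lift-through-ff-tight-cell}. There is one harmless slip in your side check. Instantiating \cref{rank-composition} at the identity $X(1,1)$, the hypothesis is that $X(g,1)$ has rank $j$, not $X(g,g)$, and that is verbatim the statement that $g$ has rank $j$, so neither the extra bookkeeping nor the fallback argument is needed.
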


\begin{proof}
  Since $j$ is \ff{}, $j \plx f$ has rank $j$ by \cref{rank-and-cocontinuity}. Consequently, by \cref{density-composition}, density of $j \plx f$ is equivalent to density of $j \d (j \plx f) \iso f$ using \cref{lift-through-ff-tight-cell}.
\end{proof}

\section{Cocompleteness and cocompletions in a \ve{}}
\label{cocompleteness-in-a-virtual-equipment}

A \emph{cocompletion} in a \ve{} axiomatises the universal property of a free cocompletion of a category under a class of colimits. When considering free cocompletions in ordinary category theory, one typically asks that every functor $f \colon \A \to \b X$ into a category $\b X$ admitting a certain class $\Phi$ of colimits may be extended to a functor $\tilde f \colon \coc\Phi{\A} \to \b X$ from the free cocompletion, in an essentially unique way. While this universal property certainly plays a role in our treatment, it is important for us to observe that free cocompletions also admit a second universal property, which concerns distributors $p \colon \b X \lto \A$ satisfying an exactness property with respect to $\Phi$. The notion of cocompletion we consider, therefore, is stronger than the 2-categorical universal property that has been considered heretofore in the literature. As we will explain later, this stronger universal property allows us to establish desirable properties of free cocompletions that do not follow from the mere 2-categorical universal property: for instance, the embedding of any object into a cocompletion in our sense is \ff{} (\cref{cocompletion-is-ff}), whereas there are pathological examples of cocompletions in the 2-categorical sense for which this is not the case (\cref{ffness-of-cocompletion-embedding}).

We shall start by introducing the appropriate notion of cocompleteness in our setting (\cref{cocompleteness}), before introducing and studying cocompletions (\cref{cocompletions}).

\subsection{Cocompleteness}
\label{cocompleteness}

The notion of completeness of an object and the cocontinuity of a tight-cell with respect to a class of weights is straightforward, being formulated exactly as in enriched category theory. However, we shall also need a notion of \emph{exactness} of a loose-cell with respect to a class of weights, which plays an analogous role to cocontinuity for tight-cells.

\begin{definition}
  \label{cocomplete}
  Let $\Phi$ be a class of weights.
  \begin{enumerate}
    \item A \emph{$\Phi$-colimit} is a $\vec p$-weighted colimit for some $\vec p \in \Phi$.
    \item An object $A$ is \emph{$\Phi$-cocomplete} if, for every tight-cell $f \colon D \to A$ and weight $\vec p \colon W \lcto D$ in $\Phi$, there exists a colimit $\vec p \wc f \colon W \to A$.
    \item A tight-cell $g \colon A \to B$ with $\Phi$-cocomplete domain is \emph{$\Phi$-cocontinuous} if it preserves every $\Phi$-colimit.
    \item A loose-cell $p \colon X \lto A$ with $\Phi$-cocomplete codomain is \emph{$\Phi$-exact} if it respects every $\Phi$-colimit (in the sense of \cref{respects-colimit}).
    \qedhere
  \end{enumerate}
\end{definition}

In particular, a tight-cell $g \colon A \to B$ with $\Phi$-cocomplete domain is $\Phi$-cocontinuous if and only if its conjoint $B(g, 1) \colon B \lto A$ is $\Phi$-exact.

\subsection{Saturation}

In general, there may be many classes of weights whose notions of cocompleteness coincide: for instance, in the setting of ordinary categories, the existence of all small colimits may be reduced to many different bases, such as:
\begin{itemize}
  \item small coproducts and (reflexive) coequalisers;
  \item an initial object and wide pushouts;
  \item finite coproducts and sifted colimits;
  \item finite colimits and filtered colimits,
\end{itemize}
and so on.
We introduce the following equivalence class on classes of weights to capture this relationship.

\begin{definition}[label=colimit-equivalence]
  Two classes of weights $\Phi$ and $\Phi'$ are \emph{colimit equivalent}, written $\Phi \colimequiv \Phi'$, if and only if $\Phi \colimleq \Phi'$ and $\Phi' \colimleq \Phi$, where $\Phi \colimleq \Phi'$ holds if and only if
  \begin{enumerate}
    \item every $\Phi'$-cocomplete object is $\Phi$-cocomplete;
    \item every $\Phi'$-cocontinuous tight-cell with $\Phi'$-cocomplete domain is $\Phi$-cocontinuous;
    \item every $\Phi'$-exact loose-cell with $\Phi'$-cocomplete codomain is $\Phi$-exact.
    \qedhere
  \end{enumerate}
\end{definition}

\begin{remark}
  Since a tight-cell $f \colon X \to Y$ is $\Phi$-cocontinuous if and only if the loose-cell $Y(f, 1)$ is $\Phi$-exact, condition (2) above is redundant (and merely included for clarity).
\end{remark}

The relation $\colimleq$ defined in \cref{colimit-equivalence} forms a preorder on the collection of classes of weights, which is a quotient of the power class of weights (since $\Phi \subseteq \Phi'$ trivially implies $\Phi \colimleq \Phi'$). We shall be interested in maximal elements, in the following sense.

\begin{definition}[label=saturation]
  Let $\Phi$ be a class of weights. The \emph{colimit saturation} of $\Phi$ is the largest class of weights $\Phi\satw$ for which $\Phi\satw \colimequiv \Phi$. A class $\Phi$ is \emph{colimit saturated} if $\Phi = \Phi\satw$.
\end{definition}

\begin{remark}
  For a class $\Phi$ of weights, denote by $\Phi\Coc$ the 2-category of $\Phi$-cocomplete objects.
  Colimit saturation, as considered traditionally~\cite{albert1988closure}, is intended to address the problem of finding the largest class of weights $\Phi\satw \supseteq \Phi$ for which there is a concrete biequivalence of 2-categories $\Phi\Coc \biequiv \Phi\satw\h\b{Coc}$. However, in line with our approach to formal category theory~\cite{arkor2024formal}, we are concerned not with 2-categories, but with virtual equipments. Therefore, the appropriate notion of colimit saturation is one that also induces a correspondence between the appropriate notion of loose-cell between $\Phi$-cocomplete objects, which is precisely the notion of $\Phi$-exact loose-cell. Consequently, our notion of colimit saturation is, a priori, more constrained than the traditional definition, which omits condition (3) in \cref{colimit-equivalence}. Note also that, again in contrast to the traditional definition, we impose no size restrictions on our weights; and do not require in (2) that the \emph{codomain} of a $\Phi'$-cocontinuous tight-cell is $\Phi'$-cocomplete.
\end{remark}

\begin{remark}
  \label{limit-saturation}
  There is a dual notion of \emph{limit saturation}, denoted $\Phi^{{\bulletstar}}$ and induced by an equivalence relation $\limequiv$ arising from a partial order $\limleq$. In the setting of enrichment in a complete and cocomplete braided closed monoidal category, the two notions of saturation coincide (up to taking opposites), and so in this context the term \emph{saturation} is used for both terms unambiguously. However, in general, we must distinguish between the two kinds of saturation.
\end{remark}

We record some basic properties of the saturation of a class of weights, for use in \cref{presheaves-and-cocompletions}.

\begin{lemma}
  \label{saturation-properties}
  $\ph\satw$ defines a closure operator on the power class of weights whose fixed points are the colimit saturated classes of weights. Furthermore, for each class of weights $\Phi$, we have the following.
  \begin{enumerate}
    \item \label{sat-is-replete} $\Phi\satw$ is replete.
    \item \label{sat-contains-representables} $B(1, f) \in \Phi\satw$ for each tight-cell $f \colon A \to B$.
    \item \label{sat-contains-concatenations} Given a composable chain $\vec p, \vec q$, if $\vec p \in \Phi\satw$ and $\vec q \in \Phi\satw$, then the composed chain $(p_1, \ldots, p_m, q_1, \ldots, q_n) \in \Phi\satw$.
    \item \label{sat-contains-left-composites} If $(p_1, \ldots, p_m) \in \Phi\satw$ and a left-composite $p_1 \odotl \cdots \odotl p_i$ exists ($1 \leq i \leq m$), then $(p_1 \odotl \cdots \odotl p_i, p_{i + 1}, \ldots, p_m) \in \Phi\satw$.
    \item If $(p_1, \ldots, p_m) \in \Phi\satw$ and a composite $p_i \odot \cdots \odot p_j$ exists ($1 \leq i \leq j \leq m$), then $(p_1, \ldots, p_{i - 1}, p_i \odot \cdots \odot p_j, p_{j + 1}, \ldots, p_m) \in \Phi\satw$.
    \item If $p \in \Phi\satw$, then $p(1, f) \in \Phi\satw$.
  \end{enumerate}
  Furthermore, $\Phi \colimequiv \Phi'$ if and only if $\Phi\satw = (\Phi')\satw$ if and only if $\Phi \colimleq \Phi' \colimleq \Phi\satw$.
\end{lemma}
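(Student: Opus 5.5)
The central device is an explicit description of $\Phi\satw$. Define $\Phi^\dagger$ to be the class of all weights $\vec q$ such that every $\Phi$-cocomplete object is $\vec q$-cocomplete, every $\Phi$-cocontinuous tight-cell (with $\Phi$-cocomplete domain) preserves $\vec q$-colimits, and every $\Phi$-exact loose-cell (with $\Phi$-cocomplete codomain) respects $\vec q$-colimits.

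First I would show that $\Phi^\dagger$ is the colimit saturation. Clearly $\Phi \subseteq \Phi^\dagger$, so $\Phi \colimleq \Phi^\dagger$. Conversely $\Phi^\dagger \colimleq \Phi$ holds by definition, so $\Phi^\dagger \colimequiv \Phi$. Moreover, any $\Psi$ with $\Psi \colimequiv \Phi$ satisfies $\Psi \colimleq \Phi$, and unwinding this says exactly $\Psi \subseteq \Phi^\dagger$. Hence $\Phi\satw = \Phi^\dagger$ exists and is the largest such class.

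The closure-operator and final claims are then formal. Extensivity is $\Phi \subseteq \Phi^\dagger$. Idempotence holds because $\Phi\satw \colimequiv \Phi$. For monotonicity, if $\Phi \subseteq \Phi'$ then $\Phi'$-cocomplete, cocontinuous and exact cells are in particular $\Phi$-cocomplete, cocontinuous and exact, so $\Phi^\dagger \subseteq \Phi'^\dagger$. The final chain follows by noting that $\Phi \colimleq \Phi'$ iff $\Phi \subseteq \Phi'^\dagger$; this is the same unwinding used above, applied now to $\Phi'$, and the relation $\colimleq$ is transitive.

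For the enumerated properties, I would check each weight against the three conditions defining $\Phi^\dagger$. The recurring tool is that a weighted colimit is characterised by a right lift, $X(\vec q \wc f, 1) \iso X(f, 1) \rf \vec q$, and that respecting a colimit is again a right-lift condition.
\begin{enumerate}
  \item \emph{Repleteness.} Right lifts through isomorphic loose-cells coincide.
  \item \emph{Representables.} For $B(1, f)$, the colimit $B(1, f) \wc g$ is $g f$. Using the restriction and companion calculus (\cref{restriction-of-lift} and the loose adjunction $B(1, f) \adj B(f, 1)$, via \cref{left-loose-adjoint-is-absolute-right-lift}), every loose-cell $p$ satisfies $p(g, 1) \rf B(1, f) \iso p(gf, 1)$. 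So this colimit exists in every object and is preserved and respected by everything.
  \item \emph{Concatenation.} Use \cref{iterated-lift}: given $\vec p$- and $\vec q$-colimits, compute $\vec q \wc (\vec p \wc f)$ and show that it is the concatenated colimit. Preservation and respect then follow from the same isomorphism $(p \rf \vec p) \rf \vec q \iso p \rf (\vec p, \vec q)$, applied to restrictions of an exact loose-cell.
  \item and (5) \emph{Composites.} Use \cref{right-lift-through-left-composite} and \cref{right-lift-through-composite}, which identify lifts through the chain with lifts through the composed chain. Existence, preservation and respect therefore transfer verbatim.
  \item \emph{Restriction.} $p(1, f) \wc g$ is the concatenation of $B(1, f)$ with $p$, up to the evident iso. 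Alternatively, use \cref{restriction-of-lift} to obtain $q \rf p(1, f) \iso (q \rf p)(f, 1)$ and so $p(1,f) \wc g \iso (p \wc g) f$; then combine (2) and (3).
\end{enumerate}

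The main obstacle is the bookkeeping in (2) and (6). These weights change the source object of the colimit via restriction, so for each condition I must verify that the canonical counit 2-cell is the one exhibited. The isomorphisms are otherwise routine.
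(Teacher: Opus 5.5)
Your proposal is correct and follows essentially the same route as the paper. The paper likewise obtains (2) from the fact that $g f$ is the $B(1,f)$-weighted colimit of $g$, (3)--(5) from the corresponding right-lift lemmas, and (6) from $p(1,f) = p \odot B(1,f)$ together with (2), (3) and (5) (or directly via restriction of lifts). Your explicit description of $\Phi\satw$ as the class of weights $\vec q$ with $\{\vec q\} \colimleq \Phi$ simply spells out the closure-operator and colimit-equivalence claims that the paper dismisses as trivial.
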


\begin{proof}
  That $\ph\satw$ defines such a closure operator is trivial.
  \begin{enumerate}
    \item Trivial, since colimits are essentially unique.
    \item For every tight-cell $g \colon B \to C$, $(f \d g)$ is the $B(1, f)$-weighted colimit of $g$~\cite[Example~3.11]{arkor2024formal}.
    \item Follows from \cref{iterated-lift}.
    \item Follows from \cref{right-lift-through-left-composite}.
    \item Follows from \cref{right-lift-through-composite}.
    \item Follows from (2 \& 3 \& 5) because $p(1, f) = p \odot B(1, f)$ (alternatively, by \cite[Lemma~3.13]{arkor2024formal}).
  \end{enumerate}
  The characterisations of colimit equivalence in terms of saturation are trivial.
\end{proof}

\subsection{Cocompletions}
\label{cocompletions}

We are ready to introduce the second concept central to the paper (following the $\PP$-presheaf objects of \cref{Phi-presheaf-object}).

\begin{definition}
  \label{cocompletion}
  Let $\Phi$ be a class of weights and let $A$ be an object. A \emph{$\Phi$-cocompletion} of $A$ comprises an object $\coc\Phi A$ and a tight-cell $\coce\Phi A \colon A \to \coc\Phi A$ such that the following conditions hold.
  \begin{enumerate}
    \item $\coc\Phi A$ is $\Phi$-cocomplete.
    \item \label{cocompletion-tight-cell-UP} For each tight-cell $f \colon A \to X$ with $\Phi$-cocomplete codomain, the left extension ${\coce\Phi A \plx f \colon \coc\Phi A \to X}$ exists, the unit $f \tto \coce\Phi A \d (\coce\Phi A \plx f)$ is invertible, and $\coce\Phi A \plx f$ is the essentially unique $\Phi$-cocontinuous tight-cell $\tilde f \colon \coc\Phi A \to X$ such that $f \iso (\coce\Phi A \d \tilde f)$.
    \item \label{cocompletion-loose-cell-UP} For each loose-cell $p \colon X \lto A$, the right lift ${p \rf \coc{\Phi}{A}(\coce\Phi A, 1) \colon X \lto \coc\Phi A}$ exists, the canonical 2-cell $(p \rf \coc{\Phi}{A}(\coce\Phi A, 1))(\coce\Phi A, 1) \tto p$ is invertible, and $p \rf \coc{\Phi}{A}(\coce\Phi A, 1)$ is the essentially unique $\Phi$-exact loose-cell $\tilde p \colon X \lto \coc\Phi A$ such that $p \iso \tilde p(\coce\Phi A, 1)$.
  \end{enumerate}
  An object is \emph{$\Phi$-cocompletable} if it admits a $\Phi$-cocompletion.
\end{definition}

In particular, for a $\Phi$-cocompletion $\coce\Phi A \colon A \to \coc\Phi A$, the conjoint $\coc{\Phi}{A}(\coce\Phi A, 1) \colon \coc\Phi A \lto A$ is lifting (\cref{lifting}).

\begin{remark}
  \label{cocompletion-alternative-formulation}
  Since a tight-cell between $\Phi$-cocomplete objects is $\Phi$-cocontinuous if and only if its conjoint is $\Phi$-exact, we may replace (2) in \cref{cocompletion} with the following condition.
  \begin{itemize}
    \item If the loose-cell $p \colon X \lto A$ in (3) is corepresentable and $X$ is $\Phi$-cocomplete, then the right lift $p \rf \coc{\Phi}{A}(\coce\Phi A, 1)$ is also corepresentable.
    \qedhere
  \end{itemize}
\end{remark}

As indicated earlier, the notion of cocompletion is invariant under colimit equivalence.

\begin{lemma}
  \label{colimit-equivalent-cocompletions}
  Let $\Phi$ and $\Phi'$ be classes of weights such that $\Phi \colimequiv \Phi'$.
  Then a tight-cell $j \colon A \to E$ exhibits the $\Phi$-cocompletion of $A$ if and only if it exhibits the $\Phi'$-cocompletion of $A$.
\end{lemma}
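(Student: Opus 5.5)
The plan is to check that each of the three clauses of \cref{cocompletion} depends on $\Phi$ only through three notions: $\Phi$-cocompleteness of objects, $\Phi$-cocontinuity of tight-cells with $\Phi$-cocomplete domain, and $\Phi$-exactness of loose-cells with $\Phi$-cocomplete codomain. By \cref{colimit-equivalence}, $\Phi \colimequiv \Phi'$ means precisely that these three notions coincide for $\Phi$ and $\Phi'$. The statement is symmetric in $\Phi$ and $\Phi'$, so it suffices to assume that $j \colon A \to E$ exhibits the $\Phi$-cocompletion of $A$ and to deduce that it exhibits the $\Phi'$-cocompletion.

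The first clause is immediate. $E$ is $\Phi$-cocomplete, and since $\Phi' \colimleq \Phi$, condition (1) of \cref{colimit-equivalence} shows that $E$ is $\Phi'$-cocomplete.

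For the second clause, let $f \colon A \to X$ have $\Phi'$-cocomplete codomain. Since $\Phi \colimleq \Phi'$, the object $X$ is also $\Phi$-cocomplete. Hence the left extension $j \plx f$ exists and has invertible unit; these two properties make no reference to the class of weights. We must show that $j \plx f$ is the essentially unique $\Phi'$-cocontinuous $\tilde f$ with $f \iso j \d \tilde f$. The domain $E$ is cocomplete for both classes. Therefore condition (2) of \cref{colimit-equivalence}, applied in both directions, shows that a tight-cell out of $E$ is $\Phi$-cocontinuous if and only if it is $\Phi'$-cocontinuous. The sets of candidates for $\tilde f$ are thus identical, and existence and essential uniqueness transfer directly.

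The third clause is handled the same way using condition (3). The right lift $p \rf E(j, 1)$ and the invertibility of its canonical 2-cell are independent of the class. A loose-cell $X \lto E$ has $\Phi$-cocomplete (equivalently $\Phi'$-cocomplete) codomain $E$, so it is $\Phi$-exact if and only if it is $\Phi'$-exact. Hence the characterising uniqueness property is the same for both classes.

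The only point requiring care is the domain and codomain hypotheses in \cref{colimit-equivalence}. These apply only to cells whose domain or codomain is cocomplete for the relevant class. That hypothesis is met here because $E$ is cocomplete for both classes, as established in the first step. I do not expect any genuine obstacle; the argument is a direct unwinding of the definitions.
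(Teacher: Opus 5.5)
Your proposal is correct and matches the paper, whose proof is simply ``Trivial'': each clause of \cref{cocompletion} refers to $\Phi$ only through cocompleteness, cocontinuity out of $\Phi$-cocomplete objects, and exactness into $\Phi$-cocomplete objects, and these coincide for colimit-equivalent classes. Your unwinding of this, including the check that $E$ is cocomplete for both classes so the domain/codomain hypotheses of \cref{colimit-equivalence} apply, is exactly the intended argument.
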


\begin{proof}
  Trivial.
\end{proof}

Note that, as anticipated at the start of the section, \cref{cocomplete} exhibits a stronger universal property than has traditionally been expressed for free cocompletions, as it admits a universal property with respect to loose-cells. We shall show in \cref{presheaves-and-cocompletions-in-V-Cat} that free cocompletions of enriched categories do indeed satisfy this stronger universal property. This is a typical phenomenon in double category theory: for instance, algebra objects for monads in \ve s satisfy a stronger universal property than algebra objects for monads in 2-categories (see the discussion after \cite[Definition~6.33]{arkor2024formal}). We contend that this stronger universal property is desirable. One indication is the following lemma, for which the stronger universal property is necessary (see \cref{ffness-of-cocompletion-embedding}).

\begin{lemma}
  \label{cocompletion-is-ff}
  Every $\Phi$-cocompletion $\coce\Phi A \colon A \to \coc{\Phi}{A}$ is \ff.
\end{lemma}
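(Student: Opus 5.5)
We must show $A(1,1) \iso \coc\Phi A(\coce\Phi A, \coce\Phi A)$. The plan is to apply the loose-cell universal property, \cref{cocompletion}\eqref{cocompletion-loose-cell-UP}, to the loose-identity $p \defeq A(1,1) \colon A \lto A$. This yields a right lift
\[
\tilde p \defeq A(1,1) \rf \coc\Phi A(\coce\Phi A, 1) \colon A \lto \coc\Phi A,
\]
whose canonical 2-cell $\tilde p(\coce\Phi A, 1) \tto A(1,1)$ is invertible. Moreover, $\tilde p$ is the essentially unique $\Phi$-exact loose-cell restricting to $A(1,1)$ along $\coce\Phi A$.

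The key step is to identify $\tilde p$ with the companion $\coc\Phi A(1, \coce\Phi A)$. For this, we check two things. First, the companion restricts correctly:
\[
\coc\Phi A(1, \coce\Phi A)(\coce\Phi A, 1) = \coc\Phi A(\coce\Phi A, \coce\Phi A).
\]
This is not yet known to be $A(1,1)$, so instead I will use the right lift property directly. By the adjunction $\coc\Phi A(1,\coce\Phi A) \adj \coc\Phi A(\coce\Phi A, 1)$ and \cref{left-loose-adjoint-is-absolute-right-lift}, we have
\[
q \rf \coc\Phi A(\coce\Phi A, 1) \iso \coc\Phi A(1, \coce\Phi A) \odot q
\]
whenever the composite exists. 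Taking $q = A(1,1)$, for which the composite trivially exists, gives
\[
\tilde p = A(1,1) \rf \coc\Phi A(\coce\Phi A, 1) \iso \coc\Phi A(1, \coce\Phi A).
\]
Here I need the orientation to match: $\ell = \coc\Phi A(\coce\Phi A,1)\colon \coc\Phi A \lto A$ plays the role of the left loose adjoint in a loose adjunction, with right adjoint the companion. This is the conjoint/companion adjunction read in the appropriate direction, since the counit $\cp{\coce\Phi A}$ has frame $\coc\Phi A(1,\coce\Phi A), \coc\Phi A(\coce\Phi A,1) \tto A(1,1)$ in the paper's right-to-left convention. I would double-check this orientation, since it is the main point where the argument could go wrong. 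If it fails, the fallback is to argue via the general fact that a companion $B(1,f)$ is always the right lift $A(1,1) \rf B(f,1)$, i.e. that conjoints are "absolute" in this sense. This fact follows from the unit $\pc f$ and the triangle identities by direct transposition.

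Combining the two facts, the invertible canonical 2-cell from \eqref{cocompletion-loose-cell-UP} becomes
\[
\coc\Phi A(\coce\Phi A, \coce\Phi A) \iso \tilde p(\coce\Phi A,1) \iso A(1,1).
\]
It then remains to check that this isomorphism is the canonical comparison $\pc{\coce\Phi A}$ (or its inverse), so that $\coce\Phi A$ is \ff{} in the sense of \cref{ff}. This should follow because the counit of the right lift is built from $\cp{\coce\Phi A}$, and the triangle identities relate $\cp{\coce\Phi A}$ to $\pc{\coce\Phi A}$. Notably, neither $\Phi$-exactness nor the uniqueness clause is needed; only the existence of the right lift and the invertibility of its counit restriction are used. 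This is consistent with the remark that the loose-cell universal property is exactly what is required.
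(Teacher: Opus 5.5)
There is a genuine gap, at exactly the step you flagged: the identification $A(1,1) \rf \coc\Phi A(\coce\Phi A, 1) \iso \coc\Phi A(1, \coce\Phi A)$ does not follow.

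\emph{The orientation is reversed.} In a \ve{} the loose adjunction is $\coc\Phi A(1, \coce\Phi A) \adj \coc\Phi A(\coce\Phi A, 1)$, with the \emph{companion} as left adjoint. The chain $\coc\Phi A(1,\coce\Phi A), \coc\Phi A(\coce\Phi A,1)$ starts and ends at $\coc\Phi A$, so the counit $\cp{\coce\Phi A}$ lands in $\coc\Phi A(1,1)$, as in the diagram of \cref{cp-notation}, not in $A(1,1)$. Hence \cref{left-loose-adjoint-is-absolute-right-lift} only computes lifts through the companion, $q \rf \coc\Phi A(1, \coce\Phi A) \iso q(\coce\Phi A, 1)$, and says nothing about lifts through the conjoint.

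\emph{The fallback is false, even for \ff{} tight-cells.} In $\Cat$, let $\B$ be the discrete category on objects $b_0, b_1$, and let $f \colon \b1 \to \B$ pick out $b_0$. Then $(\b1(1,1) \rf \B(f,1))(b_1, \ast) \iso \Set(\B(b_0, b_1), 1) = 1$, whereas $\B(1,f)(b_1, \ast) = \B(b_1, b_0) = \varnothing$.

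\emph{Transposing along $\pc{\coce\Phi A}$ cannot rescue it.} Exhibiting the companion as $A(1,1) \rf \coc\Phi A(\coce\Phi A, 1)$ needs a counit $\coc\Phi A(\coce\Phi A,1), \coc\Phi A(1,\coce\Phi A) \tto A(1,1)$. That is a 2-cell $\coc\Phi A(\coce\Phi A, \coce\Phi A) \tto A(1,1)$ running against $\pc{\coce\Phi A}$, which is precisely what you are trying to construct. For a cocompletion the identification $\widetilde{A(1,1)} \iso \coc\Phi A(1,\coce\Phi A)$ does hold in the end. However, the paper derives it in the corollary following this lemma from full faithfulness plus essential uniqueness, so invoking it here would be circular.

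The underlying problem is that \cref{cocompletion-loose-cell-UP} applied only at $p = A(1,1)$ is too weak. Write $E \defeq \coc\Phi A(\coce\Phi A, \coce\Phi A)$. Via \cref{restriction-of-lift}, that single instance gives only $A(1,1) \rf E \iso A(1,1)$, which does not determine $E$. The missing idea is to use the universal property for \emph{every} loose-cell $p \colon A \lto A$ and conclude by Yoneda, as the paper does:
\[
  p \rf E \iso (p \rf \coc\Phi A(\coce\Phi A, 1))(\coce\Phi A, 1) \iso p \iso p \rf A(1,1)
\]
naturally in $p$. Consequently globular 2-cells $E \tto p$ and $A(1,1) \tto p$ are in natural bijection, since both correspond to nullary 2-cells into $p$. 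Hence $E \iso A(1,1)$ in $\X\lh{A, A}$; taking $p = A(1,1)$ and $p = E$ already suffices. Your remark that only the existence of the lifts and the invertibility of the canonical 2-cell are needed, and not $\Phi$-exactness or uniqueness, remains correct for this argument.
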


\begin{proof}
  By \cref{cocompletion-loose-cell-UP}, there are isomorphisms
  \[
    p \rf \coc{\Phi}{A}(\coce\Phi A, \coce\Phi A)
    \iso
    (p \rf \coc{\Phi}{A}(\coce\Phi A, 1))(\coce\Phi A, 1)
    \iso
    p
    \iso
    p \rf A(1, 1)
  \]
  natural in $p$, hence there is a natural bijection between 2-cells $A(1, 1) \tto p$ and 2-cells $\coc{\Phi}{A}(\coce\Phi A, \coce\Phi A) \tto p$, which implies that $A(1, 1) \iso \coc{\Phi}{A}(\coce\Phi A, \coce\Phi A)$, exhibiting $\coce\Phi A$ as \ff.
\end{proof}

\begin{example}
  \label{ffness-of-cocompletion-embedding}
  One might expect it to be possible to prove \ffness{} of $\coce\Phi A$ using (2) instead of (3) in the proof of \cref{cocompletion-is-ff}.
  However, this is not the case: indeed, it is possible for a tight-cell $\coce\Phi A \colon A \to \coc{\Phi} A$ to satisfy (1) and (2), while not being \ff{}.

  For instance, recall that a locally small category admitting copowers by large sets is necessarily posetal (\cf~\cite[78]{freyd1964abelian}). Therefore, taking $\Phi$ to be the class of weights for copowers by large sets, a locally small category is $\Phi$-cocomplete if and only if it is equivalent to a preordered class with large joins. For a small category $\A$, the functor
  \[\A \xto{\yo_{\A}} \Set^{\A\op} \xto{(\exists x \in \ph)^{\A\op}} \b2^{\A\op}\]
  obtained as the preorder reflection of the presheaf embedding therefore satisfies properties (1) and (2) of \cref{cocompletion}~\cite[580]{walker2019distributive}. However, it is not \ff{}, and therefore is not a $\Phi$-cocompletion in the sense of \cref{cocompletion}.
\end{example}

\begin{corollary}
  Let $\Phi$ be a class of weights and let $A$ be an object admitting a $\Phi$-cocompletion $\coce\Phi A \colon A \to \coc\Phi A$. Then $\widetilde{A(1, 1)} \iso \coc\Phi A(1, \coce\Phi A)$.
\end{corollary}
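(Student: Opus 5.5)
By definition, $\widetilde{A(1, 1)}$ is the right lift $A(1, 1) \rf \coc\Phi A(\coce\Phi A, 1)$ provided by \cref{cocompletion-loose-cell-UP}. The plan is to show directly that the companion $\coc\Phi A(1, \coce\Phi A)$ has the universal property of this right lift, so the two agree up to isomorphism by uniqueness of right lifts.

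The first step is to apply \cref{left-loose-adjoint-is-absolute-right-lift} to the loose adjunction $\coc\Phi A(1, \coce\Phi A) \adj \coc\Phi A(\coce\Phi A, 1)$, whose counit is $\cp{\coce\Phi A}$; this is \cite[Lemma~2.19]{arkor2024formal}, stated in the example preceding that proposition. The roles must be read carefully. In the proposition take $\ell \defeq \coc\Phi A(\coce\Phi A, 1) \colon \coc\Phi A \lto A$. Its left adjoint is then the companion, and the roles are arranged so that we obtain right lifts through $\ell$.

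Dually, \cref{left-loose-adjoint-is-absolute-right-lift} says that if $\ell \adj r$, then $r \odot p \iso p \rf \ell$ for every $p$ into the codomain of $\ell$. Here, though, we need the case where the companion is the left adjoint and the conjoint the right adjoint, $C(1,f) \adj C(f,1)$. The direct route is therefore cleaner.

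\begin{align*}
  A(1,1) \rf \coc\Phi A(\coce\Phi A, 1)
  &\iso (\coc\Phi A(\coce\Phi A,1) \rf \coc\Phi A(\coce\Phi A,1))(1,\coce\Phi A)
\end{align*}

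This uses \cref{restriction-of-lift}, in the form $(q \rf p)(1, x) \iso q(1, x) \rf p$, together with the full faithfulness isomorphism $A(1, 1) \iso \coc\Phi A(\coce\Phi A, \coce\Phi A)$ from \cref{cocompletion-is-ff}, which we read as $A(1,1) \iso \coc\Phi A(\coce\Phi A, 1)(1, \coce\Phi A)$. It therefore suffices to show that $\coc\Phi A(\coce\Phi A,1) \rf \coc\Phi A(\coce\Phi A,1)$ is the loose-identity of $\coc\Phi A$, that is, that $\coce\Phi A$ is dense. Restricting the loose-identity along $(1, \coce\Phi A)$ then gives exactly the companion, as required.

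The main obstacle is density. We intend to deduce it from \cref{cocompletion-loose-cell-UP} applied to $p \defeq \coc\Phi A(\coce\Phi A, 1)$, using the essential uniqueness clause. The loose-identity $\coc\Phi A(1, 1)$ is $\Phi$-exact: for any $\Phi$-colimit $\vec q \wc g$ in $\coc\Phi A$, we have $\coc\Phi A(\vec q \wc g, 1) \iso \coc\Phi A(g, 1) \rf \vec q$ by the definition of colimit, which is precisely the statement that the loose-identity respects it. Moreover, its restriction $\coc\Phi A(\coce\Phi A, 1)$ equals $p$. By uniqueness, therefore, $\coc\Phi A(1, 1) \iso p \rf \coc\Phi A(\coce\Phi A, 1)$, and this is exactly density of $\coce\Phi A$.

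We will also check that the isomorphism is compatible with the counit. Concretely, the canonical 2-cell should be the identity, as is required in \cref{dense-loose-cell}. This compatibility should follow from the invertibility of the canonical 2-cell in \cref{cocompletion-loose-cell-UP}.
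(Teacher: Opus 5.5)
\textbf{Verdict: correct, via a different decomposition.} Your final argument works, but you organise it differently from the paper.

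The paper applies the essential-uniqueness clause of \cref{cocompletion-loose-cell-UP} once, directly to $A(1,1)$. The companion $\coc\Phi A(1, \coce\Phi A)$ is a candidate, because its restriction $\coc\Phi A(\coce\Phi A, \coce\Phi A)$ is isomorphic to $A(1,1)$ by \cref{cocompletion-is-ff}. It is therefore $\widetilde{A(1,1)}$. The paper leaves implicit that the companion is $\Phi$-exact. This holds because it is the restriction $\coc\Phi A(1,1)(1, \coce\Phi A)$ of the $\Phi$-exact loose-identity, and \cref{restriction-of-lift} shows that restricting a loose-cell on its domain side preserves respect for colimits.

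You instead apply uniqueness to the conjoint $\coc\Phi A(\coce\Phi A, 1)$, with the loose-identity as candidate. This gives $\coc\Phi A(1,1) \iso \coc\Phi A(\coce\Phi A, 1) \rf \coc\Phi A(\coce\Phi A, 1)$, i.e.\ density of $\coce\Phi A$. You then transport this along $(1, \coce\Phi A)$ using \cref{restriction-of-lift} and full faithfulness. In short, you restrict after invoking uniqueness, where the paper restricts before.

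What each route buys:
\begin{itemize}
\item Your route needs only one exactness check, for the loose-identity, and that is literally the definition of a colimit. It also produces density inline. The paper establishes density only later, in \cref{cocompletion-is-dense}, and does so via rank.
\item The paper's route is a single step.
\end{itemize}

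Two pieces of your write-up should be cut.
\begin{itemize}
\item Delete the detour through \cref{left-loose-adjoint-is-absolute-right-lift}. The loose adjunction $\coc\Phi A(1,\coce\Phi A) \adj \coc\Phi A(\coce\Phi A,1)$ computes right lifts through the companion, as $p \rf \coc\Phi A(1,\coce\Phi A) \iso p(\coce\Phi A, 1)$, not through the conjoint. It cannot help here, and your reading of the roles in that paragraph is muddled.
\item The closing compatibility-with-the-counit check is unnecessary. Your chain of isomorphisms uses only an abstract isomorphism between the loose-identity and the right lift, and this restricts to the required isomorphism whether or not it is the canonical one.
\end{itemize}
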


\begin{proof}
  We have $\coc\Phi A(1, \coce\Phi A)(\coce\Phi A, 1) \iso \coc\Phi A(\coce\Phi A, \coce\Phi A) \iso \coc\Phi A(1, 1)$ since $\coce\Phi A$ is \ff{} by \cref{cocompletion-is-ff}, from which the statement follows by essential uniqueness.
\end{proof}

Cocompletions of categories under classes of colimits are known to satisfy many useful properties~\cite{kelly1982basic,kelly2005notes}. The following results establish that these properties generally continue to hold in the setting of \fct.

\begin{lemma}
  \label{cocompletion-absolute}
  Let $\coce\Phi A \colon A \to \coc{\Phi}{A}$ be a $\Phi$-cocompletion.
  Left extensions along $\coce\Phi A$ and $\Phi$-colimits in $\coc{\Phi}{A}$ are $\coce\Phi A$-absolute.
\end{lemma}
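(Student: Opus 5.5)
The plan is to reduce both claims to \cref{absoluteness-and-exactness}. Write $j \defeq \coce\Phi A$ and $E \defeq \coc\Phi A$. As noted after \cref{cocompletion}, condition \eqref{cocompletion-loose-cell-UP} implies that the conjoint $E(j, 1) \colon E \lto A$ is lifting. By \cref{absoluteness-and-exactness}, a colimit in $E$ is therefore $j$-absolute (that is, $E(j, 1)$-absolute) if and only if every right lift $p \rf E(j, 1)$ respects it. So it suffices to show that every such right lift respects both $\Phi$-colimits in $E$ and left extensions along $j$.

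For $\Phi$-colimits this is immediate. By condition \eqref{cocompletion-loose-cell-UP}, each right lift $p \rf E(j, 1)$ is $\Phi$-exact, and a $\Phi$-exact loose-cell respects every $\Phi$-colimit in $E$ by definition (\cref{cocomplete}).

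For left extensions, fix a tight-cell $f \colon A \to E$. Since $E$ is $\Phi$-cocomplete, condition \eqref{cocompletion-tight-cell-UP} applies to $f$: the left extension $j \plx f$ exists, it is $\Phi$-cocontinuous, and $j \d (j \plx f) \iso f$. Let $q \defeq p \rf E(j, 1)$. We must show that the canonical 2-cell exhibits $q(j \plx f, 1)$ as the right lift $q(f, 1) \rf E(j, 1)$. I would do this by essential uniqueness in condition \eqref{cocompletion-loose-cell-UP}, applied to the loose-cell $q(f, 1) \colon X \lto A$.
\begin{itemize}
  \item The right lift $q(f, 1) \rf E(j, 1)$ is, by that condition, the essentially unique $\Phi$-exact loose-cell whose restriction along $j$ is $q(f, 1)$.
  \item The loose-cell $q(j \plx f, 1)$ is $\Phi$-exact. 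Indeed, $q$ respects each $\Phi$-colimit $\vec p \wc g$ in $E$, and $j \plx f$ preserves it. Combining the two via \cref{restriction-of-lift} gives
  \[q(j \plx f, 1)(\vec p \wc g, 1) \iso q((j \plx f)(\vec p \wc g), 1) \iso q(\vec p \wc ((j \plx f) g), 1) \iso q((j \plx f) g, 1) \rf \vec p.\]
  \item Its restriction along $j$ is $q(j \d (j \plx f), 1) \iso q(f, 1)$.
\end{itemize}
Hence $q(j \plx f, 1) \iso q(f, 1) \rf E(j, 1)$.

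The main obstacle is that \cref{respects-colimit} requires the \emph{canonical} 2-cell to exhibit the right lift, not merely some isomorphism. I would handle this by checking that the isomorphism just obtained, composed with the counit of $q(f, 1) \rf E(j, 1)$, equals the canonical 2-cell built from the unit of $j \plx f$. This is a bookkeeping step using the uniqueness clause of the right lift's universal property together with invertibility of the unit $f \tto j \d (j \plx f)$. Once it is done, \cref{absoluteness-and-exactness} gives $j$-absoluteness of the left extension.
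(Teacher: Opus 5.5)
Your argument follows the paper's route. The paper's proof is just your first two paragraphs: right lifts through $\coc\Phi A(\coce\Phi A, 1)$ are $\Phi$-exact by \cref{cocompletion-loose-cell-UP}, and the conclusion follows from \cref{absoluteness-and-exactness}.

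The paper does not spell out why such right lifts also respect left extensions along $j$. Your third paragraph fills this in correctly: \cref{cocompletion-tight-cell-UP} shows that $q(j \plx f, 1)$ is $\Phi$-exact with restriction along $j$ isomorphic to $q(f, 1)$, and essential uniqueness then applies.

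For the canonicity step, you do not need to identify the isomorphism supplied by essential uniqueness with the canonical one. That identification is only automatic if essential uniqueness is read as invertibility of the comparison 2-cell, rather than as the mere existence of some isomorphism. A cleaner route is the following.
\begin{itemize}
  \item The isomorphism from essential uniqueness already exhibits $N \defeq q(j \plx f, 1)$ as a right lift through $E(j, 1)$.
  \item Since $j$ is \ff{} (\cref{cocompletion-is-ff}), \cref{rank-and-cocontinuity} shows that $N$ has rank $j$. That is, the restriction 2-cell $E(j, 1), N \tto N(j, 1)$ itself exhibits $N$ as a right lift.
  \item Compose this with the isomorphism $N(j, 1) = q((j \plx f) j, 1) \iso q(f, 1)$ induced by the invertible unit. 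The result is exactly the canonical 2-cell of \cref{respects-colimit}, because the colimiting cocone of $j \plx f$ is obtained from its unit.
\end{itemize}
This makes the step independent of which isomorphism essential uniqueness provides.
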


\begin{proof}
  Every right lift through $\coc{\Phi}{A}(\coce\Phi A, 1)$ is $\Phi$-cocontinuous by the definition of $\Phi$-cocompletion (\cref{cocompletion-loose-cell-UP}), so the result is immediate from \cref{absoluteness-and-exactness}.
\end{proof}

\begin{proposition}[{\cf{}~\cite[Proposition~7.9]{lucyshyn2016enriched}}]
  \label{Phi-preservation-is-phi-absolute-preservation}
  Let $\coce\Phi A \colon A \to \coc{\Phi}{A}$ be a $\Phi$-cocompletion.
  The following are equivalent for a tight-cell $f \colon \coc{\Phi}{A} \to X$ into an arbitrary object $X$ (not assumed $\Phi$-cocomplete).
  \begin{enumerate}
    \item $f$ is $\Phi$-cocontinuous.
    \item $f$ has rank $\coce\Phi A$.
    \item $f$ is a left extension along $\coce\Phi A$.
    \item $f$ preserves $\coce\Phi A$-absolute colimits.
  \end{enumerate}
  The following are equivalent for a loose-cell $p \colon X \lto \coc\Phi A$.
  \begin{enumerate}
      \item $p$ is $\Phi$-exact.
      \item $p$ has rank $\coce\Phi A$.
      \item $p$ is a right lift through $\coc\Phi A(\coce\Phi A, 1)$.
      \item $p$ respects $\coce\Phi A$-absolute colimits.
  \end{enumerate}
\end{proposition}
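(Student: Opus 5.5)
It suffices to treat the loose-cell statement. For a tight-cell $f \colon \coc\Phi A \to X$, each of the four conditions on $f$ is equivalent to the corresponding condition on its conjoint $X(f, 1)$. Indeed, $\Phi$-cocontinuity of $f$ is $\Phi$-exactness of $X(f,1)$; rank is defined so that $f$ has rank $\coce\Phi A$ exactly when $X(f,1)$ does; and preservation of a colimit by $f$ is respect of that colimit by $X(f,1)$. For condition (3), a left extension $\coce\Phi A \plx g$ has conjoint $X(g,1) \rf \coc\Phi A(\coce\Phi A, 1)$ by definition of weighted colimit. Conversely, if $X(f,1)$ is a right lift through $\coc\Phi A(\coce\Phi A,1)$, then by \cref{rank-and-cocontinuity} it has rank $\coce\Phi A$, and so $f$ is the left extension of $(\coce\Phi A \d f)$. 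So fix a loose-cell $p \colon X \lto \coc\Phi A$.

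We know $\coce\Phi A$ is \ff{} by \cref{cocompletion-is-ff}. Hence \cref{rank-and-cocontinuity} gives (2) $\iff$ (3) $\implies$ (4) for $p$, without needing density. For (1) $\implies$ (3), \cref{cocompletion-loose-cell-UP} asserts that every $\Phi$-exact loose-cell $\tilde p$ is isomorphic to the right lift $\tilde p(\coce\Phi A, 1) \rf \coc\Phi A(\coce\Phi A, 1)$, by essential uniqueness. For (3) $\implies$ (1), write $p \iso q \rf \coc\Phi A(\coce\Phi A,1)$. By \cref{cocompletion-absolute}, every $\Phi$-colimit in $\coc\Phi A$ is $\coce\Phi A$-absolute, and by \cref{right-lifts-respect-absolute-colimits} every right lift through $\coc\Phi A(\coce\Phi A,1)$ respects $\coce\Phi A$-absolute colimits. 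Together these show that $p$ respects all $\Phi$-colimits, i.e.\ $p$ is $\Phi$-exact. Since $\coc\Phi A$ is $\Phi$-cocomplete, the definition of exactness applies.

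The remaining implication, (4) $\implies$ (2), is where I expect the main work. The plan is to reproduce the last step of \cref{rank-and-cocontinuity}: show that $1_{\coc\Phi A}$ is the left extension $\coce\Phi A \plx \coce\Phi A$ and that this left extension is $\coce\Phi A$-absolute.

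Existence comes from \cref{cocompletion-tight-cell-UP} applied to $f = \coce\Phi A$, which is legitimate because $\coc\Phi A$ is $\Phi$-cocomplete. Identifying the extension with the identity uses essential uniqueness: $1$ is $\Phi$-cocontinuous and restricts to $\coce\Phi A$. Absoluteness is given by \cref{cocompletion-absolute}, since left extensions along $\coce\Phi A$ are $\coce\Phi A$-absolute. Consequently, if $p$ respects $\coce\Phi A$-absolute colimits, then
\[
  p \iso p(\coce\Phi A \plx \coce\Phi A, 1) \iso p(\coce\Phi A, 1) \rf \coc\Phi A(\coce\Phi A,1),
\]
which is (2).

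This circumvents the need for density of $\coce\Phi A$. Density does in fact follow from the identity being this left extension (\cref{density-via-rank}), so one could alternatively invoke the final clause of \cref{rank-and-cocontinuity} directly.
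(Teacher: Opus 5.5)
Your argument is correct and uses the same ingredients as the paper. You get (2) $\iff$ (3) $\implies$ (4) from \cref{rank-and-cocontinuity} via \ffness{} of $\coce\Phi A$, and (1) $\implies$ (3) from essential uniqueness in \cref{cocompletion-loose-cell-UP}, with the tight-cell case reduced to conjoints.

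The difference is in how the cycle is closed. The paper simply notes (4) $\implies$ (1): since $\Phi$-colimits in $\coc\Phi A$ are $\coce\Phi A$-absolute (\cref{cocompletion-absolute}), respecting all $\coce\Phi A$-absolute colimits already means respecting all $\Phi$-colimits. That is exactly the second half of your own (3) $\implies$ (1) argument, which is really (3) $\implies$ (4) $\implies$ (1). So the step you single out as ``the main work'' is superfluous: (4) $\implies$ (1) $\implies$ (3) $\implies$ (2) already closes everything.

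Your (4) $\implies$ (2) route is nonetheless valid. It identifies $1_{\coc\Phi A}$ with the $\coce\Phi A$-absolute left extension $\coce\Phi A \plx \coce\Phi A$ by essential uniqueness in \cref{cocompletion-tight-cell-UP}. A by-product is a direct proof that $\coce\Phi A$ is dense. The paper obtains density only afterwards, as a consequence of this proposition (\cref{cocompletion-is-dense}), so you were right not to assume it.
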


\begin{proof}
  Since $\coce\Phi A$ is \ff{} (\cref{cocompletion-is-ff}), \cref{rank-and-cocontinuity} provides (2) $\iff$ (3) and (3) $\implies$ (4).
  (4) $\implies$ (1) because $\Phi$-colimits are $\coce\Phi A$-absolute (\cref{cocompletion-absolute}).

  We show (1) $\implies$ (3) first for a loose-cell $p$.
  By \cref{cocompletion-tight-cell-UP}, the right lift $p(\coce\Phi A, 1) \rf \coc{\Phi}{A}(\coce\Phi A, 1)$ is the unique $\Phi$-cocontinuous $\widetilde{p(\coce\Phi A, 1)} \colon X \lto \coc{\Phi}{A}$ such that $p(\coce\Phi A, 1) \iso \widetilde{p(\coce\Phi A, 1)}(\coce\Phi A, 1)$.
  But $p$ is also a candidate for $\widetilde{p(\coce\Phi A, 1)}$, so there exists an isomorphism $p \iso p(\coce\Phi A, 1) \rf \coc{\Phi}{A}(\coce\Phi A, 1)$, exhibiting $p$ as a right lift through $\coc{\Phi}{A}(\coce\Phi A, 1)$.
  To show that (1) $\implies$ (3) for a tight-cell $f$, take $p = X(f, 1)$ to obtain $X(f, 1) \iso X(f\coce\Phi A, 1) \rf \coc{\Phi}{A}(\coce\Phi A, 1)$.
  This isomorphism exhibits $f$ as the left extension $\coce\Phi A \plx (\coce\Phi A \d f)$.
\end{proof}

\begin{corollary}
  \label{cocompletion-is-dense}
  Let $\Phi$ be a class of weights. Every $\Phi$-cocompletion $\coce\Phi A \colon A \to \coc\Phi A$ is dense.
\end{corollary}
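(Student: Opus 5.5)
The plan is to reduce density to the rank characterisation of \cref{density-via-rank}: $\coce\Phi A$ is dense exactly when the identity $1_{\coc\Phi A}$ has rank $\coce\Phi A$. By \cref{Phi-preservation-is-phi-absolute-preservation}, a tight-cell out of $\coc\Phi A$ has rank $\coce\Phi A$ if and only if it is $\Phi$-cocontinuous. The identity $1_{\coc\Phi A}$ preserves every colimit that exists in $\coc\Phi A$, in particular every $\Phi$-colimit, and $\coc\Phi A$ is $\Phi$-cocomplete by condition (1) of \cref{cocompletion}. Hence $1_{\coc\Phi A}$ is $\Phi$-cocontinuous, so it has rank $\coce\Phi A$, and $\coce\Phi A$ is dense.

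Alternatively, we can argue at the level of loose-cells. Density of $\coce\Phi A$ means that the identity 2-cell exhibits $\coc\Phi A(1,1)$ as the right lift of $\coc\Phi A(\coce\Phi A,1)$ through itself. The loose-identity $\coc\Phi A(1,1)$ is $\Phi$-exact, since it respects any colimit by the definition of a colimit (the conjoint of the identity). By the loose-cell half of \cref{Phi-preservation-is-phi-absolute-preservation}, $\coc\Phi A(1,1)$ therefore has rank $\coce\Phi A$, that is,
\[\coc\Phi A(1,1) \iso \coc\Phi A(\coce\Phi A,1) \rf \coc\Phi A(\coce\Phi A,1).\]

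The only point needing care is to check that the canonical 2-cell witnessing this rank isomorphism is the identity 2-cell, as \cref{dense-loose-cell} requires. I expect this to be routine: the canonical 2-cell $p \tto p(\coce\Phi A,1)\rf \coc\Phi A(\coce\Phi A,1)$ in \cref{rank} is the one induced by the restriction cartesian cell, which for $p$ the identity is the identity. So there is no real obstacle; the corollary is immediate from \cref{Phi-preservation-is-phi-absolute-preservation} together with \cref{density-via-rank}.
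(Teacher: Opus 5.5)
Your proof is correct and is essentially the paper's own: the identity on $\coc\Phi A$ is $\Phi$-cocontinuous, hence has rank $\coce\Phi A$ by \cref{Phi-preservation-is-phi-absolute-preservation}, and this is exactly density by \cref{density-via-rank}. Your loose-cell variant is the same argument applied to the conjoint $\coc\Phi A(1,1)$ of the identity. It is not circular, since the proof of \cref{Phi-preservation-is-phi-absolute-preservation} uses full faithfulness but not density.
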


\begin{proof}
  The identity on $\coc{\Phi}{A}$ is $\Phi$-cocontinuous, and therefore has rank $\coce\Phi A$ by \cref{Phi-preservation-is-phi-absolute-preservation}.
  The latter property is exactly density of $\coce\Phi A$ (\cref{density-via-rank}).
\end{proof}

The following augments \cref{saturation-properties}, by exhibiting an additional property satisfied by colimit saturation.

\begin{lemma}
  \label{cocompletion-is-in-saturation}
  Let $\Phi$ be a class of weights and suppose that a $\Phi$-cocompletion $\coce\Phi A \colon A \to \coc\Phi A$ exists. Then $\coc{\Phi}{A}(\coce\Phi A, 1)$ is in the colimit saturation of $\Phi$.
\end{lemma}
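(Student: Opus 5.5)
Write $j \defeq \coce\Phi A$ and $w \defeq \coc\Phi A(j, 1) \colon \coc\Phi A \lto A$. By \cref{saturation-properties}, $\Phi\satw$ is the largest class colimit equivalent to $\Phi$. It therefore suffices to show that $\Phi \cup \{w\} \colimleq \Phi$: then $\Phi \cup \{w\} \colimequiv \Phi$, so $w \in \Phi\satw$. Concretely, we must check that every $\Phi$-cocomplete object admits all $w$-weighted colimits, and that every $\Phi$-exact loose-cell into a $\Phi$-cocomplete object respects them. Cocontinuity of tight-cells follows from exactness of conjoints.

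\emph{Existence.} Let $X$ be $\Phi$-cocomplete and $f \colon A \to X$ a tight-cell. By \cref{cocompletion-tight-cell-UP}, the left extension $\tilde f \defeq j \plx f \colon \coc\Phi A \to X$ exists. By \cref{left-extension}, a left extension along $j$ is by definition a $\coc\Phi A(j,1)$-weighted colimit of $f$, that is, $\tilde f = w \wc f$. (Weights in the statement of cocompleteness have codomain the domain of $f$; here $w \colon \coc\Phi A \lto A$ fits.) So every $\Phi$-cocomplete object admits $w$-colimits.

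\emph{Exactness.} Let $p \colon Y \lto X$ be $\Phi$-exact with $X$ $\Phi$-cocomplete. We must show $p(\tilde f, 1) \iso p(f,1) \rf w$ via the canonical 2-cell. Since $p(1,?)$ appears contravariantly, the natural route is the loose-cell universal property \cref{cocompletion-loose-cell-UP} applied to $p(f,1) \colon Y \lto A$. It yields a $\Phi$-exact $\widetilde{p(f,1)} = p(f,1) \rf w$ with $\widetilde{p(f,1)}(j,1) \iso p(f,1)$, and this loose-cell is essentially unique. The plan is to show that $p(\tilde f, 1) \colon Y \lto \coc\Phi A$ is itself $\Phi$-exact and restricts along $j$ to $p(f j?)$. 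More precisely, $p(\tilde f, 1)(j,1) = p(\tilde f j, 1) \iso p(f,1)$, using invertibility of the unit $f \iso j \d \tilde f$. By uniqueness we then get $p(\tilde f,1) \iso p(f,1) \rf w$.

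\emph{Main obstacle.} The hard part is showing that $p(\tilde f, 1)$ is $\Phi$-exact, and that the resulting isomorphism is the canonical one. For exactness: given a $\Phi$-colimit $\vec q \wc g$ in $\coc\Phi A$, the tight-cell $\tilde f$ is $\Phi$-cocontinuous, so $\tilde f(\vec q \wc g) \iso \vec q \wc (\tilde f g)$. This is a $\Phi$-colimit in $X$, which $p$ respects. Hence
\[p(\tilde f (\vec q\wc g),1) \iso p(\tilde f g,1) \rf \vec q,\]
which says exactly that $p(\tilde f,1)$ respects $\vec q \wc g$.

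For canonicity, both 2-cells are induced from the colimit cocone of $\tilde f$, restricted along $p$. I expect the uniqueness in \cref{cocompletion-loose-cell-UP} to be compatible with the counits. Alternatively, one can use \cref{Phi-preservation-is-phi-absolute-preservation}: $p(\tilde f,1)$ is $\Phi$-exact, hence is the right lift of its own restriction $p(f,1)$ through $w$ by the canonical 2-cell. Checking that this 2-cell coincides with the cocone-induced one is a routine pasting argument.
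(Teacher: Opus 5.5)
Your proposal is correct and follows essentially the paper's route: existence of $\coc\Phi A(\coce\Phi A, 1)$-weighted colimits comes from the tight-cell universal property, and exactness comes from \cref{Phi-preservation-is-phi-absolute-preservation} together with the $\coce\Phi A$-absoluteness of left extensions along $\coce\Phi A$. Your passage from $p$ to the $\Phi$-exact loose-cell $p(\tilde f, 1)$ spells out a reduction that the paper's terse proof leaves implicit. Of your two ways of securing canonicity, use the rank-based one: the bare essential uniqueness in \cref{cocompletion-loose-cell-UP} only supplies some isomorphism $p(\tilde f, 1) \iso p(f, 1) \rf \coc\Phi A(\coce\Phi A, 1)$, not invertibility of the canonical comparison.
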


\begin{proof}
  $\Phi$-cocomplete objects admit left extensions along $\coce\Phi A$ by definition. $\Phi$-cocontinuous tight-cells preserve, and loose-cells respect, $\coce\Phi A$-absolute colimits by \cref{Phi-preservation-is-phi-absolute-preservation}, and hence left extensions along $\coce\Phi A$ by \cref{cocompletion-absolute}.
\end{proof}

\begin{remark}
  From \cref{cocompletion-is-dense,cocompletion-is-in-saturation}, it follows that we could reformulate the definition of $\Phi$-cocompletion (\cref{cocompletion}) by additionally asking for $\coce\Phi A$ to be dense and for $\coc\Phi A(\coce\Phi A, 1)$ to be in the colimit saturation of $\Phi$. It would then follow that any $\Phi$-cocontinuous tight-cell $\tilde f \colon \coc\Phi A \to X$ satisfying $\coce\Phi A \d \tilde f \iso f$ would automatically be a left extension, because, under these assumptions, the identity on $\coc\Phi A$ is the left extension $\coce\Phi A \plx \coce\Phi A$, which is preserved by any $\Phi$-cocontinuous tight-cell; an analogous argument follows for $\Phi$-exact loose-cells. This would permit conditions (2 \& 3) to be simplified, at the expense of the additional assumptions regarding density and saturation. We do not spell this out, as a similar but more useful reformulation will be given in \cref{intrinsic-characterisation-of-cocompletions}.
\end{remark}

We may use all of these properties to give an alternative characterisation of cocompletions.

\begin{theorem}
  \label{intrinsic-characterisation-of-cocompletions}
  Let $\Phi$ be a class of weights. A tight-cell $j \colon A \to E$ exhibits the $\Phi$-cocompletion of $A$ if and only if
  \begin{enumerate}
    \item $E$ is $\Phi$-cocomplete;
    \item $E(j, 1)$ is in the colimit saturation of $\Phi$;
    \item $E(j, 1)$ is lifting, and right lifts through $E(j, 1)$ are $\Phi$-exact;
    \item $j$ is dense and \ff{}.
  \end{enumerate}
\end{theorem}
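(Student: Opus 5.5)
For the forward direction, each of (1)--(4) follows from a result already established for a $\Phi$-cocompletion $j \defeq \coce\Phi A$. Condition (1) is part of \cref{cocompletion}. Condition (2) is \cref{cocompletion-is-in-saturation}. Condition (3) has two parts. The lifting property is \cref{cocompletion-loose-cell-UP}. For the second part, a right lift through $E(j, 1)$ is $\Phi$-exact by the implication (3) $\implies$ (1) of \cref{Phi-preservation-is-phi-absolute-preservation}. Condition (4) is \cref{cocompletion-is-dense} together with \cref{cocompletion-is-ff}.

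For the converse, assume (1)--(4). We need conditions (2) and (3) of \cref{cocompletion}, and I would prove the loose-cell condition (3) first. Let $p \colon X \lto A$ and set $\tilde p \defeq p \rf E(j, 1)$, which exists by (3).
\begin{itemize}
  \item The canonical 2-cell $\tilde p(j, 1) \tto p$ is invertible by \cref{lift-through-ff-tight-cell}, since $j$ is \ff{}.
  \item $\tilde p$ is $\Phi$-exact by hypothesis (3).
  \item For essential uniqueness, let $q \colon X \lto E$ be $\Phi$-exact with $q(j, 1) \iso p$. By \cref{rank-and-cocontinuity}, since $j$ is dense and \ff{}, it suffices to show that $q$ respects $j$-absolute colimits. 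Then $q$ has rank $j$, so $q \iso q(j, 1) \rf E(j, 1) \iso \tilde p$.
\end{itemize}

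The hard part, and the place I expect the real content to lie, is showing that a $\Phi$-exact $q$ respects $j$-absolute colimits. My plan is to use that the identity $1_E$ is the left extension $j \plx j$ (density of $j$). This colimit is weighted by $E(j, 1)$, which lies in $\Phi\satw$ by (2). By \cref{colimit-equivalence}(3) and $\Phi\satw \colimequiv \Phi$, a $\Phi$-exact $q$ is also $\Phi\satw$-exact, so $q$ respects $j \plx j$. This gives $q \iso q(j, 1) \rf E(j, 1)$ directly, so rank $j$ follows without passing through general $j$-absolute colimits. The same saturation argument is what I would check most carefully: it must be legitimate to apply $\Phi\satw$-exactness to this particular colimit, which needs $E$ to be $\Phi\satw$-cocomplete. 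That holds by (1) and colimit equivalence.

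For the tight-cell condition (2) of \cref{cocompletion}, I would use \cref{cocompletion-alternative-formulation}: it suffices to show that if $p = X(f, 1)$ with $X$ $\Phi$-cocomplete, then $p \rf E(j, 1)$ is corepresentable, i.e.\ that the left extension $j \plx f$ exists in $X$. Since $X$ is $\Phi$-cocomplete, it is $\Phi\satw$-cocomplete by colimit equivalence. By (2), $E(j, 1) \in \Phi\satw$, so the $E(j, 1)$-weighted colimit of $f$ exists. That colimit is exactly $j \plx f$, and its conjoint is $X(f, 1) \rf E(j, 1)$, as required.

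Invertibility of the unit follows from \cref{lift-through-ff-tight-cell}. Essential uniqueness among $\Phi$-cocontinuous extensions follows from the loose-cell case applied to conjoints, because a tight-cell is $\Phi$-cocontinuous exactly when its conjoint is $\Phi$-exact.
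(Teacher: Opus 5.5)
Your proposal is correct and follows the paper's proof essentially step for step. In the converse, you get the loose-cell condition from \cref{lift-through-ff-tight-cell} and the chain $\tilde p \iso \tilde p(j \plx j, 1) \iso \tilde p(j, 1) \rf E(j, 1)$, using density of $j$ and $\Phi$-exactness transferred along $E(j, 1) \in \Phi\satw$; you get the tight-cell condition from \cref{cocompletion-alternative-formulation}, since $\Phi$-cocomplete objects admit $E(j, 1)$-weighted colimits. The only differences are cosmetic: in the forward direction, $\Phi$-exactness of right lifts through $E(j, 1)$ is already part of \cref{cocompletion-loose-cell-UP}, so the detour through \cref{Phi-preservation-is-phi-absolute-preservation} is unnecessary (though not circular), and, as you note yourself, so is the intermediate reduction to respecting all $j$-absolute colimits.
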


\begin{proof}
  Assume that $j$ exhibits the $\Phi$-cocompletion of $A$. (1) is by definition; (2) is \cref{cocompletion-is-in-saturation}; (3) is by definition; (4) is \cref{cocompletion-is-dense,cocompletion-is-ff}.

  Conversely, supposing that (1 -- 4) hold, we shall verify the assumptions of \cref{cocompletion}. (1) is by assumption. For (3), we have existence and $\Phi$-exactness of the requisite right lifts by assumption, and that $(p \rf E(j, 1))(j, 1) \tto p$ is invertible follows from \ffness{} of $j$ by \cref{lift-through-ff-tight-cell}. Suppose that there is a $\Phi$-exact loose-cell $\tilde p \colon X \lto E$ such that $p \iso \tilde p(j, 1)$.
  We have
  \[\tilde p \iso \tilde p(j \plx j, 1) \iso \tilde p(j, 1) \rf E(j, 1) \iso p \rf E(j, 1)\]
  using density of $j$ and $\Phi$-exactness, using that $E(j, 1)$ is in the colimit saturation of $\Phi$, which establishes essential uniqueness.

  Finally, for (2), observe that, since $E(j, 1)$ is in the colimit saturation of $\Phi$, given a tight-cell $f \colon A \to X$ into a $\Phi$-cocomplete object, the left extension $j \plx f \colon E \to X$ exists and corepresents $X(f, 1) \rf E(j, 1)$. This suffices to establish (2), as observed in \cref{cocompletion-alternative-formulation}.
\end{proof}

A given tight-cell may exhibit cocompletions for different classes of weights, which may not be colimit equivalent. An elementary example is the functor $1 \colon \b 1 \to \Set$ picking out a singleton set, which exhibits the cocompletion of the terminal category under either (a) small coproducts; or (b) small colimits. However, for each cocompletion $j \colon A \to E$, there is a canonical class of weights for which $j$ exhibits a cocompletion: namely, the (singleton) class of weights for left extensions along $j$.

\begin{corollary}
  \label{cocompletion-for-smaller-class-of-weights}
  Let $\Phi$ be a class of weights and let $j \colon A \to E$ be a tight-cell exhibiting the {$\Phi$-cocompletion} of $A$. Then, for each class of weights $\Phi'$ such that $\{ E(j, 1) \} \colimleq \Phi' \colimleq \Phi$, the tight-cell $j$ exhibits the $\Phi'$-cocompletion of $A$. In particular, this holds when $\Phi' \defeq \{ E(j, 1) \}$.
\end{corollary}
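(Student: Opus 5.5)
I will use the intrinsic characterisation, \cref{intrinsic-characterisation-of-cocompletions}, checking its four conditions for $\Phi'$ given that $j$ exhibits the $\Phi$-cocompletion. The same theorem applied to $\Phi$ says that $E$ is $\Phi$-cocomplete, that $E(j,1) \in \Phi\satw$, that $E(j,1)$ is lifting with $\Phi$-exact right lifts, and that $j$ is dense and \ff{}. Condition (4) for $\Phi'$ does not mention the class of weights, so it transfers immediately.

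\emph{Condition (1).} Since $\Phi' \colimleq \Phi$, clause (1) of \cref{colimit-equivalence} says every $\Phi$-cocomplete object is $\Phi'$-cocomplete. Hence $E$ is $\Phi'$-cocomplete.

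\emph{Condition (3).} The right lifts through $E(j,1)$ exist already. They are $\Phi$-exact loose-cells with $\Phi$-cocomplete codomain $E$, so clause (3) of \cref{colimit-equivalence}, again using $\Phi' \colimleq \Phi$, makes them $\Phi'$-exact.

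\emph{Condition (2).} We need $E(j,1) \in (\Phi')\satw$. Since $\{E(j,1)\} \subseteq (\{E(j,1)\})\satw$, it suffices to show $(\{E(j,1)\})\satw \subseteq (\Phi')\satw$. This should follow from $\{E(j,1)\} \colimleq \Phi'$ and the preorder and closure properties in \cref{saturation-properties}. Concretely, $\Phi'' \defeq \Phi' \cup \{E(j,1)\}$ satisfies $\Phi'' \colimleq \Phi'$: a $\Phi'$-cocomplete object is cocomplete for each weight in either part, and similarly for cocontinuity and exactness. The reverse relation $\Phi' \colimleq \Phi''$ is trivial, since $\Phi' \subseteq \Phi''$. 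So $\Phi'' \colimequiv \Phi'$, and maximality of the saturation gives $E(j,1) \in \Phi'' \subseteq (\Phi')\satw$.

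This saturation step is the only point needing care: one must verify that $\colimleq$ is compatible with unions. That holds because each clause of \cref{colimit-equivalence} is a conjunction over weights. Everything else is direct transfer.

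For the final assertion, take $\Phi' \defeq \{E(j,1)\}$. The relation $\{E(j,1)\} \colimleq \{E(j,1)\}$ holds by reflexivity. The relation $\{E(j,1)\} \colimleq \Phi$ holds because $E(j,1) \in \Phi\satw$ (by \cref{cocompletion-is-in-saturation}) and $\Phi\satw \colimequiv \Phi$, while membership in a class trivially gives $\colimleq$ into that class. Composing these relations in the preorder gives the required hypothesis.
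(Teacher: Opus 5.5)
Your proof is correct and takes the same route as the paper, which verifies the four conditions of \cref{intrinsic-characterisation-of-cocompletions} for $\Phi'$ from those for $\Phi$ and calls each check trivial. Your argument for condition (2) via $\Phi' \cup \{ E(j, 1) \} \colimequiv \Phi'$, and your derivation of the final assertion from \cref{cocompletion-is-in-saturation}, supply exactly the details the paper leaves implicit.
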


\begin{proof}
  Assuming each of the conditions of \cref{intrinsic-characterisation-of-cocompletions} for $\Phi$, the conditions for $\Phi'$ are trivially verified.
\end{proof}

The reader will note that the conditions imposed by \cref{intrinsic-characterisation-of-cocompletions} on a tight-cell $j \colon A \to E$ are similar to those of a well-behavedness (\cref{well-behaved}). Indeed, it turns out that every cocompletion is well-behaved.

\begin{proposition}
  \label{cocompletions-are-well-behaved}
  Let $\Phi$ be a class of weights. Every $\Phi$-cocompletion $\coce\Phi A \colon A \to \coc\Phi A$ is well-behaved in the sense of \cref{well-behaved}.
\end{proposition}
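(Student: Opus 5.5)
We check the three conditions of \cref{well-behaved} for $j \defeq \coce\Phi A$ in turn, relying almost entirely on results already established for cocompletions.

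Conditions (1) and (2) are immediate: $j$ is \ff{} by \cref{cocompletion-is-ff} and dense by \cref{cocompletion-is-dense}.

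Condition (3) asks that $j$ be an $\coc\Phi A(j, 1)$-atom. By \cref{atom}, this means two things:
\begin{itemize}
  \item $\coc\Phi A$ admits all $\coc\Phi A(j, 1)$-weighted colimits, i.e.\ left extensions along $j$ of every tight-cell $A \to \coc\Phi A$;
  \item these colimits are $j$-absolute.
\end{itemize}

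For existence, $\coc\Phi A$ is $\Phi$-cocomplete, so \cref{cocompletion-tight-cell-UP}, applied with $X \defeq \coc\Phi A$, gives the left extension $j \plx f$ for every $f \colon A \to \coc\Phi A$. Alternatively, \cref{cocompletion-is-in-saturation} puts $\coc\Phi A(j, 1)$ in $\Phi\satw$, and every $\Phi$-cocomplete object is $\Phi\satw$-cocomplete. One should also allow arbitrary domain objects $D$ in the weight: the weight $\coc\Phi A(j,1) \colon \coc\Phi A \lto A$ fixes $D = A$, so this is exactly the left-extension case.

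For absoluteness, \cref{cocompletion-absolute} states directly that left extensions along $j$ in $\coc\Phi A$ are $j$-absolute. That lemma in turn rests on \cref{absoluteness-and-exactness}: $\coc\Phi A(j, 1)$ is lifting, and right lifts through it are $\Phi$-exact, hence respect these colimits because the weight lies in the saturation.

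The only point needing care is the bookkeeping in \cref{absoluteness-and-exactness}. That lemma requires every right lift through $\coc\Phi A(j,1)$ to respect the colimit in question. Here those right lifts are $\Phi$-exact by \cref{cocompletion-loose-cell-UP}, and $\Phi$-exact loose-cells respect $\Phi\satw$-colimits by the definition of saturation (\cref{colimit-equivalence}(3)). Since this reasoning is already packaged in \cref{cocompletion-absolute}, we expect no real obstacle; the proof is a short assembly of these citations.
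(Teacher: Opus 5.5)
Your proof is correct and is essentially the paper's own argument: full faithfulness and density come from \cref{cocompletion-is-ff,cocompletion-is-dense}, the left extensions along $\coce\Phi A$ exist by the tight-cell universal property applied with $X = \coc\Phi A$, and they are $\coce\Phi A$-absolute by \cref{cocompletion-absolute}. One caution: your gloss justifying \cref{cocompletion-absolute} via saturation reverses the paper's dependency, since the paper proves \cref{cocompletion-is-in-saturation} using \cref{cocompletion-absolute}. This does not affect your proof, which only needs to cite that lemma.
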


\begin{proof}
  $\coce\Phi A$ is \ff{} by \cref{cocompletion-is-ff}, is dense by \cref{cocompletion-is-dense}, admits left extensions of tight-cells $A \to \coc\Phi A$ along $\coce\Phi A$ by definition of the $\Phi$-cocompletion, since $\coc\Phi A$ is $\Phi$-cocomplete. These left extensions are $\coce\Phi A$-absolute by \cref{cocompletion-absolute}.
\end{proof}

\begin{remark}
  \Cref{cocompletions-are-well-behaved} almost establishes that every $\Phi$-cocompletion $\coce\Phi A \colon A \to \coc\Phi A$ exhibits a $\{ \coc\Phi A(\coce\Phi A, f) \mid f \colon X \to \coc\Phi A \}$-presheaf object: the only missing condition is monicity of $\coc\Phi A(\coce\Phi A, 1)$. In particular, assuming such a presheaf object exists, it will be equivalent to $\coc\Phi A$ (\cref{well-behaved-is-presheaf-object}). However, in practice, we wish to construct cocompletions from presheaf objects, rather than conversely (see \cref{presheaves-and-cocompletions}).
\end{remark}

\begin{remark}
  \textcite{walker2018yoneda} shows that every lax-idempotent pseudomonad on a 2-category induces a Yoneda structure on the same 2-category. Conceptually, this expresses that free cocompletions (in the 2-categorical sense, \ie{} satisfying merely conditions (1 \& 2) of \cref{cocompletion}) have the abstract structure of presheaf constructions. \Cref{cocompletions-are-well-behaved} may be seen as an analogue of this result in the setting of \ve s.
\end{remark}

Conversely, being well-behaved is sufficient to be a cocompletion under some class of weights, assuming the existence of enough right lifts.

\begin{corollary}
  \label{cocompletion-via-well-behavedness}
  A tight-cell $j \colon A \to E$ exhibits the $\{ E(j, 1) \}$-cocompletion of $A$ if and only if it is well-behaved and $E(j, 1)$ is lifting.
\end{corollary}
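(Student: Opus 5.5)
The plan is to reduce both directions to \cref{intrinsic-characterisation-of-cocompletions}, taking $\Phi \defeq \{ E(j, 1) \}$.

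For the forward direction, suppose $j$ exhibits the $\{ E(j, 1) \}$-cocompletion of $A$. Then $j$ is well-behaved by \cref{cocompletions-are-well-behaved}. Moreover, $E(j, 1)$ is lifting: this is condition (3) of \cref{intrinsic-characterisation-of-cocompletions}, or directly the remark following \cref{cocompletion}.

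For the converse, assume $j$ is well-behaved and $E(j, 1)$ is lifting. We verify conditions (1)--(4) of \cref{intrinsic-characterisation-of-cocompletions} in turn.

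\begin{itemize}
  \item Condition (4), that $j$ is dense and \ff{}, is part of well-behavedness.
  \item Condition (2) is immediate, since $E(j, 1) \in \Phi \subseteq \Phi\satw$.
  \item Condition (1) asks that $E$ be $\{ E(j, 1) \}$-cocomplete. Any tight-cell $f \colon A \to E$ has a left extension $j \plx f$ along $j$, by clause (3) of \cref{well-behaved}.
  \item Condition (3) requires that $E(j, 1)$ be lifting, which is assumed, and that every right lift $p \rf E(j, 1)$ be $\{ E(j, 1) \}$-exact.
\end{itemize}

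The only substantive step is this exactness in condition (3). Concretely, $p \rf E(j, 1)$ must respect every left extension $j \plx f$ with $f \colon A \to E$. By well-behavedness these left extensions are $j$-absolute, meaning $E(j, 1)$-absolute. By \cref{right-lifts-respect-absolute-colimits}, every right lift through $E(j, 1)$ respects $E(j, 1)$-absolute colimits. This gives exactness, and \cref{intrinsic-characterisation-of-cocompletions} then yields the claim.

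I expect no real obstacle. The one point to check is that the definition of cocompleteness with respect to the singleton class $\{ E(j, 1) \}$ concerns exactly the $E(j, 1)$-weighted colimits of tight-cells $A \to E$, so that clause (3) of \cref{well-behaved} covers precisely the colimits that are needed.
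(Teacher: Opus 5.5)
Your proof is correct and follows the paper's argument: both reduce to \cref{intrinsic-characterisation-of-cocompletions} with $\Phi = \{ E(j, 1) \}$ and verify its four conditions in the same way. The only cosmetic difference is in the exactness step, where the paper cites \cref{rank-and-cocontinuity}; its implication (2) $\implies$ (3) is itself proved using \cref{right-lifts-respect-absolute-colimits}, which you invoke directly.
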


\begin{proof}
  We invoke \cref{intrinsic-characterisation-of-cocompletions} with $\Phi \defeq \{ E(j, 1) \}$. If the cocompletion exists, then it is well-behaved by \cref{cocompletions-are-well-behaved} and satisfies the existence of right lifts by condition (3). Conversely, given that $j$ is well-behaved, $E$ admits left extensions along $j$, so that (1) holds. (2) is trivial. The existence of right lifts in (3) is by assumption; these are automatically $\{ E(j, 1) \}$-exact when they exist by \cref{rank-and-cocontinuity}. (4) is by definition of being well-behaved.
\end{proof}

In particular, using \cref{cocompletion-for-smaller-class-of-weights}, we may reformulate the characterisation of $\Phi$-cocompletions in \cref{intrinsic-characterisation-of-cocompletions} in terms of well-behavedness.

\begin{corollary}
  \label{Phi-cocompletion-via-well-behavedness}
  Let $\Phi$ be a class of weights. A tight-cell $j \colon A \to E$ exhibits the $\Phi$-cocompletion of $A$ if and only if
  \begin{enumerate}
    \item $j$ satisfies the assumptions of \cref{cocompletion-via-well-behavedness}: \ie{} it is well-behaved and $E(j, 1)$ is lifting;
    \item $E(j, 1)$ is in the colimit saturation of $\Phi$;
    \item $E$ admits $j$-absolute $\Phi$-colimits.
  \end{enumerate}
\end{corollary}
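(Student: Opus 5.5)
We prove both directions by reducing to \cref{intrinsic-characterisation-of-cocompletions}, with \cref{cocompletion-via-well-behavedness} supplying the well-behavedness part.

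\emph{Forward direction.} Suppose $j$ exhibits the $\Phi$-cocompletion of $A$.
\begin{enumerate}
  \item Condition (1) follows from \cref{cocompletions-are-well-behaved} together with the lifting condition in \cref{cocompletion}(3). Equivalently, \cref{cocompletion-for-smaller-class-of-weights} shows that $j$ is also the $\{E(j,1)\}$-cocompletion, and then \cref{cocompletion-via-well-behavedness} applies.
  \item Condition (2) is \cref{cocompletion-is-in-saturation}.
  \item Condition (3) holds because $E$ is $\Phi$-cocomplete, and every $\Phi$-colimit in $E$ is $j$-absolute by \cref{cocompletion-absolute}.
\end{enumerate}

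\emph{Converse.} Assume (1)–(3). We check the four conditions of \cref{intrinsic-characterisation-of-cocompletions} for $\Phi$.
\begin{itemize}
  \item Condition (2) there is our (2).
  \item Condition (4) there holds because well-behaved tight-cells are dense and \ff{}.
  \item For condition (3) there, $E(j,1)$ is lifting by (1). We must show that every right lift $p \rf E(j,1)$ is $\Phi$-exact. By \cref{right-lifts-respect-absolute-colimits}, such a right lift respects all $j$-absolute colimits. Our hypothesis (3) says the relevant $\Phi$-colimits are $j$-absolute, so it respects them.
  \item Condition (1) there is $\Phi$-cocompleteness of $E$, which is exactly our (3).
\end{itemize}

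\emph{Main point to get right.} Hypothesis (3) must be read so that $E$ admits all $\Phi$-colimits and each of them is $j$-absolute. This is the same shape as the definition of a $\Phi$-atom (\cref{atom}), and it is precisely what the previous item needs.

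Under that reading, $\Phi$-exactness only involves $\Phi$-colimits in $E$, and all of these are $j$-absolute, so \cref{right-lifts-respect-absolute-colimits} covers them and nothing further is required.

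If instead (3) were read only as ``those $\Phi$-colimits that exist are $j$-absolute'', we would need cocompleteness from elsewhere. The natural attempt is to use (2): saturation transports cocompleteness between colimit-equivalent classes. However, $\{E(j,1)\}$-cocompleteness does not yield $\Phi$-cocompleteness, since (2) only gives $\{E(j,1)\} \colimleq \Phi$ and not the reverse. So the intended reading must include existence, and we adopt it.
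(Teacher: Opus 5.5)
Your proof is correct and follows essentially the same route as the paper. Both directions reduce to \cref{intrinsic-characterisation-of-cocompletions}. In the converse, $\Phi$-exactness of right lifts through $E(j,1)$ comes from $j$-absoluteness of the $\Phi$-colimits: the paper cites \cref{rank-and-cocontinuity}, which itself rests on \cref{right-lifts-respect-absolute-colimits}, the lemma you use directly. In the forward direction, hypothesis (3) comes from \cref{cocompletion-absolute}. Your reading of (3) as including $\Phi$-cocompleteness of $E$ is also the one the paper relies on, since its proof takes condition (1) of \cref{intrinsic-characterisation-of-cocompletions} ``by assumption''.
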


\begin{proof}
  We verify the conditions of \cref{intrinsic-characterisation-of-cocompletions}. (1, 2 \& 3) follow by assumption, as does the existence of right lifts through $E(j, 1)$. $\Phi$-exactness follows from \cref{rank-and-cocontinuity}, since the $\Phi$-colimits in $E$ are $j$-absolute. Finally, $\Phi$-colimits in a $\Phi$-cocompletion $j$ are $j$-absolute by \cref{cocompletion-absolute}.
\end{proof}

\begin{remark}
  \label{well-behavedness-is-cocompletion}
  The conditions of well-behavedness (\cref{well-behaved}) correspond to the \emph{well-behavedness} conditions of \cites[Definition~4]{altenkirch2010monads}[Definition~4.1]{altenkirch2010monads} and the \emph{eleuthericity} conditions of \cite[\S7.3 \& \S7.4]{lucyshyn2016enriched}. Strictly speaking, \cref{well-behaved-is-presheaf-object} establishes that well-behavedness characterises presheaf objects, rather than cocompletions. However, assuming the existence of enough right lifts, \cref{cocompletion-via-well-behavedness} shows that well-behavedness equivalently characterises free cocompletions for some class of weights: we state this explicitly for enriched categories in \cref{well-behaved-is-enriched-cocompletion}. This makes good on our promise in our earlier work \cite[Remark~4.31]{arkor2024formal} to establish the connection between well-behavedness and free cocompletions in formal category theory.
\end{remark}

\subsection{Characterisations of cocompleteness}

The existence of $\Phi$-cocompletions provides various convenient methods to test an object for $\Phi$-cocompleteness.

\begin{proposition}[{\cf{}~\cites[Proposition~2.2]{garner2012lex}[Proposition~4.33]{kelly1982basic}}]
  \label{characterisations-of-cocompleteness}
  Let $\Phi$ be a class of weights.
  The following are equivalent for an object $A$.
  \begin{enumerate}
      \item $A$ is $\Phi$-cocomplete.
      \item \label{reflectivity-characterisation} $A$ is reflective in a $\Phi$-cocomplete object.
  \end{enumerate}
  If $A$ admits a $\Phi$-cocompletion $\coce\Phi A \colon A \to \coc\Phi A$ then the following are also equivalent.
  \begin{enumerate}[resume]
    \item For every object $X$ admitting a $\Phi$-cocompletion $\phi_X \colon X \to \coc\Phi X$ and each $g \colon X \to A$, the left extension $\phi_X \plx g \colon \coc\Phi X \to A$ exists.
    \item The left extension $\coce\Phi A \plx 1_A \colon \coc\Phi A \to A$ exists.
    \item For every weight $p \in \Phi\satw$, $A$ admits the colimit $p \wc 1_A$.
    \item $\coce\Phi A \colon A \to \coc\Phi A$ admits a left adjoint.
    \item $A$ is reflective in a $\Phi$-cocompletion.
  \end{enumerate}
\end{proposition}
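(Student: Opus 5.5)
We prove (1) $\iff$ (2) first, and then, assuming a $\Phi$-cocompletion $\coce\Phi A$ exists, the cycle (1) $\implies$ (3) $\implies$ (4) $\implies$ (6) $\implies$ (7) $\implies$ (2), attaching (5) separately. For (1) $\implies$ (2), take the identity adjunction $1_A \adj 1_A$, which is trivially a reflection. For (2) $\implies$ (1), apply \cref{reflected-colimits}: if $\ell \adj r$ with $r \colon A \ffto B$ \ff{} and $B$ $\Phi$-cocomplete, then for each $f \colon D \to A$ and $\vec p \in \Phi$ the colimit $\vec p \wc (f \d r)$ exists in $B$, so $A$ admits $\vec p \wc f$.

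\textbf{(1) $\implies$ (3).} This is \cref{cocompletion-tight-cell-UP} applied to the $\Phi$-cocompletion of $X$, since $A$ is $\Phi$-cocomplete. \textbf{(3) $\implies$ (4).} Take $X \defeq A$ and $g \defeq 1_A$.

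\textbf{(4) $\implies$ (6).} By \cref{adjointness-via-absolute-left-extension}, applied to the tight-cell $\coce\Phi A \colon A \to \coc\Phi A$ in the role of $f$, it suffices to show that $\coce\Phi A \plx 1_A$ is preserved by $\coce\Phi A$. This is the step I expect to need care, because the direction of the required adjunction must be checked. The corollary gives a \emph{right} adjoint to $f$ when $f \plx 1$ exists and is preserved by $f$. Here, however, $\coce\Phi A \plx 1_A \colon \coc\Phi A \to A$ is the candidate \emph{left} adjoint of $\coce\Phi A$, so the relevant instance is the universal property of the left extension itself. By \cref{cocompletion-absolute}, left extensions along $\coce\Phi A$ are $\coce\Phi A$-absolute, so I will use \cref{absolute-implies-colimit}-style reasoning. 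Since $\coce\Phi A$ is dense (\cref{cocompletion-is-dense}), absoluteness gives $\coc\Phi A(\coce\Phi A, \coce\Phi A \d k) \iso A(1, k)$ for $k = \coce\Phi A \plx 1_A$ restricted appropriately. This yields $A(k, 1) \iso \coc\Phi A(1, \coce\Phi A)$, i.e.\ $k \adj \coce\Phi A$. Concretely, $A(k, 1) \iso A(1,1) \rf \coc\Phi A(\coce\Phi A, 1)$ by definition of the left extension, and by \cref{cocompletion-loose-cell-UP} this is the essentially unique $\Phi$-exact $\widetilde{A(1,1)}$, which by the corollary following \cref{cocompletion-is-ff} is $\coc\Phi A(1, \coce\Phi A)$. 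So $A(k,1) \iso \coc\Phi A(1, \coce\Phi A)$, which is the adjunction $k \adj \coce\Phi A$ expressed through conjoints and companions.

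\textbf{(6) $\implies$ (7).} A left adjoint to $\coce\Phi A$ exhibits a reflection, because $\coce\Phi A$ is \ff{} by \cref{cocompletion-is-ff}. \textbf{(7) $\implies$ (2).} Immediate, since a $\Phi$-cocompletion is $\Phi$-cocomplete.

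\textbf{(5).} For (1) $\implies$ (5), the saturation condition \cref{colimit-equivalence}(1) applied with $\Phi\satw \colimequiv \Phi$ gives $\Phi\satw$-cocompleteness of $A$. For (5) $\implies$ (4), \cref{cocompletion-is-in-saturation} puts $\coc\Phi A(\coce\Phi A, 1)$ in $\Phi\satw$, and the colimit of $1_A$ weighted by it is exactly $\coce\Phi A \plx 1_A$.
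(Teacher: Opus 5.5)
Your proof is correct and follows the paper's proof essentially step for step: the same cycle of implications, the same use of \cref{reflected-colimits} for (2)$\implies$(1) and \cref{cocompletion-is-in-saturation} for (5)$\implies$(4), and the other implications treated as immediate. In (4)$\implies$(6), the paper obtains $A(\coce\Phi A \plx 1_A, 1) \iso \coc\Phi A(1, \coce\Phi A)$ directly from \ffness{} and density of $\coce\Phi A$, while your ``concrete'' argument reaches the same isomorphism through the corollary $\widetilde{A(1,1)} \iso \coc\Phi A(1,\coce\Phi A)$, which is equivalent; you should delete the preceding detour through \cref{adjointness-via-absolute-left-extension} (it would yield a right adjoint of $\coce\Phi A$, not a left one) and the intermediate absoluteness formula, which does not typecheck.
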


\begin{proof}
  The trivial implications are (1) $\implies$ (2 \& 3 \& 5) and (3) $\implies$ (4) and (6) $\implies$ (7) $\implies$ (2). Then, (2) $\implies$ (1) by \cref{reflected-colimits}, and (5) $\implies$ (4) by \cref{cocompletion-is-in-saturation}. Finally, for (4) $\implies$ (6), observe that the left extension $\coce\Phi A \plx 1_A$ is the required left adjoint.
  \begin{align*}
    A(\coce\Phi A \plx 1_A, 1)
    ~\iso~&
    A(1, 1) \rf \coc{\Phi}{A}(\coce\Phi A, 1)
    \\~\iso~&\tag{$\coce\Phi A$ is \ff{} (\cref{cocompletion-is-ff})}
    \coc{\Phi}{A}(\coce\Phi A, \coce\Phi A) \rf \coc{\Phi}{A}(\coce\Phi A, 1)
    \\~\iso~&\tag{$\coce\Phi A$ is dense (\cref{cocompletion-is-dense})}
    \coc{\Phi}{A}(1, \coce\Phi A)
  \end{align*}
\end{proof}

For each characterisation of cocompleteness, there is a corresponding characterisation of cocontinuity.

\begin{corollary}[label=characterisations-of-cocontinuity]
  Let $\Phi$ be a class of weights and let $A$ and $B$ be $\Phi$-cocomplete objects. The following are equivalent for a tight-cell $f \colon A \to B$.
  \begin{enumerate}
    \item $f$ is $\Phi$-cocontinuous.
    \item Given a pseudocommutative square of tight-cells on the left below, where the vertical tight-cells exhibit reflective inclusions into $\Phi$-cocomplete objects (as in \cref{reflectivity-characterisation}), the mate on the right below is invertible.
    \[
    \begin{tikzcd}
      B & {B'} \\
      A & {A'}
      \arrow["b", from=1-1, to=1-2]
      \arrow[""{name=0, anchor=center, inner sep=0}, "r", hook, from=2-1, to=1-1]
      \arrow["a"', from=2-1, to=2-2]
      \arrow[""{name=1, anchor=center, inner sep=0}, "{r'}"', hook, from=2-2, to=1-2]
      \arrow["\iso"{description}, draw=none, from=0, to=1]
    \end{tikzcd}
    \hspace{6em}
    \begin{tikzcd}
      B & {B'} \\
      A & A
      \arrow["b", from=1-1, to=1-2]
      \arrow["\ell"', from=1-1, to=2-1]
      \arrow[between={0.3}{0.7}, Rightarrow, from=1-2, to=2-1]
      \arrow["{\ell'}", from=1-2, to=2-2]
      \arrow["a"', from=2-1, to=2-2]
    \end{tikzcd}
    \]
  \end{enumerate}
  If $A$ admits $\Phi$-cocompletions $\coce\Phi A \colon A \to \coc\Phi A$ then the following are also equivalent.
  \begin{enumerate}[resume]
    \item For every object $X$ admitting a $\Phi$-cocompletion $\phi_X \colon X \to \coc\Phi X$ and each $g \colon X \to A$, the left extension $\phi_X \plx g \colon \coc\Phi X \to A$ is preserved by $f$.
    \item The left extension $\coce\Phi A \plx 1_A$ is preserved by $f$.
    \item For every weight $p \in \Phi\satw$, the weighted colimit $p \wc 1_A$ is preserved by $f$.
  \end{enumerate}
  If furthermore $B$ admits a $\Phi$-cocompletion $\phi_B \colon B \to \coc\Phi B$ then the following are also equivalent.
  \begin{enumerate}[resume]
    \item As in (2), but where $r = \coce\Phi A$, $r' = \phi_B$, and $b = \coc\Phi f \defeq \coce\Phi A \plx (a \d \phi_B)$.
    \item As in (2), but where $B$ and $B'$ are $\Phi$-cocompletions and $b \iso r \plx (a \d r')$.
  \end{enumerate}
\end{corollary}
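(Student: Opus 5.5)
The plan is to mirror the proof of \cref{characterisations-of-cocompleteness}, replacing each existence statement by the corresponding preservation statement. The reference object for conditions (3)--(5) is the left extension $\coce\Phi A \plx 1_A$, which exists because $A$ is $\Phi$-cocomplete. By the proof of \cref{characterisations-of-cocompleteness} it is a left adjoint of $\coce\Phi A$, and $\coce\Phi A$ is \ff{}, so it is a reflector.

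The trivial implications are (1) $\implies$ (3), (5), since the colimits there are $\Phi\satw$-colimits or left extensions along $\phi_X$, and $\phi_X$'s conjoint lies in $\Phi\satw$ by \cref{cocompletion-is-in-saturation}, while $f$ is $\Phi\satw$-cocontinuous by \cref{saturation-properties}. Further, (3) $\implies$ (4) by taking $X = A$ and $g = 1_A$, and (5) $\implies$ (4) by \cref{cocompletion-is-in-saturation} again. For (1) $\iff$ (2), I apply the second half of \cref{reflected-colimits} to a $\Phi$-colimit $\vec p \wc h$ in $A$: since $b$ preserves colimits of $B$ (being $\Phi$-cocontinuous between $\Phi$-cocomplete objects, which I take as part of the hypotheses of (2)), preservation by $a = f$ is equivalent to invertibility of the mate. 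For (2) $\implies$ (1) it suffices that one such square exists. For the converse, when $f$ is $\Phi$-cocontinuous the mate compares $b\ell$ and $\ell' b$ on colimits that $\ell$ builds as reflections; each is an iso by \cref{reflected-colimits}. This direction is the delicate part: I need to check that invertibility of the mate is forced for every admissible square rather than for a particular one. The cleanest route is to show that, restricted along the unit, the mate becomes the canonical comparison for $f$ preserving $\ell \iso r \plx 1$-type colimits, and then appeal to density of cocompletions.

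The key new implication is (4) $\implies$ (1). Let $\vec p \wc h$ be a $\Phi$-colimit in $A$. Since $\coce\Phi A$ is dense and \ff{} and $\coce\Phi A \plx 1_A$ is its left adjoint, I can write $\vec p \wc h \iso (\coce\Phi A \plx 1_A)(\vec p \wc (h \d \coce\Phi A))$. The colimit $\vec p \wc (h \d \coce\Phi A)$ is a $\Phi$-colimit in $\coc\Phi A$, hence $\coce\Phi A$-absolute by \cref{cocompletion-absolute}. The tight-cell $(\coce\Phi A \plx 1_A) \d f$ is a left extension along $\coce\Phi A$ by hypothesis (4), so by \cref{Phi-preservation-is-phi-absolute-preservation} it preserves $\coce\Phi A$-absolute colimits. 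Composing, $f$ preserves $\vec p \wc h$. This is the main step; its only subtlety is identifying the composite colimit correctly, which is handled by left adjoints preserving colimits (\cref{adjoints-preserve-(co)limits}).

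For (6) and (7), note that (6) is an instance of (7): here $\coc\Phi f$ is defined as such a left extension, and it exists because $\coc\Phi B$ is $\Phi$-cocomplete. Also, (7) is an instance of (2), since $\Phi$-cocompletions are $\Phi$-cocomplete and the reflections exist by \cref{characterisations-of-cocompleteness}(6). So (2) $\implies$ (7) $\implies$ (6). For (6) $\implies$ (1), I use that $b = \coc\Phi f$ is $\Phi$-cocontinuous by \cref{cocompletion-tight-cell-UP}, and then apply the (2) $\implies$ (1) argument via \cref{reflected-colimits} to this particular square.
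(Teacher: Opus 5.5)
There is a genuine gap: nothing in your argument proves (1) or (4) $\implies$ (6) or (7). You reach these only through ``(1) $\implies$ (2) for \emph{every} admissible square'', followed by specialisation. That universal statement is false, so the step you defer cannot be completed.

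\Cref{reflected-colimits} only controls the mate at components of the form $\vec p \wc (h \d r)$. This yields (2) $\implies$ (1), but says nothing about the other components. For instance, work in $\Cat$ with $\Phi$ the small colimits. Let $r \colon \{0 < 2\} \ffto \{0 < 1 < 2\}$ with reflector $\ell$ (so $\ell(1) = 2$), take $a = 1$ and $r' = r$, and let $b$ send $0, 1 \mapsto 0$ and $2 \mapsto 2$. Then $b$ preserves all joins and $b r = r$, yet the mate $b \d \ell \tto \ell$ has the non-invertible component $0 \le 2$ at $1$.

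Your density argument breaks exactly here, since $r$ is not dense: $r \plx 1$ sends $1 \mapsto 0$, so $\ell \not\iso r \plx 1$. Hence (2) and (7) must be read existentially, with $b$ $\Phi$-cocontinuous, as you rightly impose for (2) $\implies$ (1). Then (1) $\implies$ (2) is trivial via the square $r = 1_A$, $r' = 1_B$, $b = a = f$. The specialisations also run the other way: (6) $\implies$ (7) $\implies$ (2).

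The missing ingredient is the paper's one nontrivial step, (4) $\implies$ (6). This is your ``restrict along the unit'' idea, applied to the square of (6), where density is available. Since $\phi_B \plx 1_B$ is a left adjoint it preserves left extensions, and since $\phi_B$ is \ff{},
\[(\coce\Phi A \plx (f \d \phi_B)) \d (\phi_B \plx 1_B) \iso \coce\Phi A \plx (f \d \phi_B \d (\phi_B \plx 1_B)) \iso \coce\Phi A \plx f.\]
So the mate $\coc\Phi f \d (\phi_B \plx 1_B) \tto (\coce\Phi A \plx 1_A) \d f$ is precisely the comparison 2-cell witnessing that $f$ preserves $\coce\Phi A \plx 1_A$.

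The remainder of your proposal is sound. The implications (1) $\implies$ (3), (1) $\implies$ (5), (3) $\implies$ (4), (5) $\implies$ (4) and (6) $\implies$ (1) match the paper. Your direct proof of (4) $\implies$ (1) is a valid shortcut past the paper's route (4) $\implies$ (6) $\implies$ (7) $\implies$ (2) $\implies$ (1): you reflect $\vec p \wc h$ from $\vec p \wc (h \d \coce\Phi A)$, and note that $(\coce\Phi A \plx 1_A) \d f$, being a left extension along $\coce\Phi A$, preserves $\coce\Phi A$-absolute colimits by \cref{Phi-preservation-is-phi-absolute-preservation}. You still need to check that the resulting cocone is the canonical one. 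Keeping that argument, you close the cycle by adding (4) $\implies$ (6).
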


\begin{proof}
  The proof strategy is exactly as in \cref{characterisations-of-cocompleteness}. The only nontrivial implication is (4) $\implies$ (6). For this, observe that
  \[(\coce\Phi A \plx (a \d \phi_B)) \d (\phi_B \plx 1_B) \iso \coce\Phi A \plx (a \d \phi_B \d (\phi_B \plx 1_B)) \iso \coce\Phi A \plx a\]
  using that $\phi_B \plx 1_B$ is left-adjoint and so preserves left extensions, and that $\phi_B$ is \ff{} (\cref{cocompletion-is-ff}). Thus, the 2-cell witnessing preservation of $\coce\Phi A \plx 1_A$ is precisely the mate.
\end{proof}

\subsection{Adjointness theorems}

Cocompletions satisfy a particularly convenient test for adjointness.

\begin{proposition}[{\cf~\cite[Theorem~5.33]{kelly1982basic}}]
  \label{right-adjoint-from-cocomplete-object}
  Let $\Phi$ be a class of weights and let $\jAE$ be a dense tight-cell with $\Phi$-cocomplete codomain. A $\Phi$-cocontinuous tight-cell $f \colon E \to X$ admits a right adjoint if and only if the left extension $(fj) \plx j$ exists.
\end{proposition}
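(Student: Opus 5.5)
The forward direction is immediate from the adjoint tight-cell machinery already developed. Suppose $f \adj g$ with $g \colon X \to E$. By \cref{adjointness-via-absolute-left-extension}, $g \iso f \plx 1_E$ is an absolute left extension. We want to show that $g$ is also the left extension of $j$ along $fj$. The plan is to observe that $E(j, g) \iso X(fj, 1)$, since $X(f, 1) \iso E(1, g)$. Restricting along $j$ then gives
\[E(1, g) \iso X(f, 1), \qquad E(j, g) \iso X(fj, 1).\]
Density of $j$ gives $E(1,1) \iso E(j,1) \rf E(j,1)$. Hence
\[E(g, 1) \iso E(1,1)(g,1) \iso (E(j,1) \rf E(j,1))(g,1) \iso E(j,1) \rf E(j, g) \iso E(j,1) \rf X(fj, 1),\]
using \cref{restriction-of-lift} for the third step. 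This is exactly the statement that $g$ is the left extension $(fj) \plx j$.

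For the converse, suppose $g \defeq (fj) \plx j \colon X \to E$ exists, so that $E(g, 1) \iso E(j, 1) \rf X(fj, 1)$. By \cref{relative-adjunction}, it suffices to produce an isomorphism $X(f, 1) \iso E(1, g)$, i.e.\ $f \adj g$. The idea is to compute $E(1, g)$ using density of $j$ and the $\Phi$-cocontinuity of $f$. The guiding case is $\Cat$: there $E(e, g x) \iso \widehat{A}(E(j-, e), X(fj-, x))$ by the defining property of the left extension and density of $j$. Writing $e$ as the canonical colimit $E(j,e) \wc j$, cocontinuity of $f$ then gives $X(fe, x) \iso \widehat{A}(E(j-,e), X(fj-,x))$.

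Formally, I would first use density of $j$ to regard $1_E$ as the left extension $j \plx j$, i.e.\ the $E(j,1)$-weighted colimit of $j$. The key step is that $f$ preserves this colimit, so that $f \iso E(j,1) \wc fj$, i.e.\ $X(f, 1) \iso X(fj, 1) \rf E(j, 1)$. Having done that, I would compare this with $E(1, g)$ by computing
\[E(1, g) \iso (E(j,1) \rf E(j,1))(1, g) \iso E(j, g) \rf E(j,1).\]
Finally I would identify $E(j, g)$ with $X(fj, 1)$. The previous step reduces $E(1,g)$ to $E(j,g) \rf E(j,1)$, so this identification would finish the proof.

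The main obstacle is the preservation step. $\Phi$-cocontinuity only gives preservation of $\Phi$-colimits, and $E(j,1)$ need not be in $\Phi$. So an argument is needed that, since $E$ is $\Phi$-cocomplete, $1_E \iso j \plx j$ is built from $\Phi$-colimits, or else some restriction on $j$ is required. I would first check whether the intended statement implicitly requires $j$ to exhibit a $\Phi$-cocompletion, or $E(j,1) \in \Phi\satw$; in that case \cref{Phi-preservation-is-phi-absolute-preservation} gives that $f$ has rank $j$ directly.

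The second delicate point is the identification $E(j, g) \iso X(fj, 1)$, which amounts to the unit $fj \tto$ (restriction along $j$) being invertible. By \cref{lift-through-ff-tight-cell}, this holds when $j$ is fully faithful. Without full faithfulness, I would instead argue purely via right lifts:
\[E(1,g) \iso E(j,1) \rf \bigl(E(j,1) \rf X(fj,1)\bigr)(j,1)\text{-style manipulations}\]
combined with \cref{iterated-lift}, aiming to match $X(fj,1) \rf E(j,1)$ directly from the universal properties, without ever needing to invert the unit.
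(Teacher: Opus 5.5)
Your forward direction is correct. It is a direct computation from density and \cref{restriction-of-lift}, whereas the paper argues via $(fj) \plx j \iso f \plx (j \plx j) \iso f \plx 1_E$ and \cref{adjointness-via-absolute-left-extension}. The converse is not established: both of your key steps are left open, and the second is justified incorrectly.

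For $g \defeq (fj) \plx j$, the identification $E(j, g) \iso X(fj, 1)$ is precisely the relative adjunction $fj \jadj g$, i.e.\ essentially what you are trying to prove. \cref{lift-through-ff-tight-cell} does not bear on it. That lemma concerns restrictions of right lifts \emph{through} $E(j, 1)$, i.e.\ left extensions along $j$. Here $E(g, 1) \iso E(j, 1) \rf X(fj, 1)$ is a right lift \emph{of} $E(j, 1)$ through $X(fj, 1)$, i.e.\ a left extension along $fj$. You also need information about the companion of $g$, not its conjoint.

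The step genuinely fails even for \ff{} $j$. In $\Cat$, take $\Phi = \varnothing$, $A = E = \b 1$, $j = 1$, $X = \b 2 = \{0 \to 1\}$, and $f$ picking out $1$. Every hypothesis holds, $f$ trivially has rank $j$, and $g = f \plx 1$ exists (the unique functor $\b 2 \to \b 1$). Yet $E(j, g)(\ast, 0) = 1$ while $X(fj, 1)(\ast, 0) = \b 2(1, 0) = \varnothing$, and $f$ has no right adjoint.

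So you are right that something beyond the stated hypotheses is needed, but the crux is not rank. Requiring $j$ to be a $\Phi$-cocompletion does not help either, since $1_{\b 1}$ is the $\varnothing$-cocompletion of $\b 1$. What is needed is that $f$ preserve the left extension $(fj) \plx j$ itself.

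The missing idea is the paper's reduction. By left extension along a composite (\cite[Lemma~3.16]{arkor2024formal}) and density, $(fj) \plx j \iso f \plx (j \plx j) \iso f \plx 1_E$. So by \cref{adjointness-via-absolute-left-extension}, the converse amounts to $f$ preserving this left extension. The paper secures this by asserting that it is a $\Phi$-colimit. That holds in the intended instances, e.g.\ $j$ a small dense subcategory and $\Phi$ the small weights, where $E(j, 1)$ and $X(fj, 1)$ are colimit-small. As the example shows, it is not implied by the hypotheses as written.

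If you assume $E(j, 1), X(fj, 1) \in \Phi\satw$, your ingredients do close the argument, bypassing your identification step. Rank $j$ gives $X(f, 1) \iso X(fj, 1) \rf E(j, 1)$, and preservation gives $X(fg, 1) \iso X(fj, 1) \rf X(fj, 1)$. Since $X(fj, 1) \iso E(j, 1) \odot X(f, 1)$, \cref{right-lift-through-composite,iterated-lift} give
\[X(fj, 1) \rf X(fj, 1) \iso (X(fj, 1) \rf E(j, 1)) \rf X(f, 1) \iso X(f, 1) \rf X(f, 1).\]
Hence $f$ preserves $f \plx 1_E$, and \cref{adjointness-via-absolute-left-extension} yields $f \adj g$.
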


\begin{proof}
  We have $(fj) \plx j \iso f \plx (j \plx j) \iso f \plx 1$ since $j$ is dense, using \cite[Lemma~3.16]{arkor2024formal}. Furthermore, this left extension, being a $\Phi$-colimit, is preserved by $f$, from which the result follows by \cref{adjointness-via-absolute-left-extension}.
\end{proof}

\begin{corollary}
  \label{right-adjoint-from-free-cocompletion}
  Let $f \colon A \to X$ be a tight-cell from an object admitting a $\Phi$-cocompletion into a $\Phi$-cocomplete object. The induced $\tilde f \colon \coc\Phi A \to X$ admits a right adjoint if and only if the left extension $f \plx 1$ exists.
\end{corollary}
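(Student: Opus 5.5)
The plan is to apply \cref{right-adjoint-from-cocomplete-object} with $j \defeq \coce\Phi A \colon A \to \coc\Phi A$, and then translate the condition it produces into a statement about $f$ alone.

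First, the hypotheses of that proposition hold. The codomain $\coc\Phi A$ is $\Phi$-cocomplete by definition of a $\Phi$-cocompletion. The tight-cell $\coce\Phi A$ is dense by \cref{cocompletion-is-dense}. The induced tight-cell $\tilde f \iso \coce\Phi A \plx f$ is $\Phi$-cocontinuous by \cref{cocompletion-tight-cell-UP}, since $X$ is $\Phi$-cocomplete. \Cref{right-adjoint-from-cocomplete-object} therefore says that $\tilde f$ admits a right adjoint if and only if the left extension $(\tilde f \coce\Phi A) \plx \coce\Phi A$ exists.

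Next, I would use the invertible unit $f \iso \coce\Phi A \d \tilde f$ from \cref{cocompletion-tight-cell-UP}. This identifies that condition with existence of the left extension $f \plx \coce\Phi A \colon X \to \coc\Phi A$. Left extensions are weighted colimits, namely $X(f, 1)$-weighted colimits of $\coce\Phi A$, and these are invariant under replacing the tight-cell $f$ by an isomorphic one, since the restrictions $X(f,1)$ and $X(\tilde f \coce\Phi A, 1)$ are then isomorphic.

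The remaining, and only delicate, step is to match this with the statement's ``$f \plx 1$''. That notation cannot literally mean the left extension of $1_A$ along $f$ landing in $A$: that would not be the correct condition, and its codomain does not match. I would read it as the left extension of the $\Phi$-cocompletion embedding along $f$, that is $f \plx \coce\Phi A$, in the same way that \cref{right-adjoint-from-cocomplete-object} writes $(fj) \plx j$. With that reading the corollary follows immediately from the previous step.

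If instead $f \plx 1_A$ is intended literally, I would argue as follows. Since $\coce\Phi A$ is \ff{} and dense, composing a left extension into $A$ with $\coce\Phi A$ need not give a left extension. So I would only expect an implication in one direction, and would not pursue that reading. I therefore regard pinning down the notation as the main obstacle; the mathematics is otherwise a direct substitution.
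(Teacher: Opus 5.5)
Your proposal is correct and is essentially the paper's own proof: the paper likewise instantiates \cref{right-adjoint-from-cocomplete-object} at $j = \coce\Phi A$, using density of $\coce\Phi A$, $\Phi$-cocontinuity of $\tilde f$, and the invertible unit $\coce\Phi A \d \tilde f \iso f$. Your reading of ``$f \plx 1$'' as $f \plx \coce\Phi A$ is the intended one, as confirmed by the proof of \cref{cocompletion-recognition}, which invokes this corollary to deduce $\coce\Phi A \plx j \adj j \plx \coce\Phi A$ from the existence of $j \plx \coce\Phi A$.
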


\begin{proof}
  Immediate from the definition of the $\Phi$-cocompletion and \cref{right-adjoint-from-cocomplete-object}.
\end{proof}

\begin{example}
  Let $\E$ be a cocomplete category with a small dense subcategory $\A \ffto \E$ (for instance, any locally presentable category). Then every cocontinuous functor $\E \to \b X$ (where $\b X$ is not necessarily cocomplete) admits a right adjoint.
\end{example}

\subsection{Recognition of cocompletions}

Under the assumption that a $\Phi$-cocompletion exists, we may recognise when an object is equivalent to the $\Phi$-cocompletion. (While the characterisation theorems such as \cref{intrinsic-characterisation-of-cocompletions} also serve this purpose, knowing that the cocompletion exists permits a simpler set of conditions for recognition.)

\begin{proposition}[{\cf~\cites[Theorem~5.26]{kelly1982basic}[Proposition~4.2]{kelly2005notes}}]
  \label{cocompletion-recognition}
  Let $\Phi$ be a class of weights, let $A$ be an object admitting a $\Phi$-cocompletion, and let $E$ be an object. The following are equivalent.
  \begin{enumerate}
    \item $E \equiv \coc\Phi A$.
    \item There exists a dense and \ff{} $\Phi$-atom $\jAE$ with $\Phi$-cocomplete codomain, for which the left extension $j \plx \coce\Phi A$ exists.
  \end{enumerate}
\end{proposition}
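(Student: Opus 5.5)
The plan is to prove (1) $\Rightarrow$ (2) by taking $j$ to be the cocompletion embedding itself, and (2) $\Rightarrow$ (1) by assembling a nerve--realisation adjunction between $E$ and $\coc\Phi A$ and showing it is an equivalence via \cref{nerve--realisation-equivalence}.

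For (1) $\Rightarrow$ (2), let $e \colon \coc\Phi A \to E$ be the equivalence and put $j \defeq \coce\Phi A \d e$; all the properties involved are invariant under equivalence, so it suffices to treat $j = \coce\Phi A$.
\begin{itemize}
  \item $\coce\Phi A$ is \ff{} by \cref{cocompletion-is-ff} and dense by \cref{cocompletion-is-dense}.
  \item $\coc\Phi A$ is $\Phi$-cocomplete by definition.
  \item $\Phi$-colimits in $\coc\Phi A$ are $\coce\Phi A$-absolute by \cref{cocompletion-absolute}, so $\coce\Phi A$ is a $\Phi$-atom (\cref{atom}).
  \item The left extension $\coce\Phi A \plx \coce\Phi A$ exists and is the identity, by density (\cref{density-via-rank}).
\end{itemize}

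For (2) $\Rightarrow$ (1), I take $f \defeq \coce\Phi A$ in \cref{nerve--realisation-equivalence}. The two left extensions are $g \defeq \coce\Phi A \plx j \colon \coc\Phi A \to E$ and $h \defeq j \plx \coce\Phi A \colon E \to \coc\Phi A$. The extension $g$ exists by \cref{cocompletion-tight-cell-UP}, since $E$ is $\Phi$-cocomplete, and it is $\Phi$-cocontinuous with $\coce\Phi A \d g \iso j$. The extension $h$ exists by hypothesis.
\begin{itemize}
  \item \emph{Adjunction $g \adj h$.} By \cref{right-adjoint-from-free-cocompletion}, $\tilde j = g$ has a right adjoint exactly when the relevant left extension of $\coce\Phi A$ along $j$ exists, and that is $h$. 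If the bookkeeping is cleaner, I would instead use \cref{right-adjoint-from-cocomplete-object} directly: $\coce\Phi A$ is dense, $\coc\Phi A$ is $\Phi$-cocomplete, and $g$ is $\Phi$-cocontinuous. This gives $g \adj h$, with $h$ identified as the right adjoint.
  \item \emph{Hypotheses already in hand.} Both $j$ and $\coce\Phi A$ are dense and \ff.
  \item \emph{Remaining hypothesis.} It remains to show that $g = \coce\Phi A \plx j$ is $j$-absolute.
\end{itemize}
Once these are established, \cref{nerve--realisation-equivalence} yields $E \equiv \coc\Phi A$.

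The main obstacle is this $j$-absoluteness. The colimit $g$ is weighted by $\coc\Phi A(\coce\Phi A, 1)$, which lies in $\Phi\satw$ by \cref{cocompletion-is-in-saturation}. However, the hypothesis only says that $\Phi$-colimits in $E$ are $j$-absolute, and $j$-absoluteness is not one of the three conditions in \cref{colimit-equivalence} that pass automatically to $\Phi\satw$. What I would try first is the following.
\begin{itemize}
  \item $j$-absoluteness of a $\Phi$-colimit means $E(j, \vec p \wc f) \iso E(j, f) \odotl \vec p$.
  \item Equivalently, by \cref{absoluteness-and-exactness}, it holds whenever every right lift through $E(j,1)$ respects the colimit.
  \item So I would aim to show that right lifts through $E(j,1)$ are $\Phi$-exact, and then use condition (3) of \cref{colimit-equivalence} to pass to $\Phi\satw$.
  \item Applying this to the weight $\coc\Phi A(\coce\Phi A,1) \in \Phi\satw$ would give that right lifts through $E(j,1)$ respect $g$, hence that $g$ is $j$-absolute.
\end{itemize}
Both uses of \cref{absoluteness-and-exactness} need $E(j,1)$ to be lifting, which is not among the hypotheses.

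If that route fails, the fallback is a direct computation using $g \iso \tilde j$ and $\coce\Phi A \d g \iso j$:
\[E(j, g) \iso E(\coce\Phi A \d g, g) \iso \coc\Phi A(\coce\Phi A, 1)\]
up to \ffness{} of $g$. I would then compare this with $E(j,j) \odotl \coc\Phi A(\coce\Phi A, 1)$ and conclude with \cref{absolute-implies-colimit}.
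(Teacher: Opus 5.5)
Your skeleton matches the paper's proof. For (1)$\Rightarrow$(2) you take $j \defeq \coce\Phi A$. For (2)$\Rightarrow$(1) you use the adjunction $g \adj h$, with $g \defeq \coce\Phi A \plx j$ and $h \defeq j \plx \coce\Phi A$, followed by \cref{nerve--realisation-equivalence}. But the proposal stops at the only nontrivial step, the $j$-absoluteness of $g$ (equivalently, coreflectivity of $g \adj h$).

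Your diagnosis is right. The weight $\coc\Phi A(\coce\Phi A, 1)$ lies only in $\Phi\satw$, and absoluteness is not among the properties that \cref{colimit-equivalence} transfers to $\Phi\satw$. The paper covers this step only with the phrase ``since $j$ is a $\Phi$-atom''. Neither of your routes closes the gap:
\begin{itemize}
\item The first route needs $E(j, 1)$ to be lifting, which is not assumed.
\item The fallback needs $g$ to be \ff{}. That is exactly the coreflectivity that part (1) of \cref{nerve--realisation-equivalence} is meant to deliver, so the argument is circular.
\end{itemize}

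The missing idea is to move the burden from $g$ to $h$, and to use preservation rather than absoluteness, since preservation does transfer to the saturation.
\begin{enumerate}
\item Restrict the adjunction isomorphism $E(g, 1) \iso \coc\Phi A(1, h)$ along $\coce\Phi A$ and use $\coce\Phi A \d g \iso j$. This gives $E(j, 1) \iso \coc\Phi A(\coce\Phi A, h)$, \ie{} $j \radj{\coce\Phi A} h$.
\item The root is dense (\cref{cocompletion-is-dense}) and $j$ is a $\Phi$-atom, so $h$ is $\Phi$-cocontinuous by \cref{atomic-left-adjoint-iff-cocontinuous-right-adjoint}(2).
\item Applying \cref{colimit-equivalence} to $\Phi\satw \colimleq \Phi$, $h$ is also $\Phi\satw$-cocontinuous.
\item The weight of $g$ lies in $\Phi\satw$ by \cref{cocompletion-is-in-saturation}, so $h$ preserves $g$.
\item Then \cref{composing-left-extensions}, with $j$ \ff{}, gives $g \d h \iso \coce\Phi A \plx \coce\Phi A \iso 1$. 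This is precisely what $j$-absoluteness was used for in the proof of \cref{nerve--realisation-equivalence}(1).
\end{enumerate}

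If you want $j$-absoluteness itself, your fallback computation now goes through without circularity:
\[E(j, g) \iso \coc\Phi A(\coce\Phi A, g \d h) \iso \coc\Phi A(\coce\Phi A, 1) \iso E(j, j) \odotl \coc\Phi A(\coce\Phi A, 1).\]
Then \cref{absolute-implies-colimit} applies, because $E(j, 1)$ is dense. Reflectivity follows from part (2) of \cref{nerve--realisation-equivalence}, as you planned.
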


\begin{proof}
  For (1) $\implies$ (2), take $j$ to be $\coce\Phi A$, which satisfies the assumptions by \cref{cocompletion-is-ff,cocompletion-is-dense,cocompletion-absolute}.
  For (2) $\implies$ (1), by \cref{right-adjoint-from-free-cocompletion}, we have $\coce\Phi A \plx j \adj j \plx \coce\Phi A$, so we are in the situation of \cref{nerve--realisation-equivalence}. The assumptions thereof are satisfied, since $j$ and $\coce\Phi A$ are dense and \ff{}, and $\coce\Phi A \plx j$ is $j$-absolute since $j$ is a $\Phi$-atom.
\end{proof}

\subsection{Monads relative to cocompletions}

We conclude our study of the abstract properties of cocompletions by investigating monads relative to cocompletions with respect to a class of weights, continuing our study of the formal theory of relative monads in previous work~\cite{arkor2024formal,arkor2024relative,arkor2025nerve}. Relative monads are generalisations of monads from endofunctors to arbitrary functors. Just as every adjunction induces a monad, every $j$-relative adjunction (\cref{relative-adjunction}) induces a \emph{$j$-relative monad}~\cite[Theorem~5.24]{arkor2024formal}. As we shall not need the details for what follows, and will not make we shall not make further use of relative monads herein, we refer to \cite[Definition~4.1]{arkor2024formal} for the definition of relative monads in a \ve.

Our objective in this subsection is to show that monads relative to $\Phi$-cocompletions may be characterised as $\Phi$-cocontinuous monads on $\Phi$-cocompletions (\cref{cocontinuous-monads-are-relative-monads}), thus resolving a claim in \cite[Remark~4.31]{arkor2024formal}. In doing so, we also establish a connection between the notion of cocompletion (\cref{cocompletion}) and the notion of lax-idempotent relative pseudomonad~\cite{fiore2018relative,arkor2025bicategories}.

First, we recall that every \ve{} has an underlying 2-category.

\begin{definition}[{\cite[Proposition~6.1]{cruttwell2010unified}}]
	Let $\X$ be a \vdc{} admitting loose-identities. Denote by $\tX$ the \emph{tight 2-category} associated to $\X$, having
	\begin{enumerate}
		\item objects: those of $\X$;
		\item 1-cells: tight-cells in $\X$;
		\item 2-cells $\phi \colon f \tto g$: nullary 2-cells with loose-identity codomain in $\X$ as follows.
		\[\begin{tikzcd}
			A & A \\
			B & B
			\arrow[""{name=0, anchor=center, inner sep=0}, "g", from=1-2, to=2-2]
			\arrow[""{name=1, anchor=center, inner sep=0}, "f"', from=1-1, to=2-1]
			\arrow["\shortmid"{marking}, Rightarrow, no head, nfold, from=2-2, to=2-1]
			\arrow[Rightarrow, no head, nfold, from=1-2, to=1-1]
			\arrow["\phi"{description}, draw=none, from=0, to=1]
		\end{tikzcd}\qedshift\]
	\end{enumerate}
\end{definition}

\emph{Relative pseudoadjunctions} and \emph{relative pseudomonads} are bicategorical generalisations of relative adjunctions and relative monads. Since the precise definitions are not necessary for what follows, we do not include them, but direct the reader to \cite[Definition~3.6 \& Definition~3.1]{fiore2018relative}. A relative pseudomonad is \emph{lax-idempotent} if its structure is witnessed by (nonpointwise) left extensions in a suitable sense~\cite[Proposition~5.4]{arkor2025bicategories}.

\begin{proposition}
  \label{induced-relative-pseudomonad}
  Let $\Phi$ be a class of weights. Denote by $J_\Phi \colon \tX^\Phi \ffto \tX$ the inclusion of the full sub-2-category spanned by $\Phi$-cocompletable objects (\cref{cocompletion}), and by $U_\Phi \colon \Phi\Coc \to \tX$ the locally full sub-2-category spanned by $\Phi$-cocomplete objects and $\Phi$-cocontinuous 1-cells (\cref{cocomplete}). The assignment $A \mapsto (\coce\Phi A \colon A \to \coc\Phi A)$, sending each object in $\tX^\Phi$ to its $\Phi$-cocompletion, defines the unit of a $J_\Phi$-relative pseudoadjunction, which induces a lax-idempotent $J_\Phi$-relative pseudomonad $\o\Phi$.
  \[\begin{tikzcd}
    & {\Phi\Coc} & \\
    {\tX^\Phi} && \tX
    \arrow[""{name=0, anchor=center, inner sep=0}, "{U_\Phi}", from=1-2, to=2-3]
    \arrow[""{name=1, anchor=center, inner sep=0}, "{\coc\Phi{-}}", from=2-1, to=1-2]
    \arrow["{J_\Phi}"', from=2-1, to=2-3]
    \arrow["\dashv"{anchor=center, rotate=-1}, shift right=2, draw=none, from=1, to=0]
  \end{tikzcd}\]
\end{proposition}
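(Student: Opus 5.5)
The plan is to verify directly the definition of a $J_\Phi$-relative pseudoadjunction in the sense of Fiore--Gambino--Hyland--Winskel, presented via its hom-equivalence. Concretely, for each $\Phi$-cocompletable $A$ and $\Phi$-cocomplete $X$, I will show that precomposition with the unit,
\[(\coce\Phi A \d \ph) \colon \Phi\Coc[\coc\Phi A, X] \to \tX[A, X],\]
is an equivalence of categories, pseudonaturally in $A \in \tX^\Phi$ and $X \in \Phi\Coc$.

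The first step is essential surjectivity. This follows from \cref{cocompletion-tight-cell-UP}: for $f \colon A \to X$, the left extension $\coce\Phi A \plx f$ is $\Phi$-cocontinuous, and the unit $f \tto \coce\Phi A \d (\coce\Phi A \plx f)$ is invertible.

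The second step is full faithfulness. By \cref{Phi-preservation-is-phi-absolute-preservation}, every $\Phi$-cocontinuous $g \colon \coc\Phi A \to X$ is a left extension along $\coce\Phi A$, namely $g \iso \coce\Phi A \plx (\coce\Phi A \d g)$. The universal property of the left extension (a right lift of $X(\coce\Phi A\d g, 1)$ through $\coc\Phi A(\coce\Phi A, 1)$, taken on nullary 2-cells) then gives a bijection between 2-cells $g \tto g'$ in $\tX$ and 2-cells $\coce\Phi A \d g \tto \coce\Phi A \d g'$. Since $\Phi\Coc$ is locally full, the 2-cells are exactly those of $\tX$, so this is the required bijection. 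With the hom-equivalence in hand, the extension operation $f \mapsto f^* \defeq \coce\Phi A \plx f$ on morphisms $A \to U_\Phi X$ supplies the remaining data.

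The third step is to extract the pseudo-functoriality of $\coc\Phi{-}$ and the relative pseudomonad $\o\Phi$. The extension operator is $T A \defeq \coc\Phi A$ and, for $f \colon A \to \coc\Phi B$, $f^* \defeq \coce\Phi A \plx f$. The coherence isomorphisms are built as follows:
\begin{itemize}
\item $\coce\Phi A \d f^* \iso f$ is the invertible unit;
\item $(\coce\Phi A)^* \iso 1$ holds because $\coce\Phi A$ is dense (\cref{cocompletion-is-dense});
\item $(f \d g^*)^* \iso f^* \d g^*$ holds because $g^*$ is $\Phi$-cocontinuous and hence preserves the left extension $f^*$ (it has rank $\coce\Phi A$), combined with \cref{composing-left-extensions} and the fact that $\coce\Phi A$ is \ff{} (\cref{cocompletion-is-ff}).
\end{itemize}
All of these isomorphisms are canonical mates induced by universal properties of left extensions, so the coherence axioms reduce to uniqueness of factorisations through those universal properties.

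The fourth step is lax-idempotence, which I will deduce from the characterisation in \cite[Proposition~5.4]{arkor2025bicategories}. It asks that each $f^*$ be a left extension of $f$ along $\coce\Phi A$ in the 2-category $\tX$, with unit the structural isomorphism. A pointwise left extension in a \ve{} is in particular a left extension in the tight 2-category; restricting its universal property to nullary 2-cells gives this directly.

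I expect the main obstacle to be the coherence bookkeeping in the third step: checking that the isomorphisms obtained from the various universal properties satisfy the relative pseudomonad axioms. To keep this manageable, I will avoid explicit pasting. Instead, I will characterise every structural 2-cell as the unique one whose restriction along $\coce\Phi A$ is a given identity-like 2-cell, using full faithfulness from the second step. Each coherence equation then becomes an equality of 2-cells out of $A$, where it holds trivially. Alternatively, I will use the general result of \cite{fiore2018relative} that a relative pseudoadjunction specified by a choice of biuniversal arrows induces a relative pseudomonad, so that only biuniversality, which is exactly the content of the first two steps, needs to be checked by hand.
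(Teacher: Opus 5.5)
Your proposal is correct and follows the paper's proof. Essential surjectivity comes from \cref{cocompletion-tight-cell-UP} and full faithfulness from the universal property of the left extension. The induced relative pseudomonad comes from \cite[Theorem~3.8]{fiore2018relative}; this is your fallback, which the paper invokes directly, so none of the hand-checked coherence in your third step is needed. Lax-idempotence holds because pointwise left extensions are left extensions in $\tX$ \cite[Lemma~3.18]{arkor2024formal}.

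If you keep the by-hand route, note that $(f \d g^*)^* \iso f^* \d g^*$ is precisely the statement that $g^*$ preserves $\coce\Phi A \plx f$. This holds because $\coc\Phi A(\coce\Phi A, 1) \in \Phi\satw$ (\cref{cocompletion-is-in-saturation}) and $g^*$ is $\Phi$-cocontinuous, so \cref{composing-left-extensions} and \ffness{} of $\coce\Phi A$ play no role there.
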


\begin{proof}
  To establish existence of the $J_\Phi$-relative pseudoadjunction, we must show that, for each $\Phi$-cocompletable object $A$, and each $\Phi$-cocomplete object $X$, the functor $U_\Phi\ph \c \coce\Phi A \colon \Phi\Coc(\coc\Phi A, X) \to \tX(J_\Phi A, U_\Phi X)$ is an equivalence. Essential surjectivity follows from \cref{cocompletion-tight-cell-UP}, while \ffness{} follows from the universal property of the left extension therein. Hence, by \cite[Theorem~3.8]{fiore2018relative}, the $J_\Phi$-relative pseudoadjunction induces a $J_\Phi$-relative pseudomonad $\o\Phi$. For lax-idempotence, we must show that, for each pair $A$ and $B$ of objects admitting $\Phi$-cocompletions, the assignment $(f \colon A \to \coc\Phi B) \mapsto (\tilde f \colon \coc\Phi A \to \coc\Phi B)$ is right-adjoint to precomposition by $\coce\Phi A$. This is immediate, since $\tilde f \iso \coce\Phi A \plx f$, which in particular is a nonpointwise left extension~\cite[Lemma~3.18]{arkor2024formal}.
\end{proof}

In particular, the free $\o\Phi$-pseudo algebras are precisely the $\Phi$-cocompletions. In the following, we refer to \cite[Theorem~4.1]{fiore2018relative} for the definition of the Kleisli bicategory associated to a relative pseudomonad, and to \cite[Theorem~3.9]{arkor2025bicategories} for the definition of the 2-category of pseudo algebras.

\begin{theorem}[label=formal-Eilenberg--Watts]
  Let $\Phi$ be a class of weights, inducing a relative pseudomonad $\o\Phi$ by \cref{induced-relative-pseudomonad}. The embedding of the Kleisli bicategory of $\o\Phi$ into the 2-category of $\o\Phi$-pseudo algebras witnesses a biequivalence between
  \begin{itemize}
    \item the bicategory $\Kl(\o\Phi)$, whose objects are the $\Phi$-cocompletable objects and whose hom-categories are given by $\tX(A, \coc\Phi B)$ for each $A$ and $B$;
    \item the full sub-2-category of $\Phi\Coc$ spanned by the $\Phi$-cocompletions.
  \end{itemize}
\end{theorem}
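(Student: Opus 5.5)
The argument reduces to the standard description of the comparison from a Kleisli bicategory to pseudo-algebras, followed by an identification of the objects in its image. For a relative pseudomonad, the comparison pseudofunctor $K \colon \Kl(\o\Phi) \to \o\Phi\h\b{PsAlg}$ (from \cite[Theorem~4.1]{fiore2018relative} and \cite[Theorem~3.9]{arkor2025bicategories}) sends $A$ to the free pseudo-algebra $\coc\Phi A$. It sends a Kleisli 1-cell $f \colon A \to \coc\Phi B$ to its extension $\tilde f \iso \coce\Phi A \plx f \colon \coc\Phi A \to \coc\Phi B$. It is locally an equivalence onto the hom-categories of pseudo-morphisms between free pseudo-algebras. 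The plan has three steps.

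\emph{Step 1: the comparison is locally an equivalence onto $\Phi\Coc$.} The target should be taken as the full sub-2-category of $\Phi\Coc$ spanned by $\Phi$-cocompletions, rather than as pseudo-algebras. For each pair $A$, $B$ I will check that
\[
\tX(A, \coc\Phi B) \to \Phi\Coc(\coc\Phi A, \coc\Phi B), \qquad f \mapsto \coce\Phi A \plx f,
\]
is an equivalence of categories. Its quasi-inverse is precomposition with $\coce\Phi A$. Both composites are isomorphic to the identity, for two reasons. First, \cref{cocompletion-tight-cell-UP} says the unit $f \tto \coce\Phi A \d (\coce\Phi A \plx f)$ is invertible. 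Second, it says every $\Phi$-cocontinuous $g \colon \coc\Phi A \to \coc\Phi B$ is isomorphic to $\coce\Phi A \plx (\coce\Phi A \d g)$; equivalently, by \cref{Phi-preservation-is-phi-absolute-preservation}, $g$ has rank $\coce\Phi A$. On 2-cells, full faithfulness follows from the universal property of the left extension, exactly as in the proof of \cref{induced-relative-pseudomonad}.

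\emph{Step 2: compatibility with composition.} Kleisli composition of $f \colon A \to \coc\Phi B$ and $g \colon B \to \coc\Phi C$ is $f \d \tilde g$. I need $\widetilde{f \d \tilde g} \iso \tilde f \d \tilde g$, coherently. This is \cref{composing-left-extensions}, whose hypotheses hold here: $\coce\Phi A$ is \ff{} by \cref{cocompletion-is-ff}, and $\tilde g$ is $\Phi$-cocontinuous, so it preserves left extensions along $\coce\Phi A$ by \cref{Phi-preservation-is-phi-absolute-preservation}. Identities are handled by $\widetilde{\coce\Phi A} \iso 1$, which is density of $\coce\Phi A$ (\cref{cocompletion-is-dense}). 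The coherence axioms come for free, because the pseudofunctor is the canonical one from \cite{fiore2018relative}; I only need to identify its hom-functors with the maps above.

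\emph{Step 3: essential surjectivity, and identification with pseudo-algebras.} On objects there is nothing to prove: every $\Phi$-cocompletion is, up to equivalence, $\coc\Phi A$ for a cocompletable $A$. The main obstacle is matching the paper's description of the image with pseudo-algebras. I must show that, between free pseudo-algebras, pseudo-morphisms correspond to $\Phi$-cocontinuous tight-cells, and that algebra 2-cells are all 2-cells. I would argue this via lax-idempotence (\cref{induced-relative-pseudomonad}). For a lax-idempotent relative pseudomonad, pseudo-algebra structure on an object is essentially unique. Moreover, a 1-cell between free algebras is a pseudo-morphism exactly when the canonical mate $\widetilde{h \d \phi} \tto h \d \tilde\phi$ is invertible, i.e.\ when $h$ preserves the relevant left extensions along $\coce\Phi A$; every 2-cell is automatically an algebra 2-cell. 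By \cref{characterisations-of-cocontinuity}, preserving these left extensions is equivalent to $\Phi$-cocontinuity. With this identification, Steps 1 and 2 give the biequivalence.
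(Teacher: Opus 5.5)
Your proposal is correct in substance, apart from two citation repairs noted below, but it takes a different route from the paper. The paper argues in one line, via \cite[Corollary~6.7]{arkor2025bicategories}. That result says the Kleisli bicategory of a relative pseudomonad is biequivalent to the full image of the left pseudoadjoint of any relative pseudoadjunction inducing it. For the relative pseudoadjunction of \cref{induced-relative-pseudomonad}, whose left pseudoadjoint is $\coc\Phi{-}$, that full image is exactly the full sub-2-category of $\Phi\Coc$ on the $\Phi$-cocompletions.

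That route never mentions pseudo-algebras or pseudo-morphisms, and needs no coherence check on a comparison pseudofunctor. This is deliberate: as the remark after \cref{algebras-for-pseudomonad} explains, identifying $\o\Phi$-pseudo algebras with $\Phi$-cocomplete objects is not available in the relative setting. Your Steps~1--2 reprove that corollary in this instance; Step~1 is exactly the hom-equivalence already verified in the proof of \cref{induced-relative-pseudomonad}. Your Step~3 does something the paper's proof does not. It shows that, between \emph{free} pseudo-algebras, pseudo-morphisms are exactly the $\Phi$-cocontinuous tight-cells and all 2-cells are algebra 2-cells. This reconciles the `pseudo-algebra' and `$\Phi\Coc$' descriptions in the statement.

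In fact either half suffices on its own. One option is Steps~1--2 plus a direct coherence check, since all structure cells are determined by restriction along $\coce\Phi A$. The other is the embedding of the Kleisli bicategory onto free pseudo-algebras together with Step~3.

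The two citation repairs are as follows.

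In Step~2, \cref{composing-left-extensions} does not apply as stated, because $\tilde g$ is an extension along $\coce\Phi B$, not along $f$. Full faithfulness of $\coce\Phi A$ also plays no role. All you need is that $\tilde g$ preserves the colimit $\coce\Phi A \plx f$. This holds because $\coc\Phi A(\coce\Phi A, 1) \in \Phi\satw$ by \cref{cocompletion-is-in-saturation}.

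In Step~3, conditions (3)--(5) of \cref{characterisations-of-cocontinuity} assume that the domain $\coc\Phi A$ is itself $\Phi$-cocompletable. That is not given in this relative setting. Instead, take the Kleisli map to be $\coce\Phi A$. Then, using \cref{cocompletion-is-dense}, invertibility of the canonical 2-cell says $h \iso \coce\Phi A \plx (\coce\Phi A \d h)$. Hence $h$ is $\Phi$-cocontinuous by \cref{Phi-preservation-is-phi-absolute-preservation}.

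Similarly, derive the property-like description of pseudo-morphisms directly for free algebras rather than quoting the general theory of lax-idempotent relative pseudomonads, which behaves worse than in the non-relative case. For free algebras, $\tilde f$ is a genuine left extension, and the unit axiom forces the structure cell to be the inverse of the canonical one.
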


\begin{proof}
  Immediate from \cite[Corollary~6.7]{arkor2025bicategories}, which establishes that the Kleisli bicategory for a relative pseudomonad is biequivalent to the full image of the left pseudoadjoint of any relative pseudoadjunction inducing the relative pseudomonad.
\end{proof}

We will instantiate \cref{formal-Eilenberg--Watts} in the setting of enriched categories in \cref{generalised-EW-theorem}, thereby substantially generalising the classical Eilenberg--Watts theorem.

When \emph{every} object admits a $\Phi$-cocompletion, the $\o\Phi$-pseudo algebras are precisely the $\Phi$-cocomplete objects, and the $\o\Phi$-pseudo morphisms precisely the $\Phi$-cocontinuous tight-cells, thus augmenting \cref{characterisations-of-cocompleteness,characterisations-of-cocontinuity} with an additional characterisation.

\begin{proposition}[label=algebras-for-pseudomonad]
  Let $\Phi$ be a class of weights for which every object is $\Phi$-cocompletable (\cref{cocompletion}). Then the pseudoadjunction of \cref{induced-relative-pseudomonad} is pseudomonadic, exhibiting $\Phi\Coc$ as the 2-category of $\o\Phi$-pseudo algebras, pseudo morphisms, and transformations for a lax-idempotent pseudomonad $\o\Phi$ on the 2-category $\tX$.
\end{proposition}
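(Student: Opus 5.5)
When every object is $\Phi$-cocompletable, $J_\Phi$ is the identity on $\tX$, so the $J_\Phi$-relative pseudoadjunction of \cref{induced-relative-pseudomonad} is an ordinary pseudoadjunction $\coc\Phi{-} \adj U_\Phi$ and $\o\Phi$ is an ordinary lax-idempotent pseudomonad on $\tX$. The plan is to construct the comparison 2-functor $K \colon \Phi\Coc \to \o\Phi\h\b{Alg}$ and prove it is a biequivalence (indeed an isomorphism onto the pseudo algebras, pseudo morphisms and transformations). For lax-idempotent pseudomonads we use the standard fact (Kock--Zöberlein, or \cite[Proposition~5.4]{arkor2025bicategories} in the relative setting) that a pseudo algebra structure on $A$ is essentially unique when it exists, and exists if and only if the unit $\coce\Phi A$ admits a left adjoint with invertible counit. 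Likewise, every 1-cell between pseudo algebras carries an essentially unique lax morphism structure, and it is a pseudo morphism exactly when that structure (a mate) is invertible. Transformations impose no extra condition, since $U_\Phi$ is locally full.

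On objects: $A$ carries an $\o\Phi$-pseudo algebra structure if and only if $\coce\Phi A$ admits a reflective left adjoint. By \cref{characterisations-of-cocompleteness} (the equivalence (1) $\iff$ (6)), this holds exactly when $A$ is $\Phi$-cocomplete. The left adjoint is $\coce\Phi A \plx 1_A$, and its counit $\coce\Phi A \d (\coce\Phi A \plx 1_A) \iso 1_A$ is invertible by \cref{cocompletion-tight-cell-UP}, which gives exactly the reflection required.

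On 1-cells: for a tight-cell $f \colon A \to B$ between $\Phi$-cocomplete objects, the canonical lax morphism 2-cell is the mate
\[
  (\coce\Phi A \plx (f \d \coce\Phi B)) \d (\coce\Phi B \plx 1_B) \tto (\coce\Phi A \plx 1_A) \d f .
\]
By \cref{characterisations-of-cocontinuity} ((1) $\iff$ (6)), this mate is invertible precisely when $f$ is $\Phi$-cocontinuous. So the pseudo morphisms are exactly the $\Phi$-cocontinuous tight-cells, and the 2-cells are all 2-cells of $\tX$. Hence $K$ is bijective on hom-categories up to the essentially unique choice of structure, and it is essentially surjective on objects. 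It is therefore a biequivalence, which gives pseudomonadicity.

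The main obstacle is the bookkeeping in identifying the abstract algebra and pseudo-morphism data of $\o\Phi$, as produced by \cite[Theorem~3.8]{fiore2018relative}, with the concrete adjoint and mate appearing in \cref{characterisations-of-cocompleteness,characterisations-of-cocontinuity}. In particular we must check that the multiplication of $\o\Phi$ is given by left extension, so that $\coc\Phi f \iso \coce\Phi A \plx (f \d \coce\Phi B)$. This follows from the proof of \cref{induced-relative-pseudomonad}, together with lax-idempotence, which forces all structure to be determined up to unique isomorphism by these extensions.
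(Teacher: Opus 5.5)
Your proposal is correct and follows the paper's proof. Since $J_\Phi$ is the identity, the classical adjointness characterisation of pseudo algebras and pseudo morphisms for a lax-idempotent pseudomonad (Marmolejo) applies, and \cref{characterisations-of-cocompleteness} and \cref{characterisations-of-cocontinuity}, each via condition (6) with left adjoint $\coce\Phi A \plx 1_A$ and the mate built from $\coc\Phi f$, identify these with $\Phi$-cocomplete objects and $\Phi$-cocontinuous tight-cells, every 2-cell being a transformation.

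Two small corrections. First, drop the parenthetical appeal to the relative setting: the adjointness characterisation of algebras does not extend to relative pseudomonads, as the remark after the proposition notes. Second, every 2-cell is a transformation because of lax-idempotence, not because $U_\Phi$ is locally full; local fullness only ensures that $\Phi\Coc$ contains all these 2-cells.
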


\begin{proof}
  Since the pseudomonad is lax-idempotent, $\o\Phi$-pseudo algebra structure is essentially unique: an object $A$ admits the structure of a $\o\Phi$-pseudo algebra if and only if $\coce\Phi A \colon A \to \coc\Phi A$ admits a left adjoint~\cite[Theorem~10.7]{marmolejo1997doctrines}. By \cref{characterisations-of-cocompleteness}, this condition characterises $\Phi$-cocompleteness. Similarly, a 1-cell $f \colon A \to B$ admits the structure of a pseudo morphism if and only if the canonical 2-cell below is invertible.
  \[\begin{tikzcd}
    {\coc\Phi A} & {\coc\Phi B} \\
    A & B
    \arrow["{\coc\Phi f}", from=1-1, to=1-2]
    \arrow["{\coce\Phi A \plx 1_A}"', from=1-1, to=2-1]
    \arrow[between={0.3}{0.7}, Rightarrow, from=1-2, to=2-1]
    \arrow["{\phi_B \plx 1_B}", from=1-2, to=2-2]
    \arrow["f"', from=2-1, to=2-2]
  \end{tikzcd}\]
  By \cref{characterisations-of-cocontinuity}, this condition characterises $\Phi$-cocontinuity. Finally, every 2-cell is a transformation.
\end{proof}

\begin{remark}
  Our proof of \cref{algebras-for-pseudomonad} depends crucially on the characterisation of pseudo algebras for lax-idempotent pseudomonads in terms of adjointness, a characterisation that does not extend to pseudo algebras for lax-idempotent relative pseudomonads~\cite[\S5]{arkor2025bicategories}. Therefore, we do not know under what conditions the relative pseudoadjunction of \cref{induced-relative-pseudomonad} is relatively pseudomonadic in general (\ie{} when $\Phi\Coc$ is biequivalent to the 2-category of pseudo algebras, pseudo morphisms, and transformations for the relative pseudomonad $\o\Phi$).

  In the setting of ordinary categories and conical colimits, the pseudo algebras for relative pseudomonads exhibiting free cocompletions under certain classes of weights are exhibited in \cite[Theorem~7.14]{arkor2025bicategories} as categories admitting the appropriate class of colimits. However, as remarked in \cite[Remark~7.20]{arkor2025bicategories}, the proof technique does not readily extend to enriched categories. The issue is that the structure of a $\o\Phi$-pseudo algebra $B$ is specified in terms of providing, for each tight-cell $f \colon A \to B$ with $\Phi$-cocompletable domain, a tight-cell $f^\ddag \colon \coc\Phi A \to B$ satisfying some conditions~\cite[Definition~3.1]{arkor2025bicategories}. However, it is not clear that $f^\ddag$ is necessarily a left extension along $\coce\Phi A \colon A \to \coc\Phi A$, which would be necessary for this to imply that $B$ is $\Phi$-cocomplete (\cref{Phi-preservation-is-phi-absolute-preservation}).
\end{remark}

Finally, we remark upon a connection between the Kleisli bicategory associated to the relative pseudomonad $\o\Phi$ and the construction of a skew-multicategory $\X[j]$ given a tight-cell $j \colon A \to E$ in \cite[Definition~4.15]{arkor2024formal}.

\begin{lemma}[{\cf~\cite[Lemma~8.1]{arkor2025bicategories}}]
  \label{skew-multicategory-is-monoidal}
  Let $\Phi$ be a class of weights and let $A$ be an object admitting a $\Phi$-cocompletion $\coce\Phi A \colon A \to \Phi A$. The skew-multicategory $\X[\coce\Phi A]$ is represented by the monoidal hom-category $\Kl(\o\Phi)(A, A)$.
\end{lemma}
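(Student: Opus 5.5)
The plan is to show that the binary and nullary multimorphisms of $\X[\coce\Phi A]$ are represented, using the Kleisli composite as the tensor and $\coce\Phi A$ as the unit. Write $j \defeq \coce\Phi A$ and $E \defeq \coc\Phi A$. Recall from \cite[Definition~4.15]{arkor2024formal} how $\X[j]$ is built:
\begin{itemize}
  \item its objects are tight-cells $A \to E$;
  \item a multimorphism $f_1, \ldots, f_n \to g$ is a 2-cell $E(j, f_1), \ldots, E(j, f_n) \tto E(j, g)$;
  \item the skew structure records that the first argument plays a distinguished role.
\end{itemize}
The hom-category $\Kl(\o\Phi)(A, A)$ is $\tX(A, E)$, with composite $f \cdot g \defeq (j \plx f) \d g$ in the notation $(j \plx f) \circ g$, i.e.\ $\tilde f \circ g$, and with unit $j$. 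This is exactly the data of \cref{induced-relative-pseudomonad}. To show that $\X[j]$ is represented by this monoidal category, I must show two things. First, for $f, g \colon A \to E$, there is a canonical 2-cell $E(j, f), E(j, g) \tto E(j, \tilde f g)$ which is left-opcartesian. Second, $E(j, j)$ is the loose-identity. The second is immediate from \cref{cocompletion-is-ff}, since $E(j, j) \iso A(1, 1)$. With both in hand, composing with these cells gives the required bijections between multimorphisms $(f, g, h_1, \ldots) \to k$ and $(\tilde f g, h_1, \ldots) \to k$, and between nullary data and data out of $j$. That is precisely representability.

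The key step is to identify $\tilde f g$ as a $j$-absolute weighted colimit. This proceeds in four moves.
\begin{enumerate}
  \item By density of $j$ (\cref{cocompletion-is-dense}), the identity $1_E$ is the left extension $j \plx j$. Restricting the weight along $g$ then exhibits $g$ as the colimit $E(j, g) \wc j$.
  \item By \cref{saturation-properties}(6) together with \cref{cocompletion-is-in-saturation}, the weight $E(j, g) = E(j, 1)(1, g)$ lies in $\Phi\satw$.
  \item Since $\tilde f$ is $\Phi$-cocontinuous, it is also $\Phi\satw$-cocontinuous by \cref{saturation-properties}. Hence $\tilde f g \iso E(j, g) \wc (\tilde f j) \iso E(j, g) \wc f$, using $\tilde f j \iso f$.
  \item This $\Phi\satw$-colimit in $E$ is $j$-absolute. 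The argument is that of \cref{cocompletion-absolute}: every right lift through $E(j, 1)$ is $\Phi$-exact, hence $\Phi\satw$-exact, so \cref{absoluteness-and-exactness} applies.
\end{enumerate}
By \cref{j-absolute}, it follows that the canonical cell $E(j, f), E(j, g) \tto E(j, \tilde f g)$, built from the colimit cocone, is left-opcartesian. This is exactly the claim $E(j, \tilde f g) \iso E(j, f) \odotl E(j, g)$.

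Finally, I would check that the induced monoidal structure agrees with Kleisli composition, including the associators and unitors. Associativity of the representing tensor is forced by uniqueness of left-opcartesian factorisations. It matches the Kleisli associator $\widetilde{\tilde f g} \iso \tilde f \tilde g$, because both are determined by their restriction along $j$, by the essential uniqueness in \cref{cocompletion-tight-cell-UP}. The unitors come from $\tilde j \iso 1$, which is density again, and from $\tilde f j \iso f$.

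I expect the main obstacle to be step 4 above: the weight $E(j, g)$ is only known to lie in $\Phi\satw$, not in $\Phi$. So one must verify that \cref{cocompletion-absolute} really extends to $\Phi\satw$-colimits. The route is to go through exactness of right lifts and then \cref{absoluteness-and-exactness}, which needs $E(j, 1)$ to be lifting; this holds by \cref{cocompletion}. A secondary point is matching the skew conventions, namely which argument is tight, with the left-opcartesian (rather than fully opcartesian) universal property. I expect left-opcartesianness to be exactly what the skew representability in \cite{arkor2024formal} requires.
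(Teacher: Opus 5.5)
Your argument is sound and proves the lemma, but by a more hands-on route than the paper's. The paper does no direct computation. It observes that $j = \coce\Phi A$ is well-behaved (\cref{cocompletions-are-well-behaved}) and reruns the argument of \cite[Lemma~8.1]{arkor2025bicategories}. That argument needs only that $j$ is fully faithful and dense, and that left extensions along $j$ of tight-cells $A \to E$ exist and are $j$-absolute.

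From those hypotheses, the left-opcartesian cell you want comes out immediately by restricting the $j$-absoluteness of $j \plx f$ along $g$:
$E(j, (j \plx f)g) \iso (E(j,f) \odotl E(j,1))(1,g) \iso E(j,f) \odotl E(j,g)$.
You instead recognise $\tilde f g$ as the colimit $E(j,g) \wc f$, using density, saturation, and $\Phi\satw$-cocontinuity of $\tilde f$. You then obtain its $j$-absoluteness from $\Phi\satw$-exactness of right lifts through $E(j,1)$ and \cref{absoluteness-and-exactness}. In effect you re-prove the needed instance of \cref{cocompletion-absolute} by hand, and each of your four steps checks out. Your version stays inside the cocompletion's universal properties. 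The paper's version makes visible that only well-behavedness is used, so representability by the tensor $(j \plx f)g$ holds for any well-behaved $j$, independently of any class of weights.

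Two points in your write-up should be tightened, though neither changes the strategy.

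First, $\X[j]$ is not only the loose multimorphisms you describe. A tight multimorphism $f_1, \ldots, f_n \to g$ is a 2-cell $E(j,f_2), \ldots, E(j,f_n) \tto E(f_1,g)$; the extension operator of a relative monad is a tight binary one. Representability requires a tight universal binary map, here the unit $E(j,g) \tto E(f, \tilde f g)$ of $j \plx f$ restricted along $g$. That map must induce bijections on tight as well as loose multimorphisms.
\begin{itemize}
  \item The tight bijections follow from $E(\tilde f g, k) \iso E(f,k) \rf E(j,g)$, which is \cref{restriction-of-lift} applied to $E(\tilde f, 1) \iso E(f,1) \rf E(j,1)$.
  \item Alternatively, density gives $E(j,k) \rf E(j,f_1) \iso E(f_1,k)$. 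So tight and loose multimorphisms of positive arity coincide, and this is exactly what justifies your working with loose ones only.
\end{itemize}

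Second, uniqueness of left-opcartesian factorisations yields only a comparison $(f \cdot g) \cdot h \to f \cdot (g \cdot h)$, because left-composites reassociate canonically in one direction only. Its invertibility needs a further input. One option is the lemma following \cref{right-lifts-imply-left-composites-are-composites}, applied with $E(j,f) = E(j,1)(1,f)$, which is lifting. Another is the fact that $\tilde f$ preserves the $j$-absolute left extension $j \plx g$. Your appeal to the Kleisli isomorphism $\widetilde{\tilde f g} \iso \tilde f \tilde g$ tacitly relies on one of these.
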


\begin{proof}
  The proof is exactly as in \cite[Lemma~8.1]{arkor2025bicategories}, using that, since $\coce\Phi A$ is a cocompletion, it is well-behaved by \cref{cocompletions-are-well-behaved}.
\end{proof}

Since the monoids in $\X[\coce\Phi A]$ are precisely the $\coce\Phi A$-relative monads~\cite[Theorem~4.29]{arkor2024formal}, we may use this connection to characterise them as the $\Phi$-cocontinuous monads on $\coc\Phi A$.

\begin{theorem}[{\cf~\cite[Theorem~8.3]{arkor2025bicategories}}]
  \label{cocontinuous-monads-are-relative-monads}
  Let $\Phi$ be a class of weights, and let $A$ be an object admitting a $\Phi$-cocompletion $\coce\Phi A \colon A \to \coc\Phi A$. Left extension along $\coce\Phi A$ defines a coreflection
  \begin{equation}
    \label{RMnd-Mnd-adjunction}
    \RMnd(\coce\Phi A) \rightleftarrows \Mnd(\coc\Phi A)
  \end{equation}
  which restricts to an equivalence of categories
  \begin{equation}
    \label{RMnd-Mnd-equivalence}
    \RMnd(\coce\Phi A) \equiv \Mnd_\Phi(\coc\Phi A)
  \end{equation}
  between the category of $\coce\Phi A$-relative monads, and the category of $\Phi$-cocontinuous monads on $\coc\Phi A$.
\end{theorem}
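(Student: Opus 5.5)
The plan is to deduce the statement from its general counterpart \cite[Theorem~8.3]{arkor2025bicategories}, and failing that to rerun that argument directly. The only structural inputs it needs are those already established here: $\coce\Phi A$ is well-behaved (\cref{cocompletions-are-well-behaved}), and the skew-multicategory $\X[\coce\Phi A]$ is represented by the monoidal hom-category $\Kl(\o\Phi)(A, A)$ (\cref{skew-multicategory-is-monoidal}).

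First I construct the functor $\RMnd(\coce\Phi A) \to \Mnd(\coc\Phi A)$ in \eqref{RMnd-Mnd-adjunction}. A $\coce\Phi A$-relative monad is a monoid in $\X[\coce\Phi A]$ (\cite[Theorem~4.29]{arkor2024formal}), with underlying tight-cell $t \colon A \to \coc\Phi A$. Its image is $\coce\Phi A \plx t$, which exists because $\coc\Phi A$ is $\Phi$-cocomplete. By \cref{skew-multicategory-is-monoidal}, the multiplication of the monoid corresponds to a 2-cell $(\coce\Phi A \plx t)(\coce\Phi A \plx t) \tto \coce\Phi A \plx t$. Here I use that left extensions along $\coce\Phi A$ compose: $\coce\Phi A \plx t$ is $\Phi$-cocontinuous by \cref{Phi-preservation-is-phi-absolute-preservation}, so it preserves $\coce\Phi A \plx t$, and \cref{composing-left-extensions} applies. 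Equivalently, left extension is a strong monoidal functor $\Kl(\o\Phi)(A, A) \to \tX(\coc\Phi A, \coc\Phi A)$, so it sends monoids to monads.

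The right adjoint $\Mnd(\coc\Phi A) \to \RMnd(\coce\Phi A)$ sends a monad $T$ to $\coce\Phi A \d T$, i.e.\ the relative monad obtained by precomposing with the root (\cite[\S5]{arkor2024formal}). The adjunction is a monoidal upgrade of the universal property of the left extension, namely 2-cells $\coce\Phi A \plx t \tto T$ correspond to 2-cells $t \tto \coce\Phi A \d T$. I need to check that this bijection respects units and multiplications; this follows from uniqueness in the universal property. The unit $t \tto \coce\Phi A \d (\coce\Phi A \plx t)$ is invertible by \cref{cocompletion-tight-cell-UP}, so the left adjoint is fully faithful and the adjunction is a coreflection.

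For \eqref{RMnd-Mnd-equivalence}, the essential image of the left adjoint consists of the monads whose underlying tight-cell is a left extension along $\coce\Phi A$. By \cref{Phi-preservation-is-phi-absolute-preservation}, these are exactly the $\Phi$-cocontinuous monads. A fully faithful left adjoint is an equivalence onto its essential image, so the coreflection restricts to the equivalence with $\Mnd_\Phi(\coc\Phi A)$.

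I expect the main obstacle to be the coherence check that the bijection of 2-cells is compatible with the monoid and monad structures. The cleanest route is to show that the strong monoidal functor $\Kl(\o\Phi)(A,A) \to \tX(\coc\Phi A, \coc\Phi A)$ has a lax monoidal right adjoint, precomposition with $\coce\Phi A$, and that the adjunction is monoidal (doctrinal adjunction). It then lifts to monoids automatically, and it suffices to cite \cite[Theorem~8.3]{arkor2025bicategories} with the well-behavedness input.
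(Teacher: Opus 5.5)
Your proposal is correct and follows the paper's route. The paper's proof only says that the argument of \cite[Theorem~8.3]{arkor2025bicategories} goes through unchanged. That result is stated for $\X = \Cat$, so it has to be rerun rather than cited, with \cref{cocompletions-are-well-behaved,skew-multicategory-is-monoidal} as inputs, which is what your sketch does (strong monoidal left extension, doctrinal adjunction, invertible unit, and \cref{Phi-preservation-is-phi-absolute-preservation} for the essential image). One small slip: the isomorphism $(\coce\Phi A \plx t) \d (\coce\Phi A \plx s) \iso \coce\Phi A \plx (t \d (\coce\Phi A \plx s))$ comes directly from $\coce\Phi A \plx s$ preserving the $\coce\Phi A$-absolute colimit $\coce\Phi A \plx t$. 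It does not come from \cref{composing-left-extensions}, whose shape and full-faithfulness hypothesis on the middle tight-cell do not match this situation.
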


\begin{proof}
  The proof is exactly as in \cite[Theorem~8.3]{arkor2025bicategories}, which establishes the result for $\X = \Cat$.
\end{proof}

It can be shown that this relationship furthermore respects algebra objects, in the sense that a $\Phi$-cocontinuous monad $T$ on $\coc\Phi A$ admits an algebra object if and only if the $\coce\Phi A$-relative monad $(\coce\Phi A \d T)$ does, in which case the algebra objects coincide (\cf~\cite[Example~5.9]{arkor2024relative}); we shall establish this as a consequence of a more general phenomenon in future work.

\section{Cocompletions via presheaf constructions}
\label{presheaves-and-cocompletions}

In the \hyperref[introduction]{introduction}, we recalled that cocompletions of enriched categories under classes of weights are exhibited by presheaf constructions. We now tie together our formal developments of presheaf objects and cocompletions by showing that this phenomenon is entirely formal: under reasonable assumptions, $\Phi$-cocompletions may be exhibited by presheaf objects with respect to a class of loose-cells $\PP$ appropriately related to the class of weights $\Phi$. We shall use this in \cref{presheaves-and-cocompletions-in-V-Cat} to construct cocompletions of categories enriched in bicategories.

\begin{theorem}[label=presheaf-object-is-cocompletion]
  Let $\Phi$ be a class of weights, let $\PP$ be a class of loose-cells, and let $A$ be a $\PP$-admissible object (\cref{Phi-embedding}).
  Assume the following.
  \begin{enumerate}
    \item For every weight $\vec p \in \Phi$ and loose-cell $q \in \PP$, the left-composite $q \odotl \vec p$ exists and is in $\PP\repl$.
    \item $\pi_A$ is in the colimit saturation of $\Phi$ (\cref{saturation}).
    \item $\pi_A \colon \psh\PP A \lto A$ is lifting (\cref{lifting}).
  \end{enumerate}
  Then $\pshe\PP A \colon A \to \psh\PP A$ exhibits the $\Phi$-cocompletion of $A$ (\cref{cocompletion}).
\end{theorem}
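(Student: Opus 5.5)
The plan is to verify the four conditions of \cref{intrinsic-characterisation-of-cocompletions} for $j \defeq \pshe\PP A \colon A \to \psh\PP A$, noting throughout that $\psh\PP A(\pshe\PP A, 1) \iso \pi_A$ by the Yoneda part of the lemma following \cref{Phi-embedding}. Under this identification, assumption (2) gives condition (2) of \cref{intrinsic-characterisation-of-cocompletions} directly, since $\Phi\satw$ is replete (\cref{saturation-properties}). Condition (4), that $\pshe\PP A$ is dense and \ff{}, is part of the same lemma.

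For condition (1), that $\psh\PP A$ is $\Phi$-cocomplete, take a tight-cell $f \colon D \to \psh\PP A$ and a weight $\vec p \colon W \lcto D$ in $\Phi$. Write $q \defeq \pi_A(1, f)$, which lies in $\PP$ by saturation \eqref{Phi-presheaf-object-saturation}, so that $f = \pshm q$. Assumption (1) says $q \odotl \vec p$ exists and lies in $\PP\repl$. Hence \cref{presheaf-colimits-absolute} yields the colimit $\vec p \wc \pshm q$, and moreover it is $\pi_A$-absolute. Thus every $\Phi$-colimit in $\psh\PP A$ exists and is $\pshe\PP A$-absolute; this is exactly what \cref{j-absolute} requires for the tight-cell $\pshe\PP A$, since $\pi_A \iso \psh\PP A(\pshe\PP A, 1)$.

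For condition (3), assumption (3) says $\pi_A$ is lifting, so $\psh\PP A(\pshe\PP A, 1)$ is lifting. By \cref{right-lifts-respect-absolute-colimits}, every right lift through $\pi_A$ respects $\pi_A$-absolute colimits. By the previous paragraph, all $\Phi$-colimits in $\psh\PP A$ are $\pi_A$-absolute, so every such right lift is $\Phi$-exact. Together with the above, all four conditions of \cref{intrinsic-characterisation-of-cocompletions} hold, so $\pshe\PP A$ exhibits the $\Phi$-cocompletion.

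The main obstacle is the transfer between the loose-cell $\pi_A$ and the conjoint $\psh\PP A(\pshe\PP A, 1)$. Absoluteness, liftingness and saturation are stated for one and needed for the other. Since these notions are invariant under globular isomorphism of the loose-cell, the Yoneda isomorphism should make this routine. The one subtle point is checking that the $\pi_A$-absoluteness 2-cell produced by \cref{presheaf-colimits-absolute} matches the one in \cref{j-absolute} after composing with that isomorphism; I expect this to follow by naturality of restriction of the isomorphism along $\pshm q$ and along $\vec p \wc \pshm q$.
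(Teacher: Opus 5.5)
Your proposal is correct and is essentially the paper's proof. Both arguments verify the four conditions of \cref{intrinsic-characterisation-of-cocompletions} for $\pshe\PP A$ through the isomorphism $\psh\PP A(\pshe\PP A, 1) \iso \pi_A$, and both draw on the same sources:
\begin{enumerate}
  \item $\Phi$-cocompleteness and $\pi_A$-absoluteness from \cref{presheaf-colimits-absolute};
  \item the saturation condition from repleteness of $\Phi\satw$;
  \item $\Phi$-exactness of right lifts from the fact that right lifts through $\pi_A$ respect $\pi_A$-absolute colimits.
\end{enumerate}
The obstacle you flag at the end does not actually arise: the exactness step only ever uses $\pi_A$-absoluteness directly, never $\pshe\PP A$-absoluteness.
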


\begin{proof}
  We verify the four conditions of \cref{intrinsic-characterisation-of-cocompletions} with respect to $\pshe\PP A \colon A \to \psh\PP A$, using the fact that
  $\psh\PP A(\pshe\PP A, 1) \iso \pi_A$.
  \begin{enumerate}
    \item That $\psh\PP A$ is $\Phi$-cocomplete follows from \cref{presheaf-colimits-absolute}, using the assumption regarding the existence of left-composites.
    \item $\psh\PP A(\pshe\PP A, 1)$ is in the colimit saturation because the colimit saturation is replete (\cref{sat-is-replete}) and contains $\pi_A$.
    \item $\psh\PP A(\pshe\PP A, 1) \iso \pi_A$ is lifting by assumption. Right lifts though this loose-cell are $\Phi$-exact by \cref{absoluteness-and-exactness}, where we invoke \cref{presheaf-colimits-absolute} for $\pi_A$-absoluteness of $\Phi$-colimits.
    \item $\pshe\PP A$ is dense and \ff{} by \cref{well-behaved-is-presheaf-object}.
    \qedhere
  \end{enumerate}
\end{proof}

\begin{remark}
  Condition (3) is equivalent to:
  \begin{enumerate}
    \item[(3$'$)] $p$ is lifting, for each $p \in \PP$.
  \end{enumerate}
  By definition, we have $\pi_A \in \PP$, so one direction is trivial. In the other, we have each $p \in \PP$ satisfies $p = \pi_A(1, \pshm p)$ and hence $q \rf p = q \rf \pi_A(1, \pshm p) \iso (q \rf \pi_A)(\pshm p, 1)$.
  When $q \in \PP$, this is automatic from the universal property of the $\PP$-presheaf object (\cref{Phi-presheaf-hom-is-right-lift}).
\end{remark}

In particular, every class $\PP$ of loose-cells can be viewed as a class of (unary) weights, and we may instantiate \cref{presheaf-object-is-cocompletion} in this special case. (However, in practice, it will be the more general statement that will be useful.)

\begin{corollary}[label=psh-cocompleteness, note={\cf~\cite[Theorem~10.25]{shulman2013enriched}}]
  Let $\PP$ be a class of loose-cells and let $A$ be a \mbox{$\PP$-admissible} object. If $\PP$ is closed under left-composites, then $\psh\PP A$ is $\PP$-cocomplete and every $\PP$-colimit in $\psh\PP A$ is $\pshe\PP A$-absolute. Furthermore, if $\pi_A \colon \psh\PP A \lto A$ is lifting, then $\pshe\PP A \colon A \to \psh\PP A$ exhibits the $\PP$-cocompletion of $A$.
\end{corollary}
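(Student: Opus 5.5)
The plan is to specialise \cref{presheaf-object-is-cocompletion} to $\Phi \defeq \PP$, regarding each loose-cell in $\PP$ as a unary weight. For the first two claims I would argue directly from \cref{presheaf-colimits-absolute}, since they do not need the lifting hypothesis.

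For $\PP$-cocompleteness, take a tight-cell $f \colon X \to \psh\PP A$ and a weight $p \colon Y \lto X$ in $\PP$. By \cref{Phi-presheaf-object}, $q \defeq \pi_A(1, f)$ lies in $\PP$ and satisfies $f = \pshm q$. Because $\PP$ is closed under left-composites (which I read as: the left-composite $q \odotl p$ exists and lies in $\PP$, or at least in $\PP\repl$), \cref{presheaf-colimits-absolute} gives that $p \wc \pshm q$ exists and is $\pi_A$-absolute. Since $A$ is $\PP$-admissible, the Yoneda part of the lemma on $\PP$-embeddings gives $\psh\PP A(\pshe\PP A, 1) \iso \pi_A$. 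Hence $\pi_A$-absoluteness is the same as $\pshe\PP A$-absoluteness in the sense of \cref{j-absolute}. This settles the first sentence.

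For the cocompletion claim, I would verify the three hypotheses of \cref{presheaf-object-is-cocompletion} with $\Phi = \PP$.
\begin{itemize}
  \item Hypothesis (1) is exactly closure of $\PP$ under left-composites.
  \item Hypothesis (3) is the assumption that $\pi_A$ is lifting.
  \item Hypothesis (2) asks that $\pi_A$ lie in the colimit saturation $\PP\satw$. This follows because $\pi_A \in \PP$ by the remark after \cref{Phi-presheaf-object}, and $\PP \subseteq \PP\satw$ since $\ph\satw$ is a closure operator (\cref{saturation-properties}).
\end{itemize}
The theorem then shows that $\pshe\PP A$ exhibits the $\PP$-cocompletion.

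The main point needing care is interpreting ``closed under left-composites'' so that it matches hypothesis (1) of \cref{presheaf-object-is-cocompletion}: the left-composite $q \odotl p$ must exist for every $q, p \in \PP$ that are composable, with $q$ having codomain $A$. I would state explicitly that existence is part of the closure assumption, and that landing in $\PP\repl$ rather than $\PP$ suffices throughout. Everything else is a direct citation of earlier results.
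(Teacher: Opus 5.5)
Your proposal is correct and matches the paper's proof, which simply cites \cref{presheaf-colimits-absolute} (using $\psh\PP A(\pshe\PP A, 1) \iso \pi_A$ for the absoluteness claim) together with \cref{presheaf-object-is-cocompletion} taken with $\Phi \defeq \PP$. You are also right to read ``closed under left-composites'' as including existence of the relevant left-composites, since hypothesis (1) of that theorem requires it.
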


\begin{proof}
  Immediate from \cref{presheaf-colimits-absolute,presheaf-object-is-cocompletion}.
\end{proof}

\begin{remark}
  \label{relationship-to-koudenburg}
  We briefly outline the relationship to the work of \textcite{koudenburg2024formal}, who establishes that presheaf objects (with respect to the class of all loose-cells, \ie{} the situation of \cref{presheaf-object}) in augmented \ve s exhibit cocompletions with respect to certain classes of weights and diagrams. (An \emph{augmented \ve} is a structure similar to a \ve, but which does not necessarily have nullary loose-cells, instead having a collection of multiary 2-cells with nullary domain~\cite[Definition~1.19]{koudenburg2024formal}.)

  Let $\PP$ be a class of loose-cells in a \ve{} $\X$, and take the full sub-augmented \ve{} $\X_\PP$ spanned by loose-cells in $\PP$. Suppose that $\X_\PP$ admits a presheaf object $\psh\PP A$ (with respect to all loose-cells in $\X_\PP$). Then \cite[Theorem~7.6]{koudenburg2024formal} establishes that $\psh\PP A$ exhibits the 2-categorical universal property of a free cocompletion that we consider (\ie{} conditions (1 \& 2) in \cref{cocompletion}): \citeauthor{koudenburg2024formal}'s conditions (e \& y) are analogous to our conditions (1 \& 2) in \cref{presheaf-object-is-cocompletion} (\cf~\cite[Example~5.3 \& Remark~1.26]{koudenburg2024formal}). One key difference is that we permit $\Phi$ to contain non-unary weights; we also merely require $\pi_A$ to be in $\Phi\satw$ rather than in $\Phi$. However, the most important distinction is in the universal property we establish (for which we use condition (3), which has no analogue amongst \citeauthor{koudenburg2024formal}'s assumptions).

  Crucially, as we have discussed in \cref{cocompletions}, our notion of free cocompletion has a stronger universal property than the 2-categorical universal property (namely, \cref{cocompletion-loose-cell-UP}), which does not appear to follow from the results of \cite{koudenburg2024formal}, and which is fundamental to establishing many of the important properties regarding cocompletions in \cref{cocompleteness-in-a-virtual-equipment}.
\end{remark}

\section{Copresheaf objects and completions}
\label{copresheaf-objects-and-completions}

Throughout the paper so far, we have focused on presheaf objects and cocompletions. The theory of \emph{copresheaf objects} and \emph{completions} in a \ve{} $\X$ is formally dual, corresponding to presheaf objects and cocompletions in the \ve{} $\X\co$ (\cref{dual}). For convenience, we spell out the main definitions and theorems.

\begin{manualdefinition}{\ref{dense-loose-cell}$\co$}
  A loose-cell $p \colon A \lto B$ is \emph{codense} when the identity 2-cell $1_p \colon p \tto p$ exhibits $B(1, 1) \colon B \lto B$ as the right extension $p \rx p$. A tight-cell is \emph{codense} if its companion is.
\end{manualdefinition}

\begin{manualdefinition}{\ref{Phi-presheaf-object}$\co$}
  Let $\PP$ be a class of loose-cells in $\X$ and let $A$ be an object. A \emph{$\PP$-copresheaf object} for $A$ comprises an object $\cpsh\PP A$ and a codense loose-cell $\copi^\PP_A \colon A \lto \cpsh\PP A$, such that
  \begin{enumerate}
      \item for every loose-cell $p \colon A \lto X$ in $\PP$, there is a unique tight-cell $\cpshm p \colon X \to \cpsh\PP A$ satisfying $p = \copi^\PP_A(\cpshm p, 1)$;
      \item for every tight-cell $f \colon X \to \cpsh\PP A$, the restriction $\copi^\PP_A(f, 1) \colon A \lto X$ is in $\PP$.
  \end{enumerate}
  Typically, when there is no risk of confusion, we will denote $\copi^\PP_A$ simply by $\copi_A$.
\end{manualdefinition}

\begin{manualdefinition}{\ref{Phi-embedding}$\co$}
  Let $\PP$ be a class of loose-cells. An object $A$ is \emph{$\PP$-coadmissible} if it admits a $\PP$-copresheaf object, and $1_A \in \PP\repl$.
  In this case, we denote by
  \[\cpshe\PP A \iso \widecpshm{A(1, 1)} \colon A \to \cpsh\PP A\]
  the \emph{$\PP$-copresheaf embedding of $A$}, which is the essentially unique tight-cell satisfying $A(1, 1) \iso \copi_A(\cpshe\PP A, 1)$.
\end{manualdefinition}

\begin{manualdefinition}{\ref{atom}$\co$}
  Let $\vec p \colon Z \lcto Y$ be a weight. A loose-cell $j \colon A \lto E$ is a \emph{$\vec p$-coatom} when $E$ admits $\vec p$-weighted limits, and these limits are $j$-absolute. Given a class of weights $\Phi$, we say that $j$ is a \emph{$\Phi$-coatom} if it is a $\vec p$-coatom for each $\vec p \in \Phi$. A tight-cell is a \emph{$\vec p$-coatom}, respectively \emph{$\Phi$-coatom}, if its companion is.
\end{manualdefinition}

\begin{manualdefinition}{\ref{well-behaved}$\co$}
  \label{co-well-behaved}
  A tight-cell $j \colon A \to E$ is \emph{co-well-behaved} if the following conditions hold.
  \begin{enumerate}
    \item $j$ is \ff{}.
    \item $j$ is codense.
    \item $j$ is an $E(1, j)$-coatom, \ie right extensions of tight-cells $A \to E$ along $j$ exist and are $j$-coabsolute.
    \qedhere
  \end{enumerate}
\end{manualdefinition}

\begin{manualdefinition}{\ref{cocomplete}$\co$}
  \label{complete}
  Let $\Phi$ be a class of weights.
  \begin{enumerate}
    \item A \emph{$\Phi$-limit} is a $\vec p$-weighted limit for some $\vec p \in \Phi$.
    \item An object $A$ is \emph{$\Phi$-complete} if, for every tight-cell $f \colon D \to A$ and weight $\vec p \colon D \lcto W$ in $\Phi$, there exists a limit $\vec p \wl f \colon W \to A$.
    \item A tight-cell $g \colon A \to B$ with $\Phi$-complete domain is \emph{$\Phi$-continuous} if it preserves every $\Phi$-limit.
    \item A loose-cell $p \colon A \lto X$ with $\Phi$-complete domain is \emph{$\Phi$-coexact} if it respects every $\Phi$-limit (in the sense of \cref{respects-limit}).
    \qedhere
  \end{enumerate}
\end{manualdefinition}

\begin{manualdefinition}{\ref{cocompletion}$\co$}
  \label{completion}
  Let $\Phi$ be a class of weights and let $A$ be an object. A \emph{$\Phi$-completion} of $A$ comprises an object $\com\Phi A$ and a tight-cell $\come\Phi A \colon A \to \com\Phi A$ such that the following conditions hold.
  \begin{enumerate}
    \item $\com\Phi A$ is $\Phi$-complete.
    \item For each tight-cell $f \colon A \to X$ with $\Phi$-complete codomain, the right extension ${\come\Phi A \prx f \colon \com\Phi A \to X}$ exists, the counit $\come\Phi A \d (\come\Phi A \prx f) \tto f$ is invertible, and $\come\Phi A \prx f$ is the essentially unique $\Phi$-continuous tight-cell $\tilde f \colon \com\Phi A \to X$ such that $f \iso (\come\Phi A \d \tilde f)$.
    \item For each loose-cell $p \colon X \lto A$, the right extension ${\com\Phi A(1, \come\Phi A) \rx p \colon \com\Phi A \lto X}$ exists, the canonical 2-cell $(\com\Phi A(1, \come\Phi A) \rx p)(1, \come\Phi A) \tto p$ is invertible, and $\com\Phi A(1, \come\Phi A) \rx p$ is the essentially unique $\Phi$-coexact loose-cell $\tilde p \colon \com\Phi A \lto X$ such that $p \iso \tilde p(1, \come\Phi A)$.
  \end{enumerate}
  An object is \emph{$\Phi$-completable} if it admits a $\Phi$-completion.
\end{manualdefinition}

\begin{manualproposition}{\ref{cocompletions-are-well-behaved}$\co$}
  Let $\Phi$ be a class of weights. Every $\Phi$-completion $\come\Phi A \colon A \to \com\Phi A$ is co-well-behaved.
\end{manualproposition}

\begin{manualtheorem}{\ref{presheaf-object-is-cocompletion}$\co$}
  Let $\Phi$ be a class of weights, let $\PP$ be a class of loose-cells, and let $A$ be a $\PP$-coadmissible object.
  Assume the following.
  \begin{enumerate}
    \item For every weight $\vec p \in \Phi$ and loose-cell $q \in \PP$, the right-composite $\vec p \odotr q$ exists and is in $\PP\repl$.
    \item $\pi_A$ is extending (\cref{extending}).
    \item $\pi_A$ is in the limit saturation of $\Phi$ (\cref{limit-saturation}).
  \end{enumerate}
  Then $\cpshe\PP A \colon A \to \cpsh\PP A$ exhibits the $\Phi$-completion of $A$.
\end{manualtheorem}

\section{Cocompletions of enriched categories}
\label{presheaves-and-cocompletions-in-V-Cat}

To demonstrate the utility of the theory we have developed, we instantiate our main results in the context of \ve s{} of enriched categories, providing constructions of completions and cocompletions of enriched categories under classes of weights. Beyond merely justifying the applicability of our results, this fills a conspicuous gap in the literature: while such constructions are well known to exist in the setting of enrichment in particularly nice monoidal categories, it does not appear that the existence of these constructions has been demonstrated even for categories enriched in non-symmetric monoidal categories, let alone for categories enriched in bicategories, despite the importance of these settings. (We give a survey of completion results in the literature in \cref{historical-context}.)

\subsection{Categories enriched in \vbs}

In fact, we shall consider a setting slightly more general than that of enrichment in bicategories. It was observed by \textcite{leinster2002generalized} that it is natural to enrich categories not merely in monoidal categories, but in \vdcs{} (enrichment in a monoidal category being recovered by considering a monoidal category to be a one-object \vdc). Here, we shall be concerned with the special case of enrichment in normal \vbs\footnotemark{}. Recall that a \vdc{} is normal when it admits loose-identities (\cref{opcartesian}).
\footnotetext{The reason for considering enrichment merely in normal \vbs{}, rather than in the most general setting, is that there are several subtleties arising in the theory of enrichment in \vdcs{} that must be addressed before developing the corresponding \fct. In the interest of space, we defer such considerations to a future paper.}%

\begin{definition}
  A (\emph{normal}) \emph{\vb} is a (normal) \vdc{} in which every tight-cell is an identity.
\end{definition}

A normal \vb{} is thus a structure akin to a bicategory -- in particular, possessing identity 1-cells -- but without the ability to compose 1-cells, instead having multiary 2-cells $f_1, \ldots, f_n \tto g$.

\begin{example}
  \label{multicategory}
  Every multicategory (and, in particular, every monoidal category) $\M$ forms a \vb{} $\Sigma\M$ with a single object, the loose-cells of $\Sigma\M$ being the objects of $\M$, and the 2-cells of $\Sigma\M$ being the multimorphisms of $\M$.
\end{example}

Since the concept of enrichment in \vbs{} is not widely known, we recall the relevant definitions.

\begin{definition}[{\cite[Definition~2]{leinster2002generalized}}]
  \label{V-category}
  Let $\V$ be a \vb{}. A \emph{$\V$-enriched category} (or simply \emph{$\V$-category}) $\C$ comprises the following data.
  \begin{enumerate}
    \item A class $\ob\C$ of \emph{objects}.
    \item For each $x \in \ob\C$, an \emph{extent} $\ex x \in \V$.
    \item For each $x, y \in \ob\C$, a \emph{hom-morphism}\footnotemark{} $\C(x, y) \colon \ex y \lto \ex x$ in $\V$.
    \item For each $x \in \ob\C$, an \emph{identity} 2-cell in $\V$.
      \[\begin{tikzcd}[column sep=large]
        {\ex x} & {\ex x} \\
        {\ex x} & {\ex x}
        \arrow[equals, nfold, from=1-1, to=1-2]
        \arrow[""{name=0, anchor=center, inner sep=0}, equals, nfold, from=1-1, to=2-1]
        \arrow[""{name=1, anchor=center, inner sep=0}, equals, nfold, from=1-2, to=2-2]
        \arrow["{\C(x, x)}"{inner sep=.8ex}, "\shortmid"{marking}, from=2-2, to=2-1]
        \arrow["{\I_x}"{description}, draw=none, from=0, to=1]
      \end{tikzcd}\]
    \item For each $x, y, z \in \ob\C$, a \emph{composition} 2-cell in $\V$.
      \[\begin{tikzcd}[column sep=large]
        {\ex x} & {\ex y} & {\ex z} \\
        {\ex x} && {\ex z}
        \arrow[""{name=0, anchor=center, inner sep=0}, equals, nfold, from=1-1, to=2-1]
        \arrow["{\C(x, y)}"'{inner sep=.8ex}, "\shortmid"{marking}, from=1-2, to=1-1]
        \arrow["{\C(y, z)}"'{inner sep=.8ex}, "\shortmid"{marking}, from=1-3, to=1-2]
        \arrow[""{name=1, anchor=center, inner sep=0}, equals, nfold, from=1-3, to=2-3]
        \arrow["{\C(x, z)}"{inner sep=.8ex}, "\shortmid"{marking}, from=2-3, to=2-1]
        \arrow["{\circ_{x, y, z}}"{description}, draw=none, from=0, to=1]
      \end{tikzcd}\]
  \end{enumerate}
  \footnotetext{There are two conventions for the direction of hom-morphisms for categories enriched in bicategories. We follow \textcite{street1981cauchy}, which aligns with our convention that loose-cells should be considered contravariant in their codomains (\cf~\cref{Cat}).}%
  These data are required to satisfy left-unitality, right-unitality, and associativity axioms.
\end{definition}

\begin{definition}
  Let $\V$ be a \vb. A $\V$-category is
  \begin{enumerate}
    \item \emph{small} if its class of objects is a set;
    \item \emph{extentwise small}\footnotemark{} if, for each object $V \in \V$, the class of objects with extent $V$ is a set;
    \item \emph{large} in general.
    \qedhere
  \end{enumerate}
  \footnotetext{Called \emph{pointwise small} in \cite[\S3]{gordon1999gabriel}.}%
\end{definition}

In particular, every small $\V$-category is extentwise small, and the converse holds if the \vb{} $\V$ is small.

We will leave most examples for \cref{examples}, but mention one familiar example here.

\begin{example}
  For $\M$ a multicategory and $\Sigma\M$ the corresponding one-object \vb{} described in \cref{multicategory}, a $\Sigma\M$-enriched category is simply a category enriched in the multicategory $\M$, in the sense of \cite[106]{lambek1969deductive}. In particular, when $\M$ is a monoidal category, this recovers the usual notion of a category enriched in a monoidal category~\cite{kelly1982basic}.
\end{example}

\begin{definition}[{\cite[Definition~3]{leinster2002generalized}}]
  \label{V-functor}
  Let $\V$ be a \vb{} and let $\C$ and $\D$ be $\V$-categories. A \emph{$\V$-functor} $f$ from $\C$ to $\D$ comprises the following data.
  \begin{enumerate}
    \item A function $\ob f \colon \ob\C \to \ob\D$ such that $f$ preserves extent, in that the following diagram commutes.
    \[\begin{tikzcd}
      {\ob\C} && {\ob\D} \\
      & \V
      \arrow["{\ob f}", from=1-1, to=1-3]
      \arrow["{\ex\ph}"', from=1-1, to=2-2]
      \arrow["{\ex\ph}", from=1-3, to=2-2]
    \end{tikzcd}\]
    \item For each $x, y \in \ob\C$, a 2-cell in $\V$.
    \[\begin{tikzcd}[column sep=6em]
      {\ex x} & {\ex y} \\
      {\ex{\ob f x}} & {\ex{\ob f y}}
      \arrow[""{name=0, anchor=center, inner sep=0}, equals, nfold, from=1-1, to=2-1]
      \arrow["{\C(x, y)}"'{inner sep=.8ex}, "\shortmid"{marking}, from=1-2, to=1-1]
      \arrow[""{name=1, anchor=center, inner sep=0}, equals, nfold, from=1-2, to=2-2]
      \arrow["{\D(\ob f x, \ob f y)}"{inner sep=.8ex}, "\shortmid"{marking}, from=2-2, to=2-1]
      \arrow["{f_{x, y}}"{description}, draw=none, from=0, to=1]
    \end{tikzcd}\]
  \end{enumerate}
  These data are required to satisfy preservation of identities and composites.
\end{definition}

\begin{definition}
  \label{V-distributor}
  Let $\V$ be a \vb{} and let $\C$ and $\C'$ be $\V$-categories. A \emph{$\V$-distributor} $p$ from $\C'$ to $\C$ comprises the following data.
  \begin{itemize}
    \item For each $x \in \ob\C$ and $y \in \ob{\C'}$, a \emph{hom-morphism} $p(x, y) \colon \ex y \lto \ex x$ in $\V$.
    \item For each $x, x' \in \ob\C$ and $y \in \ob{\C'}$, a \emph{composition} 2-cell in $\V$.
    \[\begin{tikzcd}[column sep=large]
      {\ex{x'}} & {\ex x} & {\ex y} \\
      {\ex{x'}} && {\ex y}
      \arrow[""{name=0, anchor=center, inner sep=0}, equals, nfold, from=1-1, to=2-1]
      \arrow["{\C(x', x)}"'{inner sep=.8ex}, "\shortmid"{marking}, from=1-2, to=1-1]
      \arrow["{p(x, y)}"'{inner sep=.8ex}, "\shortmid"{marking}, from=1-3, to=1-2]
      \arrow[""{name=1, anchor=center, inner sep=0}, equals, nfold, from=1-3, to=2-3]
      \arrow["{p(x', y)}"{inner sep=.8ex}, "\shortmid"{marking}, from=2-3, to=2-1]
      \arrow["{\circ_{x', x, y}}"{description}, draw=none, from=0, to=1]
    \end{tikzcd}\]
    \item For each $x \in \ob\C$ and $y, y' \in \ob{\C'}$, a \emph{composition} 2-cell in $\V$.
    \[\begin{tikzcd}[column sep=large]
      {\ex x} & {\ex y} & {\ex{y'}} \\
      {\ex x} && {\ex{y'}}
      \arrow[""{name=0, anchor=center, inner sep=0}, equals, nfold, from=1-1, to=2-1]
      \arrow["{p(x, y)}"'{inner sep=.8ex}, "\shortmid"{marking}, from=1-2, to=1-1]
      \arrow["{\C'(y, y')}"'{inner sep=.8ex}, "\shortmid"{marking}, from=1-3, to=1-2]
      \arrow[""{name=1, anchor=center, inner sep=0}, equals, nfold, from=1-3, to=2-3]
      \arrow["{p(x, y')}"{inner sep=.8ex}, "\shortmid"{marking}, from=2-3, to=2-1]
      \arrow["{\circ_{x, y, y'}}"{description}, draw=none, from=0, to=1]
    \end{tikzcd}\]
  \end{itemize}
  These data are required to satisfy compatibility axioms with the identities and composition in $\C$ and $\C'$ and an interaction axiom~\cite[\S3]{leinster2002generalized}.
\end{definition}

\begin{definition}
  \label{V-transformation}
  Let $\V$ be a \vb{}. A \emph{$\V$-natural transformation} with the following frame of $\V$-distributors and $\V$-functors,
  \[\begin{tikzcd}
    {\C_0} & \cdots & {\C_n} \\
    \D && {\D'}
    \arrow["f"', from=1-1, to=2-1]
    \arrow["{p_1}"'{inner sep=.8ex}, "\shortmid"{marking}, from=1-2, to=1-1]
    \arrow["{p_n}"'{inner sep=.8ex}, "\shortmid"{marking}, from=1-3, to=1-2]
    \arrow["{f'}", from=1-3, to=2-3]
    \arrow["q"{inner sep=.8ex}, "\shortmid"{marking}, from=2-3, to=2-1]
  \end{tikzcd}\]
  comprises, for each family $\{ x_i \in \ob{\C_i} \}_{0 \leq i \leq n}$, a 2-cell in $\V$ with the following frame,
  \[\begin{tikzcd}[column sep=large]
    {\ex{x_0}} & \cdots & {\ex{x_n}} \\
    {\ex{\ob f x_0}} && {\ex{\ob{f'} x_n}}
    \arrow[equals, nfold, from=1-1, to=2-1]
    \arrow["{p_1(x_0, x_1)}"'{inner sep=.8ex}, "\shortmid"{marking}, from=1-2, to=1-1]
    \arrow["{p_n(x_{n - 1}, x_n)}"'{inner sep=.8ex}, "\shortmid"{marking}, from=1-3, to=1-2]
    \arrow[equals, nfold, from=1-3, to=2-3]
    \arrow["{q(\ob f x_0, \ob{f'} x_n)}"{inner sep=.8ex}, "\shortmid"{marking}, from=2-3, to=2-1]
  \end{tikzcd}\]
  subject to naturality conditions~\cite[Definition~3.9]{arkor2025nerve}.
\end{definition}

\begin{example}
  The strict \ve{} $\VCat$ is defined as follows.
  \begin{enumerate}
    \item The underlying category is the category of (large) $\V$-categories (\cref{V-category}) and $\V$-functors (\cref{V-functor})
    \item A loose-cell is a $\V$-distributor (\cref{V-distributor}).
    \item A 2-cell is a $\V$-natural transformation (\cref{V-transformation}).
    \item The loose-identity $\C(1, 1) \colon \C \lto \C$ on a $\V$-category $\C$ is given by $(c, c') \mapsto \C(c, c')$, with actions given by composition in $\C$.
    \item The restriction of a $\V$-distributor $q \colon \B' \lto \A'$ along $\V$-functors $f \colon \A \to \A'$ and $g \colon \B \to \B'$ is the $\V$-distributor $q(f, g) \defeq (a, b) \mapsto q(f(a), g(b))$.
    \qedhere
  \end{enumerate}
\end{example}

\begin{definition}
  \label{one-object-enriched-category}
  If $\V$ is a normal \vb{}, then, for every object $V \in \V$, there is a $\V$-category $\star_V$ having a single object of extent $V$, whose hom-morphism is the loose-identity on $V$.
  To give a $\V$-functor $\star_V \to \A$ is equivalently to choose a single object $a \in \ob{\A}$ of extent $V$.
  Below, we will use this correspondence implicitly, writing $a$ for both the object and the $\V$-functor.
  Similarly, $\V$-distributors $p \colon \star_V \lto \star_{V'}$ are in bijection with loose-cells $p \colon V \lto V'$.
\end{definition}

\subsection{Enriched presheaves}

In light of \cref{presheaves-and-cocompletions}, to construct cocompletions, we are led to consider enriched categories of presheaves.

\begin{definition}[label=presheaf]
  A \emph{$\V$-presheaf} $p$ on a $\V$-category $\A$ comprises an object $\ex{p}$ of $\V$ (the \emph{extent} of $p$), together with a $\V$-distributor $\star_{\ex{p}} \lto \A$: explicitly, this comprises a morphism $p(a) \colon \ex{p} \lto \ex{a}$ in $\V$ for each object $a \in \ob{\A}$; and a left-action of $\A$ on $p$, \ie{} a 2-cell $\c_{a', a} \colon \A(a', a), p(a) \tto p(a')$ in $\V$ for each pair of objects $a, a' \in \ob{\A}$, compatible with identities and composition in $\A$.
  Dually, a \emph{$\V$-copresheaf} $q$ on $\A$ comprises an object $\ex{q}$ of $\V$, together with a $\V$-distributor $\A \lto \star_{\ex{p}}$.
\end{definition}

Since we define $\V$-presheaves in terms of $\V$-distributors, we can speak of a right lift $q \rf p$ of $\V$-presheaves $p, q$ on $\A$.
Such a right lift is a $\V$-distributor $\star_{\ex{q}} \lto \star_{\ex{p}}$ or, equivalently, a loose-cell $q \rf p \colon \ex{q} \lto \ex{p}$ in $\V$.
These right lifts will provide the hom-morphisms of each $\V$-presheaf object.
When we take $\V = \Sigma\b V$ for a symmetric monoidal category $\V$, a $\V$-presheaf on $\A$ is equivalently a $\b V$-functor $\A\op \to \b V$, and the right lift $q \rf p$ is the hom-object $[\A\op, \b V](p, q)$ of the $\b V$-functor category $[\A\op, \b V]$, when the latter exists.
A more detailed discussion of these points can be found in \cite[Section~8.1]{arkor2024formal}, which uses the notation $\rfP{A}{p}{q}$ for a right lift of presheaves.

Right lifts of $\V$-distributors can be reduced to right lifts of $\V$-presheaves by the following lemma.

\begin{lemma}\
  Let $\vec{p} = (p_1, \dots, p_n) \colon \A_n \lcto \A_0$ be a weight in $\VCat$, with $n \ge 1$.
  \begin{enumerate}
    \item \label{right-lift-enriched-characterisation}
      Let $q \colon \B \lto \A_0$ be a $\V$-distributor.
      If it exists, the right lift $q \rf \vec{p} \colon \B \lto \A_n$ satisfies
      \[
        (q \rf \vec{p})(a, b) \iso q(1, b) \rf (p_1, \dots, p_{n - 1}, p_n(1, a))
      \]
      for all $a \in \ob{\A_n}$ and $b \in \ob{\B}$.
      If the right-hand side exists for all $a$ and $b$, then so does $q \rf \vec{p}$ (\cf{} \cite[Lemma~8.7]{arkor2024formal}).
    \item \label{composite-enriched-characterisation}
      If it exists, the left-composite $(p_1 \odotl \cdots \odotl p_n) \colon \A_0 \lto \A_n$ satisfies
      \[
        (p_1 \odotl \cdots \odotl p_n)(1, a_n)
        \iso
        p_1 \odotl \cdots \odotl p_{n-1} \odotl (p_n(1, a_n))
      \]
      for all $a_n \in \ob{\A_n}$.
      If the right-hand side exists for all $a_n$, then so does $p_1 \odotl \cdots \odotl p_n$ (\cf{} \cite[Lemma~8.7]{arkor2024formal}).
  \end{enumerate}
\end{lemma}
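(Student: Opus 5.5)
The plan is to reduce both statements to objectwise computations, using the one-object $\V$-categories $\star_V$ of \cref{one-object-enriched-category} together with \cref{restriction-of-lift}. In $\VCat$, a $\V$-distributor is determined by its restrictions along the $\V$-functors $a \colon \star_{\ex a} \to \A$. Moreover, 2-cells in $\VCat$ are families of 2-cells in $\V$ indexed by tuples of objects and subject to naturality, so a universal property can be checked objectwise as long as naturality is handled.

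For (1), the formula comes directly from \cref{restriction-of-lift}(1) with $y = a \colon \star_{\ex a} \to \A_n$ and $x = b \colon \star_{\ex b} \to \B$:
\[(q \rf \vec p)(a, b) \iso q(1, b) \rf (p_1, \ldots, p_{n-1}, p_n(1, a)).\]
For the converse, suppose each $r_{a,b} \defeq q(1, b) \rf (p_1, \ldots, p_n(1, a))$ exists. I would define a $\V$-distributor $r \colon \B \lto \A_n$ by $r(a, b) \defeq r_{a,b}$, viewed as a loose-cell $\ex b \lto \ex a$ in $\V$.
\begin{itemize}
\item The actions of $\A_n$ and $\B$ on $r$ are obtained from the universal property of each $r_{a,b}$. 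Precompose its counit with the action of $\A_n$ on $p_n$ (respectively postcompose the action of $\B$ on $q$), then factor the result through the counit of $r_{a',b}$ (respectively $r_{a,b'}$).
\item The distributor axioms follow from the uniqueness of these factorisations.
\item The counits assemble into a 2-cell $\varpi \colon \vec p, r \tto q$; naturality holds by construction.
\end{itemize}
To verify the universal property, take a 2-cell $\phi \colon \vec p, r_1, \ldots, r_m \tto q$. Its components at fixed endpoints $a \in \A_n$ and $b$ form a 2-cell in $\VCat$ of shape $\vec p(\ldots, p_n(1,a)), r_1(a, -), \ldots \tto q(1, b)$. Factoring these componentwise through $r_{a,b}$ gives the components of $\lambda\phi$. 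Naturality of $\lambda\phi$ and uniqueness again follow from uniqueness of the factorisations.

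For (2), the proof is dual in spirit and uses the restriction $(1, a_n)$. To get the formula, I would show that restricting a left-opcartesian 2-cell along $a_n \colon \star \to \A_n$ on the right gives a left-opcartesian 2-cell. This requires care, because restriction is on the non-free end. A 2-cell $p_1, \ldots, p_n(1, a_n), r_1, \ldots \tto s$ with the $r_i$ starting at $\star_{\ex{a_n}}$ can be regarded as a 2-cell out of $p_1, \ldots, p_n, \A_n(a_n, 1)$-ish data. The cleaner approach is to note that in $\VCat$ a 2-cell whose chain passes through $\A_n$ is the same as a family indexed by objects of $\A_n$ with naturality, so left-opcartesianness can be checked objectwise in the last variable. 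For the converse, I define $c(-, a_n) \defeq p_1 \odotl \cdots \odotl p_n(1,a_n)$ and obtain the right action of $\A_n$ by the universal property of left-composites. This gives the left-opcartesian 2-cell componentwise, and the universal property follows as in (1).

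The main obstacle is the objectwise-checking principle: showing that universality in $\VCat$ (quantifying over arbitrary chains $r_1, \ldots, r_m$) reduces to universality of each component, especially for the naturality of the induced 2-cells in the intermediate variables. I would handle this once by writing out the naturality condition of \cite[Definition~3.9]{arkor2025nerve} and observing that each instance is an equation between two 2-cells factoring through the same counit (respectively left-opcartesian cell). Such an equation holds by uniqueness once it holds after composition with that counit, and after composition it becomes naturality of $\phi$.
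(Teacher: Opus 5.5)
Your argument for (1), and for the existence half of (2), is the paper's argument. For the latter you define $c(1,a_n)$ as the left-composite $p_1 \odotl \cdots \odotl p_n(1,a_n)$, get the action of $\A_n$ by factoring through the left-opcartesian units, and check everything by uniqueness of factorisations.

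The gap is in the first half of (2): showing $(p_1 \odotl \cdots \odotl p_n)(1,a_n) \iso p_1 \odotl \cdots \odotl p_n(1,a_n)$ when only the left-hand side is known to exist. Write $\theta \colon p_1,\ldots,p_n \tto c$ for the left-opcartesian cell and $\theta_a$ for its restriction along $a$.

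The route you call cleaner, checking left-opcartesianness objectwise in the last variable, only proves one implication. It shows that if every $\theta_a \colon p_1,\ldots,p_n(1,a) \tto c(1,a)$ is left-opcartesian then so is $\theta$, which is the existence direction again. What you need is the converse: from $\theta$ left-opcartesian, deduce that $\theta_{a_n}$ is. The test 2-cells for $\theta_{a_n}$ have the form $p_1,\ldots,p_n(1,a_n),r_1,\ldots,r_m \tto s$ with $r_1$ ending at $\star_{\ex{a_n}}$. Decomposing 2-cells over the objects of $\A_n$ gives no way to feed these into the universal property of $\theta$, which only accepts chains continuing from $\A_n$.

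Your remark that the restriction is `on the non-free end' is also backwards. $(1,a_n)$ restricts the domain $\A_n$, which is exactly the end at which the universal property of a left-composite admits extra loose-cells and an arbitrary tight-cell. That is why the formula holds at all.

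The argument that works is the one you sketch and then set aside, with the variance corrected. Prepend the companion $\A_n(1,a_n) \colon \star_{\ex{a_n}} \lto \A_n$, not the conjoint $\A_n(a_n,1)$, which lies on the wrong side. Restricting the domain along a tight-cell is composing with its companion \cite[Theorem~7.16]{cruttwell2010unified}, so $p_n(1,a_n) \iso p_n \odot \A_n(1,a_n)$ and $c(1,a_n) \iso c \odot \A_n(1,a_n)$ via opcartesian 2-cells. The argument then runs in three steps:
\begin{itemize}
\item Test 2-cells out of $p_1,\ldots,p_n(1,a_n),r_1,\ldots,r_m$ correspond bijectively to 2-cells out of $p_1,\ldots,p_n,\A_n(1,a_n),r_1,\ldots,r_m$.
\item These factor uniquely through $\theta$, because the extra cells sit at the right-hand end.
\item They then factor uniquely through the opcartesian cell $c,\A_n(1,a_n) \tto c(1,a_n)$.
\end{itemize}
This is exactly the paper's chain
\[
(p_1 \odotl \cdots \odotl p_n)(1,a_n) \iso (p_1 \odotl \cdots \odotl p_n) \odot \A_n(1,a_n) \iso p_1 \odotl \cdots \odotl (p_n \odot \A_n(1,a_n)) \iso p_1 \odotl \cdots \odotl p_n(1,a_n),
\]
whose middle step is \cref{left-composite-reassociation}. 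It holds in any virtual equipment and needs no objectwise analysis.
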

\begin{proof}
  The first part of (1) is \cref{restriction-of-lift}.
  For the second part, we show that if the right lifts $q(1, b) \rf (p_1, \dots, p_{n - 1}, p_n(1, a))$ exist, then we can construct the right lift $q \rf \vec{p}$.
  We define the $\V$-distributor $q \rf \vec{p} \colon \B \lto \A_n$ on objects by
  \[
    (q \rf \vec{p})(a, b) = q(1, b) \rf (p_1, \dots, p_{n - 1}, p_n(1, a))
  \]
  The composition 2-cells $\A_n(a', a), (q \rf \vec{p})(a, b) \tto (q \rf \vec{p})(a', b)$ and
  $(q \rf \vec{p})(a, b), \B(b, b') \tto (q \rf \vec{p})(a, b')$ are the unique 2-cells for which the counits
  \[
    p_1, \dots, p_n(1, a), (q \rf \vec{p})(a, b) \tto q(1, b)
  \]
  form a $\V$-natural transformation $\vec{p}, q \rf \vec{p} \tto \vec{p}$.
  The latter is the counit of the right lift $q \rf \vec{p}$.
  To see that it satisfies the required universal property, observe that, for $m \ge 1$, a $\V$-natural transformation
  $p_1, \dots, p_n, r_1, \dots, r_m \tto q$ is, in particular, a family of $\V$-natural transformations
  \[
    p_1, \dots, p_n(1, a), r_1(a, 1), \dots, r_n(1, b) \tto q(1, b)
  \]
  satisfying two laws.
  Under transposition, those two laws are precisely the two laws needed for the corresponding 2-cells
  \[
    r_1(a, 1), \dots, r_n(1, b) \tto (q \rf \vec{p})(a, b)
  \]
  to form a $\V$-natural transformation $r_1, \dots, r_n \tto q \rf \vec{p}$.
  For $m = 0$, we reason similarly.

  The proof of (2) is similar.
  For the first part, we have the following chain of isomorphisms.
  \begin{align*}
    (p_1 \odotl \cdots \odotl p_n)(1, a_n) &
      \iso (p_1 \odotl \cdots \odotl p_n) \odot \A_n(1, a_n) \tag{\cite[Theorem~7.16]{cruttwell2010unified}}
      \\ & \iso p_1 \odotl \cdots \odotl (p_n \odot \A_n(1, a_n)) \tag{\cref{left-composite-reassociation}}
      \\ & \iso p_1 \odotl \cdots \odotl p_n(1, a_n) \tag{\cite[Theorem~7.16]{cruttwell2010unified}}
  \end{align*}
  For the second part, we make the left-composites
  \[
    (p_1 \odotl \cdots \odotl p_n)(1, a_n)
    =
    p_1 \odotl \cdots \odotl p_{n - 1} \odotl p_n(1, a_n)
  \]
  uniquely into a $\V$-distributor $(p_1 \odotl \cdots \odotl p_n) \colon \A_0 \lto \A_n$, by defining
  \begin{align*}
    p_1(a_0, 1) \odotl \cdots \odotl p_n(1, a_n), \A_n(a_n, a_n') &\tto p_1(a_0, 1) \odotl \cdots \odotl p_n(1, a'_n)
  \end{align*}
  to be the unique 2-cell for which the units $p_1, \dots, p_n(1, a_n) \tto p_1 \odotl \cdots \odotl p_n(1, a_n)$ are $\V$-natural in $a_n$.
  The universal property of the left-composite then follows.
\end{proof}

\begin{remark}[label=composite-enriched-remark]
  By duality, \cref{composite-enriched-characterisation} implies an analogous fact about right-composites. It then follows that, for loose-composites (rather than left- or right-composites), we have
  \[
    (p_1 \odot \cdots \odot p_n)(a_0, a_n)
    =
    p_1(a_0, 1) \odot \cdots \odot p_n(1, a_n)
  \]
  with the left-hand side existing if the right-hand side does.
  In particular, binary composites of $\V$-distributors can be reduced to binary composites of the form $q \odot p$ in which $q$ is a $\V$-copresheaf and $p$ is a $\V$-presheaf; composites of this form are determined by loose-cells $q \odot p \colon \ex{p} \lto \ex{q}$ in $\V$.
\end{remark}

\Cref{right-lift-enriched-characterisation} implies an analogous fact about weighted colimits, which we make explicit as the following lemma.
In particular, it follows that colimits weighted by $\V$-distributors can be reduced to colimits weighted by $\V$-presheaves (as observed by \textcite[11]{street1983enriched} for enrichment in bicategories), which agrees with the usual notion of weighted colimit in enriched category theory (\cf{}~\cite[Section~8.1]{arkor2024formal}).

\begin{corollary}[label=colimit-weighted-by-presheaves]
  Let $\vec p = (p_1, \dots, p_n) \colon \A_n \lto \A_0$ be a weight in $\VCat$, and let $f \colon \A_0 \to \B$ be a $\V$-functor.
  If the colimit $\vec p \wc f \colon \A_n \to \B$ exists, then it is given pointwise:
  \[
    (\vec p \wc f)(a) \iso (p_1, \dots, p_{n-1}, p_n, \A_n(1, a)) \wc f
  \]
  If the right-hand side exists for all $a \in \ob{\A_n}$, then so does $\vec p \wc f$.
  Thus, the following classes of weights are colimit equivalent (\cref{colimit-equivalence}).
  \[
    \{\ \vec p\ \} \colimequiv \{\ (p_1, \dots, p_{n - 1}, p_n, \A_n(1, a))\ \}_{a \in \ob{\A_n}}
  \]
\end{corollary}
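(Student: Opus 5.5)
The plan is to unwind the definition of weighted colimit (\cref{weighted-colimit}) and apply \cref{right-lift-enriched-characterisation} to the loose-cell $\B(f, 1) \colon \B \lto \A_0$.

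First, the existence direction and pointwise formula. By definition, $\vec p \wc f$ exists exactly when the right lift $\B(f, 1) \rf \vec p \colon \B \lto \A_n$ exists and is corepresentable, in which case $\B(\vec p \wc f, 1) \iso \B(f, 1) \rf \vec p$. Restricting along $a \colon \star_{\ex a} \to \A_n$ and using \cref{restriction-of-lift} (or part (1) of the preceding lemma) gives
\[\B((\vec p \wc f)(a), 1) \iso \B(f, 1) \rf (p_1, \dots, p_{n-1}, p_n(1, a)).\]
Since $p_n(1, a) \iso p_n \odot \A_n(1, a)$ is a loose-composite with a companion, \cref{right-lift-through-composite} identifies the right-hand side with $\B(f, 1) \rf (p_1, \dots, p_n, \A_n(1, a))$. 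This says precisely that $(\vec p \wc f)(a)$, with the restricted cocone, is the colimit $(p_1, \dots, p_n, \A_n(1, a)) \wc f$. Alternatively, one can note that $\A_n(1,a)$-weighted colimits are given by precomposition with $a$ (as in \cref{sat-contains-representables}) and then apply \cref{iterated-lift}.

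Conversely, suppose each pointwise colimit $c_a \defeq (p_1, \dots, p_n, \A_n(1, a)) \wc f$ exists. The same reasoning gives corepresentations $\B(c_a, 1) \iso \B(f, 1) \rf (p_1, \dots, p_{n-1}, p_n(1, a))$ for every $a$. The second part of \cref{right-lift-enriched-characterisation} then assembles these into a $\V$-distributor $\B(f, 1) \rf \vec p$ whose value at $(a, b)$ is $\B(c_a, b)$. It remains to show that this distributor is corepresentable, i.e.\ to build a $\V$-functor $c \colon \A_n \to \B$ with $c(a) = c_a$. This is the main obstacle. My approach is to apply a Yoneda-type argument in $\VCat$: the left action of $\A_n$ on the distributor supplies 2-cells $\A_n(a', a), \B(c_a, b) \tto \B(c_{a'}, b)$. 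Taking $b = c_a$ and precomposing with the identity $\I_{c_a}$ yields $\A_n(a', a) \tto \B(c_{a'}, c_a)$. The functoriality axioms then follow from the distributor axioms together with uniqueness of factorisations through the counits. By construction $\B(c, 1)$ agrees with the assembled lift, and the cocone 2-cell is the counit.

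Finally, the colimit equivalence: both conditions of \cref{colimit-equivalence} involving cocompleteness and cocontinuity follow immediately from the pointwise formula, applied to $f$ and to $f \d g$. For the exactness condition, I would run the identical argument with an arbitrary loose-cell $q \colon X \lto \B$ in place of $\B(f,1)$. By part (1) of the lemma, $q$ respects $\vec p \wc f$ if and only if it respects each pointwise colimit, since both reduce to componentwise isomorphisms of right lifts of the form $q(c_a, x) \iso q(f, x) \rf (p_1, \dots, p_n(1, a))$.
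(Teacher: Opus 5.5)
Your proposal is correct and takes essentially the paper's route. You apply \cref{right-lift-enriched-characterisation} to $\B(f, 1)$, then use the left action of $\A_n$ on the assembled right lift to make the pointwise colimits into a $\V$-functor.

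The differences are only of presentation. You reach the weight $(p_1, \dots, p_n, \A_n(1, a))$ via restriction and \cref{right-lift-through-composite}, whereas the paper first appends the loose-identity $\A_n(1, 1)$ to $\vec p$. You also spell out the Yoneda-style functoriality check and the three clauses of colimit equivalence, which the paper leaves implicit.
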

\begin{proof}
  Colimits weighted by $\vec{p}$ are equivalently colimits weighted by $(p_1, \dots, p_n, \A_n(1, 1))$, so this follows from \cref{right-lift-enriched-characterisation}.
  In particular, as soon as the right-hand sides exist, the right lift $B(f, 1) \rf \vec p$ exists, and the action of $\A_n$ thereon equips $\vec p \wc f$ with the structure of a $\V$-functor.
\end{proof}

To construct presheaf objects and cocompletions in $\VCat$, we need to know that various right lifts and composites of $\V$-distributors exist.
Under smallness assumptions, these are given as usual by limits and colimits in $\V$, as in the following lemma.
We refer to \cref{local-colimit,local-limit} for the definition of local (co)completeness for a \vdc.
When $\V$ is a bicategory (\ie{} the \vb{} $\V$ admits all loose-composites), local completeness means that each hom-category is complete (\cref{local-completeness-for-bicategories}); whereas local cocompleteness means that each hom-category is cocomplete \emph{and} that the pre- and postcomposition functors are cocontinuous (\cref{local-cocompleteness-for-bicategories}). Left-local cocompleteness drops the requirement that the postcomposition functors are cocontinuous (\cref{left-composites-and-whiskering}).

\begin{lemma}[label=small-codomain-constructions-exist]
  Let $\vec p = (p_1, \dots, p_n)$ be a weight in $\VCat$, such that the codomain of each $p_i$ is small.
  \begin{enumerate}
    \item \label{small-codomain-right-lifts-exist} If $\V$ is locally complete and admits right lifts, then $\vec p$ is lifting (\cref{lifting}).
    \item \label{small-codomain-composites-exist} If $\V$ is (left-) locally cocomplete and admits (left-) composites, then the (left-) composite ${q \odotl p_1 \odotl \cdots \odotl p_n}$ exists for all $\V$-distributors $q$.
  \end{enumerate}
\end{lemma}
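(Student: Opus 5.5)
We reduce both parts to statements about right lifts and left-composites of $\V$-presheaves (resp.\ loose-cells in $\V$), using \cref{right-lift-enriched-characterisation} and \cref{composite-enriched-characterisation}. Those results say that $q \rf \vec p$ exists as soon as the pointwise right lifts $q(1, b) \rf (p_1, \ldots, p_{n-1}, p_n(1, a))$ exist. Likewise, the left-composite exists as soon as the pointwise left-composites $q \odotl p_1 \odotl \cdots \odotl p_n(1, a_n)$ exist. Each such pointwise expression has domain and codomain of the form $\star_V$ at the two ends. All intermediate $\V$-categories $\A_1, \ldots, \A_{n-1}$ are small by assumption, and $\A_0$ is small too, since it is the codomain of $p_1$.

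For (1), we proceed by induction on $n$ using \cref{iterated-lift}, which gives $q \rf (p_1, \ldots, p_n) \iso (q \rf p_1) \rf (p_2, \ldots, p_n)$. This reduces the claim to showing that a single $\V$-distributor $p \colon \A' \lto \A$ with small codomain $\A$ is lifting. Pointwise, we must construct the right lift of a presheaf-shaped distributor $q(1,b) \colon \star_{\ex b} \lto \A$ through $p(1,a) \colon \star_{\ex a} \lto \A$, which is a loose-cell $\ex b \lto \ex a$ in $\V$. We define it as the end-like limit in the hom-category $\V(\ex b, \ex a)$: the equaliser of the two evident maps
\[\textstyle\prod_{x \in \ob\A} q(x, b) \rf p(x, a) \rightrightarrows \prod_{x, x'} q(x', b) \rf (\A(x', x), p(x, a)),\]
formed using right lifts in $\V$ and local completeness. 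Here it matters that $\A$ is small, so the products are small.

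The main obstacle is to verify that this limit has the universal property of a right lift in the \emph{virtual} sense. We must show that multiary 2-cells $r_1, \ldots, r_m \tto$ (limit) correspond to $\V$-natural families $p(x,a), r_1, \ldots, r_m \tto q(x,b)$. For this we use the definition of local completeness for \vdcs{} (\cref{local-limit}), which we expect to say precisely that limits in hom-categories are preserved by multiary 2-cell sets in each argument. Combined with the right-lift adjunction in $\V$, this gives the bijection, and naturality in $x$ corresponds exactly to factoring through the equaliser. Finally, the lemma's reassembly yields the $\V$-distributor structure in $a, b$.

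For (2), the argument is dual in spirit. We iterate left-composites via the left-associativity isomorphism \eqref{left-associativity-of-left-composites}, reducing to a left-composite $q \odotl p$ with $p$ of small codomain $\A$. Pointwise, this is the coend $q(c, 1) \odot_{\A} p(1, a)$: the coequaliser in $\V(\ex a, \ex c)$ of
\[\textstyle\coprod_{x,x'} q(c,x') \odotl \A(x',x) \odotl p(x,a) \rightrightarrows \coprod_x q(c,x) \odotl p(x,a),\]
using (left-) composites in $\V$ and small coproducts and coequalisers. Left-local cocompleteness (\cref{left-composites-and-whiskering}) is what guarantees that this colimit is left-opcartesian, rather than merely a colimit in the hom-category, since precomposition with later loose-cells preserves it. This is again the delicate point. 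In the non-left case, full local cocompleteness similarly gives opcartesianness.
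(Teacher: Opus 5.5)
Your proposal follows essentially the same route as the paper: reduce via \cref{right-lift-enriched-characterisation,composite-enriched-characterisation} to (co)presheaves on a small $\V$-category, form the end (resp.\ coend) as a local limit (resp.\ colimit) in a hom-category of $\V$, check the virtual universal property using local (co)completeness together with right lifts (resp.\ left-composites) in $\V$, and iterate via \cref{iterated-lift} and \eqref{left-associativity-of-left-composites}. Your equaliser of products (resp.\ coequaliser of coproducts) is the same end (resp.\ coend) as the paper's limit (resp.\ colimit) over a cospan- (resp.\ span-) shaped diagram.

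There is one caveat, and it applies equally to the paper's argument. For \emph{left}-composites, \cref{composite-enriched-characterisation} only restricts the right-hand (domain) end to some $\star_V$. So assembling the pointwise coends $q(c,1) \odotl p(1,a)$ into a $\V$-distributor in $c \in \ob{\b Y}$ requires the action $\b Y(c', c), q(c,1) \odotl p(1,a) \tto q(c',1) \odotl p(1,a)$. That in turn needs $q(c,1) \odotl p(1,a)$ to keep its universal property when a loose-cell is placed to its left. This holds in the full (non-left) case via \cref{composite-enriched-remark}, or when $\V$ has right lifts, but it is not supplied by left-local cocompleteness alone.
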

\begin{proof}
  For (1), we first show that, if $p$ and $q$ are $\V$-presheaves on a small $\A$, then the right lift $q \rf p$ exists.
  We construct $q \rf p$ as follows; this construction is the same as the one given by \cite{gordon1999gabriel}, only with fewer assumptions on $\V$.
  Define a functor $\coprod_{a, a' \in \ob\A} \{ \cdot \to \cdot \from \cdot \} \to \V\lh{\ex q, \ex p}$ of categories, sending a pair $a, a' \in \ob\A$ of objects to the following cospan of 2-cells between loose-cells $\ex q \lto \ex p$ in $\V$, the 2-cells being defined using the left-actions of $p$ and $q$.
  \[
  \begin{tikzcd}
  	{q(a) \rf p(a)} & {(q(a) \rf A(a, a')) \rf p(a')} & {q(a') \rf p(a')}
  	\arrow[Rightarrow, from=1-1, to=1-2]
  	\arrow[Rightarrow, from=1-3, to=1-2]
  \end{tikzcd}
  \]
  This diagram is small because $\A$ is.
  We take $q \rf p \colon \ex{q} \lto \ex{p}$ to be the limit of this diagram.
  We have the required universal property because a family
  $\{p(a), v_1, \dots, v_n \tto q(a)\}_{a \in \ob{\A}}$
  of 2-cells is $\V$-natural in $a \in \ob{\A}$ exactly when the corresponding family
  $\{v_1, \dots, v_n \tto q(a) \rf p(a)\}_{a \in \ob{\A}}$
  is a cone for the diagram above.

  It follows by \cref{right-lift-enriched-characterisation} that $q \rf p$ exists whenever $p$ and $q$ are both $\V$-distributors with small codomain.
  Thus, the right lift $q \rf (p_1, \dots, p_n)$ exists whenever every $p_i$ has a small codomain, since
  by \cref{iterated-lift}, we have:
  \[
    q \rf (p_1, \dots, p_n) \iso q \rf p_1 \rf \cdots \rf p_n
  \]

  The proof of (2) has a similar structure.
  We first show that $q \odotl p$ exists whenever $p$ is a $\V$-presheaf, and $q$ is a $\V$-copresheaf, on a small $\A$.
  We construct $q \odotl p$ as the colimit of the functor $\coprod_{a, a' \in \ob\A} \{ \cdot \from \cdot \to \cdot \} \to \V\lh{\ex q, \ex p}$ sending a pair $a, a' \in \ob\A$ to the following cospan of 2-cells.
  \[\begin{tikzcd}
    {q(a') \odotl p(a')} & {q(a) \odotl A(a, a') \odotl p(a')} & {q(a) \odotl p(a)}
    \arrow[Rightarrow, from=1-2, to=1-1]
    \arrow[Rightarrow, from=1-2, to=1-3]
  \end{tikzcd}\]
  Again, this diagram is small because $\A$ is.
  A family $\{\vec{v}, q(a), p(a), \vec{w} \tto u\}_{a \in \ob{\A}}$ of 2-cells is $\V$-natural in $a$ exactly when it is a cocone for the colimit $q \odotl p$, and thus $q \odotl p$ is the required composite.

  It follows by \cref{composite-enriched-characterisation} that $q \odotl p$ exists whenever $p \colon \b X \lto \A$ and $q \colon \A \lto \b Y$ are both $\V$-distributors, with $\A$ small.
  Thus $q \odotl p_1 \odotl \cdots \odotl p_n$ exists whenever the codomain of each $p_k$ is small, since,
  by \cref{left-associativity-of-left-composites}, we have:
  \[
    q \odotl p_1 \odotl \cdots \odotl p_n
    \iso
    (((q \odotl p_1) \odotl p_2) \odotl \cdots) \odotl p_n
    \qedhere
  \]
\end{proof}

\subsection{Enriched presheaf objects}

We turn now to the construction of presheaf objects (in the sense of \cref{Phi-presheaf-object}) in the \ve{} $\VCat$.
We cannot expect $\PP$-presheaf objects to exist for every class of $\V$-distributors $\PP$; indeed an immediate consequence of the definition of presheaf object is that they can only exist when $\PP$ is closed under restrictions $p(1, f)$ (see \cref{trivial-presheaf-object}).
We thus restrict to classes of $\V$-distributors that are \emph{induced} by a class of $\V$-presheaves in the following sense.

\begin{definition}[label=presheaf-induced-class]
  A class $\PP$ of $\V$-distributors is \emph{presheaf-induced} if a $\V$-distributor $p \colon \B \lto \A$ is in $\PP$ if and only if, for each $a \in \ob\A$, the $\V$-presheaf $p(1, a)$ is in $\PP$.
\end{definition}

For every class of $\V$-presheaves, there is a corresponding maximal presheaf-induced class of $\V$-distributors.

\begin{theorem}[label=enriched-presheaf-objects]
    Let $\PP$ be a presheaf-induced class of $\V$-distributors.
    A $\V$-category $\A$ admits a $\PP$-presheaf $\V$-category (\cref{Phi-presheaf-object}) if and only if the right lift $q \rf p$ exists for all $\V$-presheaves $p, q \in \PP$ on $\A$. In this case, $\A$ is furthermore $\PP$-admissible (\cref{Phi-embedding}) if and only if the repletion of $\PP$ contains the identity $\V$-distributor on $\A$.
\end{theorem}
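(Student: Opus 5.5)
For the forward direction, suppose $\pi_A \colon \psh\PP A \lto \A$ exhibits a $\PP$-presheaf $\V$-category. Given $\V$-presheaves $p, q \in \PP$ on $\A$, regard them as $\V$-distributors $\star_{\ex p} \lto \A$ and $\star_{\ex q} \lto \A$. By \cref{Phi-presheaf-hom-is-right-lift}, $\psh\PP A(\pshm p, \pshm q)$ exhibits the right lift $q \rf p$, so this direction is immediate.

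For the converse, I will construct $\psh\PP A$ explicitly.
\begin{itemize}
  \item Its objects are the $\V$-presheaves $p \in \PP$ on $\A$, with extent $\ex p$.
  \item Its hom-morphisms are $\psh\PP A(p, q) \defeq q \rf p \colon \ex q \lto \ex p$.
  \item Identities and composition are induced by the universal property of right lifts, via $\tto q \rf q$ and $(r \rf q), (q \rf p) \tto r \rf p$. The unit and associativity axioms follow from uniqueness of factorisations.
\end{itemize}
The projection $\pi_A \colon \psh\PP A \lto \A$ is given by $\pi_A(a, p) \defeq p(a)$. Its left $\A$-action is the action of $p$, and its right $\psh\PP A$-action is the counit $p(a), (q \rf p) \tto q(a)$, which is $\V$-natural in $a$ by definition of the right lift.

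Density of $\pi_A$ requires $\pi_A \rf \pi_A \iso \psh\PP A(1,1)$. By \cref{right-lift-enriched-characterisation}, $(\pi_A \rf \pi_A)(p, q) \iso \pi_A(1, q) \rf \pi_A(1, p)$. Since $\pi_A(1, p) = p$ under the correspondence of \cref{one-object-enriched-category}, this is $q \rf p$, and one checks that the canonical 2-cell matches.

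Next, the universal property \eqref{Phi-presheaf-object-UP}. Given $p \colon \B \lto \A$ in $\PP$, presheaf-inducedness gives $p(1, b) \in \PP$ for each $b$. Define $\pshm p(b) \defeq p(1, b)$, which has extent $\ex b$. Its action on homs, $\B(b, b') \tto p(1, b') \rf p(1, b)$, is the transpose of the right $\B$-action of $p$.

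It remains to check that $\pi_A(1, \pshm p) = p$ holds strictly, including the actions. I also need uniqueness. A $\V$-functor $f$ with $\pi_A(1, f) = p$ must have $f(b) = p(1, b)$ on objects, and its action on homs is forced by the universal property of $q \rf p$ together with equality of the right actions.

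Saturation \eqref{Phi-presheaf-object-saturation} is also immediate: for any $f \colon X \to \psh\PP A$, each $\pi_A(1, f)(1, x) = f(x)$ lies in $\PP$, so $\pi_A(1, f) \in \PP$ by presheaf-inducedness.

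The admissibility clause follows from \cref{Phi-embedding}, since admissibility is defined as the existence of the presheaf object together with $\A(1,1) \in \PP\repl$.

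I expect the main obstacle to be the bookkeeping that the action 2-cells, rather than just the objects and homs, are strictly determined. This is what strictness of $\VCat$ and the strict equality $p = \pi_A(1, \pshm p)$ demand. The key tool is the universal property of right lifts in $\V$, using the explicit form of $\V$-natural transformations.
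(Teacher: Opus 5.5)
Your proposal is correct and follows essentially the same route as the paper. The forward direction goes via \cref{Phi-presheaf-hom-is-right-lift}. The converse builds $\psh\PP\A$ with objects the $\V$-presheaves in $\PP$, homs $q \rf p$, and projection $\pi_\A(a, p) = p(a)$, which is dense by \cref{right-lift-enriched-characterisation}; $\pshm p$ is given objectwise by $p(1, b)$, with hom-actions transposed from the right action of $p$, and existence and uniqueness both come from the universal property of these right lifts.

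One small slip: in the paper's right-to-left convention, the composition 2-cell of $\psh\PP\A$ should read $(q \rf p), (r \rf q) \tto r \rf p$. It is the transpose through $r \rf p$ of the pasted counits $p, (q \rf p), (r \rf q) \tto q, (r \rf q) \tto r$. The chain $(r \rf q), (q \rf p)$ as you wrote it is not composable.
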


\begin{proof}
  First, the existence of the $\PP$-presheaf object implies the existence of the right lifts by \cref{Phi-presheaf-hom-is-right-lift,right-lift-enriched-characterisation}.

  For the converse, assume the mentioned right lifts exist; we construct the $\PP$-presheaf object as follows.
  The $\V$-category $\psh{\PP}{\A}$ has as objects the $\V$-presheaves $p \in \PP$ on $\A$, and hom-cells $\psh{\PP}{\A}(p, q) \defeq q \rf p$.
  To exhibit this $\V$-category as a $\PP$-presheaf object, we define the $\V$-distributor $\pi^\PP_\A \colon \psh{\PP}{\A} \lto \A$ by $\pi^\PP_\A(a, p) \defeq p(a)$.
  This $\V$-distributor is dense:
  \[
    \psh{\PP}{\A}(p, q) = (\pi^\PP_\A(1, q)) \rf (\pi^\PP_\A (1, q))
    = (\pi^\PP_\A \rf \pi^\PP_\A)(p, q)
  \]
  To show that this is a presheaf object, we verify conditions (1) and (2) of \cref{Phi-presheaf-object}.
  \begin{enumerate}
    \item Let $p \colon \b X \lto \A$ be a loose-cell in $\PP$, \ie{} a $\V$-distributor such that, for every $x \in \ob{\b X}$, the presheaf $p({-}, x)$ is in $\PP$.
      A $\V$-functor $\pshm p \colon \b X \to \psh{\PP}{\A}$ satisfies
      $p = \pi^{\PP}_\A(1, \pshm p)$ when the presheaves $p({-}, x)$ and $\pi^{\PP}_\A({-}, \ob{\pshm p} x) = \ob{\pshm p} x$ are equal and ${\pshm p}_{x, y} \colon \X(x,y) \tto \psh{\PP}{\A}(\ob{\pshm p} x, \ob{\pshm p} y) = p({-}, y) \rf p({-}, x)$ is the unique factorisation of the action $\{p(a, x), \b X(x, y) \tto p(a, y)\}_{a \in \ob{\A}}$ of $\X$ on $p$ through the counit of the right lift $p({-}, x) \rf p({-}, y)$.
      Existence and uniqueness of such a $\V$-functor therefore follow from the universal property of these right lifts.
    \item For every $\V$-functor $f \colon \b X \to \psh{\PP}{\A}$, the restriction $\pi^\PP_\A(1, f) \colon \b X \lto \A$ satisfies $\pi^\PP_\A(1, f)({-}, x) = \ob{f} x \in \ob{\psh{\PP}{\A}}$, and hence is in $\PP$.
  \end{enumerate}
  The characterisation of $\PP$-admissibility is by definition.
\end{proof}

\begin{corollary}[label=sub-V-presheaf-category]
  Let $\PP$ be a presheaf-induced class of $\V$-distributors, let $\A$ be a $\V$-category admitting a $\PP$-presheaf $\V$-category, and let $i \colon \A' \ffto \psh\PP\A$ be a full sub-$\V$-category. Denote by $\PP'$ the class of $\V$-distributors induced by the class of $\V$-presheaves $\ob{\A'}$. Then $\pi^\PP_\A(1, i) \colon \A' \lto \A$ exhibits $\A'$ as a $\PP'$-presheaf object for $\A$, and $\A' \ffto \psh\PP\A$ exhibits the $\V$-functor $\psh{\PP'}\A \ffto \psh\PP\A$ induced by $\PP' \subseteq \PP$ (\cref{subpresheaf-object}).
\end{corollary}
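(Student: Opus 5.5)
The plan is to verify the two conditions of \cref{Phi-presheaf-object} directly for the loose-cell $\pi' \defeq \pi^\PP_\A(1, i) \colon \A' \lto \A$, and then to identify $i$ with the canonical comparison $\widepshm{\pi'}$.

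\textbf{Density of $\pi'$.} Since $i$ is \ff{}, we have $\A'(1,1) \iso \psh\PP\A(i,i)$. By density of $\pi^\PP_\A$ and \cref{restriction-of-lift},
\[
\psh\PP\A(i,i) \iso (\pi^\PP_\A \rf \pi^\PP_\A)(i,i) \iso \pi^\PP_\A(1,i) \rf \pi^\PP_\A(1,i) = \pi' \rf \pi'.
\]
One must also check that this isomorphism is the canonical one induced by the identity 2-cell. This follows exactly as in the proof of \cref{p-dense-iff-breve-p-ff}.

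\textbf{Condition (2).} For a $\V$-functor $f \colon \b X \to \A'$, strictness gives $\pi'(1,f) = \pi^\PP_\A(1, if)$. Its component at $x$ is the presheaf $\ob{if}(x) \in \ob{\A'}$. Since $\PP'$ is induced by the presheaves in $\ob{\A'}$, the distributor $\pi'(1,f)$ lies in $\PP'$.

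\textbf{Condition (1).} Let $p \colon \b X \lto \A$ be in $\PP'$. Each $p({-},x)$ is an object of $\A'$, and in particular lies in $\PP$, so $p \in \PP$. The universal property of $\psh\PP\A$ therefore yields a unique $\pshm p \colon \b X \to \psh\PP\A$ with $p = \pi^\PP_\A(1,\pshm p)$. On objects, $\pshm p$ sends $x \mapsto p({-},x)$, which lies in $\ob{\A'}$. Because $i$ is a full sub-$\V$-category inclusion (injective on objects, with identical homs), $\pshm p$ factors uniquely as $i \c g$ for a $\V$-functor $g \colon \b X \to \A'$: the hom-components of $g$ are just those of $\pshm p$.

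It remains to check uniqueness. Any $g'$ with $p = \pi'(1,g') = \pi^\PP_\A(1, ig')$ satisfies $ig' = \pshm p$ by uniqueness in $\psh\PP\A$. Injectivity of $i$ on objects and homs then forces $g' = g$.

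\textbf{Identifying $i$.} We have $\pi' \in \PP' \subseteq \PP$. The induced functor $\widepshm{\pi'} \colon \A' \to \psh\PP\A$ of \cref{subpresheaf-object} is the unique functor with $\pi^\PP_\A(1,\widepshm{\pi'}) = \pi'$. Since $i$ itself satisfies this equation, we get $\widepshm{\pi'} = i$. Moreover $\psh{\PP'}\A$ is $\A'$ by the above, so this is the comparison map.

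The main obstacle is the strict factorisation step in condition (1). It depends on a full sub-$\V$-category being a genuine subobject, which must be fixed by convention: literally a subclass of objects together with the same homs. Only then is factorisation through $i$ unique on the nose rather than merely up to isomorphism.
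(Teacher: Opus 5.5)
Your proof is correct, but it takes a different route from the paper's.

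The paper argues in one line via \cref{enriched-presheaf-objects}. Since $\A$ admits a $\PP$-presheaf object, the right lifts $q \rf p$ exist for all presheaves $p, q \in \PP$, hence for all $p, q \in \ob{\A'}$. That theorem therefore constructs a $\PP'$-presheaf $\V$-category whose objects are the presheaves in $\ob{\A'}$, with homs $q \rf p = \psh\PP\A(p, q)$ and projection $(a, p) \mapsto p(a)$. This is literally the full sub-$\V$-category $\A'$ with projection $\pi^\PP_\A(1, i)$, and the comparison with \cref{subpresheaf-object} is then read off ``by construction''.

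You never return to that explicit model. Instead you transport the universal property of $\psh\PP\A$ along $i$:
\begin{itemize}
\item density of $\pi^\PP_\A(1, i)$ comes from full faithfulness of $i$, as in \cref{p-dense-iff-breve-p-ff};
\item condition (1) comes from factoring $\pshm p$ strictly through $i$;
\item the identification $\widepshm{\pi^\PP_\A(1, i)} = i$ comes from the uniqueness clause.
\end{itemize}

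The paper's route is shorter. However, it tacitly identifies the given $\psh\PP\A$ with the specific model of \cref{enriched-presheaf-objects}, in which objects are literally presheaves and homs are literally right lifts. It also leaves the identification of $i$ with the induced $\V$-functor implicit.

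Your route applies to an arbitrary $\PP$-presheaf object and turns that identification into an honest uniqueness argument. It also isolates the one non-formal input: a $\V$-functor landing objectwise in a full sub-$\V$-category factors strictly and uniquely through it. So your argument should carry over to other strict virtual equipments in which fully faithful tight-cells admit such strict factorisations.
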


\begin{proof}
  Since $\PP' \subseteq \PP$, the $\V$-category $\A$ admits a $\PP'$-presheaf object by \cref{enriched-presheaf-objects}, and the stated properties follow by construction.
\end{proof}

\begin{theorem}[label=small-presheaf-object]
  Assume that $\V$ is locally complete and has right lifts.
  Every small $\V$-category $\A$ admits a $\PP$-presheaf $\V$-category, for every presheaf-induced class $\PP$ of $\V$-distributors.
\end{theorem}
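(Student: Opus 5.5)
By \cref{enriched-presheaf-objects}, it suffices to show that the right lift $q \rf p$ exists for every pair of $\V$-presheaves $p, q \in \PP$ on $\A$. Membership in $\PP$ plays no role in this existence question. So I will prove the stronger statement that $q \rf p$ exists for all $\V$-presheaves $p, q$ on a small $\A$.

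A $\V$-presheaf $p$ on $\A$ is a $\V$-distributor $p \colon \star_{\ex p} \lto \A$, and its codomain $\A$ is small by assumption. Hence \cref{small-codomain-right-lifts-exist}, applied to the unary weight $(p)$, shows that $p$ is lifting whenever $\V$ is locally complete and admits right lifts. In particular, the right lift $q \rf p \colon \star_{\ex q} \lto \star_{\ex p}$ exists for every $\V$-presheaf $q$ on $\A$. Through \cref{one-object-enriched-category}, this right lift is a loose-cell $\ex q \lto \ex p$ in $\V$.

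Concretely, as in the proof of \cref{small-codomain-constructions-exist}, $q \rf p$ is the limit in $\V\lh{\ex q, \ex p}$ of the small cospan diagram indexed by pairs $a, a' \in \ob\A$. Its vertices are the loose-cells $q(a) \rf p(a)$ and $(q(a) \rf \A(a, a')) \rf p(a')$, with legs induced by the actions of $p$ and $q$. This limit exists by local completeness, and the diagram is small because $\A$ is small.

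Combining these, \cref{enriched-presheaf-objects} yields the $\PP$-presheaf $\V$-category $\psh\PP\A$. Its objects are the $\V$-presheaves in $\PP$, and its homs are $\psh\PP\A(p, q) = q \rf p$.

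The only point needing care is that the required lifts are between presheaves, which have codomain $\A$, rather than between arbitrary distributors. This is exactly the case covered by the small-codomain hypothesis. No size condition is needed on the class of objects of $\psh\PP\A$ itself, since $\VCat$ permits large $\V$-categories. I therefore expect no real obstacle: the proof is a direct combination of \cref{enriched-presheaf-objects} and \cref{small-codomain-right-lifts-exist}.
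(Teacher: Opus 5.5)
Your proposal is correct and follows the paper's proof, which is simply the combination of \cref{enriched-presheaf-objects} with \cref{small-codomain-right-lifts-exist}. A $\V$-presheaf on a small $\A$ has small codomain, so it is lifting, and hence $q \rf p$ exists for all presheaves $p, q$ on $\A$, in particular for those in $\PP$.
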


\begin{proof}
  Immediate from \cref{small-codomain-right-lifts-exist,enriched-presheaf-objects}.
\end{proof}

\subsection{Enriched cocompletions}

We are already in a position to directly instantiate our main result (\cref{presheaf-object-is-cocompletion}) to construct cocompletions for certain classes of weights in $\VCat$, as illustrated by the following example.

\begin{example}[label=small-cocompletion-of-small-category]
  Let $\PP$ be the class of $\V$-distributors with small domain, which is induced by the class of all $\V$-presheaves on small $\V$-categories.
  Assume that $\V$ is locally complete and locally cocomplete and admits left-composites and right lifts.
  Then every small $\V$-category $\A$ is $\PP$-admissible as in \cref{small-presheaf-object}; the $\PP$-presheaf object is the $\V$-category of $\V$-presheaves on $\A$.
  The three conditions of \cref{presheaf-object-is-cocompletion} hold (using our constructions of right lifts and left-composites in \cref{small-codomain-constructions-exist}), and thus
  $\pshe{\PP}{\A} \colon \A \to \psh{\PP}{\A}$ exhibits the completion of $\A$ under small colimits.
  In particular, it not only satisfies the traditional 2-categorical universal property of a cocompletion, it also satisfies our stronger virtual double categorical universal property (\cref{cocompletion}).
\end{example}

In the above example, the class of presheaves needed for the cocompletion in question is obvious.
More generally, however, to construct the completion of an enriched category under colimits with weights in a class $\Phi$, we can inductively construct a colimit-equivalent class $\colimclosure{\Phi}{}$ of $\V$-presheaves (viewed as $\V$-distributors, and hence as classes of unary weights).

\begin{definition}
  \label{closure}
  Let $\Phi$ be a class of weights in $\VCat$.
  If $\A$ is a $\V$-category, then, assuming the following left-composites exist, we write $\colimclosure{\Phi}{\A}$ (the \emph{colimit closure} of $\Phi$) for the smallest replete class of $\V$-presheaves on $\A$ that contains the following.
  \begin{enumerate}
    \item The $\V$-presheaf $\A(1, a)$, for every object $a \in \ob{\A}$.
    \item \label{closure-composite-condition} The $\V$-presheaf $p \odotl q_1 \odotl \cdots \odotl q_n(1, c)$, for every weight $(q_1, \dots, q_n) \colon \C \lcto \B$ in $\Phi$, object $c \in \ob{\C}$, and $\V$-distributor $p \colon \B \lto \A$, such that $p(1, b) \in \colimclosure{\Phi}{\A}$ for all $b \in \ob{\B}$.
  \end{enumerate}
  We will say that \emph{$\colimclosure{\Phi}{\A}$ exists} when the above left-composites exist.
  When $\colimclosure{\Phi}{\A}$ exists for every $\V$-category $\A$, we say that \emph{$\colimclosure{\Phi}{}$ exists}.
\end{definition}

\begin{remark}
  If all of the left-composites above are furthermore loose-composites, then \cref{closure-composite-condition} ensures that $\colimclosure{\Phi}{}$ forms a \emph{family of coverings} in the sense of \textcite{betti1985cocompleteness}.
  \citeauthor{betti1985cocompleteness} notes that families of coverings exhibit free cocompletions (in the 2-categorical sense), under substantially stronger assumptions (see \cref{survey-enriched-cocompletion-for-bicategory-enrichment}).
  Indeed, in this case, $\colimclosure{\Phi}{}$ is the smallest family of coverings that contains $q_1 \odot \cdots \odot q_n(1, c)$ for all $\vec{q} \in \Phi$ and $c \in \ob\C$.
\end{remark}

\begin{remark}
  Suppose we had instead defined $\colimclosure{\Phi}{}$ to be the smallest class of $\V$-distributors containing (1) all representable $\V$-distributors (2) the left-composite $p \odotl q_1 \odotl \cdots \odotl q_n(1, c)$ for all $\V$-presheaves $p \in \colimclosure{\Phi}{}$, weights $\vec q \in \Phi$, and $\V$-functors $c \colon \b X \to \C$.
  While this may seem a natural choice (amenable to generalisation beyond \ve s of enriched categories), this class of $\V$-distributors will not, in general, be induced by a class of presheaves. We therefore would not be able to construct a cocompletion via a presheaf object with respect to $\colimclosure{\Phi}{}$.
  \Cref{closure} instead treats the $\V$-categories $\star_v$ (\cref{one-object-enriched-category}) as a collection of `generators' in $\VCat$, which does not immediately generalise to an arbitrary \ve{} (without assuming further data that appears specific to the setting of enriched categories).
\end{remark}

\begin{lemma}
  \label{closure-is-colimit-equivalent}
  Let $\Phi$ be a class of weights in $\VCat$ for which $\colimclosure{\Phi}{}$ exists.
  Then $\Phi \colimequiv \colimclosure{\Phi}{}$.
\end{lemma}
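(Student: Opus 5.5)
We prove the two inequalities $\Phi \colimleq \colimclosure{\Phi}{}$ and $\colimclosure{\Phi}{} \colimleq \Phi$ separately. By \cref{saturation-properties}, it suffices to show $\colimclosure{\Phi}{\A} \subseteq \Phi\satw$ for every $\A$, and that every weight in $\Phi$ lies in $(\colimclosure{\Phi}{})\satw$. Then $\colimclosure{\Phi}{} \colimleq \Phi$ and $\Phi \colimleq \colimclosure{\Phi}{}$ follow from monotonicity of saturation, together with the fact that $\Psi \subseteq \Psi'$ implies $\Psi \colimleq \Psi'$.

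\emph{Step 1: $\Phi \colimleq \colimclosure{\Phi}{}$.} Let $\vec q = (q_1, \dots, q_n) \colon \C \lcto \B$ be in $\Phi$. By \cref{colimit-weighted-by-presheaves}, $\{\vec q\}$ is colimit equivalent to the class of weights $(q_1, \dots, q_n, \C(1, c))$ for $c \in \ob\C$. It is therefore enough to show that each such weight lies in $(\colimclosure{\Phi}{})\satw$. Take $p \defeq \B(1,1)$; its presheaves $\B(1, b)$ lie in $\colimclosure{\Phi}{\B}$ by clause (1) of \cref{closure}. Clause (2) then puts $\B(1,1) \odotl q_1 \odotl \cdots \odotl q_n(1, c) \iso q_1 \odotl \cdots \odotl q_n(1, c)$ into $\colimclosure{\Phi}{\B}$. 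By \cref{right-lift-through-left-composite}, a colimit weighted by this left-composite is the same as a colimit weighted by the chain $(q_1, \dots, q_n(1, c))$. Likewise, exactness and cocontinuity with respect to the left-composite coincide with those for the chain, since respecting a colimit is phrased in terms of right lifts. So the chain lies in $(\colimclosure{\Phi}{})\satw$. Finally, $q_n(1, c) = q_n \odot \C(1, c)$, and by \cref{saturation-properties}(3,5) together with representables lying in every saturation, the weight $(q_1, \dots, q_n, \C(1, c))$ lies in $(\colimclosure{\Phi}{})\satw$ as well.

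\emph{Step 2: $\colimclosure{\Phi}{} \colimleq \Phi$.} We show $\colimclosure{\Phi}{\A} \subseteq \Phi\satw$ by induction on the inductive generation of \cref{closure}. We use that $\Phi\satw$ is replete (\cref{saturation-properties}), so the repletion in \cref{closure} causes no trouble. Clause (1) is covered because representables lie in $\Phi\satw$ (\cref{saturation-properties}(2)). For clause (2), assume $p \colon \B \lto \A$ has every $p(1, b) \in \Phi\satw$. Before proceeding we must first show $p \in \Phi\satw$. Since $p(1,b) = p \odot \B(1,b)$, \cref{colimit-weighted-by-presheaves} gives $\{p\} \colimequiv \{p(1,b)\}_b \colimleq \Phi$. (This step needs care with the definition of saturation as the largest equivalent class: the union of $\Phi\satw$ with a class $\colimleq \Phi$ is still $\colimequiv \Phi$, so $p \in \Phi\satw$.) With $p \in \Phi\satw$, the chain $(p, q_1, \dots, q_n) \in \Phi\satw$ by \cref{saturation-properties}(3), and $\C(1,c)$ may be appended likewise. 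Then \cref{saturation-properties}(4) gives that the left-composite of the whole chain, namely $p \odotl q_1 \odotl \cdots \odotl q_n(1,c)$, is in $\Phi\satw$. This uses \cref{composite-enriched-characterisation} to identify it with the left-composite of $(p, q_1, \dots, q_n, \C(1,c))$.

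The main obstacle is the bookkeeping in Step 2: justifying that a $\V$-distributor whose restricted presheaves all lie in $\Phi\satw$ itself lies in $\Phi\satw$, and checking that left-composites (rather than genuine composites) interact correctly with exactness. For the former, I would rely on the pointwise reduction of \cref{colimit-weighted-by-presheaves}. For the latter, I would argue via \cref{right-lift-through-left-composite}, because exactness is defined in terms of right lifts, and these see only left-composites.
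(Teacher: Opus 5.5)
Your proposal is correct and takes essentially the paper's route: you obtain $\Phi \colimleq \colimclosure{\Phi}{}$ from the pointwise reduction of \cref{colimit-weighted-by-presheaves} together with \cref{right-lift-through-left-composite}, applied to $\B(1,1) \odotl q_1 \odotl \cdots \odotl q_n(1,c)$, and the reverse inequality by showing inductively that $\Phi\satw$ contains the closure; the step you single out (that $p \in \Phi\satw$ once every $p(1,b)$ is) is what the paper's appeal to \cref{colimit-weighted-by-presheaves} provides. One citation needs fixing: in Step~1, passing from $(q_1,\dots,q_n(1,c))$ back to $(q_1,\dots,q_n,\C(1,c))$ uses the \emph{converse} of condition~(5) of \cref{saturation-properties}, which holds because \cref{right-lift-through-composite} is an equivalence; in Step~2, applying (5) and then (4) to $(p,q_1,\dots,q_n,\C(1,c))$ gives $p \odotl q_1 \odotl \cdots \odotl q_n(1,c) \in \Phi\satw$ directly, with no need for \cref{composite-enriched-characterisation}.
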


\begin{proof}
  It suffices by \cref{saturation-properties} to show that $\Phi \colimleq \colimclosure{\Phi}{} \colimleq \Phi\satw$.
  For the first inequality, consider a weight $(q_1, \dots, q_n) \colon \A \lcto \B$ in $\Phi$.
  We have $\B(1, b) \in \colimclosure{\Phi}{}$ for all $b \in \ob{\B}$, so the composite
  \[
    \B(1, 1) \odotl q_1 \odotl \cdots \odotl q_n(1, a) \iso q_1 \odotl \cdots \odotl q_n \odotl \A(1, a)
  \]
  exists and is in $\colimclosure{\Phi}{}$.
  By \cref{colimit-weighted-by-presheaves,right-lift-through-left-composite}, $\vec{q}$-colimits are given pointwise by $(q_1 \odotl \cdots \odotl q_n \odotl \A(1, a))$-weighted colimits, so we can conclude that $\Phi \colimleq \colimclosure{\Phi}{}$.

  For the second inequality, we show inductively that the class $\Phi\satw$ contains $\colimclosure{\Phi}{}$: it is replete by \cref{sat-is-replete}, contains the representables by \cref{sat-contains-representables}, and contains the requisite left-composites by \cref{colimit-weighted-by-presheaves,sat-contains-concatenations,sat-contains-left-composites}.
  We therefore have $\Phi \colimleq \Phi\satw$ by \cref{colimit-weighted-by-presheaves}.
\end{proof}

The following is the main theorem of this section, showing that cocompletions of enriched categories under arbitrary classes of weights may be constructed as presheaf categories with respect to appropriate classes of distributors.

\begin{theorem}[label=closure-is-cocompletion]
  Let $\Phi$ be a class of weights in $\VCat$.
  Assume that $\colimclosure{\Phi}{}$ exists and that every weight $\vec{q} \in \Phi$ is lifting.
  Denote by $\PP$ the class of $\V$-distributors induced by $\colimclosure{\Phi}{}$.
  Every $\V$-category $\A$ is $\PP$-admissible, and the embedding $\pshe{\PP}{\A} \colon \A \to \psh{\PP}{\A}$ exhibits the $\Phi$-cocompletion of $\A$.
\end{theorem}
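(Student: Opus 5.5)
The plan is to apply \cref{presheaf-object-is-cocompletion} to the class $\PP$ induced by $\colimclosure{\Phi}{}$, and then to check its hypotheses one at a time. Along the way we use \cref{enriched-presheaf-objects} for the existence and admissibility of the presheaf object.

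\emph{Admissibility.} By \cref{enriched-presheaf-objects}, $\A$ admits a $\PP$-presheaf $\V$-category as soon as $q \rf p$ exists for all $\V$-presheaves $p, q \in \colimclosure{\Phi}{\A}$. Admissibility then follows because $\A(1,1)$ lies in $\PP$: each of its presheaves $\A(1,a)$ belongs to $\colimclosure{\Phi}{\A}$ by clause (1) of \cref{closure}.

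For the right lifts we argue by induction on the generation of $\colimclosure{\Phi}{\A}$, proving the stronger statement that every $p \in \colimclosure{\Phi}{\A}$ is lifting.
\begin{itemize}
\item Representables $\A(1,a)$ are lifting, since $q \rf \A(1,a) \iso q(a,1)$.
\item Lifting is preserved under isomorphism.
\item Consider a generator $p \odotl q_1 \odotl \cdots \odotl q_n(1,c)$. By the induction hypothesis each $p(1,b)$ is lifting, so $p$ is lifting by \cref{right-lift-enriched-characterisation}. Each $\vec q \in \Phi$ is lifting by assumption, and so are its restrictions $\vec q(\ldots,1,c)$ by \cref{restriction-of-lift}.
\item Combining these with \cref{iterated-lift}, the chain $(p, q_1, \ldots, q_n(1,c))$ is lifting. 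By \cref{right-lift-through-left-composite}, lifts through this chain agree with lifts through its left-composite, so the left-composite is lifting.
\end{itemize}

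\emph{Condition (1) of \cref{presheaf-object-is-cocompletion}.} Let $\vec q \in \Phi$ and $r \in \PP$. The left-composite $r \odotl \vec q$ exists pointwise by \cref{composite-enriched-characterisation}, since each $r \odotl q_1 \odotl \cdots \odotl q_n(1,c)$ exists because $\colimclosure{\Phi}{}$ exists. Each such pointwise presheaf lies in $\colimclosure{\Phi}{\A}$ by clause (2) of \cref{closure}, using that $r(1,b) \in \colimclosure{\Phi}{\A}$. Since $\PP$ is presheaf-induced and $\colimclosure{\Phi}{\A}$ is replete, it follows that $r \odotl \vec q \in \PP\repl$.

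\emph{Condition (2).} We need $\pi_\A \in \Phi\satw$. By \cref{closure-is-colimit-equivalent} we have $\Phi \colimequiv \colimclosure{\Phi}{}$, so $\Phi\satw$ contains every presheaf in $\colimclosure{\Phi}{}$. Each presheaf $\pi_\A(1,p) = p$ therefore lies in $\Phi\satw$. By \cref{colimit-weighted-by-presheaves}, $\{\pi_\A\}$ is colimit equivalent to $\{\pi_\A(1,p)\}_p$, and hence $\pi_\A \in \Phi\satw$.

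\emph{Condition (3).} We need $\pi_\A$ to be lifting. By \cref{right-lift-enriched-characterisation}, $s \rf \pi_\A$ exists once $s(1,x) \rf p$ exists for every $p \in \colimclosure{\Phi}{\A}$ and arbitrary $s$. This is exactly the stronger statement established in the induction above.

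The main obstacle is that induction. We have to check that clause (2) of \cref{closure} preserves the property of being lifting against arbitrary $\V$-presheaves, not only against those in the class. This requires combining \cref{iterated-lift,right-lift-through-left-composite} with the pointwise characterisation, and handling the restriction $q_n(1,c)$ carefully.
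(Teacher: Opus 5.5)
Your proposal is correct and follows the paper's proof essentially step for step. Both first show by induction that every presheaf in the colimit closure is lifting: representables via \cref{restriction-of-lift}, and generators via \cref{iterated-lift} together with the lift/left-composite comparison. Both then obtain admissibility from \cref{enriched-presheaf-objects} and verify the three hypotheses of \cref{presheaf-object-is-cocompletion}. The details you add where the paper is terse are also correct: that $p$ itself is lifting because each $p(1,b)$ is, and that $\pi_\A \in \Phi\satw$ because $\{\pi_\A\}$ is colimit equivalent to the family of its presheaves $\pi_\A(1,p)$.
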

\begin{proof}
  We first prove that every $p \in \colimclosure{\Phi}{}$ is lifting.
  This is the case for the representables $\A(1, a)$ because $r \rf \A(1, a) \iso r(a, 1)$ by \cref{restriction-of-lift}.
  It is also the case for $p \odotl q_1 \odotl \cdots \odotl q_n(1, c)$, because
  \begin{align*}
    r \rf (p \odotl q_1 \odotl \cdots \odotl q_n(1, c))
    &\iso\tag{\cref{right-lift-through-composite}}
    r \rf (p, q_1, \dots, q_n(1, c))
    \\&\iso\tag{\cref{iterated-lift}}
    r \rf p \rf (q_1, \dots, q_n(1, c))
    \\&\iso\tag{\cref{restriction-of-lift}}
    ((r \rf p) \rf \vec{q})(1, c)
  \end{align*}

  It follows from \cref{enriched-presheaf-objects} that $\A$ is $\PP$-admissible.
  To show that $\pshe{\PP}{\A} \colon \A \to \psh{\PP}{\A}$ is the $\Phi$-cocompletion, we verify the conditions of \cref{presheaf-object-is-cocompletion}.
  \begin{enumerate}
    \item The closure under the requisite left-composites is, by \cref{composite-enriched-characterisation}, an immediate consequence of the definition of $\colimclosure{\Phi}{}$.
    \item $\pi_{\A}$ is in the colimit saturation of $\Phi$ by \cref{closure-is-colimit-equivalent}, since $\pi_{\A} \in \PP$.
    \item To show that right lifts $r \rf \pi_{\A}$ exist, it is enough by \cref{right-lift-enriched-characterisation} to show that $r \rf \pi_{\A}(1, p)$ exists for all $p \in \ob{\psh{\PP}{\A}}$; this is the case because $\pi_{\A}(1, p)$ is in $\PP$ and hence in $\colimclosure{\Phi}{}$.
    \qedhere
  \end{enumerate}
\end{proof}

\begin{corollary}[label=enriched-cocompletion]
  Let $\V$ be a locally complete and locally cocomplete bicategory with right lifts.
  If $\Phi$ is a class of weights $(p_1, \ldots, p_n)$ for which the codomain of every $p_i$ is small, then every $\V$-category $\A$ is $\Phi$-cocompletable.
\end{corollary}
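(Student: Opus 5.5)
The plan is to deduce the corollary from \cref{closure-is-cocompletion}. Its two hypotheses are that $\colimclosure{\Phi}{}$ exists and that every weight $\vec q \in \Phi$ is lifting, so the proof consists of checking these for a locally complete, locally cocomplete bicategory $\V$ with right lifts. Once both hold, \cref{closure-is-cocompletion} states directly that every $\V$-category $\A$ is $\PP$-admissible, where $\PP$ is the class induced by $\colimclosure{\Phi}{}$, and that $\pshe\PP\A$ exhibits the $\Phi$-cocompletion. In particular $\A$ is $\Phi$-cocompletable, with no size assumption on $\A$.

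The lifting condition is the easier of the two. Every $\vec q = (q_1, \ldots, q_n) \in \Phi$ has each $q_i$ with small codomain. Since $\V$ is locally complete and admits right lifts, \cref{small-codomain-right-lifts-exist} shows that $\vec q$ is lifting.

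The existence of $\colimclosure{\Phi}{\A}$ for every $\A$ requires the left-composites in \cref{closure-composite-condition} to exist. These are the composites $p \odotl q_1 \odotl \cdots \odotl q_n(1, c)$, for $\vec q \colon \C \lcto \B$ in $\Phi$, $c \in \ob\C$, and $p \colon \B \lto \A$ arbitrary.
\begin{itemize}
  \item A bicategory admits all loose-composites, and in particular left-composites.
  \item Local cocompleteness of $\V$ implies left-local cocompleteness.
  \item The chain $(q_1, \ldots, q_{n-1}, q_n(1, c))$ has the same codomains as $\vec q$, since restricting $q_n$ along $c \colon \star \to \C$ changes only its domain, so each codomain is small.
\end{itemize}
Hence \cref{small-codomain-composites-exist} applies to the weight $(q_1, \ldots, q_n(1, c))$ with $q \defeq p$, and the required left-composite exists. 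This holds for every $p$ regardless of whether $p(1, b) \in \colimclosure{\Phi}{\A}$, so the inductive definition of $\colimclosure{\Phi}{\A}$ never needs a composite that fails to exist.

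The main obstacle I expect is this size bookkeeping. The construction in \cref{small-codomain-constructions-exist} takes limits and colimits over the objects of the codomain categories, so I must confirm that forming $q_n(1, c)$ does not disturb smallness. It does not, since the codomain of $q_n(1, c)$ is still that of $q_n$. I must also confirm that the closure, a class of presheaves on a possibly large $\A$, needs only the existence of these composites and no smallness of $\A$ itself. Once both points are settled, both hypotheses of \cref{closure-is-cocompletion} hold and the corollary follows.
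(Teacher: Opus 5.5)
Your proposal is correct and is essentially the paper's proof: the paper also deduces the corollary directly from \cref{closure-is-cocompletion}, obtaining the lifting of each weight in $\Phi$ and the existence of $\colimclosure{\Phi}{}$ from \cref{small-codomain-constructions-exist}. Your check that $q_n(1, c)$ has the same small codomain as $q_n$ is the bookkeeping the paper leaves implicit when it invokes \cref{small-codomain-composites-exist}.
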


\begin{proof}
  Follows from \cref{closure-is-cocompletion}.
  The required right lifts, and $\colimclosure{\Phi}{}$, exist by \cref{small-codomain-constructions-exist}.
\end{proof}

\begin{example}
  We showed in \cref{small-cocompletion-of-small-category} that, under our usual assumptions on $\V$, every small $\V$-category has a small cocompletion.
  \Cref{enriched-cocompletion} further implies that \emph{every} $\V$-category has a small cocompletion.
\end{example}

In previous works on enriched cocompletions (\eg~\cite[Theorem~5.35]{kelly1982basic}), the $\Phi$-cocompletion of an enriched category is constructed by taking a full sub-$\V$-category of a presheaf $\V$-category (often the $\V$-category of \emph{small} presheaves, \cf{} \cref{small-presheaf-section} below). The following lemma shows that this is essentially the same as our construction involving $\colimclosure{\Phi}{\A}$. Note, however, that our definition of colimit closure does not rely on the existence of any particular presheaf object in $\VCat$.

\begin{proposition}[label=colimclosure-as-full-subcategory]
  Let $\PP$ be a replete class of $\V$-distributors and let $\A$ be a $\PP$-admissible $\V$-category. Assume that the left-composite $q \odotl \vec{p}$ exists and is in $\PP$ for all $q \in \PP$ and $\vec{p} \in \Phi$, for a class of weights $\Phi$.
  Then $\colimclosure{\Phi}{\A}$ exists. Denote by $\QQ$ the class of $\V$-distributors induced by $\colimclosure{\Phi}{\A}$. Then $\QQ \subseteq \PP$, $\A$ is $\QQ$-admissible, and the induced $\V$-functor $\psh{\QQ}{\A} \ffto \psh\PP\A$ exhibits the smallest full sub-$\V$-category of $\psh{\PP}{\A}$ that is closed in $\psh{\PP}{\A}$ under $\Phi$-colimits and representable $\V$-presheaves on~$\A$.
\end{proposition}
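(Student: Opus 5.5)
We prove existence of $\colimclosure{\Phi}{\A}$ and the inclusion $\colimclosure{\Phi}{\A} \subseteq \PP$ simultaneously, by induction along the inductive generation in \cref{closure}, restricted to presheaves on $\A$. The set of $\V$-presheaves on $\A$ lying in $\PP$ contains the representables $\A(1, a)$: $1_\A \in \PP\repl = \PP$ by admissibility, and $\PP$ is closed under restriction (each $p \in \PP$ with codomain $\A$ is $\pi_\A(1, \pshm p)$, so $p(1, f) = \pi_\A(1, \pshm p f) \in \PP$). For the inductive step, let $p \colon \B \lto \A$ be such that every $p(1, b)$ is already known to be in $\PP$. We first need $p \in \PP$ itself. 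This is where I would invoke presheaf-inducedness of the class determined by $\psh\PP\A$: the objects of $\psh\PP\A$ are presheaves, so by \cref{enriched-presheaf-objects} a $\V$-distributor $p$ all of whose restrictions $p(1, b)$ lie in $\PP$ yields a $\V$-functor $\pshm p \colon \B \to \psh\PP\A$, using the right lifts $p(1,b') \rf p(1,b)$. Then $p = \pi_\A(1, \pshm p) \in \PP$ by saturation \eqref{Phi-presheaf-object-saturation}. The hypothesis then gives that $p \odotl \vec q$ exists and lies in $\PP$. Its restriction $(p \odotl q_1 \odotl \cdots \odotl q_n)(1, c) \iso p \odotl q_1 \odotl \cdots \odotl q_n(1, c)$ exists by \cref{composite-enriched-characterisation} and lies in $\PP$ by restriction-closure. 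Since $\PP$ is replete, the smallest replete class is contained in it.

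Hence $\QQ \subseteq \PP$, where $\QQ$ is the induced class, since $\PP$ is (essentially) induced by its presheaves. Because every object of $\psh\PP\A$ is a presheaf, $\colimclosure{\Phi}{\A}$ is a class of objects of $\psh\PP\A$. Applying \cref{sub-V-presheaf-category} to the full sub-$\V$-category on $\colimclosure{\Phi}{\A}$ shows that this subcategory is a $\QQ$-presheaf object and that its inclusion is the induced functor $\psh\QQ\A \ffto \psh\PP\A$. $\QQ$-admissibility holds because $\A(1,1)$ has all restrictions $\A(1,a) \in \colimclosure{\Phi}{\A}$.

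It remains to identify this subcategory as the smallest full sub-$\V$-category $\C$ of $\psh\PP\A$ containing the representables and closed under $\Phi$-colimits. By \cref{presheaf-colimits-absolute}, the $\vec q$-colimit in $\psh\PP\A$ of a diagram $\pshm p \colon \B \to \psh\PP\A$ exists and is $\widepshm{p \odotl \vec q}$. By \cref{colimit-weighted-by-presheaves} it is computed pointwise, with value at $c$ being $p \odotl q_1 \odotl \cdots \odotl q_n(1,c)$. A diagram lands in a full subcategory $\C$ exactly when each $p(1,b) \in \ob\C$. So ``closed under $\Phi$-colimits and containing representables'' is literally the closure condition of \cref{closure}, read on objects up to isomorphism. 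This gives both containments: $\colimclosure{\Phi}{\A}$ is closed, and any closed replete $\C$ contains it by induction. One subtlety is that a full subcategory closed under colimits is automatically replete, which we may assume by taking replete full subcategories.

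The main obstacle I expect is the first step, showing $p \in \PP$ from its pointwise restrictions. This needs that the class of distributors $\PP$ behaves as presheaf-induced on distributors into $\A$, which the statement only gives through repleteness plus the construction in \cref{enriched-presheaf-objects}. I would handle it precisely by building $\pshm p$ from the pointwise presheaves, as in the proof of that theorem.
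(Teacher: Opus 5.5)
Your proposal is correct and follows the same route as the paper. You check that the $\V$-presheaves in $\PP$ satisfy both clauses of \cref{closure}, which gives existence of $\colimclosure{\Phi}{\A}$ and $\QQ \subseteq \PP$. You obtain $\QQ$-admissibility and the full inclusion from \cref{sub-V-presheaf-category}, and you identify closure under $\Phi$-colimits with the second clause of \cref{closure} via \cref{presheaf-colimits-absolute,colimit-weighted-by-presheaves}. Your one addition is an explicit argument that $p \in \PP$ whenever every $p(1, b) \in \PP$, assembling $\pshm p$ from the homs $\psh\PP\A(\widepshm{p(1, b)}, \widepshm{p(1, b')}) \iso p(1, b') \rf p(1, b)$ of \cref{Phi-presheaf-hom-is-right-lift}; this makes precise a step the paper leaves implicit, and it works for an arbitrary $\PP$-presheaf object, not only the one built in \cref{enriched-presheaf-objects}.
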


\begin{proof}
  First, the existence of the assumed left-composites implies those necessary for the existence of $\colimclosure{\Phi}{\A}$; and the class of $\V$-presheaves in $\PP$ satisfies the two conditions of \cref{closure}, so that $\QQ \subseteq \PP$.
  $\QQ$-admissibility of $\A$ then follows by \cref{sub-V-presheaf-category}. For the universal property, note that $\Phi$-colimits in $\psh{\PP}{\A}$ are $\pshe{\PP}{\A}$-absolute, and are given by the assumed left-composites (\cref{presheaf-colimits-absolute}).
  The result follows by using \cref{colimit-weighted-by-presheaves}.
\end{proof}

The construction of $\Phi$-cocompletions using presheaf categories in \cref{closure-is-cocompletion} is highly general: it holds not merely for classes of weights with small domain, but more generally for classes of large weights (subject to existence of the requisite presheaf category). The following is an example involving large colimits.

\begin{example}[label=petty-presheaves]
  Consider $\V = \Sigma\Set$, so that a $\V$-category is simply a locally small category.
  A presheaf $p \colon \A\op \to \Set$ on a (possibly large) category $\A$ is weakly multirepresentable~\cite[\S1.1]{arkor2024adjoint} (\aka~\emph{proper}~\cite[\S1]{isbell1960adequate} or \emph{petty}~\cite[\S1]{freyd1968several}) when there exists an epimorphism $\sum_{i \in I} \A(1, a_i) \epito p$ in $\Set^{\A\op}$, for some small family $\{a_i \in \ob\A\}_{i \in I}$ of objects of $\A$. In particular, every small presheaf, being expressible as a coequaliser of a coproduct of representable presheaves, is weakly multirepresentable.
  \Textcite[Remark~4.39]{lack2023virtual} observe that the (locally small) category $\psh{\Phi}{\A}$ of weakly multirepresentable presheaves satisfies the 2-categorical universal property of the completion of $\A$ under small colimits and (possibly large) cointersections of regular epimorphisms (\cf~\cite{kelly1981large}).
  We can apply our \cref{closure-is-cocompletion} to this case (although not \cref{enriched-cocompletion}, since the weights in question have large domain), showing that the weakly multirepresentable presheaves satisfy our stronger definition of cocompletion. This demonstrates that our framework is an effective setting in which to study even large cocompletions (other such examples are given in \cref{weakly-representable-presheaves,topological-functors}).

  First, note that every weakly multirepresentable presheaf $p$ is lifting.
  Indeed, $\sum_{i \in I} \A(1, a_i)$ is lifting, with $q \rf \sum_{i \in I} \A(1, a_i) \iso \prod_{i \in I}(q \rf \A(1, a_i)) \iso \prod_{i \in I} q(a_i, 1)$, so $q \rf p$ is given by a sub-copresheaf of $\prod_{i \in I} q(a_i, 1)$.

  In any category, the cointersection of a $J$-indexed family of regular epimorphisms is equivalently the colimit of a diagram of shape $\W_J$, where the category $\W_J$ consists of an object $x$, and, for each $j \in J$, an object $w_j$ along with a parallel pair of morphisms $\ell_j, r_j \colon w_j \to x$.
  Thus, there is a class of weights $\Phi$ for small colimits and cointersections of regular epimorphisms, consisting of all presheaves on small categories, as well as the conical $\W_J$-indexed weights (\ie{} presheaves $u_J \colon \star \lto \W_J$, where $u_J$ sends every object of $\W_J$ to a singleton set).
  The class $\PP \defeq \colimclosure{\Phi}{}$ then consists of all weakly multirepresentable presheaves (for instance, by \cref{colimclosure-as-full-subcategory}), and it follows from \cref{closure-is-cocompletion} that the category $\A$ is $\PP$-admissible and that the embedding $\pshe{\PP}{\A} \colon \A \to \psh{\PP}{\A}$ exhibits the $\Phi$-cocompletion of $\A$.
\end{example}

\begin{example}[label=weakly-representable-presheaves]
  For a closely related example to \cref{petty-presheaves}, take $\Phi$ to be the class of (conical) weights for coequalisers and for cointersections of regular epimorphisms (but, this time, without arbitrary small weights). A presheaf $p$ on a (possibly large) category $\A$ is weakly representable when there exists an epimorphism $\A(1, a) \epito p$ in $\Set^{\A\op}$ for some object $a \in \ob\A$. In particular, representable presheaves, but not arbitrary small presheaves, are weakly representable.
  Much as in \cref{petty-presheaves}, the class $\PP \defeq \colimclosure{\Phi}{}$ consists of all weakly representable presheaves, and it follows from \cref{closure-is-cocompletion} that the category $\A$ is $\PP$-admissible and that the embedding $\pshe{\PP}{\A} \colon \A \to \psh{\PP}{\A}$ exhibits the $\Phi$-cocompletion of $\A$.
\end{example}

We may, of course, instantiate our properties of free cocompletions (\cref{cocompleteness-in-a-virtual-equipment}) in the setting of enriched categories.
In particular, we can recover \citeauthor{szlachanyi2017tensor}'s characterisation of cocompletions of small $\V$-categories as well-behaved functors, providing a detection result for cocompletions.

\begin{corollary}[label=well-behaved-is-enriched-cocompletion, note={\cf~\cite[\S8]{szlachanyi2017tensor}}]
  Suppose that $\V$ is locally complete and admits right lifts, and let $j \colon \A \to \b E$ be a $\V$-functor, with $\A$ small. Then $j$ is well-behaved (\cref{well-behaved}) if and only if there is a class $\Phi$ of weights for which $j$ exhibits $\b E$ as the free cocompletion of the $\V$-category $\A$ under $\Phi$-weighted colimits.
\end{corollary}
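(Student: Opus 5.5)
The natural route is to reduce both directions to \cref{cocompletion-via-well-behavedness} and \cref{cocompletions-are-well-behaved}, using smallness of $\A$ only to guarantee the lifting condition.

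For the direction ($\Leftarrow$), suppose $j$ exhibits $\b E$ as the $\Phi$-cocompletion of $\A$ for some class $\Phi$. Then \cref{cocompletions-are-well-behaved} shows that $j$ is well-behaved, and nothing further is needed.

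For the direction ($\Rightarrow$), suppose $j$ is well-behaved. Take $\Phi \defeq \{ \b E(j, 1) \}$. By \cref{cocompletion-via-well-behavedness}, $j$ exhibits the $\{\b E(j, 1)\}$-cocompletion of $\A$ as soon as $\b E(j, 1) \colon \b E \lto \A$ is lifting. This is the only remaining point. The codomain of $\b E(j, 1)$ is $\A$, which is small, and $\V$ is locally complete with right lifts. Hence \cref{small-codomain-right-lifts-exist}, applied to the unary weight $(\b E(j, 1))$, shows that this weight is lifting, which completes the argument.

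The main point to check is that \cref{small-codomain-constructions-exist} requires smallness only of the codomain of each loose-cell in the weight, and not of its domain. This matters because $\b E$ may be large. Reading the proof of that lemma confirms this: it reduces to right lifts $q \rf p$ of presheaves on the codomain, via \cref{right-lift-enriched-characterisation}, and those are constructed as limits of diagrams indexed by pairs of objects of the small codomain $\A$.

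One might also wish to exhibit a more conventional $\Phi$, such as one containing $\b E(j, 1)$ together with a class colimit-equivalent to it. However, the statement only asks for existence, so the singleton class suffices.
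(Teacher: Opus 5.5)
Your proof is correct and matches the paper's, which likewise deduces the result directly from \cref{cocompletion-via-well-behavedness} together with \cref{small-codomain-right-lifts-exist}, smallness of $\A$ being exactly what makes $\b E(j, 1)$ lifting. Citing \cref{cocompletions-are-well-behaved} directly for the converse is the cleanest way to handle an arbitrary $\Phi$. Your check that only the codomain $\A$ of $\b E(j, 1)$ needs to be small, and not the possibly large $\b E$, is the right thing to verify.
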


\begin{proof}
  Immediate from \cref{cocompletion-via-well-behavedness,small-codomain-right-lifts-exist}.
\end{proof}

\begin{remark}
  \textcite[Theorem~7.8]{lucyshyn2016enriched} establishes a similar result to \cref{well-behaved-is-enriched-cocompletion} in the setting of enrichment in a symmetric closed monoidal category $\b V$, but specifically in which the codomain of $j$ is $\b V$ itself. \citeauthor{lucyshyn2016enriched} does not impose a size constraint, but instead assumes the existence of potentially large colimits: namely, left extensions of $\b V$-functors $\b A \to \b V$ along $j$. This amounts to the assumption in \cref{cocompletion-via-well-behavedness} that $\b E(j, 1)$ is lifting.
\end{remark}

Finally, we note that a $\Phi$-cocompletion of a small $\V$-category is often small itself.
\begin{corollary}
  Under the assumptions of \cref{enriched-cocompletion}, if $\A$ is (extentwise) small, and $\Phi$ is a set (rather than a class), then the $\Phi$-cocompletion of $\A$ is (extentwise) small.
\end{corollary}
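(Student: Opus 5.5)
The plan is to read off the cocompletion from the construction in \cref{closure-is-cocompletion} and show that its object class is small whenever $\A$ and $\Phi$ are. Under the assumptions of \cref{enriched-cocompletion}, the $\Phi$-cocompletion of $\A$ is given by the $\PP$-presheaf $\V$-category $\psh\PP\A$, where $\PP$ is induced by $\colimclosure{\Phi}{}$. By the construction in \cref{enriched-presheaf-objects}, the objects of $\psh\PP\A$ are exactly the $\V$-presheaves on $\A$ lying in $\colimclosure{\Phi}{\A}$. Cocompletions are unique up to equivalence, and extentwise smallness is invariant under equivalence provided we pass to a skeleton or count isomorphism classes. So it suffices to show that, for each extent $V \in \V$, there is only a set of isomorphism classes of presheaves of extent $V$ in $\colimclosure{\Phi}{\A}$. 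In the small case, we additionally need only a set of extents to occur.

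We would argue by a transfinite construction of $\colimclosure{\Phi}{\A}$. Let $C_0$ be the representables $\A(1,a)$. At a successor stage, close under the left-composites $p \odotl q_1 \odotl \cdots \odotl q_n(1,c)$ appearing in \cref{closure-composite-condition}. At limit stages, take unions. The key counting step at each stage is as follows.
\begin{itemize}
\item A single $\V$-presheaf of extent $V$ on an extentwise small $\A$ is specified by a family of loose-cells $p(a) \colon V \lto \ex a$, one for each object $a$, together with action 2-cells. Since each hom-category of the bicategory $\V$ is a set (it is locally small), once the extent is fixed there is only a set of such data up to isomorphism.
\item Each weight $\vec q \in \Phi$ has small codomains $\B, \ldots$. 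A $\V$-distributor $p \colon \B \lto \A$ whose columns lie in the current stage is therefore a small family of presheaves from a set-sized collection, so there is only a set of choices of $p$.
\item Since $\Phi$ is a set, the new presheaves at each stage range over a set.
\end{itemize}
The extent of $p \odotl \cdots \odotl q_n(1,c)$ is $\ex c$, an extent of an object of the domain of $q_n$. So the extents that occur are those of $\A$ together with those of the domains of the weights in $\Phi$. This is a set when $\A$ is small and $\Phi$ is a set of weights with small domains. When instead $\A$ is only extentwise small, we count per extent.

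The main obstacle is termination of the transfinite construction: we must show that it stabilises at some regular cardinal $\kappa$, so that the union of a set of stages is a set. We would pick $\kappa$ larger than the sizes of all codomains $\B$ of weights in $\Phi$, which is possible because $\Phi$ is a set. Then any $p \colon \B \lto \A$ with columns in $C_\kappa$ has its fewer-than-$\kappa$ columns already in some earlier $C_\alpha$, and so $C_{\kappa+1} = C_\kappa$.

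A subtlety is that per-extent counting needs the domains of the weights to be (extentwise) small, and in addition the objects $c$ of each domain to contribute only a set of new presheaves per extent. If the statement intends only codomain-smallness, we would use that extentwise smallness of domains follows from $\Phi$ being a set of weights in the enriched sense. Failing that, we would bound per extent by the data count above, which does not depend on $c$ beyond $\ex c$.
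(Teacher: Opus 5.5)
Your overall route matches the paper's ``trivial inductive argument'': build $\colimclosure{\Phi}{\A}$ by transfinite induction, count isomorphism classes per extent (rightly, since the closure is replete), and stop at a regular $\kappa$ exceeding the sizes of the codomains $\B$. However, two steps fail.

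\textbf{The first bullet is false.} Local smallness of $\V$ makes its hom-categories locally small, not small. So the presheaves of a fixed extent need not form a small collection up to isomorphism: for $\V = \Sigma\Set$ and $\A = \b 1$ they are all the small sets. Were the bullet true, the induction would be superfluous, and \cref{small-cocompletion-of-small-category} would make $\Set$ essentially small.

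Smallness has to come from the induction itself:
\begin{itemize}
\item the base case comes from extentwise smallness of $\A$;
\item the successor step comes from the inductive hypothesis on the columns $p(1, b)$ together with smallness of $\B$;
\item local smallness of $\V$ is used only for the action data.
\end{itemize}
Count the action data as 2-cells $\B(b, b') \tto p(1, b') \rf p(1, b)$, not componentwise over the possibly large $\ob\A$. These lifts exist because elements of the closure are lifting.

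\textbf{Your last paragraph cannot be rescued.} Nothing forces the domain $\C$ of a weight in $\Phi$ to be small, since \cref{enriched-cocompletion} constrains only codomains. The ``data count'' fallback is just the false bullet again. In fact the statement fails under codomain-smallness alone. Take $\V = \Sigma\Set$, $\A = \b 1$ and $\Phi = \{ q \}$, where $\C$ is the large discrete category on the small cardinals and $q \colon \C \lto \b 1$ is given by $q(*, c) \defeq c$. This singleton satisfies all the hypotheses. Yet $\colimclosure{\Phi}{\b 1}$ contains $\b 1(1, 1) \odotl q(1, c) \iso c$ for every small cardinal $c$. So the $\Phi$-cocompletion of $\b 1$, its free cocompletion under small copowers, is $\Set$, which is not essentially small.

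You must therefore assume, as in your third paragraph, that the domains of the weights are small (respectively extentwise small). You must also read ``set'' as ``small'', so that $\kappa$ and the union of the stages are small. With that hypothesis and the corrected counting, your induction is a valid proof. The paper's one-line proof tacitly needs the same restriction.
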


\begin{proof}
  A trivial inductive argument shows that, for every (extentwise) small $\A$, the restriction of $\colimclosure{\Phi}{}$ to $\V$-presheaves on $\A$ is also (extentwise) small.
  By our construction of the cocompletion, the latter restriction is the class of objects of $\coc{\Phi}{\A}$.
\end{proof}

\subsection{Small presheaves}\label{small-presheaf-section}
One consequence of \cref{enriched-cocompletion} is that the small cocompletion (where $\Phi = \{\text{ $\V$-presheaves on small $\V$-categories }\}$) of a general $\V$-category $\A$ exists under mild assumption on $\V$.
It is known that this cocompletion is given by the $\V$-category of \emph{small} $\V$-presheaves on $\A$ (\cf~\cite[Section~5.6]{kelly1982basic}), though \cref{enriched-cocompletion} does not directly give this characterisation.
More generally, for any subclass $\Phi' \subseteq \Phi$, the $\Phi'$-cocompletion is given by a sub-$\V$-category of the $\V$-category of small $\V$-presheaves.
We have so far avoided any mention of small presheaves, but for completeness we discuss them here.

\begin{definition}[label=small-presheaf]
  Let $\A$ be a $\V$-category.
  A $\V$-presheaf $p$ on $\A$ is \emph{small} when there exists a full sub-$\V$-category $i \colon \A' \ffto \A$ with small domain such that the $\V$-presheaf $p(i)$ is lifting, and such that, for every $a \in \ob{\A}$, the $\V$-natural family $\{A(a, \ob{i}a'), p(\ob{i}a') \tto p(a)\}_{a' \in \ob{\A'}}$ formed by the left-action of $p$ is left-opcartesian in $\VCat$:
  \[
    p(a) \iso A(a, i) \odotl p(i)
  \]
  A weight $(p_1, \dots, p_n) \colon \A_n \lcto \A_0$ in $\VCat$ is \emph{colimit-small} when the $\V$-presheaf $p_k(1, a_k)$ is small for every $1 \leq k \leq n$ and $a_k \in \ob{\A_k}$.
\end{definition}

\begin{remark}
  \Cref{composite-enriched-remark} implies that the $\V$-presheaf $p$ in \cref{small-presheaf} is the left-composite $A(1, i) \odotl p(i)$.
\end{remark}

\begin{remark}
  We say \emph{colimit-small} instead of just \emph{small} because, while we use the same notion of weight for both colimits and limits, colimit-smallness does not coincide with \emph{limit-smallness} (which is colimit-smallness in $\VCat\co \iso \VcoCat$; see \cref{completions-of-enriched-categories}).
\end{remark}

\begin{remark}[label=small-flexibility]
  Small $\V$-presheaves are called \emph{accessible} in \cite{kelly1982basic} (for $\V$ a symmetric closed monoidal category), while our notion of colimit-small weight is similar to \citeauthor{shulman2013enriched}'s notion of \emph{$\kappa$-small profunctor} between enriched indexed categories~\cite[Definition~10.1]{shulman2013enriched}.
  Neither require $i$ to be \ff{}, but both authors (\cite[Proposition~4.83]{kelly1982basic} and \cite[Remark~10.6]{shulman2013enriched}) show that one can always replace a non-\ff{} $i$ with a \ff{} $i$, so that this difference is irrelevant.
  \Citeauthor{shulman2013enriched}'s proof makes it clear that this fact relies on existence of certain composites; since we do not generally have these composites, we directly require that $i$ is \ff{} (for the proof of \cref{colimit-small-composite-lemma} below).
  Moreover, a direct translation of their definitions into our setting would only require the existence of \emph{some} $p'$ (not necessarily $p(i)$) such that $p \iso A(1, i) \odotl p'$.
  Again this distinction is irrelevant (\cf{} \cite[Proposition~4.83]{kelly1982basic}), since, if $i$ is \ff{}, then:
  \[
    p' \iso (A(i, i) \odotl p')
    \iso (A(1, i) \odotl p')(i)
    \iso p(i)
  \]
  Neither \citeauthor{kelly1982basic} nor \citeauthor{shulman2013enriched} has an explicit lifting assumption, but nor do they need one: since $p(i)$ has small codomain, it is automatically lifting when $\V$ is locally complete and has right lifts, by \cref{small-codomain-right-lifts-exist}.
\end{remark}

The main reason for considering small $\V$-presheaves is the fact that they are lifting, and hence that their presheaf-objects exist.

\begin{lemma}[label=small-right-lifts-exist]
  Every small $\V$-presheaf, and every colimit-small weight, is lifting.
  In particular, if $\PP$ is a class of $\V$-distributors induced by a class of small $\V$-presheaves, then every $\V$-category $\A$ admits a $\PP$-presheaf object.
\end{lemma}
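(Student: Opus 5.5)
We need to show that a small $\V$-presheaf $p$ on $\A$ is lifting, so that $q \rf p$ exists for every $\V$-distributor $q$. Then we extend this to colimit-small weights, and deduce the existence of presheaf objects from \cref{enriched-presheaf-objects}.

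\textbf{Step 1: small presheaves.} Let $p$ be small, witnessed by a fully faithful $i \colon \A' \ffto \A$ with $\A'$ small, $p(i)$ lifting, and $p \iso \A(1, i) \odotl p(i)$; this form of $p$ is given by \cref{composite-enriched-remark}. By \cref{right-lift-through-left-composite} we have
\[q \rf p \iso q \rf (\A(1, i) \odotl p(i)) \iso q \rf (\A(1, i), p(i)),\]
provided the right-hand side exists. By \cref{iterated-lift}, this is $(q \rf \A(1, i)) \rf p(i)$. The first lift exists by \cref{restriction-of-lift}, since $q \rf \A(1, i) \iso q(i, 1)$. The second lift is through $p(i)$, which is lifting by assumption. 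Hence $q \rf p \iso q(i, 1) \rf p(i)$ exists.

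One subtlety needs checking. \cref{right-lift-through-left-composite} requires the left-composite $\A(1, i) \odotl p(i)$ to exist as a left-composite in $\VCat$. The definition only states that the pointwise family $p(a) \iso \A(a, i) \odotl p(i)$ is left-opcartesian for each $a$. I would use \cref{composite-enriched-characterisation}, with the roles of the chain as stated there, to assemble these pointwise left-composites into a genuine left-composite of $\V$-distributors. This is the step I expect to need the most care, because the characterisation in \cref{composite-enriched-characterisation} is phrased by restricting the last loose-cell, whereas here the restriction is on the first. If it does not apply directly, I would fall back on \cref{right-lift-enriched-characterisation}: compute $(q \rf p)(b)$ pointwise as $q(1, b) \rf p$, and apply the argument above to each copresheaf $q(1, b)$.

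\textbf{Step 2: colimit-small weights.} Let $(p_1, \dots, p_n)$ be colimit-small. By \cref{iterated-lift}, $q \rf (p_1, \dots, p_n) \iso (\cdots(q \rf p_1) \rf \cdots) \rf p_n$, so it suffices to show that each $p_k$ is lifting. By \cref{right-lift-enriched-characterisation} with $n = 1$, the lift $(r \rf p_k)(a, b) \iso r(1, b) \rf p_k(1, a)$ exists whenever the right-hand side does for every $a$ and $b$. The right-hand side exists because each $p_k(1, a)$ is a small presheaf, which is lifting by Step 1.

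\textbf{Step 3: presheaf objects.} Let $\PP$ be induced by a class of small presheaves. For any presheaves $p, q \in \PP$ on $\A$, the lift $q \rf p$ exists by Step 1. \cref{enriched-presheaf-objects} then gives the $\PP$-presheaf object.
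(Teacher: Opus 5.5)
Your proof is essentially the paper's: Step 1 is the same chain $q \rf p \iso q \rf (\A(1,i) \odotl p(i)) \iso (q \rf \A(1,i)) \rf p(i) \iso q(i,1) \rf p(i)$, Step 3 is the same appeal to \cref{enriched-presheaf-objects}, and Step 2 uses the same ingredients (\cref{right-lift-enriched-characterisation} and \cref{iterated-lift}), except that you show each unary $p_k$ is lifting and then iterate, where the paper inducts on $n$ and restricts only the last loose-cell, an inessential difference. The subtlety you flag is exactly what the paper supplies in the Remark following \cref{small-presheaf} (that $p$ is the left-composite $\A(1,i) \odotl p(i)$ in $\VCat$), so you may cite it just as the paper does; note, however, that your fallback would not bypass it, since passing to $q(1,b)$ (which is a presheaf, not a copresheaf) still leaves a right lift through $p$ over $\A$ that requires that same left-composite.
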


\begin{proof}
  If $p \iso \A(1, i) \odotl p(i)$ is a small $\V$-presheaf, with $p(i)$ lifting, then we have $q(i) \rf p(i) \iso (q \rf \A(1, i)) \rf p(i) \iso q \rf (\A(1, i) \odotl p(i)) \iso q \rf p$.
  We prove that every colimit-small weight $\vec{p} = (p_1, \dots, p_n)$ is lifting by induction on $n$.
  For $n = 0$ this is trivial.
  For $n > 0$, it is enough to show that $q \rf (p_1, \dots, p_n(1, b))$ exists for all objects $b$ by \cref{right-lift-enriched-characterisation}.
  This is the case because the small $\V$-presheaf $p_n(1, b)$ is lifting, and $q \rf (p_1, \dots, p_{n - 1}) \rf p_n(1, b) \iso q \rf (p_1, \dots, p_{n-1}, p_n(1, b))$ by \cref{iterated-lift}.

  That every $\V$-category admits a $\PP$-presheaf object follows from \cref{enriched-presheaf-objects}.
\end{proof}

\begin{lemma}[label=colimit-small-composite-lemma]
  If a unary weight $p \colon \B \lto \A$ is colimit-small, then, for every small full sub-$\V$-category $j \colon \B' \ffto \B$, there is a small full sub-$\V$-category $i \colon \A' \ffto \A$ such that
  \[
    p(1, j) \iso \A(1, i) \odotl p(i, j)
  \]
  and such that $p(i, j)$ is lifting.
\end{lemma}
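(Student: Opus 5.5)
The plan is to treat the small full subcategory $\B'$ one object at a time and then assemble. For each $b \in \ob{\B'}$, colimit-smallness of $p$ gives that $p(1, jb)$ is a small $\V$-presheaf. So there is a small full sub-$\V$-category $i_b \colon \A_b \ffto \A$ with $p(i_b, jb)$ lifting and $p(1, jb) \iso \A(1, i_b) \odotl p(i_b, jb)$. Since $\B'$ is small, the union $\A' \defeq \bigcup_{b} \A_b$ is a small full sub-$\V$-category $i \colon \A' \ffto \A$, and each $i_b$ factors as $i \c k_b$ through a fully faithful inclusion $k_b \colon \A_b \ffto \A'$.

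The first step is to show that enlarging the subcategory preserves the defining left-composite. That is, for each $b$ we want $p(1, jb) \iso \A(1, i) \odotl p(i, jb)$, and we want $p(i, jb)$ to be lifting. I would argue as in \cref{small-flexibility}, using \ffness{} of $i$ to get $\A'(1, k_b) \iso \A(i, i_b)$. Then
\[
  p(i, jb) \iso p(1, jb)(i, 1) \iso \A(i, i_b) \odotl p(i_b, jb) \iso \A'(1, k_b) \odotl p(i_b, jb).
\]
Here the left-composite is restricted along $i$ using \cref{composite-enriched-characterisation} in its dual form, since composites of $\V$-distributors are computed pointwise. This exhibits $p(i, jb)$ as a small presheaf on $\A'$, so it is lifting by \cref{small-right-lifts-exist}. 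For the composite itself, \cref{left-composite-reassociation} gives
\[
  \A(1, i) \odotl p(i, jb) \iso \A(1, i) \odotl \A'(1, k_b) \odotl p(i_b, jb) \iso (\A(1, i) \odot \A'(1, k_b)) \odotl p(i_b, jb).
\]
The loose-composite of companions is the companion of the composite, so this is $\A(1, i_b) \odotl p(i_b, jb) \iso p(1, jb)$. The main care needed is in checking that these isomorphisms are induced by the left-action of $p$, so that the relevant $\V$-natural family is the left-opcartesian one.

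The second step assembles the pointwise statements into a statement about distributors $\B' \lto \A$. By \cref{composite-enriched-characterisation}, the left-composite $\A(1, i) \odotl p(i, j)$ exists as soon as each $\A(1, i) \odotl p(i, j)(1, b)$ exists, and it is computed pointwise. The canonical 2-cell $\A(1, i), p(i, j) \tto p(1, j)$ given by the left-action is $\V$-natural, and it is pointwise left-opcartesian by the first step, hence left-opcartesian. This yields $p(1, j) \iso \A(1, i) \odotl p(i, j)$.

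Lifting of $p(i, j) \colon \B' \lto \A'$ follows from \cref{right-lift-enriched-characterisation}, since each $p(i, j)(1, b) = p(i, jb)$ is lifting. I expect the main obstacle to be the first step: specifically, justifying the reassociation with the loose-composite $\A(1, i) \odot \A'(1, k_b)$, and ensuring that restriction along $i$ commutes with the left-composite even though only left-composites, not full composites, are available.
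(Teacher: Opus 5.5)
Your proposal follows the paper's proof essentially step for step. You choose $i_b$ for each $b \in \ob{\B'}$, take the union $i \colon \A' \ffto \A$, factor $i_b = i k_b$ using full faithfulness of $i$, and prove $p(1, jb) \iso \A(1,i) \odotl p(i, jb)$ pointwise. You then assemble with \cref{composite-enriched-characterisation} and deduce lifting from $p(i,jb) \iso \A(i,i_b) \odotl p(i_b, jb)$; the paper does this last part inline via $\A(i,i_b)$ being a companion, which is the argument of \cref{small-right-lifts-exist}.

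The step you single out, however, is not covered by the result you cite. Write $Q \defeq p(1,jb)$ and $P' \defeq p(i_b,jb)$. The isomorphism $Q(i,1) \iso \A(i,i_b) \odotl P'$ restricts a left-composite at its \emph{codomain} end.
\begin{itemize}
\item The dual of \cref{composite-enriched-characterisation} is a statement about right-composites, not left-composites.
\item \cref{composite-enriched-remark} gives pointwise computation at both ends only for genuine loose-composites.
\item Since $Q(i,1) \iso \A(i,1) \odot Q$, what you need is that the left-opcartesian cell $\A(1,i_b), P' \tto Q$ stays universal when the conjoint $\A(i,1)$ is placed to its left. Left-opcartesianness only governs chains with nothing on the left, so it does not give this.
\end{itemize}
The paper uses this same isomorphism without comment, in its final step and again in its lifting argument. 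So you are not deviating from the paper, but the step does need an argument.

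One argument that works requires $\A(i,1)$ to be lifting. This holds, for instance, when $\V$ is locally complete with right lifts, by \cref{small-codomain-right-lifts-exist}, since $\A'$ is small; that is the situation in \cref{cocompletion-in-nice-situation}. Then, for every chain $\vec r$ and loose-cell $t$, 2-cells $Q(i,1), \vec r \tto t$ correspond successively to:
\begin{itemize}
\item 2-cells $\A(i,1), Q, \vec r \tto t$;
\item 2-cells $Q, \vec r \tto t \rf \A(i,1)$;
\item 2-cells $\A(1,i_b), P', \vec r \tto t \rf \A(i,1)$, by left-opcartesianness;
\item 2-cells $\A(i,1), \A(1,i_b), P', \vec r \tto t$;
\item 2-cells $\A(i,i_b), P', \vec r \tto t$.
\end{itemize}
This uses that $\A(i,1) \odot Q$ and $\A(i,1) \odot \A(1,i_b) \iso \A(i,i_b)$ are genuine composites.

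Your other worry, the reassociation through $\A(1,i) \odot \A'(1,k_b)$, is harmless. Right lifts through the companion $\A(1,i)$ are restrictions along $i$, so 2-cells $\A(1,i), p(i,jb), \vec r \tto s$ correspond to 2-cells $p(i,jb), \vec r \tto s(i,1)$. By the first step these correspond to $\A'(1,k_b), P', \vec r \tto s(i,1)$, hence to $\A(1,i_b), P', \vec r \tto s$, and hence to $Q, \vec r \tto s$. This gives both the existence of $\A(1,i) \odotl p(i,jb)$ and its identification with $Q$.
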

\begin{proof}
  For every $b \in \ob{\B'}$, since $p(1, \ob{j}b)$ is a small $\V$-presheaf, there is some small full sub-$\V$-category $i_b \colon \A'_b \ffto \A$ such that $p(1, \ob{j}b) \iso \A(1, i_b) \odotl p(i_b, \ob{j}b)$, and such that $p(i_b, \ob{j}b)$ is lifting.
  Let $i \colon \A' \to \A$ to be the union of the $i_a$; this is a small union, so $\A'$ is small.
  \[\textstyle
    \ob{\A'} = \bigcup_{b \in \ob{\B'}} \ob{\A'_b}
  \]
  Since each $i_a$ factors through $i$, the latter witnesses smallness of the $\V$-presheaf $p(1, \ob{j}b)$ for every $b$:
  \begin{align*}
    p(1, \ob{j}b)
    ~\iso~& \A(1, i_b) \odotl p(i_b, \ob{j}b)
    \\~\iso~& (\A(1, i) \odot \A(i, i_b)) \odotl p(i_b, \ob{j}b)
    \\~\iso~& \A(1, i) \odotl (\A(i, i_b) \odotl p(i_b, \ob{j}b))
    \\~\iso~& \A(1, i) \odotl p(i, \ob{j}b)
  \end{align*}
  It follows from \cref{composite-enriched-characterisation} that $p(1, j) \iso \A(1, i) \odotl p(i, j)$.
  To see that $p(i, j)$ is lifting, it is enough by \cref{right-lift-enriched-characterisation} to show that $p(i, \ob{j}b)$ is lifting for all $b$.
  This is the case because $\A(i, i_b)$ is a companion, and is therefore lifting by \cite[Example~3.3]{arkor2024formal}, so we can construct right lifts through
  $q \rf p(i, \ob{j}b)$ by the following.
  \[
    (q \rf \A(i, i_b)) \rf p(i_b, \ob{j}b)
    \iso
    (q \rf \A(i, i_b)) \rf p(i_b, \ob{j}b)
    \iso
    q \rf (\A(i, i_b) \odotl p(i_b, \ob{j}b))
    \iso
    q \rf p(i, \ob{j}b)
    \qedhere
  \]
\end{proof}

\begin{remark}
  \Cref{colimit-small-composite-lemma} is an adaptation of \cite[Lemma~10.7]{shulman2013enriched} to our setting.
  Our proof is similar, but subtly different to \citeauthor{shulman2013enriched}'s: since his notion of smallness is a priori more flexible than ours (\cref{small-flexibility}), he takes a \emph{disjoint} union of inclusions, which results in a non-\ff{} functor.
  Like in \cite[Proposition~5.34]{kelly1982basic}, we take a non-disjoint union.
\end{remark}

\begin{lemma}[label=small-presheaf-closure]
  The following $\V$-presheaves are all small.
  \begin{enumerate}
    \item Every $\V$-presheaf on a small $\V$-category.
    \item Every representable $\V$-presheaf $\A(1, a)$ on a (not necessarily small) $\V$-category $\A$.
    \item Every left-composite $q_1 \odotl \cdots \odotl q_m \odotl p$ of a colimit-small weight $\vec{q} = (q_1, \dots, q_m) \colon \A_m \lcto \A_0$ with a small $\V$-presheaf $p$ on $\A_m$.
    \item Every element of $\colimclosure{\Phi}{}$, for $\Phi$ a class of colimit-small weights.
  \end{enumerate}
\end{lemma}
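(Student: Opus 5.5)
The plan is to verify the four items in order, each against \cref{small-presheaf}: exhibit a small full sub-$\V$-category $i \colon \A' \ffto \A$ such that $p(i)$ is lifting and $p \iso \A(1, i) \odotl p(i)$. Items (1) and (2) serve as base cases for (4), and (3) supplies the inductive step.

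For (1), when $\A$ is small, take $i = 1_\A$. Then $\A(1, 1) \odotl p \iso p$ holds trivially, since loose-identities are opcartesian. Lifting of $p$ needs separate justification. Under the standing hypotheses on $\V$ (locally complete with right lifts) this is \cref{small-codomain-right-lifts-exist}. If those hypotheses are not available, I would check whether the lifting clause can be dropped here, and otherwise make the dependence on them explicit.

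For (2), take $\A'$ to be the full sub-$\V$-category on the single object $a$, with inclusion $i \colon \star \to \A$. Then $p(i) = \A(a, a)$, viewed as a presheaf on $\A'$, is a representable, hence a companion. It is therefore lifting by \cite[Example~3.3]{arkor2024formal}. The left-composite $\A(1, i) \odotl \A(i, a)$ is $\A(1, a)$, by the Yoneda-type opcartesianness of the restriction of the identity, in the same way that $p(1, f) = p \odot B(1, f)$.

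For (3), induct on $m$. The case $m = 0$ is the hypothesis that $p$ is small. For the inductive step it suffices to treat a single unary colimit-small $q \colon \B \lto \C$ composed with a small $p$ on $\B$, and then reassociate with \eqref{left-associativity-of-left-composites}.
\begin{enumerate}
  \item Write $p \iso \B(1, j) \odotl p(j)$ with $j \colon \B' \ffto \B$ small and $p(j)$ lifting.
  \item Apply \cref{colimit-small-composite-lemma} to obtain a small $i \colon \C' \ffto \C$ with $q(1, j) \iso \C(1, i) \odotl q(i, j)$ and $q(i, j)$ lifting.
  \item Compute
  \[q \odotl p \iso q \odotl \B(1, j) \odotl p(j) \iso q(1, j) \odotl p(j) \iso \C(1, i) \odotl q(i, j) \odotl p(j),\]
  reassociating via \cref{left-composite-reassociation} and \cref{right-lift-through-left-composite}.
  \item Since $i$ is fully faithful, restricting this last expression along $i$ gives $(q \odotl p)(i) \iso q(i, j) \odotl p(j)$, as in \cref{small-flexibility}.
  \item This composite is lifting, because $r \rf (q(i, j) \odotl p(j)) \iso (r \rf q(i, j)) \rf p(j)$ by \cref{right-lift-through-left-composite,iterated-lift}.
\end{enumerate}
This exhibits $q \odotl p$ as small.

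The main obstacle is the bookkeeping of left- versus actual composites in step 3. Rewriting $q \odotl \B(1, j)$ as $q(1, j)$ needs the restriction to be an honest composite, which it is by \cite[Theorem~7.16]{cruttwell2010unified}. Splitting off $\C(1, i)$ on the left is harder: I would use \cref{left-composite-reassociation}, but that lemma needs the full left-composite to exist. Its existence should follow because both sides satisfy the same universal property, using lifting of $q(i, j)$ and $p(j)$ as in the lemma preceding \cref{restriction-of-lift}.

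For (4), induct on the generation of $\colimclosure{\Phi}{}$. Representables are small by (2). Repletion is harmless, since smallness is invariant under isomorphism. A generator $p \odotl q_1 \odotl \cdots \odotl q_n(1, c)$ has every $p(1, b)$ small by the inductive hypothesis, so $p$ is a colimit-small unary weight. Each $q_k(1, c)$ is colimit-small because $\vec q$ is, and $q_n(1, c)$ is a small presheaf. Applying (3) to the colimit-small weight $(p, q_1, \dots, q_{n-1})$ and the small presheaf $q_n(1, c)$ then shows the generator is small.
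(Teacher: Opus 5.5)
Your ingredients for (3) are the paper's. You decompose $p \iso \B(1,j) \odotl p(j)$ and absorb $\B(1,j)$ into $q$ as the honest composite $q(1,j)$. You split this with \cref{colimit-small-composite-lemma}, recover the restricted piece by the full-faithfulness trick of \cref{small-flexibility}, and get its lifting from \cref{right-lift-through-left-composite,iterated-lift}. Items (1), (2) and (4) are fine, and in places more careful than the paper. The lifting clause in (1) does rest on the hypotheses of \cref{small-codomain-right-lifts-exist}. The paper does not check lifting of $\A(i,a)$ in (2). It also leaves (4) implicit, and your induction over the generation of $\colimclosure{\Phi}{}$ is what is intended.

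The gap is the claim that (3) reduces to a single unary $q$. The hypothesis only provides the flat left-composite $q_1 \odotl \cdots \odotl q_m \odotl p$. Peeling off the unary case on the right needs $q_m \odotl p$ to exist by itself. It then needs an isomorphism $q_1 \odotl \cdots \odotl q_{m-1} \odotl (q_m \odotl p) \iso q_1 \odotl \cdots \odotl q_m \odotl p$, for which \eqref{left-associativity-of-left-composites} only gives a comparison 2-cell. Peeling on the left instead needs $q_1 \odotl \cdots \odotl q_m$ (or $q_1 \odotl q_2$) to exist. Nothing in the hypotheses supplies these sub-composites; they would come from local cocompleteness of $\V$, which is not assumed.

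The paper avoids this by never splitting the chain. Inside the full left-composite it substitutes $p \iso \A_m(1,i_m) \odotl p(i_m)$. Then, for $k = m, \ldots, 1$, it rewrites $q_k \odot \A_k(1,i_k) = q_k(1,i_k) \iso \A_{k-1}(1,i_{k-1}) \odotl q_k(i_{k-1},i_k)$, arriving at $\A_0(1,i_0) \odotl q_1(i_0,i_1) \odotl \cdots \odotl q_m(i_{m-1},i_m) \odotl p(i_m)$. Each flattening of a nested left-composite lying to the right of a prefix $(q_1, \ldots, q_k)$ transports existence from the previous stage, because that prefix is lifting by \cref{small-right-lifts-exist}. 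This uses the lemma following \cref{right-lifts-imply-left-composites-are-composites}, or the same transposition argument. That lifting of the prefix, rather than lifting of $q(i,j)$ and $p(j)$, is what your `main obstacle' paragraph needs; $q(i,j)$ and $p(j)$ only enter your step 5.

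Relatedly, in your unary computation the delicate step is the first one, $q \odotl (\B(1,j) \odotl p(j)) \iso q \odotl \B(1,j) \odotl p(j)$, which uses that $q$ is lifting. Splitting off $\C(1,i)$ at the left end, by contrast, is plain left-associativity. With the induction replaced by this in-chain iteration, your steps 4 and 5 finish the argument exactly as in the paper.
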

\begin{proof}
  (1) is trivial: we can take $i$ to be the identity in the definition of smallness.

  For (2), let $i \colon \A' \ffto\A$ be the full sub-$\V$-category of $\A$ containing only $a$.
  Then we have $\A(a', a) \iso \A(a', i) \odot \A(i, a)$ as required.

  For (3), we show that there is some $i_0 \colon \A'_0 \ffto \A_0$ with small domain, such that
  \[
    q_1 \odotl \cdots \odotl q_m \odotl p
    \iso
    \A_0(1, i_0) \odotl (q_1 \odotl \cdots \odotl q_m \odotl p)(i_0, 1)
  \]
  and such that $(q_1 \odotl \cdots \odotl q_m \odotl p)(i_0, 1)$ is lifting.
  We have the following, where the full sub-$\V$-categories $i_k$ are provided by \cref{colimit-small-composite-lemma}, and the associativity laws use the fact that $\A_0(1, i_0)$, and each $q_k$, is lifting (\cref{small-right-lifts-exist}); they are instances of \cref{left-composite-reassociation}.
  \begin{align*}
    &q_1 \odotl \cdots \odotl q_{m-1} \odotl q_m \odotl p
    \\~\iso~&
    q_1 \odotl \cdots \odotl q_{m-1} \odotl q_m \odotl (\A_m(1, i_m) \odotl p(i_m))
    \\~\iso~&
    q_1 \odotl \cdots \odotl q_{m-1} \odotl q_m \odotl \A_m(1, i_m) \odotl p(i_m)
    \\~\iso~&
    q_1 \odotl \cdots \odotl q_{m-1} \odotl q_m(1, i_m) \odotl p(i_m)
    \\~\iso~&
    q_1 \odotl \cdots \odotl q_{m-1} \odotl (\A_{m-1}(1, i_{m-1}) \odotl q_m(i_{m-1}, i_m)) \odotl p(i_m)
    \\~\iso~&
    q_1 \odotl \cdots \odotl q_{m-1} \odotl \A_{m-1}(1, i_{m-1}) \odotl q_m(i_{m-1}, i_m) \odotl p(i_m)
    \\~\iso~&
    q_1 \odotl \cdots \odotl q_{m-1}(1, i_{m-1}) \odotl q_m(i_{m-1}, i_m) \odotl p(i_m)
    \\~\iso~&
    \cdots
    \\~\iso~&
    (\A_0(1, i_0) \odotl q_1(i_0, i_1)) \odotl \cdots \odotl q_{m-1}(i_{m - 1}, i_{m-1}) \odotl q_m(i_{m-1}, i_m) \odotl p(i_m)
    \\~\iso~&
    \A_0(1, i_0) \odotl q_1(i_0, i_1) \odotl \cdots \odotl q_{m-1}(i_{m - 1}, i_{m-1}) \odotl q_m(i_{m-1}, i_m) \odotl p(i_m)
  \end{align*}
  By similar reasoning, we can show that
  \begin{align*}
    & \A_0(1, i_0) \odotl q_1(i_0, i_1) \odotl \cdots \odotl q_{m-1}(i_{m - 1}, i_{m-1}) \odotl q_m(i_{m-1}, i_m) \odotl p(i_m)
    \\~\iso~&
    \A_0(1, i_0) \odotl q_1(i_0, 1) \odotl q_2 \odotl \cdots \odotl q_m \odotl p
  \end{align*}
  and thus
  \begin{align*}
    &q_1 \odotl \cdots \odotl q_{m-1} \odotl q_m \odotl p
    \\~\iso~&
    \A_0(1, i_0) \odotl q_1(i_0, i_1) \odotl \cdots \odotl q_{m-1}(i_{m - 1}, i_{m-1}) \odotl q_m(i_{m-1}, i_m) \odotl p(i_m)
    \\~\iso~&
    \A_0(1, i_0) \odotl (q_1(i_0, 1) \odotl q_2 \odotl \cdots \odotl q_m \odotl p)
    \\~\iso~&
    \A_0(1, i_0) \odotl (q_1 \odotl q_2 \odotl \cdots \odotl q_m \odotl p)(i_0, 1)
  \end{align*}
  as required.
  Finally, $q_1(i_0, i_1) \odotl \cdots \odotl q_{m-1}(i_{m - 1}, i_{m-1}) \odotl q_m(i_{m-1}, i_m) \odotl p(i_m)$ is lifting by \cref{right-lift-through-left-composite,iterated-lift}, using the fact that each constituent of the left-composite is lifting.
  Hence  $(q_1 \odotl q_2 \odotl \cdots \odotl q_m \odotl p)(i_0, 1)$ is also lifting.
\end{proof}

\begin{example}
  Let $\Phi = \{\text{ $\V$-presheaves on small $\V$-categories }\}$. We showed in \cref{enriched-cocompletion} that, under our usual assumptions on $\V$, every $\V$-category $\A$ has a $\Phi$-cocompletion, given by the $\colimclosure{\Phi}{}$-embedding of $\A$.
  We can now recover the fact that this cocompletion consists of the small $\V$-presheaves on $\A$.
  Indeed, by \cref{small-presheaf-closure}, $\colimclosure{\Phi}{}$ only contains small $\V$-presheaves, while the same lemma also implies that it contains every composite $\A(1, i) \odot p(i)$ where $i$ has small domain, and hence contains every small $\V$-presheaf.
\end{example}

\begin{remark}[label=composite-as-colimit]
  In previous works on enriched cocompletions (\eg~\cite[Theorem~5.35]{kelly1982basic}), the small presheaves play a larger role than we have given them here.
  \citeauthor{kelly1982basic} first constructs the $\V$-categories of small $\V$-presheaves, shows that it admits small colimits, then constructs the $\Phi$-cocompletion by taking the smallest full sub-$\V$-category closed under the representables and under $\Phi$-colimits (see \cref{colimclosure-as-full-subcategory}).
  We avoid any mention of small $\V$-presheaves in our definition by working directly with the composites appearing in $\colimclosure{\Phi}{}$ (\cref{closure}), rather than the corresponding colimits in the presheaf $\V$-category.
\end{remark}

The above results enable us to apply \cref{closure-is-cocompletion} to construct $\Phi$-cocompletions when $\Phi$ contains only colimit-small weights, assuming the required left-composites exist.
At first glance, this may seem more general than \cref{enriched-cocompletion} above (which imposes the assumption that the domains of the weights are small, rather than the ostensibly weaker assumption that the weights are merely colimit-small).
However, as the following lemma implies, we can always replace $\Phi$ with a class of weights with small domain, so that \cref{enriched-cocompletion} applies.
Moreover, natural classes of weights that arise in practice (for instance, the examples appearing in \cref{examples} below), generally involve only small $\V$-categories; indeed, in \cite{kelly2005notes}, for instance, a weight is by definition a $\V$-presheaf on a \emph{small} $\V$-category.
For this reason, we consider \cref{enriched-cocompletion} to be the primary construction result for cocompletions; the following proposition may be used to convert classes of colimit-small weights to classes of colimits appropriate for application of \cref{enriched-cocompletion}.

\begin{proposition}[label=colimit-equivalent-small]
  For every class $\Phi$ of colimit-small weights, there is a colimit-equivalent class $\Phi_s$ of weights such that, for every weight $\vec{p} \colon \A_n \lcto \A_0$ in $\Phi_s$, the $\V$-category $\A_i$ is small for each $0 \leq i \leq n$.
\end{proposition}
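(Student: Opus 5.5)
The plan is to replace each weight in $\Phi$ by a family of weights whose constituent $\V$-categories are all small, using \cref{colimit-small-composite-lemma} to restrict domains and codomains to small full sub-$\V$-categories, and \cref{colimit-weighted-by-presheaves} to reduce to a pointwise statement. First, by \cref{colimit-weighted-by-presheaves}, each weight $\vec p = (p_1, \ldots, p_n) \colon \A_n \lcto \A_0$ is colimit equivalent to the family $\{(p_1, \ldots, p_n, \A_n(1, a))\}_{a \in \ob{\A_n}}$. Each such weight has terminal domain $\star_{\ex a}$, which is small. It remains colimit-small, since $\A_n(1, a)$ is small by \cref{small-presheaf-closure}. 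So we may assume that every weight in $\Phi$ has a small (one-object) domain $\A_n$.

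Next, we shrink the intermediate $\V$-categories by downward induction. Set $j_n \defeq 1_{\A_n}$. Given a small full sub-$\V$-category $j_k \colon \A'_k \ffto \A_k$, apply \cref{colimit-small-composite-lemma} to $p_k$ and $j_k$. This gives a small full sub-$\V$-category $j_{k-1} \colon \A'_{k-1} \ffto \A_{k-1}$ with $p_k(1, j_k) \iso \A_{k-1}(1, j_{k-1}) \odotl p_k(j_{k-1}, j_k)$, where $p_k(j_{k-1}, j_k)$ is lifting. Define
\[
  \vec p_s \defeq (\A_0(1, j_0),\ p_1(j_0, j_1),\ \ldots,\ p_n(j_{n-1}, j_n)),
\]
and let $\Phi_s$ consist of these weights over all $\vec p \in \Phi$. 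The codomain of $\A_0(1, j_0)$ is $\A_0$, which need not be small. To fix this, replace $\vec p_s$ by $\vec p'_s \defeq (p_1(j_0, j_1), \ldots, p_n(j_{n-1}, j_n))$, which lives entirely among small $\V$-categories. The colimit of $f \colon \A_0 \to \b X$ weighted by $\vec p_s$ is then the $\vec p'_s$-colimit of $f j_0$, since precomposing with the companion $\A_0(1, j_0)$ amounts to restricting along $j_0$ by \cref{restriction-of-lift}.

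The key isomorphism comes from telescoping the equations above, using \cref{left-composite-reassociation} and the liftingness supplied by \cref{small-right-lifts-exist}, exactly as in the proof of \cref{small-presheaf-closure}(3):
\[
  p_1 \odotl \cdots \odotl p_n(1, a) \iso \A_0(1, j_0) \odotl p_1(j_0, j_1) \odotl \cdots \odotl p_n(j_{n-1}, j_n).
\]
From this, \cref{right-lift-through-left-composite} gives $q \rf \vec p \iso q \rf \vec p_s$ for every loose-cell $q$, so existence of colimits and respecting of colimits transfer in both directions. Hence $\Phi \colimleq \Phi_s$ and $\Phi_s \colimleq \Phi$ in the sense of \cref{colimit-equivalence}.

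The main obstacle is this last step. \cref{right-lift-through-left-composite} is stated for left-composites of an initial segment, whereas here the comparison is between chains that are themselves left-composited. The cleanest route is to avoid naming the composite: prove directly, by induction along the telescope, that right lifts through $(p_1, \ldots, p_n(1, a))$ and through $(\A_0(1, j_0), p_1(j_0, j_1), \ldots)$ agree. Each step is one application of \cref{iterated-lift} and \cref{right-lift-through-left-composite} to a single left-composite $\A_{k-1}(1, j_{k-1}) \odotl p_k(j_{k-1}, j_k)$, together with \cref{restriction-of-lift} to pass restrictions through lifts. This requires checking that each intermediate right lift exists, which follows from liftingness of colimit-small weights and of the restricted pieces.
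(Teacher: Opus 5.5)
You follow the paper's route. You reduce to one-object domains by \cref{colimit-weighted-by-presheaves}, then shrink the intermediate $\V$-categories via \cref{colimit-small-composite-lemma}, telescoping as in \cref{small-presheaf-closure}. The reassociation you flag as the main obstacle is harmless: your inductive right-lift computation is fine. It gives $X(f, 1) \rf \vec p \iso X(f j_0, 1) \rf \vec p'_s$ for every $f \colon \A_0 \to X$, and likewise for loose-cells respecting these colimits.

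The gap is in the last step. The two-sided conclusion `$\Phi \colimleq \Phi_s$ and $\Phi_s \colimleq \Phi$' is proved only for $\vec p_s$, which still begins with $\A_0(1, j_0)$ and so does not have small codomain. For the class $\Phi'_s$ of the small weights $\vec p'_s$ you obtain only $\Phi \colimleq \Phi'_s$. A $\vec p'_s$-colimit of $f j_0$ is a $\vec p$-colimit of $f$, so $\Phi'_s$-cocomplete objects are $\Phi$-cocomplete, and similarly for cocontinuity and exactness. The converse $\Phi'_s \colimleq \Phi$ is different. It requires every $\Phi$-cocomplete $X$ to admit $\vec p'_s$-colimits of \emph{every} $\V$-functor $g \colon \A'_0 \to X$, and $\Phi$-cocontinuous tight-cells and $\Phi$-exact loose-cells to preserve and respect them. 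Your isomorphism only concerns those $g$ that extend along $j_0$.

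This direction can genuinely fail for a choice of $j_0$ permitted by \cref{colimit-small-composite-lemma}. Take $\V = \Sigma\Set$ and let $\A$ be the walking split idempotent ($s \colon x \to y$, $r \colon y \to x$, $rs = 1_x$, $e = sr$). Let $\Phi = \{\A(1, x)\}$, which already has one-object domain. Every locally small category is $\Phi$-cocomplete, since $\A(1, x) \wc f \iso f x$. Because $x$ is a retract of $y$, we have $\A(1, x) \iso \A(1, j_0) \odot \A(j_0, x)$ for $j_0 \colon \{y\} \ffto \A$, so the lemma may return this $j_0$. Then $\vec p'_s$ is essentially the terminal presheaf on the monoid $\{1, e\}$. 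Its colimits are splittings of idempotents, so $\{\vec p'_s\}$-cocompleteness is idempotent-completeness, which is strictly stronger.

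Here you could take $j_0 = 1_\A$ instead, but for large $\A_0$ that option is unavailable, and a further idea is needed. For instance, you could show that $\vec p'_s \in \Phi\satw$, or choose $j_0$ so that every $\V$-functor $\A'_0 \to X$ extends along it (as happens when $j_0$ has a retraction).

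The paper's proof takes the same route and asserts $\{(p_0, \ldots, p_n)\} \colimequiv \{(p_0(i_0, i_1), \ldots, p_n(i_{n-1}, 1))\}$ on the strength of the reasoning in \cref{small-presheaf-closure}. That reasoning supplies only the telescoped isomorphism, so the converse is not justified there either. A minor point: \cref{colimit-weighted-by-presheaves} needs a nonempty weight, so first discard the empty weight, whose colimits are trivial, as the paper does.
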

\begin{proof}
  Colimits weighted by the empty weight are trivial ($() \wc f \iso f$), so we may assume that $\Phi$ does not contain the empty weight.
  Thus by \cref{colimit-weighted-by-presheaves}, $\Phi$ is colimit-equivalent to a class $\Phi'$ for which every $(p_0, \dots, p_n) \in \Phi'$ has a singleton domain.
  For each of these weights, by similar reasoning to the proof of \cref{small-presheaf-closure} above, we obtain $\V$-functors $i_k$ with small domain such that $\{(p_0, \dots, p_n)\} \colimequiv \{(p_0(i_0, i_1), \dots, p_n(i_{n - 1}, 1))\}$.
  The class $\Phi_s$ consists of the weights $(p_0(i_0, i_1), \dots, p_n(i_{n - 1}, 1))$.
\end{proof}

For ease of reference, we summarise the results above in the situation of enrichment in a nice bicategory.

\begin{theorem}[label=cocompletion-in-nice-situation]
  Let $\V$ be a locally complete and locally cocomplete bicategory with right lifts and let $\Phi$ be a class of colimit-small weights. For every $\V$-category $\A$ (not assumed to be small), the free cocompletion of $\A$ under $\Phi$-weighted colimits exists, and is given by the largest sub-$\V$-category of the $\V$-category of small presheaves on $\A$ that is closed under $\Phi$-weighted colimits and representable presheaves.
\end{theorem}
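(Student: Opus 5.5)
The plan is to reduce to the results already established for classes of weights with small domain, and then identify the resulting cocompletion inside the small presheaves. Since $\V$ is a bicategory, every left-composite is a genuine loose-composite. Local completeness, local cocompleteness and the existence of right lifts are exactly the hypotheses of \cref{small-codomain-constructions-exist}.

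First, apply \cref{colimit-equivalent-small} to replace $\Phi$ by a colimit-equivalent class $\Phi_s$ all of whose weights have small constituent $\V$-categories. By \cref{colimit-equivalent-cocompletions}, a $\Phi_s$-cocompletion is the same as a $\Phi$-cocompletion. \Cref{enriched-cocompletion} then shows that every $\V$-category $\A$ is $\Phi_s$-cocompletable.

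More precisely, the proof of \cref{closure-is-cocompletion} exhibits the $\Phi_s$-cocompletion as the embedding $\pshe\QQ\A \colon \A \to \psh\QQ\A$, where $\QQ$ is the class of $\V$-distributors induced by $\colimclosure{\Phi_s}{}$. It remains to identify $\psh\QQ\A$ with the stated sub-$\V$-category of small presheaves. Let $\PP$ be the class of $\V$-distributors induced by the small $\V$-presheaves, made replete. By \cref{small-right-lifts-exist}, $\A$ admits a $\PP$-presheaf object, namely the $\V$-category of small presheaves. Since representables are small (\cref{small-presheaf-closure}), $\A$ is moreover $\PP$-admissible.

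To apply \cref{colimclosure-as-full-subcategory} with $\Phi$ itself (or with $\Phi_s$), I must check that $q \odotl \vec p$ exists and lies in $\PP$ for every $q \in \PP$ and $\vec p \in \Phi$. This reduces pointwise, via \cref{composite-enriched-characterisation}, to composites of a colimit-small weight with a small presheaf. Existence of these composites comes from the reassociation in the proof of \cref{small-presheaf-closure}(3): it rewrites the composite as one through small full subcategories, and such composites exist by \cref{small-codomain-composites-exist}. Smallness of the result is \cref{small-presheaf-closure}(3).

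\Cref{colimclosure-as-full-subcategory} then gives that $\psh{\QQ'}\A \ffto \psh\PP\A$, with $\QQ'$ induced by $\colimclosure{\Phi}{\A}$, is the smallest full sub-$\V$-category closed under $\Phi$-colimits and representables. I take this to be what the statement calls the `largest' sub-$\V$-category generated in that way. Finally, $\colimclosure{\Phi}{}$ and $\colimclosure{\Phi_s}{}$ must agree, or at least yield the same presheaf object. Both are colimit-equivalent to $\Phi$ by \cref{closure-is-colimit-equivalent}, so both give $\Phi$-cocompletions, and these are unique up to equivalence. Their closure under $\Phi$-colimits in $\psh\PP\A$ also coincides, because $\Phi$- and $\Phi_s$-colimits there are computed by the same composites.

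I expect the main obstacle to be the existence of $q \odotl \vec p$ for $\vec p$ merely colimit-small and $q$ a small presheaf on a possibly large category. This needs a careful rerun of the argument in \cref{small-presheaf-closure}(3), using lifting of companions and \cref{left-composite-reassociation} to move the composite onto small subcategories, where local cocompleteness applies. The remaining steps are bookkeeping with results already established.
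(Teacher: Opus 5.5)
Your proposal is correct and follows essentially the paper's own proof: pass to the colimit-equivalent class $\Phi_s$ via \cref{colimit-equivalent-small}, obtain existence from \cref{enriched-cocompletion}, and obtain the description from \cref{colimclosure-as-full-subcategory}. You are right to read ``largest'' as ``smallest'', which is what that proposition states. You do more than the paper in verifying the left-composite hypothesis of \cref{colimclosure-as-full-subcategory}. Your obstacle can also be avoided: apply that proposition to $\Phi_s$, where the composites exist directly by \cref{small-codomain-composites-exist}, and then use $\Phi \colimequiv \Phi_s$ to see that closure under $\Phi_s$-colimits and under $\Phi$-colimits coincide.
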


Note that, by \cref{colimit-equivalent-small,small-codomain-constructions-exist,colimit-weighted-by-presheaves}, there is no loss in generality by replacing `colimit-small weights' above by `$\V$-presheaves with small domain'.

\begin{proof}
  The existence of the $\Phi$-cocompletion is immediate from \cref{enriched-cocompletion,colimit-equivalent-small}, and its description as a closure is immediate from \cref{colimclosure-as-full-subcategory}.
\end{proof}

\subsection{Completions of enriched categories}
\label{completions-of-enriched-categories}

Following \cref{copresheaf-objects-and-completions}, the theory of copresheaf objects and completions in $\VCat$ is the theory of presheaf objects and cocompletions in $\VCat\co$. The following observation, which generalises \cite[Proposition~8.22]{arkor2024formal} from monoidal categories to \vbs{}\footnotemark, means that it is unnecessary to study $\VCat\co$ specifically, as it is implicitly included in the preceding development.
\footnotetext{In fact, it holds more generally for categories enriched in \vdcs.}%

\begin{proposition}[label=V^co-Cat]
  For each \vb{} $\V$, there is an isomorphism of \ve s $\VCat\co \iso \VcoCat$.
\end{proposition}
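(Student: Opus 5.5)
We construct an explicit isomorphism of virtual double categories $\VCat\co \to \VcoCat$, where $\V\co$ is the dual virtual bicategory of \cref{dual}: it has the same objects, reversed loose-cells, and reversed 2-cell source chains. On objects, a $\V$-category $\C$ is sent to the $\V\co$-category $\C^\circ$ with the same objects and extents. Its hom-morphisms are $\C^\circ(x, y) \defeq \C(y, x)$. In $\V$ this is a loose-cell $\ex x \lto \ex y$, hence in $\V\co$ a loose-cell $\ex y \lto \ex x$, which is the form \cref{V-category} requires.

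The identity 2-cells of $\C^\circ$ are those of $\C$. Its composition 2-cells are the composition 2-cells $\circ_{z,y,x}$ of $\C$, read in $\V\co$: the source chain $\C(z,y), \C(y,x)$ in $\V$ becomes the chain $\C^\circ(x,y), \C^\circ(y,z)$ in $\V\co$. The unit and associativity axioms transfer directly, since composition in $\V\co$ is composition in $\V$ with source chains reversed. A $\V$-functor $f$ is sent to the $\V\co$-functor with the same object function and components $f_{y,x}$. Since tight-cells of $\VCat\co$ are those of $\VCat$, this gives an isomorphism of underlying tight categories; its inverse is the same construction applied to $\V\co$, using $(\V\co)\co = \V$.

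For loose-cells, a loose-cell $\C \lto \C'$ of $\VCat\co$ is a $\V$-distributor $p \colon \C' \lto \C$, with hom-morphisms $p(x, y) \colon \ex y \lto \ex x$ for $x \in \C$ and $y \in \C'$. We send it to the $\V\co$-distributor $p^\circ \colon \C'^\circ \lto \C^\circ$ given by $p^\circ(y, x) \defeq p(x, y)$. This is a loose-cell $\ex x \lto \ex y$ in $\V\co$, which is the required type for a distributor $\C'^\circ \lto \C^\circ$. The two action 2-cells of $p$ exchange roles: the left action by $\C$ becomes the right action by $\C^\circ$, and the right action by $\C'$ becomes the left action by $\C'^\circ$. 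The compatibility and interaction axioms are preserved because each is symmetric under this exchange.

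For 2-cells, take a 2-cell of $\VCat\co$ with source chain $p_1, \dots, p_n$ and target $q$. By \cref{dual}, it is a $\V$-natural transformation in $\VCat$ from the reversed chain $p_n, \dots, p_1$ to $q$. Its components are 2-cells $p_n(x_n, x_{n-1}), \dots, p_1(x_1, x_0) \tto q(\cdot, \cdot)$ in $\V$. Reading each component in $\V\co$ gives a 2-cell $p_1^\circ(x_0, x_1), \dots, p_n^\circ(x_{n-1}, x_n) \tto q^\circ(\cdot, \cdot)$, which is exactly the component data for a 2-cell of $\VcoCat$. The naturality conditions, which are equalities of pastings with action 2-cells, correspond because both the actions and the pastings are reversed consistently. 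Identities and composition of 2-cells agree on the nose, so we obtain a strict functor of virtual double categories that is bijective on objects, tight-cells, loose-cells and 2-cells, with the inverse constructed symmetrically.

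The main obstacle is bookkeeping rather than mathematics: we must check that every orientation convention reverses consistently. This includes the right-to-left pasting convention, the contravariance of hom-morphisms in the codomain, and the order of source chains. We would carry out this check once, for the naturality axiom of a binary 2-cell. The general multiary case then follows by the same index reversal. Restrictions and loose-identities need no further argument, since an isomorphism of virtual double categories preserves cartesian and opcartesian cells.
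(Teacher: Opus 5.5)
Your proposal is correct and is essentially the paper's argument. The paper's proof is just ``by inspection'', and what is being inspected is exactly the isomorphism you write out: send $\C$ to its opposite, regarded as a $\V\co$-category, and reverse the indices on distributors and 2-cells.

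There is one slip to fix. The $\V\co$-distributor you define by $p^\circ(y, x) \defeq p(x, y)$ has type $\C^\circ \lto \C'^\circ$, not $\C'^\circ \lto \C^\circ$. This is also forced, since a functor out of $\VCat\co$ must send the loose-cell $\C \lto \C'$ to a loose-cell $\C^\circ \lto \C'^\circ$. Your swapped actions and your 2-cell paragraph already use this correct direction, so only the label needs changing.
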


\begin{proof}
  By inspection.
\end{proof}

Consequently, copresheaf objects and free completions in $\VCat$ are respectively presheaf objects and free cocompletions in $\VcoCat$. We therefore immediately obtain construction theorems therefor; for convenience, we spell out the main definitions and results.

\begin{manualdefinition}{\ref{presheaf-induced-class}$\co$}
  A class $\PP$ of $\V$-distributors is \emph{copresheaf-induced} if a $\V$-distributor $p \colon \B \lto \A$ is in $\PP$ if and only if, for each $b \in \ob\B$, the $\V$-copresheaf $p(b, 1)$ is in $\PP$.
\end{manualdefinition}

\begin{manualtheorem}{\ref{enriched-presheaf-objects}$\co$}
  Let $\PP$ be a copresheaf-induced class of $\V$-distributors.
  A $\V$-category $\A$ admits a $\PP$-copresheaf $\V$-category if and only if the right extension $p \rx q$ exists for all $\V$-copresheaves $p, q \in \PP$ on $\A$. In this case, $\A$ is furthermore $\PP$-coadmissible if and only if the repletion of $\PP$ contains the identity $\V$-distributor on $\A$.
\end{manualtheorem}

\begin{manualtheorem}{\ref{small-presheaf-object}$\co$}
  \label{enriched-copresheaf-objects}
  Assume that $\V$ is locally complete and has right extensions. Every small $\V$-category $\A$ admits a $\PP$-copresheaf $\V$-category, for every copresheaf-induced class $\PP$ of $\V$-distributors.
\end{manualtheorem}

\begin{manualdefinition}{\ref{closure}$\co$}
  Let $\Psi$ be a class of weights in $\VCat$.
  If $\A$ is a $\V$-category, then, assuming the following right-composites exist, we write $\limclosure{\Psi}{\A}$ (the \emph{limit closure} of $\Psi$) for the smallest replete class of $\V$-copresheaves on $\A$ that contains the following.
  \begin{enumerate}
    \item The $\V$-copresheaf $\A(a, 1)$, for every object $a \in \ob{\A}$.
    \item The $\V$-copresheaf $q_n(c, 1) \odotl \cdots \odotl q_1 \odotr p$, for every weight $(q_1, \dots, q_n) \colon \B \lcto \C$ in $\Psi$, object $c \in \ob{\C}$, and $\V$-distributor $p \colon \A \lto \B$, such that $p(b, 1) \in \limclosure{\Psi}{\A}$ for all $b \in \ob{\B}$.
  \end{enumerate}
  We will say that \emph{$\limclosure{\Psi}{\A}$ exists} when the above right-composites exist.
  When $\limclosure{\Psi}{\A}$ exists for every $\V$-category $\A$, we say that \emph{$\limclosure{\Psi}{}$ exists}.
\end{manualdefinition}

\begin{manualtheorem}{\ref{closure-is-cocompletion}$\co$}
  \label{closure-is-completion}
  Let $\Psi$ be a class of weights in $\VCat$.
  Assume that $\limclosure{\Psi}{}$ exists and that every weight $\vec{q} \in \Psi$ is extending.
  Denote by $\PP$ the class of $\V$-distributors induced by $\limclosure{\Psi}{}$.
  Every $\V$-category $\A$ is $\PP$-coadmissible, and the embedding $\cpshe{\PP}{\A} \colon \A \to \cpsh{\PP}{\A}$ exhibits the $\Psi$-completion of $\A$.
\end{manualtheorem}

\begin{manualtheorem}{\ref{cocompletion-in-nice-situation}$\co$}
  \label{completion-in-nice-situation}
  Let $\V$ be a locally complete and locally cocomplete bicategory with right extensions and let $\Psi$ be a class of limit-small weights. For every $\V$-category $\A$ (not assumed to be small), the free completion of $\A$ under $\Psi$-weighted limits exists, and is given by the largest sub-$\V$-category of the $\V$-category of small copresheaves on $\A$ that is closed under $\Psi$-weighted limits and corepresentable copresheaves.
\end{manualtheorem}

\section{Context and examples}
\label{examples}

We conclude by (1) explaining the relationship between our construction of free cocompletions of enriched categories and those appearing in the literature, (2) describing some specific classes of weights of interest in bicategory-enriched category theory, and (3) illustrating these by means of examples of categories enriched in certain bicategories.

\subsection{Historical context}
\label{historical-context}

The possibility of freely completing a category under a class $\J$ of colimits was considered early in the development of category theory. However, it took some time before a general construction was realised; typically, the class $\J$ was subject to certain compatibility conditions, or the construction was limited to small categories. Below, we provide a survey of constructions that appeared in the literature.

\begin{example}[Ordinary categories]
  First, we consider the case of the cocompletion of a locally small category $\A$ (\ie{} a $\Set$-enriched category) under a class $\J$ of small categories.
  \begin{center}
  \begin{tblr}{X[c]cX[c]}
    \bf Class $\J$ & \bf $\A$ can be large? & \bf Reference \\
    \hline
    Small categories & No & \cite[Proposition~9.1]{andre1966categories} \\
    Small categories & Yes & \cite[Remark~2.29]{ulmer1968properties} \\
    Any class \emph{admitting a calculus of regular prelimits}~\cite[Definition~8.1]{kock1967limit} & Yes & \cite[\S8]{kock1967limit} \\
    Any of (a) idempotents; (b) $\alpha$-small categories; (c) small categories & No & \cite[Beispiele~2.14]{gabriel1971lokal} \\
    Any class of small \emph{stable} graphs~\cite[Definition~2.1]{wood1977free} & No & \cite[Theorem~2.3]{wood1977free} \\
    Any class of small categories & No & \cites{street1979comprehensive}[\S9.6]{street1981conspectus} \\
    Any \emph{admissible} class of small categories~\cite[600]{tholen1982completions} & Yes & \cite[Theorems~2 \& 4]{tholen1982completions}
  \end{tblr}
  \end{center}
  As far as we are aware, a general construction of the free cocompletion of a not-necessarily-small category under colimits indexed by an arbitrary class $\J$ of small categories did not appear before it was described in the more general setting of enriched categories by \textcite{kelly1982basic} (see \cref{survey-enriched-cocompletions}).
\end{example}

\begin{example}
  \label{survey-enriched-cocompletions}
  In the case of enrichment in a complete and cocomplete locally small symmetric closed monoidal category $\b V$:
  \begin{enumerate}
    \item the description of the free cocompletion of a large $\b V$-category as a $\b V$-category of small presheaves appears as \cite[Theorem~2.11]{lindner1974morita};
    \item for an arbitrary class $\Phi$ of weights with small domain, the $\Phi$-cocompletion of a large $\b V$-category is constructed in \cite[Theorem~5.35]{kelly1982basic}.
  \end{enumerate}
  In the non-symmetric setting, a claim that free cocompletions exist appears without proof in \cite[Theorem~6]{power2000representation}. However, we are not aware of an explicit reference. It is worth mentioning that, in the setting of bicategories enriched in (non-symmetric) monoidal bicategories, a construction of the free cocompletion under a class of weights appears as \cite[Theorem~12.1]{garner2016enriched}, from which the result for categories enriched in monoidal categories may be deduced.
\end{example}

\begin{example}[label=survey-enriched-cocompletion-for-bicategory-enrichment]
  In the case of enrichment in a locally complete and locally cocomplete closed bicategory $\V$:
  \begin{enumerate}
    \item the free cocompletion of a small\footnotemark{} $\V$-category appears in \cite[\S4]{street1983enriched};
    \footnotetext{Let us say that a $\V$-category $\A$ is \emph{loosely small} (or \emph{Morita small}~\cite[Remark~8.6]{arkor2025nerve}) when it is equivalent, in the \vb{} of $\V$-distributors, to a small $\V$-category. Strictly speaking, \textcite[\S5]{street1983enriched} considers loosely small, rather than small, $\V$-categories, but this is an inessential difference.}
    \item the free cocompletion of a small $\V$-category under a \emph{family of coverings} -- \ie{} a class $\Phi$ of weights (meaning, for each $\V$-category, a set of presheaves thereon) satisfying a saturation condition -- appears in \cite[{p.\ [4]}]{betti1985cocompleteness}.
  \end{enumerate}
  As far as we are aware, neither the existence of free cocompletions of large $\V$-categories, nor of free cocompletions of (even small) $\V$-categories under arbitrary classes of weights, have previously been established in the literature.
\end{example}

\begin{example}
  Let $\b V$ be a unital multicategory. Loose-composites in $\Sigma\b V$ correspond to tensor products, and right lifts and right extensions to (left and right) internal homs. Some basic properties of the theory of presheaves enriched in (the envelope of) a multicategory was developed by \textcite{linton1971multilinear}. However, free cocompletions have not previously been considered.

  In particular, (non-monoidal) closed categories~\cite{eilenberg1966closed}, unital promonoidal categories~\cite{day1970closed}, and normal colax monoidal categories~\cite{day2003lax} are all examples of normal multicategories, for which our theorem gives constructions of free cocompletions, each of which appears to be new.
\end{example}

\subsection{Specific classes of weights}

We mention some examples of free cocompletions under specific classes of weights that have proven of interest in the literature, which are subsumed by the general construction above. First, we extend the notion of copower for categories enriched in bicategories~\cite[Definition~2.5]{gordon1999gabriel} to categories enriched in normal \vbs, which will be relevant in several of the examples.

\begin{definition}[Copowers]
  \label{copowers}
  The \emph{copower} of an object $a \in \ob\A$ in a $\V$-enriched category by a chain of loose-cells $(p_1, \ldots, p_n) \colon V_n \lcto V_0$ in $\V$ is a $(\star_{p_1}, \ldots, \star_{p_n})$-weighted colimit of $a \colon \star_{V_0} \to \A$ (where $\star_{p_i}$ is defined in \cref{one-object-enriched-category}).
\end{definition}

\begin{example}
  \begin{itemize}
    \item Taking $\Phi$ to be any class contained in the class of representables, the $\Phi$-cocompletion is (up to equivalence) simply the identity.
    \item Taking $\Phi$ to be the class of all colimit-small weights (equivalently, by \cref{colimit-equivalent-small}, the weights with small domain), the $\Phi$-cocompletion is simply the free cocompletion.
    \item \label{Cauchy-completion} Taking $\Phi$ to be the class of left adjoints in $\VCat$, the $\Phi$-cocompletion is the \emph{Cauchy completion} of \cites[4]{betti1982cauchy}[\S4]{street1983enriched}.
    \item \label{torsors} Given a notion of coverage on a bicategory $\V$, \citeauthor{street1983enriched} defines a \emph{torsor} to be an absolute weight satisfying a compatibility condition with the coverage~\cite[\S5]{street1983enriched}, and a \emph{stack} to be a $\V$-category admitting torsor-weighted colimits. Taking $\Phi$ to be the class of torsors, the $\Phi$-cocompletion recovers the stack associated to a $\V$-category~\cite[Theorem~5.4]{street1983enriched}.
    \item \label{absolute-copowers} Taking $\Phi$ to be the class of copowers by left adjoints (\aka{} \emph{maps}) in $\V$, the $\Phi$-cocompletion is the \emph{map-tensor\footnotemark{} completion} of \cite[\S5]{street1983enriched}.
    \footnotetext{\citeauthor{street1983enriched} uses \emph{tensor} for what we call \emph{copower}.}%
    \item Taking $\Phi$ to be the class of copowers by representable loose-cells in the \vb{} $\V$ underlying a \ve{}, the $\Phi$-cocompletion is the \emph{substitution completion} of \cite[\S2.4]{betti1987completeness} (\aka{} the \emph{restriction completion} of \cite[\S4]{betti1985closed}).
    \item Let $\b J$ be an ordinary category and let $V$ be an object of $\V$. Suppose that the category $\V\lh{V, V}_1$ of loose-cells $V \lto V$ admits copowers. There is a $\V$-category $\b J_V$ whose objects are those of $\b J$ and that all have extent $V$. The hom-morphism $\b J_V(j, j') \defeq \b J(j, j') \copow V(1, 1)$. (Therefore, $\b J_V$ is the free $\V\lh{V, V}_1$-category on $\b J$.) A \emph{conical colimit} of $\b J$ is a colimit weighted by the $\V$-distributor $\star_V \lto \b J_V$ each of whose hom-morphisms is $V(1, 1)$. When $\V$ is a bicategory, this recovers the notion of conical colimit considered in \cite[\S4]{gordon1999gabriel}. In particular, when $\b J$ is discrete, such conical colimits are coproducts of objects with extent $V$~\cite[\S3.6]{betti1985cocompleteness}.
    \qedhere
  \end{itemize}
\end{example}

\subsection{Examples of free cocompletions}

To indicate our motivation for providing a general construction of free (co)completions for categories enriched in (virtual) bicategories, we recall some notable examples of (co)completions with regard to specific bases of enrichment.

\begin{example}
  \label{Rel}
  Let $\C$ be a small category and denote by $\b R(\C)$ the full sub-bicategory of $\b{Rel}(\Set^{C\op})$ spanned by the representable presheaves. Coverages $J$ on $\C$ are in bijection with locally finitely continuous and \ioo{} lax endofunctors on $\b R(\C)$ whose action on each hom-category is an idempotent monad~\cite[Theorem]{betti1983notion}. Denote by $\b R(\C, J)$ the wide and locally full sub-bicategory of $\b R(\C)$ spanned by the $J$-algebras. The category of sheaves with respect to $(\C, J)$ is equivalent to the category of symmetric $\b R(\C, J)$-enriched categories admitting colimits weighted by left adjoints (\cref{Cauchy-completion})~\cite[Theorem]{walters1982sheaves}.

  Denoting by $J_0$ the trivial coverage on a small category $\C$, there is a morphism of coverages $J_0 \to J$, which induces a lax functor $\b R(\C, J_0) \to \b R(\C, J)$ and hence a functor $\b R(\C, J_0)\h\b{Cat} \to \b R(\C, J)\h\b{Cat}$. The composite
  \[\Set^{\C\op} \to \b R(\C, J_0)\h\b{Cat} \to \b R(\C, J)\h\b{Cat} \to \b R(\C, J)\h\b{Cat}\]
  given consecutively by the inclusion, change of base, and cocompletion under left-adjoint weights preserves symmetry and hence takes values in sheaves with respect to $(\C, J)$: it is precisely the sheafification functor~\cite[\S4]{betti1981symmetry}.
\end{example}

\begin{example}
  In a similar vein to \cref{Rel}, every locally small fibration over a category $\E$ with pullbacks induces a $\Span(\E)$-enriched category. If $\E$ is equipped with a coverage $J$, then the stackification of a locally small fibration may be obtained via the free cocompletion under colimits weighted by torsors (\cref{torsors})~\cite[Example~5.6]{street1983enriched}.
\end{example}

More generally, limits in $\Span(\E)$-enriched categories have been studied in the context of locally internal categories by \textcite[\S3]{betti1987completeness}.

\begin{example}[label=topological-functors]
  Recall that a \emph{quantaloid} is a bicategory enriched in the 2-category of complete lattices~\cite{rosenthal1991free}. Every quantaloid admits \emph{large} local limits and colimits, and consequently arbitrary right lifts and extensions~\cite[\S3]{garner2014topological}. For every (possibly large) category $\B$, there is a free quantaloid $\Q_\B$ on $\B$~\cite[\S3]{rosenthal1991free}, and \textcite[Proposition~3.5]{garner2014topological} shows that the 2-category of $\Q_\B$-categories is equivalent to the full sub-2-category of $\CAT/\B$ spanned by the faithful functors. Furthermore, a $\Q_\B$-category admits all (potentially \emph{large}) colimits if and if the corresponding faithful functor is \emph{topological} in the sense of \textcite{herrlich1974topological}~\cite[Proposition~4.5 \& Theorem~5.2]{garner2014topological}. By \cref{closure-is-cocompletion}, every $\Q_\B$-category is $\Psi$-cocompletable for every class of weights $\Psi$. Consequently, taking $\Psi$ to be the class of all (possibly large) weights, the free $\Psi$-cocompletion exists and exhibits the free topological functor on a faithful functor.
\end{example}

\begin{example}
  Let $\B$ be a locally small category and denote by $\V = \Sigma\Set/\B$ the local cocompletion of $\B$ viewed as a locally discrete 2-category; a 1-cell $p \colon B \lto B'$ in $\Sigma\Set/\B$ comprises a set $P$ equipped with a function $P \to \B(B, B')$. $\V$-categories are in bijection with functors into $\B$~\cite[Example~4.6]{fujii2024oplax}, and a functor into $\B$ is a fibration if and only if the corresponding $\V$-category admits powers by 1-cells with singleton domain~\cite[Proposition~5.4]{fujii2024oplax}.

  $\V$ is locally complete and cocomplete (its hom-categories being presheaf categories) and admits right lifts (by an analogous argument to that establishing that convolution monoidal structures are right-closed), so we can form the free completion of a $\V$-category, which exhibits the free fibration on a category over $\B$. The process of forming free fibrations has long been known to exhibit a colax-idempotent pseudomonad~\cite[118]{street1974fibrations} and, conceptually, colax-idempotent pseudomonads correspond to completions under classes of limits. However, the perspective of enrichment in a bicategory shows that this process really is a free completion in the conventional sense. This perspective moreover extends from fibrations of categories to fibrations of enriched categories~\cite[Theorem~5.5]{fujii2024oplax}.
\end{example}

\begin{example}
  Let $\V$ be a \vb{} and let $\A$ be a monad on an object $V$ of $\V$, viewed as a one-object $\V$-category with extent $V$. Supposing the presheaf $\V$-category on $\A$ exists, its objects are precisely the $\A$-opalgebras (where the extent of each opalgebra is its codomain)~\cite[Example~3.1]{kasangian1986bcategories}.
\end{example}

\begin{example}
  Denote by $\b{LexDist}$ the bicategory whose objects are small finitely complete categories, whose 1-cells are distributors preserving finite limits in their covariant parameter, and whose 2-cells are natural transformations. A monad in $\b{LexDist}$ on a finitely complete category $\A$ (\ie{} a one-object $\b{LexDist}$-enriched category with extent $\A$) corresponds via Gabriel--Ulmer duality to a finitary monad on $\b{Lex}(\A, \Set)$~\cite[Proposition~4.2]{garner2018enriched}. The free cocompletion of a one-object $\b{LexDist}$-enriched category $\A$ under copowers by left adjoints (\cref{absolute-copowers}) exhibits the $(\A\op \to \b{Lex}(\A, \Set))$-theory associated to a finitary monad~\cite[Theorem~8.1]{garner2018enriched}.
\end{example}

\subsection{A generalised Eilenberg--Watts theorem}
\label{generalised-EW-theorem}

In 1960, \citeauthor{eilenberg1960abstract} and \citeauthor{watts1960intrinsic} independently provided a characterisation of those additive functors between categories of modules for rings that are induced by tensoring with a bimodule: this characterisation has come to be known as the \emph{Eilenberg--Watts theorem}~\cite{eilenberg1960abstract,watts1960intrinsic}. To conclude, we observe that the theory we have developed herein provides a four-fold generalisation of the classical Eilenberg--Watts theorem.
\begin{enumerate}
  \item From enrichment in the category of Abelian groups to enrichment in any well behaved bicategory.
  \item From monoids in a monoidal category (which may be seen as one-object enriched categories) to (many-object) enriched categories.
  \item From a fixed class of weights (namely, the small weights) to an arbitrary class of small weights.
  \item Establishing not merely an equivalence of categories (with respect to the bimodules between two fixed monoids), but a biequivalence of bicategories (permitting the enriched categories to vary).
\end{enumerate}

\begin{theorem}
  Let $\V$ be a locally complete and locally cocomplete bicategory with right lifts, and let $\Phi$ be a class of colimit-small weights. The process of left extension and restriction along the embeddings of $\Phi$-cocompletions exhibits a biequivalence between
  \begin{itemize}
    \item the bicategory whose objects are $\V$-categories, for which a 1-cell from $\b A$ to $\b B$ is a \mbox{$\V$-functor} $\b A \to \coc\Phi{\b B}$ from $\b A$ to the $\Phi$-cocompletion of $\B$, and whose 2-cells are $\V$-natural transformations;
    \item the full sub-2-category of $\Phi$-cocomplete $\V$-categories, $\Phi$-cocontinuous $\V$-functors, and $\V$-natural transformations spanned by the $\Phi$-cocompletions.
  \end{itemize}
\end{theorem}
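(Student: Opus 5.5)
This theorem is an instance of \cref{formal-Eilenberg--Watts} in the strict \ve{} $\X \defeq \VCat$. My plan is to check that its hypotheses hold there, and then to unpack the two sides of the biequivalence it provides into the descriptions in the statement.

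First, every $\V$-category is $\Phi$-cocompletable. This is \cref{enriched-cocompletion} combined with \cref{colimit-equivalent-small}. Replace $\Phi$ by a colimit-equivalent class $\Phi_s$ of weights whose constituent $\V$-categories are all small. By \cref{colimit-equivalent-cocompletions}, $\Phi$-cocompletions and $\Phi_s$-cocompletions coincide. Moreover, since colimit equivalence preserves cocompleteness, cocontinuity and exactness, the $2$-categories $\Phi\Coc$ and $\Phi_s\Coc$ are identical. Because $\V$ is a locally complete, locally cocomplete bicategory with right lifts, \cref{small-codomain-constructions-exist} gives the right lifts and left-composites needed for $\colimclosure{\Phi_s}{}$ to exist and for every weight in $\Phi_s$ to be lifting. \Cref{closure-is-cocompletion} then produces the $\Phi$-cocompletion $\coce\Phi{\A} \colon \A \to \coc\Phi\A$ for every $\A$.

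Second, since every object is cocompletable, the inclusion $J_\Phi \colon \tX^\Phi \ffto \tX$ of \cref{induced-relative-pseudomonad} is the identity on $\u{\VCat}$. So $\o\Phi$ is an honest lax-idempotent pseudomonad on the $2$-category of $\V$-categories, $\V$-functors and $\V$-natural transformations. Its Kleisli bicategory $\Kl(\o\Phi)$ has as objects the $\V$-categories, as hom-categories $\tX(\A, \coc\Phi\B)$, and as $2$-cells the $\V$-natural transformations (the nullary $2$-cells with loose-identity codomain in $\VCat$). This is exactly the first bicategory in the statement. \Cref{formal-Eilenberg--Watts} identifies it, up to biequivalence, with the full sub-$2$-category of $\Phi\Coc$ on the $\Phi$-cocompletions. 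That is the second bicategory, since $\Phi\Coc$ consists of the $\Phi$-cocomplete $\V$-categories and $\Phi$-cocontinuous $\V$-functors.

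Third, I would identify the comparison functor with ``left extension and restriction''. The embedding $\Kl(\o\Phi) \to \Phi\Coc$ sends $f \colon \A \to \coc\Phi\B$ to $\tilde f \iso \coce\Phi{\A} \plx f$, by \cref{cocompletion-tight-cell-UP}. Its pseudo-inverse on hom-categories is precomposition with $\coce\Phi{\A}$, that is, restriction along the embedding. This is precisely the equivalence $\Phi\Coc(\coc\Phi\A, X) \simeq \tX(\A, X)$ established in the proof of \cref{induced-relative-pseudomonad}.

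The main obstacle is bookkeeping rather than mathematics. One must confirm that \cref{formal-Eilenberg--Watts} (via \cite[Corollary~6.7]{arkor2025bicategories}) gives a biequivalence onto the full image of the left pseudoadjoint, and that this image is, up to equivalence, the full sub-$2$-category spanned by all $\Phi$-cocompletions. This requires that any $\Phi$-cocompletion of $\A$ is equivalent to the chosen $\coc\Phi\A$, which follows from essential uniqueness of the universal property in \cref{cocompletion}.
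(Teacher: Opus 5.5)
Your proposal is correct and follows the paper's own argument. The paper's proof is the one-line observation that the result is immediate from \cref{formal-Eilenberg--Watts}, with $\Phi$-cocompletability of every $\V$-category supplied by \cref{enriched-cocompletion} after replacing $\Phi$ by the colimit-equivalent class of \cref{colimit-equivalent-small}; your unpacking of $\Kl(\o\Phi)$ and of the comparison as left extension and restriction is a more explicit rendering of that same reasoning.
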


\begin{proof}
  Immediate from \cref{formal-Eilenberg--Watts,enriched-cocompletion,colimit-equivalent-small}.
\end{proof}

\appendix

\section{Local (co)completeness of \vdcs}
\label{local-(co)completeness}

In this appendix, we introduce the notions of local limits and local colimits in \vdcs, necessary to establish the existence of free cocompletions of enriched categories in \cref{presheaves-and-cocompletions-in-V-Cat}, and establish the relation to local limits and local colimits in bicategories. Since \vdcs{} are not closed under the reversal of tight-cells (since 2-cells have multiary domain but unary codomain), the notion of local limit is \emph{not} dual to the notion of local colimit. Consequently, their specialisations to bicategories are also not dual: local limits, in contrast to local colimits, are not required to be stable under composition.

\subsection{Local limits}

\begin{definition}
  \label{local-limit}
  Let $\X$ be a \vdc{}, let $A, B$ be objects of $\X$, and let $p \colon \b J \to \X\lh{B, A}$ be a functor between categories. A \emph{local limit} of $p$ comprises the following data.
  \begin{enumerate}
    \item A loose-cell $\lim p \colon B \lto A$.
    \item For each $j \in \b J$, a globular 2-cell $\pi_j \colon \lim p \tto p_j$. Furthermore, for each $\jmath \colon j \to j'$ in $\b J$, the following equation should hold.
    \[
    \begin{tikzcd}
      A & B \\
      A & B
      \arrow["{\lim p}"'{inner sep=.8ex}, "\shortmid"{marking}, from=1-2, to=1-1]
      \arrow[""{name=0, anchor=center, inner sep=0}, equals, nfold, from=2-1, to=1-1]
      \arrow[""{name=1, anchor=center, inner sep=0}, equals, nfold, from=2-2, to=1-2]
      \arrow["{p_j}"{inner sep=.8ex}, "\shortmid"{marking}, from=2-2, to=2-1]
      \arrow["{\pi_j}"{description}, draw=none, from=0, to=1]
    \end{tikzcd}
    \quad = \quad
    \begin{tikzcd}[column sep=large]
      A & B \\
      A & B \\
      A & B
      \arrow["{\lim p}"'{inner sep=.8ex}, "\shortmid"{marking}, from=1-2, to=1-1]
      \arrow[""{name=0, anchor=center, inner sep=0}, equals, nfold, from=2-1, to=1-1]
      \arrow[""{name=1, anchor=center, inner sep=0}, equals, nfold, from=2-2, to=1-2]
      \arrow["{p_j}"{description}, from=2-2, to=2-1]
      \arrow[""{name=2, anchor=center, inner sep=0}, equals, nfold, from=3-1, to=2-1]
      \arrow[""{name=3, anchor=center, inner sep=0}, equals, nfold, from=3-2, to=2-2]
      \arrow["{p_{j'}}"{inner sep=.8ex}, "\shortmid"{marking}, from=3-2, to=3-1]
      \arrow["{\pi_j}"{description}, draw=none, from=0, to=1]
      \arrow["{p_\jmath}"{description}, draw=none, from=2, to=3]
    \end{tikzcd}
    \]
    \item \label{local-limit-UP} For each family of 2-cells in $\X$
    \[\begin{tikzcd}
      {X_0} & \cdots & {X_n} \\
      A && B
      \arrow[""{name=0, anchor=center, inner sep=0}, "x"', from=1-1, to=2-1]
      \arrow["{q_1}"'{inner sep=.8ex}, "\shortmid"{marking}, from=1-2, to=1-1]
      \arrow["{q_n}"'{inner sep=.8ex}, "\shortmid"{marking}, from=1-3, to=1-2]
      \arrow[""{name=1, anchor=center, inner sep=0}, "{x'}", from=1-3, to=2-3]
      \arrow["{p_j}"{inner sep=.8ex}, "\shortmid"{marking}, from=2-3, to=2-1]
      \arrow["{\chi_j}"{description}, draw=none, from=0, to=1]
    \end{tikzcd}\]
    such that, for each $\jmath \colon j \to j'$ in $\b J$, the following equation holds,
    \[
    \begin{tikzcd}
      {X_0} & \cdots & {X_n} \\
      A && B \\
      A && B
      \arrow[""{name=0, anchor=center, inner sep=0}, "x"', from=1-1, to=2-1]
      \arrow["{q_0}"'{inner sep=.8ex}, "\shortmid"{marking}, from=1-2, to=1-1]
      \arrow["{q_n}"'{inner sep=.8ex}, "\shortmid"{marking}, from=1-3, to=1-2]
      \arrow[""{name=1, anchor=center, inner sep=0}, "{x'}", from=1-3, to=2-3]
      \arrow[""{name=2, anchor=center, inner sep=0}, equals, nfold, from=2-1, to=3-1]
      \arrow["{p_j}"{description}, from=2-3, to=2-1]
      \arrow[""{name=3, anchor=center, inner sep=0}, equals, nfold, from=2-3, to=3-3]
      \arrow["{p_{j'}}"{inner sep=.8ex}, "\shortmid"{marking}, from=3-3, to=3-1]
      \arrow["{\chi_j}"{description}, draw=none, from=0, to=1]
      \arrow["{p_\jmath}"{description}, draw=none, from=2, to=3]
    \end{tikzcd}
    \quad = \quad \chi_{j'}
    \]
    a unique 2-cell
    \[\begin{tikzcd}
      {X_0} & \cdots & {X_n} \\
      A && B
      \arrow[""{name=0, anchor=center, inner sep=0}, "x"', from=1-1, to=2-1]
      \arrow["{q_0}"'{inner sep=.8ex}, "\shortmid"{marking}, from=1-2, to=1-1]
      \arrow["{q_n}"'{inner sep=.8ex}, "\shortmid"{marking}, from=1-3, to=1-2]
      \arrow[""{name=1, anchor=center, inner sep=0}, "{x'}", from=1-3, to=2-3]
      \arrow["{\lim p}"{inner sep=.8ex}, "\shortmid"{marking}, from=2-3, to=2-1]
      \arrow["{\tp{\chi_j}_{j \in \b J}}"{description}, draw=none, from=0, to=1]
    \end{tikzcd}\]
    such that, for each $j \in \b J$, the following equation holds.
    \[
    \begin{tikzcd}
      {X_0} & \cdots & {X_n} \\
      A && B \\
      A && B
      \arrow[""{name=0, anchor=center, inner sep=0}, "x"', from=1-1, to=2-1]
      \arrow["{q_0}"'{inner sep=.8ex}, "\shortmid"{marking}, from=1-2, to=1-1]
      \arrow["{q_n}"'{inner sep=.8ex}, "\shortmid"{marking}, from=1-3, to=1-2]
      \arrow[""{name=1, anchor=center, inner sep=0}, "{x'}", from=1-3, to=2-3]
      \arrow[""{name=2, anchor=center, inner sep=0}, equals, nfold, from=2-1, to=3-1]
      \arrow["{\lim p}"{description}, from=2-3, to=2-1]
      \arrow[""{name=3, anchor=center, inner sep=0}, equals, nfold, from=2-3, to=3-3]
      \arrow["{p_j}"{inner sep=.8ex}, "\shortmid"{marking}, from=3-3, to=3-1]
      \arrow["{\tp{\chi_j}_{j \in \b J}}"{description}, draw=none, from=0, to=1]
      \arrow["{\pi_j}"{description}, draw=none, from=2, to=3]
    \end{tikzcd}
    \quad = \quad
    \chi_j
    \]
  \end{enumerate}
  A functor $F \colon \X \to \Y$ of \vdc{} \emph{preserves} the local limit if the induced loose-cell $F(\lim p)$ together with the family $\{ F(\pi_j) \}_{j \in \b J}$ exhibits a local limit of $F_{B, A} \c p \colon \b J \to \Y\lh{FB, FA}$.

  Let $\J$ be a class of categories. A \vdc{} is \emph{locally ($\J$-) complete} if it admits all ($(\b J \in \J)$-indexed) local limits; a functor is \emph{locally ($\J$-) continuous} if it preserves all ($(\b J \in \J)$-indexed) local limits.
\end{definition}

Note that, since the notion of local limit is symmetric with respect to the loose-cells, a \vdc{} $\X$ is locally ($\J$-) complete if and only if $\X\co$ (\cref{dual}) is locally ($\J$-) complete.

\begin{remark}
  A pseudo double category admits local products in the sense of \cite[Example~6.10]{patterson2026products} if and only if, when viewed as a representable \vdc{}, it admits local $\J$-limits, for $\J$ the class of discrete categories.
\end{remark}

\begin{lemma}
  \label{local-limits-with-restrictions}
  Let $\X$ be a \ve. $\X$ admits the local limit of a functor $p \colon \b J \to \X\lh{B, A}$ if and only if the loose \vb{} underlying $\X$ admits a local limit of $p$.
\end{lemma}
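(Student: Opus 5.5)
The proof compares the two universal properties directly. The loose \vb{} underlying $\X$ has the same loose-cells and the same globular 2-cells as $\X$, but only those multiary 2-cells whose boundary tight-cells are identities. A local limit of $p$ in the loose \vb{} is therefore a loose-cell $\lim p$ with cone $\{\pi_j\}$, satisfying the universal property of \cref{local-limit} for globular families $\chi_j$ only, that is, with $x$ and $x'$ identities. The condition that the $\pi_j$ form a cone involves only globular 2-cells, so it is the same in both settings.

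The forward direction is immediate. A local limit in $\X$ restricts to one in the loose \vb{}: the cone data is the same, and the universal property in $\X$ specialises to the case $x = 1$, $x' = 1$.

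For the converse, suppose $(\lim p, \pi)$ is a local limit in the loose \vb{}. Take a compatible family $\chi_j$ with frame $q_1, \ldots, q_n \tto p_j$ and boundary tight-cells $x, x'$. Since $\X$ is a \ve{}, the restrictions $p_j(x, x')$ exist. Each $\chi_j$ therefore factors uniquely as a globular 2-cell $\hat\chi_j \colon q_1, \ldots, q_n \tto p_j(x, x')$ followed by the cartesian 2-cell $p_j(x, x') \tto p_j$.

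The key step is to show that restriction along $(x, x')$ preserves local limits in the loose \vb{}. I would prove this before applying it: the restriction functor $\X\lh{B, A} \to \X\lh{X_n, X_0}$ should preserve local limits, so that $(\lim p)(x, x')$, with cone the restricted projections $\pi_j(x, x')$, is a local limit of $p({-})(x, x')$. To check this, take a globular compatible family $\psi_j \colon \vec r \tto p_j(x, x')$. Composing each with the cartesian cell gives non-globular cells into $p_j$. Here the argument risks becoming circular, since the universal property of $\lim p$ is only available for globular families. I would break the circularity using companions and conjoints. Let $\cp{}$ denote the counit cells of \cref{cp-notation}. Precompose with the companion of $x$ and postcompose with the conjoint of $x'$, forming the chain $A(1, x), \vec r, B(x', 1)$ with the cells $\cp x$ and $\cp {x'}$ placed at the ends. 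This converts each non-globular cell into a globular cell $A(1, x), \vec r, B(x', 1) \tto p_j$, and the conversion is a bijection by the standard correspondence between 2-cells and cells bounded by companions and conjoints in a \ve{}. The compatibility of the family is preserved, since the conversion commutes with vertical postcomposition by $p_\jmath$. The globular universal property then gives a unique induced cell into $\lim p$, which converts back into a unique factorisation through $\pi_j(x, x')$.

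I expect this companion/conjoint transposition to be the only real obstacle. I would make sure the correspondence genuinely is a bijection on globular cells, and check its naturality with respect to postcomposition by $p_\jmath$ and $\pi_j$.

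With this lemma in place, the converse follows. Applying the restricted local limit to the family $\hat\chi_j$ gives a unique globular cell $q_1, \ldots, q_n \tto (\lim p)(x, x')$, and composing it with the cartesian cell onto $\lim p$ yields the required $\tp{\chi_j}$. Its uniqueness follows from the uniqueness of factorisations through cartesian cells together with the uniqueness in the restricted universal property.
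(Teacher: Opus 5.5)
Your proposal is correct and uses the same mechanism as the paper's proof: transpose each $\chi_j$ into a globular cell $A(1, x), q_1, \ldots, q_n, B(x', 1) \tto p_j$ using the companion of $x$ and the conjoint of $x'$, apply the universal property of $\lim p$ in the loose virtual bicategory, and transpose back using the unit cells of the companion and conjoint. The detour through $p_j(x, x')$, together with the lemma that restriction preserves local limits, is unnecessary: the transposition you use to prove that lemma applies directly to $\chi_j$, and that direct application is exactly the paper's argument.
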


\begin{proof}
  Clearly data (1 \& 2) do not depend on tight-cells in $\X$. For each family of 2-cells in $\X$,
  \[\begin{tikzcd}
    {X_0} & \cdots & {X_n} \\
    A && B
    \arrow[""{name=0, anchor=center, inner sep=0}, "x"', from=1-1, to=2-1]
    \arrow["{q_1}"'{inner sep=.8ex}, "\shortmid"{marking}, from=1-2, to=1-1]
    \arrow["{q_n}"'{inner sep=.8ex}, "\shortmid"{marking}, from=1-3, to=1-2]
    \arrow[""{name=1, anchor=center, inner sep=0}, "{x'}", from=1-3, to=2-3]
    \arrow["{p_j}"{inner sep=.8ex}, "\shortmid"{marking}, from=2-3, to=2-1]
    \arrow["{\chi_j}"{description}, draw=none, from=0, to=1]
  \end{tikzcd}\]
  we have a family of globular 2-cells in $\X$.
  \[\begin{tikzcd}
    A & {X_0} & \cdots & {X_n} & B \\
    A &&&& B
    \arrow[""{name=0, anchor=center, inner sep=0}, equals, nfold, from=1-1, to=2-1]
    \arrow["{A(1, x)}"'{inner sep=.8ex}, "\shortmid"{marking}, from=1-2, to=1-1]
    \arrow["{q_1}"'{inner sep=.8ex}, "\shortmid"{marking}, from=1-3, to=1-2]
    \arrow["{q_n}"'{inner sep=.8ex}, "\shortmid"{marking}, from=1-4, to=1-3]
    \arrow["{B(x', 1)}"'{inner sep=.8ex}, "\shortmid"{marking}, from=1-5, to=1-4]
    \arrow[""{name=1, anchor=center, inner sep=0}, equals, nfold, from=1-5, to=2-5]
    \arrow["{p_j}"{inner sep=.8ex}, "\shortmid"{marking}, from=2-5, to=2-1]
    \arrow["{\widecheck{\chi_j}}"{description}, draw=none, from=0, to=1]
  \end{tikzcd}\]
  Since $p$ admits a local limit in the loose \vb, we have a 2-cell as follows,
  \[\begin{tikzcd}
    A & {X_0} & \cdots & {X_n} & B \\
    A &&&& B
    \arrow[""{name=0, anchor=center, inner sep=0}, equals, nfold, from=1-1, to=2-1]
    \arrow["{A(1, x)}"'{inner sep=.8ex}, "\shortmid"{marking}, from=1-2, to=1-1]
    \arrow["{q_1}"'{inner sep=.8ex}, "\shortmid"{marking}, from=1-3, to=1-2]
    \arrow["{q_n}"'{inner sep=.8ex}, "\shortmid"{marking}, from=1-4, to=1-3]
    \arrow["{B(x', 1)}"'{inner sep=.8ex}, "\shortmid"{marking}, from=1-5, to=1-4]
    \arrow[""{name=1, anchor=center, inner sep=0}, equals, nfold, from=1-5, to=2-5]
    \arrow["{\lim p}"{inner sep=.8ex}, "\shortmid"{marking}, from=2-5, to=2-1]
    \arrow["{\tp{\widecheck{\chi_j}}_{j \in \b J}}"{description}, draw=none, from=0, to=1]
  \end{tikzcd}\]
  hence a 2-cell as follows, where $\urcorner$ and $\ulcorner$ are the evident 2-cells induced by the universal property of restriction.
  \[\begin{tikzcd}[column sep=huge]
    {X_0} & {X_0} & \cdots & {X_n} & {X_n} \\
    A & {X_0} & \cdots & {X_n} & B \\
    A &&&& B
    \arrow[""{name=0, anchor=center, inner sep=0}, "x"', from=1-1, to=2-1]
    \arrow[equals, nfold, from=1-2, to=1-1]
    \arrow[""{name=1, anchor=center, inner sep=0}, equals, nfold, from=1-2, to=2-2]
    \arrow["{q_1}"'{inner sep=.8ex}, "\shortmid"{marking}, from=1-3, to=1-2]
    \arrow[""{name=2, anchor=center, inner sep=0}, "\cdots"{description}, draw=none, from=1-3, to=2-3]
    \arrow["{q_n}"'{inner sep=.8ex}, "\shortmid"{marking}, from=1-4, to=1-3]
    \arrow[equals, nfold, from=1-4, to=1-5]
    \arrow[""{name=3, anchor=center, inner sep=0}, equals, nfold, from=1-4, to=2-4]
    \arrow[""{name=4, anchor=center, inner sep=0}, "{x'}", from=1-5, to=2-5]
    \arrow[""{name=5, anchor=center, inner sep=0}, equals, nfold, from=2-1, to=3-1]
    \arrow["{A(1, x)}"{description}, from=2-2, to=2-1]
    \arrow["{q_1}"{description}, from=2-3, to=2-2]
    \arrow["{q_n}"{description}, from=2-4, to=2-3]
    \arrow["{B(x', 1)}"{description}, from=2-5, to=2-4]
    \arrow[""{name=6, anchor=center, inner sep=0}, equals, nfold, from=2-5, to=3-5]
    \arrow["{\lim p}"{inner sep=.8ex}, "\shortmid"{marking}, from=3-5, to=3-1]
    \arrow["\urcorner"{description}, draw=none, from=0, to=1]
    \arrow["{=}"{description}, draw=none, from=1, to=2]
    \arrow["{=}"{description}, draw=none, from=2, to=3]
    \arrow["\ulcorner"{description}, draw=none, from=3, to=4]
    \arrow["{\tp{\widecheck{\chi_j}}_{j \in \b J}}"{description}, draw=none, from=5, to=6]
  \end{tikzcd}\]
  This satisfies the desired universal property by the universal property for the local limit in the loose bicategory together with the universal property of the restriction. The converse is trivial.
\end{proof}

\begin{proposition}
  \label{local-completeness-for-bicategories}
  Each hom-category of a bicategory is $\J$-complete if and only if the bicategory, when viewed as the loose bicategory of a representable \vdc{}, admits local $\J$-limits in the sense of \cref{local-limit}.
\end{proposition}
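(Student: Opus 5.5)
We prove the two directions separately. In each, we compare the local limit of \cref{local-limit} in the representable \vdc{} $\X$ associated to a bicategory $\K$ with the ordinary limit in the hom-category $\K(B, A)$. Throughout, we use the description of 2-cells in a representable \vdc{}. A globular 2-cell $q_1, \ldots, q_n \tto r$ is the same as a 2-cell $q_1 \odot \cdots \odot q_n \tto r$ in $\K$, with the convention that the nullary composite is the identity 1-cell. By \cref{local-limits-with-restrictions}, applied to the representable \vdc{} built from $\K$ (which has only identity tight-cells, so is trivially a \ve{} with restrictions), it suffices to consider globular families $\chi_j$.

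For the forward direction, suppose each hom-category $\K(B, A)$ is $\J$-complete, and let $p \colon \b J \to \K(B, A)$ with $\b J \in \J$. Take $\lim p$ to be the limit in $\K(B, A)$ with its projections $\pi_j$; the cone condition of (2) is then exactly the cone condition in the hom-category. For (3), a compatible family $\chi_j \colon q_1, \ldots, q_n \tto p_j$ in $\X$ corresponds to a cone $q_1 \odot \cdots \odot q_n \tto p_j$ in $\K(B, A)$. This cone factors uniquely through $\lim p$. Transporting the factorisation back along the bijection between 2-cells of $\X$ and 2-cells out of the composite gives the required unique $\tp{\chi_j}$. The point to check is that this bijection commutes with postcomposition by $\pi_j$ and by $p_\jmath$, which holds by construction of the representable \vdc{}.

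For the converse, suppose $\X$ admits local $\J$-limits. We show that the loose-cell $\lim p$ with projections $\pi_j$ is a limit in $\K(B, A)$. A cone in the hom-category with vertex $r$ is a family of unary 2-cells $r \tto p_j$ in $\X$, so we apply (3) with $n = 1$. Unary 2-cells in $\X$ are exactly 2-cells of $\K$, so the existence and uniqueness in (3) give precisely the limit property in $\K(B, A)$.

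We expect no real obstacle. The only subtlety is that the universal property in the \vdc{} must hold for all arities $n$, including $n = 0$, whereas completeness of the hom-category only provides it for $n = 1$. This gap closes because representability replaces a chain by its composite (or, for $n = 0$, by the identity 1-cell). Unlike the colimit case, no stability of the limit under whiskering is needed, because $\lim p$ appears in the codomain of the 2-cells rather than in their domain.
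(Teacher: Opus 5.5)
Your proof is correct and follows essentially the same route as the paper. Loose-composites (with the identity 1-cell as the nullary composite) put multiary 2-cells $q_1, \ldots, q_n \tto p_j$ in bijection, naturally in $p_j$, with unary 2-cells out of $q_1 \odot \cdots \odot q_n$, so the universal property of a local limit reduces to the unary case, which is exactly the limit property in the hom-category; your appeal to \cref{local-limits-with-restrictions} is harmless but redundant, since in the virtual double category associated to a bicategory every tight-cell is already an identity.
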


\begin{proof}
  In the presence of loose-composites, it is clear that, in the universal property of a local limit \eqref{local-limit-UP}, it suffices to consider unary 2-cells. The universal property is then precisely that of a limit in the hom-category.
\end{proof}

\subsection{Local colimits}

\begin{definition}
  \label{local-colimit}
  Let $\X$ be a \vdc{}, let $A, B$ be objects of $\X$, and let $p \colon \b J \to \X\lh{B, A}$ be a functor between categories. A \emph{local colimit} of $p$ comprises the following data.
  \begin{enumerate}
    \item A loose-cell $\colim p \colon B \lto A$.
    \item For each $j \in \b J$, a globular 2-cell $\copi_j \colon p_i \tto \colim p$. Furthermore, for each $\jmath \colon j \to j'$ in $\b J$, the following equation should hold.
    \[
    \begin{tikzcd}
      A & B \\
      A & B
      \arrow[""{name=0, anchor=center, inner sep=0}, equals, nfold, from=1-1, to=2-1]
      \arrow["{p_j}"'{inner sep=.8ex}, "\shortmid"{marking}, from=1-2, to=1-1]
      \arrow[""{name=1, anchor=center, inner sep=0}, equals, nfold, from=1-2, to=2-2]
      \arrow["{\colim p}"{inner sep=.8ex}, "\shortmid"{marking}, from=2-2, to=2-1]
      \arrow["{\copi_j}"{description}, draw=none, from=0, to=1]
    \end{tikzcd}
    \quad = \quad
    \begin{tikzcd}[column sep=large]
      A & B \\
      A & B \\
      A & B
      \arrow[""{name=0, anchor=center, inner sep=0}, equals, nfold, from=1-1, to=2-1]
      \arrow["{p_j}"'{inner sep=.8ex}, "\shortmid"{marking}, from=1-2, to=1-1]
      \arrow[""{name=1, anchor=center, inner sep=0}, equals, nfold, from=1-2, to=2-2]
      \arrow[""{name=2, anchor=center, inner sep=0}, equals, nfold, from=2-1, to=3-1]
      \arrow["{p_{j'}}"{description}, from=2-2, to=2-1]
      \arrow[""{name=3, anchor=center, inner sep=0}, equals, nfold, from=2-2, to=3-2]
      \arrow["{\colim p}"{inner sep=.8ex}, "\shortmid"{marking}, from=3-2, to=3-1]
      \arrow["{p_\jmath}"{description}, draw=none, from=0, to=1]
      \arrow["{\copi_{j'}}"{description}, draw=none, from=2, to=3]
    \end{tikzcd}
    \]
    \item \label{local-colimit-UP} For each family of 2-cells in $\X$
    \[\begin{tikzcd}
      {X_0} & \cdots & A & B & \cdots & {X_n} \\
      X &&&&& {X'}
      \arrow[""{name=0, anchor=center, inner sep=0}, "x"', from=1-1, to=2-1]
      \arrow["{q_1}"'{inner sep=.8ex}, "\shortmid"{marking}, from=1-2, to=1-1]
      \arrow["{q_k}"'{inner sep=.8ex}, "\shortmid"{marking}, from=1-3, to=1-2]
      \arrow["{p_j}"'{inner sep=.8ex}, "\shortmid"{marking}, from=1-4, to=1-3]
      \arrow["{q_{k + 1}}"'{inner sep=.8ex}, "\shortmid"{marking}, from=1-5, to=1-4]
      \arrow["{q_n}"'{inner sep=.8ex}, "\shortmid"{marking}, from=1-6, to=1-5]
      \arrow[""{name=1, anchor=center, inner sep=0}, "{x'}", from=1-6, to=2-6]
      \arrow["q"{inner sep=.8ex}, "\shortmid"{marking}, from=2-6, to=2-1]
      \arrow["{\chi_j}"{description}, draw=none, from=0, to=1]
    \end{tikzcd}\]
    such that, for each $\jmath \colon j \to j'$ in $\b J$, the following equation holds,
    \[
    \begin{tikzcd}[column sep=huge]
      {X_0} & \cdots & A & B & \cdots & {X_n} \\
      {X_0} & \cdots & A & B & \cdots & {X_n} \\
      X &&&&& {X'}
      \arrow[""{name=0, anchor=center, inner sep=0}, equals, nfold, from=1-1, to=2-1]
      \arrow["{q_1}"'{inner sep=.8ex}, "\shortmid"{marking}, from=1-2, to=1-1]
      \arrow["{q_k}"'{inner sep=.8ex}, "\shortmid"{marking}, from=1-3, to=1-2]
      \arrow[""{name=1, anchor=center, inner sep=0}, equals, nfold, from=1-3, to=2-3]
      \arrow["{p_j}"'{inner sep=.8ex}, "\shortmid"{marking}, from=1-4, to=1-3]
      \arrow[""{name=2, anchor=center, inner sep=0}, equals, nfold, from=1-4, to=2-4]
      \arrow["{q_{k +1}}"'{inner sep=.8ex}, "\shortmid"{marking}, from=1-5, to=1-4]
      \arrow["{q_n}"'{inner sep=.8ex}, "\shortmid"{marking}, from=1-6, to=1-5]
      \arrow[""{name=3, anchor=center, inner sep=0}, equals, nfold, from=1-6, to=2-6]
      \arrow[""{name=4, anchor=center, inner sep=0}, "x"', from=2-1, to=3-1]
      \arrow["{q_1}"{description}, from=2-2, to=2-1]
      \arrow["{q_k}"{description}, from=2-3, to=2-2]
      \arrow["{p_{j'}}"{description}, from=2-4, to=2-3]
      \arrow["{q_{k + 1}}"{description}, from=2-5, to=2-4]
      \arrow["{q_n}"{description}, from=2-6, to=2-5]
      \arrow[""{name=5, anchor=center, inner sep=0}, "{x'}", from=2-6, to=3-6]
      \arrow["q"{inner sep=.8ex}, "\shortmid"{marking}, from=3-6, to=3-1]
      \arrow["{=}"{description}, draw=none, from=0, to=1]
      \arrow["{p_\jmath}"{description}, draw=none, from=1, to=2]
      \arrow["{=}"{description}, draw=none, from=2, to=3]
      \arrow["{\chi_{j'}}"{description}, draw=none, from=4, to=5]
    \end{tikzcd}
    \quad = \quad \chi_j
    \]
    a unique 2-cell
    \[\begin{tikzcd}
      {X_0} & \cdots & A & B & \cdots & {X_n} \\
      X &&&&& {X'}
      \arrow[""{name=0, anchor=center, inner sep=0}, "x"', from=1-1, to=2-1]
      \arrow["{q_1}"'{inner sep=.8ex}, "\shortmid"{marking}, from=1-2, to=1-1]
      \arrow["{q_k}"'{inner sep=.8ex}, "\shortmid"{marking}, from=1-3, to=1-2]
      \arrow["{\colim p}"'{inner sep=.8ex}, "\shortmid"{marking}, from=1-4, to=1-3]
      \arrow["{q_{k + 1}}"'{inner sep=.8ex}, "\shortmid"{marking}, from=1-5, to=1-4]
      \arrow["{q_n}"'{inner sep=.8ex}, "\shortmid"{marking}, from=1-6, to=1-5]
      \arrow[""{name=1, anchor=center, inner sep=0}, "{x'}", from=1-6, to=2-6]
      \arrow["q"{inner sep=.8ex}, "\shortmid"{marking}, from=2-6, to=2-1]
      \arrow["{[\chi_j]_{j \in \b J}}"{description}, draw=none, from=0, to=1]
    \end{tikzcd}\]
    such that, for each $j \in \b J$, the following equation holds.
    \[
    \begin{tikzcd}[column sep=huge]
      {X_0} & \cdots & A & B & \cdots & {X_n} \\
      {X_0} & \cdots & A & B & \cdots & {X_n} \\
      X &&&&& {X'}
      \arrow[""{name=0, anchor=center, inner sep=0}, equals, nfold, from=1-1, to=2-1]
      \arrow["{q_1}"'{inner sep=.8ex}, "\shortmid"{marking}, from=1-2, to=1-1]
      \arrow["{q_k}"'{inner sep=.8ex}, "\shortmid"{marking}, from=1-3, to=1-2]
      \arrow[""{name=1, anchor=center, inner sep=0}, equals, nfold, from=1-3, to=2-3]
      \arrow["{p_j}"'{inner sep=.8ex}, "\shortmid"{marking}, from=1-4, to=1-3]
      \arrow[""{name=2, anchor=center, inner sep=0}, equals, nfold, from=1-4, to=2-4]
      \arrow["{q_{k + 1}}"'{inner sep=.8ex}, "\shortmid"{marking}, from=1-5, to=1-4]
      \arrow["{q_n}"'{inner sep=.8ex}, "\shortmid"{marking}, from=1-6, to=1-5]
      \arrow[""{name=3, anchor=center, inner sep=0}, equals, nfold, from=1-6, to=2-6]
      \arrow[""{name=4, anchor=center, inner sep=0}, "x"', from=2-1, to=3-1]
      \arrow["{q_1}"{description}, from=2-2, to=2-1]
      \arrow["{q_k}"{description}, from=2-3, to=2-2]
      \arrow["{\colim p}"{description}, from=2-4, to=2-3]
      \arrow["{q_{k + 1}}"{description}, from=2-5, to=2-4]
      \arrow["{q_n}"{description}, from=2-6, to=2-5]
      \arrow[""{name=5, anchor=center, inner sep=0}, "{x'}", from=2-6, to=3-6]
      \arrow["q"{inner sep=.8ex}, "\shortmid"{marking}, from=3-6, to=3-1]
      \arrow["{=}"{description}, draw=none, from=0, to=1]
      \arrow["{\copi_j}"{description}, draw=none, from=1, to=2]
      \arrow["{=}"{description}, draw=none, from=2, to=3]
      \arrow["{[\chi_j]_{j \in \b J}}"{description}, draw=none, from=4, to=5]
    \end{tikzcd}
    \quad = \quad
    \chi_j
    \]
  \end{enumerate}
  A functor $F \colon \X \to \Y$ of \vdc{} \emph{preserves} the local colimit if the induced loose-cell $F(\colim p)$ together with the family $\{ F(\copi_j) \}_{j \in \b J}$ exhibits a local colimit of $F_{B, A} \c p \colon \b J \to \Y\lh{FB, FA}$.

  Let $\J$ be a class of categories. A \vdc{} is \emph{locally ($\J$-) cocomplete} if it admits all ($(\b J \in \J)$-indexed) local colimits; a functor is \emph{locally ($\J$-) cocontinuous} if it preserves all ($(\b J \in \J)$-indexed) local colimits.
\end{definition}

\begin{definition}
  A \emph{weak local colimit} satisfies the universal property of \cref{local-colimit-UP} only when $k = 0$ and $n = 0$ and $x$ and $x'$ are identities. A \emph{left-local colimit} satisfies the universal property only when $k = 0$ and $x$ is an identity, and dually for \emph{right-local colimits}. A \vdc{} is \emph{weakly locally cocomplete}, \emph{left-locally cocomplete}, or \emph{right-locally cocomplete} if it admits all weak local colimits, left-local colimits, or right-local colimits respectively.
\end{definition}

(A \emph{weak local colimit} should be distinguished from a \emph{local weak colimit}, which satisfies only the existence aspect of the universal property, but nevertheless satisfies this existence property with respect to multiary 2-cells.) Note that a \vdc{} $\X$ admits (left) local colimits if and only if $\X\co$ (\cref{dual}) admits (right) local colimits.

In the presence of restrictions and right lifts, left-local colimits are furthermore local colimits. (This justifies our asking for local cocompleteness throughout \cref{presheaves-and-cocompletions-in-V-Cat}, rather than mere left-local cocompletness.)

\begin{lemma}[label=right-lifts-imply-left-local-colimits-are-local-colimits]
  Suppose that all restrictions exist and that every loose-cell is lifting. Then every left-local colimit is moreover a local colimit.
\end{lemma}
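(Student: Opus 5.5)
The plan is to follow the proof of \cref{right-lifts-imply-left-composites-are-composites} closely, replacing the left-opcartesian 2-cell by the family of coprojections of the left-local colimit. Let $p \colon \b J \to \X\lh{B, A}$ admit a left-local colimit $(\colim p, \{\copi_j\})$. We must verify the full universal property \eqref{local-colimit-UP}: we are given a compatible family $\chi_j$ whose frame has arbitrary tight-cells $x, x'$, loose-cells $q_1, \ldots, q_k$ to the left of $p_j$, and $q_{k+1}, \ldots, q_n$ to its right.

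First, remove the tight-cells. Using restrictions, each $\chi_j$ corresponds bijectively to a globular 2-cell into $q(x, x')$, and this correspondence respects the compatibility equations indexed by $\jmath$ and the equations against $\copi_j$. So we may assume $x$ and $x'$ are identities; this is the same reduction used in \cref{local-limits-with-restrictions}.

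Second, remove the left context $q_1, \ldots, q_k$. Every loose-cell is lifting, so by \cref{iterated-lift} the weight $(q_1, \ldots, q_k)$ is lifting. Globular 2-cells $q_1, \ldots, q_k, p_j, q_{k+1}, \ldots, q_n \tto q$ then correspond naturally and bijectively to 2-cells $p_j, q_{k+1}, \ldots, q_n \tto q \rf (q_1, \ldots, q_k)$, by the universal property of the right lift. Because this bijection is given by precomposition with identities on the $q_i$ followed by the counit $\varpi$, it respects whiskering by $p_\jmath$ on the $p_j$ component. Hence the transposed family $\lambda\chi_j$ is again compatible, and we may apply the left-local universal property (where $k = 0$ and $x = 1$) to obtain a unique $[\lambda\chi_j]_j \colon \colim p, q_{k+1}, \ldots, q_n \tto q \rf (q_1, \ldots, q_k)$. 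The desired factorisation is then $(1_{q_1}, \ldots, 1_{q_k}, [\lambda\chi_j]_j) \d \varpi$. Pasting it with $\copi_j$ gives $(1, \ldots, 1, \lambda\chi_j) \d \varpi = \chi_j$ by associativity of composition.

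Uniqueness follows from composing the two uniqueness statements. Any candidate factorisation $\psi$ transposes under the right lift to a 2-cell whose pastings with the $\copi_j$ equal $\lambda\chi_j$, so the transpose must be $[\lambda\chi_j]_j$. Then $\psi$ itself is determined, since transposition through the right lift is a bijection.

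The main obstacle is bookkeeping rather than a conceptual step. We must check that transposition through $q \rf (q_1, \ldots, q_k)$ commutes with pasting the coprojections $\copi_j$ and the structure cells $p_\jmath$ into the position immediately to the right of the $q_i$. This needs the associativity of composition in a \vdc{}, together with the fact that $\lambda$ is natural in precomposition on the arguments to the right of the weight. I would write this out as a single pasting equation.
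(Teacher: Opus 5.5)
Your proposal is correct and follows the same route as the paper. The paper's proof just says the argument is completely analogous to \cref{right-lifts-imply-left-composites-are-composites}: reduce to globular cells via restrictions, transpose the left context $q_1, \ldots, q_k$ through the right lift $q \rf (q_1, \ldots, q_k)$, apply the left-local universal property, and get uniqueness from both universal properties.

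One small correction to a citation. The tight-cell reduction you describe, factoring each $\chi_j$ through the cartesian 2-cell onto $q(x, x')$, is the one in \cref{local-colimits-with-restrictions}. It is not the one in \cref{local-limits-with-restrictions}, which instead adjoins companions and conjoints to the domain.
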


\begin{proof}
  The proof is completely analogous to that of \cref{right-lifts-imply-left-composites-are-composites}.
\end{proof}

\begin{remark}
  The similarity between \cref{right-lifts-imply-left-composites-are-composites} and \cref{right-lifts-imply-left-local-colimits-are-local-colimits} is explained by observing that one may consider a more general notion of local colimit than \cref{local-colimit}, which is parameterised by a functor from $\J$ into the category of weights $B \lcto A$, in which morphisms are chains of (not necessarily unary) 2-cells. In particular, a local colimit in this sense that is indexed by $\b1$ and picks out a chain $(p_1, \ldots, p_n)$ in $\X$ is precisely a loose composite in the sense of \cref{opcartesian}. For our purposes, however, the more limited notion of local colimit suffices.
\end{remark}

The following extends an observation in \cite[Remark~2.1.2]{pare2013composition} from \dcs{} to \vdcs.

\begin{lemma}
  \label{local-colimits-with-restrictions}
  Let $\X$ be a \vdc{} with restrictions. $\X$ admits the local colimit of a functor $p \colon \b J \to \X\lh{B, A}$ if and only if the loose \vb{} underlying $\X$ admits a local colimit of $p$.
\end{lemma}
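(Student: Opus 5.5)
The proof will mirror that of \cref{local-limits-with-restrictions}. The idea is to use restrictions to convert every 2-cell appearing in the universal property \eqref{local-colimit-UP} into a globular 2-cell, where only the loose \vb{} structure is involved. One direction needs no work. The data (1 \& 2) of a local colimit involve only globular 2-cells. Moreover, the universal property in the loose \vb{} is exactly the special case of \eqref{local-colimit-UP} in which $x$ and $x'$ are identities. So a local colimit in $\X$ is in particular a local colimit in the loose \vb.

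For the converse, suppose $(\colim p, \{\copi_j\})$ is a local colimit in the loose \vb, and take a compatible family of 2-cells $\chi_j$ with frame $q_1,\dots,q_k,p_j,q_{k+1},\dots,q_n \tto q$ and boundary tight-cells $x, x'$.

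1. Factor each $\chi_j$ uniquely through the cartesian 2-cell exhibiting $q(x, x')$. This gives globular 2-cells $\hat\chi_j \colon q_1,\dots,p_j,\dots,q_n \tto q(x,x')$.
2. Check that the $\hat\chi_j$ again satisfy the compatibility equation for each $\jmath \colon j \to j'$. This follows from uniqueness of factorisations through the cartesian 2-cell, since pasting $p_\jmath$ commutes with postcomposition by $\cart$.
3. Apply the local colimit universal property in the loose \vb{} to obtain a unique globular $[\hat\chi_j]_j \colon q_1,\dots,\colim p,\dots,q_n \tto q(x,x')$.
4. Define $[\chi_j]_j$ as the composite of $[\hat\chi_j]_j$ with the cartesian 2-cell $q(x,x') \tto q$.

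The equation $(\ldots,\copi_j,\ldots) \d [\chi_j] = \chi_j$ then follows from step 3 and the factorisation in step 1. For uniqueness, suppose $\psi$ is another 2-cell with this property. Its factorisation $\hat\psi$ through the cartesian cell satisfies the globular equations, so $\hat\psi = [\hat\chi_j]$ by step 3, and hence $\psi = [\chi_j]$.

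Unlike \cref{local-limits-with-restrictions}, no companions or conjoints of $x, x'$ are needed here. The tight-cells sit on the codomain side, so restricting the codomain $q$ suffices, and the argument uses only restrictions rather than a full \ve.

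The main point requiring care is step 2 together with the uniqueness step, since these are where the two universal properties (cartesian and local-colimit) must interact. Both come down to the associativity of composition in a \vdc{} and the uniqueness clause of \cref{cartesian-cell}, so I expect them to be routine.
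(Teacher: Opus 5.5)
Your proposal is correct and is essentially the paper's proof: factor each $\chi_j$ through the cartesian 2-cell exhibiting $q(x, x')$ to get globular 2-cells, apply the universal property of the local colimit in the loose virtual bicategory, and postcompose with the cartesian 2-cell; the converse is trivial. You spell out the compatibility and uniqueness checks that the paper leaves implicit, and your remark that only restrictions of $q$ are needed (no companions or conjoints of $x, x'$) matches why this lemma, unlike the local-limit version, is stated for a virtual double category with restrictions rather than a virtual equipment.
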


\begin{proof}
  Clearly data (1 \& 2) do not depend on tight-cells in $\X$. For each family of 2-cells in $\X$,
  \[\begin{tikzcd}
    {X_0} & \cdots & A & B & \cdots & {X_n} \\
    X &&&&& {X'}
    \arrow[""{name=0, anchor=center, inner sep=0}, "x"', from=1-1, to=2-1]
    \arrow["{q_1}"'{inner sep=.8ex}, "\shortmid"{marking}, from=1-2, to=1-1]
    \arrow["{q_k}"'{inner sep=.8ex}, "\shortmid"{marking}, from=1-3, to=1-2]
    \arrow["{p_j}"'{inner sep=.8ex}, "\shortmid"{marking}, from=1-4, to=1-3]
    \arrow["{q_{k + 1}}"'{inner sep=.8ex}, "\shortmid"{marking}, from=1-5, to=1-4]
    \arrow["{q_n}"'{inner sep=.8ex}, "\shortmid"{marking}, from=1-6, to=1-5]
    \arrow[""{name=1, anchor=center, inner sep=0}, "{x'}", from=1-6, to=2-6]
    \arrow["q"{inner sep=.8ex}, "\shortmid"{marking}, from=2-6, to=2-1]
    \arrow["{\chi_j}"{description}, draw=none, from=0, to=1]
  \end{tikzcd}\]
  we have a family of globular 2-cells in $\X$.
  \[\begin{tikzcd}
    {X_0} & \cdots & A & B & \cdots & {X_n} \\
    {X_0} &&&&& {X_n}
    \arrow[""{name=0, anchor=center, inner sep=0}, equals, nfold, from=1-1, to=2-1]
    \arrow["{q_1}"'{inner sep=.8ex}, "\shortmid"{marking}, from=1-2, to=1-1]
    \arrow["{q_k}"'{inner sep=.8ex}, "\shortmid"{marking}, from=1-3, to=1-2]
    \arrow["{p_j}"'{inner sep=.8ex}, "\shortmid"{marking}, from=1-4, to=1-3]
    \arrow["{q_{k + 1}}"'{inner sep=.8ex}, "\shortmid"{marking}, from=1-5, to=1-4]
    \arrow["{q_n}"'{inner sep=.8ex}, "\shortmid"{marking}, from=1-6, to=1-5]
    \arrow[""{name=1, anchor=center, inner sep=0}, equals, nfold, from=1-6, to=2-6]
    \arrow["{q(x, x')}"{inner sep=.8ex}, "\shortmid"{marking}, from=2-6, to=2-1]
    \arrow["{\widecheck{\chi_j}}"{description}, draw=none, from=0, to=1]
  \end{tikzcd}\]
  Since $p$ admits a local colimit in the loose \vb, we have a 2-cell as follows,
  \[\begin{tikzcd}
    {X_0} & \cdots & A & B & \cdots & {X_n} \\
    {X_0} &&&&& {X_n}
    \arrow[""{name=0, anchor=center, inner sep=0}, equals, nfold, from=1-1, to=2-1]
    \arrow["{q_1}"'{inner sep=.8ex}, "\shortmid"{marking}, from=1-2, to=1-1]
    \arrow["{q_k}"'{inner sep=.8ex}, "\shortmid"{marking}, from=1-3, to=1-2]
    \arrow["{\colim p}"'{inner sep=.8ex}, "\shortmid"{marking}, from=1-4, to=1-3]
    \arrow["{q_{k + 1}}"'{inner sep=.8ex}, "\shortmid"{marking}, from=1-5, to=1-4]
    \arrow["{q_n}"'{inner sep=.8ex}, "\shortmid"{marking}, from=1-6, to=1-5]
    \arrow[""{name=1, anchor=center, inner sep=0}, equals, nfold, from=1-6, to=2-6]
    \arrow["{q(x, x')}"{inner sep=.8ex}, "\shortmid"{marking}, from=2-6, to=2-1]
    \arrow["{[\widecheck{\chi_j}]_{j \in \b J}}"{description}, draw=none, from=0, to=1]
  \end{tikzcd}\]
  hence a 2-cell as follows.
  \[\begin{tikzcd}
    {X_0} & \cdots & A & B & \cdots & {X_n} \\
    {X_0} &&&&& {X_n} \\
    X &&&&& {X'}
    \arrow[""{name=0, anchor=center, inner sep=0}, equals, nfold, from=1-1, to=2-1]
    \arrow["{q_1}"'{inner sep=.8ex}, "\shortmid"{marking}, from=1-2, to=1-1]
    \arrow["{q_k}"'{inner sep=.8ex}, "\shortmid"{marking}, from=1-3, to=1-2]
    \arrow["{\colim p}"'{inner sep=.8ex}, "\shortmid"{marking}, from=1-4, to=1-3]
    \arrow["{q_{k + 1}}"'{inner sep=.8ex}, "\shortmid"{marking}, from=1-5, to=1-4]
    \arrow["{q_n}"'{inner sep=.8ex}, "\shortmid"{marking}, from=1-6, to=1-5]
    \arrow[""{name=1, anchor=center, inner sep=0}, equals, nfold, from=1-6, to=2-6]
    \arrow[""{name=2, anchor=center, inner sep=0}, "x"', from=2-1, to=3-1]
    \arrow["{q(x, x')}"{description}, from=2-6, to=2-1]
    \arrow[""{name=3, anchor=center, inner sep=0}, "{x'}", from=2-6, to=3-6]
    \arrow["q"{inner sep=.8ex}, "\shortmid"{marking}, from=3-6, to=3-1]
    \arrow["{[\widecheck{\chi_j}]_{j \in \b J}}"{description}, draw=none, from=0, to=1]
    \arrow["\cart"{description}, draw=none, from=2, to=3]
  \end{tikzcd}\]
  This satisfies the desired universal property by the universal property for the local colimit in the loose bicategory together with the universal property of the restriction. The converse is trivial.
\end{proof}

\begin{proposition}
  If $\X$ admits loose-composites, then the following are equivalent.
  \begin{enumerate}
    \item $\X$ admits a local colimit of $p$.
    \item \label{representable-local-colimits} $\X$ admits a weak local colimit of $p$, and $\colim p$ is preserved by pre- and post-whiskering of loose-cells, in the sense that the canonical 2-cell $r \odot p_j \odot s \tto r \odot \colim p \odot s$ induced by the following 2-cell exhibits the coprojection at $j \in \b J$ of a colimit of $\b J \to \X\lh{B, A} \to \X\lh{S, R}$.
    \[\begin{tikzcd}[column sep=large]
      R & A & B & S \\
      R & A & B & S \\
      R &&& S
      \arrow[""{name=0, anchor=center, inner sep=0}, equals, nfold, from=1-1, to=2-1]
      \arrow["r"'{inner sep=.8ex}, "\shortmid"{marking}, from=1-2, to=1-1]
      \arrow[""{name=1, anchor=center, inner sep=0}, equals, nfold, from=1-2, to=2-2]
      \arrow["{p_j}"'{inner sep=.8ex}, "\shortmid"{marking}, from=1-3, to=1-2]
      \arrow[""{name=2, anchor=center, inner sep=0}, equals, nfold, from=1-3, to=2-3]
      \arrow["s"'{inner sep=.8ex}, "\shortmid"{marking}, from=1-4, to=1-3]
      \arrow[""{name=3, anchor=center, inner sep=0}, equals, nfold, from=1-4, to=2-4]
      \arrow[""{name=4, anchor=center, inner sep=0}, equals, nfold, from=2-1, to=3-1]
      \arrow["r"{description}, from=2-2, to=2-1]
      \arrow["{\colim p}"{description}, from=2-3, to=2-2]
      \arrow["s"{description}, from=2-4, to=2-3]
      \arrow[""{name=5, anchor=center, inner sep=0}, equals, nfold, from=2-4, to=3-4]
      \arrow["{r \odot \colim p \odot s}", from=3-4, to=3-1]
      \arrow["{=}"{description}, draw=none, from=0, to=1]
      \arrow["{\copi_j}"{description}, draw=none, from=1, to=2]
      \arrow["{=}"{description}, draw=none, from=2, to=3]
      \arrow["\opcart"{description}, draw=none, from=4, to=5]
    \end{tikzcd}\]
  \end{enumerate}
\end{proposition}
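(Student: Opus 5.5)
The plan is to reduce the multiary universal property of a local colimit to a unary one by using loose-composites. As in \cref{local-colimits-with-restrictions}, it is harmless to restrict attention to globular 2-cells. Strictly speaking, that lemma assumes restrictions, which the statement does not. So I will handle the tight boundary $x, x'$ directly: the canonical comparison out of $r \odot p_j \odot s$ is opcartesian, and opcartesian 2-cells have a universal property that already allows arbitrary tight-cells on the boundary.

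For (1) $\implies$ (2), take a local colimit $\colim p$. Restricting its universal property to the case $k = n = 0$ with identity boundary shows that it is in particular a weak local colimit. For preservation under whiskering, fix $r \colon A \lto R$ and $s \colon S \lto B$, and suppose given a cocone $r \odot p_j \odot s \tto u$ in $\X\lh{S, R}$. Precomposing with the opcartesian 2-cells $r, p_j, s \tto r \odot p_j \odot s$ gives a family of ternary 2-cells $r, p_j, s \tto u$, compatible in $j$. The local colimit property with $k = 1$ and $n = 2$ factors this family uniquely through a 2-cell $r, \colim p, s \tto u$. Applying the universal property of $r \odot \colim p \odot s$ then yields a unique 2-cell $r \odot \colim p \odot s \tto u$. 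Uniqueness follows by combining the two uniqueness statements, since 2-cells out of $r \odot p_j \odot s$ are determined by their precomposites with the opcartesian cells.

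For (2) $\implies$ (1), take a compatible family $\chi_j \colon q_1, \ldots, q_k, p_j, q_{k+1}, \ldots, q_n \tto q$ with boundary $x, x'$. Set $r \defeq q_1 \odot \cdots \odot q_k$ and $s \defeq q_{k+1} \odot \cdots \odot q_n$, using loose-identities when $k = 0$ or $k = n$ (these exist since $\X$ admits all loose-composites, including nullary ones). By the opcartesian property, each $\chi_j$ corresponds uniquely to a 2-cell $\hat\chi_j \colon r \odot p_j \odot s \tto q$ with the same tight boundary. This uses \cref{left-composite-reassociation} in its full-composite form to identify $q_1 \odot \cdots \odot p_j \odot \cdots \odot q_n$ with $r \odot p_j \odot s$. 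Uniqueness of factorisations turns compatibility of the $\chi_j$ into compatibility of the $\hat\chi_j$.

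The main obstacle is that the colimit in (2) lives in the hom-category $\X\lh{S, R}$ of globular cells, whereas $\hat\chi_j$ has non-identity boundary $x, x'$. The first thing to try is to exploit the opcartesian cell $\mathrm{opcart}\colon r, \colim p, s \tto r \odot \colim p \odot s$ again. Concretely, I would show that non-globular 2-cells $r \odot c \odot s \tto q$ with boundary $(x, x')$, for $c$ ranging over $\X\lh{B, A}$, form a functor in $c$. I would then show that the family $(\hat\chi_j)$ determines one on $\colim p$. If the weak colimit does not directly handle the boundary, I would repackage the non-globular 2-cells as cocones by precomposing with identity-boundary 2-cells. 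This amounts to taking the colimit in (2) of the whiskered diagram, whose coprojections are by hypothesis colimiting, and then using the opcartesian universal property, which does permit a tight boundary, to descend the induced globular map. Both the factorisation through $r, \colim p, s$ and its uniqueness would then follow by unwinding the two universal properties, with (2) supplying the colimit step and opcartesianness supplying the rest.
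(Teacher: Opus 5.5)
Your (1)$\implies$(2) is the paper's argument. The paper runs the same sequence (precompose with $\opcart$, factor through $\colim p$, descend via $\opcart$) for arbitrary context and tight boundary, but the globular unary case you treat is all that (2) needs. In (2)$\implies$(1), your reduction of $\chi_j$ to $\hat\chi_j \colon r \odot p_j \odot s \tto q$ over $(x, x')$ also matches the paper.

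The gap is exactly the step you flag, and your remedy does not close it. Opcartesianness lets a tight boundary pass through factorisations \emph{out of} a chain into its composite. It says nothing about factoring $\hat\chi_j$ through the coprojection $r \odot p_j \odot s \tto r \odot \colim p \odot s$. That step needs the functor sending $c \in \X\lh{B, A}$ to the set of 2-cells $r \odot c \odot s \tto q$ over $(x, x')$ to turn $\colim p$ into a limit. Hypothesis (2) only gives this for the globular hom-functors into objects of $\X\lh{X_n, X_0}$. Precomposing with identity-boundary cells never changes the boundary of $\hat\chi_j$, so you never obtain a globular cocone for the colimit in (2) to act on.

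In fact nothing can close the gap from the stated hypotheses. Take the strict double category with:
\begin{itemize}
\item objects $A$ and $X$, and a single non-identity tight-cell $x \colon A \to X$;
\item loose-cells $1_A, 0 \colon A \lto A$ and $1_X \colon X \lto X$, with $0$ absorbing for $\odot$;
\item globular cells on $A$ forming the poset $0 \le 1_A$ (write $\iota \colon 0 \tto 1_A$), and only identities on $X$;
\item cells over $(x, x)$ given by $U_x \colon 1_A \tto 1_X$ and two cells $\tau \defeq (\iota \d U_x)$ and $\sigma$ from $0$ to $1_X$.
\end{itemize}
Let $U_x$ be a unit for horizontal composition, with $\tau \odot \tau = \tau$ and $\tau \odot \sigma = \sigma \odot \tau = \sigma \odot \sigma = \sigma$; one checks that interchange holds. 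For the empty diagram in $\X\lh{A, A}$, $0$ is initial and every whiskering $r \odot 0 \odot s = 0$ is again initial, so (2) holds. But there are two 2-cells $0 \tto 1_X$ over $(x, x)$, so (1) fails.

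What is missing is restrictions. Factor each $\hat\chi_j$ through the cartesian cell $q(x, x') \tto q$. Uniqueness of cartesian factorisations makes the resulting globular cells a cocone in $\X\lh{X_n, X_0}$, and (2) factors this cocone uniquely through $r \odot \colim p \odot s$. Pasting the cartesian cell back on gives the required cell, and its universal property again gives uniqueness, exactly as in \cref{local-colimits-with-restrictions}.

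The paper's own proof of (2)$\implies$(1) asserts this factorisation of the non-globular cell without comment, so it relies on the same step. That step is sound in a virtual equipment, or if the universal properties in (2) are required for cells with arbitrary tight boundary.
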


\begin{proof}
  (1) $\implies$ (2). Trivially $\X$ satisfies the universal property with respect to unary 2-cells, so it remains to show that the colimit is preserved by composition.
  Given a 2-cell as in \cref{local-colimit-UP}, we define a 2-cell as follows.
  \[\begin{tikzcd}
    {X_0} & \cdots & R & A & B & S & \cdots & {X_n} \\
    {X_0} & \cdots & R &&& S & \cdots & {X_n} \\
    X &&&&&&& {X'}
    \arrow[""{name=0, anchor=center, inner sep=0}, equals, nfold, from=1-1, to=2-1]
    \arrow["{q_1}"'{inner sep=.8ex}, "\shortmid"{marking}, from=1-2, to=1-1]
    \arrow["{q_k}"'{inner sep=.8ex}, "\shortmid"{marking}, from=1-3, to=1-2]
    \arrow[""{name=1, anchor=center, inner sep=0}, equals, nfold, from=1-3, to=2-3]
    \arrow["r"'{inner sep=.8ex}, "\shortmid"{marking}, from=1-4, to=1-3]
    \arrow["{p_j}"'{inner sep=.8ex}, "\shortmid"{marking}, from=1-5, to=1-4]
    \arrow["s"'{inner sep=.8ex}, "\shortmid"{marking}, from=1-6, to=1-5]
    \arrow[""{name=2, anchor=center, inner sep=0}, equals, nfold, from=1-6, to=2-6]
    \arrow["{q_{k + 1}}"'{inner sep=.8ex}, "\shortmid"{marking}, from=1-7, to=1-6]
    \arrow["{q_n}"'{inner sep=.8ex}, "\shortmid"{marking}, from=1-8, to=1-7]
    \arrow[""{name=3, anchor=center, inner sep=0}, equals, nfold, from=1-8, to=2-8]
    \arrow[""{name=4, anchor=center, inner sep=0}, "x"', from=2-1, to=3-1]
    \arrow["{q_1}"{description}, from=2-2, to=2-1]
    \arrow["{q_k}"{description}, from=2-3, to=2-2]
    \arrow["{r \odot p_j \odot s}"{description}, from=2-6, to=2-3]
    \arrow["{q_{k + 1}}"{description}, from=2-7, to=2-6]
    \arrow["{q_n}"{description}, from=2-8, to=2-7]
    \arrow[""{name=5, anchor=center, inner sep=0}, "{x'}", from=2-8, to=3-8]
    \arrow["q"{inner sep=.8ex}, "\shortmid"{marking}, from=3-8, to=3-1]
    \arrow["{=}"{description}, draw=none, from=0, to=1]
    \arrow["\opcart"{description}, draw=none, from=1, to=2]
    \arrow["{=}"{description}, draw=none, from=2, to=3]
    \arrow["{\chi_j}"{description}, draw=none, from=4, to=5]
  \end{tikzcd}\]
  Consequently, this factors uniquely as the following 2-cell.
  \[\begin{tikzcd}[column sep=large]
    {X_0} & \cdots & R & A & B & S & \cdots & {X_n} \\
    {X_0} & \cdots & R & A & B & S & \cdots & {X_n} \\
    X &&&&&&& {X'}
    \arrow[""{name=0, anchor=center, inner sep=0}, equals, nfold, from=1-1, to=2-1]
    \arrow["{q_1}"'{inner sep=.8ex}, "\shortmid"{marking}, from=1-2, to=1-1]
    \arrow["{q_k}"'{inner sep=.8ex}, "\shortmid"{marking}, from=1-3, to=1-2]
    \arrow["r"'{inner sep=.8ex}, "\shortmid"{marking}, from=1-4, to=1-3]
    \arrow[""{name=1, anchor=center, inner sep=0}, equals, nfold, from=1-4, to=2-4]
    \arrow["{p_j}"'{inner sep=.8ex}, "\shortmid"{marking}, from=1-5, to=1-4]
    \arrow[""{name=2, anchor=center, inner sep=0}, equals, nfold, from=1-5, to=2-5]
    \arrow["s"'{inner sep=.8ex}, "\shortmid"{marking}, from=1-6, to=1-5]
    \arrow["{q_{k + 1}}"'{inner sep=.8ex}, "\shortmid"{marking}, from=1-7, to=1-6]
    \arrow["{q_n}"'{inner sep=.8ex}, "\shortmid"{marking}, from=1-8, to=1-7]
    \arrow[""{name=3, anchor=center, inner sep=0}, equals, nfold, from=1-8, to=2-8]
    \arrow[""{name=4, anchor=center, inner sep=0}, "x"', from=2-1, to=3-1]
    \arrow["{q_1}"{description}, from=2-2, to=2-1]
    \arrow["{q_k}"{description}, from=2-3, to=2-2]
    \arrow["r"{description}, from=2-4, to=2-3]
    \arrow["{\colim p}"{description}, from=2-5, to=2-4]
    \arrow["s"{description}, from=2-6, to=2-5]
    \arrow["{q_{k + 1}}"{description}, from=2-7, to=2-6]
    \arrow["{q_n}"{description}, from=2-8, to=2-7]
    \arrow[""{name=5, anchor=center, inner sep=0}, "{x'}", from=2-8, to=3-8]
    \arrow["q"{inner sep=.8ex}, "\shortmid"{marking}, from=3-8, to=3-1]
    \arrow["{=}"{description}, draw=none, from=0, to=1]
    \arrow["{\copi_j}"{description}, draw=none, from=1, to=2]
    \arrow["{=}"{description}, draw=none, from=2, to=3]
    \arrow["{[(q_1, \ldots, q_k, \opcart, q_{k + 1}, \ldots, q_n) \d \chi_j]_{j \in \b J}}"{description}, draw=none, from=4, to=5]
  \end{tikzcd}\]
  By opcartesianness, this is equal to the following 2-cell, from which the required property again follows from opcartesianness.
  \[\begin{tikzcd}[column sep=large]
    {X_0} & \cdots & R & A & B & S & \cdots & {X_n} \\
    && R & A & B & S \\
    {X_0} & \cdots & R &&& S & \cdots & {X_n} \\
    X &&&&&&& {X'}
    \arrow[""{name=0, anchor=center, inner sep=0}, equals, nfold, from=1-1, to=3-1]
    \arrow["{q_1}"'{inner sep=.8ex}, "\shortmid"{marking}, from=1-2, to=1-1]
    \arrow["{q_k}"'{inner sep=.8ex}, "\shortmid"{marking}, from=1-3, to=1-2]
    \arrow[""{name=1, anchor=center, inner sep=0}, equals, nfold, from=1-3, to=2-3]
    \arrow["r"'{inner sep=.8ex}, "\shortmid"{marking}, from=1-4, to=1-3]
    \arrow[""{name=2, anchor=center, inner sep=0}, equals, nfold, from=1-4, to=2-4]
    \arrow["{p_j}"'{inner sep=.8ex}, "\shortmid"{marking}, from=1-5, to=1-4]
    \arrow[""{name=3, anchor=center, inner sep=0}, equals, nfold, from=1-5, to=2-5]
    \arrow["s"'{inner sep=.8ex}, "\shortmid"{marking}, from=1-6, to=1-5]
    \arrow[""{name=4, anchor=center, inner sep=0}, equals, nfold, from=1-6, to=2-6]
    \arrow["{q_{k + 1}}"'{inner sep=.8ex}, "\shortmid"{marking}, from=1-7, to=1-6]
    \arrow["{q_n}"'{inner sep=.8ex}, "\shortmid"{marking}, from=1-8, to=1-7]
    \arrow[""{name=5, anchor=center, inner sep=0}, equals, nfold, from=1-8, to=3-8]
    \arrow[""{name=6, anchor=center, inner sep=0}, equals, nfold, from=2-3, to=3-3]
    \arrow["r"{description}, from=2-4, to=2-3]
    \arrow["{\colim p}"{description}, from=2-5, to=2-4]
    \arrow["s"{description}, from=2-6, to=2-5]
    \arrow[""{name=7, anchor=center, inner sep=0}, equals, nfold, from=2-6, to=3-6]
    \arrow[""{name=8, anchor=center, inner sep=0}, "x"', from=3-1, to=4-1]
    \arrow["{q_1}"{description}, from=3-2, to=3-1]
    \arrow["{q_k}"{description}, from=3-3, to=3-2]
    \arrow["{r \odot \colim p \odot s}"{description}, from=3-6, to=3-3]
    \arrow["{q_{k + 1}}"{description}, from=3-7, to=3-6]
    \arrow["{q_n}"{description}, from=3-8, to=3-7]
    \arrow[""{name=9, anchor=center, inner sep=0}, "{x'}", from=3-8, to=4-8]
    \arrow["q"{inner sep=.8ex}, "\shortmid"{marking}, from=4-8, to=4-1]
    \arrow["{=}"{description, pos=0.6}, draw=none, from=0, to=2-3]
    \arrow["{=}"{description}, draw=none, from=1, to=2]
    \arrow["{\copi_j}"{description}, draw=none, from=2, to=3]
    \arrow["{=}"{description}, draw=none, from=3, to=4]
    \arrow["\opcart"{description}, draw=none, from=6, to=7]
    \arrow["{=}"{description, pos=0.4}, draw=none, from=2-6, to=5]
    \arrow["{[(q_1, \ldots, q_k, \opcart, q_{k + 1}, \ldots, q_n) \d \chi_j]_{j \in \b J}}"{description}, draw=none, from=8, to=9]
  \end{tikzcd}\]
  (2) $\implies$ (1). Given a family $\{ \chi_j \}_{j \in \b J}$ as in \cref{local-colimit-UP}, we have a 2-cell as follows.
  \[\begin{tikzcd}
    {X_0} &&&&& {X_n} \\
    X &&&&& {X'}
    \arrow[""{name=0, anchor=center, inner sep=0}, "x"', from=1-1, to=2-1]
    \arrow["{q_1 \odot \cdots \odot q_k \odot p_j \odot q_{k + 1} \odot \cdots \odot q_n}"'{inner sep=.8ex}, "\shortmid"{marking}, from=1-6, to=1-1]
    \arrow[""{name=1, anchor=center, inner sep=0}, "{x'}", from=1-6, to=2-6]
    \arrow["q"{inner sep=.8ex}, "\shortmid"{marking}, from=2-6, to=2-1]
    \arrow["{\pshm{\chi_j}}"{description}, draw=none, from=0, to=1]
  \end{tikzcd}\]
  This consequently factors uniquely as the following, from which the required universal property follows from opcartesianness.
  \[\begin{tikzcd}[column sep=huge]
    {X_0} &&&&& {X_n} \\
    {X_0} &&&&& {X_n} \\
    X &&&&& {X'}
    \arrow[""{name=0, anchor=center, inner sep=0}, equals, nfold, from=1-1, to=2-1]
    \arrow["{q_1 \odot \cdots \odot q_k \odot p_j \odot q_{k + 1} \odot \cdots \odot q_n}"'{inner sep=.8ex}, "\shortmid"{marking}, from=1-6, to=1-1]
    \arrow[""{name=1, anchor=center, inner sep=0}, equals, nfold, from=1-6, to=2-6]
    \arrow[""{name=2, anchor=center, inner sep=0}, "x"', from=2-1, to=3-1]
    \arrow["{q_1 \odot \cdots \odot q_k \odot \colim p \odot q_{k + 1} \odot \cdots \odot q_n}"{description}, from=2-6, to=2-1]
    \arrow[""{name=3, anchor=center, inner sep=0}, "{x'}", from=2-6, to=3-6]
    \arrow["q"{inner sep=.8ex}, "\shortmid"{marking}, from=3-6, to=3-1]
    \arrow["{\copi_{q_1 \odot \cdots \odot q_k \odot p_j \odot q_{k + 1} \odot \cdots \odot q_n}}"{description}, draw=none, from=0, to=1]
    \arrow["{[\pshm{\chi_j}]_{j \in \b J}}"{description}, draw=none, from=2, to=3]
  \end{tikzcd}\qedshift\]
\end{proof}

\begin{remark}[label=left-composites-and-whiskering]
  More generally, if $\X$ admits left-composites, then $\X$ admits a left-local colimit of $p$ if and only if $\X$ admits a weak local colimit of $p$, and $\colim p$ is preserved by pre-whiskering of loose-cells, in the sense that the canonical 2-cell $p_j \odot s \tto \colim p \odot s$ exhibits the coprojection at $j \in \J$.
\end{remark}

\begin{corollary}
  A pseudo double category admits local $\J$-colimits in the sense of \cite[Definition~2.1.1]{pare2013composition} if and only if, when viewed as a representable \vdc{}, it admits local $\J$-colimits in the sense of \cref{local-colimit}.
\end{corollary}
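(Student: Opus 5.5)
The plan is to reduce the corollary directly to the preceding proposition, which characterises local colimits in a \vdc{} admitting loose-composites. A pseudo double category, viewed as a \vdc{}, is representable: every chain of loose-cells, including the empty chain, has an opcartesian composite. In particular it admits all loose-composites and loose-identities, so the proposition applies. It says that the representable \vdc{} admits a local colimit of $p \colon \b J \to \X\lh{B, A}$ if and only if two things hold: $p$ has a weak local colimit, and $\colim p$ is preserved by two-sided whiskering $r \odot {-} \odot s$. It remains to identify these two conditions with the two clauses of \citeauthor{pare2013composition}'s definition.

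First, I will check that a weak local colimit is exactly a colimit in the hom-category $\X\lh{B, A}$. The universal property of a weak local colimit involves only unary globular 2-cells $p_j \tto q$ with identity tight boundaries. These are precisely the morphisms of the category of loose-cells $B \lto A$ and globular cells. So a weak local colimit is a colimit of $p$ in that category, which is the ``local'' colimit of \citeauthor{pare2013composition}'s Definition~2.1.1 before any compatibility condition is imposed.

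Second, I will match the whiskering condition with \citeauthor{pare2013composition}'s requirement that local colimits be preserved by horizontal composition on either side. One subtlety is that our condition is stated for the two-sided whisker $r \odot {-} \odot s$, whereas \citeauthor{pare2013composition} asks for preservation by $r \odot {-}$ and by ${-} \odot s$ separately.
\begin{itemize}
  \item One-sided preservation follows from two-sided preservation by taking $s$ (respectively $r$) to be a loose-identity, using that loose-identities are units for $\odot$ up to coherent isomorphism.
  \item Two-sided preservation follows from one-sided preservation by applying it twice, using associativity of $\odot$ up to coherent isomorphism. The canonical comparison cells agree because they are all induced from $\copi_j$ by opcartesianness.
\end{itemize}

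I expect the main obstacle to be bookkeeping rather than anything conceptual. The local colimit in the representable \vdc{} has to handle multiary and nullary 2-cells and non-identity tight boundaries $x, x'$. I deliberately avoid \cref{local-colimits-with-restrictions}, since a pseudo double category need not have restrictions. Instead I rely on the preceding proposition, whose proof handles arbitrary tight boundaries $x, x'$ directly by factoring through the composite chain. I will verify that nothing in that proof used restrictions. For nullary inputs, I will verify that the case $k = 0$ or $n = 0$ is covered by the opcartesian loose-identity. With these checks the corollary follows.
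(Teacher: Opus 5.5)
Your reduction to the preceding proposition and your identification of a weak local colimit with a colimit in the hom-category are fine. The argument breaks at the second identification. You read \citeauthor{pare2013composition}'s Definition~2.1.1 as the bicategorical condition: colimits in hom-categories preserved by $r \odot {-}$ and ${-} \odot s$. That condition only mentions globular 2-cells, and you expect the proposition to supply the non-identity tight boundaries for free. It cannot. In its proof of (2) $\implies$ (1), the family $\chi_j$ is first turned, via the opcartesian composite, into unary 2-cells $q_1 \odot \cdots \odot p_j \odot \cdots \odot q_n \tto q$. These still have tight boundaries $x, x'$, and they must then be factored through the whiskered coprojections. That step needs the whiskered cocone to be universal among unary 2-cells with \emph{arbitrary} tight boundaries. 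A colimit in the hom-category gives this only when $x$ and $x'$ are identities. Reducing framed 2-cells to globular ones is exactly what restrictions do in \cref{local-colimits-with-restrictions}, and a pseudo double category need not have them. So your planned check that nothing in that proof uses restrictions will fail under your reading of condition (2). It succeeds only if (2) is read with arbitrary tight boundaries, and then (2) is no longer the bicategorical whiskering condition you match it against.

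The bicategorical notion is genuinely weaker. Take the strict double category with objects $A$ and $C$, one non-identity tight-cell $f \colon A \to C$, and only the loose-cells $0, 1_A \colon A \lto A$ and $1_C \colon C \lto C$. Let $\X\lh{A, A}$ be the poset $0 \le 1_A$ with composition $\wedge$. With both tight boundaries $f$, take exactly three 2-cells: $1_f \colon 1_A \tto 1_C$ and $\alpha, \beta \colon 0 \tto 1_C$. Here $\alpha$ is the vertical composite of $0 \le 1_A$ with $1_f$, and horizontal composition makes $\{\alpha, \beta\}$ a monoid with unit $\alpha$ (say $\beta \odot \beta = \beta$), with $1_f$ a unit throughout; the interchange law is a direct check. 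Every hom-category has binary coproducts preserved by whiskering, so the bicategorical condition holds for $\J$ the discrete two-object category. However, the coproduct of $(0, 0)$ in $\X\lh{A, A}$ is $0$ with identity coprojections, so the pair $(\alpha, \beta)$ cannot factor through it. Since a local colimit is in particular a colimit in $\X\lh{A, A}$, no local coproduct of $(0, 0)$ exists in the sense of \cref{local-colimit}.

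The missing idea is that \citeauthor{pare2013composition}'s definition is itself double-categorical. It asks for a colimit in the hom-category together with the universal property against cells out of $r \odot p_j \odot s$ with arbitrary vertical boundaries. His Remark~2.1.2, generalised by \cref{local-colimits-with-restrictions}, is what lets one drop those boundaries when companions and conjoints exist. The second clause is condition (2) of the proposition with tight boundaries included. The first clause is its globular special case, hence redundant. That is the paper's proof, and it also makes your one-sided versus two-sided discussion unnecessary, since that definition already whiskers on both sides.
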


\begin{proof}
  The definition of \cite{pare2013composition} is essentially the same as that of \cref{representable-local-colimits}, except that it also asks for each colimit to exhibit a colimit in the hom-category of the loose bicategory: this condition is redundant, as it is the special case of \cref{local-colimit-UP} for globular 2-cells.
\end{proof}

\begin{corollary}
  \label{local-cocompleteness-for-bicategories}
  A bicategory admits an enrichment in the monoidal 2-category of $\J$-cocomplete categories of \textcite[\S15.8]{garner2016enriched} if and only if, when viewed as the loose bicategory of a representable \vdc{}, it admits local $\J$-colimits in the sense of \cref{local-colimit}.
\end{corollary}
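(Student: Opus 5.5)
The plan is to reduce the statement to the preceding proposition, by unfolding what an enrichment of a bicategory in the monoidal 2-category of $\J$-cocomplete categories amounts to. In the sense of \textcite[\S15.8]{garner2016enriched}, the tensor product of $\J$-cocomplete categories classifies functors that are $\J$-cocontinuous in each variable separately. So such an enrichment of a bicategory $\B$ is precisely the following two conditions.
\begin{enumerate}
  \item Every hom-category $\B(B, A)$ is $\J$-cocomplete.
  \item Every composition functor $\odot \colon \B(A, R) \times \B(S, A) \to \B(S, R)$ is $\J$-cocontinuous in each variable separately.
\end{enumerate}
Being an enrichment is a property rather than structure here, since $\J$-cocompleteness and cocontinuity are properties. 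I would state this reformulation first, citing the description of the tensor product in \loccit.

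Next, view $\B$ as the loose bicategory of a representable \vdc{}, so that all loose-composites exist. By the preceding proposition, admitting a local colimit of $p \colon \b J \to \X\lh{B, A}$ is equivalent to two things: admitting a weak local colimit of $p$, and this colimit being preserved by two-sided whiskering $r \odot {-} \odot s$.

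For the first of these, I would check that a weak local colimit is exactly a colimit in the hom-category. Its universal property only concerns unary globular 2-cells $p_j \tto q$, and those are precisely the morphisms of $\X\lh{B, A}$. This identification is essentially the observation made in the proof of the corollary about \cite{pare2013composition}.

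For the second, I would show that preservation under two-sided whiskering is equivalent to condition (2), separate cocontinuity of composition.
\begin{itemize}
  \item Given separate cocontinuity, $r \odot \colim p \odot s \iso r \odot (\colim p \odot s)$ is a colimit of the $r \odot (p_j \odot s)$. This uses first cocontinuity of ${-} \odot s$ and then of $r \odot {-}$, transported along the associator.
  \item Conversely, taking $r$ or $s$ to be a loose-identity recovers one-sided whiskering, after transporting along the unitors, and hence separate cocontinuity.
\end{itemize}

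The main obstacle is coherence in the second part. One must check that the canonical comparison 2-cell of the proposition, which is induced via opcartesian cells, agrees with the comparison built from the bicategorical composition functor and the associators and unitors. Since colimits are stable under isomorphism, this reduces to showing that the two candidate cocones are isomorphic, naturally in $j$. I would verify this using the uniqueness clause of the opcartesian universal property.
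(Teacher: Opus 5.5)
Your proposal is correct and follows the paper's own route. The paper's one-line proof likewise applies the preceding proposition (local colimits are weak local colimits preserved by two-sided whiskering), identifies that condition with Garner--Shulman's local $\J$-cocompleteness ($\J$-cocomplete hom-categories with composition $\J$-cocontinuous in each variable), and hence with the enrichment. You spell out steps the paper leaves implicit: weak local colimits are hom-category colimits, two-sided whiskering is equivalent to one-sided whiskering, and the comparison cells agree.
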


\begin{proof}
  By \cref{representable-local-colimits}, a bicategory is locally $\J$-cocomplete in the sense of \cite[\S15.1]{garner2016enriched} if and only if it admits local $\J$-colimits in the sense of \cref{local-colimit}, hence also if and only if it admits such an enrichment.
\end{proof}

\printbibliography

\end{document}